\author{Roland Donninger}
\address{Universität Wien, Fakultät für Mathematik,
  Oskar-Morgenstern-Platz 1, 1090 Vienna, Austria}
\email{roland.donninger@univie.ac.at}
\author{David Wallauch}
\address{Universität Wien, Fakultät für Mathematik,
  Oskar-Morgenstern-Platz 1, 1090 Vienna, Austria}
\email{david.wallauch@univie.ac.at}
\thanks{This work was supported by the Austrian Science Fund FWF,
  Projects P 30076: ``Self-similar blowup in dispersive wave equations''
  and P 34560: ``Stable blowup in supercritical wave equations''.}
\title{Optimal blowup stability for supercritical wave maps}
\numberwithin{equation}{section}
\newcommand{\C}{\mathbb{C}}
\newcommand{\R}{\mathbb{R}}
\newtheorem{cor}{Corollary}[section]
\newtheorem{thm}{Theorem}[section]
\newtheorem{prop}{Proposition}[section]
\newtheorem{lem}{Lemma}[section]
\theoremstyle{remark}
\newtheorem{rem}{Remark}[section]
\theoremstyle{definition}
\newtheorem{defi}{Definition}[section]
\renewcommand{\O}{\mathcal{O}}
\newcommand{\N}{\textup{\textbf{N}}}
\newcommand{\rg}{\textup{\textbf{rg}}}
\renewcommand{\ker}{\textup{\textbf{ker}}}
\newcommand{\Span}{\textup{\textbf{span}}}
\newcommand{\Cf}{\textup{\textbf{C}}}
\newcommand{\X}{\mathcal{X}}
\newcommand{\Nf}{\textup{\textbf{N}}}
\newcommand{\Lf}{\textup{\textbf{L}}}
\newcommand{\Af}{\textup{\textbf{A}}}
\newcommand{\hfh}{\textup{\textbf{h}}}
\newcommand{\K}{\textup{\textbf{K}}}
\newcommand{\Sf}{\textup{\textbf{S}}}
\newcommand{\I}{\textup{\textbf{I}}}
\newcommand{\Uf}{\textup{\textbf{U}}}
\newcommand{\uf}{\textup{\textbf{u}}}
\newcommand{\vf}{\textup{\textbf{v}}}
\newcommand{\Rf}{\textup{\textbf{R}}}
\newcommand{\gf}{\textup{\textbf{g}}}
\newcommand{\ind}{\textup{\textbf{1}}}
\newcommand{\ff}{\textup{\textbf{f}}}
\newcommand{\Pf}{\textup{\textbf{P}}}
\newcommand{\Ef}{\textup{\textbf{E}}}
\renewcommand{\Re}{\operatorname{Re}}
\renewcommand{\Im}{\operatorname{Im}}
\newcommand{\B}{\mathbb{B}}
\newcommand{\Bz}{\mathbb{B}_{1+\delta}^4}
\newcommand{\hu}{\widehat{u}}
\begin{document}
      \begin{abstract}
        We study corotational wave maps from $(1+4)$-dimensional
        Minkowski space into the $4$-sphere. We prove the stability of an
        explicitly known self-similar wave map under
        perturbations that are small in the critical Sobolev space. 
      \end{abstract}
\maketitle
\section{Introduction}

\noindent Let $\Omega\subset \mathbb R\times\mathbb R^d$ be open
and consider a smooth map $U: \Omega\to \mathbb S^d\subset \mathbb
R^{d+1}$.
We number the slots of $U$ from $0$ to $d$ and write
$\partial^0=-\partial_0$ as well as $\partial^j=\partial_j$ for $j\in
\{1,2,\dots,d\}$.
Then $U$ 
is called a \emph{wave map} if it satisfies
\begin{equation}
  \label{wavemaps}\partial^\mu\partial_\mu U+(\partial^\mu U\cdot \partial_\mu
  U)U=0
  \end{equation}
on $\Omega$, where we employ Einstein's summation convention with Greek letters
running from $0$ to $d$ and $\cdot$ denotes the Euclidean inner
product on $\mathbb R^{d+1}$. Wave maps are fundamental objects in geometric
analysis and they generalize both the wave equation to maps with
values in manifolds 
and harmonic maps to Lorentzian geometry.

The wave maps equation is a hyperbolic partial differential equation
and in order to construct solutions, it is natural to study the \emph{Cauchy problem}, i.e., one prescribes
$U(0,.): \mathbb R^d\to \mathbb S^d\subset \mathbb R^{d+1}$ and $\partial_0 U(0,.):
\mathbb R^d\to \mathbb R^{d+1}$ with $U(0,.)\cdot \partial_0 U(0,.)=0$
as \emph{initial data}. The goal is then to
prove the existence of a unique wave map that satisfies the initial
data.
For some initial data it will be possible to construct a wave map on all of $\mathbb
R\times \mathbb R^d$
but in general,
the maximal domain of existence will be a proper subset of $\mathbb R\times
\mathbb R^d$. 
For $d\geq 3$ this is evidenced
by the explicit one-parameter family of \emph{self-similar solutions}
$U_*^T(t,x)=F_*(\frac{x}{T-t})$ for $T>0$, where $F_*: \R^d\to \mathbb
S^d\subset \R^{d+1}$ is given by
\[ F_*(\xi):=
  \frac{1}{d-2+|\xi|^2}\begin{pmatrix}
    2\sqrt{d-2}\,\xi \\
    d-2-|\xi|^2
  \end{pmatrix}
  =
  \begin{pmatrix}
    \sin(f_*(\xi))\frac{\xi}{|\xi|} \\
    \cos(f_*(\xi))
  \end{pmatrix} \]
with
  \[ f_*(\xi):=2\arctan\left
            (\frac{|\xi|}{\sqrt{d-2}}\right ).
\]
The function
$U_*^T$, found in \cite{Sha88, TurSpe90, BizBie15}, is an explicit example of a wave map that starts from smooth
initial data but becomes singular after a finite
time $T$ in the sense that the gradient blows up.
With the explicit blowup solution $U_*^T$ at hand, a natural question
arises: How typical is this type of singularity formation? Is the
solution $U_*^T$ an ``exceptional object'' or does there exist a larger
class of solutions that behave like $U_*^T$?
To approach this question, it is necessary to study
perturbations of $U_*^T$. 
In this paper, for the case $d=4$, we prove that
the blowup described by $U_*^T$ is stable under perturbations that are
small in the weakest possible Sobolev norm. In order to state our
result precisely, we define the domain
$\Omega_{T}\subset \mathbb R\times \mathbb R^4$ for
$T>0$ by
\[ \Omega_{T}:=\left ([0, \infty)\times \mathbb
    R^4\right )\setminus
  \left \{(t,x)\in [T,\infty)\times \mathbb R^4: |x|\leq t-T\right
  \}. \]
Furthermore, for $R>0$ and
$x_0\in\R^d$, we set $\B^d_R(x_0):=\{x\in \R^d: |x-x_0|<R\}$ and abbreviate $\B_R^d:=\B_R^d(0)$.
\begin{thm}
  \label{maintheorem} Let $d=4$.
  There exist constants $\delta_0,M>0$ such that the following
  holds. Let $F: \mathbb R^4\to \mathbb S^4\subset\mathbb R^5$ and $G:
  \mathbb R^4\to \mathbb R^5$ be given by
  \[ F(x)=
    \begin{pmatrix}
      \sin(|x|f(x))\frac{x}{|x|} \\
      \cos(|x|f(x))
    \end{pmatrix},\qquad
    G(x)=
    \begin{pmatrix}
      \cos(|x|f(x))g(x)x \\
      -\sin(|x|f(x))|x|g(x)
    \end{pmatrix}
  \]
  for smooth, radial functions $f: \mathbb R^4\to
  \left[-\frac{3}{2},\frac{3}{2}\right]$ and $g: \mathbb R^4\to\mathbb R$. Assume
  further that $\delta\in (0,\delta_0)$ and
  \[ \|(F,G)-(U_*^1(0,.), \partial_0
    U_*^1(0,.))\|_{H^2\times H^1(\B^4_{1+\delta})}\leq \frac{\delta}{M}. \]
  Then there exists a $T\in [1-\delta,1+\delta]$ and a unique smooth wave map
  $U: \Omega_{T}\to \mathbb S^4\subset\mathbb R^5$
 that satisfies $U(0,x)=F(x)$ and $\partial_0 U(0,x)=G(x)$ for all $x\in
 \mathbb R^4$. Furthermore, in the backward lightcone of the point $(T,0)$, we
 have the weighted Strichartz estimates
 \[ \int_0^T \left \||.|^{-\frac56}\left (U(t,.)-U_*^T(t,.)\right )\right \|_{L^{12}(\mathbb
     B^4_{T-t})}^2dt\leq M\delta^2 \]
 and
 \[ \int_0^T \left \||.|^{-\frac12}\left (\partial_j
       U(t,.)-\partial_j U_*^T(t,.)\right )\right \|_{L^4(\mathbb
     B^4_{T-t})}^2dt\leq M\delta^2 \]
 for $j\in \{1,2,3,4\}$.
\end{thm}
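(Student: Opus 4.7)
My plan is to follow the Lyapunov--Perron style framework for blowup stability in similarity variables that has become standard for self-similar solutions of supercritical wave equations, with the main new difficulty being that I must work at the critical regularity $H^2\times H^1$ rather than a subcritical one.

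\textbf{Step 1: Corotational reduction and similarity coordinates.} Under the corotational ansatz the wave maps system \eqref{wavemaps} reduces to a scalar radial semilinear wave equation for the angle function, and $f_*$ corresponds to an explicit static solution in the similarity variables $\tau=-\log(T-t)$, $\rho=\frac{x}{T-t}$. After subtracting the profile I write the perturbation as a first-order system $\partial_\tau\Phi=\Lf\Phi+\Nf(\Phi)$ on a Hilbert space $\X$ modeled on $H^2\times H^1(\Bz)$; the enlargement from $\B^4_1$ to $\Bz$ is what allows me to cover the full backward lightcone of every $(T,0)$ with $T\in[1-\delta,1+\delta]$ from a single set of data.

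\textbf{Step 2: Linear theory for $\Lf$.} Using a Lumer--Phillips argument on the free part, plus a compact (lower-order) perturbation argument, I would show that $\Lf$ generates a strongly continuous semigroup $(e^{\tau\Lf})_{\tau\ge 0}$ on $\X$. The spectrum of $\Lf$ in $\{\Re\lambda\ge 0\}$ should consist only of a single simple eigenvalue $\lambda=1$ coming from the time-translation symmetry $T\mapsto T+\varepsilon$, with a one-dimensional eigenspace spanned by an explicit symmetry mode $\gf$. Let $\Pf$ be the associated spectral projection and $\Qf=\I-\Pf$; then a growth bound $\|e^{\tau\Lf}\Qf\|\lesssim 1$ should hold on $\X$.

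\textbf{Step 3: Nonlinear Strichartz estimates in similarity coordinates.} This is where I expect the main difficulty. At $H^2\times H^1$ the nonlinearity is only borderline controllable in purely Sobolev norms, so the strategy is to upgrade the linear bound to a mixed Strichartz/semigroup estimate in similarity variables. The weighted spaces appearing in the theorem,
\[ \int_0^T \|\,|\cdot|^{-5/6}\Phi(t,\cdot)\|_{L^{12}(\B^4_{T-t})}^2\,dt \quad\text{and}\quad \int_0^T \|\,|\cdot|^{-1/2}\partial\Phi(t,\cdot)\|_{L^4(\B^4_{T-t})}^2\,dt, \]
are precisely the Strichartz exponents for the four-dimensional wave equation at this scaling, and the weights arise from pushing the $\dot H^2\times\dot H^1$-adapted Strichartz pair through the change of coordinates. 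The plan is to build a working space $\mathcal{Y}$ defined by the combination of the $\X$-norm and these Strichartz norms, and to show (i)~$\|e^{\tau\Lf}\Qf\vf\|_{\mathcal{Y}}\lesssim \|\vf\|_{\X}$, (ii)~$\Nf:\mathcal{Y}\to\mathcal{Y}$ locally and is Lipschitz with small constant on small balls. Part (ii) will follow from Hölder and Sobolev embedding once the correct exponents are arranged; part (i) needs genuine Strichartz estimates for the free wave equation transplanted to the cone, and this is the technical core.

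\textbf{Step 4: Fixed point and modulation.} With these estimates in hand, for each admissible datum $\vf$ I set
\[ \mathcal{K}(\Phi,\vf)(\tau)=e^{\tau\Lf}\vf+\int_0^\tau e^{(\tau-\sigma)\Lf}\Nf(\Phi(\sigma))\,d\sigma-\Cf(\Phi,\vf), \]
where $\Cf(\Phi,\vf)\in\Span\{\gf\}$ is the unique correction that suppresses the unstable mode; a contraction argument on a small ball in $\mathcal{Y}$ yields a unique $\Phi$. A separate argument, exploiting that varying the blowup parameter $T$ moves the initial data transversally to $\Span\{\gf\}$, then shows that there is exactly one $T\in[1-\delta,1+\delta]$ for which $\Cf(\Phi,\vf)=0$, producing a genuine solution of the nonlinear problem.

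\textbf{Step 5: Return to physical variables.} Undoing the similarity change of variables gives a wave map $U$ on the backward lightcone of $(T,0)$; the Strichartz bounds on $\mathcal{Y}$ translate directly into the two weighted inequalities claimed in the theorem. Outside the cone, smoothness and uniqueness on all of $\Omega_T$ follow from standard local well-posedness and persistence of regularity for smooth data, using finite speed of propagation to glue the interior solution to an exterior one. The main obstacle, as noted, is Step~3: setting up the Strichartz calculus in similarity coordinates at the sharp scaling and verifying that the linearized flow is compatible with it.
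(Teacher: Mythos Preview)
Your overall architecture matches the paper's: corotational reduction, similarity variables, semigroup for the linearization with a single unstable mode from time translation, Strichartz estimates to close a fixed point at critical regularity, modulation of $T$ to kill the unstable direction, and finally persistence of regularity plus finite speed of propagation to extend to $\Omega_T$.

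There are, however, two substantive corrections. First, the corotational reduction produces a radial wave equation in \emph{six} space dimensions, not four; the paper carries out the entire semigroup and Strichartz analysis on $H^2\times H^1(\B^6_1)$ with the unweighted pairs $L^2_\tau L^{12}(\B^6_1)$ and $L^2_\tau \dot W^{1,4}(\B^6_1)$. The weights $|x|^{-5/6}$ and $|x|^{-1/2}$ in the theorem are not a feature of the similarity coordinates but arise only at the very end, when the unweighted six-dimensional norms are translated back to four-dimensional ones (Lemma~\ref{goback}). So your picture of where the weights come from, and hence of the working space, is off.

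Second, and more seriously, your Step~3 plan---transplanting the \emph{free} wave Strichartz estimates to the cone---is exactly what the paper does for the free part $\Sf_0$ (Lemma~\ref{Strichart}), but it does not by itself give Strichartz bounds for the full linearized flow $\Sf(\tau)(\I-\Pf)$, because $\Lf=\Lf_0+\Lf'$ carries a nontrivial potential from linearizing around $u_*$. The paper handles the difference $\Sf-\Sf_0$ by Laplace inversion: it constructs the resolvent of $\Lf$ explicitly via a lengthy ODE analysis (Bessel-type fundamental systems near $\rho=0$, a separate construction near $\rho=1$ with tight control on error terms, connection coefficients, Sections~4--5), and then proves the Strichartz bounds by delicate oscillatory-integral estimates on the resulting kernels (Sections~5--6). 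This is where the bulk of the work lies, and it is not captured by ``free Strichartz transplanted to the cone.'' Relatedly, the paper does \emph{not} prove the uniform bound $\|e^{\tau\Lf}\Qf\|_{\mathcal H}\lesssim 1$ you assume---only $\lesssim C_\varepsilon e^{\varepsilon\tau}$ for each $\varepsilon>0$ (Lemma~\ref{proectionsemigroup}); it is the Strichartz estimates themselves that provide the global-in-$\tau$ control needed to close the contraction.
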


\subsection{Discussion} Some remarks are in order.

\subsubsection{Stability of blowup} Note that
  \[ U_*^T(t,0)=
    \begin{pmatrix}
      0 \\ 1
    \end{pmatrix}
  \]
  for all $t\in [0,T)$ and thus, by scaling,
  \[ \left \||.|^{-\frac56}\left (U_*^T(t,.)-
      U_*^T(t,0)\right )
      \right \|_{L^{12}(\B^4_{T-t})}\simeq
      (T-t)^{-\frac12}, \]
    which implies that
    \[ \int_0^T \left \||.|^{-\frac56}\left(U_*^T(t,.)-U_*^T(t,0)\right)\right \|_{L^{12}(\mathbb
        B^4_{T-t})}^2 dt\simeq \int_0^T (T-t)^{-1}dt=\infty. \]
    Analogously, we find
    \[ \int_0^T \left \||.|^{-\frac12}\partial_j
          U_*^T(t,.)\right \|_{L^{4}(\mathbb
        B^4_{T-t})}^2 dt\simeq \int_0^T (T-t)^{-1}dt=\infty. \]
 Consequently, our estimates show that the self-similar blowup
 solution $U_*^T$
  is \emph{asymptotically stable} in a Strichartz sense in the
  backward lightcone of the singularity. In other words, the solution
  we construct
  can be written as
  \[
    U(t,x)=U_*^T(t,x)+\underbrace{U(t,x)-U_*^T(t,x)}_{\mbox{small}}, \]
  i.e., $U$ blows up at time $t=T$ and equals $U_*^T$ up to an error that is small in a
  suitable Strichartz space.

\subsubsection{Optimality} The wave maps equation has the scaling symmetry $U(t,x)\mapsto
  U(\frac{t}{\lambda}, \frac{x}{\lambda})$ for $\lambda>0$ and the
  corresponding scaling-invariant Sobolev space is $\dot
  H^{\frac{d}{2}}\times \dot H^{\frac{d}{2}-1}(\R^d)$.
As the wave maps equation is ill-posed
  below scaling \cite{ShaTah94}, our smallness condition on the data is optimal in the
  sense that
  the number of derivatives cannot be lowered any further.

  \subsubsection{Relation to small-data Cauchy theory}
  Our result may be viewed as the analogue of an optimal small-data global
  existence result, \emph{but relative to a blowup solution}: If the
  data are close to the given blowup solution, the evolution exists
  for as long as it possibly can and behaves asymptotically like the
  given blowup.
  
\subsubsection{Symmetry} The initial data we prescribe have a very special form. Such
  data are called \emph{corotational} and the point is that the wave
  maps flow preserves this form. Accordingly, our Strichartz estimates
  are not translation-invariant and thus inherently
  corotational. Nonetheless, we expect a similar result
  to hold for general data but establishing this is a formidable
  endeavor that has to be postponed to the future.  
  
  \subsubsection{Maximal domain of existence}
  We show the existence of the solution in the domain $\Omega_T$,
  which includes all of $[0,\infty)\times \R^4$ except for the
  \emph{forward} lightcone of the blowup point $(T,0)$. The latter is
  precisely the spacetime region that is causally influenced by the
  singularity. It is an intriguing open question whether one can
  extend the solution even further in a meaningful way.

  \subsubsection{Other dimensions}
  There is nothing special about $d=4$ except that it is the smallest
  dimension $d\geq 3$ with a critical Sobolev space of
  integer order. This fact simplifies the
  analysis but the generalization to other dimensions $d\geq 3$ is mainly
  technical and will be treated elsewhere.
  The behavior for $d\leq 2$, on the other hand, is an entirely different subject that is
  extensively treated in the literature, see below.
  
  \subsubsection{Broader context}
Finally, we would like to emphasize that our Theorem \ref{maintheorem} is
  a large-data result for an energy-supercritical wave equation. In addition,
  it is the first result on \emph{optimal} blowup stability for a geometric wave
  equation.

\subsection{Related results}
As the prototypical example of a geometric wave equation, the wave
maps equation received a lot of attention during the last 30 years and
it is impossible to review or even mention all of the available results. We therefore restrict
ourselves to a brief overview of some of the most important recent contributions  that are closely related to the
present paper.

The local Cauchy theory at low regularity for corotational wave maps
is developed in \cite{ShaTah94} and the general case is studied in
\cite{KlaMac95, KlaSel97, Tao00, MasPla12}.  The global Cauchy problem for small
data is of course most delicate when one measures smallness
in a scaling-invariant space. This challenging problem was part of a big program in the
1990s and the beginning 2000s and was finally resolved in
\cite{Tat98, Tat01, Tao01a, Tao01b, KlaRod01, ShaStr02, Kri03,
  NahSteUhl03, Kri04, Tat05, CanHer18}.

For the global Cauchy problem with large data, the most powerful
results are available in the energy-critical case $d=2$, where energy
conservation yields invaluable global information. On the other hand,
the blowup mechanism in $d=2$ is more complicated and takes place via dynamical
rescaling of a soliton (a harmonic map). As a consequence, already the construction of solutions
that blow up in finite time is highly nontrivial and was first
achieved in \cite{KriSchTat08, RodSte10, RapRod12}, motivated by
numerical evidence \cite{BizChmTab01}, see also \cite{CanKri15}. Stability results for blowup
are established in \cite{RapRod12, KriMia20}.
The question of global existence for large data has to be addressed in
view of the fact that finite-time blowup is possible. Since the blowup
takes place via shrinking of a harmonic map, the ``first'' harmonic
map provides a natural threshold for global existence.
 This is expressed in the \emph{threshold
conjecture} 
\cite{SteTat10a, SteTat10b, KriSch12, LawOh16, ChiKriLuh18}, see also
the series of unpublished preprints \cite{Tao08} and the earlier \cite{Str03, CotKenMer08} for the corotational
setting. The most recent work on energy-critical wave maps focuses on
the precise asymptotic behavior and the \emph{soliton resolution
  conjecture}
\cite{CotKenLawSch15a, CotKenLawSch15b, Cot15, Gri17, JiaKen17,
  JenLaw18, DuyJiaKenMer18}.

The present paper is concerned with the energy-supercritical case
$d\geq 3$
where the conservation of energy is useless for
the study of the Cauchy problem and the
understanding
of large-data evolutions is still comparatively poor. 
The existence of self-similar blowup for $d\geq 3$ is established in
\cite{Sha88, TurSpe90, CazShaTah98, Biz00, BizBie15}. Motivated by
numerical evidence \cite{BizChmTab00}, the stability of self-similar
blowup under perturbations that are small in Sobolev spaces of
sufficiently high order is proved in \cite{DonSchAic12, Don11,
  CosDonXia16, CosDonGlo17, ChaDonGlo17, DonGlo19, BieDonSch21}.
Furthermore, in dimensions $d\geq 7$, there exists another blowup mechanism that is
more akin to the energy-critical case \cite{GhoIbrNgu18}, see also
\cite{DodLaw15, ChiKri17} for other large-data results.

The stability of
blowup in
the critical Sobolev space, which is the content of the present
paper,
constitutes a vast conceptional improvement over the
existing results because the critical Sobolev space is the only
\emph{distinguished} space for the study of an energy-supercritical
problem. So far, blowup stability in a critical space is only known
for the much simpler energy-critical nonlinear wave
equation \cite{Don17, DonRao20}, see also \cite{Bri20} for an
extension to randomized perturbations.

\subsection{Outline of the proof}
We give a nontechnical outline of the key steps of the proof.

\subsubsection{Preliminary reductions}
In Theorem \ref{maintheorem}
the initial data are
  \emph{corotational} and this symmetry is preserved by the wave maps
  flow. This means that the solution $U$ is of the form
  \begin{equation}
\label{eq:cor}
    U(t,x)=
    \begin{pmatrix}
      \sin(|x|u(t,x))\frac{x}{|x|} \\
      \cos(|x|u(t,x))
    \end{pmatrix}
  \end{equation}
  for a smooth function $u: [0,T)\times \R^4\to \R$ such that $u(t,.)$ is
  radial for each $t\in [0,T)$.
Indeed, if one plugs the ansatz Eq.~\eqref{eq:cor} into the wave maps
equation \eqref{wavemaps}, one ends up with the single semilinear wave
equation
\begin{align}\label{startingeq2}
\left(\partial_t^2-\partial_r^2-\frac{5}{r}\partial_r\right)\widetilde
  u(t,r) +\frac{3 \sin(2r\widetilde u(t,r))- 6 r \widetilde u (t,r)}{2 r^3}=0
\end{align}
for $r>0$, where $u(t,x)=\widetilde u(t,|x|)$. Interestingly, this is a radial wave equation in
$6$ rather than $4$ spatial dimensions. Consequently, it is natural to
regard $u$ as a smooth function on $[0,T)\times \R^6$ and then,
Eq.~\eqref{startingeq2} can be written as
\begin{equation}
  \label{eq:startingeq3}
  (\partial_t^2-\Delta_x)u(t,x)+\frac{3\sin(2|x|u(t,x))-6|x|u(t,x)}{2|x|^3}=0
\end{equation}
for $x\in \R^6\setminus \{0\}$.
Note that the apparent singularity at $x=0$ is in fact removable and
the nonlinearity is generated by a smooth function.

\subsubsection{Control of the evolution away from the origin}

 The bulk of the present work is concerned with the analysis of
 Eq.~\eqref{startingeq2}, which is essentially a cubic wave equation. 
It will be vital for us to exploit the finite speed of propagation of
Eq.~\eqref{startingeq2}, so that we can restrict ourselves to
lightcones.
To this end, we define the domain $\Gamma_{t_0}^{t_1}(r_0) $, for $0\leq t_0<t_1$ and $r_0\in \R_+:=(0,\infty)$, by 
\[
\Gamma_{t_0}^{t_1}(r_0):=\{(t,r):t\in[t_0,t_1), r\in[r_0-(t_1-t),r_0+(t_1-t)]\}.
\]
Our first step will be the proof of an existence result of
Eq. \eqref{eq:startingeq3} outside of the forward lightcone of the
origin.
First, we show that the reduced wave maps equation \eqref{startingeq2}
is well-posed on any domain $\Gamma_{t_0}^{t_1}(r_0)$ that does not
touch the origin.
\begin{thm}\label{away from the center}
  Consider $\Gamma_{t_0}^{t_1}(r_0)$ with $r_0-(t_1-t_0)>0$ and let $\widetilde
  f,\widetilde g:
  [r_0-(t_1-t_0), r_0+(t_1-t_0)]\to \R$ be smooth. Then there exists a unique
  smooth solution $\widetilde u: \Gamma_{t_0}^{t_1}(r_0)\to \R$ of
  Eq.~\eqref{startingeq2} that satisfies $\widetilde
  u(t_0, .)=\widetilde f$ and
  $\partial_0 \widetilde u(t_0, .)=\widetilde g$.
\end{thm}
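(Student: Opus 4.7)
The plan is to recognize Eq.~\eqref{startingeq2} as a semilinear wave equation in $1+1$ dimensions with \emph{smooth and bounded} coefficients on $\Gamma_{t_0}^{t_1}(r_0)$, and to apply the classical Picard iteration scheme on a truncated lightcone. The hypothesis $r_0-(t_1-t_0)>0$ guarantees that $r$ stays in a compact interval $I\subset(0,\infty)$, so $5/r$ is smooth on $I$. Moreover, the nonlinear term $\frac{3\sin(2r u)-6r u}{2r^3}$ is real-analytic in $(r,u)\in I\times\R$ (the apparent singularity as $r\to 0$ is removable since $\sin(2r u)-2r u=\O((r u)^3)$), so it and all of its derivatives are uniformly Lipschitz on bounded sets.

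First I would pass to null coordinates $\xi=r-t$, $\eta=r+t$, under which $\Gamma_{t_0}^{t_1}(r_0)$ becomes a closed triangle in the $(\xi,\eta)$-plane and the principal part becomes $-4\partial_\xi\partial_\eta$. Eq.~\eqref{startingeq2} then takes the form
\[
\partial_\xi\partial_\eta v=\Phi(\xi,\eta,v,\partial_\xi v,\partial_\eta v)
\]
with $\Phi$ smooth, where $v(\xi,\eta)=\widetilde u(t,r)$. Integrating once in $\xi$ from the initial hypersurface $t=t_0$ (where $\partial_\eta v$ is determined by $\widetilde f,\widetilde g$) and then once in $\eta$ converts the PDE into a Volterra-type integral equation equivalent to the Cauchy problem. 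On a sufficiently thin slab $\{t\in[t_0,t_0+\varepsilon]\}$ the resulting operator is a contraction on a closed ball of $C^1$, giving a unique $C^1$ solution. Smoothness is then obtained by bootstrap: if $v\in C^k$, then $\partial_\xi\partial_\eta v=\Phi(\cdots)\in C^k$, and integrating twice gives $v\in C^{k+1}$; iterating yields $v\in C^\infty$.

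To extend the local solution to the full cone, I would march slab by slab in $t$. The slab width $\varepsilon$ produced by the contraction depends only on the $C^1$-norm of the solution at the base of each slab, so everything hinges on an a priori bound for $\|v\|_{C^1}$ up to $t=t_1$. This follows from a Gronwall loop on the quantity $\|v\|_{L^\infty}+\|\partial_\xi v\|_{L^\infty}+\|\partial_\eta v\|_{L^\infty}$, using that $\Phi$ is globally Lipschitz in its last three arguments whenever $|v|$ stays in a bounded set — and $|v|$ itself is controlled by integrating $\partial_\xi v$ and $\partial_\eta v$ along characteristics. Uniqueness on the full cone is immediate from the contraction estimate combined with the Volterra structure. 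The only mild technical obstacle is the careful closing of this Gronwall loop; once this is done, the rest is routine $1+1$-dimensional semilinear wave theory on a compact domain away from the origin.
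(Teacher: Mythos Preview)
Your approach is correct and follows the same overall strategy as the paper: recognize that on $\Gamma_{t_0}^{t_1}(r_0)$ with $r_0-(t_1-t_0)>0$ the equation is a $1+1$-dimensional semilinear wave equation with smooth coefficients, establish local existence by a contraction argument, and then propagate to the full cone via a Gronwall bound that rules out finite-time blowup.

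The main difference is in the function-space implementation. The paper works in the Sobolev scale: it defines $H^k$ solutions via the d'Alembert--Duhamel formula (which is exactly your null-coordinate integral equation in disguise), runs the contraction in $\sup_t H^k(I_t)$, obtains a blowup criterion there, and upgrades to $C^\infty$ at the end by Sobolev embedding after showing the solution lies in $H^k$ for every $k$. Your proposal runs the same machine directly in $C^1$ (then $C^k$), which is the more classical textbook route and arguably lighter here since the domain is compact and one-dimensional in space.

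One point deserves sharpening. You flag the closing of the Gronwall loop as ``the only mild technical obstacle'' and phrase it as needing $\Phi$ to be Lipschitz in the derivative slots \emph{once $|v|$ is bounded}, with $|v|$ in turn controlled by the derivatives---a potential circularity. In fact no bootstrap is needed: since $|\sin a-\sin b|\le|a-b|$, the map $u\mapsto \frac{3\sin(2ru)-6ru}{2r^3}$ is \emph{globally} Lipschitz in $u$ uniformly for $r$ in a compact subset of $(0,\infty)$, and the first-order term $\frac{5}{r}\partial_r u$ is linear. Hence $\Phi$ is globally Lipschitz in $(v,\partial_\xi v,\partial_\eta v)$ with no restriction on $|v|$, and the Gronwall argument closes in one step. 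The paper singles out exactly this global Lipschitz property as ``the crucial property'' (its Remark after Lemma~\ref{lem:Ntw}); once you state it, your continuation argument becomes immediate rather than delicate.
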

As an immediate consequence, we obtain the desired existence result.
\begin{cor}
  \label{cor:awayorigin}
Let $f,g\in C^\infty(\R^6)$ be radial. Then there exists a unique smooth
solution
\[ u: \{(t,x)\in [0,\infty)\times \R^6: |x|>t\}\to\R \]
of Eq.~\eqref{eq:startingeq3} that satisfies $u(0,.)=f$ and
$\partial_0 u(0,.)=g$.
\end{cor}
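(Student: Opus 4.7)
The strategy is to reduce via radiality to the one-dimensional equation \eqref{startingeq2}, cover the exterior region by truncated lightcones that avoid the origin, apply Theorem \ref{away from the center} on each of them, and glue the local pieces using uniqueness.

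Since $f$ and $g$ are smooth and radial on $\R^6$, they descend to smooth functions $\widetilde f, \widetilde g \in C^\infty([0,\infty))$ via $\widetilde f(|x|)=f(x)$ and $\widetilde g(|x|)=g(x)$. The corollary will follow once I construct a smooth $\widetilde u$ on
\[
\Lambda := \{(t,r)\in [0,\infty)\times (0,\infty) : r > t\}
\]
solving \eqref{startingeq2} with Cauchy data $(\widetilde f, \widetilde g)$, because then $u(t,x):=\widetilde u(t,|x|)$ is automatically smooth on $\{(t,x)\in[0,\infty)\times \R^6 : |x|>t\}\subset [0,\infty)\times(\R^6\setminus\{0\})$ and solves \eqref{eq:startingeq3}, given that \eqref{startingeq2} is the radial reduction of \eqref{eq:startingeq3} in six spatial dimensions.

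To build $\widetilde u$, for each $P_*=(t_*,r_*)\in\Lambda$ I set $t_1:=\tfrac{t_*+r_*}{2}$, so that $r_*-(t_1-0)=\tfrac{r_*-t_*}{2}>0$ and $P_*\in \Gamma_0^{t_1}(r_*)$. Theorem \ref{away from the center} then supplies a unique smooth solution $\widetilde u_{P_*}$ of \eqref{startingeq2} on $\Gamma_0^{t_1}(r_*)$ matching $(\widetilde f,\widetilde g)$ on the base at $t=0$, and I define $\widetilde u(P_*):=\widetilde u_{P_*}(P_*)$. Well-definedness is established by a standard propagation-of-uniqueness argument: if two such candidate cones overlap, then any point $Q=(s,\rho)$ in the overlap with $s>0$ is contained in a slightly enlarged backward cone $\Gamma_0^{s+\varepsilon}(\rho)$ which, by the membership inequalities defining a truncated cone together with $\rho>s$, fits inside both cones for all sufficiently small $\varepsilon>0$; applying the uniqueness assertion of Theorem \ref{away from the center} on this backward cone forces the two candidate solutions to coincide in a neighborhood of $Q$. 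Consequently $\widetilde u$ is well-defined, locally agrees with one of the smooth $\widetilde u_{P_*}$, and is therefore smooth on $\Lambda$. Uniqueness of $u$ on $\{|x|>t\}$ reduces to uniqueness of $\widetilde u$ on each $\Gamma_0^{t_1}(r_*)$, which is again supplied by Theorem \ref{away from the center}.

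The only nontrivial step is the gluing: Theorem \ref{away from the center} provides uniqueness only within a single fixed cone, so one has to propagate it to overlaps via the embedded backward lightcones indicated above. Everything else is a direct consequence of the $6$-dimensional radial reduction and the statement of Theorem \ref{away from the center}.
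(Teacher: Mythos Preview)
Your argument is correct and follows the same strategy as the paper: reduce to the radial equation, cover the exterior region by backward cones $\Gamma_0^{t_1}(r_0)$ with $r_0-t_1>0$, apply Theorem~\ref{away from the center} on each, and glue via uniqueness. The only difference is in the gluing step: the paper observes that the intersection of two such cones is again a cone of the same type (or empty) and invokes Lemma~\ref{continuous dependence} on the intersection, which sidesteps the minor imprecision in your phrasing---your enlarged backward cone $\Gamma_0^{s+\varepsilon}(\rho)$ need not sit inside both candidate cones when $Q$ lies on a lateral boundary, though this is harmless since for your actual purpose (local agreement near $P_*$) the relevant $Q$ are always interior to both cones.
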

The key ingredient in proving this result will be the fact that away
from the center we are effectively dealing with a
one-dimensional wave equation with a bounded nonlinearity.

\subsubsection{Control of the evolution near the origin}

The most difficult part will be the control of the evolution in the
backward lightcone of the singularity. For the following, we set
\[ u_*^T(t,x):=\frac{1}{|x|}f_*\left (\frac{x}{T-t}\right
  )=\frac{2}{|x|}\arctan\left (\frac{|x|}{\sqrt 2\,(T-t)}\right
  ). \]
Note that $u_*^T$ solves the reduced wave maps equation
\eqref{eq:startingeq3}.

\begin{thm}\label{stability}
There exist $\delta_0, M>0$ such that the following holds.
Let $f,g\in C^\infty(\overline{\B^6_{1+\delta}})$ be radial and $\delta\in
(0,\delta_0)$ such that 
\[
\|(f,g)-(u^1_*(0,.),\partial_0 u_*^1(0,.))\|_{H^2\times H^1(\B^6_{1+\delta})} \leq\frac{\delta}{M}.
\]
Then there exists a blowup time $T\in [1-\delta,1+\delta]$ and a
unique smooth solution
\[ u: \left \{(t,x)\in [0,T)\times \R^6: |x|\leq T-t\right \} \to\R \] of
Eq.~\eqref{eq:startingeq3} satisfying $u(0,.)=f$ and $\partial_0
u(0,.)=g$ on $\overline{\B^6_T}$. Furthermore, we have the Strichartz estimates
\begin{equation}
\int_0^T \left\|u(t,.)-u^T_*(t,.)\right\|^2_{L^{12}(\B^6_{T-t})}dt\leq M\delta^2
\end{equation}
and
\begin{equation}
\int_0^T \left\|u(t,.)-u^T_*(t,.)\right\|^2_{\dot{W}^{1,4}(\B^6_{T-t})}dt \leq M\delta^2.
\end{equation}
\end{thm}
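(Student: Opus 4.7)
The plan is to pass to similarity coordinates, recast the perturbation equation as a semigroup problem, remove a one-dimensional unstable mode by modulating the blowup time, establish Strichartz estimates at the critical regularity for the resulting stable semigroup, and close a contraction-mapping argument.

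\textbf{Semigroup formulation in similarity coordinates.} I introduce the variables $\tau=-\log(T-t)$ and $\xi=x/(T-t)$, which map the backward lightcone of $(T,0)$ onto $[0,\infty)\times \B^6_1$ and convert $u_*^T$ into a $\tau$-independent profile $\psi(\xi)=\frac{2}{|\xi|}\arctan(|\xi|/\sqrt 2)$. Writing the (appropriately rescaled) perturbation as a first-order system $\Phi(\tau)=(\phi_1,\phi_2)$ in $\X := H^2(\B^6_1)\times H^1(\B^6_1)$, I obtain
\[ \partial_\tau \Phi = \Lf\Phi + \N(\Phi), \qquad \Lf := \Lf_0 + \Lf', \]
where $\Lf_0$ generates the free wave semigroup in similarity coordinates on $\B^6_1$, $\Lf'$ is the bounded multiplier obtained by linearizing the reduced nonlinearity at $\psi$, and $\N(\Phi)$ vanishes to order two at $\Phi=0$. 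The hypothesis gives $\|\Phi(0)\|_\X \lesssim \delta/M$.

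\textbf{Spectral analysis and modulation.} A standard computation (with an adapted inner product on $\X$) shows via Lumer-Phillips that $\Lf_0$ generates a strongly continuous semigroup, and $\Lf'$ is a relatively compact perturbation, so the spectrum of $\Lf$ in $\{\Re\lambda\geq 0\}$ consists of finitely many eigenvalues of finite algebraic multiplicity. Differentiating the blowup family in $T$ at $T=1$ produces an explicit eigenfunction $\gf$ of $\Lf$ at $\lambda=1$, corresponding to the freedom in choosing the blowup time. The central spectral task is the \emph{mode stability} of $\psi$, i.e.\ showing that $\lambda=1$ is the only eigenvalue of $\Lf$ in $\{\Re\lambda\geq 0\}$ and that its generalized eigenspace is $\Span\{\gf\}$; this reduces to an ODE analysis of the radial spectral equation, which in this case is closely related to a hypergeometric/Heun-type equation and is amenable to connection-coefficient arguments. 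Granted mode stability, the Riesz projection $\Pro$ onto $\Span\{\gf\}$ is one-dimensional and
\[ \|e^{\tau\Lf}(I-\Pro)\|_{\X\to\X} \lesssim e^{-\omega\tau} \]
for some $\omega>0$. The unstable direction is absorbed by treating $T \in [1-\delta,1+\delta]$ as a modulation parameter, adjusted so that $\Pro\Phi(\tau)\to 0$ along the evolution.

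\textbf{Strichartz estimates --- the main obstacle.} Since the data are only at critical regularity, pure semigroup/energy decay cannot close the nonlinear fixed point against a cubic nonlinearity in six dimensions. The key analytic ingredient is a \emph{Strichartz estimate} for $e^{\tau\Lf}(I-\Pro)$ in the norms $L^2_\tau L^{12}_\xi$ and $L^2_\tau \dot W^{1,4}_\xi$, which are precisely the wave-admissible pairs on $\R^{1+6}$ at the scaling of $\dot H^2\times\dot H^1$ (the critical regularity) and which, after undoing the change of variables, reproduce the two estimates in the theorem. For the free part $e^{\tau\Lf_0}$ these follow by transporting back to physical coordinates, using finite speed of propagation to reduce to the free wave equation on $\R^{1+6}$, and applying the standard Keel-Tao Strichartz estimates; the bounded multiplier $\Lf'$ and the spectral projection $\Pro$ are then absorbed by Duhamel together with a Christ-Kiselev-type argument. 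I expect this to be the most delicate step of the proof, since it is exactly where critical dispersive information (rather than energy bounds) is indispensable and where the conic geometry interacts nontrivially with the rescaling.

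\textbf{Nonlinear contraction.} With the Strichartz estimates at hand, I introduce the Banach space of trajectories $Y$ with norm
\[ \|\Phi\|_Y := \sup_{\tau\geq 0}\|\Phi(\tau)\|_\X + \|\phi_1\|_{L^2_\tau L^{12}_\xi} + \|\phi_1\|_{L^2_\tau \dot W^{1,4}_\xi}, \]
project the Duhamel formula onto $(I-\Pro)\X$, and choose $T$ so that the $\Pro$-component of the initial data is canceled by the modulation. The nonlinearity $\N(\Phi)$, obtained by subtracting the linear part from $[3\sin(2r(\psi+\phi_1))-3\sin(2r\psi)]/(2r^3)$, is smooth and vanishes to order two in $\phi_1$, so Hölder's inequality in the Strichartz exponents (using $3/12 < 1/2$) gives a Lipschitz bound
\[ \|\N(\Phi)-\N(\Phi')\|_{L^1_\tau\X} \lesssim (\|\Phi\|_Y+\|\Phi'\|_Y)\|\Phi-\Phi'\|_Y. \]
A standard contraction argument then produces the unique solution $\Phi\in Y$ for $\delta_0$ small, and transporting everything back to the original $(t,x)$-coordinates yields the existence statement and the Strichartz bounds claimed in the theorem.
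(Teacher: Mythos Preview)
Your overall architecture matches the paper's, but there is a genuine gap in the step you flag as ``the main obstacle,'' and it propagates into the nonlinear argument.

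First, the decay claim $\|e^{\tau\Lf}(I-\Pro)\|_{\X\to\X}\lesssim e^{-\omega\tau}$ does not follow from mode stability alone. Mode stability only rules out \emph{eigenvalues} of $\Lf$ in $\{\Re\lambda\ge 0\}\setminus\{1\}$; it says nothing about a spectral gap below zero, and indeed the paper only obtains $\|\Sf(\tau)(\I-\Pro)\|\le C_\varepsilon e^{\varepsilon\tau}$ for every $\varepsilon>0$ (via Gearhart--Pr\"uss). This already breaks your contraction space $Y$, which includes $\sup_\tau\|\Phi(\tau)\|_\X$.

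Second, and more seriously, your proposed route to Strichartz for $e^{\tau\Lf}(I-\Pro)$ --- free Strichartz plus ``absorb $\Lf'$ by Duhamel and Christ--Kiselev'' --- cannot close. The potential $\Lf'$ is bounded but not small, so the Duhamel term requires $\|\Lf'\Sf(\sigma)(\I-\Pro)\ff\|_{L^1_\sigma\mathcal H}$, which is exactly the exponential decay you do not have. The paper explicitly anticipates this objection and takes a completely different route: it constructs the resolvent $(\lambda-\Lf)^{-1}$ by an intricate ODE analysis (Bessel/Liouville--Green fundamental systems near $\rho=0$, a separate construction near $\rho=1$, connection coefficients via Hankel asymptotics, all with uniform symbol control in $\omega=\Im\lambda$), writes $\Sf(\tau)(\I-\Pro)-\Sf_0(\tau)$ via Laplace inversion as an explicit oscillatory integral in $\omega$ on the imaginary axis, and then proves the $L^2_\tau L^{12}$ and $L^2_\tau\dot W^{1,4}$ bounds directly by decay estimates for these oscillatory integrals. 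This resolvent construction and the subsequent oscillatory-integral analysis occupy the bulk of the paper and are the essential new ingredient; your outline skips it entirely. The nonlinear step (Lyapunov--Perron correction, Brouwer to pick $T$, then persistence of regularity to get a smooth solution) is then carried out in a Strichartz space \emph{not} containing the uniform energy norm, which is consistent with the lack of semigroup decay.
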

Theorem \ref{stability} is in fact the main result of our paper and so we highlight some of the key steps in proving it.
\begin{itemize}

\item Our starting
point is a well-known reformulation of the problem in similarity
coordinates $\tau=-\log(T-t)$ and $\xi=\frac{x}{T-t}$.
In these new coordinates the blowup solution $u_*^T$ is
$\tau$-independent
and Eq.~\eqref{eq:startingeq3} takes the
autonomous first-order form
\[ \Phi(\tau)=\mathbf L\Phi(\tau)+\mathbf N(\Phi(\tau)) \]
for perturbations $\Phi$ of the blowup and where $\mathbf L$ is a spatial
differential operator that arises from linearising the equation at the
$\tau$-independent blowup solution. All the nonlinear terms are
summarized in $\mathbf N(\Phi(\tau))$. We show that $\mathbf L$ generates a
semigroup $\mathbf S$ on $\mathcal{H}:= H^2\times H^1(\B^6_1)$ and
moreover, by utilizing \cite{CosDonGlo17}, we prove that $\Lf$ has precisely one
unstable eigenvalue $\lambda = 1$. This
eigenvalue does not correspond to an ``actual'' instability of the
blowup solution $u^T$ but is a mere consequence of the time
translation symmetry of the wave maps equation.
We further establish that the associated spectral projection $\Pf$ is of rank $1$ and by standard semigroup theory we obtain the bound
\begin{align*}
\|\Sf(\tau) (\I-\Pf)\|_{\mathcal{H}}\leq C_\varepsilon e^{\varepsilon\tau}
\end{align*}
for any $\varepsilon>0$.
 \item The crucial observation, then, is the fact that
$\mathbf S(\tau)(\I-\Pf)$ satisfies Strichartz estimates.
To prove these estimates we first conduct an asymptotic construction of the resolvent of $\Lf$.
The spectral equation $(\lambda-\Lf)\uf=\ff$ 
with $\uf=(u_1,u_2)$ and $\ff=(f_1,f_2)$ turns out to be equivalent to the second order ODE
\begin{equation}\label{eq:outline}
  \begin{split}
    (\rho^2-1)&u_1''(\rho)+\left(2(\lambda+2)\rho-\frac{5}{\rho}\right)u_1'(\rho)+(\lambda+2)(\lambda+1)u_1
    \\
    &-\frac{48}{(\rho^2+2)^2}u_1(\rho)=F_\lambda(\rho)
    \end{split}
\end{equation}
with  $F_\lambda(\rho)=f_2(\rho)+(\lambda+2)f_1(\rho)+\rho f_1'(\rho)$ and $\rho \in (0,1)$.
To study this equation, it is advantageous to get rid of the first
order derivative by using an appropriate transformation of the
independent variable $u$. This transforms Eq.~\eqref{eq:outline} with
$F_\lambda=0$ into
\begin{equation}\label{eq:outline2}
v''(\rho)+\frac{\rho^2(10+12\lambda-4\lambda^2)-15}{4\rho^2(1-\rho^2)^2}v(\rho)=\frac{48}{(\rho^2+2)^2(1-\rho^2)}v(\rho).
\end{equation}
We then construct fundamental systems near either pole separately.
One of the main tools to do so is the diffeomorphism 
\[
\varphi(\rho):=\frac{1}{2}\log\left(\frac{1+\rho}{1-\rho}\right)
\]
which, by means of a Liouville-Green transform, turns the equation
into Bessel form.
Near $0$ we therefore obtain a fundamental system of
Eq.~\eqref{eq:outline} which essentially consists of perturbed Bessel
functions and the error is controlled by Volterra iterations. 
Unfortunately, near the endpoint $1$, we cannot use Hankel functions
straight away. This is because the standard Hankel expansion is not
good enough for our purpose and we need better control of the
perturbative parts. Hence, near 1, we first obtain a fundamental
system for
\begin{equation*}
w''(\rho)+\frac{\rho^2(10+12\lambda-4\lambda^2)-15}{4\rho^2(1-\rho^2)^2}w(\rho)=0
\end{equation*}
without changing variables and based on this, we solve
Eq.~\eqref{eq:outline2} perturbatively. Both steps are again accomplished by Volterra iterations.
Finally, to glue the solutions together we make use of the global
theory of the Bessel equation to obtain sufficient control on the connection coefficients.
As a result of this intricate ODE analysis, we obtain a solution to
Eq.~\eqref{eq:outline} and hence a suitable representation of the
resolvent $(\lambda-\Lf)^{-1}$.

\item
Next, we use the Laplace
representation
\[ [\Sf(\tau)(\I-\Pf)\ff]=\frac{1}{2\pi
    i}\lim_{N\to\infty}\int_{\epsilon-iN}^{\epsilon+iN}e^{\lambda\tau}(\lambda-\Lf)^{-1}(\I-\Pf)\ff\,d\lambda
\]
and our resolvent construction to push the contour of integration onto the imaginary axis and
prove Strichartz estimates by a delicate analysis of the occurring oscillatory integrals.
To be more precise, we prove these
estimates for the difference of the linearised evolution to the free
evolution. This is technically much simpler and equivalent because for
the free evolution the desired Strichartz estimates follow immediately
by a simple scaling argument. Readers familiar with Strichartz theory
for wave equations with potentials might wonder at this point because the comparison to the free evolution
usually only yields short-time Strichartz estimates. However, since our
problem is in fact short-time (we are working in a backward
lightcone), it is not surprising that this
strategy works. Only in the rescaled variable $\tau$ our
Strichartz estimates are global. 

\item
The full nonlinear problem is then treated
perturbatively and solved in the corresponding Strichartz space by a
fixed point argument.
The instability caused by the time translation symmetry is treated by
a Lyapunov-Perron-type argument. That is to say, we first modify the equation
to suppress the instability and in a second step we show that the
modification vanishes provided one chooses the blowup time correctly.
Finally, by more or less following the standard
procedure, we upgrade the regularity to
$C^\infty$. 
\end{itemize}

\section{Local wellposedness away from the center}
In this section we establish Theorem \ref{away from the center} and
Corollary \ref{cor:awayorigin}.
The first step is to look at the problem locally in the domains
$\Gamma_{t_0}^{t_1}(r_0)$. As usual, we start with a weaker notion of
solutions and upgrade the regularity afterwards. Thus, we use
d'Alembert's formula and consider the Duhamel form of Eq.~\eqref{startingeq2}.
\begin{defi}
  Consider $\Gamma_{t_0}^{t_1}(r_0)$ with $r_0-(t_1-t_0)>0$ and for $t\in [t_0,t_1)$,
  set $I_t:=(r_0-(t_1-t),r_0+(t_1-t))\subset \R_+$.
  Furthermore, for $\widetilde t\in\R$, we set
  \[ \Gamma_{t_0}^{t_1}(r_0, \widetilde t):=\Gamma_{t_0}^{t_1}(r_0)\cap \{(t,r)\in \R^2: t<
\widetilde t\}. \] 
We say that $u: \Gamma_{t_0}^{t_1}(r_0, \widetilde t)\to \R$ is an $H^k$ solution of
Eq.~\eqref{startingeq2} in $\Gamma_{t_0}^{t_1}(r_0, \widetilde t)$ if the following
three properties hold:
\begin{itemize}
\item $u$ is continuous,
\\
\item $u(t,.)\in H^k(I_t)$ for all $t\in [t_0,t_1)\cap[t_0,\widetilde t)$,
\\
\item $u$ satisfies the integral equation
\begin{align}\label{solutionsin1d}
u(t,r)=\frac{1}{2}\left(u(t_0,r+t-t_0)+u(t_0,r-t+t_0)\right)&+\frac{1}{2}\int_{r-t+t_0}^{r+t-t_0}\partial_0 u(t_0,y) dy\nonumber
\\
&+\frac{1}{2}\int_{t_0}^t\left(\int_{r+s-t}^{r+t-s} \widetilde{N}(u(s,.))(y)
dy\right) ds
\end{align}
for all $(t,r)\in \Gamma_{t_0}^{t_1}(r_0, \widetilde t)$, where
 \[
\widetilde{N}(u(t,.))(r):=\frac{5}{r}\partial_r u(t,r) -\frac{3 \sin(2ru(t,r))- 6 r u (t,r)}{2 r^3}.
\]
\end{itemize}

\end{defi}
Note that if $u$ is $C^2$ then $u$ is a classical solution.
In the following, we sometimes
abuse notation and do not distinguish between a radial function $f$
and its representative $\widetilde f$, given by
$f(x)=\widetilde f(|x|)$. With this identification we have the useful equivalences
\begin{align*}
\|f\|_{L^2(\B_R^d)}^2\simeq& \int_{0}^R |f(r)|^2r^{d-1} dr
\\
\|f\|_{\dot{H}^1(\B_R^d)}^2\simeq& \int_{0}^R |f'(r)|^2r^{d-1} dr
\\
\|f\|_{\dot{H}^2(\B_R^d)}^2\simeq& \int_{0}^R \left|f''(r)+\frac{d-1}{r}f'(r) \right|^2r^{d-1}dr.
\end{align*}

The decisive properties of $\widetilde N$ are given in the following
lemma.

\begin{lem}
  \label{lem:Ntw}
  Let $k\in \mathbb N_0$ and $0<a<b$. Then we have the bound
  \[ \left \|\widetilde N(f)-\widetilde N(g)\right \|_{H^k(I)}\lesssim
    \|f-g\|_{\dot
      H^{k+1}(I)}+\left (1+\|f\|_{H^k(I)}^k+\|g\|_{H^k(I)}^k\right )\|f-g\|_{H^k(I)} \]
  for all $f,g\in H^{k+1}(I)$ and all intervals $I\subset [a,b]$.
\end{lem}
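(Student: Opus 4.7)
I would split the nonlinearity as
\[
\widetilde N(u)(r) \;=\; \frac{5}{r}\,u'(r) \;+\; G(r,u(r)), \qquad G(r,u) := \frac{3u}{r^2} - \frac{3\sin(2ru)}{2r^3}.
\]
Because $\sin(2ru) - 2ru = O((ru)^3)$, the apparent singularity at $r=0$ is removable and $G$ extends to a smooth function on $\R\times\R$. A direct induction shows that every mixed derivative $\partial_r^i\partial_u^j G$ is \emph{uniformly bounded} on $[a,b]\times\R$: each application of $\partial_u$ produces a factor of $2r\in[2a,2b]$ multiplying a bounded trigonometric factor, and the remaining explicit $r$-dependence is smooth and bounded on $[a,b]$. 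This uniform boundedness (in $u$) of all derivatives of $G$ is the key structural feature that makes the whole estimate straightforward.

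\textbf{Linear part and the case $k=0$.} Since $r\mapsto 5/r$ is smooth with all derivatives bounded on $[a,b]$, the Leibniz rule gives
\[
\Bigl\|\tfrac{5}{r}(f-g)'\Bigr\|_{H^k(I)} \;\lesssim\; \|(f-g)'\|_{H^k(I)} \;\le\; \|f-g\|_{H^k(I)} + \|f-g\|_{\dot H^{k+1}(I)},
\]
where the last step simply expands $\|(f-g)'\|_{H^k}^2 = \sum_{j=1}^{k+1}\|(f-g)^{(j)}\|_{L^2}^2$. For the nonlinear term at $k=0$, the bound $|\partial_u G(r,u)| = 3|1-\cos(2ru)|/r^2 \le 6/a^2$ on $[a,b]\times\R$ together with the mean value theorem immediately yields
\[
\|G(\cdot,f)-G(\cdot,g)\|_{L^2(I)} \;\lesssim\; \|f-g\|_{L^2(I)},
\]
which matches the claimed bound since the $\|f\|^k,\|g\|^k$ terms carry no cost when $k=0$.

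\textbf{Nonlinear part for $k\ge 1$.} Here I factor
\[
G(\cdot,f) - G(\cdot,g) \;=\; (f-g)\int_0^1 \partial_u G\bigl(\cdot,\,g+t(f-g)\bigr)\,dt
\]
and invoke the Banach algebra property of $H^k(I)$ for $k\ge 1$, which follows in one dimension from $H^1(I)\hookrightarrow L^\infty(I)$. This reduces matters to the Moser-type estimate
\[
\|\partial_u G(\cdot,h)\|_{H^k(I)} \;\lesssim\; 1 + \|h\|_{H^k(I)}^k, \qquad h := g+t(f-g),
\]
uniformly in $t\in[0,1]$. By Fa\`a di Bruno, each derivative of order $j\le k$ of $\partial_u G(\cdot,h)$ is a finite sum of products of bounded quantities $\partial_r^{\ell}\partial_u^{m+1}G(\cdot,h)$ and derivatives $h^{(i_1)},\dots,h^{(i_p)}$ with $i_1+\cdots+i_p \le j\le k$ and $p\le k$. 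Each such product is bounded in $L^2(I)$ by the algebra property together with $H^1(I)\hookrightarrow L^\infty(I)$, yielding the factor $\|h\|_{H^k(I)}^{p}\le\|h\|_{H^k(I)}^{k}$. Using $\|h\|_{H^k}\le \|f\|_{H^k}+\|g\|_{H^k}$ and combining with the linear estimate completes the proof.

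\textbf{Expected obstacle.} The only nontrivial ingredient is the Moser/Fa\`a di Bruno bookkeeping; nothing delicate is required because the specific trigonometric form of the nonlinearity together with $r$ being bounded away from zero guarantees uniform boundedness of all mixed derivatives of $G$ on $[a,b]\times\R$. Consequently no cancellation or smallness is needed, and the constants depend only on $a,b,k$.
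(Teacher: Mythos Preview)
Your overall strategy matches the paper's brief sketch: isolate the linear term $\tfrac{5}{r}u'$, then handle the composition $G(\cdot,h)$ via the chain rule and the one-dimensional $H^k$ algebra property. However, your central structural claim is false. You assert that every mixed derivative $\partial_r^i\partial_u^j G$ is uniformly bounded on $[a,b]\times\R$, and you invoke this when calling the factors $\partial_r^{\ell}\partial_u^{m+1}G(\cdot,h)$ ``bounded quantities'' in the Fa\`a di Bruno expansion. But already
\[
\partial_r\partial_u G(r,u)\;=\;-\frac{6(1-\cos(2ru))}{r^3}\;+\;\frac{6u\sin(2ru)}{r^2}
\]
grows linearly in $u$. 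Your justification tracks only the $\partial_u$-derivatives and overlooks that $\partial_r$ applied to a trigonometric factor in the argument $2ru$ produces a factor of $u$; more generally $|\partial_r^\ell\partial_u^{m+1}G(r,u)|$ behaves like $1+|u|^\ell$, not like a constant.

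The repair is easy and leaves the exponent unchanged. In the bivariate Fa\`a di Bruno formula for $\tfrac{d^j}{dr^j}\bigl[\partial_u G(r,h(r))\bigr]$ the generic term is
\[
\bigl(\partial_r^{\alpha}\partial_u^{p+1}G\bigr)(r,h(r))\cdot h^{(i_1)}\cdots h^{(i_p)},\qquad \alpha+i_1+\cdots+i_p=j,\quad i_s\ge 1,
\]
so that $\alpha+p\le j\le k$. Since $|\partial_r^{\alpha}\partial_u^{p+1}G(r,h)|\lesssim 1+|h|^{\alpha}$, the total number of $h$-factors in each product is at most $\alpha+p\le k$, and your $L^\infty$/algebra bookkeeping then gives $\|\partial_u G(\cdot,h)\|_{H^k(I)}\lesssim 1+\|h\|_{H^k(I)}^k$ exactly as claimed. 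With this correction the proof goes through and is essentially the same argument the paper sketches.
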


\begin{proof}
  Let $I\subset [a,b]$ and $f,g\in C(I)\cap L^2(I)$. Since
  $\frac{1}{r}\simeq 1$ for all $r\in [a,b]$, we have the pointwise
  bound
  \[ |\widetilde N(f)(r)-\widetilde N(g)(r)|\lesssim
    |f'(r)-g'(r)|+|f(r)-g(r)| \]
  for all $r\in I$.
  Thus, the case $k=0$ follows by a density argument. For $k\geq 1$ we
  consider $f\in C^k(I)\cap H^k(I)$ and differentiate $\widetilde
  N(f)$ $k$ times. By the chain rule, this produces powers of
  derivatives of $f$. Thus,
  the stated estimate is a consequence of the algebra property of
  $H^1(I)$, elementary pointwise algebraic identities, and a density argument.
\end{proof}

\begin{rem}
Note that in the case $k=0$, $\widetilde N$ satisfies a \emph{global}
Lipschitz bound. This is the crucial property.
\end{rem}

Now we turn to the analysis of solutions of Eq.~\eqref{startingeq2}. The first result deals with the dependence on
the initial data and uniqueness.
\begin{lem}\label{continuous dependence}
  Consider $\Gamma_{t_0}^{t_1}(r_0)$ with
  $r_0-(t_1-t_0)>0$ and, as before, let $I_t=(r_0-(t_1-t),
  r_0+(t_1-t))$ and $\widetilde t\in (t_0, t_1)$. Then we
  have the bound
  \begin{align*}
    \|&u(t,.)-\widetilde u(t,.)\|_{H^1(I_t)} \lesssim \|(u(t_0,.),\partial_0
    u(t_0,.))-(\widetilde u(t_0,.),\partial_0\widetilde u(t_0,.))\|_{H^1\times
   L^2(I_{t_0})}
 \end{align*}
for all $t\in [t_0,\widetilde t]$ and all $H^1$ solutions $u$ and $\widetilde u$ of
Eq.~\eqref{startingeq2} in
$\Gamma_{t_0}^{t_1}(r_0, \widetilde t)$.
\end{lem}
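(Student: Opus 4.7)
The plan is to subtract the integral equation (\ref{solutionsin1d}) for $\widetilde u$ from that for $u$, so that the difference $v:=u-\widetilde u$ satisfies the same d'Alembert-type identity on $\Gamma_{t_0}^{t_1}(r_0,\widetilde t)$ with initial data $(v(t_0,\cdot),\partial_0 v(t_0,\cdot))\in H^1\times L^2(I_{t_0})$ and forcing
\[
F(s,r):=\widetilde N(u(s,\cdot))(r)-\widetilde N(\widetilde u(s,\cdot))(r).
\]

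Next I would prove the inhomogeneous energy estimate
\[
\|v(t,\cdot)\|_{H^1(I_t)}\lesssim \|(v(t_0,\cdot),\partial_0 v(t_0,\cdot))\|_{H^1\times L^2(I_{t_0})}+\int_{t_0}^t\|F(s,\cdot)\|_{L^2(I_s)}\,ds
\]
directly from the subtracted integral equation. The $L^2(I_t)$ piece is controlled by Minkowski together with the Fubini observation that for fixed $s\in[t_0,t]$ the set $\{(r,y):r\in I_t,\ r+s-t\le y\le r+t-s\}$ has $r$-sections of length at most $2(t-s)$, and that $[r+s-t,r+t-s]\subset I_s$ whenever $r\in I_t$. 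For the $\dot H^1(I_t)$ piece, differentiating (\ref{solutionsin1d}) in $r$ turns the initial-data terms into translates of $\partial_r v(t_0,\cdot)$ and $\partial_0 v(t_0,\cdot)$ (whose $L^2(I_t)$-norms are bounded by the initial data) and converts the double integral into $\tfrac12\int_{t_0}^t[F(s,\cdot+t-s)-F(s,\cdot+s-t)]\,ds$, an $L^2(I_t)$-valued integrand whose $L^2$ norm is controlled by $\|F(s,\cdot)\|_{L^2(I_s)}$ via translation invariance and, again, Minkowski.

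To close the argument I would apply Lemma \ref{lem:Ntw} with $k=0$. Since every $I_s$ lies inside the fixed compact interval $[r_0-(t_1-t_0),r_0+(t_1-t_0)]\subset\R_+$, the lemma produces the \emph{global} Lipschitz bound
\[
\|F(s,\cdot)\|_{L^2(I_s)}\lesssim \|v(s,\cdot)\|_{H^1(I_s)},
\]
with implicit constant depending only on $r_0$ and $t_1-t_0$. Substituting this into the energy estimate and applying Gronwall on the bounded interval $[t_0,\widetilde t]$ yields the asserted bound.

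The main conceptual obstacle is that the $H^1$-solution hypothesis provides no a priori control on $\|u(s,\cdot)\|_{H^1}$ or $\|\widetilde u(s,\cdot)\|_{H^1}$, so any superlinear dependence on these norms in the nonlinear estimate would be fatal. The essential ingredient is thus the \emph{global} (rather than merely local) Lipschitz property at $k=0$ highlighted in the remark after Lemma \ref{lem:Ntw}, which is available precisely because $r$ remains in a compact subset of $\R_+$ and hence the nonlinear part of $\widetilde N$ has uniformly bounded derivatives.
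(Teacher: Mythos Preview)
Your proposal is correct and follows essentially the same approach as the paper: subtract the integral equations, establish the $H^1(I_t)$ energy estimate for the d'Alembert representation (with the $L^2$ part via Cauchy--Schwarz/Minkowski and the $\dot H^1$ part by differentiating and using translation invariance), invoke the global Lipschitz bound from Lemma~\ref{lem:Ntw} at $k=0$, and close with Gronwall. You also correctly identify the key conceptual point, namely that the argument hinges on the \emph{global} Lipschitz property of $\widetilde N$ at $k=0$ (cf.\ the remark after Lemma~\ref{lem:Ntw}), since no a priori $H^1$ bounds on $u,\widetilde u$ are available.
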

\begin{proof}
To prove this result we first claim that the estimate
\begin{align}\label{firstestimate}
\left\|\int_{r+s-t}^{r+t-s}
f(y)
dy\right\|_{H_r^1(I_t)}
\lesssim \left\|f\right\|_{L^2(I_s)}
\end{align}
holds true for all $f\in H^1(I_{t_0})$ and $t_0\leq s\leq t\leq t_1$.
To see this, it suffices to note that
\begin{align*}
  \left |\int_{r+s-t}^{r+t-s} f(y)dy\right |^2&\leq \left |\int_{r+s-t}^{r+t-s}
  dy\right |\int_{r+s-t}^{r+t-s}|f(y)|^2dy\leq
                                                2(t-s)\int_{r_0-t_1+s}^{r_0+t_1-s}|f(y)|^2dy
  \\
  &\lesssim (t-s)\|f\|_{L^2(I_s)}^2
\end{align*}
for all $r\in I_t$, 
along with
\begin{align*}
\|f(r+t-s)\|_{L^2_r(I_t)}+\left\|f(r-t+s)\right\|_{L^2_r(I_t)}\lesssim& \left\|
f
\right\|_{L^2(r_0-(t_1-t),r_0+t_1-s)}
\\
&+\|f
\|_{L^2(r_0-(t_1-s),r_0+t_1-t)}
\\
\lesssim& \left\|
f
\right\|_{L^2(I_s)}.
\end{align*}
Let now $u$ and $\widetilde{u}$ be two $H^1$ solutions. Then we have
\begin{align*}
\left\|\int_{r+s-t}^{r+t-s} \widetilde{N}(u(s,.))(y)-\widetilde{N}(\widetilde{u}(s,.))(y)
dy\right\|_{H^1_r(I_t)}\lesssim \left\|\widetilde{N}(u(s,.))-\widetilde{N}(\widetilde{u}(s,.))
\right\|_{L^2(I_s)}.
\end{align*}
Now, because of the condition $r_0-(t_1-t_0)>0$, Lemma \ref{lem:Ntw}
applies and we obtain
\begin{align*}
\|\widetilde{N}(u(s,.))-\widetilde{N}(\widetilde{u}(s,.))
\|_{L^2(I_s)}\lesssim \|u(s,.)-\widetilde u(s,.)
\|_{H^1(I_s)}.
\end{align*}
Plugging in the definitions of the respective solutions then yields
\begin{align*}
\|u(t,.)-\widetilde{u}(t,.)\|_{H^1(I_t)}\lesssim& \|(u(t_0,.),\partial_0
    u(t_0,.))-(\widetilde u(t_0,.),\partial_0\widetilde u(t_0,.))\|_{H^1\times
    L^2(I_{t_0})}
    \\
    &+\int_{t_0}^t\|u(s,.)-\widetilde{u}(s,.)\|_{H^1(I_s)} ds
\end{align*}
for all $t\in[t_0, \widetilde t)$ and therefore, the claim follows from Gronwall's inequality.
\end{proof}

\begin{rem}
  As an immediate consequence of Lemma \ref{continuous dependence} we
  obtain the uniqueness of $H^1$ solutions in (truncated) cones
  $\Gamma_{t_0}^{t_1}(r_0, \widetilde t)$ with $r_0-(t_1-t_0)>0$.
\end{rem}

Next, we turn to
  the existence and prove a standard local existence result with a
  blowup criterion.

  \begin{lem}
    \label{lem:locexk}
    Let $k\in \mathbb N_0$, $M>0$, and $0<a<b$. Then there exists
    a $t'>0$ such that the following holds. Let
    $I_t:=(r_0-(t_1-t), r_0+(t_1-t))$ and consider
    $\Gamma_{t_0}^{t_1}(r_0)$ with $I_{t_0}\subset
    [a,b]$. Furthermore, let $(f,g)\in H^{k+1}\times
    H^k(I_{t_0})$ with $\|(f,g)\|_{H^{k+1}\times H^k(I_{t_0})}\leq M$. Then there exists an $H^k$ solution $u$ of
    Eq.~\eqref{startingeq2} in the (possibly truncated) cone
    $\Gamma_{t_0}^{t_1}(r_0, t_0+t')$
    that satisfies $(u(t_0,.), \partial_0 u(t_0,.))=(f,g)$. 
    If $u$ is inextendible beyond time $\widetilde t\in (t_0,t_1)$ as
    an $H^k$ solution of Eq.~\eqref{startingeq2}, we
    have
    \[ \sup_{t\in (t_0,\widetilde t)}\|u(t,.)\|_{H^k(I_t)}=\infty. \]
  \end{lem}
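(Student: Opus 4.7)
My plan is to run a contraction mapping argument on the Duhamel integral equation \eqref{solutionsin1d} and then derive the blowup criterion by a continuation argument. Let $\Phi(u)$ denote the right-hand side of \eqref{solutionsin1d} with the prescribed initial data $(f,g)$, and look for a fixed point in the closed ball
\[
X_{t'} := \left\{ u \in C([t_0, t_0+t']; H^{k+1}(I_t)) : \sup_t \|u(t,\cdot)\|_{H^{k+1}(I_t)} \leq CM \right\}
\]
for a large absolute constant $C$ and a small $t' > 0$ to be chosen depending only on $k$, $M$, $a$, $b$. Three ingredients should drive the estimates: first, the d'Alembert part has $H^{k+1}$ norm $\lesssim \|f\|_{H^{k+1}}+\|g\|_{H^k}$ because integrating $g$ in $r$ gains one derivative; second, an analogue of \eqref{firstestimate} generalized one step in regularity gives $\bigl\|\int_{r+s-t}^{r+t-s} h(y)\,dy\bigr\|_{H^{k+1}_r(I_t)} \lesssim \|h\|_{H^k(I_s)}$; third, Lemma \ref{lem:Ntw} controls $\|\widetilde N(u(s,\cdot))\|_{H^k(I_s)}$ by $\|u(s,\cdot)\|_{\dot H^{k+1}(I_s)}$ plus a polynomial factor in $\|u(s,\cdot)\|_{H^k(I_s)}$. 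Combined, these show that for $t'$ small enough $\Phi$ maps $X_{t'}$ into itself and is a strict contraction (the Lipschitz bound of Lemma \ref{lem:Ntw} entering identically). Banach's theorem then delivers the desired fixed point, which is the sought $H^k$ (in fact $H^{k+1}$) solution.

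For the blowup criterion I argue by contradiction. Suppose $u$ is inextendible beyond $\widetilde t$ as an $H^k$ solution yet $M_0 := \sup_{t \in (t_0, \widetilde t)} \|u(t,\cdot)\|_{H^k(I_t)} < \infty$. Feeding $M_0$ into the factor $1 + \|u\|_{H^k}^k$ of Lemma \ref{lem:Ntw} and using the derivative gain of the Duhamel kernel, the integral equation yields
\[
\|u(t,\cdot)\|_{H^{k+1}(I_t)} \leq C_{M,M_0} + C_{M_0} \int_{t_0}^t \|u(s,\cdot)\|_{H^{k+1}(I_s)}\,ds
\]
for $t \in [t_0, \widetilde t)$, and Grönwall promotes the a priori $H^k$ bound to an a priori $H^{k+1}$ bound. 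I then pick $t^* < \widetilde t$ close enough to $\widetilde t$ that the local existence time $t'$ corresponding to this $H^{k+1}$ bound exceeds $\widetilde t - t^*$, restart the fixed-point construction at $t^*$ with data $(u(t^*,\cdot), \partial_0 u(t^*,\cdot)) \in H^{k+1} \times H^k$, and obtain an $H^k$ solution defined past $\widetilde t$; by the uniqueness consequence of Lemma \ref{continuous dependence} this extends $u$, contradicting inextendibility.

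The main obstacle is the careful matching of regularities: Lemma \ref{lem:Ntw} loses one spatial derivative in $\widetilde N$, which must be exactly compensated by the one-derivative gain of the d'Alembert and Duhamel kernels. This compensation is what allows the contraction to close in $H^{k+1}$ and what makes the Grönwall step work in the blowup criterion, since the a priori $H^k$ control absorbs the genuinely nonlinear factor $1+\|u\|_{H^k}^k$ and leaves only a term linear in $\|u\|_{H^{k+1}}$.
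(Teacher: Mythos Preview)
Your proposal is correct and follows the same overall strategy as the paper: a contraction-mapping argument on the Duhamel formulation for local existence, followed by a continuation argument for the blowup criterion.

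The one noteworthy difference is the choice of fixed-point space. The paper runs the contraction in the ball
\[
Y(t')\subset X(t'),\qquad \|u\|_{X(t')}:=\sup_{t\in[t_0,t_0+t']}\|u(t,\cdot)\|_{H^{k}(I_t)},
\]
i.e.\ it closes at the \emph{solution} regularity $H^k$; the constant $CM$ in the mapping bound then really only uses $\|f\|_{H^k}+\|g\|_{H^{k-1}}$, so that the local existence time depends only on this weaker norm. With that observation in hand, the paper's terse continuation step (``otherwise we could use the above argument to extend'') goes through directly: a uniform $H^k$ bound gives, via the Duhamel formula, a uniform bound on $(u(t^*,\cdot),\partial_0 u(t^*,\cdot))$ in $H^k\times H^{k-1}$, and one can restart. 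You instead close the contraction at the \emph{data} regularity $H^{k+1}$; this makes the self-map estimate cleaner but forces you to upgrade the assumed $H^k$ bound to an $H^{k+1}$ bound before restarting, which is exactly what your Gr\"onwall step accomplishes. Both routes are valid; yours is slightly more elaborate but also more self-contained, since the restart then appeals to the lemma exactly as stated rather than to a property implicit in its proof.
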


  \begin{proof}
    For $t'> 0$ and $u \in C^\infty(\overline{\Gamma_{t_0}^{t_1}(r_0, t_0+t')})$ we set
    \[ \|u\|_{X(t')}:=\sup_{t\in [t_0,t_0+t']}\|u(t,
      .)\|_{H^k(I_t)} \]
    and denote by $X(t')$ the completion of
    $C^\infty(\overline{\Gamma_{t_0}^{t_1}(r_0, t_0+t')})$ with respect to
    $\|.\|_{X(t')}$. Furthermore, for $u\in X(t')$ and $(t, r)\in
    \Gamma_{t_0}^{t_1}(r_0, t_0+t')$, we define the
    mapping
    \begin{align*}K_{f,g}(u)(t,r):=\frac{1}{2}\big(f(r+t-t_0)&+f(r-t+t_0)\big)+\frac{1}{2}\int_{r-t+t_0}^{r+t-t_0}g(y) dy\nonumber
\\
&+\frac{1}{2}\int_{t_0}^t\left(\int_{r+s-t}^{r+t-s} \widetilde{N}(u(s,.))(y)
dy\right) ds.
    \end{align*}
    From Lemma \ref{lem:Ntw} and the proof of Lemma \ref{continuous
      dependence} it follows that $K_{f,g}$ maps $X(t')$ to $X(t')$
    and we have the bound
    \[ \|K_{f,g}(u)\|_{X(t')}\leq
      CM+Ct'\left (1+\|u\|_{X(t')}^k\right )\|u\|_{X(t')} \]
    for some constant $C\geq 1$.
    We set $Y(t'):=\{u\in X(t'): \|u\|_{X(t')}\leq 2CM\}$ and upon
    choosing $t'$ small enough, we infer that $K_{f,g}$ maps $Y(t')$
    to $Y(t')$.
    Furthermore, we have the bound
    \[ \|K_{f,g}(u)-K_{f,g}(v)\|_{X(t')}\lesssim t'\|u-v\|_{X(t')} \]
    for all $u,v\in Y(t')$ and we may choose $t'$ so small that
    $K_{f,g}$ becomes a contraction on $Y(t')$. Thus, the contraction
    mapping principle yields the existence of a fixed point $u\in
    Y(t')$ of $K_{f,g}$. Clearly, by Sobolev embedding, $u:
    \Gamma_{t_0}^{t_1}(r_0, t_0+t')\to\R$ is continuous and hence an
    $H^k$ solution of Eq.~\eqref{startingeq2} in
    $\Gamma_{t_0}^{t_1}(r_0, t_0+t')$.

    If $u$ is inextendible as an $H^k$ solution of
    Eq.~\eqref{startingeq2} beyond time $\widetilde t\in
    (t_0,t_1)$, we must have
    \[ \sup_{t\in (t_0, \widetilde t)}\|u(t, .)\|_{H^k(I_t)}=\infty \]
    because otherwise, we could use the above argument to extend the solution
    beyond time $\widetilde t$.
  \end{proof}

  Thanks to the global Lipschitz property of $\widetilde N$, we can
  extend any $H^k$ solution to the full cone $\Gamma_{t_0}^{t_1}(r_0)$.

  \begin{lem} \label{localex1}
  Consider $\Gamma_{t_0}^{t_1}(r_0)$ with $r_0-(t_1-t_0)>0$ and for
  $k\in\mathbb N_0$, let
$(f,g)\in H^{k+1}\times H^k(I_{t_0})$, where $I_t=(r_0-(t_1-t), r_0+(t_1-t))$. Then there exists an $H^k$ solution
$u$ of Eq.~\eqref{startingeq2} in  $\Gamma_{t_0}^{t_1}(r_0)$ that satisfies
$(u(t_0,.),\partial_0 u(t_0,.))=(f,g)$.
\end{lem}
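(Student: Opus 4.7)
The plan is to extend the local $H^k$ solution supplied by Lemma \ref{lem:locexk} to the full cone $\Gamma_{t_0}^{t_1}(r_0)$ by ruling out the blowup criterion. Let $\widetilde t^{*}\in(t_0,t_1]$ be the supremum of times $\widetilde t$ for which an $H^k$ solution with the prescribed data exists on $\Gamma_{t_0}^{t_1}(r_0,\widetilde t)$; Lemma \ref{lem:locexk} ensures $\widetilde t^{*}>t_0$ and Lemma \ref{continuous dependence} renders the maximal solution unambiguous. If $\widetilde t^{*}=t_1$ we are done, so assume for contradiction that $\widetilde t^{*}<t_1$. The blowup criterion then forces
\[ \sup_{t\in(t_0,\widetilde t^{*})} \|u(t,.)\|_{H^k(I_t)}=\infty, \]
and the task is to produce an a priori estimate contradicting this.

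I argue by induction on $k$. In the base case $k=0$, applying Lemma \ref{lem:Ntw} with $g=0$ (and noting $\widetilde N(0)=0$) provides the global Lipschitz-type bound $\|\widetilde N(f)\|_{L^2(I)}\lesssim\|f\|_{H^1(I)}$, which is exactly the point highlighted in the remark after that lemma. Plugging this into the Duhamel identity \eqref{solutionsin1d} and invoking the $L^2\to H^1$ gain estimate \eqref{firstestimate} from the proof of Lemma \ref{continuous dependence} yields
\[ \|u(t,.)\|_{H^1(I_t)}\lesssim\|(f,g)\|_{H^1\times L^2(I_{t_0})}+\int_{t_0}^t \|u(s,.)\|_{H^1(I_s)}\,ds, \]
so Grönwall's inequality produces a uniform $H^1$, and a fortiori $L^2$, bound on $[t_0,\widetilde t^{*})$, contradicting the blowup criterion at level $k=0$.

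For the inductive step $k\geq 1$, I strengthen the inductive hypothesis so that the $(k-1)$-level assertion also furnishes a uniform $H^k$ bound on the full cone, one derivative above the nominal regularity, as already visible in the base case. Since our data lie in $H^{k+1}\times H^k\subset H^k\times H^{k-1}$, this supplies a (unique) $H^{k-1}$ solution on $\Gamma_{t_0}^{t_1}(r_0)$ with $\|u(s,.)\|_{H^k(I_s)}\leq C_{k-1}$, which by uniqueness (Lemma \ref{continuous dependence}) agrees with our $H^k$ solution on $[t_0,\widetilde t^{*})$. Feeding this uniform $H^k$ control into Lemma \ref{lem:Ntw} reduces the nonlinear estimate to
\[ \|\widetilde N(u(s,.))\|_{H^k(I_s)}\lesssim\|u(s,.)\|_{\dot H^{k+1}(I_s)}+C, \]
and combining with the $H^k\to H^{k+1}$ analogue of \eqref{firstestimate} in the Duhamel formula yields
\[ \|u(t,.)\|_{H^{k+1}(I_t)}\lesssim\|(f,g)\|_{H^{k+1}\times H^k(I_{t_0})}+1+\int_{t_0}^t \|u(s,.)\|_{H^{k+1}(I_s)}\,ds. \]
A further Grönwall step then delivers a uniform $H^{k+1}$, hence $H^k$, bound on $[t_0,\widetilde t^{*})$, contradicting the blowup criterion and closing the induction.

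The main obstacle is precisely the failure of the global Lipschitz property at the level $k\geq 1$: the polynomial factor $\|f\|_{H^k}^k+\|g\|_{H^k}^k$ in Lemma \ref{lem:Ntw} would otherwise generate a super-linear Grönwall inequality with no automatic closure. The role of the induction is to supply, at each step, an a priori $H^k$ bound inherited from the previous step, thereby taming this factor to a harmless constant and reducing the matter to a genuinely linear Grönwall problem.
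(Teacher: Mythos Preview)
Your proof is correct and follows essentially the same route as the paper: the base case $k=0$ is handled by the global Lipschitz bound from Lemma~\ref{lem:Ntw} together with Gr\"onwall, and the higher-$k$ steps bootstrap via the uniform $H^k$ control inherited from the previous level to tame the polynomial factor in Lemma~\ref{lem:Ntw}. The only presentational difference is that you make the strengthened induction hypothesis (carrying an $H^{k+1}$ bound at level $k$) explicit, whereas the paper leaves it implicit in the phrase ``suppose $u$ is an $H^k$ solution in $\Gamma_{t_0}^{t_1}(r_0)$'' and the displayed term $\sup_{t}\|u(t,.)\|_{H^k(I_t)}^{k+1}$.
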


\begin{proof}
  We start with the case $k=0$ and argue by contradiction.
  By Lemma \ref{lem:locexk}, there exists
a $\widetilde t> t_0$ and an $H^1$ solution $u$ of
Eq.~\eqref{startingeq2} in $\Gamma_{t_0}^{t_1}(r_0, \widetilde
t)$. Suppose that $\widetilde t< t_1$ and that $u$ is inextendible
beyond $\widetilde t$. From Lemma \ref{lem:Ntw} and the proof of Lemma \ref{continuous
  dependence} we obtain the estimate
\[ \|u(t,.)\|_{H^1(I_t)}\lesssim \|(f,g)\|_{H^1\times
    L^2(I_{t_0})}+\int_{t_0}^t \|u(s,.)\|_{H^1(I_s)}ds \]
and by Gronwall's inequality, $\|u(t,.)\|_{H^1(I_t)}\lesssim 1$ for
all $t\in [t_0,\widetilde t)$. This is a contradiction to the blowup
criterion in Lemma \ref{lem:locexk} and thus, $u$ extends to all of
$\Gamma_{t_0}^{t_1}(r_0)$.

Now suppose $u$ is an $H^k$ solution of Eq.~\eqref{startingeq2} in
$\Gamma_{t_0}^{t_1}(r_0)$ and $(f,g)\in H^{k+1}\times H^k(I_{t_0})$.
By Lemma \ref{lem:locexk}, there exists a $\widetilde t>t_0$ and an
$H^{k+1}$ solution $\widetilde u$ in $\Gamma_{t_0}^{t_1}(r_0,
\widetilde t)$ with initial data $(\widetilde u(t_0,.),
\partial_0 \widetilde u(t_0,.))=(f,g)$. By Lemma \ref{continuous
  dependence}, $\widetilde u=u$ and Lemma \ref{lem:Ntw} yields the
bound
\[ \|u(t,.)\|_{H^{k+1}(I_t)}\lesssim \|(f,g)\|_{H^{k+1}\times
    H^k(I_{t_0})}+1+\sup_{t\in [t_0,\widetilde
    t]}\|u(t,.)\|_{H^k(I_t)}^{k+1}+\int_{t_0}^t
  \|u(s,.)\|_{H^{k+1}(I_s)}ds \]
for all $t\in [t_0,\widetilde t)$.
Thus, by the same argument as above we see that $u$ extends to
all of $\Gamma_{t_0}^{t_1}(r_0)$ as an $H^{k+1}$ solution and the
claim follows inductively.
\end{proof}

We can now turn to the proof of Theorem \ref{away from the center}.

\begin{proof}[Proof of Theorem \ref{away from the center}]

Let $(f,g) \in C^\infty\times C^\infty(\overline{I_{t_0}})$. By Lemma
\ref{localex1}, we obtain an $H^1$ solution $u$ of
Eq.~\eqref{startingeq2} that satisfies $u(t,.)\in
H^k(I_t)$ for any $k\geq 1$ and all $t\in [t_0,t_1)$. By Sobolev
embedding, $u(t,.)\in C^\infty(\overline{I_t})$. To deal with the time
derivatives, observe that the right hand side of
Eq.~\eqref{solutionsin1d} consists of the term involving the initial
data, which is smooth by assumption, and the Duhamel term, which gains a degree of differentiability in $t$. Since this gain is untouched by differentiating with respect to $r$, we conclude that $\partial_r^k \partial_t^l u(t,r)$ exists for any $k,l\geq1$ and is in fact continuous on $\Gamma_{t_0}^{t_1}(r_0)$. Finally,  Lemma \ref{continuous dependence} ensures that this 
solution is indeed unique.
\end{proof}
\begin{proof}[Proof of Corollary \ref{cor:awayorigin}]
  Let $(f,g) \in C^\infty\times C^\infty(\R^6)$ and note that
  $(f,g) \in H^k\times H^{k-1}(I)$ for any interval $I\subset \R$ that
  satisfies $\overline{I}\subset \R_+$ and any $k\geq 1$. Consider now
  $\Gamma_{0}^{t_1}(r_0)$ with $r_0-t_1>0$. From Theorem \ref{away from the
    center} there exists a unique smooth solution $u$ of
  Eq.~\eqref{startingeq2} on $\Gamma_{0}^{t_1}(r_0)$ with initial data
  $u(0,.)=f$ and $\partial_0u(0,.)=g$. Furthermore, since
  $\Gamma_{0}^{t_1}(r_0)$ does not touch the origin, we see that $u(t,.)$
  corresponds to a smooth radial function for any $t\in [0,t_1)$. Let
  now $T,\varepsilon >0 $ and consider the set
  $D^T_\varepsilon=\{(t,r)\in [0,T]\times \R:r\geq t+
  \varepsilon\}$. We can cover $D^T_\varepsilon$ with cones
  of the form $\Gamma_{t_0}^{t_1}(r_0)$. Now, since each of these cones
  is a positive distance away from the origin, the above consideration
  implies the existence of a unique smooth solution in each of these
  cones.
Since the intersection of two such cones is either empty or again a
cone, by Lemma \ref{continuous dependence}, we can smoothly glue together the solutions in the individual
cones to obtain a unique solution in all of $D_\epsilon^T$.
  Since $T$ and $\varepsilon$ were
  arbitrary, the claim follows.
\end{proof}

\section{Semigroup theory and free Strichartz estimates}
Before we can analyse Eq. (\ref{startingeq2}) properly in the radial lightcone $\Gamma^T$, we first need the right choice of coordinates. For what we intend to do, the similarity coordinates given by 
	\begin{equation}\label{coordinate}
	\tau=-\log(T-t)+\log(T),\,\,\; \rho=\frac{r}{T-t}
	\end{equation}
are well suited.
Thus, we set $\psi(\tau, \rho)=Te^{-\tau}u(T-Te^{-\tau},Te^{-\tau}\rho)$ and switch to the similarity coordinates which transform Eq.~\eqref{startingeq2} into 
	\begin{align}\left(2+3\partial_\tau+\partial_\tau^2 +2\rho\partial_\tau\partial_\rho+ 4 \rho\partial_\rho-\frac{5}{\rho}\partial_\rho+(\rho^2-1)\partial_\rho^2
\right)\psi +\frac{3\sin(2\rho\psi)-6\rho\psi}{2\rho^3}=0,
	\end{align}
        where we omit the arguments of $\psi$ for brevity.
	To be able to use semigroup techniques, we define  
	\begin{equation}
	\psi_1(\tau,\rho):=\psi(\tau,\rho) 
	\end{equation}
and 
\begin{equation}\label{transform}
\psi_2(\tau,\rho):=(1+\partial_\tau+\rho\partial_\rho)\psi_1(\tau,\rho),
\end{equation}
	which yields the system
	\begin{equation}
          \label{eq:syspsi}
          \begin{split}
	\partial_\tau \psi_1 &=\psi_2-\psi_1-\rho\partial_\rho\psi_1\\
\partial_\tau \psi_2
                             &=\partial_\rho^2\psi_1+\frac{5}{\rho}\partial_\rho \psi_1-\rho\partial_\rho\psi_2
                             -2\psi_2-\frac{3\sin(2\rho\psi_1)-6\rho\psi_1}{2\rho^3},
                             \end{split}
	\end{equation}
	with initial data
	\begin{equation*}\label{inidata}
	\psi_1(0,\rho)=Tf(T\rho),\,\, \psi_2(0,\rho)=T^2 g(T\rho).
	\end{equation*}

 \subsection{Semigroup theory}

Motivated by the above evolution equation, we define the formal differential operator $\widetilde{\Lf}$ as
\begin{align*}
\widetilde{\Lf}\uf(\rho):=\begin{pmatrix}
-\rho u_1'(\rho)-u_1(\rho)+u_2(\rho)\\
u_1''(\rho)+\frac{5}{\rho}u_1'(\rho)-\rho u_2'(\rho)-2u_2(\rho)
\end{pmatrix},
\end{align*}
where $\mathbf u=(u_1, u_2)$.
To endow this operator with an appropriate domain, we first need the proper Hilbert space to work in.
In view of the regularity we require, we accordingly define the space
\[
\mathcal{H}:= \{\uf \in H^2\times H^1(\B^6_1): \uf\text{ radial} \}
\]
and as a domain for $\widetilde{\Lf}$ we set
\[
D(\widetilde{\Lf})=\{\uf \in C^3\times C^2\big(\overline{\B}_1^6 \big):\uf \text{ radial}\}.
\]
In addition to the standard $H^2\times H^1$ inner product, we define another inner product on 
$
D(\widetilde{\Lf})
$
by setting
\begin{align*}
(\uf,\vf)_{\widetilde{\mathcal{H}}}:=&2\int_0^1u_1''(\rho)\overline{v_1''(\rho)}\rho^5
                                       d \rho +10\int_0^1
                                       u_1'(\rho)\overline{v_1'(\rho)}\rho^3
                                       d\rho 
+2\int_0^1 u_2'(\rho)\overline{v_2'(\rho)}\rho^5 d\rho \\
&+u_1(1)\overline{v_1(1)}+u_2(1)\overline{v_2(1)}
\end{align*}
and we denote the associated norm by $\|.\|_{\widetilde{\mathcal{H}}}.$
The reason for defining this alternative norm $\|.\|_{\widetilde{\mathcal{H}}}$ stems from the fact that this will allow us to prove a dissipative estimate for $\widetilde{\Lf}$. First we will however show that the norms $\|.\|_{\mathcal{H}}$ and $\|.\|_{\widetilde{\mathcal{H}}}$ are equivalent. To do so, we will need the following version of Hardy's inequality.

\begin{lem}\label{helplemoverrho}
  Let $R_0>0$. Then we have the bounds
  \begin{align*}
    \int_0^R |f(\rho)|^2 \rho d\rho&\lesssim \|f\|_{H^2(\B_R^6)}^2 \\
    \int_0^R |f'(\rho)|^2 \rho^3 d\rho &\lesssim \|f\|_{H^2(\B_R^6)}^2
  \end{align*}
  for all $R\geq R_0$ and all $f\in C^2(\overline{\B_R^6})$.
\end{lem}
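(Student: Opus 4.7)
The plan is to exploit the radial norm equivalences recorded just before the lemma, which identify $\|f\|_{H^2(\B_R^6)}^2$ with a sum of $L^2$-integrals carrying the natural weight $\rho^5$; both bounds then amount to transferring powers of $\rho$ from the weight onto derivatives of $f$, i.e.\ they are weighted Hardy-type inequalities adapted to the six-dimensional radial Laplacian. I would first prove the second inequality by a direct expansion of the equivalent $\dot H^2$ seminorm, and then derive the first from it via an integration by parts together with a scaled trace estimate.

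For the second inequality, the key identity obtained by expanding the square is
\begin{equation*}
\left|f''(\rho)+\tfrac{5}{\rho}f'(\rho)\right|^2\rho^5 = |f''(\rho)|^2\rho^5 + 5\tfrac{d}{d\rho}\bigl(|f'(\rho)|^2\bigr)\rho^4 + 25|f'(\rho)|^2\rho^3.
\end{equation*}
An integration by parts in the middle term, in which the boundary contribution at $\rho=0$ vanishes automatically because of the $\rho^4$ weight, yields
\begin{equation*}
\int_0^R\left|f''(\rho)+\tfrac{5}{\rho}f'(\rho)\right|^2\rho^5\, d\rho = \int_0^R|f''(\rho)|^2\rho^5\, d\rho + 5|f'(R)|^2 R^4 + 5\int_0^R|f'(\rho)|^2\rho^3\, d\rho,
\end{equation*}
so all three terms on the right are non-negative. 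Hence $5\int_0^R|f'|^2\rho^3\, d\rho$ is bounded by the left-hand side, which is in turn $\lesssim\|f\|_{H^2(\B_R^6)}^2$ by the recalled equivalence, and the second bound follows.

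For the first inequality, I would integrate by parts using $\rho=\tfrac{1}{2}(\rho^2)'$ to get
\begin{equation*}
\int_0^R|f(\rho)|^2\rho\, d\rho = \tfrac{1}{2}|f(R)|^2 R^2 - \int_0^R\operatorname{Re}\bigl(f(\rho)\overline{f'(\rho)}\bigr)\rho^2\, d\rho,
\end{equation*}
and then apply Cauchy--Schwarz together with Young's inequality to the last term to absorb a multiple of $\int_0^R|f|^2\rho\, d\rho$ back to the left, ending with $\int_0^R|f|^2\rho\, d\rho\lesssim|f(R)|^2 R^2 + \int_0^R|f'|^2\rho^3\, d\rho$. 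The last summand is controlled by the inequality just established. The boundary contribution is handled by rescaling the standard trace estimate on $\B_1^6$ via $y=x/R$, which produces $|f(R)|^2\lesssim R^{-6}\|f\|_{L^2(\B_R^6)}^2 + R^{-4}\|\nabla f\|_{L^2(\B_R^6)}^2$, and therefore $|f(R)|^2 R^2\lesssim_{R_0}\|f\|_{H^1(\B_R^6)}^2\leq\|f\|_{H^2(\B_R^6)}^2$ for $R\geq R_0$.

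The main obstacle is the boundary term $|f(R)|^2 R^2$ and the accompanying need to keep constants uniform in $R\in[R_0,\infty)$. Testing the first inequality against $f\equiv 1$ shows that the ratio of the two sides scales like $R^{-4}$, so the restriction $R\geq R_0$ is genuinely necessary and the resulting constants must degenerate as $R_0\to 0$; the rescaled trace estimate above delivers exactly this $R_0$-dependent bound. Beyond this, the argument reduces to routine weighted integration-by-parts manipulations based on the given norm equivalences.
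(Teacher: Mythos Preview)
Your proof is correct and follows essentially the same strategy as the paper: both obtain the second inequality by a Hardy-type integration by parts tied to the radial $\dot H^2(\B_R^6)$ seminorm, and then derive the first inequality from the second via integration by parts plus absorption, controlling the boundary term $R^2|f(R)|^2$ by a Sobolev/trace bound uniform for $R\geq R_0$. The only cosmetic difference is in the second inequality, where you expand $|f''+5f'/\rho|^2\rho^5$ directly into a positivity identity, whereas the paper rewrites $\int_0^R|f'|^2\rho^3\,d\rho=\int_0^R|\rho^5 f'|^2\rho^{-7}\,d\rho$, integrates by parts, and applies Cauchy--Schwarz.
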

\begin{proof}
An integration by parts and the one-dimensional Sobolev embedding
yield
\begin{align*}
  \int_0^R |f(\rho)|^2\rho d\rho &\lesssim \|f\|_{H^2(\B_R^6)}^2+\int_0^R
                        |f'(\rho)||f(\rho)|\rho^2 d\rho \\
  &\lesssim \|f\|_{H^2(\B_R^6)}^2+\frac{1}{\epsilon}\int_0^R
    |f'(\rho)|^2\rho^3 d\rho+\epsilon\int_0^R |f(\rho)|^2 \rho d\rho
\end{align*}
for all $R\geq R_0$ and any $\epsilon>0$. Thus, it suffices to prove the
second inequality. To this end, we again integrate by parts and obtain
\begin{align*}
\int_0^R |f'(\rho)|^2 \rho^{3} d\rho &= \int_0^R |\rho^{5} f'(\rho)|^2 \rho^{-7} d\rho \lesssim\int_0^R |f'(\rho)|\left|f''(\rho)+\frac{5}{\rho}f'(\rho)\right|\rho^{4} d\rho
\\
  &\lesssim \left(\int_0^R|f'(\rho)|^2 \rho^{3} d\rho\right)^{\frac{1}{2}} \left(\int_0^R\left|f''(\rho)+\frac{5}{\rho}f'(\rho)\right|^2\rho^5d\rho \right)^{\frac{1}{2}}
\end{align*}
for all $R\geq R_0$ and $f\in C^2(\overline{\B_R^6})$.
\end{proof}
\begin{lem}\label{lem:norms}
The norms $\|.\|_{\widetilde{\mathcal{H}}}$ and $\|.\|_{\mathcal{H}}$ are equivalent on $D(\widetilde{\Lf})$.
Consequently, they are also equivalent on $\mathcal{H}$.
\end{lem}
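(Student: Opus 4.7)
The plan is to prove the norm equivalence on the dense subspace $D(\widetilde{\Lf})$ by establishing the two one-sided inequalities separately, then extend to $\mathcal{H}$ by density (radial $C^\infty$ functions on $\overline{\B^6_1}$ are dense in radial $H^2\times H^1$ by standard mollification arguments). Throughout, I identify radial functions with their one-dimensional radial profiles so that the equivalences displayed just before Lemma \ref{helplemoverrho} apply.

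For the direction $\|\uf\|_{\widetilde{\mathcal{H}}}\lesssim \|\uf\|_{\mathcal{H}}$, the weighted terms are handled almost immediately. The middle term $\int_0^1 |u_1'(\rho)|^2 \rho^3 d\rho$ is exactly what Lemma \ref{helplemoverrho} bounds by $\|u_1\|_{H^2(\B^6_1)}^2$. For the second-derivative term, I write $u_1''=\left(u_1''+\tfrac{5}{\rho}u_1'\right)-\tfrac{5}{\rho}u_1'$ and square, obtaining
\[
\int_0^1 |u_1''(\rho)|^2\rho^5 d\rho \lesssim \int_0^1 \Bigl|u_1''(\rho)+\tfrac{5}{\rho}u_1'(\rho)\Bigr|^2\rho^5 d\rho + \int_0^1 |u_1'(\rho)|^2 \rho^3 d\rho,
\]
both of which are controlled by $\|u_1\|_{H^2(\B^6_1)}^2$ (the first by the radial equivalence, the second again by Lemma \ref{helplemoverrho}). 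The $u_2$-derivative term is trivial since $\rho^5\leq \rho$. The boundary point-values $|u_1(1)|^2, |u_2(1)|^2$ are controlled by the trace theorem for the sphere $\partial \B^6_1$, using $u_j\in H^{2-j+1}(\B^6_1)$ and radiality.

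The reverse direction is where the real work lies, because $\|\cdot\|_{\widetilde{\mathcal{H}}}$ records no information about $L^2$-norms of $u_1,u_2$ themselves except at $\rho=1$. The derivative parts are easy: $\int_0^1 |u_1'|^2\rho^5 d\rho\leq \int_0^1 |u_1'|^2\rho^3 d\rho$, and $\int_0^1 |u_1''+\tfrac{5}{\rho}u_1'|^2\rho^5 d\rho$ expands to terms already present (using $\rho\leq 1$ to absorb the cross-term). The main obstacle is reconstructing $\int_0^1 |u_j|^2\rho^5 d\rho$ from $u_j(1)$ and the weighted derivative norm. The plan is to apply the fundamental theorem of calculus together with a carefully weighted Cauchy--Schwarz:
\[
|u_j(\rho)|\leq |u_j(1)|+\int_\rho^1 |u_j'(s)|\,ds
\leq |u_j(1)|+\left(\int_\rho^1 s^{-(2j+1)}ds\right)^{\!1/2}\!\left(\int_0^1 |u_j'(s)|^2 s^{2j+1}ds\right)^{\!1/2}.
\]
For $j=1$ the first factor gives $\rho^{-2}/2$, for $j=2$ it gives $\rho^{-4}/4$, and in both cases the second factor is bounded by $\|\uf\|_{\widetilde{\mathcal{H}}}$. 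Integrating $|u_j(\rho)|^2$ against $\rho^5 d\rho$ then produces integrals $\int_0^1 \rho^{5-4}d\rho$ (for $u_1$) and $\int_0^1 \rho^{5-8+4}d\rho=\int_0^1 \rho d\rho$ (for $u_2$), both finite. This is precisely why the weights $\rho^3,\rho^5$ in the $\widetilde{\mathcal{H}}$ inner product are tuned to dimension $d=6$: the Hardy-type estimate above needs the weights to be strictly below $\rho^5$ to close.

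Having obtained both inequalities on $D(\widetilde{\Lf})$, the equivalence on all of $\mathcal{H}$ follows by extending $\|\cdot\|_{\widetilde{\mathcal{H}}}$ continuously from $D(\widetilde{\Lf})$ to $\mathcal{H}$ and using that the two norms are comparable on the dense subspace. No further regularity issues arise because the extension preserves the inequality, and trace values at $\rho=1$ extend continuously thanks to the trace theorem used above.
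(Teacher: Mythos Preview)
Your proof is correct and follows the same overall structure as the paper's. For the direction $\|\cdot\|_{\widetilde{\mathcal H}}\lesssim\|\cdot\|_{\mathcal H}$ the two arguments are identical (Lemma~\ref{helplemoverrho} for the weighted first-derivative term, the decomposition $u_1''=(u_1''+\tfrac{5}{\rho}u_1')-\tfrac{5}{\rho}u_1'$ for the second-derivative term, and a Sobolev/trace bound for the point values).

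The only genuine difference is in the reverse direction, where you recover $\int_0^1|u_j|^2\rho^5\,d\rho$ via the fundamental theorem of calculus and a weighted Cauchy--Schwarz, while the paper instead integrates by parts against $\rho^6$ and absorbs the resulting $\tfrac12\int_0^1|u_j|^2\rho^5\,d\rho$ back into the left-hand side. Both are standard Hardy-type manoeuvres and equally short; the paper's version has the minor advantage of treating $j=1,2$ uniformly with the single weight $\rho^5$, whereas your version needs the separate weights $\rho^3$ and $\rho^5$ (which is also fine since both appear in $\|\cdot\|_{\widetilde{\mathcal H}}$). Note a small arithmetic slip in your write-up: for $j=1$ the square root factor is $\sim\rho^{-1}$, not $\rho^{-2}$, so the resulting integral is $\int_0^1\rho^3\,d\rho$ rather than $\int_0^1\rho\,d\rho$; this is harmless since both converge.
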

\begin{proof}
First, we show that for any $\uf \in D(\widetilde{\Lf})$ we have that 
\[\|\uf\|_{\widetilde{\mathcal{H}}}\lesssim\|\uf\|_{\mathcal{H}}.
\]
From the one-dimensional Sobolev embedding $L^\infty([0,1])\hookrightarrow H^1((0,1))$ we infer that
\[
|u_1(1)|^2+|u_2(1)|^2 \lesssim\|\uf\|_{\mathcal{H}}^2.
\]
An application of Lemma \ref{helplemoverrho} yields
\begin{align*}
\int_0^1 |u_1'(\rho)|^2 \rho^3 d\rho \lesssim \|u_1\|_{H^2(\B^6_1)}^2\leq \|\uf\|_{\mathcal{H}}^2.
\end{align*}
Therefore, we are left with establishing the estimate
\[\int_0^1 |u_1''(\rho)|^2 \rho^5 d\rho \lesssim \|\uf\|_{\mathcal{H}}^2.
\]
To see this, we again employ Lemma \ref{helplemoverrho} and obtain
\begin{align*}
\int_0^1 |u_1''(\rho)|^2 \rho^5 d\rho \lesssim \|u_1\|_{\dot{H}^2(\B^6_1)}^2+\int_0^1 |u_1'(\rho)|^2\rho^3 d\rho \lesssim \|\uf\|_{\mathcal{H}}^2.
\end{align*}
To prove the reverse inequality, we note that 
\begin{align*}
\int_0^1 |u_j(\rho)|^2  \rho^5 d\rho \leq& |u_j(1)|^2+\int_0^1 |u_j'(\rho)||u_j(\rho)| \rho^6 d\rho
\\
\leq& |u_j(1)|^2+\frac12\int_0^1 \left (|u_j'(\rho)|^2\rho^5+|u_j(\rho)|^2
      \rho^5\right ) d\rho
\end{align*}
which implies
\[
\|u_j\|_{L^2(\B^6_1)}^2\lesssim \|\uf\|_{\widetilde{\mathcal{H}}}^2,
\]
for $j=1,2$.
Finally, we estimate
\begin{align*}
\|u_1\|_{\dot H^2(\B^6_1)}^2\lesssim \int_0^1 \left (|u_1''(\rho)|^2
  \rho^5+|u_1'(\rho)|^2 \rho^3\right ) d\rho \leq \|\uf\|_{\widetilde{\mathcal{H}}}^2.
\end{align*} 
\end{proof}
\begin{lem}
The operator $\widetilde{\Lf}$ satisfies
\[
\Re\left(\widetilde{\Lf}\uf,\uf\right)_{\widetilde{\mathcal{H}}}\leq 0
\]
for all $\uf \in D(\widetilde{\Lf})$.
\end{lem}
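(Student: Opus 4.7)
The plan is to evaluate $(\widetilde{\Lf}\uf,\uf)_{\widetilde{\mathcal{H}}}$ directly from the definitions, exploiting three one-dimensional integrations by parts to convert the troublesome third-derivative terms (arising from the $\rho u_1'''$ inside $[(\widetilde\Lf\uf)_1]''$, the $\rho u_1''$ inside $[(\widetilde\Lf\uf)_1]'$, and the $\rho u_2''$ inside $[(\widetilde\Lf\uf)_2]'$) into clean boundary contributions at $\rho=1$. First I would compute the derivatives $v_1'=-\rho u_1''-2u_1'+u_2'$, $v_1''=-\rho u_1'''-3u_1''+u_2''$, and $v_2'=u_1'''+\tfrac{5}{\rho}u_1''-\tfrac{5}{\rho^2}u_1'-3u_2'-\rho u_2''$, where $\vf=\widetilde{\Lf}\uf$, and substitute them into the three weighted integrals.

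The crucial observation is then that the specific weights $2,10,2$ and the lower-order coefficients $-1,-2$ appearing in $\widetilde\Lf$ are calibrated so that the bulk $6\int|u_1''|^2\rho^5$, $20\int|u_1'|^2\rho^3$, and $6\int|u_2'|^2\rho^5$ produced by integrating $-2\int u_1'''\overline{u_1''}\rho^6$, $-10\int u_1''\overline{u_1'}\rho^4$, and $-2\int u_2''\overline{u_2'}\rho^6$ by parts cancel \emph{exactly} against the $-3u_1''$, $-2u_1'$, and $-3u_2'$ contributions, leaving behind only the boundary values $-|u_1''(1)|^2-5|u_1'(1)|^2-|u_2'(1)|^2$. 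Next, I would handle the mixed bulk terms: the $u_1'''\overline{u_2'}\rho^5$ from $A_3$, after one more integration by parts, produces a boundary term $2\operatorname{Re}(u_1''(1)\overline{u_2'(1)})$ and a bulk piece that exactly cancels against both $2\int u_2''\overline{u_1''}\rho^5$ (via the symmetry $\operatorname{Re}\int f\overline{g}=\operatorname{Re}\int \overline{f}g$) and the $10\int u_1''\overline{u_2'}\rho^4$ term. The same symmetry kills the $u_1'\overline{u_2'}\rho^3$ cross terms in $A_2,A_3$.

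After these cancellations, the real part of $(\widetilde{\Lf}\uf,\uf)_{\widetilde{\mathcal{H}}}$ reduces to a quadratic form $Q$ in the five boundary values $a_0:=u_1(1)$, $a_1:=u_1'(1)$, $a_2:=u_1''(1)$, $b_0:=u_2(1)$, $b_1:=u_2'(1)$, in which the purely quadratic ``diagonal'' contributions $-|a_2|^2,-5|a_1|^2,-|b_1|^2,-|a_0|^2,-2|b_0|^2$ are supplemented by the cross terms generated above and by $\operatorname{Re}(v_1(1)\overline{u_1(1)}+v_2(1)\overline{u_2(1)})$. The final step is then purely algebraic: completing the square first in $a_2$ (absorbing $2\operatorname{Re}(a_2\overline{b_1})+\operatorname{Re}(a_2\overline{b_0})$) and then in $a_1$ (absorbing $5\operatorname{Re}(a_1\overline{b_0})-\operatorname{Re}(a_1\overline{a_0})$) reduces $Q$ to two manifestly non-positive square terms plus a remainder of the form $\tfrac{1}{2}\operatorname{Re}(b_0\overline{a_0})-\tfrac{19}{20}|a_0|^2-\tfrac{1}{2}|b_0|^2$, which is controlled by Young's inequality $|\operatorname{Re}(b_0\overline{a_0})|\leq\tfrac{1}{2}(|a_0|^2+|b_0|^2)$. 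The main obstacle is bookkeeping: there is no deep estimate, but one must be meticulous in tracking signs and verifying that every bulk integral is matched by exactly the right partner, which is precisely the design principle behind the weights in $\|.\|_{\widetilde{\mathcal{H}}}$.
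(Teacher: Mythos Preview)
Your proposal is correct and follows essentially the same route as the paper: the integration-by-parts scheme and the exact bulk cancellations are identical, and both proofs reduce $\Re(\widetilde{\Lf}\uf,\uf)_{\widetilde{\mathcal{H}}}$ to the same quadratic form in the five boundary values $u_1(1),u_1'(1),u_1''(1),u_2(1),u_2'(1)$. The only cosmetic difference is in the final algebra: where you complete the square in $a_2$ and then in $a_1$, the paper twice invokes the elementary inequality $\Re(a\overline{b}+a\overline{c}-b\overline{c})\leq\tfrac12(|a|^2+|b|^2+|c|^2)$ (once with $(a,b,c)=(u_2(1),u_1'(1),u_1(1))$ and once with $(u_1''(1),u_2'(1),u_2(1))$) to arrive at $4\Re(u_1'(1)\overline{u_2(1)})-4|u_1'(1)|^2-|u_2(1)|^2\leq 0$.
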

\begin{proof}
Let $\uf \in D(\widetilde{\Lf})$. 
Integrating by parts shows
\begin{align*}
-\int_0^1u_1^{(3)}(\rho)\overline{u_1''(\rho)} \rho^6 d\rho= -|u_1''(1)|^2 +6\int_0^1|u_1''(\rho)|^2 \rho^5 d\rho+\int_0^1u_1''(\rho)\overline{u_1^{(3)}(\rho)} \rho^6 d\rho.
\end{align*}
Thus, 
\begin{align*}
-\Re\int_0^1u_1^{(3)}(\rho)\overline{u_1''(\rho)} \rho^6 d\rho= -\frac{|u_1''(1)|^2}{2} +3\int_0^1|u_1''(\rho)|^2 \rho^5 d\rho.
\end{align*}
Using this, we obtain
\begin{align*}
\Re\int_0^1[\widetilde{\Lf}\uf]_1''(\rho)\overline{u_1''(\rho)}\rho^5 d\rho
&=\Re\left(\int_0^1 u_2''(\rho)\overline{u_1''(\rho)}\rho^5 d \rho-\int_0^1u_1^{(3)}(\rho)\overline{u_1''(\rho)} \rho^6 d\rho\right)
\\
&\quad-3 \int_0^1 |u_1''(\rho)|^2 \rho^5 d\rho
\\
&=\Re\int_0^1 u_2''(\rho)\overline{u_1''(\rho)}\rho^5 d \rho-\frac{|u_1''(1)|^2}{2}.
\end{align*}
Similarly, we see that 
\begin{align*}
\Re \int_0^1  [\widetilde{\Lf}\uf]_2'(\rho)\overline{u_2'(\rho)} \rho^5 d \rho
&=\Re\left(\int_0^1 u_1^{(3)}(\rho)\overline{u_2'(\rho)}\rho^5 d
   \rho+5\int_0^1\left (u_1''(\rho)\overline{u_2'(\rho)} \rho^4 -u_1'(\rho)\overline{u_2'(\rho)} \rho^3\right) d \rho\right)
\\
&\quad-\frac{|u_2'(1)|^2}{2}
\\
&=\Re\left(u_1''(1)\overline{u_2'(1)}-\int_0^1 u_1''(\rho)\overline{u_2''(\rho)}\rho^5 d \rho-5\int_0^1u_1'(\rho)\overline{u_2'(\rho)} \rho^3 d \rho\right)
\\
&\quad-\frac{|u_2'(1)|^2}{2}
\end{align*}
It follows that
\begin{align*}
\Re\left(\int_0^1[\widetilde{\Lf}\uf]_1''(\rho)\overline{u_1''(\rho)}\rho^5 d\rho+\int_0^1  [\widetilde{\Lf}\uf]'_2(\rho)\overline{u_2'(\rho)} \rho^5 d \rho\right)=& -\frac{1}{2}(|u_1''(1)|^2+|u_2'(1)|^2)
\\
&+\Re(u_1''(1)\overline{u_2'(1)})
\\
&-5\Re \int_0^1 u_1'(\rho)\overline{u_2'(\rho)}\rho^{3} d \rho=:I_1.
\end{align*}
Therefore, a short calculation shows
\begin{align*}
2I_1+10\Re  \int_0^1 [\widetilde{\Lf}\uf]_1'(\rho)\overline{ u_1'(\rho)}\rho^3d\rho \leq&-\frac{1}{2}\left(|u_1''(1)|^2+|u_2'(1)|^2\right) -5|u_1'(1)|^2 \\
&+\Re(u_1''(1)\overline{u_2'(1)}).
\end{align*}
Putting everything together, we obtain
\begin{align*}
\Re\left(\widetilde{\Lf}\uf,\uf\right)_{\mathcal{\widetilde{H}}}&\leq
-\frac{1}{2}\left(|u_1''(1)|^2+|u_2'(1)|^2\right)- 5|u_1'(1)|^2
+\Re(u_1''(1)\overline{u_2'(1)})
\\
&\quad+\Re\left(u_1(1)\overline{u_2(1)}-u_1'(1)\overline{u_1(1)}\right)-|u_1(1)|^2\\
&\quad+\Re\left( u_1''(1)\overline{u_2(1)}+5u_1'(1)\overline{u_2(1)}-u_2'(1)\overline{u_2(1)}\right)-2|u_2(1)|^2.
\end{align*}
By employing the elementary inequality
\begin{equation}\label{elementaryineq}
\Re(a\overline{b}+a\overline{c}-b\overline{c})\leq \frac{1}{2}(|a|^2+|b|^2+|c|^2),
\end{equation}
once with $a=u_2(1)$, $b=u_1'(1)$, $c=u_1(1)$
and once with $a=u_1''(1)$, $b=u_2'(1)$, $c=u_2(1)$,
we obtain
\begin{align*}
\Re\left(\widetilde{\Lf}\uf,\uf\right)_{\mathcal{\widetilde{H}}}\leq
\Re\left(4 u_1'(1)\overline{u_2(1)}\right)-4|u_1'(1)|^2-|u_2(1)|^2\leq 0.
\end{align*}
\end{proof}
The next Lemma will be the final ingredient in showing that $\widetilde{\Lf}$ satisfies the assumption of the Lumer-Phillips theorem.
\begin{lem}\label{density}
The range of the operator $1-\widetilde{\Lf}$ is dense in $\mathcal{H}.$ 
\end{lem}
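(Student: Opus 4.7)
The plan is to reduce the equation $(1-\widetilde{\Lf})\uf=\ff$ to a scalar second-order ODE and construct its solution in the appropriate smoothness class. Writing out the operator equation with $\uf=(u_1,u_2)$ and $\ff=(f_1,f_2)$ gives
\[
2u_1(\rho)+\rho u_1'(\rho)-u_2(\rho)=f_1(\rho),\qquad 3u_2(\rho)+\rho u_2'(\rho)-u_1''(\rho)-\tfrac{5}{\rho}u_1'(\rho)=f_2(\rho).
\]
From the first equation I read $u_2=2u_1+\rho u_1'-f_1$; substituting into the second reduces the system to
\[
(\rho^2-1)u_1''(\rho)+\tfrac{6\rho^2-5}{\rho}u_1'(\rho)+6u_1(\rho)=F(\rho),\qquad F:=f_2+3f_1+\rho f_1',
\]
on $(0,1)$. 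Since $D(\widetilde{\Lf})$ is dense in $\mathcal{H}$, for density of the range it suffices to treat $\ff\in D(\widetilde{\Lf})$, for which $F\in C^\infty([0,1])$; if I obtain $u_1\in C^\infty([0,1])$ with an even Taylor expansion at the origin, then $u_2=2u_1+\rho u_1'-f_1$ is automatically a smooth radial function, so $\uf\in D(\widetilde{\Lf})$ and $(1-\widetilde{\Lf})\uf=\ff$.

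A Frobenius analysis at the two endpoints gives indicial roots $\{0,-4\}$ at $\rho=0$ and $\{0,\tfrac12\}$ at $\rho=1$. Consequently the homogeneous ODE admits, up to scaling, a unique local solution $\phi_0$ analytic at $\rho=0$ and a unique local solution $\psi_0$ analytic at $\rho=1$; the other Frobenius branches behave like $\rho^{-4}$ near $0$ and like $(1-\rho)^{1/2}$ near $1$ respectively. Writing $\phi_0(\rho)=\sum_{k\geq 0}a_k\rho^{2k}$ and plugging into the ODE yields the recursion $a_{k+1}=\tfrac{2k+3}{2(k+3)}a_k$, so $a_k\sim c k^{-3/2}$ as $k\to\infty$, and the series converges on the closed unit interval. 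A parallel Frobenius construction at $\rho=1$ produces $\psi_0$, analytic in $1-\rho$.

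The decisive observation is that $\phi_0$ and $\psi_0$ are linearly independent. Indeed, from $a_k\sim k^{-3/2}$ the series $\sum a_k$ converges while $\sum 2k a_k$ diverges, so $\phi_0$ is continuous but has unbounded derivative at $\rho=1$; this forces a nonzero coefficient of the $(1-\rho)^{1/2}$ branch in its expansion near $1$, hence $\phi_0\notin\mathbb{C}\psi_0$. Thus $\{\phi_0,\psi_0\}$ is a fundamental system for the homogeneous ODE on $(0,1)$. Variation of parameters produces a particular solution $u_p$ of the inhomogeneous equation, and setting $u_1:=A\phi_0+B\psi_0+u_p$ I fix the scalar $B$ uniquely so as to cancel the $\rho^{-4}$-type singular component of $u_p$ at the origin, and the scalar $A$ uniquely so as to cancel the $(1-\rho)^{1/2}$-type singular component of $u_p$ at $\rho=1$. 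The resulting $u_1$ is then smooth on $[0,1]$ and, by the Frobenius structure at the origin, has an even Taylor expansion there, which identifies it with a smooth radial function on $\overline{\mathbb{B}_1^6}$.

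The substantive step is proving $\phi_0\notin\mathbb{C}\psi_0$, equivalently that the unique solution smooth at $0$ carries a nontrivial $(1-\rho)^{1/2}$ component at $1$; without this nondegeneracy the range of $1-\widetilde{\Lf}$ would acquire a codimension and the lemma would fail. Once that is established, everything else reduces to standard Frobenius and variation-of-parameters manipulations, followed by the purely algebraic recovery of $u_2$.
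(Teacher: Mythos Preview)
Your strategy is correct and differs from the paper's in one essential respect. The paper produces the explicit fundamental system
\[
\psi_1(\rho)=\frac{2-\rho^2-2\sqrt{1-\rho^2}}{\rho^4},\qquad \psi_2(\rho)=\frac{2-\rho^2}{\rho^4},
\]
with Wronskian $W(\psi_1,\psi_2)(\rho)=-2\rho^{-5}(1-\rho^2)^{-1/2}$, and the $\sqrt{1-\rho^2}$ in $\psi_1$ makes your nondegeneracy condition $\phi_0\notin\mathbb{C}\psi_0$ manifest. With these closed forms in hand the paper writes down the variation-of-parameters integral and checks smoothness at each endpoint by direct, elementary manipulation (Taylor expansion at $\rho=0$, a change of variables $s=(1-\rho)t+1$ near $\rho=1$). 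Your route---proving nondegeneracy from the coefficient asymptotic $a_k\sim ck^{-3/2}$, so that $\sum a_k$ converges while $\sum 2ka_k$ diverges---is a clean alternative that does not require guessing closed-form solutions and would transfer to situations where none are available. What you give up is the explicit endpoint analysis: your claim that the corrected $u_1$ is smooth on $[0,1]$ leans on the general Frobenius theory for inhomogeneous equations at regular singular points, which is standard but not spelled out here, whereas the paper's computations make this concrete. One small slip: $\ff\in D(\widetilde{\Lf})=C^3\times C^2$ gives only $F\in C^2$, not $F\in C^\infty$; it is cleaner to take $\ff\in C^\infty\times C^\infty$ as the paper does, which is still dense and gives you analyticity of $F$ at the endpoints.
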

\begin{proof}
Let $\ff=(f_1,f_2) \in C^\infty\times C^{\infty}(\overline{\B^6_1})$. Then the equation $(\lambda-\widetilde{\Lf})\uf=\ff$ written out explicitly has the form
\begin{align*}
(\lambda +1)u_1(\rho)-u_2(\rho) + \rho u_1'(\rho)&=f_1(\rho)\\
(\lambda+2)u_2(\rho)-u_1''(\rho)-\frac{5}{\rho}u_1'(\rho)+ \rho u_2'(\rho)&=f_2(\rho).
\end{align*}
Hence for $\lambda=1$ the first equation implies 
\begin{equation}\label{form of u_2}
u_2(\rho)=2u_1(\rho) + \rho u_1'(\rho)-f_1(\rho).
\end{equation}
Plugging this into the second one yields
\begin{equation}\label{lambda=1}
(\rho^2-1)u_1''(\rho)+
\left(6\rho-\frac{5}{\rho}\right)u_1'(\rho)+ 6u_1(\rho)=F_1(\rho)
\end{equation}
with $F_1(\rho)=f_2(\rho)+3 f_1(\rho)+\rho f_1'(\rho)$.
A fundamental system for the homogeneous equation 
\begin{equation*}
(\rho^2-1)u_1''(\rho)+
\left(6\rho-\frac{5}{\rho}\right)u_1'(\rho)+ 6u_1(\rho)=0
\end{equation*}is given by
$$\psi_1(\rho):=\frac{2-\rho^2-2\sqrt{1-\rho^2}}{\rho^4},\;\;\;\psi_2(\rho):=\frac{2-\rho^2}{\rho^4}.
$$
Furthermore, the Wronskian of $\psi_1$ and $\psi_2$ is given by
$$W(\psi_1,\psi_2)(\rho)=-\frac{2}{\rho^5\sqrt{1-\rho^2}}.
$$
Therefore, a solution of Eq.~\eqref{lambda=1} is given by
\begin{align*}
u_1(\rho)=&\psi_1(\rho)\int_\rho^1\frac{\psi_2(s)F_1(s)}{W(\psi_1,\psi_2)(s)\left(s^2-1\right)} ds+\psi_2(\rho)\int_0^\rho\frac{\psi_1(s)F_1(s)}{W(\psi_1,\psi_2)(s)\left(s^2-1\right)} ds\\
=& \psi_1(\rho)\int_\rho^1\frac{s(2-s^2)F_1(s)}{2\sqrt{1-s^2}}ds+ \psi_2(\rho)\int_0^\rho\frac{s(2-s^2-2\sqrt{1-s^2})F_1(s)}{2\sqrt{1-s^2}} ds.
\end{align*}
By standard ODE theory, it follows that $u_1 \in C^\infty(\B^6_1)$ 
and so we only need to check the endpoints.
A Taylor expansion shows that
$\psi_1$ is a smooth even function on $[0,1)$ and so, we see that
\begin{align}\label{eq:1 smooth}
\psi_1(\rho)\int_\rho^1\frac{s(2-s^2)F_1(s)}{2\sqrt{1-s^2}}ds\in C^\infty([0,1)).
\end{align}
Further, by rescaling according to $t\rho= s $ we obtain that
\begin{align*}
\psi_2(\rho)\int_0^\rho\frac{s(2-s^2-2\sqrt{1-s^2})F_1(s)}{2\sqrt{1-s^2}} ds=\frac{2-\rho^2}{\rho^2}\int_0^1\frac{t(2-\rho^2t^2-2\sqrt{1-\rho^2t^2})F_1(\rho t)}{2\sqrt{1-\rho^2t^2}} dt.
\end{align*}
Note that for $\rho$ sufficiently close to $0$, the expansion
\begin{align*}
\sqrt{1-\rho^2}=1-\frac{\rho^2}{2}-\frac{\rho^4}{8}+O(\rho^6)
\end{align*}
 holds, where the $O$ term is a smooth function.
 Using this, we see that
\begin{align}\label{eq:2 near 0}
\psi_2(\rho)\int_0^\rho\frac{s(2-s^2-2\sqrt{1-s^2})F_1(s)}{2\sqrt{1-s^2}} ds=(2-\rho)\int_0^1\frac{\left(\frac
{\rho^2t^5}{8}+O(\rho^4t^7)\right)F_1(\rho t)}{2\sqrt{1-\rho^2t^2}} dt.
\end{align}
As a consequence of \eqref{eq:1 smooth}, \eqref{eq:2 near 0} and the fact that $\psi_1$ is a smooth even function, one sees that
\begin{align*}
u_1\in C^\infty([0,1)) 
\end{align*}
and 
\begin{align*}
u_1'(0)=u_1^{(3)}(0)=0.
\end{align*}
As we are left with investigating $u_1$ at the endpoint $\rho=1$, we rewrite $u_1$ as
\begin{align*}
u_1(\rho)&=-\frac{2\sqrt{1-\rho^2}}{\rho^4}\int_\rho^1\frac{s(2-s^2)F_1(s)}{2\sqrt{1-s^2}}ds+ \psi_2(\rho)\int_0^1 \frac{s(2-s^2)F_1(s)}{2\sqrt{1-s^2}}ds
\\
&\quad -\psi_2(\rho)\int_0^\rho sF_1(s) ds.
\end{align*}
Evidently, all of the above terms but the first one are smooth at $\rho=1$ and so we only need to show that
\begin{align*}
v_1(\rho):=\sqrt{1-\rho^2}\int_\rho^1\frac{s(2-s^2)F_1(s)}{\sqrt{1-s^2}}ds
\end{align*}
is smooth at $\rho=1$.
By transforming according to $s=(1-\rho)t+1$
we obtain that
\begin{align*}
\int_\rho^1\frac{s(2-s^2)F_1(s)}{\sqrt{1-s^2}}ds&=(1-\rho)\int_{-1}^0\frac{\widetilde{F_1}((1-\rho)t+1)}{\sqrt{1-[(1-\rho)t+1]^2}}dt
\\
&=(1-\rho)^{\frac{1}{2}}\int_{-1}^0\frac{\widetilde{F_1}((1-\rho)t+1)}{\sqrt{-t}\sqrt{2+(1-\rho)t}}dt
\end{align*}
with $\widetilde{F_1}(s)=s(2-s^2)F_1(s)$.
Consequently,
\begin{align*}
v_1(\rho)=(1-\rho)\sqrt{1+\rho}\int_{-1}^0\frac{\widetilde{F_1}((1-\rho)t+1)}{\sqrt{-t}\sqrt{2+(1-\rho)t}}dt
\end{align*}
and thus, $v_1$ is a smooth function at $\rho=1$. In summary, we conclude that $u_1 \in C^3(\overline{\B^6_1})$ and from \eqref{form of u_2} it follows that $u_2\in C^2(\overline{\B^6_1})$.
\end{proof}

The preceding two lemmas show that the requirements of the Lumer-Phillips theorem are satisfied
and we obtain our next result.
\begin{lem}\label{semigroupgen}
The operator $\widetilde{\Lf}$ is closable and its closure, denoted by $\Lf_0$, generates a semigroup $\Sf_0$ on $\mathcal{H}$ that satisfies
\begin{equation*}
\|\Sf_0(\tau)\ff\|_{\mathcal{H}} \lesssim \|\ff\|_{\mathcal{H}}
\end{equation*}
for all $\ff \in \mathcal{H}$ and all $\tau \geq 0$.
\end{lem}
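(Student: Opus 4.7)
The plan is to assemble the hypotheses of the Lumer--Phillips theorem in the Hilbert space $(\mathcal H,(\cdot,\cdot)_{\widetilde{\mathcal H}})$ — whose norm is equivalent to $\|.\|_{\mathcal H}$ by Lemma \ref{lem:norms} — and then transfer the resulting contraction bound back to the original norm. The conceptual heavy lifting has already been done in the two preceding lemmas (dissipativity in the weighted inner product and density of $\rg(1-\widetilde{\Lf})$ via the explicit ODE analysis at $\lambda=1$); only a short assembly remains.

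The first step is to verify that $D(\widetilde{\Lf})$ is dense in $\mathcal H$. A standard mollification combined with a radial cut-off shows that radial elements of $C^3(\overline{\B_1^6})\times C^2(\overline{\B_1^6})$ are dense in the radial subspace of $H^2\times H^1(\B_1^6)$; by the norm equivalence, density with respect to $\|.\|_{\mathcal H}$ and with respect to $\|.\|_{\widetilde{\mathcal H}}$ coincide. Together with the estimate $\Re(\widetilde{\Lf}\uf,\uf)_{\widetilde{\mathcal H}}\leq 0$ on $D(\widetilde{\Lf})$ and the dense-range property from Lemma \ref{density}, all hypotheses of Lumer--Phillips are then in place.

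Next I would apply the version of Lumer--Phillips for densely defined dissipative operators whose range of $1-A$ is dense. This furnishes at once that $\widetilde{\Lf}$ is closable, that its closure $\Lf_0$ is again dissipative with $\rg(1-\Lf_0)=\mathcal H$, and that $\Lf_0$ generates a strongly continuous contraction semigroup $\Sf_0$ on $(\mathcal H,\|.\|_{\widetilde{\mathcal H}})$, so that
\[
\|\Sf_0(\tau)\ff\|_{\widetilde{\mathcal H}}\leq \|\ff\|_{\widetilde{\mathcal H}}
\]
for all $\ff\in\mathcal H$ and all $\tau\geq 0$. Invoking Lemma \ref{lem:norms} on both sides yields the advertised bound $\|\Sf_0(\tau)\ff\|_{\mathcal H}\lesssim \|\ff\|_{\mathcal H}$; the implicit constant is exactly the product of the two equivalence constants, which explains why one must settle for $\lesssim$ rather than genuine contractivity in the original norm.

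No genuine obstacle is expected at this stage. The two potentially delicate points — constructing a preimage under $1-\widetilde{\Lf}$ with the right boundary behaviour at $\rho=0$ and $\rho=1$, and extracting a dissipative estimate without exploiting any positivity of the original $H^2\times H^1$ inner product — have already been dispatched by Lemmas \ref{density} and the computation preceding it. What remains is bookkeeping: checking density of the domain, citing the appropriate generation theorem, and tracking constants through the norm equivalence.
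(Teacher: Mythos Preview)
Your proposal is correct and matches the paper's approach exactly: the paper simply states that the preceding two lemmas (dissipativity in $\|.\|_{\widetilde{\mathcal H}}$ and density of $\rg(1-\widetilde{\Lf})$) verify the hypotheses of Lumer--Phillips, and the norm equivalence from Lemma \ref{lem:norms} converts the resulting contraction into the stated $\lesssim$ bound. You have spelled out the bookkeeping (density of the domain, transfer of constants) a bit more explicitly than the paper does, but there is no substantive difference.
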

\subsection{The free Strichartz estimates}
Having established the existence of the ``free'' semigroup $\Sf_0$, we will continue by proving Strichartz estimates for this semigroup. To do so, we will from now on always assume that $T$ is confined to $\left[\frac{1}{2},\frac{3}{2}\right]$. This restriction of $T$ leads to no loss of generality as we only care for $T$ close to $1$ anyway.
\begin{lem}\label{lem:extension}
There exists a family of extension operators $\Ef_T:H^2\times H^1(\B^6_T) \to H^2\times H^1(\R^6)$ such that
\begin{align*}
\|\Ef_T\ff\|_{H^2\times H^1(\R^6)}\lesssim \|\ff\|_{H^2\times H^1(\B^6_T)}
\end{align*}
for all $T\in \left[\frac{1}{2},\frac{3}{2}\right]$ and all $\ff \in H^2\times H^1(\B^6_T)$.

\end{lem}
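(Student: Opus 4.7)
The plan is to reduce to a standard extension on a fixed domain (the unit ball $\B^6_1$) via scaling, and then exploit the fact that $T$ is confined to the compact interval $[\tfrac12,\tfrac32]$, which is bounded away from $0$ and $\infty$, to obtain uniform constants. Concretely, I would fix once and for all a bounded linear extension operator $E: H^2\times H^1(\B^6_1)\to H^2\times H^1(\R^6)$, whose existence is classical (for example Stein's extension for Lipschitz domains, or the standard reflection/cut-off construction adapted to the ball).

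For $T\in[\tfrac12,\tfrac32]$ and $\ff=(f_1,f_2)\in H^2\times H^1(\B^6_T)$, I would introduce the rescaling $\tilde f_j(y):=f_j(Ty)$ for $y\in\B^6_1$. A direct change of variables shows
\[
\|\tilde f_j\|_{L^2(\B^6_1)}^2=T^{-6}\|f_j\|_{L^2(\B^6_T)}^2,\qquad
\|\nabla^k \tilde f_j\|_{L^2(\B^6_1)}^2=T^{2k-6}\|\nabla^k f_j\|_{L^2(\B^6_T)}^2,
\]
so for $T\in[\tfrac12,\tfrac32]$ the map $\ff\mapsto(\tilde f_1,\tilde f_2)$ is a bounded isomorphism $H^2\times H^1(\B^6_T)\to H^2\times H^1(\B^6_1)$ whose operator norm and that of its inverse are bounded independently of $T$. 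I would then apply $E$ to $(\tilde f_1,\tilde f_2)$, obtain a pair $(\tilde F_1,\tilde F_2)\in H^2\times H^1(\R^6)$ with a norm bound uniform in $T$, and finally undo the scaling by setting
\[
[\Ef_T\ff]_j(x):=\tilde F_j(x/T),\qquad x\in\R^6.
\]

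By construction $[\Ef_T\ff]_j$ restricted to $\B^6_T$ equals $f_j$, since $\tilde F_j$ restricted to $\B^6_1$ equals $\tilde f_j$. Again by the same change of variables, the $H^2\times H^1(\R^6)$ norm of $\Ef_T\ff$ is comparable, with $T$-independent constants, to that of $(\tilde F_1,\tilde F_2)$, hence bounded by a constant times $\|\ff\|_{H^2\times H^1(\B^6_T)}$. Linearity of $\Ef_T$ is immediate since both rescaling and $E$ are linear.

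There is no real obstacle here; the only thing to be careful about is bookkeeping the scaling exponents so that the equivalence constants between the norms on $\B^6_T$ and on $\B^6_1$ remain uniform for $T$ in the compact set $[\tfrac12,\tfrac32]$. This is automatic because the relevant powers of $T$ are all bounded above and below on this interval, so the entire construction produces a family $\{\Ef_T\}_{T\in[1/2,3/2]}$ with a single uniform operator norm, as required.
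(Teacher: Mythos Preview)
Your proposal is correct and follows essentially the same approach as the paper: fix a single bounded extension operator on the unit ball, conjugate it by the scaling $x\mapsto Tx$, and use the compactness of $[\tfrac12,\tfrac32]$ away from $0$ and $\infty$ to obtain uniform constants. The paper's proof is exactly this argument, stated more tersely.
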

\begin{proof}
Let $\Ef_1$ be a bounded extension operator  from $H^2\times H^1(\B^6_1)$ to $ H^2\times H^1(\R^6)$.  For $\ff \in H^2\times H^1(\B^6_T)$ we define an extension by first mapping $\ff$ to $H^2\times H^1(\B^6_1)$ via the scaling $\ff\mapsto \ff(T.)$, then extending via $\Ef_1$, and finally undoing the scaling. Since $T\in \left[\frac{1}{2},\frac{3}{2}\right]$, the resulting family of extension operators satisfies the desired estimate.
\end{proof}
This extension Lemma allows us to prove the aforementioned Strichartz
estimates for $\Sf_0$.

\begin{lem}\label{Strichart}
Let $p\in [2,\infty]$ and $q\in [6,12]$ be such that $\frac{1}{p}+\frac{6}{q}=1$. Then we have the estimate
\begin{align*}
\|\left[\Sf_0(\tau)\ff\right]_1\|_{L^p_\tau(\R_+)L^q(\B^6_1)}\lesssim \|\ff\|_{\mathcal{H}}
\end{align*}
for all $\ff \in \mathcal{H}$.
Furthermore, also the inhomogeneous estimate
\begin{align*}
\left\|\int_0^\tau\left[\Sf_0(\tau-\sigma)\hfh(\sigma)\right]_1 d\sigma\right\|_{L^p_\tau(I)L^q(\B^6_1)}\lesssim \|\hfh\|_{L^1(I)\mathcal{H}}
\end{align*}
holds for all $\hfh \in L^1(\R_+,\mathcal{H})$ and all intervals $I
\subset [0,\infty)$ containing $0$.
\end{lem}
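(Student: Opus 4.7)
The plan is to realize $\Sf_0$ as the conjugate of the free radial wave evolution on $\R^6$ under the similarity-coordinate change of variables. Once this identification is in place, the claim reduces to the classical Strichartz estimate for the wave equation on $\R^6$ at the $\dot H^2\times \dot H^1$ level, together with the extension Lemma \ref{lem:extension}. The key algebraic fact is that $\tfrac{1}{p}+\tfrac{6}{q}=1$ is simultaneously the scaling condition for wave Strichartz in $d=6$ at the critical regularity \emph{and} the exponent that exactly collapses the Jacobian factors produced when converting between $(\tau,\rho)$ and $(t,x)=(1-e^{-\tau},e^{-\tau}\rho)$.

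Concretely, for $\ff=(f_1,f_2)\in D(\widetilde{\Lf})$, I would first extend $\ff$ to $\Ef_1\ff\in H^2\times H^1(\R^6)$ via Lemma \ref{lem:extension} with $T=1$ (preserving radiality by averaging over $SO(6)$ if needed), and let $u$ be the unique radial solution of $(\partial_t^2-\Delta_x)u=0$ on $[0,1)\times\R^6$ with data $\Ef_1\ff$. Setting $\psi(\tau,\rho):=e^{-\tau}u(1-e^{-\tau},e^{-\tau}\rho)$, the same manipulation used earlier to derive \eqref{eq:syspsi} from \eqref{startingeq2} (with the nonlinearity removed) shows that $(\psi,(1+\partial_\tau+\rho\partial_\rho)\psi)$ classically solves the homogeneous version of \eqref{eq:syspsi} on $[0,\infty)\times \B^6_1$ with data $\ff$ at $\tau=0$. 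Since the backward lightcone of $(1,0)$ intersected with $\{t=0\}$ equals $\B^6_1$, finite speed of propagation makes this restriction independent of the chosen extension, and uniqueness of the semigroup forces $[\Sf_0(\tau)\ff]_1=\psi(\tau,\cdot)|_{\B^6_1}$.

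Next, I would apply the classical wave Strichartz estimate on $\R^6$: the scaling identity $\tfrac{1}{p}+\tfrac{d}{q}=\tfrac{d}{2}-s$ in $d=6$, $s=2$ reads exactly $\tfrac{1}{p}+\tfrac{6}{q}=1$, and wave admissibility $\tfrac{1}{p}+\tfrac{d-1}{2q}\leq \tfrac{d-1}{4}$ is automatic for all $(p,q)$ under consideration, so $\|u\|_{L^p_tL^q_x([0,1)\times\R^6)}\lesssim \|\Ef_1\ff\|_{\dot H^2\times\dot H^1(\R^6)}\lesssim \|\ff\|_{\mathcal H}$. Substituting $y=e^{-\tau}\rho$ and $t=1-e^{-\tau}$ produces
\[
\|\psi(\tau,\cdot)\|_{L^q(\B^6_1)}^p=e^{(\frac{6}{q}-1)p\tau}\|u(t,\cdot)\|_{L^q(\B^6_{1-t})}^p=e^{-\tau}\|u(t,\cdot)\|_{L^q(\B^6_{1-t})}^p,
\]
the second equality using $\tfrac{1}{p}+\tfrac{6}{q}=1$. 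Integrating in $\tau$ and using $e^{-\tau}\,d\tau=dt$ collapses the $\tau$-integral into $\int_0^1\|u(t,\cdot)\|_{L^q(\B^6_{1-t})}^p dt\leq \|u\|_{L^p_tL^q(\R^6)}^p$, which gives the homogeneous estimate on $D(\widetilde{\Lf})$ and, by density, on $\mathcal H$; the endpoint $p=\infty$, $q=6$ is handled analogously.

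The inhomogeneous bound follows from the homogeneous one by Minkowski applied to the Duhamel formula: writing $\int_0^\tau=\int_{\R_+}\mathbf{1}_{[\sigma,\infty)}(\tau)$ and using translation invariance in $\tau$ of the homogeneous bound yields
\[
\Big\|\int_0^\tau[\Sf_0(\tau-\sigma)\hfh(\sigma)]_1\,d\sigma\Big\|_{L^p_\tau(I)L^q(\B^6_1)} \leq \int_I\|[\Sf_0(\cdot)\hfh(\sigma)]_1\|_{L^p(\R_+)L^q(\B^6_1)}\,d\sigma\lesssim \|\hfh\|_{L^1(I)\mathcal H}.
\]
The main obstacle is the identification in the second paragraph: one must carefully verify that the similarity transformation conjugates the free 6D wave equation to the homogeneous version of \eqref{eq:syspsi}, and that the scaling exponents for $\dot H^2\times \dot H^1$ wave Strichartz in $d=6$ coincide with the Jacobian-collapsing exponents of the pullback. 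Once this structural observation is made, the rest is bookkeeping.
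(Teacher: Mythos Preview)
Your proposal is correct and follows essentially the same route as the paper: identify $[\Sf_0(\tau)\ff]_1$ with the free $6$-dimensional wave evolution pulled back through similarity coordinates, invoke the classical Strichartz estimate together with the extension Lemma~\ref{lem:extension}, and then derive the inhomogeneous bound from the homogeneous one via Minkowski. The one cosmetic difference is that the paper only establishes the two endpoints $(p,q)=(2,12)$ (via the standard $L^2_tL^{12}_x$ Strichartz estimate in $\R^6$) and $(p,q)=(\infty,6)$ (via the Sobolev embedding $H^2(\B^6_1)\hookrightarrow L^6(\B^6_1)$ and the semigroup bound $\|\Sf_0(\tau)\ff\|_{\mathcal H}\lesssim\|\ff\|_{\mathcal H}$), and then interpolates, whereas you directly observe that every pair on the line $\tfrac{1}{p}+\tfrac{6}{q}=1$ with $p\in[2,\infty]$ is already wave-admissible at the $\dot H^2\times \dot H^1$ level and apply Strichartz for each such pair. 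Your Jacobian computation showing that this scaling line is exactly what makes the $e^{-\tau}$ factors collapse is the same mechanism the paper exploits at $(2,12)$.
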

\begin{proof}
Let $T\in \left[\frac{1}{2},\frac{3}{2}\right]$, $\ff \in C^2 \times C^1(\overline{\B^6_1})$, and define the scaling operator\\ $\Af_T:H^2\times H^1(\B^6_T)\to \mathcal{H}$ by
 \begin{align*}
\Af_T \ff =( Tf_1(T.),T^2 f_2(T.)).
\end{align*}
In view of the coordinate transformation \eqref{coordinate}, the evolution $\Sf_0(.)\ff$
is given by the solution $u\in C^2(\R_+\times \R)$ restricted to the
lightcone $\Gamma^T$ of the equation
\begin{align*}
\begin{cases}
\left(\partial_t^2-\partial_r^2-\frac{5}{r}\partial_r
\right)u(t,r)=0\\
(u(0,.),\partial_0 u(0,.))=\Ef_T\Af_T^{-1}\ff,
\end{cases}
\end{align*}
where $\Ef_T$ is the Sobolev
extension from Lemma \ref{lem:extension}.
Therefore, $$
\left[\Sf_0(\tau)\ff\right]_1(\rho)=Te^{-\tau}u(T-Te^{-\tau},Te^{-\tau}\rho).
$$
This implies
\begin{align*}
\|\left[\Sf_0(\tau)\ff\right]_1\|_{L^{12}(\B^6_1)}\lesssim&
\|Te^{-\tau}u(T-Te^{-\tau},e^{-\tau}.)\|_{L^{12}(\B^6_1)}\\
\lesssim& Te^{-\frac{1}{2}\tau}\|u(T-Te^{-\tau},.)\|_{L^{12}(\R^6)}
\end{align*}
and so 
\begin{align*}
\|\left[\Sf_0(.)\ff\right]_1\|_{L^2(\R_+)L^{12}(\B^6_1)}
\lesssim& \left (\int_0^\infty \left(Te^{-\frac{1}{2}\tau}\|u(T-Te^{-\tau},.)\|_{L^{12}(\R^6)}\right)^2 d \tau\right)^{\frac{1}{2}}
\\
\lesssim&\|u\|_{L^2(\R_+)L^{12}(\R^6)}
\lesssim\left\|\Ef_T\Af_T^{-1}\ff\right\|_{H^2\times H^1(\R^6)}
\\
\lesssim&\left\|\Af_T^{-1}\ff\right\|_{H^2\times H^1(\B^6_T)} \lesssim \|\ff\|_{\mathcal{H}}
\end{align*}
by the standard Strichartz estimates, which can for instance be found in \cite{Sog95}.
Thanks to the Sobolev embedding $H^2(\B^6_1)\hookrightarrow L^6(\B^6_1)$,
we have that
\begin{align*}
\|\left[\Sf_0(\tau)\ff\right]_1\|_{L^6(\B^6_1)}\lesssim \|\Sf_0(\tau)\ff\|_{\mathcal{H}}\lesssim \|\ff\|_{\mathcal{H}}
\end{align*} and so, the other endpoint estimate, given by
\[
\|\left[\Sf_0(.)\ff\right]_1\|_{L^\infty(\R_+)L^6(\B^6_1)}\lesssim \|\ff\|_{\mathcal{H}},
\]
follows immediately.

For the general estimate we use interpolation to obtain
\begin{align*}
\|[\Sf_0(\tau)\ff]_1\|_{L^q(\B^6_1)}\lesssim \|[\Sf_0(\tau)\ff]_1\|_{L^6(\B^6_1)}^{\frac{12}{q}-1}\|[\Sf_0(\tau)\ff]_1\|_{L^{12}(\B^6_1)}^{2-\frac{12}{q}},
\end{align*}
hence
\begin{align*}
\|[\Sf_0(.)\ff]_1\|_{L^p(\R_+)L^q(\B^6_1)}\lesssim \|[\Sf_0(.)\ff]_1\|_{L^\infty(\R_+)L^6(\B^6_1)}^{\frac{12}{q}-1}\|[\Sf_0(.)\ff]_1\|_{L^2(\R_+)L^{12}(\B^6_1)}^{2-\frac{12}{q}}\lesssim \|\ff\|_{\mathcal{H}},
\end{align*}
provided that $\frac{1}{p}+\frac{6}{q}=1$.

To obtain the inhomogeneous estimate, we let $I=[0,\tau_0)\subset [0,\infty)$ and use Minkowski's inequality to estimate
\begin{align*}
&\left\|\int_0^\tau\left[\Sf_0(\tau-\sigma)\hfh(\sigma)\right]_1 d\sigma\right\|_{L^p_\tau(I) L^q(\B^6_1)} 
\\
=&\left\|\int_0^{\tau_0}\ind_{[0,\tau_0]}(\tau-\sigma)\left[\Sf_0(\tau-\sigma)\hfh(\sigma)\right]_1 d\sigma\right\|_{L^p_\tau(I)L^q(\B^6_1)}
\\
\leq &\int_0^{\tau_0}\left\|\ind_{[0,\tau_0]}(\tau-\sigma)\left[\Sf_0(\tau-\sigma)\hfh(\sigma)\right]_1 \right\|_{L^p_\tau(\R_+)L^q(\B^6_1)}d\sigma
\\
\leq &\int_0^{\tau_0}\left\|\left[\Sf_0(\tau)\hfh(\sigma)\right]_1 \right\|_{L^p_\tau(\R_+)L^q(\B^6_1)}d\sigma
\\
\lesssim &\int_0^{\tau_0}\left\|\hfh(\sigma)\right\|_{\mathcal{H}}d\sigma.
\end{align*}
\end{proof}

We will also require Strichartz estimates for the derivatives. 
\begin{lem}\label{freestr2}
We have
	\begin{align*}
	\|\left[\Sf_0(\tau)\ff\right]_1\|_{L^2_\tau(\R_+)\dot{W}^{1,4}(\B^6_1)}\lesssim \|\ff\|_{\mathcal{H}}
	\end{align*}
	for all $\ff\in \mathcal{H}$.
	Moreover, 
	\begin{align*}
	\left\|\int_0^\tau\left[\Sf_0(\tau-\sigma)\hfh(\sigma)\right]_1 d\sigma\right\|_{L^2_\tau(I)\dot{W}^{1,4}(\B^6_1)}\lesssim \|\hfh\|_{L^1(I)\mathcal{H}}
	\end{align*}
	for all $\hfh \in L^1(\R_+,\mathcal{H})$ and all intervals
        $I\subset [0,\infty)$ containing $0$.
	Finally, for the second component of the semigroup $\Sf_0$ we have the estimate
\begin{equation*}
	\left\| [\Sf_0(\tau)\ff]_2\right\|_{L^2_\tau(\R_+)L^4(\B_1^6)}\lesssim\|\ff\|_{\mathcal{H}}
	\end{equation*}
for all $\ff\in \mathcal{H}$ as well as the inhomogeneous estimate
	\begin{equation*}
	\left\| \int_0^\tau [\Sf_0(\tau-\sigma)\hfh(\sigma)]_2 d \sigma\right\|_{L^2_\tau(I)L^4(\B_1^6)} \lesssim\|\hfh\|_{L^1(I)\mathcal{H}}
	\end{equation*}
	 for all $\hfh \in L^1(\R_+,\mathcal{H})$ and all intervals
         $I\subset [0,\infty)$ containing $0$.
\end{lem}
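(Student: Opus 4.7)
The plan is to follow the strategy of Lemma \ref{Strichart} line by line, reducing the assertion to the standard Strichartz estimates for the free six-dimensional wave equation at the admissible pair $(p,q)=(2,4)$. Recall that $(2,4)$ is wave-admissible in $d=6$ at scaling index $s=1$, i.e.~$\|u\|_{L^2_t L^4_x(\R\times\R^6)}\lesssim \|(f,g)\|_{\dot H^1\times L^2(\R^6)}$ for any free wave $u$ with data $(f,g)$, see \cite{Sog95}. As in the proof of Lemma \ref{Strichart}, I use the extension-rescaling operator $\Ef_T\Af_T^{-1}$ and the explicit identity $[\Sf_0(\tau)\ff]_1(\rho)=Te^{-\tau}u(T-Te^{-\tau},Te^{-\tau}\rho)$, where $u$ solves the free wave equation on $\R\times\R^6$ with initial data $\Ef_T\Af_T^{-1}\ff$.

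For the first homogeneous estimate, differentiating in $\rho$ gives $\partial_\rho[\Sf_0(\tau)\ff]_1(\rho)=T^2e^{-2\tau}\partial_r u(T-Te^{-\tau},Te^{-\tau}\rho)$, and the change of variable $r=Te^{-\tau}\rho$ in the radial $L^4(\B_1^6)$ integral produces a Jacobian factor yielding
\[
\|\nabla[\Sf_0(\tau)\ff]_1\|_{L^4(\B_1^6)} \lesssim T^{1/2}e^{-\tau/2}\|\nabla u(T-Te^{-\tau},\cdot)\|_{L^4(\R^6)}.
\]
Squaring, integrating in $\tau$, and substituting $t=T-Te^{-\tau}$ reduces the claim to $\|\nabla u\|_{L^2([0,T))L^4(\R^6)}^2\lesssim \|\ff\|_{\mathcal H}^2$. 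Since $\nabla u$ solves the free wave equation with data $(\nabla f,\nabla g)\in\dot H^1\times L^2(\R^6)$, the Strichartz estimate recalled above combined with Lemma \ref{lem:extension} bounds the right-hand side by $\|\Ef_T\Af_T^{-1}\ff\|_{H^2\times H^1(\R^6)}^2\lesssim \|\ff\|_{\mathcal H}^2$, uniformly for $T\in[1/2,3/2]$.

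For the second component I first identify $[\Sf_0(\tau)\ff]_2$ explicitly. A direct chain-rule computation on $\psi_1(\tau,\rho)=Te^{-\tau}u(T-Te^{-\tau},Te^{-\tau}\rho)$ produces
\[
\partial_\tau\psi_1=-\psi_1+T^2e^{-2\tau}\partial_tu-T^2e^{-2\tau}\rho\,\partial_ru,\qquad \rho\partial_\rho\psi_1=T^2e^{-2\tau}\rho\,\partial_ru,
\]
so that the definition \eqref{transform} collapses to $[\Sf_0(\tau)\ff]_2(\rho)=T^2e^{-2\tau}\partial_tu(T-Te^{-\tau},Te^{-\tau}\rho)$. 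The same change of variable as above reduces the third estimate of the lemma to $\|\partial_tu\|_{L^2([0,T))L^4(\R^6)}^2\lesssim \|\ff\|_{\mathcal H}^2$. Since $\partial_tu$ is itself a solution of the free wave equation, now with initial data $(g,\Delta f)\in\dot H^1\times L^2(\R^6)$, this again follows from the free Strichartz estimate together with Lemma \ref{lem:extension}.

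Finally, the two inhomogeneous bounds follow from the corresponding homogeneous estimates via exactly the Minkowski manoeuvre used at the end of the proof of Lemma \ref{Strichart}: extend the Duhamel integral to $\R_+$ through an indicator in $\sigma$, exchange the outer $L^p_\tau L^q_x$ norm with the $\sigma$-integration, and discard the indicator using translation invariance of the $L^p$-norm in $\tau$, thereby reducing to the homogeneous Strichartz bound applied to $\hfh(\sigma)\in\mathcal H$. I do not anticipate any genuine obstacle here; the only point of care is uniformity of all implicit constants in $T\in[1/2,3/2]$, which is automatic from the bounded scaling and from Lemma \ref{lem:extension}.
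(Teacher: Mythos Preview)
Your proof is correct and follows essentially the same route as the paper: reduce to the free six-dimensional wave equation via the scaling/extension $\Ef_T\Af_T^{-1}$ and the identity $[\Sf_0(\tau)\ff]_1(\rho)=Te^{-\tau}u(T-Te^{-\tau},Te^{-\tau}\rho)$, then invoke the standard $L^2_t L^4_x$ Strichartz estimate at the $\dot H^1\times L^2$ level for $\nabla u$ and $\partial_t u$, and conclude the inhomogeneous bounds by the Minkowski argument from Lemma~\ref{Strichart}. The paper phrases the first part as a direct appeal to the $L^2\dot W^{1,4}$ Strichartz estimate and handles the second component via the identity $[\Sf_0(\tau)\ff]_2=(\partial_\tau+\rho\partial_\rho+1)[\Sf_0(\tau)\ff]_1$, which is exactly what your chain-rule computation unpacks.
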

\begin{proof}
We argue in a similar fashion as in the proof of Lemma \ref{Strichart}
and so we again assume that $\ff\in C^2\times C^1(\overline{\B}^6_1)$
and let $u\in C^2(\R_+\times \R)$ be the solution of the Cauchy problem  
\begin{align*}
\begin{cases}
\left(\partial_t^2-\partial_r^2-\frac{5}{r}\partial_r
\right)u(t,r)=0\\
(u(0,.), \partial_0 u(0,.))=\Ef_T\Af_T^{-1}\ff,
\end{cases}
\end{align*}
where, as before, $\Af_T:H^2\times H^1(\B^6_T)\to \mathcal{H}$ with
 \begin{align*}
\Af_T \ff =( Tf_1(T.),T^2 f_2(T.))
\end{align*}
and $\Ef_T$ is the Sobolev extension operator from Lemma
\ref{lem:extension}. 
It follows that
\begin{align*}
\|[\Sf_0(\tau)\ff]_1 \|_{\dot{W}^{1,4}(\B^6_1)}\lesssim& \|e^{-2\tau}\partial_1u(T-Te^{-\tau},Te^{-\tau}.)\|_{L^4(\B^6_1)}
\\
\lesssim& \|e^{-\frac{1}{2}\tau}\partial_1u(T-Te^{-\tau},.)\|_{L^4(\R^6)}.
\end{align*}  
This implies
\begin{align*}
\|[\Sf_0(.)\ff]_1 \|_{L^2(\R_+)\dot{W}^{1,4}(\B^6_1)}
\lesssim& \|u\|_{L^2(\R_+) \dot{W}^{1,4}(\R^6)}.
\end{align*}
and so the first estimate follows from the standard $L^2\dot{W}^{1,4}$
Strichartz estimate. To establish the homogeneous estimate on the
second component, it suffices to remark that
\begin{align*}
[\Sf_0(\tau)\ff(\rho)]_2=[(\partial_\tau + \rho\partial_\rho+1)\Sf_0(\tau)\ff(\rho)]_1
\end{align*}
and therefore, the claim follows analogously to the above. 

The inhomogeneous estimates are a consequence of Minkowski's inequality, see the proof of Lemma \ref{Strichart}.
\end{proof}

This concludes our discussion of the free semigroup for the time being and we turn to perturbations of the blowup solution $u^T_*$. 
In view of Eq.~\eqref{eq:syspsi} we set 
\begin{align*}
N(u)(\rho):=-\frac{3\sin(2\rho u(\rho))-6\rho u(\rho)}{2\rho^3}
\end{align*}
 and further make the ansatz $\Psi=\Psi_*+\Phi,$ where
\begin{equation}
  \Psi_*(\rho):=
  \begin{pmatrix}
    \psi_{*_1}(\rho) \\ \psi_{*_2}(\rho)
  \end{pmatrix}
  :=
  \begin{pmatrix}
\frac{2}{\rho} \arctan\left(\frac{\rho}{\sqrt{2}}\right) \\
\frac{2\sqrt{2}}{\rho^2+2}
\end{pmatrix}
\end{equation}
is the blowup solution $u^T_*$ in similarity coordinates. Our next step is to linearise the nonlinearity at this solution and so we define the operator $\Lf'$, mapping from $\mathcal{H}$ to $\mathcal{H}$, by
\begin{align*}
\Lf' \uf (\rho):=\begin{pmatrix}
0\\
\frac{48}{(\rho^2+2)^2}u_1(\rho)
\end{pmatrix}
\end{align*}
and set
\begin{align*}
\Nf (\uf)(\rho) :=\begin{pmatrix}
0\\
N(\psi_{*_1}+u_1)(\rho)-N(\psi_{*_1})(\rho)-\frac{48}{(\rho^2+2)^2}u_1(\rho)
\end{pmatrix}.
\end{align*}
Finally, we set 
$\Lf:=\Lf_0+\Lf'$
and remark that since $\Lf'$ is a compact linear operator, the bounded perturbation theorem implies that $\Lf$ will also generate a semigroup on $\mathcal{H}$, which we denote by $\Sf$. 
This allows us to formally rewrite our equation in Duhamel form as
\begin{align}\label{integraleq}
\Phi(\tau)=\Sf(\tau)\uf+\int_0^\tau \Sf(\tau-\sigma)\Nf(\Phi(\sigma)) d \sigma.
\end{align}
To make sense of this equation, we will show in the following that
$\Sf$ satisfies Strichartz estimates as in Lemmas \ref{Strichart} and
\ref{freestr2}, provided we project away the unstable direction. This will naturally give meaning to Eq.~\eqref{integraleq} in an appropriate Strichartz space.

\subsection{Spectral analysis of $\Lf$}

In what follows, we will analyse the spectrum of the operator
$\Lf$. To this end, we remark that for any $\lambda\in \C$ with
$\Re\lambda>0$, we have that $\lambda \in \rho(\Lf_0)$ since
$\Lf_0$ generates a contraction semigroup. Thus, the identity
$\lambda-\Lf=(1-\Lf'\Rf_{\Lf_0}(\lambda))(\lambda-\Lf_0)$, with
$\Rf_{\Lf_0}(\lambda):=(\lambda-\Lf_0)^{-1}$, implies that any
spectral point $\lambda$  with $\Re\lambda> 0$ has to be an eigenvalue
by the spectral theorem for compact operators.

\begin{lem}
   \label{lem:spec}
  The point spectrum $\sigma_p(\Lf)$ of $\Lf$ is contained in $\{z\in \C:
  \Re z< 0\}\cup \{1\}$.
\end{lem}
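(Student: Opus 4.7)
The plan is to reduce the eigenvalue equation to a scalar second-order ODE and then invoke the mode stability analysis of \cite{CosDonGlo17}. Fix $\lambda \in \sigma_p(\Lf)$ with $\Re \lambda \geq 0$ and $\lambda \neq 1$; the goal is a contradiction. Let $\uf = (u_1, u_2) \in \mathcal{H}$ be an eigenfunction. Writing $(\lambda - \Lf)\uf = 0$ componentwise, the first line gives $u_2 = (\lambda + 1) u_1 + \rho u_1'$, and substitution into the second yields the homogeneous version of Eq.~\eqref{eq:outline}, namely
\begin{equation*}
(\rho^2 - 1) u_1''(\rho) + \left(2(\lambda+2)\rho - \frac{5}{\rho}\right) u_1'(\rho) + \left[(\lambda+2)(\lambda+1) - \frac{48}{(\rho^2+2)^2}\right] u_1(\rho) = 0.
\end{equation*}
The coefficients are smooth on $(0,1)$, so any $H^2$ solution is automatically smooth there; conversely, every radial $u_1 \in H^2(\B^6_1)$ solving this ODE yields an eigenfunction of $\Lf$ via the formula for $u_2$.

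Next I would perform a Frobenius analysis at the regular singular points $\rho = 0$ and $\rho = 1$. At the origin, the indicial exponents are $0$ and $-4$, so $H^2$ admissibility near $\rho = 0$ selects the one-dimensional subspace of smooth even solutions. At $\rho = 1$, the indicial exponents are $0$ and $\tfrac{3}{2} - \lambda$; a direct check using the radial weight $\rho^5$ shows that for $\Re \lambda \geq 0$ the nontrivial branch, whose leading behavior is $(1-\rho)^{3/2 - \lambda}$, satisfies $|u_1''(\rho)|^2 \sim (1-\rho)^{-1 - 2\Re \lambda}$, which is not $\rho^5$-integrable on $(0,1)$. Hence only the smooth branch at $\rho = 1$ is admissible. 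Consequently, the existence of an eigenvalue at $\lambda$ is equivalent to the coincidence of the two one-dimensional spaces of admissible solutions, i.e., to the vanishing of a transcendental connection coefficient $W(\lambda)$ formed from fundamental systems at the two endpoints.

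The remaining and decisive task is to show $W(\lambda) \neq 0$ throughout $\{\Re \lambda \geq 0\} \setminus \{1\}$. This is precisely the mode stability problem treated in \cite{CosDonGlo17} for the corotational self-similar wave map profile; the proof of Lemma \ref{lem:spec} then reduces to matching our ODE to theirs (up to harmless rescaling or reparametrization) and citing their nonvanishing result. The exceptional value $\lambda = 1$ arises from the time-translation symmetry of the wave maps equation and produces a genuine eigenfunction associated with $\partial_T u_*^T|_{T=1}$. The principal obstacle is the mode stability step itself: it is a transcendental connection problem for a non-hypergeometric ODE, admits no soft proof (dissipativity-type estimates cannot distinguish the time-translation mode from hypothetical other unstable modes), and genuinely requires the delicate special-function machinery of \cite{CosDonGlo17}. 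Everything else — the reduction, and the exclusion of the singular branches via Sobolev weights at both endpoints (including the imaginary axis) — is routine.
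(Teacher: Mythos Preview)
Your proposal is correct and follows essentially the same approach as the paper: reduce the eigenvalue equation to the scalar second-order ODE, use Frobenius analysis at the two regular singular endpoints to show that $H^2$ membership forces $u_1 \in C^\infty([0,1])$, and then invoke the mode stability result of \cite{CosDonGlo17}. The only detail you leave implicit is the explicit substitution the paper uses to match the ODE to the one in \cite{CosDonGlo17}, namely $v_1(\rho) = \rho\, u_1(\rho)$, which transforms the equation into the precise form analyzed there; this is indeed the ``harmless reparametrization'' you allude to.
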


\begin{proof}
Suppose that $\lambda$ is an eigenvalue of $\Lf$ with $\Re\lambda\geq 0$,
i.e., there exists a nontrivial $\uf\in D(\Lf)$ such that
$(\lambda-\Lf)\uf=\mathbf 0$.
A similar computation
as in the proof of Lemma \ref{density} shows that this implies the linear second order ODE
\begin{equation} \label{spectraleq}
(\rho^2-1)u_1''(\rho)+\left(2(\lambda+2)\rho-\frac{5}{\rho}\right)u_1'(\rho)+(\lambda+2)(\lambda+1)u_1(\rho)-\frac{48}{(\rho^2+2)^2}u_1(\rho)=0
\end{equation}
and $u_1$ is nontrivial.
By assumption, $\uf \in \mathcal H$ and thus,
$u_1\in H^2(\B^6_1)$. ODE regularity yields $u_1\in
C^\infty(0,1)$. In order to determine the behavior at the singular
endpoints, we employ Frobenius' method.
The indices are $\{0, -4\}$ at $\rho=0$ and $\{0, \frac32-\lambda\}$
at $\rho=1$.
From the endpoint behavior and $u_1\in H^2(\B_1^6)$ it follows that $u_1\in C^\infty([0,1])$.
In terms of the variable $v_1(\rho):=\rho u_1(\rho)$, Eq.~\eqref{spectraleq}
reads
\[ (1-\rho^2)v_1''(\rho)+\left
    [\frac{3}{\rho}-2(\lambda+1)\rho\right]v_1'(\rho)
  -\lambda(\lambda+1)v_1(\rho)-\frac{3(\rho^4-12\rho^2+4)}{\rho^2(\rho^2+2)^2}v_1(\rho)=0. \]
This spectral problem has been studied thoroughly in
\cite{CosDonGlo17} and from there we obtain that necessarily $\lambda=1$. 
\end{proof}

\begin{lem}
The spectrum of $\Lf=\Lf_0+\Lf'$ satisfies $\sigma(\Lf)\subset\{z\in
\C:\Re(z)\leq 0\} \cup\{1\}$, with $1\in \sigma_p(\Lf)$. Furthermore, the eigenvalue $1$ has geometric and algebraic multiplicity one and the associated eigenfunction is given by
$$
\gf(\rho)=\begin{pmatrix}
\frac{1}{2+\rho^2}\\ \frac{4}{(2+\rho^2)^2}
\end{pmatrix}.
$$
\end{lem}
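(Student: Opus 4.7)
The plan is to establish the four distinct claims in turn.

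For the spectral inclusion, I would use the factorization $\lambda-\Lf=(\ind-\Lf'\Rf_{\Lf_0}(\lambda))(\lambda-\Lf_0)$ already employed before Lemma \ref{lem:spec}. Since $\Lf_0$ generates a contraction semigroup on $(\mathcal{H},\|\cdot\|_{\widetilde{\mathcal{H}}})$, its spectrum is contained in $\{\Re z\leq 0\}$, so for $\Re\lambda>0$ the operator $\Lf'\Rf_{\Lf_0}(\lambda)$ is well defined and compact. By the analytic Fredholm alternative every such $\lambda$ either lies in the resolvent set of $\Lf$ or is an eigenvalue, and Lemma \ref{lem:spec} forces the only possibility to be $\lambda=1$. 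The fact that $\gf$ is an eigenfunction at $\lambda=1$ is then a direct verification: the first line of the system $(1-\Lf)\gf=\mathbf{0}$ demands $g_2=2g_1+\rho g_1'$, which is immediate from $g_1(\rho)=\frac{1}{2+\rho^2}$, while the second line reduces to checking that $g_1$ solves the homogeneous ODE \eqref{spectraleq} at $\lambda=1$, a short algebraic computation. Since $\gf$ is smooth on $[0,1]$, it lies in $D(\Lf)$, so $1\in\sigma_p(\Lf)$.

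For the geometric multiplicity, any nontrivial $\uf\in\ker(1-\Lf)$ yields, as in the proof of Lemma \ref{lem:spec}, a nontrivial solution of \eqref{spectraleq} at $\lambda=1$, whose solution space is two-dimensional. I would apply Frobenius' method at both singular endpoints to cut this down. At $\rho=0$ the indices are $\{0,-4\}$, so any $\rho^{-4}$-type branch is excluded by $H^2$-integrability, and at $\rho=1$ the indices are $\{0,\tfrac12\}$, so the $(1-\rho)^{1/2}$ branch has second derivative of size $(1-\rho)^{-3/2}$, which fails integrability against $\rho^5\,d\rho$. Admissibility at each endpoint thus carves out a one-dimensional subspace of the two-dimensional solution space, so their intersection is at most one-dimensional; the explicit eigenfunction $\gf$ shows it is precisely $\Span\{\gf\}$.

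The algebraic multiplicity is the most delicate part and constitutes the main obstacle. It suffices to show that $(\ind-\Lf)\hfh=\gf$ admits no solution $\hfh\in D(\Lf)$, for any generalized eigenvector of order greater than one would, after being moved down the filtration by powers of $(\ind-\Lf)$, produce such an $\hfh$. Reducing this inhomogeneous equation to its first component exactly as in the proof of Lemma \ref{density} yields an inhomogeneous version of \eqref{spectraleq} at $\lambda=1$ with right-hand side $F(\rho)=\frac{10+\rho^2}{(2+\rho^2)^2}$. Starting from the known homogeneous solution $\psi_1(\rho)=\frac{1}{2+\rho^2}$, I would construct a second linearly independent homogeneous solution by reduction of order, write down the general solution via variation of parameters, and then examine its endpoint behavior. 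The decisive step is to evaluate a definite integral---the Fredholm-type obstruction obtained by pairing $F$ against the suitable homogeneous solution---and verify that it does not vanish. Non-vanishing forces a $(1-\rho)^{1/2}$ (or logarithmic) term near $\rho=1$ that cannot be absorbed by any choice of the two free constants used to kill the $\rho^{-4}$ behavior at $\rho=0$, ruling out an $H^2$ solution.
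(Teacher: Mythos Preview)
Your approach is essentially the same as the paper's: verify $\gf$ is an eigenfunction directly, rule out a second $H^2$ eigenfunction by analyzing the singular endpoints of \eqref{spectraleq} at $\lambda=1$, and exclude a generalized eigenvector by reducing $(\I-\Lf)\hfh=\gf$ to an inhomogeneous ODE and checking an obstruction integral. Your right-hand side $F(\rho)=\frac{10+\rho^2}{(2+\rho^2)^2}$ agrees with the paper's.

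Two points are worth sharpening. First, you skip a step the paper makes explicit: before invoking a Jordan-type filtration you need $\dim\rg\Pf<\infty$, where $\Pf$ is the Riesz projection at $1$. The paper obtains this by noting that the essential spectrum is stable under the compact perturbation $\Lf'$ and $1\notin\sigma(\Lf_0)$; without it, ``algebraic multiplicity one'' in the sense of $\dim\rg\Pf=1$ (which is what the rest of the paper uses) does not follow merely from $\ker(\I-\Lf)^2=\ker(\I-\Lf)$. Second, you leave the key nonvanishing of the obstruction integral as a verification to be done. The paper's argument here is clean and worth knowing: after setting $c_2=0$ to kill the $\rho^{-4}$ behavior at the origin, the coefficient of the $(1-\rho)^{1/2}$-type term at $\rho=1$ is $\int_0^1\frac{g_1(s)G(s)}{(1-s^2)W(g_1,\widetilde g_1)(s)}\,ds$, whose integrand has a strict sign on $(0,1)$ since $g_1,G>0$ and $W(g_1,\widetilde g_1)<0$ there. (Also, a minor slip: only one free constant is used to eliminate the $\rho^{-4}$ behavior at $0$; the remaining constant multiplies $g_1$, which is smooth at $\rho=1$ and hence cannot cancel the singular term.)
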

\begin{proof}
Obviously, $\gf\in D(\Lf)$ and a straightforward computation shows
that $(1-\Lf)\gf=\mathbf 0$. Hence, $1\in \sigma_p(\Lf)$.
By reduction of order we see that a second solution to the generalized eigenvalue equation (\ref{spectraleq}) with $\lambda=1$ is given by
$$
\widetilde{g}_1(\rho)=g_1(\rho)\int_\rho^1 \frac{(s^2+2)^2}{s^5\sqrt{1-s^2}}\, d s.
$$
Now any solution of the equation has to be a linear combination of $g_1$ and $\widetilde{g}_1$ and as $\widetilde{g}_1$ is not in $H^2(\B^6_1)$, we conclude that an eigenfunction has to be a multiple of $\gf$ and therefore the geometric multiplicity of the eigenvalue $1$ is one. Next, let $\Pf$ be the spectral projection associated to this eigenvalue, i.e.,
$$
\Pf \uf:=\int_\gamma \Rf_{\Lf}(\lambda) \uf\, d \lambda,
$$
where $\gamma:[0,1]\to \C$, $\gamma(t)=1+\frac{e^{2\pi i t}}{2}$. 
Note that $\dim \Pf < \infty$, since otherwise $1$ would belong to the
essential spectrum of $\Lf$. This, however, is impossible because the
essential spectrum is invariant under compact perturbations and
$1\notin\sigma(\Lf_0)$. As $\Pf$ is a projection, we have a
decomposition of $ \mathcal{H}$ into the closed subspaces $\rg \Pf$
and $\ker \Pf$. This also yields a decomposition of $\Lf$ into the
operators $\Lf_{\rg \Pf}$ and $\Lf_{\ker \Pf}$ which act as operators
on $\rg \Pf$ and $\ker \Pf$ respectively.
The inclusion $\langle \gf \rangle \subset \rg \Pf$ is immediate and
we claim that in fact $\rg\Pf=\langle\gf\rangle$. To see this, note
that the operator $(\I_{\rg \Pf}-\Lf_{\rg \Pf}): \rg \Pf \to \rg \Pf$
is nilpotent as the only eigenvalue of $ \Lf_{\rg \Pf}$ is
$1$. Therefore, there exists a minimal $n\geq 1$ such that
$(\I_{\rg\Pf}-\Lf_{\rg \Pf})^n \uf=0$ for all $\uf \in \rg \Pf$. If
$n=1$, we are done. If not, then there exists a $\vf \in \rg \Pf$ such
that $(\I_{\rg \Pf}-\Lf_{\rg \Pf})\vf =\gf$. This implies that $v_1 $ satisfies the inhomogeneous ODE
\begin{equation*}
(\rho^2-1)v_1''(\rho)+\left(6\rho-\frac{5}{\rho}\right)v_1'(\rho)+\left(6-\frac{48}{(\rho^2+2)^2}\right)v_1(\rho)=G(\rho)
\end{equation*}
with $G(\rho)=g_2(\rho)+3 g_1(\rho)+\rho g_1'(\rho)=\frac{\rho^2+10}{(2+\rho^2)^2}.$
By the variation of constants formula, $v_1$ has to be of the form
\begin{align*}
v_1(\rho)=&c_1 g_1(\rho)+c_2\widetilde{g}_1(\rho)-g_1(\rho)\int_{\rho}^{1} \frac{\widetilde{g_1}(s)G(s)}{\left(1-s^2\right)W(g_1,\widetilde{g_1})(s)} d s
\\
&-
\widetilde{g_1}(\rho)\int_{0}^\rho \frac{g_1(s)G(s)}{\left(1-s^2\right)W(g_1,\widetilde{g_1})(s)} ds
\end{align*}
with $c_1,c_2\in \C$.
Note that
$$
W(g_1,\widetilde{g}_1)(\rho)=-g_1(\rho)^2\frac{(\rho^2+2)^2}{\rho^5\sqrt{1-\rho^2}}
$$
is strictly negative on $(0,1)$ and therefore nonvanishing on that interval.
Evidently, the expressions $
\frac{g_1(\rho)G(\rho)}{(1-\rho^2)W(g_1,\widetilde{g_1})(\rho)}$ and $
\frac{\widetilde{g_1}(\rho)G(\rho)}{(1-\rho^2)W(g_1,\widetilde{g_1})(\rho)}$
are continuous
and integrable on $(0,1)$. 
Consequently, since $v_1 \in H^2(\B^6_1)$, we must have $c_2=0$.
Furthermore, $|\widetilde g_1'(\rho)|\simeq (1-\rho)^{-\frac12}$ near
$\rho=1$ and thus, we must have
\[ \int_0^1 \frac{g_1(s)G(s)}{(1-s^2)W(g_1, \widetilde g_1)(s)}ds=0. \]
 This is however impossible due to the strict negativity of the integrand on $(0,1)$.
\end{proof}

A calculation which is very similar to the one done in the proof of Lemma 2.6 in \cite{DonRao20} yields our next result.
\begin{lem}\label{resbound}
For every $\varepsilon > 0$, there exist constants $C_\varepsilon,
K_\varepsilon >0$ such that
\[
	\|\Rf_{\Lf}(\lambda)\|_{\mathcal{H}}\leq C_\varepsilon
\]
for all $\lambda \in \C$ satisfying $|\lambda|\geq K_\varepsilon$ and $\Re \lambda\geq\varepsilon$. 
\end{lem}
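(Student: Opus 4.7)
The plan is to express $\Rf_{\Lf}(\lambda)$ as a Neumann perturbation of $\Rf_{\Lf_0}(\lambda)$. From the factorisation $\lambda-\Lf=(\I-\Lf'\Rf_{\Lf_0}(\lambda))(\lambda-\Lf_0)$ one obtains $\Rf_{\Lf}(\lambda)=\Rf_{\Lf_0}(\lambda)(\I-\Lf'\Rf_{\Lf_0}(\lambda))^{-1}$ whenever the inverse exists, while Lemma~\ref{semigroupgen} combined with the Hille--Yosida theorem gives $\|\Rf_{\Lf_0}(\lambda)\|_{\mathcal{H}}\lesssim 1/\Re\lambda\leq 1/\varepsilon$ uniformly on the half-plane $\Re\lambda\geq\varepsilon$. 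The whole lemma therefore reduces to proving $\|\Lf'\Rf_{\Lf_0}(\lambda)\|_{\mathcal{H}}\leq\tfrac{1}{2}$ once $|\lambda|\geq K_\varepsilon$ and $\Re\lambda\geq\varepsilon$, since the Neumann series then yields $\|\Rf_{\Lf}(\lambda)\|_{\mathcal{H}}\leq 2/\varepsilon$ (up to the implicit constant).

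Given $\ff\in\mathcal{H}$, set $\uf:=\Rf_{\Lf_0}(\lambda)\ff$. Proceeding exactly as in the reductions used in Lemmas~\ref{density} and~\ref{lem:spec}, the resolvent equation collapses to the scalar ODE
$$
(\rho^2-1)u_1''(\rho)+\Bigl[(2\lambda+4)\rho-\tfrac{5}{\rho}\Bigr]u_1'(\rho)+(\lambda+2)(\lambda+1)u_1(\rho)=F_\lambda(\rho),
$$
with $F_\lambda=f_2+(\lambda+2)f_1+\rho f_1'$, and $u_2=(\lambda+1)u_1+\rho u_1'-f_1$ recovered algebraically. Since $[\Lf'\uf]_2=Vu_1$ with $V(\rho):=48/(\rho^2+2)^2$ smooth and bounded on $[0,1]$, we have $\|\Lf'\uf\|_{\mathcal{H}}\lesssim\|u_1\|_{H^1(\B^6_1)}$, so it suffices to establish the $|\lambda|$-uniform decay estimate
$$
\|u_1\|_{H^1(\B^6_1)}\leq\frac{C_\varepsilon}{|\lambda|}\|\ff\|_{\mathcal{H}}.
$$

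The heuristic behind this estimate is that for large $|\lambda|$ the zeroth-order coefficient $(\lambda+2)(\lambda+1)$ dominates and forces $u_1\sim F_\lambda/\lambda^2$; combined with $|F_\lambda|\lesssim|\lambda|\|\ff\|_{\mathcal{H}}$, this supplies exactly one power of $|\lambda|^{-1}$. I would make this rigorous via a weighted a priori estimate in which the large parameter yields a coercive quadratic form of order $|\lambda|^{2}\|u_1\|^{2}$, with the remaining cross terms absorbed by Cauchy--Schwarz and integration by parts. This is precisely the calculation performed in the proof of Lemma~2.6 in \cite{DonRao20}, which transfers to our setting essentially unchanged because the underlying ODE is structurally identical. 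The main technical obstacle is the bookkeeping of the boundary contributions at the singular endpoints $\rho=0$ (where $5/\rho$ blows up) and $\rho=1$ (where the principal part $\rho^2-1$ degenerates); both are handled by exploiting the $H^2$-regularity inherited from $D(\Lf_0)$ together with Hardy-type inequalities in the spirit of Lemma~\ref{helplemoverrho}. Once the decay estimate is in hand, choosing $K_\varepsilon$ so that $C_\varepsilon/K_\varepsilon\leq 1/2$ completes the proof.
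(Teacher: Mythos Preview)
Your proposal is correct and follows essentially the same route as the paper: the paper does not give an independent argument but simply refers to the calculation in Lemma~2.6 of \cite{DonRao20}, which is exactly the Neumann-series reduction plus the $|\lambda|^{-1}$-gain on $\|u_1\|_{H^1}$ that you outline. Your description of the mechanism (the coercive $|\lambda|^2$-term in the weighted pairing, boundary bookkeeping via Hardy-type inequalities) matches that reference, so there is nothing to add.
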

This in turn implies
\begin{lem}\label{proectionsemigroup}
For every $\varepsilon >0$, there exists a constant $C_\varepsilon >0$ such that
$$
\| \Sf(\tau)(\I-\Pf)\ff\|_{\mathcal{H}} \leq C_\varepsilon e^{\epsilon\tau}\|\ff\|_{\mathcal{H}}
$$
for all $\ff \in \mathcal{H}$.
\end{lem}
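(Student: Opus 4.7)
The plan is to exploit the spectral decomposition $\mathcal{H}=\rg\Pf\oplus\ker\Pf$ induced by the spectral projection $\Pf$ and to apply the Gearhart-Prüss theorem on the stable subspace $\ker\Pf$. Since $\Pf$ commutes with $\Lf$, both summands are invariant under the semigroup $\Sf(\tau)$, and the restriction $\Lf_{\ker\Pf}:=\Lf|_{\ker\Pf}$ generates a $C_0$-semigroup on $\ker\Pf$ which coincides with $\Sf(\tau)|_{\ker\Pf}$. In particular, $\Sf(\tau)(\I-\Pf)\ff=\Sf(\tau)|_{\ker\Pf}(\I-\Pf)\ff$ for every $\ff\in\mathcal{H}$, and since $\I-\Pf$ is a bounded operator, it suffices to bound the norm of the restricted semigroup.

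First, I would pin down the spectrum of $\Lf_{\ker\Pf}$. Because $\rg\Pf=\langle\gf\rangle$ collects precisely the spectral data at the isolated eigenvalue $\lambda=1$, the standard block decomposition of $\Lf$ along $\rg\Pf\oplus\ker\Pf$ yields $\sigma(\Lf_{\ker\Pf})=\sigma(\Lf)\setminus\{1\}$, which by the preceding lemma is contained in $\{z\in\C:\Re z\leq 0\}$. Hence, for every fixed $\varepsilon>0$, the closed half-plane $\{\Re\lambda\geq\varepsilon\}$ lies in the resolvent set of $\Lf_{\ker\Pf}$.

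The decisive step is to upgrade Lemma \ref{resbound} to a uniform resolvent bound on $\{\Re\lambda\geq\varepsilon/2\}$ for the restricted operator. Since $\ker\Pf$ is invariant under $(\lambda-\Lf)^{-1}$, one has $\Rf_{\Lf_{\ker\Pf}}(\lambda)=\Rf_{\Lf}(\lambda)|_{\ker\Pf}$, and Lemma \ref{resbound} then delivers uniform control whenever $|\lambda|\geq K_{\varepsilon/2}$ and $\Re\lambda\geq\varepsilon/2$. On the remaining compact region $\{\varepsilon/2\leq\Re\lambda,\;|\lambda|\leq K_{\varepsilon/2}\}$ the operator-valued map $\lambda\mapsto\Rf_{\Lf_{\ker\Pf}}(\lambda)$ is holomorphic and therefore bounded by continuity. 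Combining the two gives a uniform bound $\sup_{\Re\lambda\geq\varepsilon/2}\|\Rf_{\Lf_{\ker\Pf}}(\lambda)\|_{\ker\Pf\to\ker\Pf}<\infty$.

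With this uniform resolvent bound in place, the Gearhart-Prüss theorem for $C_0$-semigroups on Hilbert spaces yields $s_0(\Lf_{\ker\Pf})\leq\varepsilon/2$, so the growth bound of $\Sf|_{\ker\Pf}$ satisfies $\omega_0\leq\varepsilon/2$, and in particular $\|\Sf(\tau)|_{\ker\Pf}\|_{\ker\Pf\to\ker\Pf}\leq C_\varepsilon e^{\varepsilon\tau}$ for all $\tau\geq 0$. Composing with $\I-\Pf$ and absorbing its norm into $C_\varepsilon$ yields the claim. I do not anticipate any real analytic obstacle here: the spectral information and the resolvent bound have already been assembled in the preceding lemmas, and the Hilbert-space setting makes Gearhart-Prüss the natural and sharp tool, so the argument is essentially a careful piece of semigroup bookkeeping.
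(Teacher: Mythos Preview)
Your proposal is correct and follows the same approach as the paper: the paper's proof simply cites Lemma \ref{resbound} and the Gearhart--Pr\"uss--Greiner theorem together with the observation that $\sigma(\Lf_{\ker\Pf})\subset\{\Re\lambda\leq 0\}$, and you have spelled out precisely how these ingredients combine, including the routine compactness argument to handle the bounded part of the half-plane.
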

\begin{proof}
This Lemma follows immediately from Lemma \ref{resbound} and the
Gearhart-Prüss-Greiner Theorem, see e.g.~\cite{EngNag99}, p.~302,
Theorem 1.11, since $\sigma(\Lf_{\ker \Pf})\subset \{\lambda\in \C: \Re(\lambda)\leq 0\}$.
\end{proof}
As the crude growth estimate from Lemma \ref{proectionsemigroup} is not good enough for our purposes, a more detailed analysis of the semigroup is needed. Therefore, let $\ff \in D(\Lf)$ and set $\widetilde{\ff}:= (\I-\Pf)\ff \in D(\Lf)$. Then, for any $\varepsilon>0$, Laplace inversion yields
\begin{equation}
\Sf(\tau)\widetilde{\ff}=\lim_{N \to \infty}\frac{1}{2\pi i}\int_{\varepsilon-i N}^{\varepsilon+i N}e^{\lambda\tau}\Rf_{\Lf}(\lambda) \widetilde{\ff}d \lambda,
\end{equation}
see \cite{EngNag99}, p.~234, Corollary 5.15.
Thus, in order to obtain quantitative information on the semigroup $\Sf(\I-\Pf)$, we need to investigate $\Rf_{\Lf}(\lambda)$ for $\lambda$ close to the imaginary axis. Note that 
$\uf=\Rf_{\Lf}(\lambda)\widetilde{\ff}$ implies $(\lambda-\Lf)\uf=\widetilde{\ff},$ which in turn implies
\begin{equation}\label{resolventeq}
(\rho^2-1)u_1''(\rho)+\left(2(\lambda+2)\rho-\frac{5}{\rho}\right)u_1'(\rho)+(\lambda+2)(\lambda+1)u_1-\frac{48}{(\rho^2+2)^2}u_1(\rho)=F_\lambda(\rho)
\end{equation}
where $F_\lambda(\rho)=f_2(\rho)+(\lambda+2)f_1(\rho)+\rho f_1'(\rho)$.
Therefore, our next step will be a detailed analysis of
Eq.~\eqref{resolventeq}.
\section{ODE analysis}
In this section we will consider the slightly more general ODE
\begin{equation}\label{greenode}
-(1-\rho^2)u''(\rho)+\left(2(\lambda+2)\rho-\frac{5}{\rho}\right)u'(\rho)+\left((\lambda+2)(\lambda+1)+V(\rho)\right)u(\rho)=F_\lambda(\rho),
\end{equation}
where, unless we explicitly state otherwise, $V$ does not denote our specific potential but rather any potential that satisfies $V\in C^\infty([0,1])$.
From now on, we will often make use of functions of symbol type. Let $I\subset \R$, $\rho_0\in\overline{I}$, and $\alpha \in \R$. We say that a smooth function $f:I \to \R$ is of symbol type and write $f(\rho)=\O((\rho_0-\rho)^{\alpha})$ if
\begin{align*}
|\partial_\rho^n f(\rho)|\lesssim_n |\rho_0-\rho|^{\alpha-n},
\end{align*}
 for all $\rho \in I$ and all $n\in \mathbb{N}_0$.
A discussion of symbol calculus can for instance be found in \cite{Don17} and \cite{DonRao20}. We will also make use of the ``Japanese bracket'' $\langle x\rangle:=\sqrt{1+|x|^2}$ and lastly, whenever we are given a function of the form $f(\rho,\lambda)$, then $f'(\rho,\lambda)$ denotes $\partial_\rho f(\rho,\lambda)$.
\subsection{Construction of a fundamental system}
To get rid of the first order term in Eq.~\eqref{greenode}, we
define a new unknown $v$ by 
$$v(\rho):=\frac{\rho^{\frac{5}{2}}}{(1-\rho^2)^{\frac{1}{4}-\frac{\lambda}{2}}}u(\rho),$$
which, for $F_\lambda=0$, turns Eq.~\eqref{greenode} into
\begin{equation}\label{no first order}
v''(\rho)+\frac{\rho^2(10+12\lambda-4\lambda^2)-15}{4\rho^2(1-\rho^2)^2}v(\rho)=\frac{V(\rho)}{1-\rho^2}v(\rho).
\end{equation}
As we are only interested in values of $\lambda$ that are close to the imaginary axis,
we assume that $\lambda=\varepsilon+ i\omega$ with $ \varepsilon \in [0,\frac{1}{4}]$ and $\omega \in \R$.
In order to better understand Eq.~\eqref{no first order}, the diffeomorphism $\varphi:(0,1)\to (0,\infty)$, given by
$$
\varphi(\rho):=\frac{1}{2}\log\left(\frac{1+\rho}{1-\rho}\right),
$$
will be crucial. Note that 
$$
\varphi'(\rho)=\frac{1}{1-\rho^2}
$$
and that the Liouville-Green Potential $Q_{\varphi}$, which is defined as
$$
Q_\varphi(\rho)=-\frac{3}{4}\frac{\varphi''(\rho)^2}{\varphi'(\rho)^2}+\frac{1}{2}\frac{\varphi'''(\rho)}{\varphi'(\rho)},
$$ is given by
$$
Q_{\varphi}(\rho )=\frac{1}{(1-\rho^2)^2}.
$$
Bearing this in mind, we rewrite Eq.~\eqref{no first order} as
\begin{align}
  \label{eq:no1strewrite}
&v''(\rho)+\frac{-9+12\lambda-4\lambda^2}{4(1-\rho^2)^2}v(\rho)-\frac{15}{4\varphi^2(\rho)(1-\rho^2)^2}v(\rho) +Q_\varphi(\rho)v(\rho)\nonumber
\\
=&\left(\frac{V(\rho)}{1-\rho^2}-\frac{15}{4(1-\rho^2)^2}+\frac{15}{4\rho^2(1-\rho^2)^2}-\frac{15}{4\varphi^2(\rho)(1-\rho^2)^2}\right)v(\rho),
\end{align}
and perform a Liouville-Green transform.
That is to say, we set $w(\varphi(\rho)):= \varphi'(\rho)^{\frac{1}{2}}v(\rho)$, which transforms
$$
v''(\rho)+\frac{-9+12\lambda-4\lambda^2}{4(1-\rho^2)^2}v(\rho)- \frac{15}{4\varphi^2(\rho)(1-\rho^2)^2}v(\rho)+Q_\varphi(\rho)v(\rho)=0
$$
into the equation
\begin{equation}\label{bessel}
w''(\varphi(\rho))+\frac{-9+12\lambda-4\lambda^2}{4}w(\varphi(\rho))-\frac{15}{4\varphi^2(\rho)} w(\varphi(\rho))=0.
\end{equation}
This is a Bessel equation and we have the fundamental system
\begin{align*}
\sqrt{\varphi(\rho)}J_2\left(a(\lambda)\varphi(\rho)\right)\\
\sqrt{\varphi(\rho)}Y_2(a(\lambda)\varphi(\rho))
\end{align*}
with $a(\lambda)= i\frac{3-2\lambda}{2}$ and the standard Bessel
functions $J_2, Y_2$, see \cite{OlvLonBoiClar10}.
It follows that the equation
\begin{align}\label{ODE3}
v''(\rho)&+\frac{-9+12\lambda-4\lambda^2}{4(1-\rho^2)^2}v(\rho)- \frac{15}{4\varphi^2(\rho)(1-\rho^2)^2}v(\rho)+Q_\varphi(\rho)v(\rho)=0
\end{align}
has the fundamental system
\begin{align*}
b_1(\rho,\lambda)&:= \sqrt{(1-\rho^2)\varphi(\rho)}J_2(a(\lambda)\varphi(\rho))\\
b_2(\rho,\lambda)&:=\sqrt{(1-\rho^2)\varphi(\rho)}Y_2(a(\lambda)\varphi(\rho)).
\end{align*}
Consequently, the asymptotics of $J_2$ and $Y_2$ will be important and
for us it will be sufficient to study them on the set
\begin{align*}
\{z\in \C: 0<|z|< r\}\cap\{ z \in \C: 0 \leq \arg z <\pi\} 
\end{align*} 
for an arbitrary (but fixed) $r>0$, where $\arg$ denotes the (principal branch of the) argument function. A Taylor expansion shows that on this set, the representations
\begin{align}\label{BesselTaylor}
J_2(z)=z^2 \left(\frac{1}{8}+\O(z^2)\right)
\qquad
Y_2(z)=z^{-2}\left(-\frac{4}{\pi}+\O(z)\right)
\end{align}
hold.

Another fundamental system is given by the Hankel
functions. However, we do not resort to the theory of Hankel functions
but rather prefer to construct a different fundamental system directly
in order to obtain good control on the error terms in the asymptotic
expansion. The point is that the decomposition of Eq.~\eqref{no first
  order} introduced in Eq.~\eqref{eq:no1strewrite} has an artificial
singularity at $\rho=1$ which would lead to weaker control of the error.

\begin{lem}\label{free ODE near 1}
There exist $r>0$ and $\rho_0\in [0,1)$ such that for $\rho \in [\rho_\lambda,1),$ where \\$\rho_\lambda:= \min\{\frac{r}{|a(\lambda)|},\rho_0\}$,
the equation
\begin{equation}\label{no V}
v''(\rho)+\frac{\rho^2(10+12\lambda-4\lambda^2)-15}{4\rho^2(1-\rho^2)^2}v(\rho)=0
\end{equation} has a fundamental system of the form
\begin{align*}
h_1(\rho,\lambda)=&\frac{\sqrt{1-\rho^2}}{\sqrt{a(\lambda)}}\left(\frac{1-\rho}{1+\rho}\right)^{\frac{3}{4}-\frac{\lambda}{2}}
\left[1+(1-\rho)\O(\langle\omega\rangle^{-1})+\O(\rho^{-1}(1-\rho)^2\langle\omega\rangle^{-1})\right]
\\
h_2(\rho,\lambda)=&\frac{\sqrt{1-\rho^2}}{\sqrt{a(\lambda)}}\left(\frac{1-\rho}{1+\rho}\right)^{-\frac{3}{4}+\frac{\lambda}{2}}
\left[1+(1-\rho)\O(\langle\omega\rangle^{-1})+\O(\rho^{-1}(1-\rho)^2\langle\omega\rangle^{-1})\right].
\end{align*}
\end{lem}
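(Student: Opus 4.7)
The plan is a Volterra iteration built on the fundamental system of a suitable modified equation. A direct computation verifies that the two functions
\[
h_j^{(0)}(\rho,\lambda):=\frac{\sqrt{1-\rho^2}}{\sqrt{a(\lambda)}}\left(\frac{1-\rho}{1+\rho}\right)^{(-1)^{j+1}(3/4-\lambda/2)},\qquad j=1,2,
\]
form a fundamental system of the \emph{modified} equation $v''+\widetilde{\phi}(\rho,\lambda)v=0$ with $\widetilde{\phi}(\rho,\lambda):=(4-(3-2\lambda)^2)/(4(1-\rho^2)^2)$, and satisfy the remarkable identities $h_1^{(0)}h_2^{(0)}=(1-\rho^2)/a(\lambda)$ together with the constant Wronskian $W(h_1^{(0)},h_2^{(0)})=-2i$. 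Since the difference between $\widetilde{\phi}$ and the coefficient in \eqref{no V} equals exactly $W_1(\rho):=15/(4\rho^2(1-\rho^2))$, the target equation reads $v''+\widetilde{\phi}v=W_1 v$. Variation of parameters with $(h_1^{(0)},h_2^{(0)})$, anchored at $\rho=1$ and normalised by $h_1^{(0)}(\rho)$, recasts \eqref{no V} as the Volterra integral equation
\[
\varepsilon_1(\rho,\lambda)=-\int_\rho^1\widetilde{K}(\rho,s,\lambda)\bigl(1+\varepsilon_1(s,\lambda)\bigr)\,ds,\qquad h_1:=h_1^{(0)}(1+\varepsilon_1),
\]
with closed-form kernel (after simplification using $h_1^{(0)}h_2^{(0)}=(1-\rho^2)/a(\lambda)$)
\[
\widetilde{K}(\rho,s,\lambda)=\frac{15}{-8ia(\lambda)s^2}\left(\left[\frac{(1+\rho)(1-s)}{(1-\rho)(1+s)}\right]^{3/2-\lambda}-1\right).
\]
The construction of $h_2$ proceeds analogously.

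Three structural features of $\widetilde{K}$ drive the argument: the explicit prefactor $1/a(\lambda)$ provides the $\langle\omega\rangle^{-1}$-gain, the bracket vanishes at $s=\rho$ (a shadow of the Wronskian identity) which yields extra cancellation, and for $\rho\le s<1$ with $\Re\lambda\in[0,1/4]$ the bracket stays bounded in modulus by $2$, so $|\widetilde{K}(\rho,s,\lambda)|\lesssim 1/(|a(\lambda)|s^2)$. The change of variables $u:=(1-s)/(1+s)$, $u_0:=(1-\rho)/(1+\rho)$ brings the kernel integral into the explicit form
\[
\int_\rho^1\widetilde{K}(\rho,s,\lambda)\,ds=\frac{15}{-4ia(\lambda)}\int_0^{u_0}\frac{(u/u_0)^{3/2-\lambda}-1}{(1-u)^2}\,du,
\]
which one bounds by $\lesssim u_0/((1-u_0)|a(\lambda)|)=(1-\rho)/(2\rho|a(\lambda)|)$. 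For $\rho\ge\rho_\lambda=\min\{r/|a(\lambda)|,\rho_0\}$ this is at most $\max\{1/(2r),\,(1-\rho_0)/(2\rho_0|a(\lambda)|)\}$ and, using $|a(\lambda)|\ge 5/4$ throughout the strip, choosing $r$ large and $\rho_0$ sufficiently close to $1$ makes the Volterra operator a strict contraction on $L^\infty([\rho_\lambda,1))$. Its Neumann series therefore converges to a unique $\varepsilon_1$.

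To extract the claimed symbol-type structure of $\varepsilon_1$, one splits the transformed integrand as $((u/u_0)^{3/2-\lambda}-1)\cdot 1+((u/u_0)^{3/2-\lambda}-1)((1-u)^{-2}-1)$. The first summand integrates to $u_0(\lambda-3/2)/(5/2-\lambda)+\O(u_0^2)$ which, after reverting to $\rho$, contributes the piece of the form $(1-\rho)\O(\langle\omega\rangle^{-1})$; the second summand benefits from the extra vanishing $(1-u)^{-2}-1=u(2-u)/(1-u)^2$ at $u=0$ and, combined with the back-substitution from $u$ to $\rho$, contributes the refined piece $\O(\rho^{-1}(1-\rho)^2\langle\omega\rangle^{-1})$ that encodes the behaviour near the inner cut-off $\rho_\lambda$. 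The symbol-type derivative bounds propagate by differentiating the fixed-point equation in $\rho$ and inductively using the symbol structure of $h_j^{(0)}$, $W_1$, and $\widetilde{K}$ together with the already established $L^\infty$-smallness of $\varepsilon_1$.

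The main obstacle is obtaining this refined decomposition uniformly in $\lambda$ along the strip $\Re\lambda\in[0,1/4]$ and across both regimes of $\rho_\lambda$: tracking the cancellation at $s=\rho$ built into $\widetilde{K}$ carefully enough to see the sharper $(1-\rho)^2$-factor in the second piece, rather than the crude combined bound $\lesssim(1-\rho)/(\rho\langle\omega\rangle)$, while making sure the $1/a(\lambda)$-gain is preserved in the large-$|\omega|$ regime and the smallness in the bounded-$|\omega|$ regime is supplied by $1-\rho_0$.
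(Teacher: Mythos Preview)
Your construction of $h_1$ is essentially the paper's: your $h_j^{(0)}$ are the paper's $w_j$, your kernel $\widetilde K$ coincides (up to harmless sign conventions) with the kernel the paper derives, and the Volterra iteration anchored at $\rho=1$ produces $h_1=w_1(1+\varepsilon_1)$ with the stated bounds. Your extraction of the refined form via the substitution $u=(1-s)/(1+s)$ is a clean variant of the paper's route, which instead re-inserts $\widetilde w=1+\O(\rho^{-1}(1-\rho)\langle\omega\rangle^{-1})$ into the Volterra equation and then checks directly that $\partial_\rho\int_\rho^1 K\,ds$ and $\partial_\rho^2\int_\rho^1 K\,ds$ extend continuously to $\rho=1$.

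The genuine gap is the sentence ``The construction of $h_2$ proceeds analogously.'' It does not. Anchoring a Volterra iteration for $h_2=h_2^{(0)}(1+\varepsilon_2)$ at $\rho=1$ fails: the analogue of your $\widetilde K$ now carries the bracket
\[
\left[\frac{(1-\rho)(1+s)}{(1+\rho)(1-s)}\right]^{3/2-\lambda}-1,
\]
and for $\rho\le s$ the base is $\ge 1$, so the bracket blows up like $(1-s)^{-(3/2-\Re\lambda)}$ as $s\to1$; since $\Re\lambda\le\tfrac14$ this is not integrable on $[\rho,1)$ and the iteration diverges. Anchoring instead at $\rho_\lambda$ does converge, but then $\varepsilon_2(\rho)\to c(\lambda)$ as $\rho\to1$ with $|c(\lambda)|\sim\tfrac{1}{|a(\lambda)|}\bigl(\rho_\lambda^{-1}-1\bigr)\sim 1/r$ in the large-$|\omega|$ regime, i.e.\ a nonzero constant independent of $\omega$; the resulting solution is therefore not of the claimed form $h_2^{(0)}[1+(1-\rho)\O(\langle\omega\rangle^{-1})+\dots]$ without substantial further work. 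The paper accordingly abandons a direct Volterra for $h_2$ and uses reduction of order,
\[
h_2(\rho,\lambda)=\tfrac{2}{i}\,h_1(\rho,\lambda)\int_{\rho_\lambda}^\rho h_1(s,\lambda)^{-2}\,ds+c(\lambda)h_1(\rho,\lambda),
\]
followed by a page of careful integration by parts that exploits the identity $w_1^{-2}=\tfrac{i}{2}\partial_\rho\bigl((1+\rho)/(1-\rho)\bigr)^{3/2-\lambda}$ together with the already-established refined form of $h_1$, in order to choose $c(\lambda)$ so that the spurious $h_1$-component is removed and the stated decomposition for $h_2$ emerges. This is the substantive second half of the lemma and is missing from your proposal.
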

\begin{proof}
We start by rewriting Eq.~\eqref{no V} as
\begin{align*}
v''(\rho)+\frac{-5+12\lambda-4\lambda^2}{4 (1-\rho^2)^2}v(\rho)=\frac{15}{4\rho^2 (1-\rho^2)^2}v(\rho)-\frac{15}{4(1-\rho^2)^2}v(\rho)=\frac{15}{4\rho^2(1-\rho^2)}v(\rho).
\end{align*}
Now, two linearly independent solutions of the equation
\begin{align*}
w''(\rho)+\frac{-5+12\lambda-4\lambda^2}{4 (1-\rho^2)^2}w(\rho)=0
\end{align*}
are given by 
\begin{align*}
w_1(\rho,\lambda)=&\frac{\sqrt{1-\rho^2}}{\sqrt{a(\lambda)}}
                    \left(\frac{1-\rho}{1+\rho}\right)^{\frac{3}{4}-\frac{\lambda}{2}}
                    =a(\lambda)^{-\frac12}(1-\rho)^{\frac54-\frac{\lambda}{2}}(1+\rho)^{-\frac14+\frac{\lambda}{2}}
\\
w_2(\rho,\lambda)=&\frac{\sqrt{1-\rho^2}}{\sqrt{a(\lambda)}}
                    \left(\frac{1+\rho}{1-\rho}\right)^{\frac{3}{4}-\frac{\lambda}{2}}
                    =a(\lambda)^{-\frac12}(1-\rho)^{-\frac14+\frac{\lambda}{2}}(1+\rho)^{\frac54-\frac{\lambda}{2}}
\end{align*}
 with constant Wronskian
\begin{align*}
W(w_1,w_2)=\frac{2}{i}.
\end{align*}
As we intend to construct the solutions $h_1$ and $h_2$ by Volterra iterations, 
we make the ansatz
\begin{align*}
w(\rho,\lambda)=& w_1(\rho,\lambda)+\int_\rho^{\rho_1}\frac{15i w_1(\rho,\lambda)w_2(s,\lambda)}{8s^2(1-s^2)} w(s,\lambda) ds 
\\
&-\int_\rho^{\rho_1}\frac{15i w_2(\rho,\lambda)w_1(s,\lambda)}{8s^2(1-s^2)} w(s,\lambda) ds
\end{align*}
with $\rho_1$ to be determined.
Since $w_1(.,\lambda)$ has no zeros on $[0,1)$, we can rewrite this
integral equation in terms of the auxiliary variable $\widetilde w:=\frac{w}{w_1}$, which yields
\begin{align}\label{volterra near 1}
\widetilde w(\rho,\lambda)=& 1+\int_\rho^{\rho_1}\frac{15i
                             w_1(s,\lambda)w_2(s,\lambda)}{8s^2(1-s^2)}
                             \widetilde w(s,\lambda) ds 
\\
&-\int_\rho^{\rho_1}\frac{15i
     w_2(\rho,\lambda)w_1(s,\lambda)^2}{8w_1(\rho,\lambda)s^2(1-s^2)}
     \widetilde w(s,\lambda) ds \nonumber
\\
=&1+15i\int_\rho^{\rho_1}\frac{1-\left(\frac{1+\rho}{1-\rho}\frac{1-s}{1+s}\right)^{\frac{3}{2}-\lambda}}{4s^2(3-2\lambda)}
   \widetilde w(s,\lambda) ds. \nonumber
\end{align}
It follows that
\begin{align*}
\int_{\frac{1}{|a(\lambda)|}}^{\rho_1} \sup_{\rho\in [\frac{1}{|a(\lambda)|},s]}\left|\frac{1-\left(\frac{1+\rho}{1-\rho}\frac{1-s}{1+s}\right)^{\frac{3}{2}-\lambda}}{s^2(3-2\lambda)} \right| ds\lesssim \int_{\frac{1}{|a(\lambda)|}}^{\rho_1}\frac{1}{s^2|3-2\lambda|} ds \lesssim 1
\end{align*}
for any $\rho_1 \in [\frac{1}{|a(\lambda)|},1]$. This allows us to set $\rho_1=1$ and
invoke Lemma B.1 in \cite{DonSchSof11} to conclude the existence of a
unique solution $\widetilde w$ to Eq.~\eqref{volterra near 1} of the form 
$
\widetilde w(\rho,\lambda)=1+ O(\rho^{-1}\langle\omega\rangle^{-1}).
$
By re-inserting this expression into Eq.~\eqref{volterra near 1}, we
obtain the refined representation $\widetilde w(\rho,\lambda)=1+O(\rho^{-1}(1-\rho)\langle\omega\rangle^{-1})$.
Strictly speaking, the $O$-term also depends on
$\varepsilon=\Re(\lambda)$ but this dependence is of no relevance to
us and hence suppressed in our notation.

To establish that this solution is of symbol type, we again use our diffeomorphism $\varphi$ to obtain 
\begin{align*}
\int_\rho^1\frac{1-\left(\frac{1+\rho}{1-\rho}\frac{1-s}{1+s}\right)^{\frac{3}{2}-\lambda}}{s^2(3-2\lambda)} ds=&\int_\rho^1\frac{1-e^{(3-2\lambda)(\varphi(\rho)-\varphi(s))}}{s^2(3-2\lambda)}ds
\\
=&\int_{\varphi(\rho)}^\infty\frac{1-e^{(3-2\lambda)(\varphi(\rho)-y)}}{\varphi^{-1}(y)^2(3-2\lambda)}(\varphi^{-1})'(y)dy
\\
=&\int_0^\infty\frac{1-e^{-(3-2\lambda)y}}{\varphi^{-1}(y+\varphi(\rho))^2(3-2\lambda)}(\varphi^{-1})'(y+\varphi(\rho))dy.
\end{align*}
Since \[|\partial_y^k \varphi^{-1}(y)|\lesssim_k e^{-2y},\] we see that
\begin{align*}
\left| \partial_\omega^l \partial_\rho^k \int_0^\infty\frac{1-e^{-(3-2\lambda)y}}{\varphi^{-1}(y+\varphi(\rho))^2(3-2\lambda)}(\varphi^{-1})'(y+\varphi(\rho))dy\right| \lesssim_{k,l} \rho^{-k}(1-\rho)^{1-k}.
\end{align*}
To also establish symbol behavior in $\omega$, we note that the
estimate above allows us to safely assume $\omega\geq 1$ and so, rescaling leads to 
\begin{align*}
&\int_0^\infty\frac{1-e^{-(\frac{3}{2}-\lambda)y}}{\varphi^{-1}(y+\varphi(\rho))^2(3-2\lambda)}(\varphi^{-1})'(y+\varphi(\rho))dy
\\
=&\frac{1}{\omega}\int_0^\infty\frac{1-e^{-(\frac{3}{2}-\varepsilon)\frac{y}{\omega}}e^{iy}}{\varphi^{-1}\left(\frac{y}{\omega}+\varphi(\rho)\right)^2(3-2\lambda)}(\varphi^{-1})'\left(\frac{y}{\omega}+\varphi(\rho)\right)dy.
\end{align*}
Consequently, $\widetilde w$ is of the form
\begin{align*}
\widetilde w(\rho,\lambda)=1+ \O(\rho^{-1}(1-\rho)\langle\omega\rangle^{-1}).
\end{align*}
In summary, we have found a solution to Eq. \eqref{no V} of the form
\begin{align*}
h_1(\rho,\lambda)=&\frac{\sqrt{1-\rho^2}}{\sqrt{a(\lambda)}}\left(\frac{1-\rho}{1+\rho}\right)^{\frac{3}{4}-\frac{\lambda}{2}}
\left[1+\O(\rho^{-1}(1-\rho)\langle\omega\rangle^{-1})\right].
\end{align*}

The form of the error term in the expression for $h_1$ implies that we can find a $\rho_0\in [0,1)$ such that $h_1(.,\lambda)$ has no zeros on
$[\rho_0,1)$ for $\Re(\lambda)\in [0,\frac14]$.
In addition, there exists an $r>0$ such that
$h_1(.,\lambda)$ has no zeros on $[\rho_\lambda, 1)$, where  
$\rho_\lambda:=\min\{\rho_0, \frac{r}{|a(\lambda)|}\}$.
By reduction of order, the second solution is then given by 
\begin{align*}
h_2(\rho,\lambda)=& \frac{2h_1(\rho,\lambda)}{i}\int_{\rho_\lambda}^\rho h_1(s,\lambda)^{-2} ds+c(\lambda) h_1(\rho,\lambda)
\end{align*}
with $c(\lambda)$ to be determined. 
Our next step is to develop a better understanding of $h_1$ and so we set
      \begin{align*}
  K(\rho,s,\lambda):=\frac{1-\left(\frac{1+\rho}{1-\rho}\frac{1-s}{1+s}\right)^{\frac{3}{2}-\lambda}}{s^2(3-2\lambda)} 
      \end{align*}
      and observe that \begin{align*}
      \widetilde w(\rho,\lambda)=&1+\frac{15i}{4}\int_\rho^1
                                   K(\rho,s,\lambda)\widetilde
                                   w(s,\lambda) ds \\
      =&  1+\frac{15i}{4}\int_\rho^1 K(\rho,s,\lambda) ds+ \O(\rho^{-1}(1-\rho)^2 \langle\omega\rangle^{-1})
      \end{align*}
for $\rho\in [\rho_\lambda,1)$. Now, 
      \begin{align*}
 \partial_\rho \int_\rho^1 K(\rho,s,\lambda) ds=-(1-\rho)^{-\frac{5}{2}+\lambda}(1+\rho)^{\frac{1}{2}-\lambda}\int_\rho^1\frac{\left(\frac{1-s}{1+s}\right)^{\frac{3}{2}-\lambda}}{s^2} ds
      \end{align*}
and from de l'Hospital's rule it follows that
\begin{align*}
\int_\rho^1\frac{\left(\frac{1-s}{1+s}\right)^{\frac{3}{2}-\lambda}}{s^2} ds \sim 2^{-\frac{3}{2}+\lambda} \frac{(1-\rho)^{\frac{5}{2}-\lambda}}{\frac{5}{2}-\lambda}
\end{align*}
as $\rho \to 1$. This implies $$\lim_{\rho \to 1}\partial_\rho \int_\rho^1 K(\rho,s,\lambda)ds=-\frac{1}{5-2\lambda}.$$
Next, we compute that 
\begin{align*}
\partial_\rho^2 \int_\rho^1 K(\rho,s,\lambda)ds=&\rho^{-2}(1-\rho)^{-1}(1+\rho)^{-1}+\bigg[-\left(\frac{5}{2}-\lambda\right)(1-\rho)^{-\frac{7}{2}+\lambda}(1+\rho)^{\frac{1}{2}-\lambda}
\\
&-\left(\frac{1}{2}-\lambda\right)(1-\rho)^{-\frac{5}{2}+\lambda}(1+\rho)^{-\frac{1}{2}-\lambda}\bigg]\int_\rho^1\frac{\left(\frac{1-s}{1+s}\right)^{\frac{3}{2}-\lambda}}{s^2} ds.
\end{align*}
An integration by parts now yields \begin{align*}
\int_\rho^1\frac{\left(\frac{1-s}{1+s}\right)^{\frac{3}{2}-\lambda}}{s^2} ds =&\frac{2(1-\rho)^{\frac{5}{2}-\lambda}}{(1+\rho)^{\frac{3}{2}-\lambda}\rho^2 (5-2\lambda)}+4\int_\rho^1\frac{(1-s)^{\frac{5}{2}-\lambda}}{(5-2\lambda)s^3(1+s)^{\frac{3}{2}-\lambda}} ds
\\
&+\int_\rho^1\frac{(3-2\lambda)(1-s)^{\frac{5}{2}-\lambda}}{(5-2\lambda)(1+s)^{\frac{5}{2}-\lambda} s^2} ds
\end{align*}
and with this, one can easily check that $$\partial_\rho^2 \left(\int_\rho^1K(\rho,s,\lambda)ds\right)$$ is continuous on $[\rho_\lambda,1]$.
With that in mind, we rewrite $\widetilde w$ on $[\rho_\lambda,1) $ as 
\[ \widetilde w(\rho,\lambda)=1+(1-\rho)\O(\langle\omega\rangle^{-1})+\O(\rho^{-1}(1-\rho)^2\langle\omega\rangle^{-1})
\]
and
$h_2$ as
\begin{align*}
h_2(\rho,\lambda)=&\frac{2h_1(\rho,\lambda)}{i}\int_{\rho_\lambda}^\rho w_1(s,\lambda)^{-2}[1+(1-s)\O(\langle\omega\rangle^{-1})+\O(s^{-1}(1-s)^2\langle\omega\rangle^{-1})]^{-2}
ds
\\
&+c(\lambda)h_1(\rho,\lambda).
\end{align*}
Further,
as \begin{align}\label{Eq:derivative w2}
w_1(\rho,\lambda)^{-2}=\frac{i}{2}\partial_\rho \left( \frac{1+\rho}{1-\rho} \right)^{\frac{3}{2}-\lambda},
\end{align}
we compute
\begin{align*}
h_2(\rho,\lambda)=&\frac{2h_1(\rho,\lambda)}{i}\int_{\rho_\lambda}^\rho w_1(s,\lambda)^{-2}[1+(1-s)\O(\langle\omega\rangle^{-1})+\O(s^{-1}(1-s)^2\langle\omega\rangle^{-1})]
ds
\\
&+c(\lambda)h_1(\rho,\lambda)
\\
=& w_2(\rho,\lambda)[1+(1-\rho)\O(\langle\omega\rangle^{-1})+\O(\rho^{-1}(1-\rho)^2\langle\omega\rangle^{-1})]+[c(\lambda)+O(\langle\omega\rangle^{0})]h_1(\rho,\lambda)
\\
&+\frac{2h_1(\rho,\lambda)}{i}\int_{\rho_\lambda}^\rho w_1(s,\lambda)^{-2}[(1-s)\O(\langle\omega\rangle^{-1})+\O(s^{-1}(1-s)^2\langle\omega\rangle^{-1})]ds.
\end{align*}
Next, an integration by parts yields
\begin{align*}
\int_{\rho_\lambda}^\rho w_1(s,\lambda)^{-2}(1-s)\O(\langle\omega\rangle^{-1}) ds=&(1-\rho)^{-\frac{1}{2}+\lambda}(1+\rho)^{\frac{1}{2}-\lambda}\O(\langle\omega\rangle^{-1})\\
&+\int_{\rho_\lambda}^\rho(1-s)^{-\frac{1}{2}+\lambda}(1+s)^{-\frac{1}{2}-\lambda}\O(\langle\omega\rangle^0)ds+O(\langle\omega\rangle^{-1}),
\end{align*}
and since 
\begin{align*}
h_1(\rho,\lambda)(1-\rho)^{-\frac{1}{2}+\lambda}(1+\rho)^{\frac{1}{2}-\lambda}\O(\langle\omega\rangle^{-1})=w_2(\rho,\lambda)[(1-\rho)\O(\langle\omega\rangle^{-1})+\O(\rho^{-1}(1-\rho)^2\langle\omega\rangle^{-1})],
\end{align*} 
we see that
\begin{align*}
h_2(\rho,\lambda)=& w_2(\rho,\lambda)[1+(1-\rho)\O(\langle\omega\rangle^{-1})+\O(\rho^{-1}(1-\rho)^2\langle\omega\rangle^{-1})]
\\
&+h_1(\rho,\lambda)\int_{\rho_\lambda}^\rho (1-s)^{-\frac{5}{2}+\lambda}(1+s)^{\frac{1}{2}-\lambda}\O(s^{-1}(1-s)^2\langle\omega\rangle^{0}) ds
\\
&+h_1(\rho,\lambda)\int_{\rho_\lambda}^\rho(1-s)^{-\frac{1}{2}+\lambda}(1+s)^{-\frac{1}{2}-\lambda}\O(\langle\omega\rangle^0)
ds+[c(\lambda)+O(\langle\omega\rangle^{0})]  h_1(\rho,\lambda).
\end{align*}
Note, that both integrands are integrable on $[\rho_\lambda,1]$ and so we rewrite $h_2$ as
\begin{align}\label{calc:h_2}
h_2(\rho,\lambda)=& w_2(\rho,\lambda)[1+(1-\rho)\O(\langle\omega\rangle^{-1})+\O(\rho^{-1}(1-\rho)^2\langle\omega\rangle^{-1})]\nonumber
\\
&+h_1(\rho,\lambda)\int_{\rho}^1 (1-s)^{-\frac{5}{2}+\lambda}(1+s)^{\frac{1}{2}-\lambda}\O(s^{-1}(1-s)^2\langle\omega\rangle^{0}) ds 
\\
&+h_1(\rho,\lambda)\int_{\rho}^1(1-s)^{-\frac{1}{2}+\lambda}(1+s)^{-\frac{1}{2}-\lambda}\O(\langle\omega\rangle^{0})
ds+[c(\lambda)-\widehat{c}(\lambda)] h_1(\rho,\lambda)\nonumber
\end{align}
for a suitable $\widehat{c}(\lambda)$.
Finally, using the identity \eqref{Eq:derivative w2}, we observe that
\begin{align*} 
&h_1(\rho,\lambda)\int_{\rho}^1 (1-s)^{-\frac{5}{2}+\lambda}(1+s)^{\frac{1}{2}-\lambda}\O(s^{-1}(1-s)^2\langle\omega\rangle^{0}) ds 
\\
=&h_1(\rho,\lambda) \left(\frac{1+\rho}{1-\rho}\right)^{\frac{3}{2}-\lambda}\O(\rho^{-1}(1-\rho)^2\langle\omega\rangle^{-1})
\\
&+ h_1(\rho,\lambda)\int_{\rho}^1 \left(\frac{1+s}{1-s}\right)^{\frac{3}{2}-\lambda}\O(s^{-2}(1-s)\langle\omega\rangle^{-1}) ds 
\\
=&w_2(\rho,\lambda)[1+\O(\rho^{-1}(1-\rho)\langle\omega\rangle^{-1})]\O(\rho^{-1}(1-\rho)^2\langle\omega\rangle^{-1})
\\
&+w_2(\rho,\lambda)[1+\O(\rho^{-1}(1-\rho)\langle\omega\rangle^{-1})]\int_\rho^1 \left(\frac{1-\rho}{1+\rho}\frac{1+s}{1-s}\right)^{\frac{3}{2}-\lambda}\O(s^{-2}(1-s)\langle\omega\rangle^{-1})ds
\\
=&w_2(\rho,\lambda)\O(\rho^{-1}(1-\rho)^2\langle\omega\rangle^{-1})
\end{align*}
on $[\rho_\lambda,1)$,
where the last step follows from once more employing the
diffeomorphism $\varphi$ and a similar calculation as in the
construction of $h_1$ above. Thus, choosing $c=\widehat c$ yields the existence of a second solution on $[\rho_\lambda,1)$ which is of the claimed form 
\begin{align*}
h_2(\rho,\lambda)=w_2(\rho,\lambda)[1+(1-\rho)\O(\langle\omega\rangle^{-1})+\O(\rho^{-1}(1-\rho)^2\langle\omega\rangle^{-1})].
\end{align*}
\end{proof}
Note that by enlarging $r$ and $\rho_0$ if necessary, we can enforce that $h_2$ does not vanish on $[\rho_\lambda,1)$ either.
We now set
$\widehat{\rho}_\lambda:=\min\{\frac{1}{2}(\rho_0+1),\frac{2r}{|a(\lambda)|}\}\in
(\rho_\lambda,1)$
and with this, we turn to the full equation \eqref{no first order}.

\begin{lem}\label{Besselsol}
Eq.~\eqref{no first
  order} has a fundamental system of the form
\begin{align*}
\psi_1(\rho,\lambda)=&b_1(\rho,\lambda)[1+\O(\rho^2\langle\omega\rangle^0)]
\\
=&\sqrt{(1-\rho^2)\varphi(\rho)}J_2( a(\lambda)\varphi(\rho))[1+\O(\rho^2\langle\omega\rangle^0)]
\\
\psi_2(\rho,\lambda)=& b_2(\rho,\lambda)[1+\O(\rho^2\langle\omega\rangle^0)]+\O(\rho^\frac12\langle\omega\rangle^{-2})
\\
=&\sqrt{(1-\rho^2)\varphi(\rho)}Y_2( a(\lambda)\varphi(\rho))[1+\O(\rho^2\langle\omega\rangle^0)]+\O(\rho^\frac12\langle\omega\rangle^{-2})
\end{align*}
for all $\rho\in (0,\widehat \rho_\lambda]$.
\end{lem}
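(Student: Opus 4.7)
The plan is to treat Eq.~\eqref{no first order} as a perturbation of Eq.~\eqref{ODE3}, for which the fundamental system $\{b_1(\cdot,\lambda),b_2(\cdot,\lambda)\}$ has already been exhibited. A direct subtraction of the two equations shows that Eq.~\eqref{no first order} takes the form $v''+\mathcal P(\rho,\lambda)v=\mathcal R(\rho)v$, where $\mathcal P$ is the coefficient in Eq.~\eqref{ODE3} and
\[ \mathcal R(\rho)=\frac{V(\rho)}{1-\rho^2}-\frac{15}{4(1-\rho^2)^2}+\frac{15}{4(1-\rho^2)^2}\left(\frac{1}{\rho^2}-\frac{1}{\varphi(\rho)^2}\right). \]
Because $\varphi(\rho)=\rho+\O(\rho^3)$ near the origin, the bracketed combination extends smoothly to $\rho=0$, so $\mathcal R\in C^\infty([0,1))$ is bounded uniformly in $\lambda$. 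A short computation using $W(J_2,Y_2)(z)=\tfrac{2}{\pi z}$ then yields $W(b_1,b_2)=\tfrac{2}{\pi}$, a constant, as it must be since Eq.~\eqref{ODE3} has no first-order term, and in particular $\lambda$-independent.

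For the small solution $\psi_1$ I would set up the standard Volterra equation
\[
\psi_1(\rho)=b_1(\rho)+\tfrac{\pi}{2}\int_0^\rho\bigl(b_1(s)b_2(\rho)-b_2(s)b_1(\rho)\bigr)\mathcal R(s)\psi_1(s)\,ds.
\]
The crucial point is that throughout $\rho\in(0,\widehat\rho_\lambda]$ the Bessel argument $a(\lambda)\varphi(\rho)$ remains in a fixed compact set by the choice of $\widehat\rho_\lambda$, so the expansions \eqref{BesselTaylor} apply with $\omega$-uniform constants and give $b_1(\rho)\simeq a(\lambda)^2\rho^{5/2}/8$, $b_2(\rho)\simeq -\tfrac{4}{\pi a(\lambda)^2}\rho^{-3/2}$. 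The prefactors $a(\lambda)^{\pm 2}$ then cancel in both products appearing in the kernel, and passing to $\widetilde\psi_1:=\psi_1/b_1$ reduces the problem to a Volterra fixed-point equation whose kernel $K$ satisfies $\int_0^\rho\sup_{s'\in[s,\rho]}|K(s',s)|\,ds=\O(\rho^2)$ uniformly in $\omega$. A standard Volterra iteration, for instance Lemma~B.1 of \cite{DonSchSof11}, yields $\widetilde\psi_1=1+\O(\rho^2\langle\omega\rangle^0)$; the symbol-type character follows by differentiating the integral equation, as in the proof of Lemma~\ref{free ODE near 1}.

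For $\psi_2$ a direct Volterra from $0$ is obstructed because $b_2(s)^2\mathcal R(s)\sim s^{-3}$ fails to be integrable at the origin. I would instead construct $\psi_2$ by reduction of order, setting
\[ \psi_2(\rho):=\tfrac{2}{\pi}\,\psi_1(\rho)\int_{\widehat\rho_\lambda}^\rho\psi_1(\sigma)^{-2}\,d\sigma, \]
which is automatically a solution and is linearly independent of $\psi_1$. Writing $\psi_1(\sigma)^{-2}=b_1(\sigma)^{-2}[1+\O(\sigma^2\langle\omega\rangle^0)]$ and using the Abel identity $\tfrac{2}{\pi}b_1(\rho)\int_{\widehat\rho_\lambda}^\rho b_1(\sigma)^{-2}d\sigma=b_2(\rho)+c(\lambda)b_1(\rho)$ for the unperturbed pair, the leading term reproduces $b_2(\rho)[1+\O(\rho^2\langle\omega\rangle^0)]$. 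The residual $c(\lambda)b_1(\rho)$ together with the $\O(\sigma^2)$ correction contributes, after tracking the $a(\lambda)^{\pm 2}$ Bessel prefactors and using $\widehat\rho_\lambda\sim|a(\lambda)|^{-1}$ to evaluate endpoint contributions, a term of size $\O(\rho^{1/2}\langle\omega\rangle^{-2})$; this is exactly the stated additive remainder.

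The main obstacle is maintaining the claimed uniformity in $\omega$ together with symbol-type control of the remainders across the full range $\rho\in(0,\widehat\rho_\lambda]$. The saving grace is the definition $\widehat\rho_\lambda\lesssim|a(\lambda)|^{-1}$: the Bessel argument stays in a fixed bounded set where \eqref{BesselTaylor} applies with uniform constants, and all growth in $\omega$ from the $a(\lambda)^{\pm 2}$ prefactors cancels identically both in the Volterra kernel and in the reduction-of-order integrand, turning what would nominally be a singular problem into an elementary fixed-point argument.
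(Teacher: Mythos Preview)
Your approach is essentially the paper's: perturb off the Bessel pair $b_1,b_2$, run a Volterra iteration for $\psi_1$, and obtain $\psi_2$ by reduction of order. The $\psi_1$ argument is correct and matches the paper (one small omission: you should note that $J_2$ has only real zeros and $a(\lambda)$ has nonvanishing imaginary part, so $b_1\neq 0$ and the division $\psi_1/b_1$ is legitimate).

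There is, however, a genuine gap in your treatment of $\psi_2$. With your definition $\psi_2=\tfrac{2}{\pi}\psi_1\int_{\widehat\rho_\lambda}^\rho\psi_1^{-2}$ and the Abel identity $\tfrac{2}{\pi}b_1\int_{\widehat\rho_\lambda}^\rho b_1^{-2}=b_2+c(\lambda)b_1$, the constant is $c(\lambda)=-b_2(\widehat\rho_\lambda,\lambda)/b_1(\widehat\rho_\lambda,\lambda)=\O(\langle\omega\rangle^0)$, since at $\rho=\widehat\rho_\lambda$ the Bessel argument is of unit size. Hence $c(\lambda)b_1(\rho)=\O(\rho^{5/2}\langle\omega\rangle^2)$, and on the interval $\rho\lesssim\langle\omega\rangle^{-1}$ the ratio $\rho^{5/2}\langle\omega\rangle^2/(\rho^{1/2}\langle\omega\rangle^{-2})=\rho^2\langle\omega\rangle^4$ can be as large as $\langle\omega\rangle^2$. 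So this term is \emph{not} $\O(\rho^{1/2}\langle\omega\rangle^{-2})$, nor can it be absorbed into $b_2\cdot\O(\rho^2\langle\omega\rangle^0)$ (same computation). Your $\psi_2$ therefore does not have the stated form.

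The fix is exactly what the paper does: instead of taking the bare reduction-of-order solution, set $\psi_2:=c_1(\lambda)\psi_1+c_2\widetilde\psi_1$ with $\widetilde\psi_1(\rho)=\psi_1(\rho)\int_\rho^{\widetilde\rho_\lambda}\psi_1^{-2}$ and the constants determined by $b_2=c_1(\lambda)b_1+c_2\widetilde b_1$, i.e.\ $c_2=-\tfrac{2}{\pi}$ and $c_1(\lambda)=-b_2(\widetilde\rho_\lambda,\lambda)/b_1(\widetilde\rho_\lambda,\lambda)=\O(\langle\omega\rangle^0)$. Then $c_1\psi_1+c_2\psi_1\int_\rho^{\widetilde\rho_\lambda}b_1^{-2}=(\psi_1/b_1)(c_1b_1+c_2\widetilde b_1)=b_2[1+\O(\rho^2\langle\omega\rangle^0)]$ exactly, and only the genuinely perturbative remainder $c_2\psi_1\int_\rho^{\widetilde\rho_\lambda}b_1^{-2}\O(s^2\langle\omega\rangle^0)\,ds$ survives; your own bookkeeping shows this is $\O(\rho^{1/2}\langle\omega\rangle^{-2})$.
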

\begin{proof}
We commence by noting that $|\varphi(\rho)|\lesssim \rho$ for all
$\rho\in [0,\widehat\rho_\lambda]$ and thus, the condition $\rho\leq
\widehat{\rho}_\lambda$ implies that
$|a(\lambda)\varphi(\rho)|\lesssim 1$. Consequently, the Bessel asymptotics \eqref{BesselTaylor} apply.
In order to find solutions for Eq.~(\ref{no first order}), we set 
\begin{align*}
\widetilde{V}(\rho)&:=V(\rho)+\frac{15}{4(1-\rho^2)}\left(\frac{1}{\rho^2}-\frac{1}{\varphi(\rho)^2}-1\right)
\end{align*}
and by a Taylor expansion, it follows that $\widetilde{V}\in C^\infty([0,\widehat\rho_\lambda])$.
Thus, we write Eq.~\eqref{no first order} as
\begin{align}
v''(\rho)&+\frac{-9+12\lambda-4\lambda^2}{4(1-\rho^2)^2}v(\rho)- \frac{15}{4\varphi^2(\rho)(1-\rho^2)^2}v(\rho)+Q_\varphi(\rho)v(\rho)\nonumber=\frac{\widetilde{V}(\rho)}{1-\rho^2}v(\rho),
\end{align}
and note that $W(b_1(.,\lambda),b_2(.,\lambda))=\frac{2}{\pi}$.
Motivated by this, we make the ansatz
\begin{align*}
b(\rho,\lambda)= b_1(\rho,\lambda)&-\frac{\pi}{2}b_1(\rho,\lambda)\int_{0}^{\rho} b_2(s,\lambda)\frac{\widetilde{V}(s)}{1-s^2}b(s,\lambda)d s\\
&+\frac{\pi}{2}b_2(\rho,\lambda)\int_{0}^{\rho} b_1(s,\lambda)\frac{\widetilde{V}(s)}{1-s^2}b(s,\lambda) d s.
\end{align*}
Next, we remark that $J_2$ has only real zeros (see \cite{Olv97}, p. 244 Theorem 6.2) and as $a(\lambda)$ always has a nonzero imaginary part, we can divide the whole integral equation by $b_1$ and set 
$h:=\frac{b}{b_1}$.
This yields  
	\begin{equation}\label{int0}
h(\rho,\lambda)=1+\int_{0}^{\rho}K(\rho,s,\lambda)h(s,\lambda) ds,
	\end{equation}
with $$
K(\rho,s,\lambda)=\frac{\pi\widetilde{V}(s)}{2(1-s^2)}\left(\frac{b_2(\rho,\lambda)}{b_1(\rho,\lambda)}b_1(s,\lambda)^2-b_2(s,\lambda)b_1(s,\lambda)\right).
$$
From 
\begin{align*}
b_2(\rho,\lambda)b_1(\rho,\lambda)=(1-\rho^2)\varphi(\rho)Y_2(a(\lambda)\varphi(\rho))J_2(a(\lambda)\varphi(\rho))
\end{align*}
we see that the product $b_2(\rho,\lambda) b_1(\rho,\lambda)$ satisfies
$$
|b_2(\rho,\lambda)b_1(\rho,\lambda)|\lesssim \varphi(\rho)\lesssim \rho.
$$
Furthermore,
$$
\left|\frac{b_2(\rho,\lambda)}{b_1(\rho,\lambda)}b_1(s,\lambda)^2\right|\lesssim
\varphi(s)\lesssim s
$$
for all $0\leq s\leq\rho\leq \widehat{\rho}_\lambda$. Consequently, we obtain
$$
\int_0^{\widehat{\rho}_\lambda} \sup_{\rho\in [s,\widehat\rho_\lambda]}|K(\rho,s,\lambda)|ds \lesssim \langle\omega\rangle^{-2}
$$
and so, a Volterra iteration yields
the existence of a unique solution $h(\rho,\lambda)$ to Eq. (\ref{int0}) that satisfies
$$
h(\rho,\lambda)-1= O( \rho^2).
$$
Furthermore, since all the involved functions behave like symbols,
Appendix B of \cite{DonSchSof11} shows that\footnote{Strictly
  speaking, $\widehat{\rho}_\lambda$ not differentiable at $\frac{1}{2}(\rho_0+1)$. However, this is inessential and can easily be remedied by using a smoothed out version of $\widehat{\rho}_\lambda$.}
$h(\rho,\lambda)=1+\O(\rho^2\langle\omega\rangle^0)$ and thus, we
obtain the existence of a solution to
Eq.~\eqref{no first order} of the form
$$\psi_1(\rho,\lambda)=b_1(\rho,\lambda)[1+\O(\rho^2\langle\omega\rangle^0)].$$

To construct the second solution stated in the lemma, we pick a $\rho_1 \in (0,1]$ such that $\psi_1$ does not vanish for $\rho\leq\min\{\rho_1,\widehat{\rho}_\lambda\}=:\widetilde{\rho}_\lambda$ for any $0\leq \Re\lambda\leq \frac{1}{4}.$ Moreover, as
$\widetilde{b}_1(\rho,\lambda):=b_1(\rho,\lambda)\int_{\rho}^{\widetilde{\rho}_\lambda} b_1(s,\lambda)^{-2} ds$ is also a solution of Eq.~\eqref{ODE3}, there exist constants $c_1(\lambda),c_2(\lambda)$ such that
\begin{align*}
b_2(\rho,\lambda)=c_1(\lambda) b_1(\rho,\lambda)+ c_2(\lambda)\widetilde{b}_1(\rho,\lambda).
\end{align*}
Moreover, these constants are given by
\begin{align}
c_1(\lambda)&=\frac{W(b_2(.,\lambda),\widetilde{b}_1(.,\lambda))}{W(b_1(.,\lambda),\widetilde{b}_1(.,\lambda))}\\
c_2(\lambda)&=-\frac{W(b_2(.,\lambda),b_1(.,\lambda))}{W(b_1(.,\lambda),\widetilde{b}_1(.,\lambda))}.
\end{align}
Using that
$W(b_2(.,\lambda),b_1(.,\lambda))=-\frac{2}{\pi}$ and $ W(b_1(.,\lambda),\widetilde{b}_1(.,\lambda))=-1$, we infer that
$c_2=-\frac{2}{\pi}$ and $c_1(\lambda)=-W(b_2(.,\lambda),\widetilde{b_1}(.,\lambda))$.
Next, evaluating $W(b_2(.,\lambda),\widetilde{b}_1(.,\lambda))$ at $\widetilde{\rho}_\lambda$ yields
\[
W(b_2(.,\lambda),\widetilde{b_1}(.,\lambda))=-b_2(\widetilde{\rho}_\lambda,\lambda)b_1(\widetilde{\rho}_\lambda,\lambda)^{-1}=\O(\langle\omega\rangle^{0}).
\]
Keeping these facts in mind, we now turn our attention to $\psi_2$ and remark that a second solution of Eq.~\eqref{no first order} is given by
$\widetilde{\psi}_1(\rho,\lambda)=\psi_1(\rho,\lambda)\int_{\rho}^{\widetilde{\rho}_\lambda} \psi_1(s,\lambda)^{-2} ds$.
Considering this, we calculate
\begin{align*}
\psi_2(\rho,\lambda):&=c_1(\lambda)\psi_1(\rho,\lambda)+ c_2\psi_1(\rho,\lambda)\int_{\rho}^{\widetilde{\rho}_\lambda} \psi_1(s,\lambda)^{-2}ds 
\\
&=c_1(\lambda)\psi_1(\rho,\lambda)+
   c_2\psi_1(\rho,\lambda)\int_{\rho}^{\widetilde{\rho}_\lambda}
   b_1(s,\lambda)^{-2}ds \\
&\quad +c_2\psi_1(\rho,\lambda)\int_{\rho}^{\widetilde{\rho}_\lambda}\left [
   \psi_1(s,\lambda)^{-2}-b_1(s,\lambda)^{-2}\right ] ds
\\
&=b_2(\rho,\lambda)[1+\O(\rho^2\langle\omega\rangle^0)]
+c_2\psi_1(\rho,\lambda)\int_{\rho}^{\widetilde{\rho}_\lambda} \frac{\O(s^2\langle\omega\rangle^0)}{b_1(s,\lambda)^2[1+\O(s^2\langle\omega \rangle^0)]^2} ds.
\end{align*}
Since $b_1(\rho,\lambda)^{-2}=\O(\rho^{-5}\langle\omega\rangle^{-4})$,
we obtain
 \[
\int_{\rho}^{\widetilde{\rho}_\lambda} \frac{\O(s^2\langle\omega\rangle^0)}{b_1(s,\lambda)^2[1+\O(s^2\langle\omega \rangle^0)]^2} ds=\O(\rho^0\langle\omega\rangle^{-2})+\O(\rho^{-2}\langle\omega\rangle^{-4})=\O(\rho^{-2}\langle\omega\rangle^{-4}).
\]
Finally, for $|\lambda|$ large enough we see that
$\widetilde{\rho}_\lambda=\widehat{\rho}_\lambda$ and so we can safely
assume that $\widetilde{\rho}_\lambda=\widehat{\rho}_\lambda$.
\end{proof}
Having constructed a fundamental system near $0$, we turn to the endpoint $1$.
\begin{lem}\label{Hankelsol}
There exists a fundamental system
for Eq. (\ref{no first order}) of the form
\begin{align*}
\psi_3(\rho,\lambda)&= h_1(\rho,\lambda)[1+(1-\rho) \O(\langle\omega\rangle^{-1})+\O(\rho^0 (1-\rho)^2 \langle \omega \rangle^{-1})]
\\
\psi_4(\rho,\lambda)&=h_2(\rho,\lambda)[1+(1-\rho)\O(\langle\omega\rangle^{-1})+\O(\rho^0 (1-\rho)^2 \langle \omega \rangle^{-1})]
\end{align*}
for all $\rho\geq \rho_\lambda$.
\end{lem}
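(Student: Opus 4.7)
I would construct $\psi_3$ and $\psi_4$ in two separate steps that closely mirror the structure of the proof of Lemma \ref{free ODE near 1}: first obtain $\psi_3 \sim h_1$ by a Volterra iteration that treats $\frac{V(\rho)}{1-\rho^2}$ as a perturbation of the equation \eqref{no V} whose fundamental system $\{h_1,h_2\}$ is already in hand; then obtain $\psi_4$ from $\psi_3$ by reduction of order, in direct analogy with the construction of $h_2$ from $h_1$ in Lemma \ref{free ODE near 1}. Throughout we shrink the constant from that lemma (if necessary) so that $h_1(\cdot,\lambda)$ has no zeros on $[\rho_\lambda,1)$ and recall that $W(h_1(\cdot,\lambda),h_2(\cdot,\lambda))$ is a nonzero constant (computable directly from the reduction-of-order construction of $h_2$).

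\textbf{Construction of $\psi_3$.} Variation of constants on the interval $[\rho_\lambda,1)$ suggests the ansatz
\[
\psi_3(\rho,\lambda)=h_1(\rho,\lambda)+\frac{1}{W(h_1,h_2)}\int_\rho^1\!\bigl[h_2(\rho,\lambda)h_1(s,\lambda)-h_1(\rho,\lambda)h_2(s,\lambda)\bigr]\frac{V(s)}{1-s^2}\psi_3(s,\lambda)\,ds .
\]
Dividing by $h_1(\rho,\lambda)$ I obtain an integral equation for $\widetilde\psi_3:=\psi_3/h_1$ of the form
\[
\widetilde\psi_3(\rho,\lambda)=1+\int_\rho^1 K_3(\rho,s,\lambda)\widetilde\psi_3(s,\lambda)\,ds,
\]
with $K_3(\rho,s,\lambda)=\frac{1}{W}\frac{V(s)}{1-s^2}\!\left[\tfrac{h_2(\rho,\lambda)}{h_1(\rho,\lambda)}h_1(s,\lambda)^2-h_1(s,\lambda)h_2(s,\lambda)\right]$. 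Using the asymptotics from Lemma \ref{free ODE near 1}, I estimate $|h_1h_2|(s)\lesssim \langle\omega\rangle^{-1}(1-s)$ and $\bigl|\tfrac{h_2(\rho)}{h_1(\rho)}h_1(s)^2\bigr|\lesssim\langle\omega\rangle^{-1}(1-s)^{5/2-\Re\lambda}(1-\rho)^{-3/2+\Re\lambda}$ uniformly for $\rho_\lambda\le\rho\le s<1$ and $\Re\lambda\in[0,\tfrac14]$. After division by $1-s^2$ both terms give bounded contributions of size $\langle\omega\rangle^{-1}$, and hence
\[
\int_\rho^1\sup_{\rho'\in[\rho,s]}|K_3(\rho',s,\lambda)|\,ds\lesssim (1-\rho)\langle\omega\rangle^{-1}.
\]
Lemma~B.1 of \cite{DonSchSof11} then furnishes a unique $\widetilde\psi_3(\rho,\lambda)=1+O((1-\rho)\langle\omega\rangle^{-1})$; a second pass through the integral equation yields the refined expansion
\[
\widetilde\psi_3(\rho,\lambda)=1+(1-\rho)\O(\langle\omega\rangle^{-1})+\O(\rho^0(1-\rho)^2\langle\omega\rangle^{-1}),
\]
with the symbol behavior in $(\rho,\omega)$ obtained by differentiating the integral equation and exploiting the diffeomorphism $\varphi$ exactly as in Lemma \ref{free ODE near 1}.

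\textbf{Construction of $\psi_4$ and main obstacle.} Since $\psi_3(\cdot,\lambda)$ has no zeros on $[\rho_\lambda,1)$ (by shrinking $r$ if needed) but $\psi_3^{-2}$ is \emph{not} integrable at $\rho=1$, a straightforward variation-of-constants ansatz emanating from $\rho=1$ would involve the divergent integral $\int_\rho^1 h_2^2 \frac{V}{1-s^2}\,ds$. Instead I use reduction of order starting from $\rho_\lambda$:
\[
\psi_4(\rho,\lambda)=\frac{2}{i}\,\psi_3(\rho,\lambda)\int_{\rho_\lambda}^\rho \psi_3(s,\lambda)^{-2}\,ds+c(\lambda)\psi_3(\rho,\lambda),
\]
with $c(\lambda)$ to be determined. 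Writing $\psi_3=h_1[1+E_3]$ with $E_3=(1-\rho)\O(\langle\omega\rangle^{-1})+\O((1-\rho)^2\langle\omega\rangle^{-1})$, I have $\psi_3^{-2}=h_1^{-2}[1+\widetilde E_3]$ with a similar error, and using the identity \eqref{Eq:derivative w2} together with $h_1=w_1[1+(1-\rho)\O(\langle\omega\rangle^{-1})+\O(\rho^{-1}(1-\rho)^2\langle\omega\rangle^{-1})]$, the antiderivative of the leading part of $\psi_3^{-2}$ is $\frac{i}{2}(\tfrac{1+\rho}{1-\rho})^{3/2-\lambda}$ up to acceptable errors. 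This will yield
\[
\tfrac{2}{i}\psi_3(\rho)\!\int_{\rho_\lambda}^\rho \psi_3(s)^{-2}ds=w_2(\rho,\lambda)[1+(1-\rho)\O(\langle\omega\rangle^{-1})+\O(\rho^0(1-\rho)^2\langle\omega\rangle^{-1})]+\widehat c(\lambda)h_1(\rho,\lambda)+R,
\]
where $R$ comes from integrating $h_1^{-2}\widetilde E_3$ and is handled by the same integration-by-parts trick used in the construction of $h_2$ in Lemma \ref{free ODE near 1} (splitting the integrand via \eqref{Eq:derivative w2} to trade a factor of $(1-s)^{-5/2+\lambda}$ for an explicit boundary term plus a convergent integral). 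Choosing $c(\lambda)=-\widehat c(\lambda)$ cancels the leftover multiple of $h_1=\psi_3+O(\text{small})$, and the remaining $w_2$ term combines with the error factor to give $\psi_4=h_2[1+(1-\rho)\O(\langle\omega\rangle^{-1})+\O(\rho^0(1-\rho)^2\langle\omega\rangle^{-1})]$.

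The main obstacle here is bookkeeping: I must carry the $\langle\omega\rangle^{-1}$ gains and the $(1-\rho)$-gains simultaneously through the reduction-of-order formula, in particular through the antiderivative of $h_1^{-2}\widetilde E_3$, which features the non-integrable weight $(1-s)^{-5/2+\Re\lambda}$. The same integration-by-parts argument that appears in the $h_2$-construction of Lemma \ref{free ODE near 1} — using \eqref{Eq:derivative w2} to write $w_1^{-2}$ as a derivative and then peeling off the boundary terms at $\rho=1$ — is exactly what is needed, and symbol behavior is preserved because each step reduces to integrals that fall under the calculus of \cite{DonSchSof11}.
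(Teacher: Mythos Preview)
Your proposal is correct and follows essentially the same route as the paper: construct $\psi_3$ by a Volterra iteration based on the fundamental system $\{h_1,h_2\}$ of Eq.~\eqref{no V} with $\frac{V(\rho)}{1-\rho^2}$ as the perturbation, and then obtain $\psi_4$ from $\psi_3$ by reduction of order starting at $\rho_\lambda$, choosing $c(\lambda)$ to cancel the leftover multiple of $\psi_3$ and handling the integral of $h_1^{-2}\widetilde E_3$ via the identity \eqref{Eq:derivative w2} exactly as in the $h_2$-construction of Lemma \ref{free ODE near 1}. The paper's proof is a terse sketch of precisely this argument, so your more detailed account matches it in all essential respects.
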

\begin{proof}
As $\rho_\lambda$ was chosen such that neither $h_1$ nor $h_2$ vanish on $[\rho_\lambda,1),$ a Volterra iteration akin to the one in Lemma \ref{free ODE near 1} proves the existence of the solution $\psi_3$. We can also w.l.o.g assume that $\psi_3(\rho,\lambda)$ does not vanish on $[\rho_\lambda,1)$ and obtain the second solution $\psi_4$ as
\begin{align*}
\psi_4(\rho,\lambda)=\frac{2\psi_3(\rho,\lambda)}{i}\int_{\rho_\lambda}^\rho \psi_3(s,\lambda)^{-2} ds +c(\lambda)\psi_3(\rho,\lambda)
\end{align*}
for $c(\lambda)$ to be determined.
By emulating our calculations from the proof of Lemma \ref{free ODE near 1}, we infer that $\psi_3$ takes the form
\begin{align*}
\psi_3(\rho,\lambda)=h_1(\rho,\lambda)[1+ (1-\rho)\O(\langle\omega\rangle^{-1})+ \O(\rho^0(1-\rho)^2\langle\omega\rangle^{-1})]
\end{align*}
and so
\begin{align*}
\psi_4(\rho,\lambda)&=\frac{2\psi_3(\rho,\lambda)}{i}\int_{\rho_\lambda}^\rho h_1(s,\lambda)^{-2}[1+(1-s)\O(\langle\omega\rangle^{-1})+\O(s^{0}(1-s)^2\langle\omega\rangle^{-1})]ds 
\\
&\quad+c(\lambda)\psi_3(\rho,\lambda).
\end{align*}
Manipulating the integral term as in the derivation of Lemma \ref{free ODE near 1} then shows that $\psi_4$ is of the claimed form for appropriately chosen $c(\lambda)$.
\end{proof}
Having constructed all these solutions, we now come to the task of patching them together.
\begin{lem}\label{connectioncoef}
On $[\rho_\lambda,\widehat{\rho}_\lambda]$ the solutions $\psi_3$ and $\psi_4$ have the representations

\begin{align*}
\psi_3(\rho,\lambda) &= c_{1,3}(\lambda)\psi_1(\rho,\lambda)+ c_{2,3}(\lambda)\psi_2(\rho,\lambda)\\
\psi_4(\rho,\lambda) &= c_{1,4}(\lambda)\psi_1(\rho,\lambda)+ c_{2,4}(\lambda)\psi_2(\rho,\lambda),
\end{align*}
with
\begin{align*}
	c_{1,3}(\lambda)=&\frac{e^{i\frac{5}{4}\pi}\sqrt{\pi}}{\sqrt{2}}+\O(\langle\omega\rangle^{-1})\\
	c_{2,3}(\lambda)=&i \frac{e^{i\frac{5}{4}\pi}\sqrt{\pi}}{\sqrt{2}}+\O(\langle\omega\rangle^{-1})
\end{align*}
and
\begin{align*}
c_{1,4}(\lambda)=&\frac{e^{i\frac{5}{4}\pi}\sqrt{\pi}}{\sqrt{2}}\frac{h_2(\rho_\lambda,\lambda)}{h_1(\rho_\lambda,\lambda)}+i\pi\frac{b_2(\rho_\lambda,\lambda)}{h_1(\rho_\lambda,\lambda)}+\O(\langle\omega\rangle^{-1})=\O(\langle\omega\rangle^{0})
\\
c_{2,4}(\lambda)=&i\frac{e^{i\frac{5}{4}\pi}\sqrt{\pi}}{\sqrt{2}}\frac{h_2(\rho_\lambda,\lambda)}{h_1(\rho_\lambda,\lambda)}-i\pi\frac{b_1(\rho_\lambda,\lambda)}{h_1(\rho_\lambda,\lambda)}+\O(\langle\omega\rangle^{-1})=\O(\langle\omega\rangle^{0}).
\end{align*}
\end{lem}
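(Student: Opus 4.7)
All four solutions $\psi_1,\psi_2,\psi_3,\psi_4$ of the second-order linear ODE \eqref{no first order} (which has no first-order term) have pairwise constant Wronskians, so the connection coefficients are given by Cramer formulas
\[ c_{1,3}=\tfrac{\pi}{2}W(\psi_3,\psi_2), \qquad c_{2,3}=\tfrac{\pi}{2}W(\psi_1,\psi_3), \qquad c_{1,4}=\tfrac{\pi}{2}W(\psi_4,\psi_2), \qquad c_{2,4}=\tfrac{\pi}{2}W(\psi_1,\psi_4), \]
where the normalising denominator $W(\psi_1,\psi_2)=\tfrac{2}{\pi}$ follows from the reduction-of-order construction of $\psi_2$ in Lemma \ref{Besselsol} together with the identity $W(f,f\int f^{-2})=-1$.

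I will evaluate each of the numerator Wronskians at the matching point $\rho=\rho_\lambda$, where both the Bessel-type representation of $\psi_1,\psi_2$ from Lemma \ref{Besselsol} and the Hankel-type representation of $\psi_3,\psi_4$ from Lemma \ref{Hankelsol} are simultaneously available. Tracking the symbol-type error terms and the typical sizes $|h_j(\rho_\lambda)|,|b_j(\rho_\lambda)|\lesssim \langle\omega\rangle^{-1/2}$ and $|h_j'(\rho_\lambda)|,|b_j'(\rho_\lambda)|\lesssim \langle\omega\rangle^{1/2}$, each such Wronskian collapses at leading order to one of $W(h_i,b_j)(\rho_\lambda)$ plus a uniform $O(\langle\omega\rangle^{-1})$ error. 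The key computation is then that the global Bessel connection $H_2^{(1,2)}=J_2\pm iY_2$, combined with the large-argument Hankel asymptotic that governs the near-$\rho=1$ behaviour of $h_1,h_2$, produces the identifications
\[ h_1 \sim \sqrt{\pi/2}\,e^{i5\pi/4}(b_1+ib_2), \qquad h_2 \sim \sqrt{\pi/2}\,e^{-i5\pi/4}(b_1-ib_2), \]
which together with the exact identity $W(b_1,b_2)=2/\pi$ deliver the claimed leading terms for $c_{1,3}$ and $c_{2,3}$.

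For $c_{1,4}$ and $c_{2,4}$ two structurally new contributions appear. Because $\psi_4$ is built by reduction-of-order based at $\rho_\lambda$, the integral $\int_{\rho_\lambda}^{\rho}\psi_3^{-2}$ vanishes at $\rho_\lambda$, so $\psi_4(\rho_\lambda)=\widetilde{c}(\lambda)\psi_3(\rho_\lambda)$ with $\widetilde{c}(\lambda)=h_2(\rho_\lambda)/h_1(\rho_\lambda)+O(\langle\omega\rangle^{-1})$, while $\psi_4'(\rho_\lambda)$ picks up the extra term $-2i/\psi_3(\rho_\lambda)\approx -2i/h_1(\rho_\lambda)$ from differentiating the integral. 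The first mechanism multiplies the main terms of $c_{1,3},c_{2,3}$ by the ratio $h_2(\rho_\lambda)/h_1(\rho_\lambda)$, while the second inserts a term of the form $\pm 2i b_j(\rho_\lambda)/h_1(\rho_\lambda)$ into $W(\psi_4,\psi_j)(\rho_\lambda)$, producing the $\pm i\pi b_j(\rho_\lambda)/h_1(\rho_\lambda)$ contributions after multiplication by $\pi/2$. The size estimates on $h_j(\rho_\lambda)$ and $b_j(\rho_\lambda)$ finally yield the uniform bounds $c_{1,4},c_{2,4}=O(\langle\omega\rangle^0)$.

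\textbf{Main obstacle.} The delicate step will be to make the Hankel identification rigorous at the nonasymptotic point $\rho=\rho_\lambda$, where $a(\lambda)\varphi(\rho_\lambda)$ is only bounded, not large, and where $b_1,b_2$ solve Eq.~\eqref{ODE3} whereas $h_1,h_2$ solve the different equation \eqref{no V}. This will require a separate Volterra iteration that propagates the asymptotic identity from $\rho\to 1$ back to $\rho_\lambda$ through Eq.~\eqref{no V}, with the mismatch between the two free equations absorbed into the $O(\langle\omega\rangle^{-1})$ remainder in the spirit of the construction carried out in Lemma \ref{Besselsol}.
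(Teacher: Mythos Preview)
Your overall plan — Cramer formulas, evaluate Wronskians at $\rho_\lambda$, use the reduction-of-order structure of $\psi_4$ for $c_{1,4},c_{2,4}$ — matches the paper's and your treatment of $c_{1,4},c_{2,4}$ is essentially correct. The gap is in the computation of $c_{1,3},c_{2,3}$.

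You want to reduce $W(\psi_3,\psi_2)(\rho_\lambda)$ to $W(h_1,b_2)(\rho_\lambda)$ and then invoke $h_1\sim\sqrt{\pi/2}\,e^{i5\pi/4}(b_1+ib_2)$ at $\rho_\lambda$. The problem is twofold. First, $h_1$ solves \eqref{no V} while $b_1,b_2$ solve \eqref{ODE3}, so $W(h_1,b_j)$ is not constant and no global Bessel identity is available. Second, the Hankel large-argument expansion at $z=a(\lambda)\varphi(\rho_\lambda)$, which has $|z|\simeq r$ merely bounded, carries an error $O(|z|^{-1})=O(r^{-1})$ that does \emph{not} decay in $\omega$; likewise the error term $\O(\rho^{-1}(1-\rho)^2\langle\omega\rangle^{-1})$ in Lemma~\ref{free ODE near 1} is only $O(r^{-1})$ at $\rho_\lambda$. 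So your proposed identification fails to produce an $O(\langle\omega\rangle^{-1})$ remainder, and your ``main obstacle'' paragraph, while correctly flagging the issue, does not supply the missing mechanism (propagating ``through Eq.~\eqref{no V}'' cannot help, since neither $b_j$ nor the needed Hankel combination lives there).

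The paper resolves this by introducing $\widehat h_1:=b_1+ib_2=\sqrt{(1-\rho^2)\varphi(\rho)}\,H_2^{(1)}(a(\lambda)\varphi(\rho))$, which solves the \emph{same} equation \eqref{ODE3} as $b_1,b_2$, so that $W(\widehat h_1,b_j)$ is constant and given exactly by the Bessel identity $W(H_2^{(1)},Y_2)(z)=\tfrac{2}{\pi z}$. From $\widehat h_1$ one builds, by a Volterra iteration with potential $\widetilde V/(1-\rho^2)$, a solution $\widehat\psi_3$ of the \emph{full} equation \eqref{no first order}. Since $\widehat\psi_3$ and $\psi_3$ now solve the same second-order ODE and are both recessive as $\rho\to 1$, they are exact scalar multiples; the constant is read off from the Hankel asymptotic in the regime where it \emph{is} valid (namely $\rho\to 1$, i.e., $|a(\lambda)\varphi(\rho)|\to\infty$), giving $\widehat\psi_3=\tfrac{\sqrt2}{\sqrt\pi}e^{-i5\pi/4}\psi_3$. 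One then evaluates $W(\widehat\psi_3,\psi_2)$ at $\rho_\lambda$, where the Volterra errors for $\widehat\psi_3$ and $\psi_2$ are genuinely $O(\langle\omega\rangle^{-1})$ and the leading part is the exact constant $W(\widehat h_1,b_2)=\tfrac2\pi$. The crucial point you are missing is that the Hankel--Bessel matching constant must be extracted at $\rho\to 1$, between two solutions of the \emph{same} ODE, rather than at $\rho_\lambda$ between solutions of different ODEs.
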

\begin{proof}
We know the explicit representations
\begin{align*}
c_{1,3}(\lambda)&=\frac{W(\psi_3(.,\lambda),\psi_2(.,\lambda))}{W(\psi_1(.,\lambda),\psi_2(.,\lambda))}\\
c_{2,3}(\lambda)&=-\frac{W(\psi_3(.,\lambda),\psi_1(.,\lambda))}{W(\psi_{1}(.,\lambda),\psi_{2}(.,\lambda))}
\end{align*}
and so our task boils down to calculating these Wronskians.
Evaluating the Wronskian $W(\psi_1(.,\lambda),\psi_2(.,\lambda))$ at
$\rho=0$ yields
$$
W(\psi_{1}(.,\lambda),\psi_{2}(.,\lambda))=\frac{2}{\pi}.
$$
However, computing the Wronskian of $\psi_3$ and $\psi_2$ is more challenging and for this we will need the Hankel function $H^1_2$ (see \cite{OlvLonBoiClar10}). Since 
$b_1$ and $b_2$ solve Eq.~ \eqref{ODE3} so does
\begin{align*}
\widehat{h}_1(\rho,\lambda)=\sqrt{(1-\rho^2)\varphi(\rho)}H^1_2(a(\lambda)\varphi(\rho)).
\end{align*}
Moreover, as
\begin{align}\label{Hankel 1 symbol}
H^1_2(z)=\frac{\sqrt{2}e^{-i\frac{5}{4}\pi+i z}}{\sqrt{\pi z}}[1+\O(|z|^{-1})]
\end{align}
 holds on $\{z: \Im(z)\geq 0,|z|\geq c \}$ (see \cite{Olv97}, p. 238) for any $c>0$ fixed, we see that the two representations
\begin{align*}
\widehat{h}_1(\rho,\lambda)=&\frac{\sqrt{2}e^{-i\frac{5}{4}\pi}\sqrt{1-\rho^2}}{\sqrt{\pi a(\lambda)}}\left(\frac{1-\rho}{1+\rho}\right)^{\frac{3}{4}-\frac{\lambda}{2}}
\left[1+O(\rho^{-1}\langle\log(1-\rho)\rangle^{-1}\langle\omega\rangle^{-1})\right]
\end{align*}
and 
\begin{align*}
\widehat{h}_1(\rho,\lambda)=&\frac{\sqrt{2}e^{-i\frac{5}{4}\pi}\sqrt{1-\rho^2}}{\sqrt{\pi a(\lambda)}}\left(\frac{1-\rho}{1+\rho}\right)^{\frac{3}{4}-\frac{\lambda}{2}}
\left[1+\O(\rho^{-1}(1-\rho)^0\langle\omega\rangle^{-1})\right]
\end{align*}
hold for all $\rho \in [\rho_\lambda,1]$ and all $0\leq \Re\lambda\leq \frac{1}{4}.$
Furthermore, using \eqref{Hankel 1 symbol} we can also w.l.o.g.~assume
that $\rho_\lambda$ is chosen such that $\widehat{h}_1$ has no zeros
on $[\rho_\lambda,1)$. Now, given that the potential
$\widetilde{V}$ as defined in the proof of Lemma \ref{Besselsol} is
integrable on $[0,1]$, we are able to construct a solution
$\widehat{\psi}_3 $ to Eq.~\eqref{no first order} of the form
\begin{align*}
\widehat{\psi}_3(\rho,\lambda)= \widehat{h}_1(\rho,\lambda)[1+O(\rho^{0}\langle\log(1-\rho)\rangle^{-1}\langle\omega\rangle^{-1})]
\end{align*} by employing a Volterra iteration based on $\widehat{h}_1$ and $\widehat{h}_1(\rho,\lambda)\int_{\rho_\lambda}^\rho \widehat{h}_1(s,\lambda)^{-2} ds$. 
The behavior of $\widehat{\psi}_3$ as $\rho \to 1$ now necessitates that
\begin{align*}
\widehat{\psi}_3=\frac{\sqrt{2}e^{-i\frac{5}{4}\pi}}{\sqrt{\pi}}\psi_3.
\end{align*}
Moreover, the latter form of $\widehat{h}_1$ implies that $\widehat{\psi}_3$ can also be recast as
\begin{align*}
\widehat{\psi}_3(\rho,\lambda)= \widehat{h}_1(\rho,\lambda)[1+\O(\rho^{0}(1-\rho)^0\langle\omega\rangle^{-1})]
\end{align*}
Using this and the fact that $W(H_2^1,Y_2)(z)= \frac{2}{\pi z}$, we
infer that $W(\widehat h_1(.,\lambda), b_2(.,\lambda))=\frac{2}{\pi}$
and an evaluation at $\rho_\lambda$ yields
\begin{align*}
  W(\psi_3(.,\lambda),\psi_2(.,\lambda))
  &=\frac{\sqrt\pi e^{i\frac{5}{4}\pi}}{\sqrt 2}W(\widehat
    h_1(.,\lambda), b_2(.,\lambda))[1+\O(\langle
    \omega\rangle^{-1})] \\
  &\quad +\widehat{h}_1(\rho_\lambda,\lambda)b_2(\rho_\lambda,\lambda)\O(\langle\omega\rangle^{0})+\O(\langle\omega\rangle^{-2}) \\
  &=\frac{\sqrt{2}e^{i\frac{5}{4}\pi}}{\sqrt{\pi}}[1+\O(\langle\omega\rangle^{-1})]
    +\O(\langle\omega\rangle^{-1})
\end{align*}
since $\widehat{ h}_1(\rho_\lambda,\lambda)b_j(\rho_\lambda,\lambda)= \O(\langle\omega\rangle^{-1})$
for $j=1,2$. Consequently, we obtain
$$
c_{1,3}(\lambda)=\frac{e^{i\frac{5}{4}\pi}\sqrt{\pi}}{\sqrt{2}}+\O(\langle\omega\rangle^{-1})
$$
and, analogously, one computes
$$
c_{2,3}(\lambda)=i \frac{e^{i\frac{5}{4}\pi}\sqrt{\pi}}{\sqrt{2}}+\O(\langle\omega\rangle^{-1}).
$$

Now, to patch together $\psi_4$ with the other solutions, we need some more considerations.
First, for $\widehat{\psi}_4(\rho,\lambda):=\widehat{\psi}_3(\rho, \lambda)\int_{\rho_\lambda}^\rho \widehat{\psi}_3(s,\lambda)^{-2} ds$ 
we compute that
\begin{align*}
W(\psi_4(.,\lambda),\widehat{\psi}_4(.,\lambda))
&=\frac{\psi_4(\rho_\lambda,\lambda)}{\widehat
                                                   \psi_3(\rho_\lambda,\lambda)}
  =\frac{\sqrt{\pi}e^{i\frac{5}{4}\pi}h_2(\rho_\lambda,\lambda)[1+\O(\langle\omega\rangle^{-1})]}{\sqrt{2}h_1(\rho_\lambda,\lambda)[1+\O(\langle\omega\rangle^{-1})]}
\\
&=\frac{\sqrt{\pi}e^{i\frac{5}{4}\pi}h_2(\rho_\lambda,\lambda)}{\sqrt{2}h_1(\rho_\lambda,\lambda)}+\O(\langle\omega\rangle^{-1})
\end{align*}
and so, as
$W(\psi_3(.,\lambda),\widehat{\psi}_4(.,\lambda))=\frac{\sqrt{\pi}e^{i\frac{5}{4}\pi}}{\sqrt{2}}$
and $W(\psi_4(.,\lambda),\psi_3(.,\lambda))=2i$, we see that
\begin{align*}
\psi_4(\rho,\lambda)=\left(\frac{h_2(\rho_\lambda,\lambda)}{h_1(\rho_\lambda,\lambda)}+\O(\langle\omega\rangle^{-1})\right)\psi_3(\rho,\lambda)-\frac{2\sqrt{2}ie^{-i\frac{5}{4}\pi}}{\sqrt{\pi}}\widehat{\psi}_4(\rho,\lambda)
\end{align*}
Lastly, we have that
\begin{align*}
W(\psi_j(.,\lambda),\widehat{\psi}_4(.,\lambda))=\frac{\sqrt{\pi}e^{i\frac{5}{4}\pi}b_j(\rho_\lambda,\lambda)}{\sqrt{2}h_1(\rho_\lambda,\lambda)}+\O(\langle\omega\rangle^{-1})
\end{align*}
for $j=1,2$ and the
 claim follows by putting everything together.
\end{proof}
Analogously we can patch together the solutions of the free equation. To this end, let $\psi_{\mathrm{f}_1}$ and $\psi_{\mathrm{f}_2}$ by the solutions obtained from Lemma \ref{Besselsol} in the case $V=0$ and, for notational convenience, let $h_1=\psi_{\mathrm{f}_3}$ and $h_2=\psi_{\mathrm{f}_4}$.
\begin{lem}\label{connectioncoeffree}
On $[\rho_\lambda,\widehat{\rho}_\lambda]$, the solutions $\psi_{\mathrm{f}_3}$ and $\psi_{\mathrm{f}_4}$ have the representations
\begin{align*}
\psi_{\mathrm{f}_3}(\rho,\lambda) &= c_{\mathrm{f}_{1,3}}(\lambda)\psi_{\mathrm{f}_1}(\rho,\lambda)+ c_{\mathrm{f}_{2,3}}(\lambda)\psi_{\mathrm{f}_2}(\rho,\lambda)\\
\psi_{\mathrm{f}_4}(\rho,\lambda) &= c_{\mathrm{f}_{1,4}}(\lambda)\psi_{\mathrm{f}_1}(\rho,\lambda)+ c_{\mathrm{f}_{2,4}}(\lambda)\psi_{\mathrm{f}_2}(\rho,\lambda)
\end{align*}
with
\begin{align*}
	c_{\mathrm{f}_{1,3}}(\lambda)=&\frac{e^{i\frac{5}{4}\pi}\sqrt{\pi}}{\sqrt{2}}+\O(\langle\omega\rangle^{-1})\\
	c_{\mathrm{f}_{2,3}}(\lambda)=&i \frac{e^{i\frac{5}{4}\pi}\sqrt{\pi}}{\sqrt{2}}+\O(\langle\omega\rangle^{-1})
\end{align*}
and
\begin{align*}
c_{\mathrm{f}_{1,4}}(\lambda)=&\frac{e^{i\frac{5}{4}\pi}\sqrt{\pi}}{\sqrt{2}}\frac{h_2(\rho_\lambda,\lambda)}{h_1(\rho_\lambda,\lambda)}+i\pi\frac{b_2(\rho_\lambda,\lambda)}{h_1(\rho_\lambda,\lambda)}+\O(\langle\omega\rangle^{-1})=\O(\langle\omega\rangle^{0})
\\
c_{\mathrm{f}_{2,4}}(\lambda)=&i\frac{e^{i\frac{5}{4}\pi}\sqrt{\pi}}{\sqrt{2}}\frac{h_2(\rho_\lambda,\lambda)}{h_1(\rho_\lambda,\lambda)}-i\pi\frac{b_1(\rho_\lambda,\lambda)}{h_1(\rho_\lambda,\lambda)}+\O(\langle\omega\rangle^{-1})=\O(\langle\omega\rangle^{0}).
\end{align*}
\end{lem}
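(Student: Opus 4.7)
The strategy is to mirror the proof of Lemma \ref{connectioncoef} step by step, exploiting the fact that the arguments there depended on $V$ only through its smoothness on $[0,1]$, a property trivially shared by $V\equiv 0$. In particular, both the Volterra iterations (for $\psi_{\mathrm{f}_1},\psi_{\mathrm{f}_2}$ via Lemma \ref{Besselsol} and for the Hankel-based auxiliary solution below) and all Wronskian computations carry over essentially verbatim. The one minor notational simplification is that $\psi_{\mathrm{f}_3}=h_1$ and $\psi_{\mathrm{f}_4}=h_2$ are literal equalities in the free case, rather than perturbations of $h_1,h_2$.

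Concretely, first I would introduce the Hankel-based reference solution $\widehat h_1(\rho,\lambda):=\sqrt{(1-\rho^2)\varphi(\rho)}\,H^1_2(a(\lambda)\varphi(\rho))$ and run the same Volterra iteration as in the proof of Lemma \ref{connectioncoef}, now with the \emph{free} auxiliary potential $\widetilde V(\rho)=\tfrac{15}{4(1-\rho^2)}\big(\tfrac{1}{\rho^2}-\tfrac{1}{\varphi(\rho)^2}-1\big)\in C^\infty([0,1])$, to produce a solution $\widehat\psi_{\mathrm{f}_3}$ of the $V=0$ version of Eq.~\eqref{no first order} with
\[
\widehat\psi_{\mathrm{f}_3}(\rho,\lambda)=\widehat h_1(\rho,\lambda)[1+\O(\rho^0(1-\rho)^0\langle\omega\rangle^{-1})].
\]
Matching endpoint behavior as $\rho\to 1$ with that of $\psi_{\mathrm{f}_3}=h_1$ then forces $\widehat\psi_{\mathrm{f}_3}=\tfrac{\sqrt 2\,e^{-i\frac{5}{4}\pi}}{\sqrt\pi}\,\psi_{\mathrm{f}_3}$. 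The connection coefficients $c_{\mathrm{f}_{1,3}},c_{\mathrm{f}_{2,3}}$ are then read off from the explicit Wronskian formulas, using the standard Bessel-Hankel identity $W(H^1_2,Y_2)(z)=\tfrac{2}{\pi z}$ (whence $W(\widehat h_1(.,\lambda),b_2(.,\lambda))=\tfrac{2}{\pi}$) and its analogue for $(H^1_2,J_2)$, together with the evaluations $\widehat h_1(\rho_\lambda,\lambda)b_j(\rho_\lambda,\lambda)=\O(\langle\omega\rangle^{-1})$ that follow from $|a(\lambda)\varphi(\rho_\lambda)|\lesssim 1$ and the Bessel Taylor expansions \eqref{BesselTaylor}.

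For $\psi_{\mathrm{f}_4}=h_2$ I would introduce the auxiliary function $\widehat\psi_{\mathrm{f}_4}(\rho,\lambda):=\widehat\psi_{\mathrm{f}_3}(\rho,\lambda)\int_{\rho_\lambda}^{\rho}\widehat\psi_{\mathrm{f}_3}(s,\lambda)^{-2}\,ds$ exactly as in the proof of Lemma \ref{connectioncoef}, and evaluate $W(\psi_{\mathrm{f}_4}(.,\lambda),\widehat\psi_{\mathrm{f}_4}(.,\lambda))$ at $\rho=\rho_\lambda$ to obtain $\tfrac{\sqrt\pi\,e^{i\frac{5}{4}\pi}}{\sqrt 2}\tfrac{h_2(\rho_\lambda,\lambda)}{h_1(\rho_\lambda,\lambda)}+\O(\langle\omega\rangle^{-1})$. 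Combining this with $W(\psi_{\mathrm{f}_3}(.,\lambda),\widehat\psi_{\mathrm{f}_4}(.,\lambda))=\tfrac{\sqrt\pi\,e^{i\frac{5}{4}\pi}}{\sqrt 2}$ and $W(\psi_{\mathrm{f}_4}(.,\lambda),\psi_{\mathrm{f}_3}(.,\lambda))=2i$ produces the decomposition
\[
\psi_{\mathrm{f}_4}(\rho,\lambda)=\Bigl(\tfrac{h_2(\rho_\lambda,\lambda)}{h_1(\rho_\lambda,\lambda)}+\O(\langle\omega\rangle^{-1})\Bigr)\psi_{\mathrm{f}_3}(\rho,\lambda)-\tfrac{2\sqrt 2\,i\,e^{-i\frac{5}{4}\pi}}{\sqrt\pi}\,\widehat\psi_{\mathrm{f}_4}(\rho,\lambda),
\]
and the stated formulas for $c_{\mathrm{f}_{1,4}},c_{\mathrm{f}_{2,4}}$ follow from the Wronskians $W(\psi_{\mathrm{f}_j}(.,\lambda),\widehat\psi_{\mathrm{f}_4}(.,\lambda))=\tfrac{\sqrt\pi\,e^{i\frac{5}{4}\pi}b_j(\rho_\lambda,\lambda)}{\sqrt 2\,h_1(\rho_\lambda,\lambda)}+\O(\langle\omega\rangle^{-1})$ for $j=1,2$.

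There is no substantively new obstacle here: the only point worth checking is that the specific potential used in Lemma \ref{connectioncoef} entered the argument exclusively through a smooth, integrable Volterra kernel, so setting it to zero preserves every bound. Accordingly, the proof reduces to transcribing the previous argument with $V=0$ and observing that the same asymptotic expansions and Wronskian identifications remain valid.
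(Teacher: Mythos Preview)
Your proposal is correct and follows exactly the approach the paper indicates: the paper does not give a separate proof of Lemma \ref{connectioncoeffree} but simply states ``Analogously we can patch together the solutions of the free equation,'' and your detailed transcription of the proof of Lemma \ref{connectioncoef} with $V=0$ (noting the simplification $\psi_{\mathrm{f}_3}=h_1$, $\psi_{\mathrm{f}_4}=h_2$) is precisely what is intended.
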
                                                       
Next, let $\chi: [0,1]\times \{z\in \C:0\leq \Re z\leq \frac{1}{4}\} \to
[0,1]$, $\chi_\lambda(\rho):=\chi(\rho,\lambda)$, be a smooth cutoff function that satisfies
$\chi_\lambda(\rho)=1$ for $\rho \in [0,\rho_\lambda]$, $
\chi_\lambda(\rho)=0$ for $\rho \in [\widehat{\rho}_\lambda,1] $, and
$|\partial_\rho^k\partial_\omega^\ell \chi_\lambda(\rho)|\leq
C_{k,\ell}\langle\omega\rangle^{k-\ell}$ for $k,\ell\in\mathbb N_0$. 
We then define two solutions of Eq.~\eqref{no first order} as
\begin{align*}
v_1(\rho,\lambda):=&\chi_\lambda(\rho)[c_{1,4}(\lambda)\psi_1(\rho,\lambda)+c_{2,4}(\lambda)\psi_2(\rho,\lambda)]
+\left(1-\chi_\lambda(\rho)\right)\psi_4(\rho,\lambda)\\
v_2(\rho,\lambda):=& \chi_\lambda(\rho)[c_{1,3}(\lambda)\psi_1(\rho,\lambda)+c_{2,3}(\lambda)\psi_2(\rho,\lambda)]
+\left(1-\chi_\lambda(\rho)\right)\psi_3(\rho,\lambda)
\end{align*}
and note that an evaluation at $\rho=1$ yields 
$$
W(v_1,v_2)=W(\psi_4,\psi_3)=2i.
$$

\subsection{The original equation}
We now turn back to our original ODE \eqref{resolventeq}. To obtain solutions of Eq.~\eqref{greenode}, with $F_\lambda=0$, we set
\begin{equation} \label{transformation}
u_j(\rho,\lambda)=\rho^{-\frac{5}{2}}(1-\rho^2)^{\frac{1}{4}-\frac{\lambda}{2}}v_j(\rho,\lambda)
\end{equation}
for $j=1,2$.
\begin{lem}\label{asymptotic}
The solutions $u_1$ and $u_2$ are of the form
\begin{align}
u_1(\rho,\lambda)&=
\rho^{-\frac{5}{2}}(1-\rho^2)^{\frac{1}{4}-\frac{\lambda}{2}}h_2(\rho,\lambda)\left[1+(1-\rho)\O(\langle\omega\rangle^{-1})+\O(\rho^{0}(1-\rho)^2\langle\omega\rangle^{-1})\right]
\nonumber\\
&=
\rho^{-\frac{5}{2}}\frac{(1+\rho)^{\frac{3}{2}-\lambda}}{\sqrt{ a(\lambda)}}\left[1+(1-\rho)\O(\langle\omega\rangle^{-1})+\O(\rho^{-1}(1-\rho)^2\langle\omega\rangle^{-1})\right]\nonumber
\\
u_2(\rho,\lambda)&=\rho^{-\frac{5}{2}}(1-\rho^2)^{\frac{1}{4}-\frac{\lambda}{2}}h_1(\rho,\lambda)\left[1+(1-\rho)\O(\langle\omega\rangle^{-1})+\O(\rho^{0}(1-\rho)^2\langle\omega\rangle^{-1})\right]\nonumber
\\
&= 
\rho^{-\frac{5}{2}}\frac{(1-\rho)^{\frac{3}{2}-\lambda}}{\sqrt{ a(\lambda)}}\left[1+(1-\rho)\O(\langle\omega\rangle^{-1})+\O(\rho^{-1}(1-\rho)^2\langle\omega\rangle^{-1})\right]\nonumber
\end{align}
for all $\rho \geq
\widehat{\rho}_\lambda=\min\{\frac{1}{2}(\rho_0+1),\frac{2r}{|a(\lambda)|}\}$
and all $\lambda\in \C$ with $\Re\lambda\in [0,\frac14]$.
\end{lem}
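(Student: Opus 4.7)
The plan is to simply unfold the definitions. For $\rho \geq \widehat\rho_\lambda$ the cutoff function satisfies $\chi_\lambda(\rho)=0$, so by construction of $v_1,v_2$ we have $v_1(\rho,\lambda)=\psi_4(\rho,\lambda)$ and $v_2(\rho,\lambda)=\psi_3(\rho,\lambda)$. Feeding this into the transformation Eq.~\eqref{transformation} immediately yields
\[
u_1(\rho,\lambda)=\rho^{-5/2}(1-\rho^2)^{1/4-\lambda/2}\psi_4(\rho,\lambda), \qquad u_2(\rho,\lambda)=\rho^{-5/2}(1-\rho^2)^{1/4-\lambda/2}\psi_3(\rho,\lambda),
\]
and the first equality in each asymptotic formula follows at once from Lemma~\ref{Hankelsol}.

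To pass to the second equality, I would substitute the explicit forms of $h_1,h_2$ provided by Lemma~\ref{free ODE near 1} and perform the purely algebraic simplification of the prefactor. For $u_1$, the factor $(1-\rho^2)^{1/4-\lambda/2}\cdot\sqrt{1-\rho^2}=(1-\rho)^{3/4-\lambda/2}(1+\rho)^{3/4-\lambda/2}$ combines with $\bigl(\frac{1-\rho}{1+\rho}\bigr)^{-3/4+\lambda/2}=(1+\rho)^{3/4-\lambda/2}(1-\rho)^{-3/4+\lambda/2}$ from $h_2$ to produce exactly $(1+\rho)^{3/2-\lambda}$, with all $(1-\rho)$ powers cancelling. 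The analogous computation for $u_2$ (using $h_1$ instead of $h_2$) yields $(1-\rho)^{3/2-\lambda}$ because the sign in the exponent of $\frac{1-\rho}{1+\rho}$ is flipped. This gives precisely the prefactors claimed.

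For the error term, I would just multiply the bracketed expansions from Lemma~\ref{free ODE near 1} and Lemma~\ref{Hankelsol}: both have the form $1+(1-\rho)\O(\langle\omega\rangle^{-1})+\O(\rho^{k}(1-\rho)^2\langle\omega\rangle^{-1})$ with $k\in\{-1,0\}$, and since $\rho^{-1}\geq \rho^0$ on $(0,1]$, the product stays in the larger class $1+(1-\rho)\O(\langle\omega\rangle^{-1})+\O(\rho^{-1}(1-\rho)^2\langle\omega\rangle^{-1})$ after using that symbol-type remainders are closed under multiplication. This step is essentially bookkeeping and the symbol calculus summarised at the start of the ODE section justifies that derivatives behave correctly.

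The only mildly delicate point, and what I would expect to be the main obstacle, is to verify that the expansions are actually valid uniformly on the stated range $\rho\geq\widehat\rho_\lambda$. For small $|\omega|$ one has $\widehat\rho_\lambda=\tfrac12(\rho_0+1)$ so $\rho$ is bounded away from $0$ and all implicit constants in the $\O$-symbols are harmless; for large $|\omega|$ one has $\widehat\rho_\lambda=2r/|a(\lambda)|$, which is precisely the threshold ensuring that the Hankel asymptotics underlying Lemma~\ref{free ODE near 1} apply on the whole interval $[\widehat\rho_\lambda,1)$. Combining these two regimes gives the uniform statement, and the lemma follows.
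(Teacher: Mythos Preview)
Your proof is correct and follows exactly the same approach as the paper: the paper's proof is the single sentence ``The claimed forms of $u_1$ and $u_2$ follow immediately from Lemma~\ref{Hankelsol} and the transformation \eqref{transformation},'' and you have simply spelled out the algebra behind ``immediately.'' Your discussion of uniformity in $\rho$ is more than the paper supplies but is consistent with the setup (note that $\widehat\rho_\lambda\geq\rho_\lambda$, so Lemmas~\ref{free ODE near 1} and~\ref{Hankelsol} certainly apply on $[\widehat\rho_\lambda,1)$).
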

\begin{proof}
The claimed forms of $u_1$ and $u_2$  follow immediately from Lemma \ref{Hankelsol} and the transformation \eqref{transformation}.
\end{proof}
As a consequence of this lemma, we obtain that for $\lambda \in \C$
with $\Re\lambda\in [0,\frac{1}{4}]$, $c_{2,4}(\lambda)$ does not
vanish.

\begin{lem}
  \label{lem:c24}
  We have $c_{2,4}(\lambda)\not=0$ for all $\lambda\in \C$ with
  $\Re\lambda\in [0,\frac14]$.
\end{lem}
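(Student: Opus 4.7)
The plan is a proof by contradiction: assuming $c_{2,4}(\lambda_0)=0$ for some $\lambda_0\in\C$ with $\Re\lambda_0\in[0,\tfrac14]$, I will manufacture a genuine eigenfunction of $\Lf$ at eigenvalue $\lambda_0$ and reach a contradiction with Lemma \ref{lem:spec}.

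First, Lemma \ref{connectioncoef} gives $\psi_4(\rho,\lambda_0)=c_{1,4}(\lambda_0)\psi_1(\rho,\lambda_0)$ on the overlap $[\rho_{\lambda_0},\widehat\rho_{\lambda_0}]$, and $c_{1,4}(\lambda_0)\neq 0$ because $\psi_4$ is nontrivial. Since Eq.~\eqref{no first order} is regular on the open interval $(0,1)$, ODE uniqueness lets me extend $\psi_1(\cdot,\lambda_0)$ and $\psi_4(\cdot,\lambda_0)$ to solutions on all of $(0,1)$ with $\psi_4=c_{1,4}(\lambda_0)\psi_1$ throughout. Inverting the transformation \eqref{transformation}, I define $u(\rho):=\rho^{-5/2}(1-\rho^2)^{1/4-\lambda_0/2}\psi_1(\rho,\lambda_0)$, which is a nontrivial solution of the homogeneous spectral equation \eqref{spectraleq} on $(0,1)$.

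Next I would verify that $u$ is smooth on the closed interval $[0,1]$. Near $\rho=0$, Lemma \ref{Besselsol} together with $J_2(z)=\tfrac{z^2}{8}(1+\O(z^2))$ yields $\psi_1(\rho,\lambda_0)\sim\rho^{5/2}$, so the prefactor $\rho^{-5/2}$ is exactly compensated and $u$ corresponds to the regular (index $0$) Frobenius branch at $0$, whose power series converges. Near $\rho=1$, the identification $\psi_1=c_{1,4}(\lambda_0)^{-1}\psi_4$ combined with Lemma \ref{free ODE near 1} gives $\psi_1\sim(1-\rho)^{-1/4+\lambda_0/2}$ up to bounded factors, so the factor $(1-\rho^2)^{1/4-\lambda_0/2}$ precisely cancels the singularity and $u$ is bounded, corresponding to the index-$0$ branch among the Frobenius indices $\{0,\tfrac32-\lambda_0\}$. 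Since $\Re\lambda_0\in[0,\tfrac14]$ forces $\tfrac32-\lambda_0\notin\mathbb Z_{\geq 0}$, no logarithmic resonance can occur, and Frobenius theory yields $u\in C^\infty([0,1])$.

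Finally, setting $\mathbf u:=(u,\,(\lambda_0+1)u+\rho u')$, the ODE \eqref{spectraleq} is exactly the scalar reduction of the eigenvalue equation $(\lambda_0-\Lf)\mathbf u=\mathbf 0$ carried out in the proof of Lemma \ref{density}. Thus $\mathbf u\in C^3\times C^2(\overline{\B_1^6})\subset D(\Lf)$ is a nonzero eigenfunction with eigenvalue $\lambda_0$. Lemma \ref{lem:spec} then forces $\lambda_0=1$ or $\Re\lambda_0<0$, both incompatible with $\Re\lambda_0\in[0,\tfrac14]$. The only step requiring genuine care is the Frobenius regularity at the two singular endpoints; once one confirms that the auxiliary exponent $\tfrac32-\lambda_0$ avoids the nonnegative integers throughout the relevant strip, the argument closes without further difficulty.
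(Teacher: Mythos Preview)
Your proof is correct and follows essentially the same route as the paper: both argue by contradiction, observing that $c_{2,4}(\lambda_0)=0$ forces the solution $u_1(\cdot,\lambda_0)$ (which you reconstruct by hand) to pick up the regular Frobenius branch at both singular endpoints, hence to lie in $H^2(\B_1^6)$ and produce an eigenfunction contradicting Lemma~\ref{lem:spec}. The paper is slightly more economical because it works directly with the globally defined $u_1$ (built via the cutoff gluing just before the lemma) and invokes Lemma~\ref{asymptotic} for the behavior at $\rho=1$, leaving only the endpoint $\rho=0$ to check; your version spells out both endpoints and the Frobenius non-resonance explicitly, which is fine but not strictly needed.
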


\begin{proof}
We argue by contradiction. Suppose there exists a $\lambda$ such that
$c_{2,4}(\lambda)=0$. Then, by definition,
\[
  u_1(\rho,\lambda)=c_{1,4}(\lambda)\rho^{-\frac52}(1-\rho^2)^{\frac14-\frac{\lambda}{2}}\psi_1(\rho,\lambda)=\O(\rho^0) \]
for $\rho\in [0,\rho_\lambda]$ and hence, $u_1(.,\lambda)\in
H^2(\B^6_1)$. This implies that $\lambda\in \sigma_p(\Lf)$, a
contradiction to Lemma \ref{lem:spec}.
\end{proof}
 Based on Lemma \ref{lem:c24}, we define a third solution to
 Eq.~\eqref{greenode} by
 \[ u_0(\rho,\lambda):=u_2(\rho,\lambda)-\frac{c_{2,3}(\lambda)}{c_{2,4}(\lambda)}u_1(\rho,\lambda) \] and remark that
$$
W(u_1(.,\lambda),u_0(.,\lambda))(\rho)=2i\rho^{-5}(1-\rho^2)^{\frac{1}{2}-\lambda}.
$$

Note that neither $u_1$ nor $u_0$ is in $H^2(\B_1^6)$. The function $u_1$ fails to be in the Sobolev space due to the divergent behavior of $Y_2$ at 0, while $u_0''$ is not square integrable at 1.
Hence, to find a solution of Eq.~\eqref{resolventeq}, with $0\leq \Re \lambda\leq \frac{1}{4}$ and $F_\lambda \in C^2(\overline{\B^6_1})$, which is in $H^2(\B^6_1)$, we make the ansatz
\begin{align*}
u(\rho,\lambda)&= \left[c-U_1(\rho_1,\lambda)F_\lambda(\rho_1)\right] u_0(\rho,\lambda) +u_0(\rho,\lambda) \int_\rho^{\rho_1}\frac{u_1(s,\lambda)}{W(u_1(.,\lambda),u_0(.,\lambda))(s)}\frac{F_\lambda(s)}{1-s^2} d s
\\
&\quad +u_1(\rho,\lambda) \int_0^\rho\frac{u_0(s,\lambda)}{W(u_1(.,\lambda),u_0(.,\lambda))(s)}\frac{F_\lambda(s)}{1-s^2} d s,
\end{align*}
with $c \in \C$, $\rho_1\in [0,1)$,
and $$U_1(\rho,\lambda):=\int_0^\rho\frac{u_1(s,\lambda)}{W(u_1(.,\lambda),u_0(.,\lambda))(s)(1-s^2)} ds.$$
Since $F_\lambda$ is in $C^2(\overline{\B^6_1})$ by assumption, we can do an integration by parts to obtain
\begin{align*}
 \int_\rho^{\rho_1}\frac{u_1(s,\lambda)}{W(u_1(.,\lambda),u_0(.,\lambda))(s)}\frac{F_\lambda(s)}{1-s^2} d s&= U_1(\rho_1,\lambda)F_\lambda(\rho_1)-U_1(\rho,\lambda)F_\lambda(\rho)
 \\
 &\quad-\int_\rho^{\rho_1}U_1(s,\lambda)F'_\lambda(s)ds
\end{align*}
and so
\begin{align*}
u(\rho,\lambda)&=cu_0(\rho,\lambda) -u_0(\rho,\lambda)\left(U_1(\rho,\lambda)F_\lambda(\rho)+\int_\rho^{\rho_1}U_1(s,\lambda)F'_\lambda(s) ds\right)\\
&\quad +u_1(\rho,\lambda) \int_0^\rho\frac{u_0(s,\lambda)}{W(u_1(.,\lambda),u_0(.,\lambda))(s)}\frac{F_\lambda(s)}{1-s^2} d s.
\end{align*}

We continue by setting  
\[ U_0(\rho,\lambda):=\int_0^\rho\frac{u_0(s,\lambda)}{W(u_1(.,\lambda),u_0(.,\lambda))(s)(1-s^2)} ds\] and also perform an integration by parts in the second integral, which yields
\begin{align} \label{u intermediate}
u(\rho,\lambda)&= u_0(\rho,\lambda)\left(c-U_1(\rho,\lambda)F_\lambda(\rho)-\int_\rho^{\rho_1}U_1(s,\lambda)F'_\lambda(s)ds \right) \nonumber
\\
&\quad +u_1(\rho,\lambda)\left(U_0(\rho,\lambda)F_\lambda(\rho)- \int_0^\rho U_0(s,\lambda)F'_\lambda(s) d s\right).
\end{align}
By standard ODE theory we obtain that $u \in C^2((0,1))$ and so we only have to check the behavior at the end points.
To proceed with that task, we will need the asymptotic behavior of $U_j(\rho,\lambda)$ as $\rho$ tends to $1$.
We have that 
\begin{align*}
U_1(\rho,\lambda)=\frac{1}{2i}\int_0^\rho u_1(s,\lambda)s^5(1-s^2)^{-\frac{3}{2}+\lambda} ds
\end{align*}
and hence by using l'Hospitals rule and Lemma \ref{asymptotic} we obtain the asymptotic behavior
\begin{equation}\label{U_1 asy}
U_1(\rho,\lambda)\sim -\frac{(1-\rho^2)^{-\frac{1}{2}+\lambda}}{4 i(-\frac{1}{2}+\lambda)}u_1(1,\lambda)\sim -\frac{(1-\rho)^{-\frac{1}{2}+\lambda}}{2i(-\frac{1}{2}+\lambda)\sqrt{a(\lambda)}}
\end{equation}
as $\rho $ tends to $1$.
For $U_0$, we similarly obtain
\begin{align}\label{U_0 asy}
U_0(\rho,\lambda)\sim(1-\rho)^{-\frac{1}{2}+\lambda}\O(\langle\omega\rangle^{-\frac32})
\end{align}
as $\rho \to 1$, since $u_0=u_2-\frac{c_{2,3}}{c_{2,4}}u_1$.

Using the symbol forms of $\psi_1$ and $\psi_2$ given in Lemma
\ref{Besselsol} it is straightforward to check that $u(.,\lambda)\in L^2(\B^6_{\frac{1}{2}})$. 
To study $u$ near the endpoint $1$ we infer that
\begin{align*}
-u_0(\rho,\lambda)U_1(\rho,\lambda)+u_1(\rho,\lambda)U_0(\rho,\lambda)=-u_2(\rho,\lambda)U_1(\rho,\lambda)+u_1(\rho,\lambda)U_2(\rho,\lambda)
\end{align*}
where
\[ U_2(\rho,\lambda):=\int_0^\rho
  \frac{u_2(s,\lambda)}{W(u_1(.,\lambda),
    u_0(.,\lambda))(s)(1-s^2)}ds. \]
Consequently,
\begin{align*}
u(\rho,\lambda)=&u_0(\rho,\lambda)\left(c-\int_\rho^{\rho_1}U_1(s,\lambda)F'_\lambda(s)ds \right)-u_2(\rho,\lambda)U_1(\rho,\lambda)F_\lambda(\rho)
\\
&+u_1(\rho,\lambda)\left(U_2(\rho,\lambda)F_\lambda(\rho)- \int_0^\rho U_0(s,\lambda)F'_\lambda(s) d s\right)
\end{align*}
and from this representation one can easily verify that
$u(.,\lambda)\in L^2(\B^6_1)$, by simply plugging in the forms of $u_1$ and $u_2$ stated in Lemma \ref{asymptotic}.
 Next, from Eq.~\eqref{u intermediate} we compute that
\begin{align*}
u'(\rho,\lambda)&= u_0'(\rho,\lambda)\left(c-U_1(\rho,\lambda)F_\lambda(\rho)-\int_\rho^{\rho_1}U_1(s,\lambda)F'_\lambda(s)ds \right) 
\\
&\quad+u_1'(\rho,\lambda)\left(U_0(\rho,\lambda)F_\lambda(\rho)- \int_0^\rho U_0(s,\lambda)F'_\lambda(s) d s\right)
\end{align*}
and as above one establishes that $u'(.,\lambda)\in L^2(\B^6_1)$ .
Lastly, the second derivative of $u$ is given by
\begin{align*}
u''(\rho,\lambda)&= u_0''(\rho,\lambda)\left(c-U_1(\rho,\lambda)F_\lambda(\rho)-\int_\rho^{\rho_1}U_1(s,\lambda)F'_\lambda(s)ds \right) \nonumber
\\
&\quad+u_1''(\rho,\lambda)\left(U_0(\rho,\lambda)F_\lambda(\rho)- \int_0^\rho U_0(s,\lambda)F'_\lambda(s) d s\right)
\\
&\quad-u_0'(\rho,\lambda)U_1'(\rho,\lambda)F_\lambda(\rho)+u_1'(\rho,\lambda)U_0'(\rho,\lambda)F_\lambda(\rho).
\end{align*}
One can again make use of the symbol forms of $\psi_1$ and $\psi_2$ to promptly verify that $u''(.,\lambda)\in L^2(\B^6_{\frac{1}{2}})$. To check the behavior at $\rho =1$ we use that
\[-u_0'(\rho,\lambda)U_1'(\rho,\lambda)F_\lambda(\rho)+u_1'(\rho,\lambda)U_0'(\rho,\lambda)F_\lambda(\rho)=-\frac{F_\lambda(\rho)}{1-\rho^2}
\]
and so we can rewrite $u''$ in similar fashion as $u$ before as
\begin{align*}
u''(\rho,\lambda)&= u_0''(\rho,\lambda)\left(c-\int_\rho^{\rho_1}U_1(s,\lambda)F'_\lambda(s)ds \right)-u_2''(\rho,\lambda)U_1(\rho,\lambda)F_\lambda(\rho)
\\
&\quad+u_1''(\rho,\lambda)\left(U_2(\rho,\lambda)F_\lambda(\rho)- \int_0^\rho U_0(s,\lambda)F'_\lambda(s) d s\right)
-\frac{F_\lambda(\rho)}{1-\rho^2}.
\end{align*} 
Taking the limit $\rho_1\to 1$ now yields 
\begin{align*}
\lim_{\rho_1\to 1} -u_0''(\rho,\lambda)\int_\rho^{\rho_1}U_1(s,\lambda)F'_\lambda(s)ds =-u_0''(\rho,\lambda)\int_\rho^{1}U_1(s,\lambda)F'_\lambda(s)ds 
\end{align*}
and one readily checks that this is a square integrable expression near 1 by using de l'Hospitals rule and Lemma \ref{asymptotic}.
Further, using Lemma \ref{asymptotic} and the asymptotics \eqref{U_0 asy} and \eqref{U_1 asy} it follows that $cu_1''(\rho,\lambda)$ and $u_1''(\rho,\lambda)[U_2(\rho,\lambda)-\int_0^\rho U_0(s,\lambda)F'_\lambda(s) d s]$ are square integrable near $1$. Consequently, we are left with investigating the terms
\begin{align*}
u_2''(\rho,\lambda)[c-U_1(\rho,\lambda)F_\lambda(\rho)]-\frac{F_\lambda(\rho)}{1-\rho^2}.
\end{align*}
To see that the two first order poles cancel out, we remark that
\[
\partial_\rho(1-\rho^2)^{-\frac{1}{2}+\lambda}=-\frac{(-1+2\lambda)\rho}{(1-\rho^2)^{\frac{3}{2}-\lambda}}
\]
which we use to perform yet one more integration by parts, which yields
\begin{align*}
U_1(\rho,\lambda)=&-\frac{\rho^{4}u_1(\rho,\lambda)}{2i(1-2\lambda)(1-\rho^2)^{\frac{1}{2}-\lambda}}+l_1(\lambda)+\int_0^\rho\frac{\partial_s
                    (s^{4}u_1(s,\lambda))}{2i(1-2\lambda)(1-s^2)^{\frac{1}{2}-\lambda}} ds
\end{align*}
with $$l_1(\lambda)=\lim_{\rho \to 0}\frac{\rho^{4}u_1(\rho)}{2i(1-2\lambda)(1-\rho^2)^{\frac{1}{2}-\lambda}}.$$
Note that Lemma \ref{asymptotic}
implies
\begin{align*}
u_2''(\rho,\lambda)\sim&\frac{(\frac{3}{2}-\lambda)(\frac{1}{2}-\lambda)(1-\rho)^{-\frac{1}{2}-\lambda}}{\sqrt{a(\lambda)}}\
\end{align*}
as $\rho \rightarrow 1$. 
Using this, we obtain that 
\begin{equation}
-\frac{\rho^{4}u_1(\rho,\lambda) u_2''(\rho,\lambda)}{2i(1-2\lambda)(1-\rho^2)^{\frac{1}{2}-\lambda}}\sim\frac{1}{2(1-\rho)}\sim\frac{1}{1-\rho^2}
\end{equation}
as $\rho\to 1$ and therefore the poles cancel out exactly.
Observe now that $\rho^2Y_2(\rho)\in C^\infty([0,1])$ and so the expression
$$
\frac{\partial_s(s^{4}u_1(s,\lambda))}{2i(1-2\lambda)(1-s^2)^{\frac{1}{2}-\lambda}}
$$
is integrable on $[0,1]$.
Thus, if we make the choice 
$$
c=c(F_\lambda,\lambda)=-F_\lambda(1)\left(
  l_1(\lambda)+\int_0^1\frac{\partial_s (s^{4}u_1(s,\lambda))} {2i(1-2\lambda)(1-s^2)^{\frac{1}{2}-\lambda}} ds \right),
$$
once more employing Lemma \ref{asymptotic} yields
\begin{align*}
\left|\lim_{\rho \to 1} u_2''(\rho,\lambda)[c(F_\lambda,\lambda)-U_1(\rho,\lambda)F_\lambda(\rho)]-\frac{F_\lambda(\rho)}{1-\rho^2}\right|\lesssim 1.
\end{align*} Summarising, the unique function $u(.,\lambda)\in H^2(\B^6_1)$, which solves Eq.~\eqref{resolventeq} for \\ $0 \leq \Re\lambda \leq \frac14$ and $F_\lambda\in C^2(\overline{\B^6_1})$, is given by
\begin{align*}
u(\rho,\lambda)&= u_0(\rho,\lambda)\left(c(F_\lambda,\lambda)-U_1(\rho,\lambda)F_\lambda(\rho)-\int_\rho^{1}U_1(s,\lambda)F'_\lambda(s)ds \right)
\\
&\quad +u_1(\rho,\lambda)\left(U_0(\rho,\lambda)F_\lambda(\rho)- \int_0^\rho U_0(s,\lambda)F'_\lambda(s) d s\right)
\end{align*}
and we have shown the following Lemma.
\begin{lem}
Let $f\in C^2(\overline{\B^6_1})$ and $0\leq\Re(\lambda)\leq \frac{1}{4}$. Then the function $\mathcal{R}(f)$ defined as
\begin{align*}
\mathcal{R}(f)(\rho,\lambda)&:= u_0(\rho,\lambda)\left(c(f,\lambda)-U_1(\rho,\lambda)f(\rho)-\int_\rho^{1}U_1(s,\lambda)f'(s)ds \right)\nonumber
\\
&\quad +u_1(\rho,\lambda)\left(U_0(\rho,\lambda)f(\rho)- \int_0^\rho U_0(s,\lambda)f'(s) d s\right)
\end{align*}
with 
\[ U_j(\rho,\lambda):=\int_0^\rho\frac{u_j(s,\lambda)}{W(u_1(.,\lambda),u_0(.,\lambda))(s)(1-s^2)} ds\]
 for $j=0,1$ and
\[
c(f,\lambda)=-f(1)\left( \lim_{\rho \to 0}\frac{\rho^{4}u_1(\rho,\lambda)}{2i(1-2\lambda)(1-\rho^2)^{\frac{1}{2}-\lambda}}+\int_0^1\frac{\partial_s(s^{4}u_1(s,\lambda))} {2i(1-2\lambda)(1-s^2)^{\frac{1}{2}-\lambda}} ds \right),
\]
is the unique function $u \in H^2(\B^6_1)\cap C^\infty(\B^6_1)$ that
solves the equation 
\begin{equation}\label{greenode2}
-(1-\rho^2)u''(\rho)+\left(2(\lambda+2)\rho-\frac{5}{\rho}\right)u'(\rho)+\left((\lambda+2)(\lambda+1)-\frac{48}{(\rho^2+2)^2}\right)u(\rho)=f(\rho).
\end{equation}
\end{lem}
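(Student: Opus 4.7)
The proof is essentially a consolidation of all the analysis carried out in the preceding pages, so the plan is to organize the existing pieces into a coherent verification of the three claims: (i) $\mathcal R(f)$ solves Eq.~\eqref{greenode2}, (ii) $\mathcal R(f) \in H^2(\B^6_1) \cap C^\infty(\B^6_1)$, and (iii) this $H^2$ solution is unique.

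First, existence. By construction, $\mathcal R(f)$ is the variation of constants ansatz based on the fundamental system $\{u_0(\cdot,\lambda), u_1(\cdot,\lambda)\}$ for the homogeneous equation associated to Eq.~\eqref{greenode2}, where $u_0$ is the solution that behaves well at $\rho=1$ and $u_1$ the one that is regular at $\rho=0$. Since the Wronskian $W(u_1(\cdot,\lambda), u_0(\cdot,\lambda))(\rho) = 2i\rho^{-5}(1-\rho^2)^{\frac12-\lambda}$ is nonzero on $(0,1)$, standard ODE theory gives $\mathcal R(f) \in C^\infty(\B^6_1)$ and verifies the ODE pointwise on $(0,1)$.

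Second, $H^2$ regularity. I would treat the endpoints separately. Near $\rho = 0$, the symbol expansions from Lemma \ref{Besselsol} and the transformation \eqref{transformation} imply that $u_1(\cdot,\lambda)$ is the genuinely regular solution while $u_0(\cdot,\lambda)$ only contributes the mild part $c \, u_0(\rho,\lambda) + \mathcal O(\rho^0)$ times $u_0$ in the formula; the Volterra structure ensures the integrals converge and the resulting function, together with its first two derivatives, lies in $L^2$ on $\B^6_{1/2}$ with respect to the six-dimensional measure $\rho^5 d\rho$. Near $\rho = 1$, I would use the rewriting
\[
-u_0(\rho,\lambda)U_1(\rho,\lambda) + u_1(\rho,\lambda)U_0(\rho,\lambda) = -u_2(\rho,\lambda)U_1(\rho,\lambda) + u_1(\rho,\lambda)U_2(\rho,\lambda)
\]
already carried out in the excerpt to exhibit $u$ in a form where only $u_2$ appears multiplied by $U_1 F_\lambda$; Lemma \ref{asymptotic} together with the asymptotics \eqref{U_1 asy}, \eqref{U_0 asy} then shows that $u$ and $u'$ are in $L^2$ near $1$. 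The only remaining obstacle is $u''$: the term $-\frac{F_\lambda(\rho)}{1-\rho^2}$ generated by $W(u_0',u_1')$ and the term $u_2''(\rho,\lambda)[c - U_1(\rho,\lambda)F_\lambda(\rho)]$ are each non-integrable near $\rho = 1$ individually, but the integration-by-parts identity for $U_1$ together with the specific choice of $c = c(f,\lambda)$ makes their singular parts cancel exactly, leaving a bounded remainder.

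Finally, uniqueness. Any other solution $\tilde u \in H^2(\B^6_1)$ of Eq.~\eqref{greenode2} would satisfy $u - \tilde u \in H^2(\B^6_1)$ and $(\lambda - \widetilde{\Lf})(u-\tilde u, \ast) = 0$ in the sense leading to the spectral equation \eqref{spectraleq}; but by Lemma \ref{lem:spec}, $\sigma_p(\Lf) \cap \{\Re z \geq 0\} \subset \{1\}$, and $1 \notin [0,\tfrac14]$, so no nontrivial $H^2$ homogeneous solution exists in the relevant range of $\lambda$. The main obstacle throughout is not any single step but the precise bookkeeping of error orders needed to make the cancellation at $\rho=1$ effective; this is precisely what the choice of the constant $c(f,\lambda)$, involving the limit and integral of $\partial_s(s^4 u_1(s,\lambda))/[2i(1-2\lambda)(1-s^2)^{\frac12-\lambda}]$, is designed to accomplish.
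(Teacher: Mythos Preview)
Your outline follows the paper's argument almost exactly: variation of constants gives a $C^\infty(0,1)$ solution, endpoint behavior is checked separately via the symbol forms of Lemmas \ref{Besselsol} and \ref{asymptotic}, the rewriting $-u_0U_1+u_1U_0=-u_2U_1+u_1U_2$ handles $u,u'$ near $\rho=1$, and for $u''$ the specific choice of $c(f,\lambda)$ forces the first-order poles of $u_2''(\rho,\lambda)[c-U_1(\rho,\lambda)f(\rho)]$ and $-\frac{f(\rho)}{1-\rho^2}$ to cancel, exactly as in the paper; uniqueness via Lemma \ref{lem:spec} is likewise the paper's (implicit) route.

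One slip to fix: you have the roles of $u_0$ and $u_1$ reversed at $\rho=0$. As the paper states explicitly, $u_1$ is the solution that fails $H^2$ near the origin because it carries the $Y_2$-piece (through $\psi_2$), while $u_0$ is the one regular at $0$ (the $\psi_2$-contributions cancel in $u_0=u_2-\tfrac{c_{2,3}}{c_{2,4}}u_1$). This does not change your overall strategy, but the sentence ``$u_1(\cdot,\lambda)$ is the genuinely regular solution'' near $\rho=0$ is backwards and should be corrected before the argument can be written out cleanly.
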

To continue, let $u_{\mathrm{f}_1}$, $u_{\mathrm{f}_2}$ be the solutions of Eq. \eqref{greenode}, that arise by repeating the previous constructions in the special case $V=0$. We would like to copy our construction of $\mathcal{R}(F_\lambda)$ with these solutions. However, to obtain the solution $u_{\mathrm{f}_0}$, we first have to make sure that $c_{\mathrm{f}_{2,4}}$ does not vanish for $0 \leq \Re\lambda\leq \frac14$. This is ensured by the next Lemma.
\begin{lem}\label{lem:roots of free c24}
We have $c_{\mathrm{f}_{2,4}}(\lambda)\not=0$ for all $\lambda\in \C$ with
  $\Re\lambda\in [0,\frac14]$.
\end{lem}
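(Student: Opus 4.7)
The plan is to adapt the strategy of Lemma \ref{lem:c24}. First, I would assume for contradiction that $c_{\mathrm{f}_{2,4}}(\lambda_0)=0$ for some $\lambda_0$ with $\Re\lambda_0\in[0,\tfrac14]$; then, arguing exactly as in the proof of Lemma \ref{lem:c24}, the singular $\psi_{\mathrm{f}_2}$-component of $u_{\mathrm{f}_1}(\cdot,\lambda_0)$ drops out on $[0,\rho_{\lambda_0}]$, giving $u_{\mathrm{f}_1}(\rho,\lambda_0)=\O(\rho^0)$ near zero, and hence $u_{\mathrm{f}_1}(\cdot,\lambda_0)\in H^2(\B^6_1)$. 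In other words, $\lambda_0$ would be an eigenvalue of $\Lf_0$.

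Unlike in Lemma \ref{lem:c24}, I cannot appeal to a prior spectral bound to close the argument, because $\Lf_0$ is only a contraction semigroup generator and so the imaginary axis is a priori admissible in its spectrum. The way out is to exploit the fact that the free spectral equation
\begin{equation*}
(\rho^2-1)u''(\rho)+\left(2(\lambda_0+2)\rho-\tfrac{5}{\rho}\right)u'(\rho)+(\lambda_0+2)(\lambda_0+1)u(\rho)=0
\end{equation*}
is explicitly solvable: the substitution $z=\rho^2$, $u(\rho)=v(z)$ transforms it into the hypergeometric equation with parameters $a=\tfrac{\lambda_0+1}{2}$, $b=\tfrac{\lambda_0+2}{2}$, $c=3$. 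Matching at $\rho=0$ (where ${}_2F_1(a,b;3;0)=1$ while $u_{\mathrm{f}_1}(\cdot,\lambda_0)$ is smooth and nonzero there, from the leading behaviour $b_1(\rho,\lambda)\sim\rho^{5/2}a(\lambda)^2/8$) identifies $u_{\mathrm{f}_1}(\cdot,\lambda_0)$ as a nonzero constant multiple of ${}_2F_1(a,b;3;\rho^2)$.

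Next I would apply the standard connection formula for ${}_2F_1$ at $z=1$, valid on our strip since $c-a-b=\tfrac32-\lambda_0\notin \mathbb{Z}$. This yields the expansion
\begin{equation*}
u_{\mathrm{f}_1}(\rho,\lambda_0)=A\bigl[1+\O(1-\rho)\bigr]+B(1-\rho)^{\tfrac32-\lambda_0}\bigl[1+\O(1-\rho)\bigr]
\end{equation*}
near $\rho=1$, with
\begin{equation*}
B\;\propto\;\frac{\Gamma\!\left(\lambda_0-\tfrac32\right)}{\Gamma\!\left(\tfrac{\lambda_0+1}{2}\right)\Gamma\!\left(\tfrac{\lambda_0+2}{2}\right)}.
\end{equation*}
For $\Re\lambda_0\in[0,\tfrac14]$ none of $\tfrac{\lambda_0+1}{2}$, $\tfrac{\lambda_0+2}{2}$, $\lambda_0-\tfrac32$ is a nonpositive integer, so $B\neq 0$. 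But then $u_{\mathrm{f}_1}''(\rho)\sim(1-\rho)^{-\tfrac12-\lambda_0}$, and $\int_0^1|u_{\mathrm{f}_1}''(\rho)|^2\rho^5\,d\rho=\infty$ since $-1-2\Re\lambda_0\leq-1$, contradicting $u_{\mathrm{f}_1}(\cdot,\lambda_0)\in H^2(\B^6_1)$.

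The only step demanding genuine care is the identification of the Bessel/Liouville-Green solution from Lemma \ref{Besselsol} (with $V=0$) with the hypergeometric solution ${}_2F_1$, since all subsequent conclusions rely on the explicit Gamma-function form of the coefficient $B$. Once that identification is made, the rest is routine. The fallback would be to run an independent Frobenius analysis at $\rho=1$ for the free equation directly, which yields the same singular exponent $\tfrac32-\lambda_0$ and the same nonvanishing criterion, but going through ${}_2F_1$ is considerably cleaner.
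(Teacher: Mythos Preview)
Your proposal is correct and follows essentially the same route as the paper: both arguments reduce to the hypergeometric equation via $z=\rho^2$, identify the would-be $H^2$ eigenfunction with ${}_2F_1(a,b;3;z)$, and then use the standard connection formula at $z=1$ to show the coefficient of the singular branch $(1-z)^{\frac32-\lambda}$ is nonzero because $\Gamma(a)\Gamma(b)$ has no poles on the strip. The only cosmetic difference is that the paper first disposes of $\Re\lambda>0$ via the contraction semigroup bound for $\Lf_0$ and runs the hypergeometric computation only on the imaginary axis, whereas you carry it out uniformly over the whole strip; your version is slightly cleaner but not materially different.
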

\begin{proof}
To begin, we remark that whenever $c_{\mathrm{f}_{2,4}}$ vanishes for $0\leq \Re \lambda\leq \frac14$, then $u_{\mathrm{f}_1}(.,\lambda)\in H^2(\B^6_1)$. To see this, one can argue in the same way as in the proof of Lemma \ref{asymptotic}. 
As a consequence, $u_{\mathrm{f}_1}(.,\lambda)\in H^2(\B^6_1)$ implies
that $\lambda$ is an eigenvalue of the operator $\Lf_0$. But, as
$\Lf_0$ generates a contraction semigroup, this cannot happen for $\Re\lambda>0$ and so we only have to check the claim on the imaginary axis.
By construction, $u_{\mathrm{f}_1}(.,i\omega)$ is a solution of the equation
\begin{equation}\label{Eq:spectralfree}
-(1-\rho^2)u''(\rho)+\left(2(i\omega+2)\rho-\frac{5}{\rho}\right)u'(\rho)+(i\omega+2)(i\omega+1)u(\rho)=0.
\end{equation}
Assume now that $c_{\mathrm{f}_{2,4}}(i\omega)=0$ for some $\omega\in \R$.
Then, as the Frobenius indices of Eq.~\eqref{Eq:spectralfree} are given by $\{0,-4\}$ at $0$ and $\{0,\frac{3}{2}-i \omega\}$ at 1, $u_{\mathrm{f}_1}(.,i\omega)$ is in fact a smooth function on $[0,1]$.
Setting $z=\rho^2$ and $v(z)=u(\sqrt{z})$ transforms equation \eqref{Eq:spectralfree} into
\begin{align*}
z(1-z)v''(z)+\left(3-\left(i\omega +\frac{5}{2}\right)z\right)v'(z)-\frac{(i\omega+2)(i\omega+1)}{4}v(z)=0.
\end{align*} 
This is now a hypergeometric differential equation, i.e., it is of the form
\begin{align*}
z(1-z)v''(z)+(c-(a+b+1)z)v'(z)-abv(z)=0
\end{align*}
with $a=\frac{1+i\omega}{2}, b=1+\frac{i\omega}{2},$ and $c=3$.
Near $0$, this equation therefore has a fundamental system given by 
\begin{align*}
f_1(z):=\, _2F_1(a,b;c;z)
\end{align*}
and a second solution which diverges at $\rho=0$,
while near 1, a fundamental system  is given by 
\begin{align*}
f_2(z):&=\,_2F_1(a,b;a+b+c-1;1-z)\\
f_3(z):&= (1-z)^{\frac{3}{2}-i\omega} \,_2F_1(c-a,c-b;c-a-b+1;1-z),
\end{align*}
see \cite{Olv97}.
For a solution of Eq. $\eqref{Eq:spectralfree}$ to be smooth, it has to be a multiple of $f_1$.
Let $c_2$ and $c_3$ be such that
\begin{align*}
f_1(z)=c_2 f_2(z)+c_3 f_3(z)
\end{align*}
and note that $c_3$ has to vanish in order for $f_1$ to be smooth. Thanks to the explicit connection formula, $c_3$ is given by
\begin{align*}
c_3=\frac{\Gamma(c)\Gamma(a+b-c+1)}{\Gamma(a) \Gamma(b)}
\end{align*}
where $\Gamma$ denotes the Gamma function. So, for $c_3$ to vanish, $a$ or $b$ would have to be a pole of $\Gamma$, which is not the case.
\end{proof} 
Thanks to Lemma \ref{lem:roots of free c24}, we can define the solution $u_{\mathrm{f}_0}$ analogously to $u_0$ as
\[
u_{\mathrm{f}_0}:=u_{\mathrm{f}_2}-\frac{c_{\mathrm{f}_{2,3}}}{c_{\mathrm{f}_{2,4}}}
u_{\mathrm{f}_1}. \]
Furthermore,
$$
W(u_{\mathrm{f}_1}(.,\lambda),u_{\mathrm{f}_0}(.,\lambda))=W(u_1(.,\lambda),u_0(.,\lambda))
$$
and so, we can copy our construction of $\mathcal{R}(F_\lambda)(\rho,\lambda)$ with the solutions $u_{\mathrm{f}_j}$. This leads to a solution $\mathcal{R}_{\mathrm{f}}(F_\lambda)(\rho,\lambda)$ of Eq.~\eqref{greenode} with $V=0$ which is of the form
\begin{align*}
\mathcal{R}_{\mathrm{f}}(F_\lambda)(\rho,\lambda):&= u_{\mathrm{f}_0}(\rho,\lambda)\left(c_{\mathrm{f}}(F_\lambda,\lambda)-U_{\mathrm{f}_1}(\rho,\lambda)F_\lambda(\rho)-\int_\rho^{1}U_{\mathrm{f}_1}(s,\lambda)F'_\lambda(s)ds \right)
\\
&\quad+u_{\mathrm{f}_1}(\rho,\lambda)\left(U_{\mathrm{f}_0}(\rho,\lambda)F_\lambda(\rho)- \int_0^\rho U_{\mathrm{f}_0}(s,\lambda)F'_\lambda(s) d s\right),
\end{align*}
where $ c_{\mathrm{f}},$ $U_{\mathrm{f}_0}$, and $U_{\mathrm{f}_1}$ are defined just as their ``non-free'' counterparts, by replacing the corresponding $u_j$ by $u_{\mathrm{f}_j}.$
\section{Strichartz estimates}
Using Laplace inversion, we are now able to explicitly write down $[\Sf(\tau)(\I-\Pf) \ff]_1=[\Sf(\tau)\widetilde{\ff}]_1$ for any $\ff \in C^3\times C^2(\overline{\B^6_1})$ as 
\begin{align}\label{semirep}
[\Sf(\tau) \widetilde{\ff}]_1(\rho)=[\Sf_0(\tau) \widetilde{\ff}]_1(\rho)
+\frac{1}{2\pi i}\lim_{N \to \infty} \int_{\varepsilon-i N}^{\varepsilon+ i N}e^{\lambda\tau}[\mathcal{R}(F_\lambda)(\rho,\lambda)-\mathcal{R}_{\mathrm{f}}(F_\lambda)(\rho,\lambda)] d\lambda,
\end{align}
with $\varepsilon >0$ and $F_\lambda(\rho)=\rho\widetilde
f_1'(\rho)+(\lambda+2)\widetilde f_1(\rho)+\widetilde f_2(\rho)$, since $[\mathbf R_{\mathbf L}(\lambda)\widetilde
{\mathbf f}]_1=\mathcal R(F_\lambda)(.,\lambda)$ and $[\mathbf R_{\mathbf L_0}(\lambda)\widetilde
{\mathbf f}]_1=\mathcal R_{\mathrm f}(F_\lambda)(.,\lambda)$.
Therefore, our next job will be to estimate the above integral term in the limit case $\varepsilon\to 0$. To this end, we will now state a couple of preliminary lemmas which will be crucial in accomplishing that task.
\subsection{Preliminary and technical Lemmas}
The first set of lemmas will be concerned with oscillatory integrals.
\begin{lem}\label{osci1}
	Let $\alpha >0$. Then 
	$$
	\left|\int_\R e^{i a\omega}\O(\langle\omega\rangle^{-(1+\alpha)}) d \omega \right|\lesssim \langle a\rangle^{-2},
	$$
	 for any $a\in \R$. 
\end{lem}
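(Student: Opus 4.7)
The plan is to split the analysis into two regimes based on the size of $|a|$, which corresponds to the standard dichotomy between bounding the integral directly and exploiting oscillation via integration by parts.

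For the small-$a$ regime, say $|a|\leq 1$, I would simply estimate the integral by absolute values. Since the integrand is an $\O(\langle\omega\rangle^{-(1+\alpha)})$ function and $\alpha>0$, we have $\int_\R \langle\omega\rangle^{-(1+\alpha)}\,d\omega<\infty$, so the integral is bounded by a constant, which in turn is $\lesssim \langle a\rangle^{-2}$ because $\langle a\rangle$ is bounded above on this range.

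For the large-$a$ regime, $|a|>1$, I would integrate by parts twice, writing $e^{ia\omega}=\tfrac{1}{ia}\partial_\omega e^{ia\omega}$ and transferring the derivative onto the symbol. Denoting by $f(\omega)$ a representative of the $\O(\langle\omega\rangle^{-(1+\alpha)})$ amplitude, the boundary terms at $\pm\infty$ vanish because $f$ and $f'$ decay like $\langle\omega\rangle^{-(1+\alpha)}$ and $\langle\omega\rangle^{-(2+\alpha)}$ respectively. This yields
\[
\int_\R e^{ia\omega}f(\omega)\,d\omega = \frac{1}{(ia)^2}\int_\R e^{ia\omega}f''(\omega)\,d\omega.
\]
By the defining property of symbol-type functions, $f''$ is $\O(\langle\omega\rangle^{-(3+\alpha)})$, which is absolutely integrable, so the remaining integral is bounded by a constant. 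Combining with the prefactor $1/a^2$ gives the bound $|a|^{-2}\lesssim \langle a\rangle^{-2}$ in this regime.

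There is no real obstacle here; the only point requiring any care is the justification that symbol-type behavior is preserved under differentiation (which is built into the definition used in the paper) and that boundary contributions vanish, both of which are immediate from the decay rates. Patching the two regimes gives the claim uniformly in $a\in\R$.
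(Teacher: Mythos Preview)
Your proof is correct and follows exactly the approach the paper sketches in one line: absolute convergence handles the bounded-$a$ case, and two integrations by parts using the symbol-type decay of the amplitude yield the $|a|^{-2}$ bound for large $a$. You have simply spelled out the details the paper leaves implicit.
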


\begin{proof}
	Since the integral is absolutely convergent the claim follows by doing two integrations by parts.
\end{proof}
\begin{lem}\label{osci2}
	Let $\alpha \in (0,1)$. Then
	\begin{align*}
	\left|\int_\R e^{i a\omega}\O(\langle\omega\rangle^{-\alpha}) d \omega \right|\lesssim |a|^{\alpha-1}\langle a\rangle^{-2}
	\end{align*}
	holds for $a\in \R\setminus\{0\}$.
\end{lem}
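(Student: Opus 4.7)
The plan is to split the analysis according to whether $|a|$ is small or large, with the split parameter chosen as $R=1/|a|$ in the small-$a$ regime. The symbol function $f(\omega)=\O(\langle\omega\rangle^{-\alpha})$ is smooth and its derivatives satisfy $|f^{(k)}(\omega)|\lesssim_k \langle\omega\rangle^{-\alpha-k}$; since $\alpha<1$ the integrand is not absolutely integrable on $\R$, so I will interpret it as the improper integral $\lim_{N\to\infty}\int_{-N}^N e^{ia\omega}f(\omega)\,d\omega$, which converges by oscillation exactly as in Lemma \ref{osci1}.

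First I would handle the range $|a|\leq 1$, where $\langle a\rangle^{-2}\simeq 1$, so the claim reduces to $|I(a)|\lesssim |a|^{\alpha-1}$. I split $I(a)=I_{\mathrm{low}}(a)+I_{\mathrm{high}}(a)$ at $R=1/|a|$. The low-frequency piece is estimated directly,
\[
|I_{\mathrm{low}}(a)|\leq \int_{|\omega|\leq R}\langle\omega\rangle^{-\alpha}\,d\omega \lesssim R^{1-\alpha}=|a|^{\alpha-1}.
\]
For the high-frequency piece I perform one integration by parts, picking up the boundary contribution at $\omega=\pm R$ and a remainder containing $f'(\omega)=\O(\langle\omega\rangle^{-\alpha-1})$. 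The boundary terms are bounded by $|a|^{-1}R^{-\alpha}=|a|^{\alpha-1}$, and the remainder by
\[
\frac{1}{|a|}\int_{|\omega|>R}\langle\omega\rangle^{-\alpha-1}\,d\omega \lesssim \frac{1}{|a|}R^{-\alpha}=|a|^{\alpha-1}.
\]

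Next, for $|a|>1$ the target bound is $|I(a)|\lesssim|a|^{\alpha-3}$, and since $|a|^\alpha\geq 1$ it suffices to prove $|I(a)|\lesssim |a|^{-3}$. Here I integrate by parts three times on the whole real line; because each $f^{(k)}(\omega)$ decays at infinity the boundary terms vanish in the limit, and I obtain
\[
I(a)=\frac{(-1)^3}{(ia)^3}\int_\R e^{ia\omega}f'''(\omega)\,d\omega.
\]
Since $f'''(\omega)=\O(\langle\omega\rangle^{-\alpha-3})$ is absolutely integrable, this gives $|I(a)|\lesssim |a|^{-3}$, completing the proof.

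I do not expect any real obstacle; the only minor subtlety is justifying the integration by parts in the improper-integral sense, which follows from the symbol decay of $f$ and its derivatives. Combining the two regimes yields $|I(a)|\lesssim |a|^{\alpha-1}\langle a\rangle^{-2}$ uniformly in $a\in\R\setminus\{0\}$.
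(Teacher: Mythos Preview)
Your proof is correct. The paper itself does not give a proof but simply cites Lemma~4.2 in \cite{DonRao20}; the argument there is essentially the same as yours (split at $|\omega|\sim 1/|a|$ for small $|a|$ and integrate by parts for large $|a|$), so there is nothing to compare.
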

\begin{proof}
	See Lemma 4.2 in \cite{DonRao20}.
\end{proof}
\begin{lem}\label{osci3}
We have
	\begin{align*}
	\left|\int_\R e^{i a\omega}(1-\chi_\lambda(\rho))\O\left(\rho^{-n}\langle\omega\rangle^{-(n+1)}\right) d \omega \right|\lesssim \langle a\rangle^{-2}
	\end{align*}
	for all $n\geq 1$, $\rho \in (0,1)$, and $a\in \R$.
\end{lem}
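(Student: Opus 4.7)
The plan is to use the familiar ``small $|a|$ vs.~large $|a|$'' dichotomy: for $|a|\le 1$ we bound the integral in absolute value, and for $|a|>1$ we integrate by parts twice in $\omega$. The structural observation that makes both halves work uniformly in $\rho$ is that the $\omega$-support of $1-\chi_\lambda(\rho)$ is compatible with the factor $\rho^{-n}$. Indeed, by definition $1-\chi_\lambda(\rho)=0$ whenever $\rho\le\rho_\lambda=\min\{r/|a(\lambda)|,\rho_0\}$, and since $a(\lambda)=i(3-2\lambda)/2$ with $\lambda=\varepsilon+i\omega$ and $\varepsilon\in[0,\tfrac14]$, we have $|a(\lambda)|\simeq\langle\omega\rangle$. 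Thus, for fixed $\rho\in(0,\rho_0)$ the $\omega$-support of $1-\chi_\lambda(\rho)$ lies in $\{\langle\omega\rangle\gtrsim 1/\rho\}$, while for $\rho\in[\rho_0,1)$ one has the trivial bound $\rho^{-n}\lesssim 1$. The net consequence is the master estimate
\begin{align*}
\int_\R \bigl|1-\chi_\lambda(\rho)\bigr|\,\rho^{-n}\langle\omega\rangle^{-(n+1+k)}\,d\omega \;\lesssim\; 1, \qquad k\ge 0,
\end{align*}
where the decay in $\omega$ (using $n\ge 1$) ensures convergence and the factor $\rho^{-n}$ is exactly compensated by $(1/\rho)^{-n}$ coming from the support restriction.

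For $|a|\le 1$, applying the master estimate with $k=0$ yields $\lesssim 1\simeq \langle a\rangle^{-2}$ immediately.

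For $|a|>1$, we write $e^{ia\omega}=(ia)^{-2}\partial_\omega^2 e^{ia\omega}$ and integrate by parts twice; the $\langle\omega\rangle^{-(n+1)}$ decay causes the boundary terms at $\pm\infty$ to vanish. After the integration by parts, each derivative either falls on the cutoff---where the standing assumption $|\partial_\omega^\ell\chi_\lambda(\rho)|\lesssim\langle\omega\rangle^{-\ell}$ supplies a factor $\langle\omega\rangle^{-\ell}$---or on the symbol, where $\partial_\omega^\ell\O(\rho^{-n}\langle\omega\rangle^{-(n+1)})=\O(\rho^{-n}\langle\omega\rangle^{-(n+1+\ell)})$. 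Every resulting term is therefore pointwise bounded by $\rho^{-n}\langle\omega\rangle^{-(n+3)}$, supported either on $\{1-\chi_\lambda(\rho)\ne 0\}$ or on $\{\partial_\omega\chi_\lambda(\rho)\ne 0\}$; the latter lives in the transition region $r/\rho\le|a(\lambda)|\le 2r/\rho$, where again $\langle\omega\rangle\simeq 1/\rho$, so the same support restriction applies. Applying the master estimate with $k=2$ then gives an $\omega$-integral of size $\lesssim 1$, and the prefactor $|a|^{-2}$ produces $\lesssim\langle a\rangle^{-2}$.

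The only genuine subtlety is verifying that differentiating the cutoff preserves the compensating support condition; once that is checked via the transition region $\rho\in[\rho_\lambda,\widehat\rho_\lambda]$ together with $|a(\lambda)|\simeq\langle\omega\rangle$, the remainder is routine symbol calculus combined with the integration-by-parts step.
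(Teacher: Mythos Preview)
Your argument is correct and is precisely the approach the paper has in mind: the paper does not give its own proof but refers to Lemma~4.3 in \cite{DonRao20}, which is exactly this ``absolute bound for $|a|\le 1$, two integrations by parts for $|a|>1$'' argument, exploiting that on the support of $1-\chi_\lambda(\rho)$ (and of $\partial_\omega^\ell\chi_\lambda(\rho)$) one has $\langle\omega\rangle\gtrsim\rho^{-1}$ so that the factor $\rho^{-n}$ is absorbed by $\langle\omega\rangle^{-n}$. Your write-up in fact supplies the details the paper omits.
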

\begin{proof}
	This can be proven in the same manner as Lemma 4.3 in \cite{DonRao20}
\end{proof}
Finally, we need one more lemma on oscillatory integrals.
\begin{lem}\label{osci4}
We have
	\begin{align*}
	\left|\int_\R e^{i a\omega}(1-\chi_\lambda(\rho))\O\left(\rho^{-n}\langle\omega\rangle^{-n}\right)d\omega\right|\lesssim|a|^{-1}\langle a\rangle^{-2}
	\end{align*}
	for any $n \geq 2$, $\rho \in (0,1)$, and $a\in \R \setminus\{0\}$.
\end{lem}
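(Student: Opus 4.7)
The plan is to prove the estimate by repeated integration by parts in $\omega$, exploiting two features: (i) the support restriction that $1-\chi_\lambda(\rho)$ forces in $\omega$, and (ii) the symbol-type decay $\langle\omega\rangle^{-n}$. Write $f(\omega):=(1-\chi_\lambda(\rho))\O(\rho^{-n}\langle\omega\rangle^{-n})$. Since $\rho_\lambda=\min\{r/|a(\lambda)|,\rho_0\}$ and $|a(\lambda)|\simeq\langle\omega\rangle$, the condition $\rho\geq\rho_\lambda$ (which is necessary for $1-\chi_\lambda(\rho)\neq 0$) implies, when $\rho\leq\rho_0$, that $\langle\omega\rangle\gtrsim\rho^{-1}$. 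In the complementary regime $\rho>\rho_0$ the support restriction is trivial but the prefactor $\rho^{-n}$ is uniformly bounded by a constant. Combining the bound on $\omega$-derivatives of $\chi_\lambda$, $|\partial_\omega^k\chi_\lambda(\rho)|\lesssim\langle\omega\rangle^{-k}$, with the symbol property of the amplitude yields
\[
|\partial_\omega^k f(\omega)|\lesssim \rho^{-n}\langle\omega\rangle^{-n-k}\mathbf{1}_{\langle\omega\rangle\gtrsim 1/\rho},
\]
where in the $\rho>\rho_0$ case we drop the indicator and replace $\rho^{-n}$ by a constant.

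The key elementary computation is then
\[
\int_\R|\partial_\omega^k f(\omega)|\,d\omega\lesssim \rho^{-n}\int_{1/\rho}^{\infty}\omega^{-n-k}\,d\omega\simeq \rho^{k-1},
\]
valid for $k\geq 1$ (we need $n+k>1$, guaranteed by $n\geq 2$); the alternative $\rho>\rho_0$ regime gives $\int|\partial_\omega^k f|\lesssim 1$. Since $\rho\in(0,1)$, both estimates are bounded by $1$ when $k=1$ and by $1$ when $k=3$ (because $\rho^2\leq 1$). Integration by parts $k$ times, with boundary terms vanishing thanks to the decay $\langle\omega\rangle^{-n}$, $n\geq 2$, then gives
\[
\left|\int_\R e^{ia\omega}f(\omega)\,d\omega\right|\leq |a|^{-k}\int_\R|\partial_\omega^k f(\omega)|\,d\omega\lesssim |a|^{-k}
\]
for $k\in\{1,3\}$.

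Finally, I combine the two cases: for $|a|\leq 1$ take $k=1$ to obtain the bound $|a|^{-1}$, and for $|a|\geq 1$ take $k=3$ to obtain $|a|^{-3}$. Both estimates read uniformly as $|a|^{-1}\langle a\rangle^{-2}$, completing the proof. No genuine obstacle arises; the only point requiring some care is the bookkeeping of the supports of $1-\chi_\lambda$ and of its $\omega$-derivatives (which are concentrated in the narrow transition $\rho\in[\rho_\lambda,\widehat\rho_\lambda]$, i.e.\ $|\omega|\simeq 1/\rho$), but this is automatically subsumed by the uniform bounds $|\partial_\omega^k\chi_\lambda|\lesssim\langle\omega\rangle^{-k}$ and does not affect the final estimate. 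The structure of the argument mirrors the proof of Lemma~\ref{osci3}, with the single difference that one less power of $\langle\omega\rangle$ in the amplitude is compensated by one additional factor of $|a|^{-1}$ after integration by parts.
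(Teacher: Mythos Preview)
Your proof is correct and follows the natural approach: integration by parts in $\omega$ combined with the support restriction $\langle\omega\rangle\gtrsim\rho^{-1}$ enforced by the cutoff, which converts the loss $\rho^{-n}$ into a gain after integrating $\langle\omega\rangle^{-n-k}$ over $\{\langle\omega\rangle\gtrsim\rho^{-1}\}$. The paper does not spell out its own argument but simply refers to Lemma~4.4 in \cite{DonRao20}, whose proof is precisely this integration-by-parts scheme; your write-up is therefore in line with the intended method.
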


\begin{proof}
This can be proven as Lemma 4.4 in \cite{DonRao20}.
\end{proof}
We will also make use of the following technical Lemmas.
\begin{lem}\label{teclem1}
The estimates
\begin{align*}
\left\||.|^{2-\frac{1}{12}}f\right\|_{L^{12}(\B^6_1)}\lesssim \|f\|_{H^1(\B^6_1)}
\end{align*}
and 
\begin{align*}
\left\|\int_\rho^1 s^{2-\frac{1}{12}}f'(s)ds\right\|_{L^{12}_\rho(\B^6_1)}\lesssim \|f\|_{H^1(\B^6_1)}
\end{align*}
hold true for all $f\in C^1(\overline{\B^6_1})$.
Further, also the estimate
\begin{align*}
\left\||.|^{1-\frac{1}{12}}f\right\|_{L^{12}(\B^6_1)}\lesssim \|f\|_{H^2(\B^6_1)}
\end{align*}
is true for all $f\in C^2(\overline{\B^6_1})$.
\end{lem}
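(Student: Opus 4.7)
The plan is to reduce each of the three estimates to a one-dimensional weighted integral inequality via the radial identification $\|h\|_{L^p(\B^6_1)}^p\simeq\int_0^1|h(r)|^p r^5\,dr$ stated earlier in Section~2. Under this identification the first two estimates become bounds on $\int_0^1 r^{28}|f(r)|^{12}\,dr$ and on the $L^{12}(r^5\,dr)$-norm of a one-dimensional integral operator in $r$, while the third becomes a bound on $\int_0^1 r^{16}|f(r)|^{12}\,dr$. In each case the strategy is the same: combine a sharp radial pointwise bound on $f$ with a Hardy-type estimate, after the right split of the exponent $12=10+2$.

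For the first estimate the main input is the pointwise bound $r^{5/2}|f(r)|\lesssim\|f\|_{H^1(\B^6_1)}$ for $r\in(0,1]$. I would prove it by writing
\[
r^5|f(r)|^2=|f(1)|^2-5\int_r^1 s^4|f(s)|^2\,ds-2\int_r^1 s^5\Re(f\bar f')(s)\,ds,
\]
bounding $|f(1)|$ by the trace inequality, the last integral by Cauchy-Schwarz with the natural six-dimensional weight, and the middle one through $s^4\leq s^3$ together with the Hardy inequality $\int_0^1 s^3|f|^2\,ds\lesssim\|f\|_{H^1(\B^6_1)}^2$. Splitting $r^{28}|f|^{12}=(r^{5/2}|f|)^{10}\cdot r^3|f|^2$, placing the first factor in $L^\infty$ via the pointwise bound, and applying Hardy once more to the remaining $r^3|f|^2$ integral then gives the claim.

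For the second estimate I would apply Cauchy-Schwarz directly to $g(\rho):=\int_\rho^1 s^{2-1/12}f'(s)\,ds$, writing $s^{2-1/12}=s^{-7/12}\cdot s^{5/2}$ so that the $|f'|^2$ integral carries the natural weight $s^5$:
\[
|g(\rho)|^2\leq\left(\int_\rho^1 s^{-7/6}\,ds\right)\left(\int_\rho^1 s^5|f'(s)|^2\,ds\right)\lesssim\rho^{-1/6}\|f\|_{H^1(\B^6_1)}^2.
\]
Then $\int_0^1|g(\rho)|^{12}\rho^5\,d\rho\lesssim\|f\|_{H^1(\B^6_1)}^{12}\int_0^1\rho^4\,d\rho\lesssim\|f\|_{H^1(\B^6_1)}^{12}$.

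The third estimate is the most delicate because the cruder $H^1$ pointwise bound would leave a divergent integral $\int_0^1 r^{-9}|f|^2\,dr$. The way around is to gain a stronger radial decay from $H^2$ using the Sobolev embedding $H^2(\B^6_1)\hookrightarrow W^{1,3}(\B^6_1)$, which yields $\int_0^1|f'(s)|^3 s^5\,ds\lesssim\|f\|_{H^2(\B^6_1)}^3$; then the identity $f(r)=f(1)-\int_r^1 f'(s)\,ds$ combined with Hölder in one variable (with exponents $3$ and $3/2$) gives $|f(r)|\lesssim r^{-1}\|f\|_{H^2(\B^6_1)}$ on $(0,1]$. Splitting $r^{16}|f|^{12}=(r|f|)^{10}\cdot r^6|f|^2$, placing $(r|f|)^{10}$ in $L^\infty$ via the pointwise bound, and using $r^6\leq r^5$ on $[0,1]$ to dominate the remaining integral by $\|f\|_{L^2(\B^6_1)}^2\lesssim\|f\|_{H^2(\B^6_1)}^2$ then closes the estimate. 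I do not expect any genuine obstacle: the only nontrivial design choice is the split $12=10+2$ which pairs the singular $r$-weight with the $L^\infty$ factor while leaving a controllable $r^a|f|^2$ integral.
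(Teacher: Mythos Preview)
Your proof is correct, but it follows a genuinely different route from the paper's. The paper's unifying device is a \emph{dimension reduction} to the two-dimensional ball: for the first estimate it rewrites
\[
\left\||.|^{2-\frac{1}{12}}f\right\|_{L^{12}(\B^6_1)}^{12}=\int_0^1|s^2f(s)|^{12}s^4\,ds\leq\int_0^1|s^2f(s)|^{12}s\,ds=\||.|^2 f\|_{L^{12}(\B^2_1)}^{12},
\]
invokes the two-dimensional Sobolev embedding $H^1(\B^2_1)\hookrightarrow L^{12}(\B^2_1)$, and then bounds $\||.|^2 f\|_{H^1(\B^2_1)}$ by $\|f\|_{H^1(\B^6_1)}$ via Hardy. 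The same reduction to $\B^2_1$ is used for the second estimate, while the third is obtained by applying the first to $|.|^{-1}f$ and controlling $\||.|^{-1}f\|_{H^1(\B^6_1)}$ by $\|f\|_{H^2(\B^6_1)}$ through Lemma~\ref{helplemoverrho}. Your approach instead rests on Strauss-type radial pointwise bounds ($r^{5/2}|f(r)|\lesssim\|f\|_{H^1(\B^6_1)}$ and $r|f(r)|\lesssim\|f\|_{H^2(\B^6_1)}$) combined with the split $12=10+2$, placing the pointwise bound in $L^\infty$ and the remaining $r^a|f|^2$ term in $L^1$ via Hardy. Your treatment of the second estimate is appreciably shorter than the paper's: a single Cauchy--Schwarz with the weight split $s^{2-1/12}=s^{-7/12}\cdot s^{5/2}$ replaces the paper's passage through $\||.|^{-2+1/3}g\|_{H^1(\B^6_1)}$. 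The trade-off is that the paper's argument is more systematic (one Sobolev embedding in one auxiliary dimension handles everything), whereas yours is more hands-on but entirely self-contained and avoids the somewhat ad hoc intermediate $H^1(\B^6_1)$ computations for the integral operator.
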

\begin{proof}
We readily calculate
\begin{align*}
\||.|^{2-\frac{1}{12}}f\|_{L^{12}(\B^6_1)}&=\left(\int_0^1\left|s^{2-\frac{1}{12}} f(s)\right|^{12}s^5 ds \right)^{\frac{1}{12}}=\left(\int_0^1\left|s^2 f(s)\right|^{12}s^4 ds \right)^{\frac{1}{12}}
\\
&\lesssim\left(\int_0^1\left|s^2 f(s)\right|^{12}s ds \right)^{\frac{1}{12}} 
=\||.|^2 f\|_{L^{12}(\B^2_1)}
\lesssim\||.|^2 f\|_{H^1(\B^2_1)}
\\
&\lesssim \left(\int_0^1 |f(s)|^2s^3+ |f'(s)|^2 s^5 ds \right)^{\frac{1}{2}}
\lesssim
\| f\|_{H^1(\B^6_1)}
\end{align*}
by Sobolev embedding and Lemma \ref{helplemoverrho}.
To show the second bound, we argue similarly to obtain 
\begin{align*}
\left\|\int_\rho^1 s^{2-\frac{1}{12}}f'(s)ds\right\|_{L^{12}_\rho(\B^6_1)}&=
\left\|\rho^{2} \rho^{-2+\frac{1}{3}}\int_\rho^1 s^{2-\frac{1}{12}}f'(s)ds\right\|_{L^{12}_\rho(\B^2_1)}
\\
&\lesssim \left\|\rho^{-2+\frac{1}{3}}\int_\rho^1 s^{2-\frac{1}{12}}f'(s)ds\right\|_{H^1_\rho(\B^6_1)}.
\end{align*}
Further,
\begin{align*}
\left\|\rho^{-2+\frac{1}{3}}\int_\rho^1 s^{2-\frac{1}{12}}f'(s)ds\right\|_{L^2_\rho(\B^6_1)}&\lesssim \left\|\rho^{-3+\frac{1}{4}}\int_\rho^1 s^{\frac{5}{2}}|f'(s)|ds\right\|_{L^2_\rho(\B^6_1)}
\\
 &\lesssim \|f\|_{H^1(\B^6_1)}\||.|^{-3+\frac{1}{4}}\|_{L^2(\B^6_1)} 
\\
&\lesssim \|f\|_{H^1(\B^6_1)}
\end{align*}
and 
\begin{align*}
\left\|\rho^{-2+\frac{1}{3}}\int_\rho^1 s^{2-\frac{1}{12}}f'(s)ds\right\|_{\dot{H}^1_\rho(\B^6_1)}&\lesssim \left\|\rho^{-3+\frac{1}{3}}\int_\rho^1 s^{2-\frac{1}{12}}|f'(s)|ds\right\|_{L^2_\rho(\B^6_1)}+\|f\|_{H^1(\B^6_1)} 
\\
&\lesssim
\left\|\rho^{-3+\frac{1}{12}}\int_\rho^1 s^{2+\frac{1}{24}}|f'(s)|ds\right\|_{L^2_\rho(\B^6_1)}+\|f\|_{H^1(\B^6_1)}
\\
&\lesssim \|f\|_{H^1(\B^6_1)}\int_0^1 s^{-1+\frac{1}{12}} ds\,\| |.|^{-3+\frac{1}{12}}\|_{L^2(\B^6_1)}+\|f\|_{H^1(\B^6_1)}
\\
&\lesssim \|f\|_{H^1(\B^6_1)}
\end{align*}
where the third $\lesssim$ follows from the Cauchy-Schwarz inequality.
For the third one, we note that the calculation used to derive the first estimate together with Lemma \ref{helplemoverrho} implies
\begin{align*}
\||.|^{1-\frac{1}{12}}f\|_{L^{12}(\B^6_1)}\lesssim \||.|^{-1} f\|_{H^1(\B^6_1)}\lesssim \|f\|_{H^2(\B^6_1)}.
\end{align*}
\end{proof}
\begin{lem}\label{teclem2}
	Let $\alpha \in (0,1)$. Then we have the estimate
	\begin{align*}
	\int_0^1s^{\alpha-1}|a+\log(1\pm s)|^{-\alpha} ds\lesssim |a|^{-\alpha}
	\end{align*}
	for any $a\in \R\setminus\{0\}$.
\end{lem}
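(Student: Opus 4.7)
The plan is to perform the substitution $t=\pm\log(1\pm s)$ in each of the two sign choices, which removes the logarithm and converts the claim into an integral with a power-type singularity that can be handled by a direct domain split. First, I would treat the plus sign: setting $t=\log(1+s)$ yields $s=e^t-1$ with $t\in[0,\log 2]$, and the elementary bounds $e^t-1\geq t$ together with $\alpha-1<0$ reduce the integral to
\[
\int_0^{\log 2}t^{\alpha-1}|a+t|^{-\alpha}\,dt.
\]
For the minus sign, $t=-\log(1-s)$ gives $s=1-e^{-t}$ with $t\in[0,\infty)$, and splitting at $t=1$ produces the dominating expression
\[
\int_0^{1}t^{\alpha-1}|a-t|^{-\alpha}\,dt+\int_1^\infty|a-t|^{-\alpha}e^{-t}\,dt,
\]
the exponential factor coming from the measure $(1-e^{-t})^{\alpha-1}e^{-t}\,dt$ on the far region, where $1-e^{-t}$ is bounded above and below.

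After this reduction the problem boils down to the scalar estimate $J(b):=\int_0^1 t^{\alpha-1}|b-t|^{-\alpha}\,dt\lesssim |b|^{-\alpha}$, uniformly in $b\in\R\setminus\{0\}$, plus an easier tail bound. For $b\leq 0$ the bound is immediate since $|b-t|\geq|b|$ on $[0,1]$. For $b>0$ I would distinguish three regimes: (i) $b\geq 2$, where $|b-t|\geq b/2$ on $[0,1]$ trivially gives $J(b)\lesssim b^{-\alpha}$; (ii) the resonant window $[b/2,\min(2b,1)]$, on which the rescaling $t=b\tau$ turns the contribution into a scale-invariant integral of the form $\int_{1/2}^{2}\tau^{\alpha-1}|1-\tau|^{-\alpha}\,d\tau=O(1)$, absorbed by $b^{-\alpha}$ since $b\leq 2$; (iii) the non-resonant complement $[0,b/2]\cup[\min(2b,1),1]$, where $|b-t|$ is comparable to $\max(b,t)$ and a direct computation yields a contribution of $O(1)+O(\log(1/b))$. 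The tail $\int_1^\infty|b-t|^{-\alpha}e^{-t}\,dt$ is controlled analogously; for $|b|$ small it is $O(1)\lesssim|b|^{-\alpha}$, while for $|b|$ large the exponential decay $e^{-t}\leq e^{-b/2}$ on $[b/2,2b]$ beats the polynomial singularity, and on $[2a,\infty)$ the bound $|b-t|\geq t/2$ reduces matters to a fast-decaying integral.

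The argument is essentially elementary. The only mildly delicate point is the logarithmic factor $\log(1/|b|)$ appearing in the non-resonant contribution for small $|b|$, but this is absorbed into $|b|^{-\alpha}$ precisely because $\alpha>0$; no further ideas are required. The main practical obstacle is simply the bookkeeping of cases, not any single hard estimate.
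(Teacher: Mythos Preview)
Your argument is correct; the substitution $t=\mp\log(1\pm s)$ together with the comparison $1-e^{-t}\simeq t$ on $[0,1]$ cleanly reduces matters to the model integral $J(b)=\int_0^1 t^{\alpha-1}|b-t|^{-\alpha}\,dt$ and an exponentially damped tail, and your case split on the size of $b$ handles $J$ without difficulty. The paper proceeds differently: it uses the $a$-\emph{dependent} substitution $s=1-e^{ax}$ (so $\log(1-s)=ax$), which produces the prefactor $a^{1-\alpha}$ directly and leaves an integral over $x\in(-\infty,0]$ with a fixed singularity at $x=-1$; it then splits at $x=-\tfrac12$ and $x=-2$ and exploits the explicit antiderivative $\partial_x(1-e^{ax})^{\alpha}=-a\alpha(1-e^{ax})^{\alpha-1}e^{ax}$. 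Your route is a bit more modular (one universal lemma about $J(b)$, reused for both sign choices), while the paper's substitution makes the $a$-scaling appear immediately and avoids the separate tail analysis; both are elementary and of comparable length. One small cosmetic point: in your tail discussion you write ``$[2a,\infty)$'' where you mean the variable $b$ (or whatever you have called the relevant parameter there), but this does not affect the argument.
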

\begin{proof}
	We only prove the - case as the + case can be shown analogously. 
For $a<0$ the estimate
\begin{align*}
\left|a+ \log(1-s) \right|^{-\alpha} \leq |a|^{-\alpha}
\end{align*}  
holds for all $s\in [0,1]$ and so the claim follows.
For $a>0$  we change variables according to $s=1-e^{ax}$ and compute
	\begin{align*}
	\int_0^1s^{\alpha-1}|a+\log(1-s)|^{-\alpha} ds &= \int_{-\infty}^0(1-e^{a x})^{\alpha-1}|a+ax|^{-\alpha} a e^{a x} dx
	\\
	&\lesssim |a|^{1-\alpha}\int_{-\frac{1}{2}}^0(1-e^{a x})^{\alpha-1}e^{ax} dx 
	\\
	&\quad +|a|^{1-\alpha}(1-e^{-\frac a2})^{\alpha-1}e^{-\frac a2}\int_{-2}^{-\frac{1}{2}} |1+x|^{-\alpha}dx 
	\\
	&\quad +|a|^{1-\alpha}\int^{-2}_{-\infty}(1-e^{a x})^{\alpha-1}e^{a x} dx.
	\end{align*}
	The claimed estimate now follows from a straightforward computation and the two identities
\[
-\partial_x \frac{(1-e^{ax})^{\alpha}}{a\alpha}=(1-e^{ax})^{\alpha-1}e^{ax} 
\]
and
\[(1-e^{-\frac{a}{2}})^{\alpha-1}e^{-\frac{a}{2}} \lesssim a^{\alpha-1}
\]
for all $a>0$.
\end{proof}
\begin{lem}\label{teclem3}
The estimate
	\begin{align*}
	\int_0^1 s^{-\frac{1}{2}}\left|a\pm \frac{1}{2}\log(1-s^2) \right|^{-\frac{1}{2}} ds\lesssim |a|^{-\frac{1}{2}}
	\end{align*}
	holds for all $a\in \R\setminus\{0\}$.
\end{lem}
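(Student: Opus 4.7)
The plan is to imitate the template of the proof of Lemma~\ref{teclem2}, with a substitution adapted to the modified logarithm $\tfrac12\log(1-s^2)=\tfrac12\log(1-s)+\tfrac12\log(1+s)$. By the sign symmetry $a\mapsto -a$ (which interchanges the two choices of $\pm$), it suffices to treat the $+$ case. If $a\leq 0$, then $a+\tfrac12\log(1-s^2)\leq a<0$ on $[0,1)$, so $|a+\tfrac12\log(1-s^2)|\geq |a|$ and the bound follows at once from $\int_0^1 s^{-1/2}\,ds<\infty$.

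For the harder case $a>0$, the integrand has an interior singularity at $s_0=\sqrt{1-e^{-2a}}$. Following Lemma~\ref{teclem2}, I would linearise the logarithm via the change of variables $s=\sqrt{1-e^{2ax}}$ for $x\in(-\infty,0]$, which gives $a+\tfrac12\log(1-s^2)=a(1+x)$ and $s^{-1/2}\,ds=-(1-e^{2ax})^{-3/4}ae^{2ax}\,dx$, leading to
\begin{align*}
\int_0^1 s^{-1/2}\left|a+\tfrac12\log(1-s^2)\right|^{-1/2}ds=a^{1/2}\int_{-\infty}^0 (1-e^{2ax})^{-3/4}e^{2ax}|1+x|^{-1/2}\,dx.
\end{align*}
The strategy is to split this integral at $x=-\tfrac12$ and $x=-2$ and to estimate each piece separately.

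The first two pieces are handled essentially as in Lemma~\ref{teclem2}. On $[-\tfrac12,0]$, the factor $|1+x|^{-1/2}$ is bounded and the antiderivative identity $(1-e^{2ax})^{-3/4}e^{2ax}=-\tfrac{2}{a}\partial_x(1-e^{2ax})^{1/4}$ produces the explicit bound $\tfrac{2}{a}(1-e^{-a})^{1/4}$, which after multiplication by $a^{1/2}$ is controlled by $a^{-1/2}$ for large $a$ and by $a^{-1/4}\lesssim a^{-1/2}$ when $a\leq 1$. On $[-2,-\tfrac12]$, the integrand $(1-e^{2ax})^{-3/4}e^{2ax}$ is monotone in $|x|$ and attains its maximum $(1-e^{-a})^{-3/4}e^{-a}$ at $x=-\tfrac12$, and since $|1+x|^{-1/2}$ is integrable on this finite interval, the piece is again $\lesssim a^{-1/2}$ after multiplying by $a^{1/2}$ (the large-$a$ regime is then exponentially small).

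The tail $(-\infty,-2]$ is the main obstacle: $|1+x|^{-1/2}$ alone is not integrable at $-\infty$, and one must exploit the decay of $e^{2ax}$ on the $a$-dependent scale $1/a$. I would rescale by $w=-2ax$ to rewrite the piece as $\frac{1}{\sqrt{2a}}\int_{4a}^{\infty}(1-e^{-w})^{-3/4}e^{-w}(w-2a)^{-1/2}\,dw$. Splitting at $w=1$, the segment $[4a,1]$ (present only when $a<\tfrac14$) is bounded using $1-e^{-w}\geq w/2$ together with the further scaling $w=2av$, which reduces it to $a^{-1/4}\int_2^{1/(2a)}v^{-3/4}(v-1)^{-1/2}\,dv$; the integrand decays like $v^{-5/4}$ at infinity and hence the integral is uniformly bounded in $a$. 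On $[1,\infty)$ the integrand is dominated by $e^{-w}$, giving an $O(1)$ contribution, and for large $a$ the whole tail inherits an $e^{-4a}$ factor. Collecting the three contributions and multiplying by $a^{1/2}$ yields the desired bound $\lesssim |a|^{-1/2}$.
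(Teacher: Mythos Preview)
Your proof is correct and follows essentially the same approach as the paper: the same substitution $s=\sqrt{1-e^{2ax}}$ and the same three-way split at $x=-\tfrac12$ and $x=-2$. The only difference is on the tail $(-\infty,-2]$, which the paper handles more simply: since $|1+x|\geq 1$ there, one bounds $|1+x|^{-1/2}\leq 1$ and evaluates $\int_{-\infty}^{-2}(1-e^{2ax})^{-3/4}e^{2ax}\,dx$ directly via the antiderivative $-\tfrac{2}{a}(1-e^{2ax})^{1/4}$, so your rescaling in $w$ is unnecessary.
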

\begin{proof}
We again  only prove the + case as the - case follows likewise.
 For negative $a$ the estimate
\begin{align*}
\left|a+ \frac{1}{2}\log(1-s^2) \right|^{-\frac{1}{2}} \leq |a|^{-\frac{1}{2}}
\end{align*}  
holds for $s \in [0,1]$ and so the claim follows immediately. For positive $a$
we change variables according to
\[s=\sqrt{1-e^{2ax}}
\]
and obtain
\begin{align*}
\int_0^1 s^{-\frac{1}{2}}\left|a\pm \frac{1}{2}\log(1-s^2) \right|^{-\frac{1}{2}} ds &=|a|^{\frac{1}{2}}\int_{-\infty}^0(1-e^{2ax})^{-\frac{3}{4}}|1+x|^{-\frac{1}{2}}e^{2ax} dx 
\\
&\lesssim |a|^{\frac{1}{2}}\int_{-\frac{1}{2}}^0(1-e^{2ax})^{-\frac{3}{4}} e^{2ax}dx 
\\
&\quad+ |a|^{\frac{1}{2}}(1-e^{-a})^{-\frac{3}{4}}e^{-4a}\int_{-2}^{-\frac{1}{2}}|1+x|^{-\frac{1}{2}} dx 
\\
&\quad+ |a|^{\frac{1}{2}}\int_{-\infty}^{-2}(1-e^{2ax})^{-\frac{3}{4}}e^{2ax} dx .
\end{align*}
Now, the claimed estimate follows from the identities
\[
-\partial_x \frac{2}{a}(1-e^{2ax})^{\frac{1}{4}}=(1-e^{2ax})^{-\frac{3}{4}}e^{2ax} 
\]
and
\[(1-e^{-a})^{-\frac{3}{4}}e^{-4a}\lesssim a^{-1}.
\]
\end{proof}
\subsection{Oscillatory integral bounds} 
With these technical lemmas out of the way we begin bounding the integral term in Eq.~\eqref{semirep}.
To do so, we suppose  $f\in C^2(\overline{\B^6_1})$ and investigate the expression
\[
\frac{1}{2\pi i}\lim_{\varepsilon\to 0^+}\lim_{N \to \infty} \int_{\varepsilon-i N}^{\varepsilon+ i N}e^{\lambda\tau}[\mathcal{R}(f)(\rho,\lambda)-\mathcal{R}_{\mathrm{f}}(f)(\rho,\lambda)] d\lambda.
\]
Additionally, as $F_\lambda$ contains the term $\lambda f_1$ we also have to obtain control over the integral
\[
\frac{1}{2\pi i}\lim_{\varepsilon\to 0^+}\lim_{N \to \infty} \int_{\varepsilon-i N}^{\varepsilon+ i N} \lambda  e^{\lambda\tau}[\mathcal{R}( f)(\rho,\lambda)-\mathcal{R}_{\mathrm{f}}(f)(\rho,\lambda)] d\lambda.
\]
To make our lives easier, we break up the difference $\mathcal{R}(f)-\mathcal{R}_{\mathrm{f}}(f)$  into smaller parts starting with
\begin{align*}
V_1(f)(\rho,\lambda):=u_0(\rho,\lambda)c(f,\lambda)-u_{\mathrm{f}_0}(\rho,\lambda)c_{\mathrm{f}}(f,\lambda).
\end{align*}
However, we first need a better description of $c(f,\lambda)$ and $c(f,\lambda)-c_{\mathrm{f}}(f,\lambda)$
\begin{lem}\label{lem:c}
We have that
\begin{align*}
c(f,\lambda)=f(1)\O(\langle\omega\rangle^{-\frac{5}{2}})
\end{align*}
and
\begin{align*}
c(f,\lambda)-c_{\mathrm{f}}(f,\lambda)=f(1)\O(\langle\omega\rangle^{-\frac{7}{2}}).
\end{align*}
\end{lem}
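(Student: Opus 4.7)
\emph{Plan of proof.} Writing
\[c(f,\lambda) = -f(1)\bigl[l_1(\lambda) + I(\lambda)\bigr], \qquad I(\lambda) = \int_0^1 \frac{\partial_s(s^4 u_1(s,\lambda))}{2i(1-2\lambda)(1-s^2)^{\frac{1}{2}-\lambda}}\, ds,\]
I would bound $l_1$ and $I$ separately and, for the second claim, propagate the $\langle\omega\rangle^{-1}$ gain coming from the fact that $u_1 - u_{\mathrm{f}_1}$ lives entirely in the symbol-type error terms of Lemma \ref{asymptotic}.

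For $l_1(\lambda)$, I would combine the near-$0$ representation $u_1(\rho,\lambda) = \rho^{-5/2}(1-\rho^2)^{\frac{1}{4}-\frac{\lambda}{2}}[c_{1,4}(\lambda)\psi_1(\rho,\lambda) + c_{2,4}(\lambda)\psi_2(\rho,\lambda)]$ with the Bessel expansions from Lemma \ref{Besselsol}, namely $b_1(\rho,\lambda) \sim a(\lambda)^2 \rho^{5/2}/8$ and $b_2(\rho,\lambda) \sim -\frac{4}{\pi a(\lambda)^2}\rho^{-3/2}$. The $\psi_1$ contribution vanishes in the $\rho \to 0$ limit, while $\psi_2$ yields $\lim_{\rho \to 0}\rho^4 u_1(\rho,\lambda) = -4c_{2,4}(\lambda)/(\pi a(\lambda)^2) = \O(\langle\omega\rangle^{-2})$ since $c_{2,4}(\lambda) = \O(\langle\omega\rangle^0)$ by Lemma \ref{connectioncoef}. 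Dividing by $2i(1-2\lambda) = \O(\langle\omega\rangle)$ gives $l_1(\lambda) = \O(\langle\omega\rangle^{-3})$, which is safely within the target.

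For $I(\lambda)$ the decisive structural fact is the factorization of Lemma \ref{asymptotic}: $s^4 u_1(s,\lambda) = a(\lambda)^{-1/2}(1+s)^{-\lambda}B(s,\lambda)$ on $[\widehat\rho_\lambda,1]$, where $B(s,\lambda) = s^{3/2}(1+s)^{3/2}[1+\text{errors}]$ has all $s$-derivatives bounded uniformly in $\omega$. Substituting cancels the $(1+s)^\lambda$ oscillations and leaves
\[\partial_s(s^4 u_1(s,\lambda))(1-s^2)^{-\frac{1}{2}+\lambda} = \frac{(1-s)^{-\frac{1}{2}+\lambda}}{\sqrt{a(\lambda)}}\,G(s,\lambda), \qquad G(s,\lambda) = (1+s)^{-\frac{1}{2}}\bigl[{-\lambda}(1+s)^{-1}B + B'\bigr],\]
where $G$ satisfies $|\partial_s^k G(s,\lambda)| \lesssim \langle\omega\rangle$ for all $k \geq 0$ (the $\omega$-growth is confined to the explicit $\lambda$-prefactor on $B$). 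I would then integrate by parts iteratively via $(1-s)^{-\frac{1}{2}+\lambda} = -(\frac{1}{2}+\lambda)^{-1}\partial_s(1-s)^{\frac{1}{2}+\lambda}$; each IBP gains $\langle\omega\rangle^{-1}$ without spoiling the symbol order of $G$, and the boundary contribution at $s=1$ vanishes thanks to the $(1-s)^{\frac{1}{2}+\lambda}$ factor. The complementary piece $\int_0^{\widehat\rho_\lambda}$ is treated directly from the Bessel representation: since $\widehat\rho_\lambda \sim \langle\omega\rangle^{-1}$ and $|\partial_s(s^4 u_1)| = \O(\langle\omega\rangle^{-1})$ on that interval, its contribution is at most $\O(\langle\omega\rangle^{-3})$. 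Combined with the prefactors $(1-2\lambda)^{-1}\sim\langle\omega\rangle^{-1}$ and $a(\lambda)^{-1/2}\sim\langle\omega\rangle^{-1/2}$, this yields $I(\lambda) = \O(\langle\omega\rangle^{-5/2})$.

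The second bound follows by applying the same argument to the difference $u_1 - u_{\mathrm{f}_1}$ in place of $u_1$. By Lemma \ref{asymptotic} the leading factors $a(\lambda)^{-1/2}(1+s)^{-\lambda}s^{3/2}(1+s)^{3/2}$ of the interacting and free solutions coincide, so their difference lives in the $\O(\langle\omega\rangle^{-1})$ error terms; likewise $c_{2,4}(\lambda) - c_{\mathrm{f}_{2,4}}(\lambda) = \O(\langle\omega\rangle^{-1})$ follows by inspecting Lemmas \ref{connectioncoef} and \ref{connectioncoeffree}. Propagating this extra $\langle\omega\rangle^{-1}$ through each step gives $c(f,\lambda) - c_{\mathrm{f}}(f,\lambda) = f(1)\O(\langle\omega\rangle^{-7/2})$. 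The main technical obstacle is careful bookkeeping at the matching point $s=\widehat\rho_\lambda$: the boundary contributions produced by each IBP on $[\widehat\rho_\lambda,1]$ must be combined with the endpoint value of the near-origin integral to avoid spurious $\O(\langle\omega\rangle^{0})$ terms, and the symbol statement in $\omega$ must be preserved when the same IBP is differentiated with respect to $\omega$ — both are handled by the symbol-type error estimates already established in Lemmas \ref{Besselsol} and \ref{asymptotic}.
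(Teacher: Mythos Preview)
Your approach is correct and essentially the same as the paper's: both bound $l_1(\lambda)=\O(\langle\omega\rangle^{-3})$ directly from the Bessel expansion, split the integral into near-zero and far pieces, estimate the former from Lemma~\ref{Besselsol}, and extract decay on the latter by integration by parts against the oscillatory factor $(1-s)^{-\frac12+\lambda}$, with the second claim following from the extra $\langle\omega\rangle^{-1}$ gain in $u_1-u_{\mathrm f_1}$. The one technical simplification in the paper is that it works with the smooth cutoff $\chi_\lambda$ already built into the definition of $u_1$ rather than a hard split at $\widehat\rho_\lambda$, which makes the boundary-matching issue you flag in your final paragraph disappear automatically.
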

\begin{proof}
Recall that $c(f,\lambda)$ was given by
\[
c(f,\lambda)=-f(1)\left( \lim_{\rho \to 0}\frac{\rho^{4}u_1(\rho)}{2i(1-2\lambda)(1-\rho^2)^{\frac{1}{2}-\lambda}}+\int_0^1\frac{\partial_s(s^{4}u_1(s,\lambda))} {2i(1-2\lambda)(1-s^2)^{\frac{1}{2}-\lambda}} ds \right).
\]
It is immediate that 
\[\lim_{\rho \to 0}\frac{\rho^{4}u_1(\rho)}{2i(1-2\lambda)(1-\rho^2)^{\frac{1}{2}-\lambda}}=\O\left(\langle\omega\rangle^{-3}\right),
\]
since $u_1$ is given by
\begin{align} \label{eq:u1 again}
u_1(\rho,\lambda)&=\rho^{-\frac{5}{2}}(1-\rho^2)^{\frac{1}{4}-\frac{\lambda}{2}}\chi_\lambda(\rho)\left(\O(\rho^{-1}\langle\omega\rangle^{-\frac{3}{2}})+c_{2,4}(\lambda) b_2(\rho,\lambda)[1+\O(\rho^2\langle\omega\rangle^0)]\right)\nonumber
\\
&\quad+(1-\chi_\lambda(\rho))\frac{\rho^{-\frac{5}{2}}(1+\rho)^{\frac{3}{2}-\lambda}}{\sqrt{ a(\lambda)}}[1+\O(\rho^{-1}(1-\rho)\langle\omega\rangle^{-1})].
\end{align} 
To estimate the integral term, we first note that the representation \eqref{BesselTaylor} of $Y_2 $  implies $\partial_\rho (\rho^\frac{3}{2} b_2(\rho,\lambda))=\O(\rho^0\langle\omega\rangle^{-2})$. Using this fact we see that
\begin{align*}
\chi_\lambda(\rho)\partial_\rho\left[\rho^{\frac{3}{2}}(1-\rho^2)^{\frac{1}{4}-\frac{\lambda}{2}}\left(\O(\rho^{-1}\langle\omega\rangle^{-\frac{3}{2}})+b_2(\rho,\lambda)[1+\O(\rho^2\langle\omega\rangle^0)]\right)\right]=\chi_\lambda(\rho)\O(\rho^{-\frac{1}{2}}\langle\omega\rangle^{-\frac{3}{2}})
\end{align*} and so
we calculate that
\begin{align*}
\int_0^1\frac{\partial_s(s^{4}u_1(s,\lambda))} {2i(1-2\lambda)(1-s^2)^{\frac{1}{2}-\lambda}} ds
&= \int_0^1\chi_\lambda(s) \O(s^{-\frac{1}{2}}\langle\omega\rangle^{-\frac{5}{2}}) ds
\\
&\quad +\O(\langle\omega\rangle^{-\frac{3}{2}})\int_0^1(1-\chi_\lambda(s))(1-s^2)^{-\frac{1}{2}+\lambda} 
\\
&\quad\times \partial_s\left(s^{\frac{3}{2}}(1+s)^{\frac{3}{2}-\lambda}[1+\O(s^{-1}(1-s)\langle\omega\rangle^{-1})]\right)
ds.
\end{align*}
Thus, appropriate integrations by parts in the latter integral yield
\begin{align*}
\int_0^1\frac{\partial_s(s^{4}u_1(s,\lambda))} {2i(1-2\lambda)(1-s^2)^{\frac{1}{2}-\lambda}} ds=\O(\langle\omega\rangle^{-\frac{5}{2}})
\end{align*}
and consequently,
\begin{align*}
c(f,\lambda)=f(1)\O(\langle\omega\rangle^{-\frac{5}{2}}).
\end{align*}
Similarly one obtains
\begin{align*}
c(f,\lambda)-c_{\mathrm{f}}(f,\lambda)=f(1)\O(\langle\omega\rangle^{-\frac{7}{2}}).
\end{align*}

\end{proof}
\begin{lem}\label{decomp1}
We can decompose $V_1$ as 
\[V_1(f)(\rho,\lambda)= \rho^{-\frac{5}{2}} (1-\rho^2)^{\frac{1}{4}-\frac{\lambda}{2}}f(1)(G_1(\rho,\lambda)+G_2(\rho,\lambda))\] where
\begin{align*}
G_1(\rho,\lambda)&=\chi_\lambda(\rho)b_1(\rho,\lambda)[\O(\langle\omega\rangle^{-\frac{7}{2}})+\O(\rho^2\langle\omega\rangle^{-\frac{5}{2}})]\\
G_2(\rho,\lambda)&=(1-\chi_\lambda(\rho))h_1(\rho,\lambda)[\O(\langle\omega\rangle^{-\frac{7}{2}})+\O(\rho^0(1-\rho)\langle\omega\rangle^{-\frac{7}{2}})]
\\
&\quad+(1-\chi_\lambda(\rho))h_2(\rho,\lambda)[\O(\langle\omega\rangle^{-\frac{7}{2}})+\O(\rho^0(1-\rho)\langle\omega\rangle^{-\frac{7}{2}})].
\end{align*}
\end{lem}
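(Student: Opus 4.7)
The plan is to split the expression
\[
V_1(f)(\rho,\lambda)=u_0(\rho,\lambda)c(f,\lambda)-u_{\mathrm{f}_0}(\rho,\lambda)c_{\mathrm{f}}(f,\lambda)
\]
into a piece that only depends on the difference $c(f,\lambda)-c_{\mathrm{f}}(f,\lambda)$ and a piece that only depends on the difference $u_0-u_{\mathrm{f}_0}$, then localize each piece by multiplying with $\chi_\lambda$ and $1-\chi_\lambda$. Concretely, I would write
\[
V_1(f)=u_0\bigl(c(f,\lambda)-c_{\mathrm{f}}(f,\lambda)\bigr)+\bigl(u_0-u_{\mathrm{f}_0}\bigr)c_{\mathrm{f}}(f,\lambda),
\]
so that Lemma \ref{lem:c} directly controls the scalar factors: the first bracket is $f(1)\O(\langle\omega\rangle^{-7/2})$ and $c_{\mathrm{f}}(f,\lambda)=f(1)\O(\langle\omega\rangle^{-5/2})$. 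The problem reduces to understanding $u_0$ and $u_0-u_{\mathrm{f}_0}$, multiplied by $\chi_\lambda$ and by $1-\chi_\lambda$ respectively.

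On the support of $\chi_\lambda$, I would invoke the connection formulas of Lemma \ref{connectioncoef} together with the transformation $u_j=\rho^{-5/2}(1-\rho^2)^{1/4-\lambda/2}v_j$ to show the cancellation
\[
u_0(\rho,\lambda)=\rho^{-5/2}(1-\rho^2)^{1/4-\lambda/2}\Bigl(c_{1,3}(\lambda)-\tfrac{c_{2,3}(\lambda)c_{1,4}(\lambda)}{c_{2,4}(\lambda)}\Bigr)\psi_1(\rho,\lambda),
\]
and the analogous identity for $u_{\mathrm{f}_0}$ with $K_{\mathrm{f}}(\lambda)\psi_{\mathrm{f}_1}$. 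Since $\psi_1=b_1[1+\O(\rho^2\langle\omega\rangle^0)]$ and $K(\lambda)=\O(\langle\omega\rangle^0)$, the first summand $u_0(c-c_{\mathrm{f}})$ contributes the $\chi_\lambda b_1\,\O(\langle\omega\rangle^{-7/2})$ piece of $G_1$. For the second summand I would compare $K(\lambda)\psi_1$ with $K_{\mathrm{f}}(\lambda)\psi_{\mathrm{f}_1}$: the leading constants in Lemmas \ref{connectioncoef} and \ref{connectioncoeffree} coincide, so $K-K_{\mathrm{f}}=\O(\langle\omega\rangle^{-1})$, and moreover $\psi_1-\psi_{\mathrm{f}_1}$ picks up the extra factor $\rho^2$ from the Volterra iteration since the two potentials $\widetilde V$ and $\widetilde V_{\mathrm{f}}$ only differ by $V\in C^\infty([0,1])$. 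Combined with $c_{\mathrm{f}}(f,\lambda)=f(1)\O(\langle\omega\rangle^{-5/2})$, this produces the $\chi_\lambda b_1\,\O(\rho^2\langle\omega\rangle^{-5/2})$ part of $G_1$.

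On the support of $1-\chi_\lambda$ I would use $u_0=u_2-\tfrac{c_{2,3}}{c_{2,4}}u_1$ and the expansions of $u_1,u_2$ in Lemma \ref{asymptotic}, according to which $u_1$ is essentially $\rho^{-5/2}(1-\rho^2)^{1/4-\lambda/2}h_2$ times a symbol of the form $1+(1-\rho)\O(\langle\omega\rangle^{-1})+\O(\rho^0(1-\rho)^2\langle\omega\rangle^{-1})$, with a similar expansion for $u_2$ in terms of $h_1$. Substituting these into $u_0(c-c_{\mathrm{f}})$ and using the $\O(\langle\omega\rangle^{-7/2})$ bound of $c-c_{\mathrm{f}}$ gives the two $\O(\langle\omega\rangle^{-7/2})$-terms multiplying $h_1$ and $h_2$ in $G_2$. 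The second summand $(u_0-u_{\mathrm{f}_0})c_{\mathrm{f}}$ is handled by the same expansion, but now one has to verify that $u_0-u_{\mathrm{f}_0}$ gains at least one additional power in $\langle\omega\rangle^{-1}$ over $c_{\mathrm{f}}(f,\lambda)$, so that the product still lies in $\O(\langle\omega\rangle^{-7/2})$; this comes from the fact that the leading Hankel-type coefficients $\tfrac{c_{2,3}}{c_{2,4}}$ and $\tfrac{c_{\mathrm{f}_{2,3}}}{c_{\mathrm{f}_{2,4}}}$ agree (they are built from the same $h_j,b_j$) and from the $(1-\rho)\O(\langle\omega\rangle^{-1})$ improvement in Lemma \ref{asymptotic}.

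The main obstacle will be the bookkeeping in the second bullet: one has to keep track of several symbol orders simultaneously and verify that the $\rho^2$ gain in $\psi_1-\psi_{\mathrm{f}_1}$, coming from the fact that $\widetilde V-\widetilde V_{\mathrm{f}}$ is bounded at $\rho=0$, really carries through the Volterra iteration with uniform symbol estimates in $\omega$. Once this is established, all the remaining contributions are absorbed into the error symbols in the statement and the decomposition of $V_1$ into $G_1$ and $G_2$ follows.
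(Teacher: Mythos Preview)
Your proposal is correct and follows essentially the same route as the paper. The only cosmetic difference is that the paper uses the symmetric splitting
\[
V_1(f)=c(f,\lambda)\bigl(u_0-u_{\mathrm f_0}\bigr)+u_{\mathrm f_0}\bigl(c(f,\lambda)-c_{\mathrm f}(f,\lambda)\bigr)
\]
rather than your $u_0(c-c_{\mathrm f})+(u_0-u_{\mathrm f_0})c_{\mathrm f}$, and then carries out exactly the comparison of $K(\lambda)\psi_1$ with $K_{\mathrm f}(\lambda)\psi_{\mathrm f_1}$ (yielding $K-K_{\mathrm f}=\O(\langle\omega\rangle^{-1})$ and $\psi_1-\psi_{\mathrm f_1}=b_1\O(\rho^2\langle\omega\rangle^0)$) that you outline. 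Your ``main obstacle'' is not really one: the $\rho^2$ gain in $\psi_1-\psi_{\mathrm f_1}$ is already contained in the statement of Lemma~\ref{Besselsol}, since both $\psi_1$ and $\psi_{\mathrm f_1}$ are of the form $b_1[1+\O(\rho^2\langle\omega\rangle^0)]$.
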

\begin{proof}
Recall that 
\begin{align*}
u_0(\rho,\lambda)&=\rho^{-\frac{5}{2}}(1-\rho^2)^{\frac{1}{4}-\frac{\lambda}{2}}\bigg[\left(c_{1,3}(\lambda)-\frac{c_{2,3}(\lambda)}{c_{2,4}(\lambda)}c_{1,4}(\lambda)\right)\chi_\lambda(\rho)\psi_1(\rho,\lambda)
\\
&\quad+(1-\chi_\lambda(\rho))\left(\psi_3(\rho,\lambda)-\frac{c_{2,3}(\lambda)}{c_{2,4}(\lambda)}\psi_4(\rho,\lambda)\right)
\bigg]
\end{align*}
and similarly 
\begin{align*}
u_{\mathrm{f}_0}(\rho,\lambda)&=\rho^{-\frac{5}{2}}(1-\rho^2)^{\frac{1}{4}-\frac{\lambda}{2}}\bigg[\left(c_{\mathrm{f}_{1,3}}(\lambda)-\frac{c_{\mathrm{f}_{2,3}}(\lambda)}{c_{\mathrm{f}_{2,4}}(\lambda)}c_{\mathrm{f}_{1,4}}(\lambda)\right)\chi_\lambda(\rho)\psi_{\mathrm{f}_1}(\rho,\lambda)
\\
&\quad+(1-\chi_\lambda(\rho))\left(\psi_{\mathrm{f}_3}(\rho,\lambda)-\frac{c_{\mathrm{f}_{2,3}}(\lambda)}{c_{\mathrm{f}_{2,4}}(\lambda)}\psi_{\mathrm{f}_4}(\rho,\lambda)\right)
\bigg].
\end{align*}
To continue we remark that
\begin{align*}
V_1(f)(\rho,\lambda)=c(f,\lambda)(u_0(\rho,\lambda)-u_{\mathrm{f}_0}(\rho,\lambda))+u_{\mathrm{f}_0}(\rho,\lambda)(c(f,\lambda)-c_{\mathrm{f}}(f,\lambda))
\end{align*}
and by employing Lemma \ref{lem:c} one readily checks that 
\[
u_{\mathrm{f}_0}(\rho,\lambda)(c(f,\lambda)-c_{\mathrm{f}}(f,\lambda))
\]
is already of the same form as
\[ \rho^{-\frac{5}{2}} (1-\rho^2)^{\frac{1}{4}-\frac{\lambda}{2}}f(1)(G_1(\rho,\lambda)+G_2(\rho,\lambda)).\]
Next,
we investigate the difference $ u_0(\rho,\lambda)-u_{\mathrm{f}_0}(\rho,\lambda)$ on the support of $\chi_\lambda(\rho)$, i.e., we look at
\begin{align*}
\chi_\lambda(\rho)\left[\left(c_{1,3}(\lambda)-\frac{c_{2,3}(\lambda)}{c_{2,4}(\lambda)}c_{1,4}(\lambda)\right)\psi_1(\rho,\lambda)-\left(c_{\mathrm{f}_{1,3}}(\lambda)-\frac{c_{\mathrm{f}_{2,3}}(\lambda)}{c_{\mathrm{f}_{2,4}}(\lambda)}c_{\mathrm{f}_{1,4}}(\lambda)\right)\psi_{\mathrm{f}_1}(\rho,\lambda)\right].
\end{align*}
Using Lemmas \ref{connectioncoef} and \ref{connectioncoeffree} one directly verifies that
\begin{align*}
c_{1,3}(\lambda)-c_{\mathrm{f}_{1,3}}(\lambda)
=\O\left(\langle\omega\rangle^{-1}\right)
\end{align*}
and 
\begin{align*}
\frac{c_{\mathrm{f}_{2,3}}(\lambda)}{c_{\mathrm{f}_{2,4}}(\lambda)}c_{\mathrm{f}_{1,4}}(\lambda)-\frac{c_{2,3}(\lambda)}{c_{2,4}(\lambda)}c_{1,4}(\lambda)
=&\O(\langle\omega\rangle^{-1}).
\end{align*}
Thus, we obtain that
\begin{align*}
&\quad\chi_\lambda(\rho)\left[\left(c_{1,3}(\lambda)-\frac{c_{2,3}(\lambda)}{c_{2,4}(\lambda)}c_{1,4}(\lambda)\right)\psi_1(\rho,\lambda)-\left(c_{\mathrm{f}_{1,3}}(\lambda)-\frac{c_{\mathrm{f}_{2,3}}(\lambda)}{c_{\mathrm{f}_{2,4}}(\lambda)}c_{\mathrm{f}_{1,4}}(\lambda)\right)\psi_{\mathrm{f}_1}(\rho,\lambda)\right]
\\
&=
\chi_\lambda(\rho)\left[\O(\langle\omega\rangle^{-1})\psi_1(\rho,\lambda)+\left(c_{\mathrm{f}_{1,3}}(\lambda)-\frac{c_{\mathrm{f}_{2,3}}(\lambda)}{c_{\mathrm{f}_{2,4}}(\lambda)}c_{\mathrm{f}_{1,4}}(\lambda)\right)\left(\psi_1(\rho,\lambda)-\psi_{\mathrm{f}_1}(\rho,\lambda)\right)\right]
\\
&=\chi_\lambda(\rho)b_1(\rho,\lambda)[\O(\langle\omega\rangle^{-1})+\O(\rho^2\langle\omega\rangle^{0})].
\end{align*}
A similar computation on the support of $(1-\chi_\lambda(\rho))$ shows that we have 
\begin{align*}
u_0(\rho,\lambda)-u_{\mathrm{f}_0}(\rho,\lambda)&= \rho^{-\frac{5}{2}} (1-\rho^2)^{\frac{1}{4}-\frac{\lambda}{2}} \chi_\lambda(\rho)b_1(\rho,\lambda)[\O(\langle\omega\rangle^{-1})+\O(\langle\omega\rangle^{0}\rho^2)]
\\
&\quad+\rho^{-\frac{5}{2}} (1-\rho^2)^{\frac{1}{4}-\frac{\lambda}{2}}(1-\chi_\lambda(\rho))h_1(\rho,\lambda)[\O(\langle\omega\rangle^{-1})+\O(\rho^0(1-\rho)\langle\omega\rangle^{-1})]
\\
&\quad+\rho^{-\frac{5}{2}} (1-\rho^2)^{\frac{1}{4}-\frac{\lambda}{2}}(1-\chi_\lambda(\rho)) h_2(\rho,\lambda)[\O(\langle\omega\rangle^{-1})+\O(\rho^0(1-\rho)\langle\omega\rangle^{-1})]
\end{align*}
and so the desired decomposition follows, since we know from Lemma \ref{lem:c} that $$c(f,\lambda)=f(1)\O(\langle\omega\rangle^{-\frac{5}{2}}).$$
\end{proof}
\begin{lem}\label{symbol1}
$G_1$ and $G_2$ satisfy
\begin{align*}
\rho^{-\frac{5}{2}} (1-\rho^2)^{\frac{1}{4}-\frac{\lambda}{2}}G_1(\rho,\lambda)&=\chi_\lambda(\rho)(1-\rho^2)^{\frac{1}{4}-\frac{\lambda}{2}}\O(\rho^0\langle\omega\rangle^{-\frac{3}{2}})
\\
\rho^{-\frac{5}{2}} (1-\rho^2)^{\frac{1}{4}-\frac{\lambda}{2}}G_2(\rho,\lambda)&=
(1-\chi_\lambda(\rho))\rho^{-\frac{5}{2}}(1-\rho)^{\frac{3}{2}-\lambda}
[\O(\langle\omega\rangle^{-4})+
\O(\rho^{0}(1-\rho)\langle\omega\rangle^{-4})]
\\
&\quad+(1-\chi_\lambda(\rho))\rho^{-\frac{5}{2}}(1+\rho)^{\frac{3}{2}-\lambda}
[\O(\langle\omega\rangle^{-4})+
\O(\rho^{0}(1-\rho)\langle\omega\rangle^{-4})]
 \end{align*}
\end{lem}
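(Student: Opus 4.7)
The strategy is to simply plug in the explicit asymptotic descriptions of $b_1$, $h_1$, and $h_2$ given by \eqref{BesselTaylor} and Lemma \ref{free ODE near 1}, and then read off the symbol behavior after the indicated multiplications and cancellations. Throughout, one exploits that $\chi_\lambda$ localises to $\rho\leq\widehat\rho_\lambda\leq 2r/|a(\lambda)|$, so that $|a(\lambda)\varphi(\rho)|\lesssim 1$ on $\mathrm{supp}\,\chi_\lambda$, while $(1-\chi_\lambda)$ localises to $\rho\geq\rho_\lambda$, where the fundamental system of Lemma \ref{free ODE near 1} is valid.

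For $G_1$, since $|a(\lambda)\varphi(\rho)|\lesssim 1$ on $\mathrm{supp}\,\chi_\lambda$, the Taylor expansion \eqref{BesselTaylor} gives
\[
b_1(\rho,\lambda)=\sqrt{(1-\rho^2)\varphi(\rho)}\,J_2\!\left(a(\lambda)\varphi(\rho)\right)=\rho^{5/2}\,\O\!\left(\langle\omega\rangle^{2}\right),
\]
where the symbol behavior in $\rho$ (resp.\ $\omega$) follows from $\varphi(\rho)=\rho+\O(\rho^3)$ (resp.\ $a(\lambda)=\O(\langle\omega\rangle)$) and standard symbol calculus. Multiplying by the error bracket in the definition of $G_1$ yields
\[
\chi_\lambda(\rho)\,b_1(\rho,\lambda)\left[\O(\langle\omega\rangle^{-7/2})+\O(\rho^{2}\langle\omega\rangle^{-5/2})\right]=\chi_\lambda(\rho)\,\O\!\left(\rho^{5/2}\langle\omega\rangle^{-3/2}\right),
\]
where in the second term $\rho^{2}\lesssim \langle\omega\rangle^{-2}$ on $\mathrm{supp}\,\chi_\lambda$ is absorbed into the $\omega$-symbol. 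After the prefactor $\rho^{-5/2}(1-\rho^2)^{1/4-\lambda/2}$ this gives exactly the first claimed form. Symbol behavior under $\partial_\rho$ and $\partial_\omega$ is preserved because $\chi_\lambda$ itself satisfies $|\partial_\rho^k\partial_\omega^\ell\chi_\lambda|\lesssim \langle\omega\rangle^{k-\ell}$.

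For $G_2$, I would insert the formulas from Lemma \ref{free ODE near 1} and carry out the algebraic simplification
\[
(1-\rho^2)^{1/4-\lambda/2}\,h_1(\rho,\lambda)=\frac{1}{\sqrt{a(\lambda)}}\,(1-\rho)^{3/2-\lambda}\bigl[1+(1-\rho)\O(\langle\omega\rangle^{-1})+\O(\rho^{-1}(1-\rho)^{2}\langle\omega\rangle^{-1})\bigr],
\]
and analogously
\[
(1-\rho^2)^{1/4-\lambda/2}\,h_2(\rho,\lambda)=\frac{1}{\sqrt{a(\lambda)}}\,(1+\rho)^{3/2-\lambda}\bigl[1+(1-\rho)\O(\langle\omega\rangle^{-1})+\O(\rho^{-1}(1-\rho)^{2}\langle\omega\rangle^{-1})\bigr].
\]
Since $\rho\geq\rho_\lambda$ on $\mathrm{supp}(1-\chi_\lambda)$ the $\rho^{-1}$ weights in the errors are harmless. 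The factor $a(\lambda)^{-1/2}=\O(\langle\omega\rangle^{-1/2})$ combines with the bracket $[\O(\langle\omega\rangle^{-7/2})+\O(\rho^{0}(1-\rho)\langle\omega\rangle^{-7/2})]$ in the definition of $G_2$ to produce the declared $\langle\omega\rangle^{-4}$ decay. Multiplying by the remaining $\rho^{-5/2}$ and grouping terms yields exactly the second claimed form.

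The only genuinely delicate point is to check that all estimates survive as symbol estimates under $\partial_\rho$ and $\partial_\omega$, not merely as pointwise bounds. This is routine but requires care: derivatives of $\chi_\lambda$ produce powers of $\langle\omega\rangle$, derivatives of $(1-\rho)^{3/2-\lambda}$ produce additional logarithmic factors in $\lambda$ which are absorbed into symbols since $\varepsilon=\Re\lambda\in[0,\tfrac14]$ stays bounded, and the bracketed error terms are already stated to be of symbol type in Lemma \ref{free ODE near 1}. I would therefore conclude by invoking standard symbol calculus (products and compositions of symbols are symbols of the appropriate order), as discussed for instance in \cite{DonRao20}.
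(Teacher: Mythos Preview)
Your proposal is correct and follows exactly the approach the paper takes: the paper's proof is the single sentence ``This follows immediately from the symbol representations \eqref{BesselTaylor} and Lemma \ref{free ODE near 1},'' and you have simply written out in detail the substitutions and symbol-calculus simplifications that this sentence leaves implicit. Your handling of the trade-offs between powers of $\rho$ and powers of $\langle\omega\rangle$ on the supports of $\chi_\lambda$ and $1-\chi_\lambda$ is the right mechanism, and your verification that the bounds survive as symbol estimates (not merely pointwise) is exactly the routine check the paper suppresses.
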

\begin{proof}
This follows immediately from the symbol representations \eqref{BesselTaylor} and Lemma \ref{free ODE near 1}.
\end{proof}
Motivated by this, we define the operators $T_j$ for $j=1,2$ and $f\in C^2(\overline{\B^6_1})$ as
\begin{align*}
T_j(\tau)f(\rho):=&\frac{1}{2\pi i}\lim_{\varepsilon\to 0^+}\lim_{N \to \infty} \int_{\varepsilon-i N}^{\varepsilon+ i N}e^{\lambda\tau}
\rho^{-\frac{5}{2}} (1-\rho^2)^{\frac{1}{4}-\frac{\lambda}{2}}f(1)G_j(\rho,\lambda) d \lambda.
\end{align*}
Note that the limits exist for any $\rho\in (0,1)$ because the integrand is absolutely
convergent by Lemma \ref{symbol1}.
Furthermore, as $F_\lambda(\rho)$ also contains the term $\lambda f_1(\rho)$, we analogously define the operators $\dot{T}_j$ as
$$\dot{T}_j(\tau)f(\rho):=\frac{1}{2\pi i}\lim_{\varepsilon\to 0^+}\lim_{N \to \infty} \int_{\varepsilon-i N}^{\varepsilon+ i N}\lambda e^{\lambda\tau}
\rho^{-\frac{5}{2}} (1-\rho^2)^{\frac{1}{4}-\frac{\lambda}{2}}f(1)G_j(\rho,\lambda) d \lambda
$$
for $j=1,2$ and $f\in C^2(\overline{\B^6_1})$. Again, the limits
exist for any $\rho\in (0,1)$ by Lemma \ref{symbol1} because on the support of $\chi_\lambda$
we may trade powers of $\rho$ for decay in $\omega$ and thereby obtain
absolute convergence.
\begin{lem}\label{kernel1}
	Let $p\in [2,\infty]$ and $q\in [6,12]$ be such that $\frac{1}{p}+\frac{6}{q}=1$.
Then
\begin{equation*}
\|T_j(.)f\|_{L^p(\R_+)L^q(\B_1^6)}\lesssim\|f\|_{H^1(\B_1^6)}
\end{equation*}
and 
\begin{equation*}
\| \dot{T}_j(.)f\|_{L^p(\R_+)L^q(\B_1^6)}\lesssim\|f\|_{H^2(\B_1^6)}
\end{equation*}
for $j=1,2$ and all $f\in C^2(\overline{\B^6_1})$.
\end{lem}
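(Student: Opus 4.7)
The plan is to push the Laplace-inversion contour in the definitions of $T_j$ and $\dot T_j$ from $\Re\lambda=\varepsilon$ down to the imaginary axis $\Re\lambda=0$. This contour shift is justified because Lemma \ref{symbol1} provides uniform $\omega$-decay of the integrand for $\Re\lambda\in[0,\tfrac14]$. Once $\lambda=i\omega$, the factors $(1-\rho^2)^{-\lambda/2}$ and $(1\pm\rho)^{-\lambda}$ become oscillating exponentials $e^{-i\omega\log(1-\rho^2)/2}$ and $e^{-i\omega\log(1\pm\rho)}$, so that the $\omega$-integrand takes the form $e^{i\omega a}\sigma(\omega,\rho)$ with one of the phases
\[
a_0=\tau-\tfrac12\log(1-\rho^2),\qquad a_\pm=\tau-\log(1\pm\rho),
\]
all essentially nonnegative since $\tau\geq 0$. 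Lemmas \ref{osci1}--\ref{osci4} then convert $\omega$-decay of $\sigma$ into $a$-decay of the integral. By complex interpolation it suffices to prove the two endpoint estimates $L^2_\tau L^{12}_\rho(\B^6_1)$ and $L^\infty_\tau L^6_\rho(\B^6_1)$. Throughout I rely on the elementary radial trace bound $|f(1)|\lesssim\|f\|_{H^1(\B^6_1)}$, proved by integrating $\partial_r|f|^2$ on $[\tfrac12,1]$ where $r^5\simeq 1$.

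For $T_1$ the symbol decay is $\langle\omega\rangle^{-3/2}$, so Lemma \ref{osci1} with $\alpha=\tfrac12$ gives
\[
|T_1(\tau)f(\rho)|\lesssim |f(1)|(1-\rho^2)^{1/4}\langle a_0\rangle^{-2}\lesssim |f(1)|\langle\tau\rangle^{-2},
\]
which is trivially in $L^p_\tau L^q_\rho$. For $T_2$ the symbol has the stronger decay $\langle\omega\rangle^{-4}$ together with a factor $(1-\chi_\lambda(\rho))\rho^{-5/2}(1\pm\rho)^{3/2}$. Writing $\rho^{-5/2}=\rho^{1/2}\cdot\rho^{-3}$ and applying Lemma \ref{osci3} with $n=3$ yields
\[
|T_2(\tau)f(\rho)|\lesssim |f(1)|\rho^{1/2}\bigl[(1-\rho)^{3/2}\langle a_-\rangle^{-2}+(1+\rho)^{3/2}\langle a_+\rangle^{-2}\bigr],
\]
equally harmless.

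The operators $\dot T_j$ carry an extra factor of $\lambda$ and hence one order less $\omega$-decay. For $\dot T_1$ at the endpoint $L^\infty_\tau L^6_\rho$, the crucial observation is that the $\chi_\lambda(\rho)$ cutoff restricts the $\omega$-integral to $|\omega|\lesssim 1/\rho$, so that direct absolute estimation gives
\[
|\dot T_1(\tau)f(\rho)|\lesssim |f(1)|\int_{|\omega|\lesssim 1/\rho}\langle\omega\rangle^{-1/2}\,d\omega\lesssim |f(1)|\rho^{-1/2}
\]
uniformly in $\tau\geq 0$, and $\||\cdot|^{-1/2}\|_{L^6(\B^6_1)}<\infty$. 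At the endpoint $L^2_\tau L^{12}_\rho$, Lemma \ref{osci2} with $\alpha=\tfrac12$ yields the pointwise bound $|\dot T_1(\tau)f(\rho)|\lesssim |f(1)||a_0|^{-1/2}\langle a_0\rangle^{-2}$; Minkowski's inequality (valid since $q=12\geq p=2$) then reduces matters to estimating $\|\dot T_1(\cdot,\rho)f\|_{L^2_\tau}$ pointwise in $\rho$. A change of variables $u=\tau+c$ with $c\sim\rho^2$ gives $\|\dot T_1(\cdot,\rho)f\|_{L^2_\tau}^2\lesssim |f(1)|^2\langle\log(1/\rho)\rangle$, which is integrable in $L^{12}_\rho$ against the measure $\rho^5\,d\rho$. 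The operator $\dot T_2$ is handled analogously with Lemma \ref{osci4} in place of Lemma \ref{osci2} and the $(1-\chi_\lambda(\rho))$ cutoff forcing $|\omega|\gtrsim 1/\rho$; the technical Lemmas \ref{teclem2}--\ref{teclem3} assist in the resulting norm integrations.

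The hardest step will be the endpoint $L^2_\tau L^{12}_\rho$ for $\dot T_j$: the $|a|^{-1/2}$ singularity coming from Lemma \ref{osci2} (resp.\ $|a|^{-1}$ from Lemma \ref{osci4}) is borderline with respect to the measure $\rho^5\,d\rho$ at the origin, and a naive $L^p_\tau$-then-$L^q_\rho$ integration fails. The Minkowski swap together with the merely logarithmic growth of $\|\dot T_j(\cdot,\rho)f\|_{L^2_\tau}$ as $\rho\to 0$ is what makes the estimate go through. Complex interpolation between the two endpoints then delivers the full range of $(p,q)$ with $1/p+6/q=1$, $p\in[2,\infty]$, $q\in[6,12]$.
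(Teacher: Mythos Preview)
Your treatment of $T_1$ and $T_2$ is correct and essentially coincides with the paper's argument. Your $L^\infty_\tau L^6_\rho$ bound for $\dot T_1$ via crude absolute integration on $\{|\omega|\lesssim\rho^{-1}\}$ also works, though the paper obtains it differently (by trading $\rho^{-5/6}$ for an extra $\langle\omega\rangle^{-5/6}$ and invoking Lemma~\ref{osci1}).

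The genuine gap is in your $L^2_\tau L^{12}_\rho$ endpoint for $\dot T_1$. The Minkowski inequality you invoke goes the \emph{wrong} way: for $p\le q$ one has $\|F\|_{L^q_\rho L^p_\tau}\le \|F\|_{L^p_\tau L^q_\rho}$, not the reverse, so controlling $\|\dot T_1(\cdot,\rho)f\|_{L^2_\tau}$ pointwise in $\rho$ gives no bound on the Strichartz norm $\|\dot T_1\|_{L^2_\tau L^{12}_\rho}$. Moreover, your pointwise estimate $|a_0|^{-1/2}\langle a_0\rangle^{-2}$ (from Lemma~\ref{osci2} with $\alpha=\tfrac12$) is genuinely too weak: at $\tau=0$ one has $a_0\sim\rho^2/2$, so the bound behaves like $\rho^{-1}$, and $\rho^{-1}\notin L^{12}(\B^6_1)$.

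The missing idea, which the paper exploits, is to \emph{trade} powers of $\rho$ for additional $\omega$-decay on the support of $\chi_\lambda$. Since $\chi_\lambda(\rho)\neq0$ forces $\rho\langle\omega\rangle\lesssim1$, one may write
\[
\chi_\lambda(\rho)\,\O(\rho^0\langle\omega\rangle^{-1/2})
=\chi_\lambda(\rho)\,\O\bigl(\rho^{-1/12}\langle\omega\rangle^{-1/2-1/12}\bigr),
\]
after which Lemma~\ref{osci2} with $\alpha=\tfrac{7}{12}$ gives $|\dot T_1(\tau)f(\rho)|\lesssim \rho^{-1/12}|a_0|^{-5/12}\langle a_0\rangle^{-2}|f(1)|$. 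Using $a_0\ge\tau$, this is bounded by $\rho^{-1/12}|\tau|^{-5/12}\langle\tau\rangle^{-2}|f(1)|$, which \emph{factors} into a function of $\rho$ times a function of $\tau$. The iterated norm is then immediate: $\rho^{-1/12}\in L^{12}(\B^6_1)$ and $|\tau|^{-5/12}\langle\tau\rangle^{-2}\in L^2(\R_+)$. The same trade (now on the support of $1-\chi_\lambda$, via Lemmas~\ref{osci3}--\ref{osci4}) handles $\dot T_2$; Lemmas~\ref{teclem2}--\ref{teclem3} are not needed for this particular piece.
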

\begin{proof}
Note first, that
for $j=1,2$ the operators $T_j$ satisfy
\begin{align*}
T_j(\tau)f(\rho) =&\frac{1}{2\pi}\int_\R e^{i\omega\tau}
\rho^{-\frac{5}{2}} (1-\rho^2)^{\frac{1}{4}-\frac{i\omega}{2}}f(1)G_j(\rho, i \omega) d \omega,
\end{align*}
since the integral is absolutely convergent.
Moreover,
\begin{align*}
|T_1(\tau)f(\rho)|&=\left|\frac{1}{2\pi}\int_\R e^{i \omega\tau}
\rho^{-\frac{5}{2}} (1-\rho^2)^{\frac{1}{4}-\frac{i \omega}{2}}f(1)G_1(\rho,i\omega) d \omega\right|
\\
&=\left|f(1)\int_\R e^{i \omega\tau}\chi_{i \omega}(\rho)(1-\rho^2)^{\frac{1}{4}-\frac{i \omega}{2}}\O(\rho^0\langle\omega\rangle^{-\frac{3}{2}}) d \omega\right|
\\
&\lesssim\left\langle\tau-\frac{1}{2}\log(1-\rho^2)\right\rangle^{-2}|f(1)|
\\
&\lesssim \langle\tau\rangle^{-2}\|f\|_{H^1(\B^6_1)},
\end{align*}
thanks to Lemma \ref{osci1}.
From this, we can immediately infer the estimate
\begin{equation*}
\|T_1(.)f\|_{L^p(\R_+)L^q(\B_1^6)}\lesssim\|f\|_{H^1(\B_1^6)}
\end{equation*}
for all Strichartz pairs  $(p,q)$ stated in the Lemma.
Next, by using the symbol forms
established in Lemma \ref{symbol1}
and Lemma \ref{osci3} we conclude that
\begin{align*}
|T_2(\tau)f(\rho)|&=\left|\frac{1}{2\pi}\int_\R e^{i \omega\tau}
\rho^{-\frac{5}{2}} (1-\rho^2)^{\frac{1}{4}-\frac{i \omega}{2}}f(1)G_2(\rho,i\omega) d \omega \right|
\\
&\lesssim\left(\langle\tau-\log(1+\rho)\rangle^{-2}+\langle\tau-\log(1-\rho)\rangle^{-2}\right)|f(1)|
\\ 
&\lesssim \langle\tau\rangle^{-2}\|f\|_{H^1(\B^6_1)}.
\end{align*}
Thus, the first claimed estimate has been proven. To establish the bounds on $\dot{T}_j$ we remark that
$V_1(f)(\rho,\lambda)$ satisfies
\begin{align*}
|\lambda V_1(f)(\rho,\lambda)|\lesssim |f(1)|\rho^{-\frac{5}{2}}\langle\lambda\rangle^{-3}
\end{align*}
for any $\rho\in (0,1)$.
Therefore, we can take the limit in the integral to obtain  
\begin{align*}
\dot{T}_1(\tau)f(\rho)&=\frac{i}{2\pi}\int_\R \omega e^{i\omega\tau}
\rho^{-\frac{5}{2}} (1-\rho^2)^{\frac{1}{4}-\frac{i \omega}{2}}f(1)G_1(\rho,i\omega ) d \omega
\\
&=\int_\R e^{i\omega\tau}
\chi_{i\omega}(\rho)(1-\rho^2)^{\frac{1}{4}-\frac{i \omega}{2}}f(1)\O(\rho^{-\frac{1}{12}}\langle\omega\rangle^{-\frac{1}{2}-\frac{1}{12}})d \omega.
\end{align*}
An application of Lemma \ref{osci2} then yields 
\begin{align*}
|\dot{T}_1(\tau)f(\rho)|&\lesssim \rho^{-\frac{1}{12}}|\tau-\frac{1}{2}\log(1-\rho^2)|^{-\frac{1}{2}+\frac{1}{12}}\langle \tau\rangle^{-2}\|f\|_{H^2(\B^6_1)}
\\
&\lesssim \rho^{-\frac{1}{12}}|\tau|^{-\frac{1}{2}+\frac{1}{12}}\langle \tau\rangle^{-2}\|f\|_{H^2(\B^6_1)},
\end{align*}
from which we infer that
\begin{align*}
\|\dot{T}_1(.)f\|_{L^2(\R_+)L^{12}(\B_1^6)}\lesssim\|f\|_{H^2(\B_1^6)}.
\end{align*}
In addition, the estimate 
\begin{align*}
\left|\dot{T}_1(\tau)f(\rho)\right|&=\left|\frac{i}{2\pi}\int_\R \omega e^{i\omega\tau}
\rho^{-\frac{5}{2}} (1-\rho^2)^{\frac{1}{4}-\frac{i \omega}{2}}f(1)G_1(\rho,i\omega ) d \omega\right|
\\
&=\left|\int_\R e^{i\omega\tau}
\chi_{i\omega}(\rho)(1-\rho^2)^{\frac{1}{4}-\frac{i \omega}{2}}f(1)\O(\rho^{-\frac{5}{6}}\langle\omega\rangle^{-\frac{1}{2}-\frac{5}{6}})d \omega\right|
\\
&\lesssim \langle\tau\rangle^{-2}\rho^{-\frac{5}{6}}\|f\|_{H^2(\B^6_1)}
\end{align*}
holds thanks to Lemma \ref{osci1}.
This implies
\begin{align*}
\|\dot{T}_1(.)f\|_{L^{\infty}(\R_+)L^{6}(\B_1^6)}\lesssim\|f\|_{H^2(\B_1^6)},
\end{align*}
and thus, the general estimate 
\begin{align*}
\|\dot{T}_1(.)f\|_{L^{p}(\R_+)L^{q}(\B_1^6)}\lesssim\|f\|_{H^2(\B_1^6)}
\end{align*}
for all admissible pairs $(p,q)$ follows from interpolation.
Finally,
$\dot{T}_2$ can be bounded in a similar manner as $\dot{T}_1$, by additionally making use of Lemma \ref{osci4}. 
\end{proof}
We continue by investigating
\begin{align*}
V_2(f)(\rho,\lambda):=&-u_0(\rho,\lambda)U_1(\rho,\lambda)f(\rho)+u_1(\rho,\lambda)U_0(\rho,\lambda)f(\rho)
\\
&+u_{\mathrm{f}_0}(\rho,\lambda)U_{\mathrm{f}_1}(\rho,\lambda)f(\rho)-u_{\mathrm{f}_1}(\rho,\lambda)U_{\mathrm{f}_0}(\rho,\lambda)f(\rho).
\end{align*}
\begin{lem}\label{decomp2}
We can decompose $V_2$ according to 
\[V_2(f)(\rho,\lambda)=  \rho^{-\frac{5}{2}} (1-\rho^2)^{\frac{1}{4}-\frac{\lambda}{2}}f(\rho) \sum_{j=3}^9G_j(\rho,\lambda)
\]
where
\begin{align*}
G_3(\rho,\lambda)&=\chi_\lambda(\rho)b_1(\rho,\lambda)\int_0^\rho s^{\frac{5}{2}}\frac{b_1(s,\lambda)\alpha_1(\rho,s,\lambda)+b_2(s,\lambda)\alpha_2(\rho,s,\lambda)}{(1-s^2)^{\frac{5}{4}-\frac{\lambda}{2}}}
\\
&\quad+\frac{\O(s^3\langle\omega\rangle^{-2})[1+\O(\rho^2\langle\omega\rangle^0)]}{(1-s^2)^{\frac{5}{4}-\frac{\lambda}{2}}} ds
\\
G_4(\rho,\lambda)&=(1-\chi_\lambda(\rho))h_2(\rho,\lambda)
\int_0^\rho \chi_\lambda(s)s^{\frac{5}{2}}\frac{b_1(s,\lambda)\beta_1(\rho,s,\lambda)+b_2(s,\lambda)\beta_2(\rho,s,\lambda)}{(1-s^2)^{\frac{5}{4}-\frac{\lambda}{2}}}
\\
&\quad+\chi_\lambda(s)\frac{\O(s^3\langle\omega\rangle^{-2})[1+\O(\rho^0(1-\rho)\langle\omega\rangle^{-1})]}{(1-s^2)^{\frac{5}{4}-\frac{\lambda}{2}}}ds
\\
G_5(\rho,\lambda)&=(1-\chi_\lambda(\rho))h_1(\rho,\lambda)
\int_0^\rho \chi_\lambda(s)s^{\frac{5}{2}}\frac{b_1(s,\lambda)\beta_3(\rho,s,\lambda)+b_2(s,\lambda)\beta_4(\rho,s,\lambda)}{(1-s^2)^{\frac{5}{4}-\frac{\lambda}{2}}}
\\
&\quad+\chi_\lambda(s)\frac{\O(s^3\langle\omega\rangle^{-2})[1+\O(\rho^0(1-\rho)\langle\omega\rangle^{-1})]}{(1-s^2)^{\frac{5}{4}-\frac{\lambda}{2}}}ds
\end{align*}
and
\begin{align*}
G_6(\rho,\lambda)&=(1-\chi_\lambda(\rho))h_1(\rho,\lambda)\int_0^\rho (1-\chi_\lambda(s))s^{\frac{5}{2}}\frac{h_2(s,\lambda)\gamma_1(\rho,s,\lambda)}{(1-s^2)^{\frac{5}{4}-\frac{\lambda}{2}}}ds
\\
G_7(\rho,\lambda)&=\chi_\lambda(\rho)b_1(\rho,\lambda)\int_0^\rho s^{\frac{5}{2}}\frac{b_1(s,\lambda)\alpha_3(\rho,s,\lambda)}{(1-s^2)^{\frac{5}{4}-\frac{\lambda}{2}}}ds
\\
&\quad+ \chi_\lambda(\rho)\O(\rho^{\frac{1}{2}}\langle\omega\rangle^{-2})\int_0^\rho s^{\frac{5}{2}}\frac{b_1(s,\lambda)[1+\O(s^2\langle\omega\rangle^0)]}{(1-s^2)^{\frac{5}{4}-\frac{\lambda}{2}}}ds
\\
&\quad+\chi_\lambda(\rho)b_2(\rho,\lambda)\int_0^\rho s^{\frac{5}{2}}\frac{b_1(s,\lambda)\alpha_4(\rho,s,\lambda)}{(1-s^2)^{\frac{5}{4}-\frac{\lambda}{2}}}  ds
\\
G_8(\rho,\lambda)&=(1-\chi_\lambda(\rho))h_2(\rho,\lambda)\int_0^\rho \chi_\lambda(s)s^{\frac{5}{2}}\frac{b_1(s,\lambda)\beta_5(\rho,s,\lambda) }{(1-s^2)^{\frac{5}{4}-\frac{\lambda}{2}}}
ds
\\
G_9(\rho,\lambda)&=(1-\chi_\lambda(\rho))h_2(\rho,\lambda))\int_0^\rho (1-\chi_\lambda(s))s^{\frac{5}{2}}\frac{h_1(s,\lambda)\gamma_2(\rho,s,\lambda)}{(1-s^2)^{\frac{5}{4}-\frac{\lambda}{2}}}
 ds,
\end{align*} 
with 
\begin{align*}
\alpha_j(\rho,s,\lambda)&=\O(\langle\omega\rangle^{-1})+\O(\rho^2\langle\omega\rangle^{0})+\O(s^2\langle\omega\rangle^{0})+\O(\rho^2s^2\langle\omega\rangle^{0})
\\
\beta_j(\rho,s,\lambda)&= 
\O(\langle\omega\rangle^{-1})+\O(\rho^0(1-\rho)\langle\omega\rangle^{-1})+\O(s^2\langle\omega\rangle^{0})+\O(\rho^0(1-\rho)s^2\langle\omega\rangle^{-1})
\\
\gamma_j(\rho,s,\lambda)&=
\O(\langle\omega\rangle^{-1})+\O(\rho^0(1-\rho)\langle\omega\rangle^{-1})+\O(s^0(1-s)\langle\omega\rangle^{-1})
\\
&\quad+\O(\rho^0(1-\rho)s^0(1-s)\langle\omega\rangle^{-2}).
\end{align*}

\end{lem}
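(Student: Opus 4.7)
The plan is to reduce $V_2$ to the claimed seven-piece form by a systematic case analysis organised by the cutoff $\chi_\lambda$. I would first group the four products in the definition of $V_2$ into the two differences
\[
V_2(f)(\rho,\lambda)=\bigl[-\bigl(u_0U_1-u_{\mathrm{f}_0}U_{\mathrm{f}_1}\bigr)(\rho,\lambda)+\bigl(u_1U_0-u_{\mathrm{f}_1}U_{\mathrm{f}_0}\bigr)(\rho,\lambda)\bigr]f(\rho),
\]
and factor out the common prefactor $\rho^{-5/2}(1-\rho^2)^{1/4-\lambda/2}$ using the transformation \eqref{transformation}, which is present in each of $u_0,u_1,u_{\mathrm{f}_0},u_{\mathrm{f}_1}$. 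What remains is then to compare $v_1v_1,v_2v_2$ type products against their free analogues inside an $s$-integral with weight $(1-s^2)^{-5/4+\lambda/2}s^{5/2}$ coming from the Wronskian $W(u_1,u_0)=2i\rho^{-5}(1-\rho^2)^{1/2-\lambda}$.

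Next I would split each outer factor according to $\chi_\lambda(\rho)+(1-\chi_\lambda(\rho))=1$, using the definitions of $u_0,u_{\mathrm{f}_0}$ (which select $\psi_1,\psi_{\mathrm{f}_1}$ on $\{\chi_\lambda=1\}$ and $\psi_3-\tfrac{c_{2,3}}{c_{2,4}}\psi_4$, respectively its free counterpart, on $\{\chi_\lambda<1\}$) and of $u_1,u_{\mathrm{f}_1}$ (which select $\chi_\lambda(c_{1,4}\psi_1+c_{2,4}\psi_2)$ vs.\ $(1-\chi_\lambda)\psi_4$, and analogously in the free case). Then I would split each $s$-integral in $U_j,U_{\mathrm{f}_j}$ by $\chi_\lambda(s)+(1-\chi_\lambda(s))=1$. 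A key combinatorial simplification is that on the configuration $\chi_\lambda(\rho)=1$ and $1-\chi_\lambda(s)=1$ the integration region is empty, since $s\le\rho\le\rho_\lambda<\widehat\rho_\lambda\le s$ is impossible. Therefore only three non-trivial regions survive, and the $G_j$ are in bijection with them: $G_3,G_7$ correspond to $\chi_\lambda(\rho)\chi_\lambda(s)$, $G_4,G_5,G_8$ correspond to $(1-\chi_\lambda(\rho))\chi_\lambda(s)$, and $G_6,G_9$ correspond to $(1-\chi_\lambda(\rho))(1-\chi_\lambda(s))$.

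On each region I would plug in the asymptotic representations of Lemmas~\ref{Besselsol} and \ref{Hankelsol}, so that $\psi_j$ and $\psi_{\mathrm{f}_j}$ become $b_j$ or $h_j$ multiplied by $[1+\text{symbol-type error}]$, and likewise apply the connection coefficient estimates of Lemmas~\ref{connectioncoef} and \ref{connectioncoeffree}. The two mechanisms producing gains are: (i) the perturbative error $\O(\rho^2\langle\omega\rangle^0)$ and $\O(s^2\langle\omega\rangle^0)$ that converts $\psi_j$ into $b_j$ (or $\psi_{3,4}$ into $h_{1,2}$ with $(1-\rho)$-type errors); and (ii) the connection-coefficient differences $c_{i,k}-c_{\mathrm{f}_{i,k}}=\O(\langle\omega\rangle^{-1})$ which feed into the $\O(\langle\omega\rangle^{-1})$ terms in $\alpha_j,\beta_j,\gamma_j$. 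The residual $\O(s^3\langle\omega\rangle^{-2})$ terms in $G_3$--$G_5$ are produced precisely when two such perturbative errors multiply each other across the outer factor and the inner factor, which also forces an additional $\rho^2\langle\omega\rangle^{-2}$ that can be absorbed into the $s^3\langle\omega\rangle^{-2}$ on $\mathrm{supp}\,\chi_\lambda$.

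The central difficulty will be accounting for the cancellations within each region: before invoking connection-coefficient differences, the leading order of products like $b_1(s,\lambda)b_2(s,\lambda)-b_2(s,\lambda)b_1(s,\lambda)$ vanishes identically, and similarly for $h_1h_2-h_2h_1$; the surviving contribution is always a product of a leading $b_j$ or $h_j$ factor with an error $\O(\langle\omega\rangle^{-1})$ generated either by the difference of $\psi_j$ from its Bessel template or by the differences of connection coefficients. Once this cancellation pattern is identified case by case, the symbol calculus of Lemmas~\ref{free ODE near 1}, \ref{Besselsol}, and \ref{Hankelsol} yields $\alpha_j,\beta_j,\gamma_j$ automatically by a Leibniz-type argument, and collecting the terms region by region yields exactly the claimed $G_3,\ldots,G_9$.
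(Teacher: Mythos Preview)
Your proposal is correct and follows essentially the same route as the paper. The paper's proof organises the computation exactly as you describe: it splits according to the cutoffs $\chi_\lambda(\rho)$ and $\chi_\lambda(s)$, uses $s\le\rho$ to discard the impossible region, inserts the representations from Lemmas~\ref{Besselsol}, \ref{Hankelsol}, \ref{connectioncoef}, \ref{connectioncoeffree}, and then extracts the gains via the algebraic identity $a_1b_1-a_2b_2=b_1(a_1-a_2)+a_2(b_1-b_2)$ applied repeatedly --- which is precisely your ``Leibniz-type argument'' for producing the $\alpha_j,\beta_j,\gamma_j$ from the differences $\psi_j-\psi_{\mathrm f_j}$ and $c_{i,k}-c_{\mathrm f_{i,k}}=\O(\langle\omega\rangle^{-1})$.
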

\begin{proof}
Recall that $u_0=u_2-\frac{c_{2,3}}{c_{2,4}}u_1$,
where 
\begin{align*}
u_1(\rho,\lambda)&=\rho^{-\frac{5}{2}}(1-\rho^2)^{\frac{1}{4}-\frac{\lambda}{2}}\big[\chi_\lambda(\rho)(c_{1,4}(\lambda)\psi_1(\rho,\lambda)+c_{2,4}(\lambda)\psi_2(\rho,\lambda))\\
&\quad+\left(1-\chi_\lambda(\rho)\right)\psi_4(\rho,\lambda)\big]\\
u_2(\rho,\lambda)&= \rho^{-\frac{5}{2}}(1-\rho^2)^{\frac{1}{4}-\frac{\lambda}{2}}\big[\chi_\lambda(\rho)(c_{1,3}(\lambda)\psi_1(\rho,\lambda)+c_{2,3}(\lambda)\psi_2(\rho,\lambda))\\
&\quad+\left(1-\chi_\lambda(\rho)\right)\psi_3(\rho,\lambda)\big],
\end{align*}
with the $\psi_j$ as in Lemmas \ref{Besselsol} and \ref{Hankelsol}, and $c_{j,k}$ from Lemma \ref{connectioncoef}.
Furthermore we have that
\[
U_j(\rho,\lambda)=\int_0^\rho\frac{s^5 u_j(s,\lambda)}{(1-s^2)^{\frac{3}{2}-\lambda}}ds.
\]
Having reminded ourselves of these definitions we start by decomposing $-u_0(\rho,\lambda)U_1(\rho,\lambda)+u_{\mathrm{f}_0}(\rho,\lambda)U_{\mathrm{f}_1}(\rho,\lambda)$ on the support of $\chi_\lambda$. Upon neglecting the prefactor $\rho^{-\frac{5}{2}}(1-\rho^2)^{\frac{1}{4}-\frac{\lambda}{2}}$ for the time being, we want to decompose
\begin{align*}
&\quad\chi_\lambda(\rho)\Bigg[\left(c_{1,3}(\lambda)-\frac{c_{2,3}(\lambda)}{c_{2,4}(\lambda)}c_{1,4}(\lambda)\right)\psi_1(\rho,\lambda)
\\
&\quad\times\int_0^\rho s^{\frac{5}{2}}\frac{\chi_\lambda(s)\left(c_{1,4}(\lambda)\psi_1(\rho,\lambda)+c_{2,4}(\lambda)\psi_2(\rho,\lambda)\right)+(1-\chi_\lambda(s))\psi_4(s,\lambda)}{(1-s^2)^{\frac{5}{4}-\frac{\lambda}{2}}} ds
\\
&\quad-\left(c_{\mathrm{f}_{1,3}}(\lambda)-\frac{c_{\mathrm{f}_{2,3}}(\lambda)}{c_{\mathrm{f}_{2,4}}(\lambda)}c_{\mathrm{f}_{1,4}}(\lambda)\right)\psi_{\mathrm{f}_1}(\rho,\lambda)
\\
&\quad\times\int_0^\rho s^{\frac{5}{2}}\frac{\chi_\lambda(s)(c_{\mathrm{f}_{1,4}}(\lambda)\psi_{\mathrm{f}_1}(s,\lambda)+c_{_\mathrm{2,4}}(\lambda)\psi_{\mathrm{2}}(\rho,\lambda))+(1-\chi_\lambda(s))\psi_{\mathrm{f}_4}(s,\lambda)}{(1-s^2)^{\frac{5}{4}-\frac{\lambda}{2}}} ds\Bigg]
\\
&=
\chi_\lambda(\rho)\Bigg[\left(c_{1,3}(\lambda)-\frac{c_{2,3}(\lambda)}{c_{2,4}(\lambda)}c_{1,4}(\lambda)\right)\psi_1(\rho,\lambda)
\\
&\quad\times\int_0^\rho s^{\frac{5}{2}}\frac{c_{1,4}(\lambda)\psi_1(\rho,\lambda)+c_{2,4}(\lambda)\psi_2(\rho,\lambda)}{(1-s^2)^{\frac{5}{4}-\frac{\lambda}{2}}} ds
\\
&\quad-\left(c_{\mathrm{f}_{1,3}}(\lambda)-\frac{c_{\mathrm{f}_{2,3}}(\lambda)}{c_{\mathrm{f}_{2,4}}(\lambda)}c_{\mathrm{f}_{1,4}}(\lambda)\right)\psi_{\mathrm{f}_1}(\rho,\lambda)
\\
&\quad\times\int_0^\rho s^{\frac{5}{2}}\frac{c_{\mathrm{f}_{1,4}}(\lambda)\psi_{\mathrm{f}_1}(s,\lambda)+c_{_\mathrm{2,4}}(\lambda)\psi_{\mathrm{2}}(\rho,\lambda)}{(1-s^2)^{\frac{5}{4}-\frac{\lambda}{2}}} ds\Bigg],
\end{align*}
since $s\leq \rho$.
Thanks to the proof of Lemma \ref{decomp1} we already know that
\begin{align*}
c_{\mathrm{f}_{1,3}}(\lambda)-\frac{c_{\mathrm{f}_{2,3}}(\lambda)}{c_{\mathrm{f}_{2,4}}(\lambda)}c_{\mathrm{f}_{1,4}}(\lambda)
-\left(
c_{1,3}(\lambda)-\frac{c_{2,3}(\lambda)}{c_{2,4}(\lambda)}c_{1,4}\right)=\O\langle\omega\rangle^{-1}.
\end{align*}
Furthermore,
\begin{align*}
\psi_1(\rho,\lambda)-\psi_{\mathrm{f}_1}(\rho,\lambda)=b_1(\rho,\lambda)\O(\rho^2\langle\omega\rangle^0)
\end{align*}
and analogous equalities also hold for $\psi_j-\psi_{\mathrm{f}_j}$ with $j=2,3,4.$
Thus, a repeated usage of the identity 
\[
a_1b_1-a_2b_2=b_1(a_1-a_2)+a_2(b_1-b_2)
\] yields
\begin{align*}
&\quad\chi_\lambda(\rho)\Bigg[\left(c_{1,3}(\lambda)-\frac{c_{2,3}(\lambda)}{c_{2,4}(\lambda)}c_{1,4}(\lambda)\right)\psi_1(\rho,\lambda)
\\
&\quad\times\int_0^\rho s^{\frac{5}{2}}\frac{\chi_\lambda(s)\left(c_{1,4}(\lambda)\psi_1(\rho,\lambda)+c_{2,4}(\lambda)\psi_2(\rho,\lambda)\right)+(1-\chi_\lambda(s))\psi_4(s,\lambda)}{(1-s^2)^{\frac{5}{4}-\frac{\lambda}{2}}} ds
\\
&\quad-\left(c_{\mathrm{f}_{1,3}}(\lambda)-\frac{c_{\mathrm{f}_{2,3}}(\lambda)}{c_{\mathrm{f}_{2,4}}(\lambda)}c_{\mathrm{f}_{1,4}}(\lambda)\right)\psi_{\mathrm{f}_1}(\rho,\lambda)
\\
&\quad\times\int_0^\rho s^{\frac{5}{2}}\frac{\chi_\lambda(s)\left(c_{\mathrm{f}_{1,4}}(\lambda)\psi_{\mathrm{f}_1}(s,\lambda)+c_{_\mathrm{2,4}}(\lambda)\psi_{\mathrm{2}}(\rho,\lambda)\right)+(1-\chi_\lambda(s))\psi_{\mathrm{f}_4}(s,\lambda)}{(1-s^2)^{\frac{5}{4}-\frac{\lambda}{2}}} ds\Bigg]
\\
&=G_3(\rho,\lambda).
\end{align*}
The other multipliers $G_j$ can be obtained by applying the same considerations to the remaining terms.
\end{proof}
By using the symbol representations established in Lemmas \ref{Besselsol} and \ref{Hankelsol} this decomposition can be rewritten in a much more compact form as follows.\begin{lem}\label{V2symbol}
The multipliers $G_j$ satisfy
\begin{align*}
\rho^{-\frac{5}{2}} (1-\rho^2)^{\frac{1}{4}-\frac{\lambda}{2}}G_3(\rho,\lambda)&=\chi_\lambda(\rho)(1-\rho^2)^{\frac{1}{4}-\frac{\lambda}{2}}\int_0^\rho \frac{\O(\rho^0s\langle\omega\rangle^{-1})}{(1-s^2)^{\frac{5}{4}-\frac{\lambda}{2}}}  ds
\\
\rho^{-\frac{5}{2}} (1-\rho^2)^{\frac{1}{4}-\frac{\lambda}{2}}G_4(\rho,\lambda)&=(1-\chi_\lambda(\rho))\rho^{-\frac{5}{2}}(1+\rho)^{\frac{3}{2}-\lambda}
\\
&\quad\times\int_0^\rho \chi_\lambda(s)\frac{\O(s\langle\omega\rangle^{-\frac{5}{2}})\widehat{\beta}_1(\rho,s,\lambda)}{(1-s^2)^{\frac{5}{4}-\frac{\lambda}{2}}}
\\
&\quad+\chi_\lambda(s)\frac{\O(s^3\langle\omega\rangle^{-2})[1+\O(\rho^{-1}(1-\rho)\langle\omega\rangle^{-1})]}{(1-s^2)^{\frac{5}{4}-\frac{\lambda}{2}}}ds
\\
\rho^{-\frac{5}{2}} (1-\rho^2)^{\frac{1}{4}-\frac{\lambda}{2}}G_5(\rho,\lambda)&=(1-\chi_\lambda(\rho))\rho^{-\frac{5}{2}}(1-\rho)^{\frac{3}{2}-\lambda}
\\
&\quad\times\int_0^\rho \chi_\lambda(s)\frac{\O(s\langle\omega\rangle^{-\frac{5}{2}})\widehat{\beta}_2(\rho,s,\lambda)}{(1-s^2)^{\frac{5}{4}-\frac{\lambda}{2}}}
\\
&\quad+\chi_\lambda(s)\frac{\O(s^3\langle\omega\rangle^{-2})[1+\O(\rho^{-1}(1-\rho)\langle\omega\rangle^{-1})]}{(1-s^2)^{\frac{5}{4}-\frac{\lambda}{2}}}ds
\\
\rho^{-\frac{5}{2}} (1-\rho^2)^{\frac{1}{4}-\frac{\lambda}{2}}G_6(\rho,\lambda)&=(1-\chi_\lambda(\rho))\rho^{-\frac{5}{2}}(1-\rho)^{\frac{3}{2}-\lambda}
\\
&\quad\times\int_0^\rho (1-\chi_\lambda(s))\frac{s^{\frac{5}{2}}\O(\langle\omega\rangle^{-1})}{(1-s)^{\frac{3}{2}-\lambda}}\widehat{\gamma}_1(\rho,s,\lambda)ds
\\
\rho^{-\frac{5}{2}} (1-\rho^2)^{\frac{1}{4}-\frac{\lambda}{2}}G_7(\rho,\lambda)&=\chi_\lambda(\rho)(1-\rho^2)^{\frac{1}{4}-\frac{\lambda}{2}}\int_0^\rho \frac{\O(\rho^0s\langle\omega\rangle^{-1})}{(1-s^2)^{\frac{5}{4}-\frac{\lambda}{2}}}  ds
\\
\rho^{-\frac{5}{2}} (1-\rho^2)^{\frac{1}{4}-\frac{\lambda}{2}}G_8(\rho,\lambda)&=(1-\chi_\lambda(\rho))\rho^{-\frac{5}{2}}(1+\rho)^{\frac{3}{2}-\lambda}
\\
&\quad\times\int_0^\rho \chi_\lambda(s)\frac{\O(s\langle\omega\rangle^{-\frac{5}{2}})}{(1-s^2)^{\frac{5}{4}-\frac{\lambda}{2}}}
\widehat{\beta}_3(\rho,s,\lambda) ds
\\
\rho^{-\frac{5}{2}} (1-\rho^2)^{\frac{1}{4}-\frac{\lambda}{2}}G_9(\rho,\lambda)&=(1-\chi_\lambda(\rho))\rho^{-\frac{5}{2}}(1+\rho)^{\frac{3}{2}-\lambda}
\\
&\quad\times\int_0^\rho (1-\chi_\lambda(s))\frac{s^{\frac{5}{2}}\O(\langle\omega\rangle^{-1})}{(1+s)^{\frac{3}{2}-\lambda}}
\widehat{\gamma}_2(\rho,s,\lambda) ds,
\end{align*}
where 
\begin{align*}
\widehat{\beta}_j(\rho,s,\lambda)&=[1+\O(\rho^{-1}(1-\rho)\langle\omega\rangle^{-1})][O(\langle\omega\rangle^{-1})+\O(\rho^0(1-\rho)\langle\omega\rangle^{-1})
\\
&\quad+\O(s^2\langle\omega\rangle^{0})+\O(\rho^0(1-\rho)s^2\langle\omega\rangle^{-1})]
\\
\widehat{\gamma}_j(\rho,s,\lambda)&=[1+\O(\rho^{-1}(1-\rho)\langle\omega\rangle^{-1})][1+\O(s^{-1}(1-s)\langle\omega\rangle^{-1})]\gamma_j(\rho,s,\lambda),
\end{align*}
with $\gamma_j$ from Lemma \ref{decomp2}.
\end{lem}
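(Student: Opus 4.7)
The proof is essentially a substitution-and-simplification argument: each of $G_3,\dots,G_9$ from Lemma \ref{decomp2} is a product of the solutions $b_1, b_2$ (whose asymptotics are given by \eqref{BesselTaylor} and Lemma \ref{Besselsol}) and $h_1, h_2$ (Lemma \ref{free ODE near 1}), multiplied by cutoffs and rational prefactors. The plan is to substitute these symbol-type asymptotics into each $G_j$, combine them with the overall prefactor $\rho^{-5/2}(1-\rho^2)^{1/4-\lambda/2}$, and invoke the fact that products of $\O$-symbols are again $\O$-symbols with additive exponents in order to collapse the cross terms into the compact form stated in the lemma.

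I would split the argument according to which asymptotic regime each factor lives in. On $\mathrm{supp}\,\chi_\lambda$, where $|a(\lambda)\varphi(\rho)|\lesssim 1$, the Bessel expansion \eqref{BesselTaylor} pins down $b_1(\rho,\lambda)$ to leading order as $\rho^{5/2}\langle\omega\rangle^{3/2}$ times a bounded symbol, and $b_2(\rho,\lambda)$ as $\rho^{-3/2}\langle\omega\rangle^{-5/2}$ times a bounded symbol; this handles $G_3$ and $G_7$, where the outer $b_1(\rho,\lambda)$ combines with $\rho^{-5/2}(1-\rho^2)^{1/4-\lambda/2}$ to produce the $(1-\rho^2)^{1/4-\lambda/2}\chi_\lambda(\rho)$ prefactor, while the integrand collapses into $\O(\rho^0 s\langle\omega\rangle^{-1})/(1-s^2)^{5/4-\lambda/2}$ after pairing leading Bessel orders with $\alpha_j$. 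On $\mathrm{supp}(1-\chi_\lambda)$, Lemma \ref{free ODE near 1} gives
\[
h_j(\rho,\lambda)=a(\lambda)^{-1/2}\sqrt{1-\rho^2}\bigl((1\mp\rho)/(1\pm\rho)\bigr)^{3/4-\lambda/2}\bigl[1+\O(\ldots)\bigr],
\]
so combination with the global prefactor produces exactly the $(1\mp\rho)^{3/2-\lambda}$ factors that appear in the statement for $G_4,G_5,G_6,G_8,G_9$. For the mixed integrals $G_4, G_5, G_8$ the $a(\lambda)^{-1/2}$ from $h_j$ combines with $a(\lambda)^{\pm 2}$ from the Bessel expansion of $b_j$ to produce the advertised $\langle\omega\rangle^{-5/2}$ (for $b_1$ inside) or $\langle\omega\rangle^{-2}$ factor; the remaining perturbative $[1+\O(\ldots)]$ terms from both solutions are simply absorbed into $\widehat\beta_j$. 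For the purely Hankel-type cases $G_6, G_9$, the product $a(\lambda)^{-1/2}\cdot a(\lambda)^{-1/2}=\O(\langle\omega\rangle^{-1})$ is the source of the $\langle\omega\rangle^{-1}$ inside the integral, and all remaining perturbative factors are packaged into $\widehat\gamma_j$.

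The main technical obstacle is the combinatorial bookkeeping. Each of $\alpha_j, \beta_j, \gamma_j$ is a sum of four $\O$-symbols, and every solution comes with its own multiplicative error bracket, so brute-force expansion yields many contributions. What keeps this under control is that the symbol classes in question are closed under multiplication and that most cross products inherit \emph{strictly better} decay than what is advertised in $\widehat\beta_j$ or $\widehat\gamma_j$; hence they are harmlessly absorbed by the leading ones. Concretely, for each $G_j$ one only has to verify that the worst contribution -- the product of the leading Bessel/Hankel asymptotics with the single largest $\O$-term inside $\alpha_j$, $\beta_j$, or $\gamma_j$ -- indeed matches the form in the statement; the remaining terms then follow a fortiori. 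A secondary subtlety is that the cutoff $\chi_\lambda$ depends on $\lambda$, but since $|\partial_\rho^k\partial_\omega^\ell\chi_\lambda(\rho)|\leq C_{k,\ell}\langle\omega\rangle^{k-\ell}$ by construction, derivatives falling on $\chi_\lambda$ stay within the admissible symbol class and cause no new difficulty.
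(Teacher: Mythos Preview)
Your approach is correct and is exactly what the paper does; its proof reads in full: ``This is an immediate consequence of Lemma \ref{decomp2}, previously established symbol representations, and support properties of the cut-offs.'' One small correction to your arithmetic: on $\mathrm{supp}\,\chi_\lambda$ the expansion \eqref{BesselTaylor} gives $b_1(\rho,\lambda)=\O(\rho^{5/2}\langle\omega\rangle^{2})$ and $b_2(\rho,\lambda)=\O(\rho^{-3/2}\langle\omega\rangle^{-2})$ (not $\langle\omega\rangle^{3/2}$ and $\langle\omega\rangle^{-5/2}$), so the advertised $\langle\omega\rangle^{-5/2}$ in $G_4,G_5,G_8$ really comes from $a(\lambda)^{-1/2}$ in $h_j$ times $\langle\omega\rangle^{-2}$ from $b_2$, while for the $b_1$-contribution one must additionally trade powers of $s$ for $\omega$-decay using $s\lesssim\langle\omega\rangle^{-1}$ on $\mathrm{supp}\,\chi_\lambda(s)$---this is precisely the ``support properties of the cut-offs'' the paper invokes.
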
  
\begin{proof}
This is an immediate consequence of Lemma \ref{decomp2}, previously
established symbol representations, and support properties of the cut-offs.
\end{proof}
In analogy to what we did before we define the operators $T_j$ and $\dot{T}_j$ for $j=3,\dots,9$ and $f\in C^2(\overline{\B^6_1})$  as
\begin{align*}
T_j(\tau)f(\rho):=\frac{1}{2\pi i}\lim_{\varepsilon\to 0^+}\lim_{N \to \infty} \int_{\varepsilon-i N}^{\varepsilon+ i N}e^{\lambda\tau}
\rho^{-\frac{5}{2}} (1-\rho^2)^{\frac{1}{4}-\frac{\lambda}{2}}f(\rho)G_j(\rho,\lambda) d \lambda
\end{align*}
and
\begin{align*}
\dot{T}_j(\tau)f(\rho):=\frac{1}{2\pi i}\lim_{\varepsilon\to 0^+}\lim_{N \to \infty} \int_{\varepsilon-i N}^{\varepsilon+ i N}\lambda e^{\lambda\tau}
\rho^{-\frac{5}{2}} (1-\rho^2)^{\frac{1}{4}-\frac{\lambda}{2}}f(\rho)G_j(\rho,\lambda) d \lambda.
\end{align*}
That these limits exist for each $\tau\geq 0$ and $\rho\in (0,1)$
follows from the computations in the proof of Lemma \ref{kernel2}.

\begin{lem}\label{kernel2}
	Let $p\in [2,\infty]$ and $q\in [6,12]$ be such that $\frac{1}{p}+\frac{6}{q}=1$.
	Then
	\begin{equation*}
	\|T_j(.)f\|_{L^p(\R_+)L^q(\B_1^6)}\lesssim\|f\|_{H^1(\B_1^6)}
	\end{equation*}
	and 
	\begin{equation*}
	\| \dot{T}_j(.)f\|_{L^p(\R_+)L^q(\B_1^6)}\lesssim\|f\|_{H^2(\B_1^6)}
	\end{equation*}
	for $j=3,4,\dots,9$ and all $f\in C^2(\overline{\B^6_1})$.
\end{lem}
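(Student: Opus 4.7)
The plan is to follow the template established in the proof of Lemma \ref{kernel1}: collapse the contour to the imaginary axis $\lambda = i\omega$, apply Fubini to move the $\omega$-integration inside the $s$-integrals in the representations of the $G_j$ provided by Lemma \ref{V2symbol}, bound the inner $\omega$-integral pointwise in $(\tau,\rho,s)$ by means of Lemmas \ref{osci1}--\ref{osci4}, and finally integrate the resulting kernels in $s$ and then in the spatial variable $\rho$, using the technical estimates of Lemmas \ref{teclem1}--\ref{teclem3}. The $(p,q)=(\infty,6)$ endpoint will be obtained separately via Lemma \ref{osci1} together with the Sobolev embedding $H^1(\B^6_1)\hookrightarrow L^6(\B^6_1)$, and the intermediate pairs then follow by interpolation.

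To justify the contour shift and Fubini step I would exploit the localisation of the cutoffs: on the support of $\chi_\lambda(\rho)$ one has $\rho |a(\lambda)|\lesssim 1$, so any power of $\rho$ may be traded for the same power of decay in $\langle\omega\rangle$, and analogously for $s$ on the support of $\chi_\lambda(s)$. This trading yields enough absolute convergence in $\omega$ for each representation in Lemma \ref{V2symbol}, even after the extra factor $\lambda$ introduced in $\dot T_j$, so both operators are well defined pointwise and admit the iterated representation.

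For the $T_j$ estimates, after one such trade the symbols in Lemma \ref{V2symbol} sit in $\O(\langle\omega\rangle^{-1-\alpha})$ for some small $\alpha>0$. Lemma \ref{osci1} (on the $\chi_\lambda(\rho)$ pieces $G_3,G_7$) and Lemma \ref{osci3} (on the $(1-\chi_\lambda(\rho))$ pieces $G_4,G_5,G_6,G_8,G_9$) yield pointwise kernel bounds of the form $\langle \tau - \Phi_j(\rho,s)\rangle^{-2}$, where $\Phi_j$ is a logarithmic phase produced by the factor $(1-\rho^2)^{-\lambda/2}(1-s^2)^{\lambda/2}$ or an analogous $(1\pm\rho)(1\pm s)$ combination. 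In each of the seven cases the residual $s$-integral is $\tau$-independent and absolutely convergent on $[0,\rho]$, so one ends up with the pointwise estimate $|T_j(\tau)f(\rho)|\lesssim \langle\tau\rangle^{-2}\,\rho^{a_j}\,|f(\rho)|$ for an appropriate weight $\rho^{a_j}$. Lemma \ref{teclem1} then turns this into the desired $L^p(\R_+)L^q(\B^6_1)$ bound for every admissible Strichartz pair.

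The main obstacle is the family $\dot T_j$. Here the factor $\lambda=i\omega$ consumes one order of decay, so that after any trade one is left with only $\O(\langle\omega\rangle^{-\alpha})$, $\alpha<1$. The pointwise oscillatory bounds degenerate accordingly: Lemmas \ref{osci2} and \ref{osci4} produce the weaker factor $|\tau-\Phi_j(\rho,s)|^{\alpha-1}\langle\tau-\Phi_j(\rho,s)\rangle^{-2}$, whose $s$-integral is no longer harmless. This is precisely where Lemmas \ref{teclem2} and \ref{teclem3} come into play. For the $G_3$ and $G_7$ pieces the phase is $\Phi=\tfrac{1}{2}\log(1-\rho^2)-\tfrac{1}{2}\log(1-s^2)$, and Lemma \ref{teclem3}, applied after the rescaling $s=\rho t$ that absorbs the weight $s^{-1/2}$ coming from the trade, gives an $s$-integral bounded by $|\tau|^{-1/2}\langle\tau\rangle^{-2}$. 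For $G_4,G_5,G_6,G_8,G_9$ the phases $\log(1\pm s)-\log(1\pm\rho)$ are handled by Lemma \ref{teclem2} with a suitable $\alpha$, producing the same kind of bound. Combining these decay rates with the weighted $L^{12}$-inequalities of Lemma \ref{teclem1} yields $\|\dot T_j(.)f\|_{L^2(\R_+)L^{12}(\B^6_1)}\lesssim \|f\|_{H^2(\B^6_1)}$. The endpoint $(\infty,6)$ estimate is obtained directly from Lemma \ref{osci1} and the Sobolev embedding, exactly as for $\dot T_1$ in Lemma \ref{kernel1}, and the general Strichartz pair follows by interpolation.
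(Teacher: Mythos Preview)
Your plan for the $T_j$ is correct and matches the paper: trade powers on the support of the cutoffs, apply Lemmas~\ref{osci1} and~\ref{osci3}, obtain $|T_j(\tau)f(\rho)|\lesssim \rho^{2-\frac{1}{12}}|f(\rho)|\langle\tau\rangle^{-2}$, and finish with Lemma~\ref{teclem1}.

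The $\dot T_j$ part, however, has a genuine gap. For $j=3,7$ both $\rho$ and $s$ lie on the support of $\chi_\lambda$, hence are bounded away from $1$; the phase $\tfrac12\log(1-\rho^2)-\tfrac12\log(1-s^2)$ is therefore bounded, and your rescaling $s=\rho t$ does \emph{not} produce the integrand of Lemma~\ref{teclem3} (the phase becomes $\log(1-\rho^2 t^2)$, not $\log(1-t^2)$). More seriously, the route through Lemma~\ref{osci2} with $\alpha=\tfrac12$ gives a pointwise factor $|\tau|^{-1/2}\langle\tau\rangle^{-2}$, and this is \emph{not} in $L^2_\tau(\R_+)$ near $\tau=0$, so the endpoint $(p,q)=(2,12)$ fails as stated. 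Lemmas~\ref{teclem2} and~\ref{teclem3} are not used at all in the paper's proof of this lemma; they only enter later for $V_3$, where $f'$ appears and the $s$-integral runs up to $1$.

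The paper's argument for $\dot T_j$ is simpler. For $j=3,4,5,7,8$ one trades the extra $\omega$ against powers of $\rho$ (not $s$): since $\rho\lesssim\langle\omega\rangle^{-1}$ on the support of $\chi_\lambda(\rho)$, one may write the symbol as $\rho^{-\frac{1}{12}}\O(\langle\omega\rangle^{-1-\frac{1}{12}})$, apply Lemma~\ref{osci1} (or Lemma~\ref{osci3} on the $(1-\chi_\lambda(\rho))$ pieces) to obtain the clean $\langle\tau\rangle^{-2}$, integrate $\int_0^\rho ds$ trivially, and arrive at $|\dot T_j(\tau)f(\rho)|\lesssim \rho^{1-\frac{1}{12}}|f(\rho)|\langle\tau\rangle^{-2}$; the third estimate in Lemma~\ref{teclem1} then closes. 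For $j=6,9$ there is no $\chi_\lambda$ on either variable to trade, and here the paper integrates by parts in $s$ (using $\partial_s(1-s)^{-\frac12+\lambda}$) to gain an extra $\langle\omega\rangle^{-1}$ before invoking Lemma~\ref{osci3}. This integration-by-parts step is the missing idea in your treatment of $\dot T_6$ and $\dot T_9$.
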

\begin{proof}
Note that we have  
\[
|V_2(f)(\rho,\lambda)|\lesssim\langle\lambda\rangle^{-2} |f(\rho)|
\]
and therefore
\begin{align*}
T_j(\tau)f(\rho)=\frac{1}{2\pi} \int_\R e^{i \omega\tau}
\rho^{-\frac{5}{2}} (1-\rho^2)^{\frac{1}{4}-\frac{\lambda}{2}}f(\rho)G_j(\rho,i\omega) d \omega
\end{align*}
holds for $j=3,..,9.$
Furthermore, we can also use Fubini's theorem to interchange the order of integration in all the operators $T_j$.
For $T_3$ this yields
\begin{align*}
T_3(\tau)f(\rho)=f(\rho)\int_0^{\rho}\int_\R e^{i\omega \tau} \O(\rho^{\frac{11}{12}}s^0\langle\omega\rangle^{-1-\frac{1}{12}})
\chi_{i\omega}(\rho) \frac{(1-\rho^2)^{\frac{1}{4}-\frac{i \omega}{2}}}{(1-s^2)^{\frac{5}{4}-\frac{i\omega}{2}}} d \omega ds.
\end{align*}
By using Lemma \ref{osci1} we then obtain
\begin{align*}
|T_3(\tau)f(\rho)|&\lesssim \left|\rho^{\frac{11}{12}}f(\rho) \int_0^{\rho}\langle\tau +\frac{1}{2}(\log(1-s^2)-\log(1-\rho^2))\rangle^{-2}ds\right|
\\
&\lesssim\left|\rho^{\frac{11}{12}}f(\rho) \int_0^{\rho}\langle\tau \rangle^{-2}ds\right|
\\
&\lesssim\rho^{2-\frac{1}{12}}|f(\rho)|\langle\tau \rangle^{-2}.
\end{align*}
 Moreover, we have that
\[|\dot{T}_3(\tau)f(\rho)|\lesssim \rho^{-1}|f(\rho)|\langle\lambda\rangle^{-2} \]
for $\rho\in(0,1)$
and so we can also take the $\varepsilon$ limit and change the order of integration. A calculation  similar to the one used to bound $|T_3(\tau)f(\rho)|,$ then shows
\begin{align*}
|\dot{T}_3(\tau)f(\rho)|\lesssim\rho^{1-\frac{1}{12}}|f(\rho)|\langle\tau \rangle^{-2}.
\end{align*}
Using Lemmas \ref{osci1} and \ref{osci3} the estimates
\begin{align*}
|T_j(\tau)f(\rho)|\lesssim\rho^{2-\frac{1}{12}}|f(\rho)|\langle\tau \rangle^{-2}
\end{align*}
and
\begin{align*}
|\dot{T}_j(\tau)f(\rho)|\lesssim\rho^{1-\frac{1}{12}}|f(\rho)|\langle\tau \rangle^{-2},
\end{align*}
for $j=4,5,7,8$ can be derived likewise.
For $T_6$ we have that
\begin{align*}
T_6(\tau)f(\rho)&=\rho^{-\frac{5}{2}}f(\rho)\int_0^{\rho} \int_\R e^{i \omega\tau}(1-\chi_{i \omega}(\rho))(1-\rho)^{\frac{3}{2}-i \omega}
\\
&\quad \times s^{\frac{5}{2}}\frac{1-\chi_{i \omega}(s)}{(1-s)^{\frac{3}{2}-i\omega}}\O(\langle\omega\rangle^{-1})
\widehat{\gamma}_1(\rho,s,i \omega)d \omega ds.
\end{align*}
Since $\widehat{\gamma}_6(.,.,i\omega)$ behaves like $\O(\langle\omega\rangle^{-1})$ in $\omega$,
Lemma \ref{osci3} yields
\begin{align*}
|T_6(\tau)f(\rho)|&\lesssim\rho^{-\frac{3}{2}}|f(\rho)|\int_0^\rho s^{\frac{5}{2}} \langle\tau-\log(1-\rho)+\log(1-s)\rangle^{-2}ds
\\
&\lesssim\rho^{-\frac{3}{2}}|f(\rho)|\langle\tau\rangle^{-2}\int_0^\rho s^{\frac{5}{2}}ds
\\
&\lesssim\rho^{2}|f(\rho)|\langle\tau\rangle^{-2}.
\end{align*}
An analogous calculation shows
$$
|T_9(\tau)f(\rho)|\lesssim\rho^{2}|f(\rho)|\langle\tau\rangle^{-2}.
$$
Next, as the $\lambda$ integrals in $\dot{T}_6$ and $\dot{T}_9$ appear
to be no longer absolutely convergent, we perform an integration by parts which yields
\begin{align*}
&\quad\int_0^\rho (1-\chi_\lambda(s))\frac{s^{\frac{5}{2}}\O(\langle\omega\rangle^{-1})}{(1-s)^{\frac{3}{2}-\lambda}}\widehat{\gamma}_1(\rho,s,\lambda) ds,
\\ 
&=(1-\chi_\lambda(\rho))\frac{\rho^{\frac{5}{2}}\O(\langle\omega\rangle^{-2})}{(1-\rho)^{\frac{1}{2}-\lambda}}\widehat{\gamma}_1(\rho,\rho,\lambda)
\\
&\quad +\O(\langle\omega\rangle^{-2})\int_0^\rho(1-s)^{-\frac{1}{2}+\lambda}\partial_s\left[(1-\chi_\lambda(s))s^{\frac{5}{2}}\widehat{\gamma}_1(\rho,s,\lambda)\right] ds\\
&=:B_6(\rho,\lambda)+I_6(\rho,\lambda).
\end{align*}
This now renders the $\lambda$ integral absolutely convergent and we can again take the limit $\varepsilon \to 0$. Note that the operator  corresponding to the integral term $ I_6(\rho,\lambda)$ is bounded by
\begin{align*}
\rho|f(\rho)|\langle\tau\rangle^{-2},
\end{align*}
which can be verified in a similar manner as the bound on $T_6$.
Furthermore,
\begin{align*}
&\quad\left|\rho^{-\frac{5}{2}}f(\rho)\int_\R \omega e^{i\omega \tau}((1-\chi_{i\omega}(\rho))(1-\rho)^{\frac{3}{2}-i\omega} B_6(\rho,i\omega) d \omega\right|
\\
&=\left|\rho^{-1}f(\rho)\int_\R \omega e^{i\omega \tau}(1-\chi_{i \omega}(\rho))^2 \rho(1-\rho)\O(\langle\omega\rangle^{-2}) \widehat{\gamma}_1(\rho,\rho,i \omega) d \omega\right|
\\
&\lesssim \rho|f(\rho)|\langle\tau \rangle^{-2},
\end{align*}
due to Lemma \ref{osci3}
and therefore
\begin{align*}
\left|\dot{T}_6(\tau)f\right|\lesssim \rho|f(\rho)|\langle\tau \rangle^{-2}.
\end{align*}
As $\dot{T}_9$ can be bounded by similar means, we conclude that
\begin{align*}
|T_j(\tau)f(\rho)|\lesssim\rho^{2-\frac{1}{12}}|f(\rho)|\langle\tau \rangle^{-2}
\end{align*}
and
\begin{align*}
|\dot{T}_j(\tau)f(\rho)|\lesssim\rho^{1-\frac{1}{12}}|f(\rho)|\langle\tau \rangle^{-2}
\end{align*}
holds for all $j=3,4,\dots,9$ and so we wrap up this proof by employing Lemma \ref{teclem1}.
\end{proof}
Finally, we can turn our attention to 
\begin{align*}
V_3(f)(\rho,\lambda)=&-u_0\int_\rho^{1}U_1(s,\lambda)f'(s)ds +u_1\int_0^\rho U_0(s,\lambda)f'(s)ds 
\\
&+u_{\mathrm{f}_0}\int_\rho^{1}U_{\mathrm{f}_1}(s,\lambda)f'(s)ds -u_{\mathrm{f}_1}\int_0^\rho U_{\mathrm{f}_0}(s,\lambda)f'(s)ds.
\end{align*}
As we did before, we also derive a useful decomposition of $V_3$.
\begin{lem}
We can decompose $V_3$ as
\[
V_3(f)(\rho,\lambda)=\rho^{-\frac{5}{2}} (1-\rho^2)^{\frac{1}{4}-\frac{\lambda}{2}}\sum_{j=10}^{18} G_j(f)(\rho,\lambda)
\]
with
\begin{align*}
G_{10}(f)(\rho,\lambda)&=\chi_{\lambda}(\rho)b_1(\rho,\lambda)\int_\rho^1f'(s)\int_0^s\chi_{\lambda}(t)t^{\frac{5}{2}}\frac{b_1(t,\lambda)\alpha_5(\rho,t,\lambda)+b_2(t,\lambda)\alpha_6(\rho,t,\lambda)}{(1-t^2)^{\frac{5}{4}-\frac{\lambda}{2}}}
\\
&\quad+
\chi_\lambda(t)\frac{\O(t^3\langle\omega\rangle^{-2})[1+\O(\rho^2\langle\omega\rangle^0)]}{(1-t^2)^{\frac{5}{4}-\frac{\lambda}{2}}} dt ds
\\
G_{11}(f)(\rho,\lambda)&=(1-\chi_{\lambda}(\rho))h_2(\rho,\lambda)\int_\rho^1f'(s)
\int_0^s\chi_{\lambda}(t)t^{\frac{5}{2}}\frac{b_1(t,\lambda)\beta_6(\rho,t,\lambda)+b_2(t,\lambda)\beta_7(\rho,t,\lambda)}{(1-t^2)^{\frac{5}{4}-\frac{\lambda}{2}}}
\\
&\quad+
\chi_\lambda(t)\frac{\O(t^3\langle\omega\rangle^{-2})[1+\O(\rho^0(1-\rho)\langle\omega\rangle^{-1})]}{(1-t^2)^{\frac{5}{4}-\frac{\lambda}{2}}} dt ds
\\
G_{12}(f)(\rho,\lambda)&=(1-\chi_{\lambda}(\rho))h_1(\rho,\lambda)\int_\rho^1f'(s)
\int_0^s\chi_{\lambda}(t)t^{\frac{5}{2}}\frac{b_1(t,\lambda)\beta_7(\rho,t,\lambda)+b_2(t,\lambda)\beta_8(\rho,t,\lambda)}{(1-t^2)^{\frac{5}{4}-\frac{\lambda}{2}}}
\\
&\quad+
\chi_\lambda(t)\frac{\O(t^3\langle\omega\rangle^{-2})[1+\O(\rho^0(1-\rho)\langle\omega\rangle^{-1})]}{(1-t^2)^{\frac{5}{4}-\frac{\lambda}{2}}} dt ds
\\
G_{13}(f)(\rho,\lambda)&=\chi_{\lambda}(\rho)b_1(\rho,\lambda)\int_\rho^1f'(s)\int_0^s(1-\chi_{\lambda}(t))t^{\frac{5}{2}}\frac{h_2(t,\lambda)\beta_{9}(t,\rho,\lambda)}{(1-t^2)^{\frac{5}{4}-\frac{\lambda}{2}}} dt ds
\\
G_{14}(f)(\rho,\lambda)&=(1-\chi_{\lambda}(\rho))h_2(\rho,\lambda)\int_\rho^1f'(s)\int_0^s(1-\chi_{\lambda}(t))t^{\frac{5}{2}}\frac{h_2(t,\lambda)\gamma_3(\rho,t,\lambda)}{(1-t^2)^{\frac{5}{4}-\frac{\lambda}{2}}} dt ds
\\
G_{15}(f)(\rho,\lambda)&=(1-\chi_{\lambda}(\rho))h_1(\rho,\lambda)\int_\rho^1f'(s)\int_0^s(1-\chi_{\lambda}(t))t^{\frac{5}{2}}\frac{h_2(t,\lambda)\gamma_4(\rho,t,\lambda)}{(1-t^2)^{\frac{5}{4}-\frac{\lambda}{2}}} dt ds
\\
G_{16}(f)(\rho,\lambda)&=\chi_{\lambda}(\rho)b_1(\rho,\lambda)\int_0^\rho f'(s)\int_0^st^{\frac{5}{2}}\frac{b_1(t,\lambda)\alpha_7(t,\rho,\lambda)}{(1-t^2)^{\frac{5}{4}-\frac{\lambda}{2}}} dt ds
\\
&\quad+\chi_{\lambda}(\rho)\O(\rho^{\frac{1}{2}}\langle\omega\rangle^{-2})\int_0^\rho f'(s)\int_0^st^{\frac{5}{2}}\frac{b_1(t,\lambda)[1+\O(t^2\langle\omega\rangle^0)]}{(1-t^2)^{\frac{5}{4}-\frac{\lambda}{2}}} dt ds
\\
&\quad+\chi_{\lambda}(\rho)b_2(\rho,\lambda) \int_0^\rho f'(s)\int_0^st^{\frac{5}{2}}\frac{b_1(t,\lambda)\alpha_8(\rho,t,\lambda)}{(1-t^2)^{\frac{5}{4}-\frac{\lambda}{2}}} dt ds
\\
G_{17}(f)(\rho,\lambda)&=(1-\chi_{\lambda}(\rho))h_2(\rho,\lambda) \int_0^\rho f'(s)\int_0^s\chi_{\lambda}(t)t^{\frac{5}{2}}\frac{b_1(t,\lambda)\beta_{10}(\rho,t,\lambda)}{(1-t^2)^{\frac{5}{4}-\frac{\lambda}{2}}} 
dt ds
\\
G_{18}(f)(\rho,\lambda)&=(1-\chi_{\lambda}(\rho))h_2(\rho,\lambda) \int_0^\rho f'(s)
\\
&\quad\times\int_0^s(1-\chi_\lambda(t))t^{\frac{5}{2}}\frac{h_1(t,\lambda)\gamma_5(\rho,t,\lambda)+h_2(t,\lambda)\gamma_6(\rho,t,\lambda)}{(1-t^2)^{\frac{5}{4}-\frac{\lambda}{2}}} dt ds,
\end{align*}
where as before
\begin{align*}
\alpha_j(\rho,t,\lambda)&=\O(\langle\omega\rangle^{-1})+\O(\rho^2\langle\omega\rangle^{0})+\O(t^2\langle\omega\rangle^{0})+\O(\rho^2t^2\langle\omega\rangle^{0})
\\
\beta_j(\rho,t,\lambda)&= 
\O(\langle\omega\rangle^{-1})+\O(\rho^0(1-\rho)\langle\omega\rangle^{-1})+\O(t^2\langle\omega\rangle^{0})+\O(\rho^0(1-\rho)t^2\langle\omega\rangle^{-1})
\\
\gamma_j(\rho,t,\lambda)&= 
\O(\langle\omega\rangle^{-1})+\O(\rho^0(1-\rho)\langle\omega\rangle^{-1})+\O(t^0(1-t)\langle\omega\rangle^{-1})
\\
&\quad+\O(\rho^0(1-\rho)t^0(1-t)\langle\omega\rangle^{-2}).
\end{align*}

\end{lem}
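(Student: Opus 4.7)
The proof follows the template of Lemma \ref{decomp2} verbatim, with one additional layer of integration to track. The three ingredients are: (i) the cutoff representations of $u_0, u_1, u_{\mathrm f_0}, u_{\mathrm f_1}$ constructed in the previous section, which on $\{\chi_\lambda(\rho)=1\}$ realize these functions as linear combinations of $\psi_1, \psi_2$ (resp.\ $\psi_{\mathrm f_1}, \psi_{\mathrm f_2}$) with coefficients from Lemma \ref{connectioncoef} (resp.\ Lemma \ref{connectioncoeffree}), and on $\{\chi_\lambda(\rho)=0\}$ realize them via $\psi_3, \psi_4$ (resp.\ $\psi_{\mathrm f_3}, \psi_{\mathrm f_4}$); (ii) the symbol expansions of $\psi_j$ and the differences $\psi_j - \psi_{\mathrm f_j}$ from Lemmas \ref{Besselsol}, \ref{Hankelsol}, \ref{free ODE near 1}, together with the connection-coefficient bounds $c_{j,k}-c_{\mathrm f_{j,k}}=\O(\langle\omega\rangle^{-1})$; and (iii) the algebraic identity $a_1 b_1 - a_2 b_2 = b_1(a_1-a_2) + a_2(b_1-b_2)$, used to trade products of differing functions into a controlled difference times one of them.

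I would first split $V_3$ into the two natural pieces
\[
A(\rho,\lambda) := -u_0\!\int_\rho^{1}\! U_1(s,\lambda) f'(s)\,ds + u_{\mathrm f_0}\!\int_\rho^{1}\! U_{\mathrm f_1}(s,\lambda) f'(s)\,ds
\]
and
\[
B(\rho,\lambda) := u_1\!\int_0^{\rho}\! U_0(s,\lambda) f'(s)\,ds - u_{\mathrm f_1}\!\int_0^{\rho}\! U_{\mathrm f_0}(s,\lambda) f'(s)\,ds.
\]
Applying the algebraic identity to $A$ yields a term with prefactor $u_0 - u_{\mathrm f_0}$ (times $\int_\rho^1 U_1 f'\,ds$) plus a term with prefactor $u_{\mathrm f_0}$ (times $\int_\rho^1 (U_1 - U_{\mathrm f_1}) f'\,ds$); the analogous manipulation for $B$ produces $u_1 - u_{\mathrm f_1}$ and $U_0 - U_{\mathrm f_0}$. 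Next I would insert the partition $\chi_\lambda(\rho) + (1-\chi_\lambda(\rho))$ at the outer point: on $\{\chi_\lambda(\rho)=1\}$ the prefactor becomes $\chi_\lambda(\rho) b_1(\rho,\lambda)$ times one of the $\O(\langle\omega\rangle^{-1})+\O(\rho^2\langle\omega\rangle^0)$ remainders already extracted in the proof of Lemma \ref{decomp2}, while on $\{\chi_\lambda(\rho)=0\}$ it becomes $(1-\chi_\lambda(\rho)) h_1$ or $(1-\chi_\lambda(\rho)) h_2$ times a remainder of the form $\O(\langle\omega\rangle^{-1})+\O((1-\rho)\langle\omega\rangle^{-1})$. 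Inside each resulting $U_j(s,\lambda) = \tfrac{1}{2i}\int_0^s t^5 u_j(t,\lambda) (1-t^2)^{-3/2+\lambda}\,dt$ I would insert a second partition $\chi_\lambda(t) + (1-\chi_\lambda(t))$, so that the inner integrand is proportional to $b_1, b_2$ on $\{\chi_\lambda(t)=1\}$ and to $h_1, h_2$ on $\{\chi_\lambda(t)=0\}$, with the remainders generated by one more application of the algebraic identity.

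Collecting the resulting pieces by the pair (outer prefactor, inner integrand type) and factoring out the common weight $\rho^{-5/2}(1-\rho^2)^{1/4-\lambda/2}$ produces exactly nine templates matching the stated $G_{10}, \dots, G_{18}$: the six contributions $G_{10}$--$G_{15}$ come from $A$ (outer integration on $[\rho,1]$, hence the $\int_\rho^1 f'(s)\int_0^s \cdots\,dt\,ds$ shape), while $G_{16}, G_{17}, G_{18}$ come from $B$ (outer integration on $[0,\rho]$). The $\alpha_j, \beta_j, \gamma_j$ functions are nothing but the product of the outer $\rho$-remainder and the inner $t$-remainder, and a routine multiplication confirms that these products fit the stated templates. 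The only real obstacle is bookkeeping: the double-integral structure roughly doubles the number of contributions relative to Lemma \ref{decomp2}, and one must verify that every cross term between cutoffs and remainders at the two integration points lands in the correct $G_j$ without spawning unaccounted pieces. No new analytic input is required beyond what has already been established.
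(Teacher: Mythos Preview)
Your proposal is correct and follows exactly the paper's approach: the paper's own proof is a single sentence stating that ``this decomposition follows in the same way as the one in Lemma \ref{decomp2},'' which is precisely the template you spell out in detail. Your elaboration of the cutoff insertions, the algebraic identity, and the bookkeeping of the nine resulting pieces is an accurate unpacking of what that one-line proof leaves implicit.
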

\begin{proof}
This decomposition follows in the same way as the one in Lemma \ref{decomp2}.
\end{proof}
It will again be useful for us to use the symbol representations.
\begin{lem}
We have that
\begin{align*}
\rho^{-\frac{5}{2}} (1-\rho^2)^{\frac{1}{4}-\frac{\lambda}{2}}G_{10}(f)(\rho,\lambda)&=\chi_{\lambda}(\rho)(1-\rho^2)^{\frac{1}{4}-\frac{\lambda}{2}}\int_\rho^1 f'(s)
\int_0^s\chi_{\lambda}(t)\frac{\O\left(\rho^0 t\langle\omega\rangle^{-1}\right)}{(1-t^2)^{\frac{5}{4}-\frac{\lambda}{2}}} dt ds
\\
\rho^{-\frac{5}{2}} (1-\rho^2)^{\frac{1}{4}-\frac{\lambda}{2}}G_{11}(f)(\rho,\lambda)&=(1-\chi_{\lambda}(\rho))\rho^{-\frac{5}{2}}(1+\rho)^{\frac{3}{2}-\lambda}\int_\rho^1f'(s)
\\
&\quad\times\int_0^s\chi_{\lambda}(t)\frac{\O(t\langle
\omega\rangle^{-\frac{5}{2}})\widehat{\beta}_4(\rho,t,\lambda)}{(1-t^2)^{\frac{5}{4}-\frac{\lambda}{2}}}
\\
&\quad+
\chi_\lambda(t)\frac{\O(t^3\langle\omega\rangle^{-2})[1+\O(\rho^{-1}(1-\rho)\langle\omega\rangle^{-1})]}{(1-t^2)^{\frac{5}{4}-\frac{\lambda}{2}}}
dt ds
\\
\rho^{-\frac{5}{2}} (1-\rho^2)^{\frac{1}{4}-\frac{\lambda}{2}}G_{12}(f)(\rho,\lambda)&=(1-\chi_{\lambda}(\rho))\rho^{-\frac{5}{2}}(1-\rho)^{\frac{3}{2}-\lambda}\int_\rho^1f'(s)
\\
&\quad\times\int_0^s\chi_{\lambda}(t)\frac{\O(t\langle
\omega\rangle^{-\frac{5}{2}})\widehat{\beta}_5(\rho,t,\lambda)}{(1-t^2)^{\frac{5}{4}-\frac{\lambda}{2}}}
\\
&\quad+
\chi_\lambda(t)\frac{\O(t^3\langle\omega\rangle^{-2})[1+\O(\rho^{-1}(1-\rho)\langle\omega\rangle^{-1})]}{(1-t^2)^{\frac{5}{4}-\frac{\lambda}{2}}}
dt ds
\end{align*}
and
\begin{align*}
\rho^{-\frac{5}{2}} (1-\rho^2)^{\frac{1}{4}-\frac{\lambda}{2}}G_{13}(f)(\rho,\lambda)&=\chi_{\lambda}(\rho)(1-\rho^2)^{\frac{1}{4}-\frac{\lambda}{2}}\int_\rho^1f'(s)
\\
&\quad\times\int_0^s(1-\chi_{\lambda}(t))t^{\frac{5}{2}}\frac{\O(\rho^0 \langle\omega \rangle^{\frac{3}{2}})}{(1-t)^{\frac{3}{2}-\lambda}}\widehat{\beta}_6(t,\rho,\lambda) dt ds
\\
\rho^{-\frac{5}{2}} (1-\rho^2)^{\frac{1}{4}-\frac{\lambda}{2}}G_{14}(f)(\rho,\lambda)&=(1-\chi_{\lambda}(\rho))\rho^{-\frac{5}{2}}(1+\rho)^{\frac{3}{2}-\lambda}\int_\rho^1f'(s)
\\
&\quad\times\int_0^s(1-\chi_{\lambda}(t))t^{\frac{5}{2}}\frac{\O(\langle\omega\rangle^{-1})}{(1-t)^{\frac{3}{2}-\lambda}} \widehat{\gamma}_3(\rho,t,\lambda)dt ds
\\
\rho^{-\frac{5}{2}} (1-\rho^2)^{\frac{1}{4}-\frac{\lambda}{2}}G_{15}(f)(\rho,\lambda)&=(1-\chi_{\lambda}(\rho))\rho^{-\frac{5}{2}}(1-\rho)^{\frac{3}{2}-\lambda}\int_\rho^1f'(s)
\\
&\quad\times\int_0^s(1-\chi_{\lambda}(t))t^{\frac{5}{2}}\frac{\O(\langle\omega\rangle^{-1})}{(1-t)^{\frac{3}{2}-\lambda}} \widehat{\gamma}_4(\rho,t,\lambda)dt ds
\end{align*}
and
\begin{align*}
\rho^{-\frac{5}{2}} (1-\rho^2)^{\frac{1}{4}-\frac{\lambda}{2}}G_{16}(f)(\rho,\lambda)&=\chi_{\lambda}(\rho)(1-\rho^2)^{\frac{1}{4}-\frac{\lambda}{2}}
\int_0^\rho f'(s)\int_0^s\chi_{\lambda}(t)\frac{\O\left(\rho^0 t\langle\omega\rangle^{-1}\right)}{(1-t^2)^{\frac{5}{4}-\frac{\lambda}{2}}} dt ds
\\
\rho^{-\frac{5}{2}} (1-\rho^2)^{\frac{1}{4}-\frac{\lambda}{2}}G_{17}(f)(\rho,\lambda)&=(1-\chi_{\lambda}(\rho))\rho^{-\frac{5}{2}}(1+\rho)^{\frac{3}{2}-\lambda}\int_0^\rho f'(s)
\\
&\quad\times\int_0^s\chi_{\lambda}(\rho)\frac{\O(t\langle\omega\rangle^{-\frac{5}{2}})}{(1-t^2)^{\frac{5}{4}-\frac{\lambda}{2}}} \widehat{\beta}_{7}(\rho,t,\lambda)
dt ds
\\
\rho^{-\frac{5}{2}} (1-\rho^2)^{\frac{1}{4}-\frac{\lambda}{2}}G_{18}(f)(\rho,\lambda)&=(1-\chi_{\lambda}(\rho))\rho^{-\frac{5}{2}}(1+\rho)^{\frac{3}{2}-\lambda}\int_0^\rho f'(s)\O(\langle\omega\rangle^{-1})
\\
&\quad\times\int_0^s(1-\chi_\lambda(t))t^{\frac{5}{2}}\left[\frac{\widehat{\gamma}_5(\rho,t,\lambda)}{(1+t)^{\frac{3}{2}-\lambda}}+\frac{\widehat{\gamma}_6(\rho,t,\lambda)}{(1-t)^{\frac{3}{2}-\lambda}}\right] dt ds
\end{align*}
where $\widehat{\beta}_j$ and $\widehat{\gamma}_j$ are defined as in Lemma \ref{V2symbol}.
\end{lem}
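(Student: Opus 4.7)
The plan is to proceed exactly as in the proof of Lemma~\ref{V2symbol}: since the explicit decomposition of $V_3$ has just been stated in the preceding lemma, all that remains is to substitute the symbol-type expansions of the Bessel building blocks $b_1, b_2$ and of the near-$1$ fundamental system $h_1, h_2$, and then to combine them with the overall prefactor $\rho^{-5/2}(1-\rho^2)^{1/4-\lambda/2}$. No semigroup or oscillatory-integral machinery enters; the statement is pure substitution and symbol calculus.

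Concretely, I would treat each $G_j$ for $j \in \{10,\dots,18\}$ in turn. On the support of the inner cutoff $\chi_\lambda(t)$ one has $t \leq \widehat{\rho}_\lambda \lesssim \langle\omega\rangle^{-1}$, so the Bessel Taylor expansions in \eqref{BesselTaylor} apply and produce $b_1(t,\lambda) = \O(t^{5/2}\langle\omega\rangle^2)$ and $b_2(t,\lambda) = \O(t^{-3/2}\langle\omega\rangle^{-2})$ of symbol type. After multiplying by the $t^{5/2}$ already present in the integrand and by the $\alpha_j$ or $\beta_j$ factors, the $b_1$-contributions coalesce into the $\O(t^3\langle\omega\rangle^{-2})$ terms already explicit in the decomposition, while the $b_2$-contributions produce the compact $\O(t\langle\omega\rangle^{-5/2})$ leading terms stated in the lemma. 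Similarly, on the support of $1-\chi_\lambda(\rho)$ I would invoke Lemma~\ref{free ODE near 1} to replace $h_1(\rho,\lambda)$ and $h_2(\rho,\lambda)$ by their leading $a(\lambda)^{-1/2}(1\mp\rho)^{3/4-\lambda/2}(1\pm\rho)^{-3/4+\lambda/2}$ forms, with the perturbative remainders absorbed into the $\widehat{\beta}_j$ or $\widehat{\gamma}_j$ symbol factors as defined in Lemma~\ref{V2symbol}.

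The main obstacle, though mild, is the bookkeeping associated with the combination of the two nested cutoffs. When the outer factor is $1-\chi_\lambda(\rho)$ and the inner one is $\chi_\lambda(t)$ (as in $G_{11}, G_{12}, G_{17}$), then $\rho \geq \rho_\lambda$ while $t \leq \widehat{\rho}_\lambda$, so the $h_j$-expansion governs the $\rho$-behavior while the Bessel Taylor expansion governs the $t$-behavior, and the two expansions must be composed cleanly to recover the stated structure. When both outer and inner cutoffs are of the $(1-\chi_\lambda)$-type (as in $G_{14}, G_{15}, G_{18}$), Lemma~\ref{free ODE near 1} is operative in both variables, and I would simply substitute the explicit forms of $h_j$ in each; the factor $a(\lambda)^{-1/2}=\O(\langle\omega\rangle^{-1/2})$ from each occurrence of $h_j$ combines to produce the $\O(\langle\omega\rangle^{-1})$ prefactor stated. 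A final rearrangement then yields each $G_j$ in exactly the claimed form, mirroring the proof of Lemma~\ref{V2symbol} one-for-one.
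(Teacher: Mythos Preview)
Your proposal is correct and matches the paper's approach exactly: the paper states this lemma without proof, relying implicitly on the same substitution-and-symbol-calculus argument used for Lemma~\ref{V2symbol}, which is precisely what you outline. Your explicit accounting of the $a(\lambda)^{-1/2}$ factors from the $h_j$ and the Bessel Taylor expansions for $b_1,b_2$ is the correct mechanism, and the bookkeeping you describe for the mixed cutoff regimes is exactly what is needed.
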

Once more, we define another set of operators $T_j$ and $\dot{T}_j$ for $j=10,\dots,18$ and $f\in C^2(\overline{\B^6_1}) $ as
\begin{align*}
T_j(\tau)f(\rho):=\frac{1}{2\pi i}\lim_{\varepsilon\to 0^+}\lim_{N \to \infty} \int_{\varepsilon-i N}^{\varepsilon+ i N}e^{\lambda\tau}
\rho^{-\frac{5}{2}} (1-\rho^2)^{\frac{1}{4}-\frac{\lambda}{2}}G_j(f)(\rho,\lambda) d \lambda
\end{align*}
and
\begin{align*}
\dot{T}_j(\tau)f(\rho):=\frac{1}{2\pi i}\lim_{\varepsilon\to 0^+}\lim_{N \to \infty} \int_{\varepsilon-i N}^{\varepsilon+ i N}\lambda e^{\lambda\tau}
\rho^{-\frac{5}{2}} (1-\rho^2)^{\frac{1}{4}-\frac{\lambda}{2}}G_j(f)(\rho,\lambda) d \lambda.
\end{align*}
\begin{lem}\label{kernel3}
	Let $p\in [2,\infty]$ and $q\in [6,12]$ be such that $\frac{1}{p}+\frac{6}{q}=1$.
	Then
	\begin{equation*}
	\|T_j(.)f\|_{L^p(\R_+)L^q(\B_1^6)}\lesssim\|f\|_{H^1(\B_1^6)}
	\end{equation*}
	and 
	\begin{equation*}
	\| \dot{T}_j(.)f\|_{L^p(\R_+)L^q(\B_1^6)}\lesssim\|f\|_{H^2(\B_1^6)}
	\end{equation*}
	for $j=10,11,\dots,18$ and all $f\in C^2(\overline{\B^6_1})$.
\end{lem}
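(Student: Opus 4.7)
The plan is to adapt the proof of Lemma~\ref{kernel2} by incorporating the additional integration against $f'(s)$ that distinguishes each $G_j(f)$ for $j=10,\dots,18$ from its counterpart $G_{j-7}$; indeed, the inner $t$-integrals in $G_{10},\dots,G_{18}$ mirror exactly the $s$-integrals in $G_3,\dots,G_9$, only now averaged against $f'(s)$. First I would use Fubini's theorem to move the $\omega$-integral inside the $s$- and $t$-integrals, which is justified by the absolute convergence guaranteed by the symbol bounds in the preceding lemma. The resulting inner $\omega$-integrals are then of precisely the form treated in Lemmas~\ref{osci1}--\ref{osci4}, with phase $\tau+\tfrac12\log(1-t^2)-\tfrac12\log(1-\rho^2)$ or the analogous expression involving $\log(1\pm\rho)$ and $\log(1\pm t)$.

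On the support of $\chi_\lambda$, where $t,\rho\lesssim\langle\omega\rangle^{-1}$, I would trade a small power of $t$ for decay $\langle\omega\rangle^{-1/12}$ to bring the amplitude within the scope of Lemma~\ref{osci1}, exactly as was done for $T_3$ in Lemma~\ref{kernel2}. This yields a pointwise bound of the form
\begin{equation*}
|T_j(\tau)f(\rho)|\lesssim \langle\tau\rangle^{-2}\left|\int_{I_{j,\rho}} s^{2-\frac{1}{12}}|f'(s)|\,ds\right|,
\end{equation*}
with $I_{j,\rho}$ either $(0,\rho)$ or $(\rho,1)$ depending on $j$. The second estimate of Lemma~\ref{teclem1} then delivers the $L^2(\R_+)L^{12}(\B^6_1)$ bound by $\|f\|_{H^1(\B^6_1)}$, while the $L^\infty(\R_+)L^6(\B^6_1)$ endpoint comes directly from the homogeneous semigroup bound combined with the Sobolev embedding $H^2\hookrightarrow L^6$, exactly as in Lemma~\ref{kernel1}; the intermediate admissible pairs follow by interpolation. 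For $\dot T_j$, the additional factor of $\lambda$ shifts the weight on $|f'(s)|$ down to $s^{1-1/12}$ and, at the sharp endpoint, produces a time-singular factor $|\tau|^{-1/2+1/12}$ via Lemma~\ref{osci2}; a direct adaptation of the proof of Lemma~\ref{teclem1} (using the third estimate there in place of the first) then handles the $H^2$ side.

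The principal technical obstacle lies with the operators $\dot T_{13}$--$\dot T_{18}$, where the inner symbol decays only like $\langle\omega\rangle^{-1}$ and the outer amplitude involves $h_1(\rho,\lambda)$ or $h_2(\rho,\lambda)$ on the support of $1-\chi_\lambda$, so that the inverse Laplace contour integral is not absolutely convergent. For these I would follow the integration-by-parts trick used for $\dot T_6$ and $\dot T_9$: exploiting the identity $\partial_t(1-t)^{-\frac12+\lambda}=-(-\tfrac12+\lambda)(1-t)^{-\frac32+\lambda}$ to transfer a factor of $\lambda$ onto the smooth part of the integrand, one gains the missing $\langle\omega\rangle^{-1}$ at the cost of a boundary term (controlled by Lemma~\ref{osci3}) and a remainder integral (controlled by Lemma~\ref{osci1}). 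When the decay in $\omega$ is borderline, Lemmas~\ref{teclem2} and~\ref{teclem3} will be invoked to absorb the $|a|^{\alpha-1}$ singularities produced by Lemma~\ref{osci2} near the degenerate phases. Carrying out this bookkeeping uniformly across the nine operators is the main labour of the proof, but each individual case reduces to a minor variant of one already treated in Lemma~\ref{kernel2}.
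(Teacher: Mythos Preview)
Your outline is correct for $j\in\{10,11,12,16,17,18\}$, and the paper proceeds essentially as you describe there: Fubini, a trade of $t$-powers for $\omega$-decay on the support of $\chi_\lambda(t)$, Lemma~\ref{osci1} (or Lemma~\ref{osci3} when $\rho$ lives on the support of $1-\chi_\lambda$), and then Lemma~\ref{teclem1}.

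However, you have mislocated the main difficulty. For $G_{13}$, $G_{14}$, $G_{15}$ the inner variable $t$ lives on the support of $1-\chi_\lambda(t)$, so $t$ is \emph{not} small; the trade ``$t\lesssim\langle\omega\rangle^{-1}$'' is unavailable. Worse, the symbol amplitude there is of order $\langle\omega\rangle^{3/2}\cdot\widehat\beta\sim\langle\omega\rangle^{1/2}$ and the $t$-integrand carries the non-integrable factor $(1-t)^{-\frac32+\lambda}$, with $s$ running all the way to $1$. Thus the $\omega$-integral for the \emph{undotted} $T_{13},T_{14},T_{15}$ is not absolutely convergent, and your claimed pointwise bound $|T_j(\tau)f(\rho)|\lesssim\langle\tau\rangle^{-2}\int s^{2-1/12}|f'(s)|\,ds$ fails for these $j$. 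The paper resolves this by integrating by parts in $t$ \emph{already at the level of $T_{13},T_{14},T_{15}$}, using $\partial_t(1-t)^{-\frac12+\lambda}=(\tfrac12-\lambda)(1-t)^{-\frac32+\lambda}$, which simultaneously tames the $t$-singularity and gains a factor $\langle\omega\rangle^{-1}$. Only then does Lemma~\ref{osci2} apply, and the resulting $|a|^{\alpha-1}$ time-singular factors in the $s$-integrand are handled by Lemmas~\ref{teclem2} and~\ref{teclem3} (plus a reference to \cite{DonRao20}). The dotted operators $\dot T_{13},\dot T_{14},\dot T_{15}$ then require one \emph{additional} integration by parts on top of this, not just one in total.

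A minor correction: the $L^\infty(\R_+)L^6(\B^6_1)$ endpoint cannot be obtained from ``the homogeneous semigroup bound'', since the $T_j$ are pieces of the difference $\mathcal R-\mathcal R_{\mathrm f}$ and are not semigroup operators. The paper gets the endpoint by the same oscillatory-integral route with a different power trade (e.g.\ $\rho^{-5/6}$ in place of $\rho^{-1/12}$) followed by interpolation; Lemma~\ref{kernel1} does this explicitly for $\dot T_1$.
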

\begin{proof}
We continue using the same strategy as before and note that \[|V_3(f)(\rho,\lambda)|\lesssim \langle \lambda \rangle^{-2}\rho^{-\frac{5}{2}}\|f\|_{W^{1,\infty}(\B^6_1)} .\] Thus, we can again take the limit $\varepsilon \to 0$ in the operators $T_j$ to obtain that 
\begin{align*}
T_j =\frac{1}{2\pi }\int_{\R}e^{i\omega\tau}
\rho^{-\frac{5}{2}} (1-\rho^2)^{\frac{1}{4}-\frac{i \omega}{2}}G_j(f)(\rho, i \omega) d \omega,
\end{align*}
for $j=10,11,\dots,18$.
For $T_{10}$ we also readily use Fubini's theorem to interchange the order of integration. This yields
\begin{align*}
T_{10}(\tau)f(\rho)=&\int_\rho^1  f'(s)\int_0^s \int_\R e^{i\omega \tau}\chi_{i \omega}(\rho) (1-\rho^2)^{\frac{1}{4}-\frac{i \omega}{2}}\chi_{i\omega}(t)\frac{\O(\rho^0 t^{\frac{11}{12}}\langle\omega\rangle^{-1-\frac{1}{12}})}{(1-t^2)^{\frac{5}{4}-\frac{i\omega}{2}}}d \omega dt ds
\end{align*}
and thus, as the $t$ integrand is only supported away from $1$,
\begin{align*}
|T_{10}(\tau)f(\rho)|&\lesssim\langle\tau\rangle^{-2} \int_\rho^1  |f'(s)|\int_0^s t^{\frac{11}{12}} dt ds
\\
&\lesssim \langle\tau\rangle^{-2}\int_\rho^1  |f'(s)|s^{2-\frac{1}{12}} ds,
\end{align*}
by Lemma \ref{osci1}.
Again, one can analogously obtain the estimate \[|\dot{T}_{10}(\tau)f(\rho)| \lesssim\langle\tau\rangle^{-2}\int_\rho^1  |f'(s)|s^{1-\frac{1}{12}} ds.\]
We can also slightly modify this strategy to obtain the bounds 
\[|T_j(\tau)f(\rho)|\lesssim \langle\tau\rangle^{-2}\int_\rho^1  |f'(s)|s^{2-\frac{1}{12}} ds\]
and
\[|\dot{T}_j(\tau)f(\rho)| \lesssim \langle\tau\rangle^{-2}\int_\rho^1  |f'(s)|s^{1-\frac{1}{12}} ds\]
for $j=16,17$.
For $j=11$ we have
\begin{align*}
T_{11}(\tau)f(\rho)&=\rho^{-\frac{5}{2}}\int_\rho^1f'(s)\int_0^s\int_\R e^{i\omega\tau}(1-\chi_{i\omega}(\rho))(1+\rho)^{\frac{3}{2}-i\omega}\chi_{\lambda}(t)
\\
 &\quad\times\bigg[\frac{\O(t\langle
\omega\rangle^{-\frac{5}{2}})\widehat{\beta}_4(\rho,t,\lambda)}{(1-t^2)^{\frac{5}{4}-\frac{\lambda}{2}}} 
\\
&\quad+
\frac{\O(t^3\langle\omega\rangle^{-2})[1+\O(\rho^{-1}(1-\rho)\langle\omega\rangle^{-1})]}{(1-t^2)^{\frac{5}{4}-\frac{i \omega}{2}}}\bigg]
d\omega dt ds
\end{align*}
and since the support of the $t $ integrand is again away from the endpoint $1$,  applying  Lemma \ref{osci3} yields
 \begin{align*}
\left|T_{11}(\tau)f(\rho)\right|&\lesssim \langle\tau \rangle^{-2}\int_\rho^1 |f'(s)| \int_0^s t dt ds
\\
&\lesssim \langle\tau \rangle^{-2} \int_\rho^1|f'(s)| s^2 ds.
 \end{align*}
In the same manner, we obtain the estimate
\begin{align*}
|\dot{T}_{11}(\tau)f(\rho)|\lesssim \langle \tau \rangle^{-2}\int_\rho^1 s|f'(s)| ds.
\end{align*}
The same strategy can again be used to obtain the bounds
\begin{align*}
|T_{12}(\tau)f(\rho)|\lesssim \langle \tau \rangle^{-2}\int_\rho^1 s^2|f'(s)| ds
\end{align*}
and
\begin{align*}
|\dot{T}_{12}(\tau)f(\rho)|\lesssim& \langle \tau \rangle^{-2}\int_\rho^1 s|f'(s)| ds.
\end{align*}
Thus, an application of Lemma \ref{teclem1} yields the desired Strichartz estimates for $j=10,11,12,16,17$.
For $j=13$, we integrate by parts to obtain
\begin{align*}
&\quad\int_0^s(1-\chi_{i\omega}(t))t^{\frac{5}{2}}\frac{\O(\rho^0 \langle\omega \rangle^{\frac{3}{2}})}{(1-t)^{\frac{3}{2}-i\omega}}\widehat{\beta}_6(t,\rho,i\omega) dt 
\\
&=(1-\chi_{i\omega}(s))s^{\frac{5}{2}}\frac{\O(\rho^0\langle\omega \rangle^{\frac{1}{2}})}{(1-t)^{\frac{1}{2}-i\omega}} \widehat{\beta}_6(s,\rho,\lambda)
+\O(\rho^0\langle\omega\rangle^{\frac{1}{2}})\int_0^s\frac{\partial_t\left[(1-\chi_{i\omega}(t))t^{\frac{5}{2}}\widehat{\beta}_6(t,\rho,i\omega)\right]}{(1-t)^{\frac{1}{2}-i\omega}} dt.
\end{align*}
So
\begin{align*}
T_{13}(\tau)f(\rho)&=\int_\R e^{\tau i\omega}\chi_{i \omega}(\rho)(1-\rho^2)^{\frac{1}{4}-\frac{i\omega}{2}}
\\
&\quad\times \int_\rho^1f'(s)\int_0^s(1-\chi_{i\omega}(t))t^{\frac{5}{2}}\frac{\O(\rho^0\langle\omega \rangle^{\frac{3}{2}})}{(1-t)^{\frac{3}{2}-i\omega}}\widehat{\beta}_6(t,\rho,i\omega) dt ds d \omega
\end{align*}
satisfies
\begin{align*}
T_{13}(\tau)f(\rho)&=\int_\rho^1 s^{\frac{5}{2}}f'(s)\int_\R e^{i\omega \tau}\chi_{i \omega}(\rho)(1-\rho^2)^{\frac{1}{4}-\frac{i \omega}{2}} \frac{(1-\chi_{i\omega}(s))}{(1-s)^{\frac{1}{2}-i\omega }}
\\
&\quad\times\O(\rho^{-\frac{11}{24}}\langle\omega\rangle^{\frac{1}{24}})\widehat{\beta}_6(s,\rho,i\omega) d \omega ds
\\
&\quad+\int_\rho^1 f'(s)\int_0^s \int_\R e^{i\omega \tau}\chi_{i \omega}(\rho)(1-\rho^2)^{\frac{1}{4}-\frac{i \omega}{2}} (1-t)^{-\frac{1}{2}+i\omega} \O(\rho^{-\frac{11}{24}}\langle\omega\rangle^{\frac{1}{24}})
\\
&\quad\times\partial_t\left[(1-\chi_{i\omega}(t))t^{\frac{5}{2}}\widehat{\beta}(t,\rho,i \omega)\right]d \omega dt ds \\
&=:T^1_{13}(\tau)f(\rho)+T^2_{13}(\tau)f(\rho).
\end{align*}
By using that $\widehat{\beta}_6(.,.,i\omega)$ behaves like $\O(\langle\omega\rangle^{-1})$ in $\omega$, Lemma \ref{osci2}, Minkoswki's inequality, and Lemma \ref{teclem3}, we obtain that
\begin{align*}
\|T_{13}^1(\tau)f\|_{L^{12}(\B^6_1)}&\lesssim \int_0^1 s^{\frac{5}{2}}|f'(s)|(1-s)^{-\frac{1}{2}}\langle\tau-\log(1-s)\rangle^{-2}
\\
&\quad\times\|\rho^{-\frac{11}{24}}|\tau-\frac{1}{2}\log(1-\rho^2)+\log(1-s)|^{-\frac{1}{24}}\|_{L_\rho^{12}(\B^6_1)} ds
\\
&\lesssim \int_0^1 s^{\frac{5}{2}}|f'(s)|(1-s)^{-\frac{1}{2}}\langle\tau+\log(1-s)\rangle^{-2}
|\tau+\log(1-s)|^{-\frac{1}{24}}ds.
\end{align*}
Thus, the computation in the proof of Proposition 4.6 in \cite{DonRao20} shows that
\begin{align*}
\|T_{13}^1(.)f\|_{L^2(\R_+) L^{12}(\B^6_1)}\lesssim \|f\|_{H^1(\B^6_1)}.
\end{align*}
A similar calculation shows
\begin{align*}
\|T_{13}^2(.)f\|_{L^2(\R_+) L^{12}(\B^6_1)}\lesssim \|f\|_{H^1(\B^6_1)}
\end{align*}
and as the other endpoint estimates for $T_{13}^1$ and $T_{13}^2$ can be obtained analogously, an interpolation argument yields
\begin{align*}
\|T_{13}(.)f\|_{L^p(\R_+) L^q(\B^6_1)}\lesssim \|f\|_{H^1(\B^6_1)}
\end{align*}
for any admissible pair $(p,q)$.
To derive the estimate
\begin{align}
\|\dot{T}_{13}(.)f\|_{L^p(\R_+) L^{q}(\B^6_1)}\lesssim \|f\|_{H^2(\B^6_1)}
\end{align}
we first note that we can again take the limit $\varepsilon\to 0$ and interchange the order of integration. The claim then essentially follows by performing one more integration by parts.  We illustrate this for the boundary part $\dot{T}^1_{13}(\tau)f(\rho)$, given  by 
\begin{align}\label{dotT113}
\dot{T}^1_{13}(\tau)f(\rho)&=\int_\rho^1 s^{\frac{5}{2}}f'(s)\int_\R i \omega e^{i\omega \tau}\chi_{i \omega}(\rho)(1-\rho^2)^{\frac{1}{4}-\frac{i \omega}{2}} \frac{(1-\chi_{i\omega}(s))}{(1-s)^{\frac{1}{2}-i\omega }}\nonumber
\\
&\quad\times\O(\rho^{-\frac{11}{24}}\langle\omega\rangle^{\frac{1}{24}})\widehat{\beta}_6(s,\rho,i\omega) d \omega ds\nonumber
\\
&=-f'(\rho)\int_\R  e^{i\omega \tau}\chi_{i \omega}(\rho)(1-\rho^2)^{\frac{1}{4}-\frac{i \omega}{2}} (1-\chi_{i\omega}(\rho))(1-\rho)^{\frac{1}{2}+i\omega }\nonumber
\\
&\quad\times \O(\rho^{\frac{5}{2}-\frac{11}{24}}\langle\omega\rangle^{\frac{1}{24}})\widehat{\beta}_6(\rho,\rho,i \omega) d \omega \nonumber
\\
&\quad+ \int_\rho^1 \int_\R e^{i\omega \tau}\chi_{i \omega}(\rho)(1-\rho^2)^{\frac{1}{4}-\frac{i \omega}{2}} (1-s)^{\frac{1}{2}+i\omega }
\O(\rho^{-\frac{11}{24}}\langle\omega\rangle^{\frac{1}{24}}) \nonumber
\\
&\quad \times\partial_s\left[(1-\chi_{i\omega}(s))s^{\frac{5}{2}}f'(s)\widehat{\beta}(s,\rho,i\omega)\right]d \omega ds
\end{align} 
We now readily calculate, that the boundary term in \eqref{dotT113} can be bounded in absolute value by
$\rho^{2}|f'(\rho)|$, while the integral term can by controlled by the same means as $T_{13}$.
Consequently, an application of Lemma \ref{teclem1} establishes the estimates on $\dot{T}_{13}$ and 
we continue with $T_{14}$. For $T_{14}$ we can do the same manipulations as with $T_{13}$ which yields
\begin{align*}
T_{14}(\tau)f(\rho)&= \rho^{-\frac{5}{2}}\int_\rho^1 f'(s)\int_\R e^{i\tau\omega}(1-\chi_{i\omega}(\rho))(1+\rho)^{\frac{3}{2}-i \omega}
\\
&\quad\times\frac{(1-\chi_{i \omega}(s))s^{\frac{5}{2}}\O(\langle\omega\rangle^{-2})}{(1-s)^{\frac{1}{2}-i \omega}}\widehat{\gamma}_3(\rho,s,i \omega) d\omega ds 
\\
&\quad+ \rho^{-\frac{5}{2}}\int_\rho^1 f'(s) \int_0^s\int_\R e^{i\tau\omega}(1+\rho)^{\frac{3}{2}-i \omega}(1-\chi_{i\omega}(\rho))\O(\langle\omega\rangle^{-2})
\\
&\quad\times(1-t)^{-\frac{1}{2}+i \omega}\partial_t\left[(1-\chi_{i \omega}(t))t^{\frac{5}{2}}\widehat{\gamma}_3(\rho,t,i \omega)\right]d \omega dt ds
\\
&=:T_{14}^1(\tau)f(\rho)+T_{14}^2(\tau)f(\rho).
\end{align*}
Now, the $s$-integrand in $T^1_{14}$
given by \[
f'(s)\int_\R e^{i\tau\omega}(1-\chi_{i\omega}(\rho))(1+\rho)^{\frac{3}{2}-i \omega}
\frac{(1-\chi_{i \omega}(s))s^{\frac{5}{2}}\O(\langle\omega\rangle^{-2})}{(1-s)^{\frac{1}{2}-i \omega}}\widehat{\gamma}_3(\rho,s,i \omega) d\omega=:I_{14}^1(\rho,s,\tau)
\]
satisfies the two estimates
\begin{align*}
\left|I_{14}^1(\rho,s,\tau)\right|\lesssim \rho^{2} s^{\frac{5}{2}}|f'(s)|(1-s)^{-\frac{1}{2}}
\langle\tau-\log(1+\rho)+\log(1-s)\rangle^{-2}ds
\end{align*}
and
\begin{align*}
|I_{14}^1(\rho,s,\tau)|&\lesssim \rho^{3} s^{\frac{5}{2}}|f'(s)|(1-s)^{-\frac{1}{2}}
\\
&\quad\times\langle\tau-\log(1+\rho)+\log(1-s)\rangle^{-2}|\tau-\log(1+\rho)+\log(1-s)|^{-1}
ds,
\end{align*}
thanks to Lemmas \ref{osci3} and \ref{osci4}.
From these estimates, we infer that
\begin{align*}
|T_{14}^1(\tau)f(\rho)|&\lesssim \rho^{-\frac{11}{24}}\int_0^1 s^{\frac{5}{2}}|f'(s)|(1-s)^{-\frac{1}{2}}
\\
&\quad\times\langle\tau+\log(1-s)\rangle^{-2}|\tau-\log(1+\rho)+\log(1-s)|^{-\frac{1}{24}} ds.
\end{align*}
So, we can again use Minkoswki's inequality as well as Lemma \ref{teclem2} to conclude that
\begin{align*}
\|T_{14}^1(\tau)f\|_{L^{12}(\B^6_1)}\lesssim&\int_0^1 s^{\frac{5}{2}}|f'(s)|(1-s^2)
\langle\tau+\log(1-s)\rangle^{-2}|\tau+\log(1-s)|^{-\frac{1}{24}} ds .
\end{align*}
Again, by slightly modifying our argument above, we obtain the full estimate
\begin{align*}
\|T_{14}(.)f\|_{L^p(\R_+) L^{q}(\B^6_1)}\lesssim \|f\|_{H^1(\B^6_1)}
\end{align*}
and one more integration by parts combined with similar reasoning also yields
\begin{align}
\|\dot{T}_{14}(.)f\|_{L^p(\R_+) L^{q}(\B^6_1)}\lesssim \|f\|_{H^2(\B^6_1)}.
\end{align}
As the Strichartz estimates for $T_{15}$ and $\dot{T}_{15}$
can be obtained similarly, only $T_{18}$ and $\dot{T}_{18}$ remain to be dealt with.
Therefore, note that
\begin{align*}
T_{18}(\tau)f(\rho)&=\rho^{-\frac{5}{2}}\int_0^\rho f'(s)\int_0^s\int_\R e^{i \omega \tau }(1-\chi_{i \omega}(\rho))(1-\rho)^{\frac{3}{2}-i\omega}(1-\chi_{i \omega}(t))t^{\frac{5}{2}}
\\
&\quad\times\big[(1+t)^{-\frac{3}{2}+i \omega}\O(\langle\omega\rangle^{-1})\widehat{\gamma}_5(\rho,t,i\omega)
\\
&\quad+(1-t)^{-\frac{3}{2}+i \omega} \O(\langle\omega\rangle^{-1})\widehat{\gamma}_6(\rho,t,i\omega)\big] d\omega d t ds.
\end{align*}

Thus, as $\widehat{\gamma}_j(.,.,i \omega) $ behaves like $\O(\langle\omega\rangle^{-1})$ in $\omega$, applying  Lemma \ref{osci3} leads to the estimate
\begin{align*}
|T_{18}(\tau)f(\rho)|&\lesssim\rho^{-\frac{3}{2}}\int_0^\rho|f'(s)|s^{\frac{5}{2}}\int_0^s
\langle\tau-\log(1-\rho)+\log(1-t)\rangle^{-2}
\\
&\quad+\langle\tau-\log(1-\rho)+\log(1+t)\rangle^{-2} dt ds
\\
&\lesssim\langle \tau \rangle^{-2}\rho^{-\frac{1}{2}}\int_0^\rho |f'(s)|s^{\frac{3}{2}}\int_0^s 1 dt ds
\\
&\lesssim \langle \tau \rangle^{-2}\|f\|_{H^1(\B^6_1)},
\end{align*}
where the last step follows from the Cauchy-Schwarz inequality.
Finally, to control $\dot{T}_{18}$, one has to perform an integration by parts in the $t$ integral and then do a similar calculation.
\end{proof}
Due to these Lemmas we can now easily prove a first set of Strichartz estimates for the semigroup $\Sf$.
\begin{prop}\label{strichatzestimate1}
Let $p\in [2,\infty]$ and $q\in [6,12]$ be such that $\frac{1}{p}+\frac{6}{q}=1$. Then the estimate
\begin{equation*}
\left\| [\Sf(\tau)(\I-\Pf)\ff]_1\right\|_{L^p_\tau(\R_+)L^q(\B_1^6)}\lesssim\|(\I-\Pf)\ff\|_{\mathcal{H}}
\end{equation*}
holds true for all $\ff\in \mathcal{H}$. Furthermore, we also have
\begin{equation*}
\left\| \int_0^\tau [\Sf(\tau-\sigma)(\I-\Pf)\hfh(\sigma,.)]_1 d \sigma\right\|_{L^p_\tau(I)L^q(\B_1^6)} \lesssim\|(\I-\Pf)\hfh\|_{L^1(I)\mathcal{H}}
\end{equation*}
for any interval $0\in I\subset[0,\infty) $ and all $\hfh \in L^1(\R_+,\mathcal{H}).$
\end{prop}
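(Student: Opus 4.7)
The plan is to start from the Laplace representation \eqref{semirep}, insert the already constructed decomposition
\[
\mathcal{R}(F_\lambda)(\rho,\lambda)-\mathcal{R}_{\mathrm{f}}(F_\lambda)(\rho,\lambda) = V_1(F_\lambda)(\rho,\lambda)+V_2(F_\lambda)(\rho,\lambda)+V_3(F_\lambda)(\rho,\lambda),
\]
push the contour from $\Re \lambda=\varepsilon$ down to $\Re\lambda=0$, and then identify the resulting $\lambda$-integrals with a finite sum of the operators $T_j$ and $\dot T_j$ studied in Section~4.2. The crucial point is that $\mathcal{R}$ and $\mathcal{R}_{\mathrm{f}}$ are linear in their argument and the $\lambda$-dependence of $F_\lambda$ is affine:
\[
F_\lambda(\rho)=\bigl(\widetilde f_2(\rho)+2\widetilde f_1(\rho)+\rho\,\widetilde f_1'(\rho)\bigr)+\lambda\,\widetilde f_1(\rho),
\]
so the $\lambda$-independent piece of $F_\lambda$ produces $T_j$-contributions and the $\lambda$-linear piece produces $\dot T_j$-contributions.

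First I would fix a radial $\ff\in C^3\times C^2(\overline{\B^6_1})$ and set $\widetilde \ff:=(\I-\Pf)\ff$. Since $\gf$ is smooth, $\widetilde\ff$ lies in $C^3\times C^2(\overline{\B^6_1})\cap D(\Lf)$, so \eqref{semirep} applies. The holomorphic extension of $\mathcal R(F_\lambda)-\mathcal R_{\mathrm f}(F_\lambda)$ to the half-strip $0\le\Re\lambda\le\varepsilon$, combined with the polynomial decay as $|\Im\lambda|\to\infty$ that follows from the symbol estimates of Section~3, justifies moving the contour onto the imaginary axis. Substituting the decompositions of $V_1,V_2,V_3$ and splitting $F_\lambda$ as above then expresses $[\Sf(\tau)\widetilde\ff]_1$ in the schematic form
\[
[\Sf_0(\tau)\widetilde\ff]_1(\rho)+\sum_{j=1}^{18}T_j(\tau)h_j(\rho)+\sum_{j=1}^{18}c_j\,\dot T_j(\tau)\widetilde f_1(\rho),
\]
where each $h_j$ is a fixed linear combination of $\widetilde f_1,\widetilde f_2$ and $\rho\,\widetilde f_1'$ and each $c_j\in\{0,1\}$. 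A direct check (using Lemma~\ref{helplemoverrho} to control $\rho\,\widetilde f_1'$) gives $\|h_j\|_{H^1(\B^6_1)}\lesssim\|\widetilde\ff\|_{\mathcal H}$ and $\|\widetilde f_1\|_{H^2(\B^6_1)}\le\|\widetilde\ff\|_{\mathcal H}$, so Lemmas \ref{kernel1}, \ref{kernel2}, \ref{kernel3} combined with Lemma \ref{Strichart} for the $\Sf_0$-term yield the homogeneous estimate on the dense subset $C^3\times C^2(\overline{\B^6_1})\subset\mathcal H$, and a density argument extends it to all $\ff\in\mathcal H$.

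For the inhomogeneous bound I would simply rerun the Minkowski-inequality computation at the end of the proof of Lemma~\ref{Strichart}: for $I=[0,\tau_0)\subset[0,\infty)$ one has
\[
\left\|\int_0^\tau[\Sf(\tau-\sigma)(\I-\Pf)\hfh(\sigma)]_1\,d\sigma\right\|_{L^p_\tau(I)L^q(\B^6_1)} \le \int_0^{\tau_0}\|[\Sf(\cdot)(\I-\Pf)\hfh(\sigma)]_1\|_{L^p_\tau(\R_+)L^q(\B^6_1)}\,d\sigma,
\]
and the homogeneous estimate applied pointwise in $\sigma$ finishes the proof.

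The main technical obstacle will be the bookkeeping associated with the splitting: verifying that every contribution of $V_1,V_2,V_3$ to \eqref{semirep} really does reassemble into a finite sum of $T_j$ and $\dot T_j$ acting on the claimed inputs, and that the order of Laplace inversion, the $\varepsilon\to 0^+$ limit, and the nested integrals appearing in the $G_j$ may be freely interchanged. The interchange issue is however already implicit in the absolute-convergence statements built into the definitions and proofs of Lemmas \ref{kernel1}--\ref{kernel3}, so in practice what remains is purely an accounting exercise.
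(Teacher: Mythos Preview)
Your proposal is correct and follows essentially the same route as the paper: use the representation \eqref{semirep}, split $F_\lambda$ into its $\lambda$-independent and $\lambda$-linear parts so that the difference $\mathcal R(F_\lambda)-\mathcal R_{\mathrm f}(F_\lambda)$ becomes a finite sum of $T_j$ and $\dot T_j$ terms, apply Lemmas \ref{kernel1}, \ref{kernel2}, \ref{kernel3} together with the free estimate of Lemma \ref{Strichart}, extend by density, and handle the inhomogeneous bound via the Minkowski argument from Lemma \ref{Strichart}. The additional detail you supply (the $H^1$-control of $\rho\widetilde f_1'$ and the contour-shifting justification) is useful bookkeeping that the paper leaves implicit.
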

\begin{proof}
Due to the identity
\begin{align*}
[\Sf(\tau) (\I-\Pf)\ff]_1(\rho)&=[\Sf_0(\tau)(\I-\Pf)\ff]_1(\rho)
\\
&\quad+\frac{1}{2\pi i}\lim_{N \to \infty} \int_{\varepsilon-i N}^{\varepsilon+ i N}e^{\lambda\tau}(\mathcal{R}(F_\lambda)(\rho,\lambda)-\mathcal{R}_{\mathrm{f}}(F_\lambda)(\rho,\lambda)) d\lambda,
\end{align*}
the homogeneous estimate follows from the ``free'' Strichartz estimates established in Lemma \ref{Strichart} combined with Lemmas \ref{kernel1}, \ref{kernel2} and \ref{kernel3} via a density argument.
The inhomogeneous one can then be established as previously done in the proof of Lemma \ref{Strichart}.
\end{proof}
\section{More Strichartz estimates}
The Strichartz estimates from the previous section will not be sufficient to control the full nonlinear equation. However, we can remedy this by also deriving the Strichartz estimate
\begin{equation}\label{strest}
\|[\Sf(\tau)(\I-\Pf)\ff]_1\|_{L^2_\tau(\R_+)\dot{W}^{1,4}(\B^6_1)}\lesssim
\|(\I-\Pf)\ff\|_{\mathcal H}.
\end{equation}
Our strategy to derive this estimate works for all pairs $(p,q)$ with $p\in [2,\infty)$ and $q\in(3,4]$. However, as we only need the pair $(p,q)=(2,4)$ we only prove the estimate \eqref{strest}. For the endpoint pair $(p,q)=(\infty,3)$ one would have to take into account the parity of the $\O(\langle\omega\rangle^{-1})$ terms, which appear throughout this paper.
To establish \eqref{strest}, we will have to interchange limits and differentiation with oscillatory integrals which are not absolutely convergent. To do so we will make use of the following lemma and variations thereof.
\begin{lem}\label{difinterchange}
Let $f(\omega)=\O\left(\langle\omega\rangle^{-1-\alpha}\right)$ with $\alpha > 0$.
Then
\begin{align*}
\partial_a \int_\R e^{i a \omega } f(\omega) d \omega= i \int_\R \omega e^{i a \omega}f(\omega) d \omega
\end{align*}
for $a \in \R\setminus\{0\}$.
\end{lem}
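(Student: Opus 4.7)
The plan is to circumvent the non-absolute convergence of the integrals by integrating by parts, which is legitimate because the symbol hypothesis $f(\omega)=\O(\langle\omega\rangle^{-1-\alpha})$ automatically implies $|f^{(n)}(\omega)|\lesssim_n\langle\omega\rangle^{-1-\alpha-n}$ for every $n\in\mathbb N_0$, so $f$ and all its derivatives are smooth and vanish at infinity.

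First, since $a\neq 0$, I would use $\partial_\omega e^{ia\omega}=ia\,e^{ia\omega}$ and integrate by parts once in the integral on the left to obtain
\[
  \int_\R e^{ia\omega} f(\omega)\,d\omega \;=\; \frac{i}{a}\int_\R e^{ia\omega} f'(\omega)\,d\omega,
\]
with no boundary contribution. The new integrand is now absolutely integrable (its modulus is $\O(\langle\omega\rangle^{-2-\alpha})$), and the formal $a$-derivative $i\omega\,e^{ia\omega} f'(\omega)$ is also absolutely integrable because $|\omega f'(\omega)|\lesssim\langle\omega\rangle^{-1-\alpha}$ with $\alpha>0$. Standard dominated convergence therefore legitimises differentiation under the integral sign, giving
\[
  \partial_a \int_\R e^{ia\omega} f(\omega)\,d\omega \;=\; -\frac{i}{a^2}\int_\R e^{ia\omega} f'(\omega)\,d\omega - \frac{1}{a}\int_\R \omega e^{ia\omega} f'(\omega)\,d\omega.
\]

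Next, I would apply the same idea to the right-hand side of the claimed formula. Since $\omega f(\omega)=\O(\langle\omega\rangle^{-\alpha})$ vanishes at infinity, an integration by parts yields
\[
  i\int_\R \omega e^{ia\omega} f(\omega)\,d\omega \;=\; -\frac{1}{a}\int_\R e^{ia\omega}\bigl(f(\omega)+\omega f'(\omega)\bigr)\,d\omega,
\]
where each summand in the integrand is absolutely integrable. Subtracting this from the previous display, the terms involving $\omega f'(\omega)$ cancel, and what remains to be shown reduces to
\[
  \frac{1}{a}\int_\R e^{ia\omega} f(\omega)\,d\omega \;=\; \frac{i}{a^2}\int_\R e^{ia\omega} f'(\omega)\,d\omega,
\]
which is precisely the first integration-by-parts identity multiplied by $1/a$.

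The only (minor) technical obstacle is justifying the differentiation under the integral sign uniformly in $a$; this is handled by noting that on any compact subset $K\subset\R\setminus\{0\}$ the dominating function $\langle\omega\rangle^{-1-\alpha}\in L^1(\R)$ controls the $a$-derivative integrand uniformly for $a\in K$. Readers preferring a more mechanical route may instead introduce a cutoff $\chi_R(\omega)=\chi(\omega/R)$ with $\chi\in C_c^\infty(\R)$, $\chi\equiv 1$ near $0$: the identity is trivial for the truncated integrands, and after integrating by parts once in each integral to restore absolute convergence, the limit $R\to\infty$ may be taken term by term.
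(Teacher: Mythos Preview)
Your argument is correct and follows precisely the strategy the paper has in mind: the paper does not spell out a proof but refers to Lemma~5.1 in \cite{Don17}, whose method is exactly the one you use---integrate by parts to gain enough decay in $\omega$ so that both integrands become absolutely convergent, differentiate under the integral sign by dominated convergence, and then undo the integration by parts. Your presentation makes this explicit and is entirely in line with the cited reference.
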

\begin{proof}
This can be shown as Lemma 5.1 in \cite{Don17}.
\end{proof}
We will also need one more technical Lemma.
\begin{lem}\label{teclem5}
The estimates
\begin{align*}
\left\||.|^{\frac{3}{4}}f\right\|_{L^{4}(\B^6_1)}\lesssim \|f\|_{H^1(\B^6_1)}
\end{align*}
and 
\begin{align*}
\left\|\int_\rho^1 s^{\frac{3}{4}}f'(s)ds\right\|_{L^{4}_\rho(\B^6_1)}\lesssim \|f\|_{H^1(\B^6_1)}
\end{align*}
hold true for all $f\in C^1(\overline{\B^6_1})$.
Further, also the estimate
\begin{align*}
\left\|f\right\|_{L^{4}(\B^6_1)}\lesssim \|f\|_{H^2(\B^6_1)}
\end{align*}
is true for all $f\in C^2(\overline{\B^6_1})$.
\end{lem}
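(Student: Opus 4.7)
The plan is to follow the template of Lemma \ref{teclem1} exactly, handling each of the three estimates in turn by reducing the $L^4(\B^6_1)$-norm to an $L^4$-norm on a lower-dimensional ball with an appropriate radial weight, invoking the Sobolev embedding in that lower dimension, and then controlling the resulting weighted $H^1$- or $H^2$-norm using Lemma \ref{helplemoverrho} together with, where necessary, one short integration by parts.

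For the first bound I would observe that
\[
  \bigl\||.|^{\frac{3}{4}}f\bigr\|_{L^4(\B^6_1)}^4 \simeq \int_0^1 |s^{\frac{3}{2}}f(s)|^4 s^2 \, ds \simeq \bigl\||.|^{\frac{3}{2}}f\bigr\|_{L^4(\B^3_1)}^4,
\]
and then apply the three-dimensional Sobolev embedding $H^1(\B^3_1)\hookrightarrow L^4(\B^3_1)$. Expanding the weighted $H^1(\B^3_1)$-norm of $s^{3/2}f(s)$ produces, after the pointwise estimate $\bigl|\tfrac{3}{2}s^{\frac{1}{2}}f+s^{\frac{3}{2}}f'\bigr|^2\lesssim s|f|^2+s^3|f'|^2$, the three terms $\int_0^1 s^5|f|^2\,ds$, $\int_0^1 s^5|f'|^2\,ds$, and $\int_0^1 s^3|f|^2\,ds$. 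The first two are immediately controlled by $\|f\|_{H^1(\B^6_1)}^2$. For the last one, which is the only genuinely delicate term, I integrate by parts using $s^3=\partial_s(s^4/4)$, apply Cauchy--Schwarz and AM--GM to bound it by $|f(1)|^2+\|f\|_{\dot H^1(\B^6_1)}^2$, and control $|f(1)|$ via a one-dimensional Sobolev trace argument on $[\tfrac{1}{2},1]$, where the radial weight $s^5$ is bounded below.

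For the middle estimate I would avoid the dimensional reduction altogether and simply apply Minkowski's integral inequality, using that $\|\mathbf 1_{[0,s]}\|_{L^4(\B^6_1)}\simeq s^{3/2}$. This reduces matters to the bound $\int_0^1 s^{9/4}|f'(s)|\,ds\lesssim \|f\|_{H^1(\B^6_1)}$, which follows from a single Cauchy--Schwarz pairing $s^{5/2}|f'(s)|$ with $s^{-1/4}$, using the integrability of $s^{-1/2}$ on $(0,1)$ and the identification $\int_0^1 s^5|f'|^2\,ds\simeq \|f\|_{\dot H^1(\B^6_1)}^2$.

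For the third estimate I would again mirror the first, writing
\[
  \|f\|_{L^4(\B^6_1)}^4\simeq \int_0^1|sf(s)|^4 s\,ds\simeq \bigl\||.|f\bigr\|_{L^4(\B^2_1)}^4,
\]
applying $H^1(\B^2_1)\hookrightarrow L^4(\B^2_1)$, and expanding the resulting $\||.|f\|_{H^1(\B^2_1)}^2$ into a sum of terms of the form $\int_0^1 s|f|^2\,ds$, $\int_0^1 s^3|f|^2\,ds$, and $\int_0^1 s^3|f'|^2\,ds$. All three are controlled by $\|f\|_{H^2(\B^6_1)}^2$ directly via Lemma \ref{helplemoverrho}, using $s^3\leq s$ on $[0,1]$ for the middle term. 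The main obstacle in the whole proof is the mismatched-weight term $\int_0^1 s^3|f|^2\,ds$ arising in the first estimate, since there the right-hand side is $H^1$ rather than $H^2$ and Lemma \ref{helplemoverrho} therefore does not apply directly; the integration-by-parts-plus-trace device is precisely what is needed to trade it for quantities already present in $\|f\|_{H^1(\B^6_1)}$.
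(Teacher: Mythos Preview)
Your proposal is correct and, for the first and third estimates, follows exactly the template the paper invokes (it simply says ``This follows in the same way as Lemma~\ref{teclem1}''): reduce the $L^4(\B^6_1)$-norm to an $L^4$-norm on a lower-dimensional ball via a radial weight, apply Sobolev embedding there, and control the resulting weighted integrals by the Hardy-type arguments of Lemma~\ref{helplemoverrho}. Your handling of the awkward term $\int_0^1 s^3|f|^2\,ds$ in the first estimate is precisely the integration-by-parts device underlying those Hardy bounds. For the second estimate your Minkowski-plus-Cauchy--Schwarz argument is a genuine shortcut: following Lemma~\ref{teclem1} verbatim one would again pass to a lower-dimensional ball, apply Sobolev embedding, and then estimate the $L^2$ and $\dot H^1$ parts of the weighted integral operator separately, which requires differentiating under the integral and controlling several pieces; your route collapses this to two lines. (As an aside, the third estimate is nothing but the standard embedding $H^2(\B^6_1)\hookrightarrow L^4(\B^6_1)$, so both your reduction and the paper's do more work than strictly necessary there.)
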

\begin{proof}
This follows in the same way as Lemma \ref{teclem1}.
\end{proof}
Suppose again that $\ff \in C^3\times C^2(\overline{\B^6_1})$ and set $\widetilde{\ff}=(\I-\Pf)\ff$. Then, by using variations of Lemma \ref{difinterchange}, we obtain
\begin{align*}
\partial_\rho[\Sf(\tau)\widetilde{\ff}]_1(\rho)=\partial_\rho[\Sf_0(\tau)\widetilde{\ff}]_1(\rho)+\frac{1}{2\pi i}\lim_{N \to \infty}\int_{\varepsilon-iN}^{\varepsilon+ i N} e^{\lambda \tau}\partial_\rho[\mathcal{R}(F_\lambda)(\rho,\lambda)-\mathcal{R}_{\mathrm{f}}(F_\lambda) (\rho,\lambda) ]d \lambda
\end{align*}
for any $\varepsilon >0$.
To be more precise, we cannot apply Lemma \ref{difinterchange} straight away, but instead have to perform integrations by parts to obtain enough decay in $\omega$. After that, we can apply the Lemma and undo the integrations by parts.
Recall that $\mathcal{R}(F_\lambda)(\rho,\lambda)$ is of the form
\begin{align}
\mathcal{R}(F_\lambda)(\rho,\lambda)&= u_0(\rho,\lambda)\left(c(F_\lambda,\lambda)-U_1(\rho,\lambda)F_\lambda(\rho)-\int_\rho^{1}U_1(s,\lambda)F'_\lambda(s)ds \right)\nonumber
\\
&\quad+u_1(\rho,\lambda)\left(U_0(\rho,\lambda)F_\lambda(\rho)- \int_0^\rho U_0(s,\lambda)F'_\lambda(s) d s\right)
\end{align}
and therefore 
\begin{align}\label{u'}
\mathcal{R}(F_\lambda)'(\rho,\lambda)&= u'_0(\rho,\lambda)\left(c(F_\lambda,\lambda)-U_1(\rho,\lambda)F_\lambda(\rho)-\int_\rho^{1}U_1(s,\lambda)F'_\lambda(s)ds \right)\nonumber
\\
&\quad+u_1'(\rho,\lambda)\left(U_0(\rho,\lambda)F_\lambda(\rho)- \int_0^\rho U_0(s,\lambda)F'_\lambda(s) d s\right).
\end{align}
Analogously,
\begin{align*}
\mathcal{R}_{\mathrm{f}}(F_\lambda)'(\rho,\lambda)=& u_{\mathrm{f}_0}'(\rho,\lambda)\left(c_{\mathrm{f}}(F_\lambda,\lambda)-U_{\mathrm{f}_1}(\rho,\lambda)F_\lambda(\rho)-\int_\rho^{1}U_{\mathrm{f}_1}(s,\lambda)F'_\lambda(s)ds \right)
\\
&+u_{\mathrm{f}_1}'(\rho,\lambda)\left(U_{\mathrm{f}_0}(\rho,\lambda)F_\lambda(\rho)- \int_0^\rho U_{\mathrm{f}_0}(s,\lambda)F'_\lambda(s) d s\right).
\end{align*}
We will now proceed in a similar fashion as before.
Motivated by this let $f\in C^2(\overline{\B^6_1})$ and consider
\[V_1'(f)(\rho,\lambda):
=u'_0(\rho,\lambda)c(f,\lambda)-u'_{\mathrm{f}_0}(\rho,\lambda)c_{\mathrm{f}}(f,\lambda).
\]
\begin{lem}\label{decompV'}
We can decompose $V_1'$ as
\[V_1'(f)(\rho,\lambda)=f(1)\left(H_1(\rho,\lambda)+H_2(\rho,\lambda)\right)
\]
with
\begin{align*}
H_1(\rho,\lambda)=&\left[\rho^{-1}+\rho(1-\rho^2)^{-1}\O(\langle\omega\rangle) \right]\chi_\lambda(\rho)(1-\rho^2)^{\frac{1}{4}-\frac{\lambda}{2}}\O(\rho^0\langle\omega\rangle^{-\frac{3}{2}})
\\
H_2(\rho,\lambda)=&\left[\O(\rho^{-1}(1-\rho)^{-1})+(1-\rho)^{-1}\O(\langle\omega\rangle)\right](1-\chi_{\lambda}(\rho))\rho^{-\frac{5}{2}}(1-\rho)^{\frac{3}{2}-\lambda}
\\
&\times[\O(\langle\omega\rangle^{-4})+
\O(\rho^{0}(1-\rho)\langle\omega\rangle^{-4})]
\\
&+\left[\O(\rho^{-1})+(1+\rho)^{-1}\O(\langle\omega\rangle)\right](1-\chi_{\lambda}(\rho))\rho^{-\frac{5}{2}}(1+\rho)^{\frac{3}{2}-\lambda}
\\
&\times[\O(\langle\omega\rangle^{-4})+
\O(\rho^{0}(1-\rho)\langle\omega\rangle^{-4})]
\\
&+(1-\chi_{\lambda}(\rho))\rho^{-\frac{5}{2}}(1+\rho)^{\frac{3}{2}-\lambda}\O(\rho^{-1}(1-\rho)^0\langle\omega\rangle^{-4}).
\end{align*}
\end{lem}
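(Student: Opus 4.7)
The plan is to derive the decomposition directly from the representation of $V_1$ established in Lemma \ref{decomp1} (as rewritten in Lemma \ref{symbol1}). Since $c(f,\lambda)$ and $c_{\mathrm{f}}(f,\lambda)$ are independent of $\rho$, one has
\[
V_1'(f)(\rho,\lambda)=u_0'(\rho,\lambda)c(f,\lambda)-u_{\mathrm{f}_0}'(\rho,\lambda)c_{\mathrm{f}}(f,\lambda)=\partial_\rho V_1(f)(\rho,\lambda),
\]
so the task reduces to computing the $\rho$-derivative of each summand in the decomposition of $V_1$ and verifying that the outcome has the claimed symbol structure.

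First I would differentiate the $\chi_\lambda$-supported piece
\[
f(1)\,\chi_\lambda(\rho)(1-\rho^2)^{\frac{1}{4}-\frac{\lambda}{2}}\O(\rho^0\langle\omega\rangle^{-\frac{3}{2}}).
\]
The product rule yields three contributions. Differentiating the symbol $\O(\rho^0\langle\omega\rangle^{-3/2})$ produces $\O(\rho^{-1}\langle\omega\rangle^{-3/2})$, which supplies the $\rho^{-1}$ term inside the bracket of $H_1$. Differentiating $(1-\rho^2)^{1/4-\lambda/2}$ produces a multiplicative factor $\rho(1-\rho^2)^{-1}\O(\langle\omega\rangle)$ (since $\lambda=\varepsilon+i\omega$), matching the second term of the $H_1$-bracket. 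The $\chi_\lambda'$-contribution, supported on $[\rho_\lambda,\widehat\rho_\lambda]$ with $|\chi_\lambda'|\lesssim\langle\omega\rangle$, has to be analysed separately.

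Next I would differentiate the two $(1-\chi_\lambda)$-supported pieces of the form
\[
f(1)(1-\chi_\lambda(\rho))\rho^{-\frac{5}{2}}(1\mp\rho)^{\frac{3}{2}-\lambda}\bigl[\O(\langle\omega\rangle^{-4})+\O(\rho^0(1-\rho)\langle\omega\rangle^{-4})\bigr].
\]
Differentiating $\rho^{-5/2}$ extracts a factor $-\tfrac{5}{2}\rho^{-1}$; differentiating $(1\mp\rho)^{3/2-\lambda}$ produces $\pm(\lambda-\tfrac{3}{2})(1\mp\rho)^{-1}=(1\mp\rho)^{-1}\O(\langle\omega\rangle)$; together these supply the outer brackets $[\O(\rho^{-1}(1-\rho)^{-1})+(1-\rho)^{-1}\O(\langle\omega\rangle)]$ and $[\O(\rho^{-1})+(1+\rho)^{-1}\O(\langle\omega\rangle)]$ appearing in $H_2$. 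Differentiating the inner symbol $\O(\rho^0(1-\rho)\langle\omega\rangle^{-4})$ gives an extra $\rho^{-1}(1-\rho)^0\langle\omega\rangle^{-4}$-type piece which, after collecting the $(1\pm\rho)^{3/2-\lambda}$ factors, yields the last summand of $H_2$.

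The main technical obstacle is the bookkeeping of the $\chi_\lambda'$-contributions, which appear with opposite signs from differentiating $\chi_\lambda$ on the inner side and $1-\chi_\lambda$ on the outer side. The key observation is that $\chi_\lambda'$ is supported on $[\rho_\lambda,\widehat\rho_\lambda]$, where $\rho\sim |a(\lambda)|^{-1}\sim\langle\omega\rangle^{-1}$; consequently, every factor of $\langle\omega\rangle$ generated by $\chi_\lambda'$ can be traded for a factor of $\rho^{-1}$ on its support. This permits the $\chi_\lambda'$-contribution from the inner side to be absorbed into the $\rho^{-1}\cdot\chi_\lambda(\rho)\O(\rho^0\langle\omega\rangle^{-3/2})$ component of $H_1$, and the contribution from the outer side to be absorbed into the $\rho^{-1}$- and $\rho^{-1}(1-\rho)^{-1}$-type brackets of $H_2$ (one uses in addition that the transition interval is contained in a neighbourhood bounded away from $\rho=1$, so the $(1\mp\rho)^{-1}$ factors are harmless there). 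Verifying this compatibility is the routine but slightly delicate point; once done, collecting all contributions produces precisely the stated decomposition.
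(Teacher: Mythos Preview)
Your approach is correct and is essentially the same as the paper's, which simply states that the decomposition follows immediately from Lemma \ref{decomp1} and the symbol forms in Lemma \ref{symbol1}. You have spelled out the differentiation of each symbol piece and the handling of the cutoff-derivative terms in more detail than the paper does; the content and route are the same.
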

\begin{proof}
This follows immediately from the decomposition in Lemma \ref{decomp1} and the corresponding symbol forms given in Lemma \ref{symbol1}.
\end{proof}
In accordance with our previous strategy, we define the operators $S_j $ and $\dot{S}_j$ for $j=1,2$ and $f\in C^2(\overline{\B^6_1})$  as
\begin{align*}
S_j(\tau)f(\rho):=\frac{1}{2\pi i}\lim_{\varepsilon\to 0^+}\lim_{N \to \infty} \int_{\varepsilon-i N}^{\varepsilon+ i N}e^{\lambda\tau}
f(1)H_j(\rho,\lambda) d \lambda
\end{align*}
and
\begin{align*}
\dot{S}_j(\tau)f(\rho):=\frac{1}{2\pi i}\lim_{\varepsilon\to 0^+}\lim_{N \to \infty} \int_{\varepsilon-i N}^{\varepsilon+ i N}\lambda e^{\lambda\tau} f(1)
H_j(\rho,\lambda) d \lambda.
\end{align*}

\begin{lem}\label{Sf1}
The estimates
	\begin{align*}
	\|S_j(.)f\|_{L^2(\R_+)L^4(\B^6_1)}\lesssim& \|f\|_{H^1(\B^6_1)}
	\end{align*}
	and 
	\begin{align*}
	\|\dot{S}_j(.)f\|_{L^2(\R_+)L^4(\B^6_1)}\lesssim& \|f\|_{H^2(\B^6_1)}
	\end{align*}
	hold for $j=1,2$ and all $f \in C^2(\overline{\B^6_1})$.
\end{lem}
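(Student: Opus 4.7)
The plan is to mimic the scheme used in the proofs of Lemmas \ref{kernel1}, \ref{kernel2} and \ref{kernel3}. For each of the summands of $H_j$ listed in Lemma \ref{decompV'} I first take the limit $\varepsilon\to 0^+$ in order to recast the Laplace-type contour integral as a Fourier integral in $\omega\in\R$, then apply one of the oscillatory integral bounds from Lemmas \ref{osci1}--\ref{osci4} to obtain a pointwise estimate in $(\tau,\rho)$, and finally assemble the resulting spacetime function into the $L^2_\tau L^4_\rho(\B^6_1)$ norm with the help of Lemmas \ref{teclem5} and, where necessary, \ref{teclem3}. The point is that the symbol decay $\O(\langle\omega\rangle^{-3/2})$ for $H_1$ and $\O(\langle\omega\rangle^{-4})$ for $H_2$ is just enough to absorb the extra $\O(\langle\omega\rangle)$ factors introduced by the $\rho$-derivative, provided they are paired with the matching $\rho$- or $(1-\rho)$-singularities.

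For $H_1$, the absolutely convergent summand $\rho^{-1}\chi_\lambda(\rho)(1-\rho^2)^{\frac14-\frac\lambda2}\O(\langle\omega\rangle^{-3/2})|f(1)|$ is controlled by Lemma \ref{osci1}, yielding
\[ |S_1(\tau)f(\rho)|\lesssim \rho^{-1}\langle\tau-\tfrac12\log(1-\rho^2)\rangle^{-2}|f(1)|, \]
which is in $L^2_\tau L^4_\rho(\B^6_1)$ because $\int_0^1\rho^{-4}\rho^5 d\rho<\infty$ and $|f(1)|\lesssim\|f\|_{H^1(\B^6_1)}$. For the second summand, which carries an extra $\O(\langle\omega\rangle)$, I use that on $\mathrm{supp}\,\chi_\lambda$ one has $\rho\lesssim\langle\omega\rangle^{-1}$ and $(1-\rho^2)^{-1}\lesssim 1$, so that the trade $\chi_\lambda(\rho)\langle\omega\rangle^\alpha\lesssim\chi_\lambda(\rho)\rho^{-\alpha}$ reduces the situation to Lemma \ref{osci2}, producing a bound of the form $\rho^{1-\alpha}|\tau-\tfrac12\log(1-\rho^2)|^{\alpha-1}\langle\tau\rangle^{-2}|f(1)|$ whose $L^2_\tau L^4_\rho$-norm is finite once $\alpha$ is chosen close enough to $1$. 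The $\dot S_1$-bound goes through in the same way: the extra $\lambda$ costs one additional power of $\langle\omega\rangle$, which is paid for by enlarging $\alpha$ further and by using $|f(1)|\lesssim\|f\|_{H^2(\B^6_1)}$ together with Lemmas \ref{teclem3} and \ref{teclem5}, exactly as was done for $\dot T_1$ in the proof of Lemma \ref{kernel1}.

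The multiplier $H_2$ is supported in $\mathrm{supp}(1-\chi_\lambda)$, which stays away from $\rho=0$, so the factors $\rho^{-1}$ and $\rho^{-1}(1-\rho)^{-1}$ do not generate a singularity in $\rho$. The dangerous terms carrying $(1\pm\rho)^{-1}\O(\langle\omega\rangle)$ are tamed by one integration by parts against $(1\pm\rho)^{\frac32-\lambda}$, which converts $(1-\rho)^{-1}\O(\langle\omega\rangle)$ into $(1-\rho)^0\O(\langle\omega\rangle^0)$ in precisely the same way as for $\dot T_6$ and $\dot T_9$ in Lemma \ref{kernel2}; Lemmas \ref{osci3} and \ref{osci4} then provide pointwise decay in $(\tau,\rho)$ whose $L^2_\tau L^4_\rho$-norm is easily estimated by Lemma \ref{teclem5}. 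The only real obstacle in the whole argument is bookkeeping: the $\O(\langle\omega\rangle^{-3/2})$ and $\O(\langle\omega\rangle^{-4})$ decay budgets have to be distributed among the $\rho$-symbol trade on $\mathrm{supp}\,\chi_\lambda$, the oscillatory bound, and the integration by parts in just the right proportion so that the remaining phase powers $|\tau-\tfrac12\log(1-\rho^2)|^{-\beta}$ or $|\tau-\log(1\pm\rho)|^{-\beta}$ satisfy $\beta<1$, which is exactly the situation handled by Lemma \ref{teclem3}. This balancing act is fully analogous to the delicate analysis of $T_{13}$ and $T_{14}$ in the proof of Lemma \ref{kernel3}.
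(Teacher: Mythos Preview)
Your approach is essentially the paper's (whose proof is a one-line reference to Lemma~\ref{kernel1} together with Lemma~\ref{teclem5}), and your more detailed sketch is correct in spirit. One small correction: the analogy you draw to $\dot T_6$, $\dot T_9$ for $H_2$ is misplaced, since $G_1$ and $G_2$---and hence $H_1$, $H_2$---contain no $s$-integral against which to integrate by parts. Instead, the factor $(1-\rho)^{-1}\O(\langle\omega\rangle)$ simply multiplies against $(1-\rho)^{\frac32-\lambda}\O(\langle\omega\rangle^{-4})$ to give $(1-\rho)^{\frac12-\lambda}\O(\langle\omega\rangle^{-3})$, and the resulting term $(1-\chi_\lambda(\rho))\rho^{-\frac52}(1-\rho)^{\frac12-\lambda}\O(\langle\omega\rangle^{-3})$ (together with its $\lambda$-multiplied counterpart for $\dot S_2$) is handled directly by Lemmas~\ref{osci3} and~\ref{osci4} after pulling out the harmless factor $\rho^{-\frac12}$, with no extra integration by parts needed; the remaining $\langle\tau-\log(1-\rho)\rangle^{-2}$ decay and the $(1-\rho)^{\frac12}$ weight are more than enough for the $L^2_\tau L^4_\rho$ bound.
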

\begin{proof}
	This Lemma can be established in a very similar fashion as Lemma \ref{kernel1}, by making use of Lemma \ref{teclem5}.
\end{proof}
We proceed in the same manner and use the previous decomposition of $V_2(f,\lambda)$ to decompose
\begin{align*}
V'_2(f)(\rho,\lambda):=&-u_0'(\rho,\lambda)U_1(\rho,\lambda)f(\rho)+u_1'(\rho,\lambda)U_0(\rho,\lambda)f(\rho)
\\
&+u_{\mathrm{f}_0}'(\rho,\lambda)U_{\mathrm{f}_1}(\rho,\lambda)f(\rho)-u_{\mathrm{f}_1}'(\rho,\lambda)U_{\mathrm{f}_0}(\rho,\lambda)f(\rho).
\end{align*}
\begin{lem}\label{lem:decmopv_2'}
We can decompose $V_2'$ as
\begin{align*}
V_2'(f)(\rho,\lambda)=f(\rho)\sum_{j=3}^{9}  H_j(\rho,\lambda)
\end{align*}
with
\begin{align*}
H_3(\rho,\lambda)=&\left[\O(\rho^{-1})+\rho(1-\rho^2)^{-1}\O(\langle\omega\rangle)\right]\widetilde{G}_3(\rho,\lambda) 
\\
H_4(\rho,\lambda)=&\left[\O(\rho^{-1})+(1+\rho)^{-1}\O(\langle\omega\rangle)\right]\widetilde{G}_4(\rho,\lambda)+\widehat{G}_4(\rho,\lambda)
\\
H_5(\rho,\lambda)=&\left[\O(\rho^{-1})+(1-\rho)^{-1}\O(\langle\omega\rangle)\right]\widetilde{G}_5(\rho,\lambda)+\widehat{G}_5(\rho,\lambda)
\\
H_6(\rho,\lambda)=&\left[\O(\rho^{-1})+(1-\rho)^{-1}\O(\langle\omega\rangle)\right] \widetilde{G}_6(\rho,\lambda)+\widehat{G}_6(\rho,\lambda)
\\
H_7(\rho,\lambda)=&\left[\O(\rho^{-1})+\rho(1-\rho^2)^{-1}\O(\langle\omega\rangle)\right]\widetilde{G}_7(\rho,\lambda)
\\
H_8(\rho,\lambda)=&\left[\O(\rho^{-1})+(1+\rho)\O(\langle\omega\rangle)\right] 
\widetilde{G}_8(\rho,\lambda)+\widehat{G}_8(\rho,\lambda)
\\
H_9(\rho,\lambda)=&\left[\O(\rho^{-1})+(1+\rho)\O(\langle\omega\rangle)\right] \widetilde{G}_9(\rho,\lambda)+\widehat{G}_9(\rho,\lambda),
\end{align*}
where all the $\widetilde{G}_j $ are of the same forms as $\rho^{-\frac{5}{2}}(1-\rho^2)^{\frac{1}{4}-\frac{\lambda}{2}}G_j(\rho,\lambda)$
and where the $\widehat{G}_j$ are the terms obtained when the $\rho$ derivative hits one of the perturbative parts $\widehat{\beta}_j,\widehat{\gamma}_j$ (or $\O(\rho^0(1-\rho)\langle\omega\rangle^{-1})$ in the case of $j=3,4$) as given in Lemma \ref{V2symbol}.
\end{lem}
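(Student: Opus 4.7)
My plan is to derive the decomposition by direct application of the product rule. Observe first that $V_2'(f)$ has the same structure as $V_2(f)$ from Lemma \ref{decomp2} but with each $u$-factor replaced by its $\rho$-derivative. Since $u_j(\rho,\lambda) = \rho^{-5/2}(1-\rho^2)^{1/4-\lambda/2}\,v_j(\rho,\lambda)$ (and similarly for $u_{\mathrm{f}_j}$), the product rule yields
\[
u_j'(\rho,\lambda) = \left(-\tfrac{5}{2\rho} + \tfrac{(\lambda-1/2)\rho}{1-\rho^2}\right)u_j(\rho,\lambda) + \rho^{-5/2}(1-\rho^2)^{1/4-\lambda/2}\,v_j'(\rho,\lambda).
\]
Plugging this into the definition of $V_2'(f)$ splits it into a ``prefactor-derivative'' piece, equal to $\left(-\tfrac{5}{2\rho} + \tfrac{(\lambda-1/2)\rho}{1-\rho^2}\right) V_2(f)$, and a ``$v_j'$'' piece.

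Feeding the prefactor-derivative piece through Lemma \ref{V2symbol} immediately yields $\mathcal{O}(\rho^{-1}) + \rho(1-\rho^2)^{-1}\mathcal{O}(\langle\omega\rangle)$ multiplying each $\widetilde{G}_j$; this already accounts for all of $H_3$ and $H_7$ (supported on $\chi_\lambda$, where no further absorption is required), and explains why no separate $\widehat{G}_3, \widehat{G}_7$ appear — the $\rho^0$-order symbols inside $\widetilde{G}_3, \widetilde{G}_7$ yield, after differentiation, an $\mathcal{O}(\rho^{-1})$ factor which is swallowed by the leading coefficient. For $j \in \{4,5,6,8,9\}$, supported on $1-\chi_\lambda$, the coefficient must additionally absorb the log-derivative of the main factor $w_k$ of $h_k$ from Lemma \ref{free ODE near 1} ($k=1$ for $j=5,6$, where $G_j$ involves $h_1$; $k=2$ for $j=4,8,9$, where $G_j$ involves $h_2$). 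A direct computation gives
\[
-\tfrac{5}{2\rho} + \tfrac{(\lambda-1/2)\rho}{1-\rho^2} + \frac{w_k'(\rho,\lambda)}{w_k(\rho,\lambda)} = \begin{cases}-\tfrac{5}{2\rho} + \tfrac{3/2-\lambda}{1+\rho} & k=2, \\ -\tfrac{5}{2\rho} - \tfrac{3/2-\lambda}{1-\rho} & k=1,\end{cases}
\]
producing exactly the leading coefficient of $\widetilde{G}_j$ in each $H_j$ — equivalently, the log-derivative of the absorbed prefactor $\rho^{-5/2}(1\pm\rho)^{3/2-\lambda}$ already identified in Lemma \ref{V2symbol}.

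The remainder of the $v_j'$-piece — where the derivative instead hits the perturbative symbol factors $\widehat{\beta}_j$, $\widehat{\gamma}_j$ or the $[1+\mathcal{O}(\rho^0(1-\rho)\langle\omega\rangle^{-1})]$ corrections — defines the $\widehat{G}_j$ pieces for $j \in \{4,5,6,8,9\}$. Symbol calculus guarantees these remain of symbol type and preserve the integral/cutoff structure, so they fit the form described in the statement. I expect the main technical hurdle to be the careful bookkeeping across the many subterms of each $G_j$, in particular tracking contributions from $\chi_\lambda'$ (which satisfies $|\partial_\rho\chi_\lambda| \lesssim \langle\omega\rangle$ and is supported on $[\rho_\lambda, \widehat{\rho}_\lambda]$); however, on this overlap the fundamental systems match smoothly by Lemmas \ref{connectioncoef} and \ref{connectioncoeffree}, so these contributions either cancel or fold harmlessly into existing $\mathcal{O}(\langle\omega\rangle)$ coefficients without disrupting the claimed form.
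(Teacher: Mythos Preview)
Your approach is correct and is essentially what the paper does, though the paper gives no explicit proof of this lemma at all: it is stated immediately after the sentence ``We proceed in the same manner and use the previous decomposition of $V_2(f,\lambda)$ to decompose $V_2'(f)(\rho,\lambda)$'', and the description of the $\widetilde G_j$ and $\widehat G_j$ in the lemma statement itself is meant to be self-explanatory. Your product-rule computation, identifying the leading coefficients as the logarithmic derivatives of $\rho^{-5/2}(1\pm\rho)^{3/2-\lambda}$ and isolating the $\widehat G_j$ as the contributions from differentiating the perturbative symbol factors, makes this explicit and is exactly the intended argument (compare the one-line proof of the analogous Lemma~\ref{decompV'}).
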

We continue by defining operators $S_j$ and $\dot{S}_j$ as
\begin{align*}
S_j(\tau)f(\rho)=\frac{1}{2\pi i}\lim_{\varepsilon\to 0^+}\lim_{N \to \infty}\int_{\varepsilon-i N}^{\varepsilon+ i N}e^{\lambda\tau}
f(\rho)H_j(\rho,\lambda) d \lambda,
\end{align*}

\begin{align*}
\dot{S}_j(\tau)f(\rho)=\frac{1}{2\pi i}\lim_{\varepsilon\to 0^+}\lim_{N \to \infty} \int_{\varepsilon-i N}^{\varepsilon+ i N}\lambda e^{\lambda\tau}
f(\rho)H_j(\rho,\lambda) d \lambda
\end{align*}
for $j=3,\dots,9$ and    $f\in C^2(\overline{\B^6_1})$
\begin{lem}
The estimates
	\begin{align*}
	\|S_j(.)f\|_{L^2(\R_+)L^4(\B^6_1)}\lesssim& \|f\|_{H^1(\B^6_1)}
	\end{align*}
	and 
	\begin{align*}
	\|\dot{S}_j(.)f\|_{L^2(\R_+)L^4(\B^6_1)}\lesssim& \|f\|_{H^2(\B^6_1)}
	\end{align*}
hold for $j=3,4,\dots,9$ and $f\in C^2(\overline{\B_1^6}).$
\end{lem}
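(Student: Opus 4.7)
The plan is to mirror the argument used for Lemma \ref{kernel2} as closely as possible, since by Lemma \ref{lem:decmopv_2'} each $H_j$ is essentially $\widetilde G_j$ (which has the same structure as the corresponding $G_j$ from Lemma \ref{V2symbol}) multiplied by an extra factor of the form $\O(\rho^{-1})+\text{(regular)}\cdot\O(\langle\omega\rangle)$, together with an additional piece $\widehat G_j$ that arises when $\partial_\rho$ falls on one of the perturbative factors $\widehat\beta_j, \widehat\gamma_j$. I would first verify that $V'_2(f)(\rho,\lambda)$ has enough decay in $\lambda$ to justify exchanging the $\varepsilon\to 0^+$ limit with the $\omega$-integration and applying Fubini, using integrations by parts in the $s$-variable whenever the integrand sits near the endpoint $s=1$ (exactly as was done for $G_6,G_9$ in the proof of Lemma \ref{kernel2}).

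Once the exchange of limits is settled, each $S_j(\tau)f(\rho)$ reduces to an oscillatory integral of the form $\int_\R e^{i\omega\tau}\Phi_j(\rho,\omega)\,d\omega$ with an explicit symbol $\Phi_j$. The strategy for each $j$ is identical to that of Lemma \ref{kernel2}: bound the integral pointwise in $\rho$ and $\tau$ by invoking Lemmas \ref{osci1}--\ref{osci4}, and then invoke Lemma \ref{teclem5} (the $L^4(\B^6_1)$ analogue of Lemma \ref{teclem1}) to upgrade the pointwise bound to an $L^2_\tau L^4_\rho$ bound controlled by $\|f\|_{H^1(\B^6_1)}$. For $\dot S_j$ the extra power of $\lambda$ is handled by trading $\langle\omega\rangle$-decay against $\rho$-weights on the support of $\chi_\lambda$ (using $\rho\langle\omega\rangle\chi_\lambda(\rho)\lesssim 1$), or by performing one further integration by parts in $s$ when the relevant term is supported near $\rho=1$; the resulting pointwise bound is then promoted via Lemma \ref{teclem5} and costs us one additional derivative, accounting for the $H^2$ norm on the right.

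The main technical obstacle is controlling those terms in which the prefactor $(1-\rho)^{-1}\O(\langle\omega\rangle)$ or $(1+\rho)^{-1}\O(\langle\omega\rangle)$ appears: differentiation in $\rho$ has eaten one power of $\langle\omega\rangle^{-1}$ from the underlying $\widetilde G_j$, so the naive symbol behaves like $\O(\langle\omega\rangle^{0})$ and the oscillatory integral lemmas do not apply directly. The way out is that the factor $(1\pm\rho)^{-1}$ only ``hurts'' near the corresponding endpoint, and there the support restriction $1-\chi_\lambda$ combined with an integration by parts in $s$ (as in the treatment of $G_6$ in Lemma \ref{kernel2}) upgrades the relevant phase $(1\mp s)^{-3/2+\lambda}$ to $(1\mp s)^{-1/2+\lambda}$ and gains the missing $\langle\omega\rangle^{-1}$. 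The same maneuver applied one more time, together with Lemmas \ref{teclem2} and \ref{teclem3}, handles the $\dot S_j$ estimates.

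The $\widehat G_j$ contributions are easier: because the perturbative factors are themselves $\O(\langle\omega\rangle^{-1})$ and their $\rho$-derivative is still symbolic of the same order, these terms are strictly better behaved than the corresponding $\widetilde G_j$-terms, and the same pointwise-oscillatory-integral / Lemma \ref{teclem5} scheme applies verbatim. Putting all nine cases together yields the stated $L^2_\tau L^4(\B^6_1)$ bounds, with the $H^1$ norm on the right for $S_j$ and the $H^2$ norm for $\dot S_j$.
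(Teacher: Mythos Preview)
Your proposal is correct and follows essentially the same approach as the paper: for $j=3,4,5,7,8$ the argument of Lemma \ref{kernel2} carries over verbatim once Lemma \ref{teclem5} replaces Lemma \ref{teclem1}, while $j=6,9$ require additional integrations by parts in $s$ to recover the $\langle\omega\rangle^{-1}$ lost to the prefactor $(1\mp\rho)^{-1}\O(\langle\omega\rangle)$. One minor point: for $\dot S_6$ (and analogously $\dot S_9$) the paper does not invoke Lemmas \ref{teclem2}--\ref{teclem3} but rather performs a \emph{second} integration by parts in $s$, splitting into boundary and integral pieces $B_{6,1}+B_{6,2}+I_{6,1}+\widehat G_6$, and then applies Lemma \ref{osci4} directly to obtain a pointwise bound of the form $|f(\rho)|\,|\tau|^{-\alpha}\langle\tau\rangle^{-2}$; this is slightly cleaner than routing through the logarithmic-phase integrals, though your approach would also work.
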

\begin{proof}
For $j=3,4,5,7,8$ we can use the arguments which we already used in the proofs of Lemmas \ref{kernel1} and \ref{kernel2} combined with Lemma \ref{teclem5} to derive both sets of Strichartz estimates.
For $S_6$, we remark, that the considerations in the proof of Lemma \ref{kernel2} together with Lemma \ref{teclem5}
imply the estimate 
\begin{align*}
\|S_6(.)f\|_{L^2(\R_+)L^4(\B^6_1)}\lesssim \|f\|_{H^1(\B^6_1)}.
\end{align*}
To deal with $\dot{S}_6$, we first note that after an integration by parts in the $s$-integral, we obtain
$$|\lambda H_6(\rho,\lambda)|\lesssim \rho^{-1}\langle\omega\rangle^{-1}$$
for $\rho\in(0,1)$ and $ \omega\in \R$.
Hence, a variant of Lemma \ref{difinterchange} (to be more precise we again first have to integrate by parts, then take the limit and then undo the integration) yields
\begin{align*}
\lim_{\varepsilon\to 0^+}\lim_{N \to \infty} \int_{\varepsilon-i N}^{\varepsilon+ i N}\lambda e^{\lambda\tau}
f(\rho)H_6(\rho,\lambda) d \lambda=\int_\R i \omega e^{i \omega\tau}
f(\rho)H_6(\rho,i\omega) d \omega.
\end{align*}
Recall that after an integration by parts, the integral term in the multiplier $G_6$ takes the form
\begin{align*}
B_6(\rho,\lambda)+I_6(\rho,\lambda)=&(1-\chi_\lambda(\rho))\rho^{\frac{5}{2}}\frac{\O(\langle\omega\rangle^{-2})}{(1-\rho)^{\frac{1}{2}-\lambda}}\widehat{\gamma}_1(\rho,\rho,\lambda)
\\
&-\O(\langle\omega\rangle^{-2})\int_0^\rho(1-s)^{-\frac{1}{2}+\lambda}\partial_s\left[(1-\chi_\lambda(s))s^{\frac{5}{2}}\widehat{\gamma}_1(\rho,s,\lambda)\right] ds.
\end{align*}
Therefore, a second integration by parts yields
\begin{align*}\label{g5}
H_6(\rho,\lambda)=&\left[\O(\rho^{-1})+\O((1-\rho)^{-1}\langle\omega\rangle)\right](1-\chi_\lambda(\rho))\rho^{-\frac{5}{2}}(1-\rho)^{\frac{3}{2}-\lambda}
\\
&\times\bigg[(1-\chi_\lambda(\rho))\rho^{\frac{5}{2}}\O(\langle\omega\rangle^{-2})(1-\rho)^{-\frac{1}{2}+\lambda}\widehat{\gamma}_1(\rho,\rho,\lambda)
\\
&-\O(\langle\omega\rangle^{-3})(1-\rho)^{\frac{1}{2}+\lambda}\partial_\rho\left[(1-\chi_\lambda(\rho))\rho^{\frac{5}{2}}\widehat{\gamma}_1(\rho,\rho,\lambda)\right]\\
&+\O(\langle\omega\rangle^{-3})\int_0^\rho(1-s)^{\frac{1}{2}+\lambda}\partial_s^2\left[(1-\chi_\lambda(s))s^{\frac{5}{2}}\widehat{\gamma}_1(\rho,s,\lambda)\right] ds
\bigg]+\widehat{G}_6(\rho,\lambda)
\\
=&:B_{6,1}(\rho,\lambda)+B_{6,2}(\rho,\lambda)+I_{6,1}(\rho,\lambda)+\widehat{G}_6(\rho,\lambda).
\end{align*}
We now use Lemma \ref{osci4} to obtain
\begin{align*}
\left|f(\rho)\int_\R \omega e^{i \omega \tau}B_{6,1}(\rho,i \omega) d \omega\right|\lesssim |f(\rho)||\tau|^{-\frac{1}{4}}\langle\tau\rangle^{-2}.
\end{align*}
Using this estimate we can easily infer that
\begin{align*}
\left\|f(\rho)\int_\R \omega e^{i \omega \tau}B_{6,1}(\rho,i \omega) d \omega\right\|_{L^2(\R_+)L^4(\B^6_1)}\lesssim \|f\|_{H^2(\B^6_1)} .
\end{align*}
 As the the oscillatory integral corresponding to $B_{6,2}(\rho,i \omega)$, $I_{6,1}(\rho,i \omega)$ and $\widehat{G}_6(\rho,\lambda)$ can be bounded similarly, we obtain
\begin{align*}
\|\dot{S}_6(.)f\|_{L^2(\R_+)L^4(\B^6_1)}\lesssim \|f\|_{H^2(\B^6_1)}.
\end{align*}
Since $S_9$ and $\dot{S}_9$ can be bounded likewise we conclude the proof of this Lemma.
\end{proof}
Our very last decomposition is the one of 
\begin{align*}
V_3'(f)(\rho,\lambda):=&-u_0'(\rho,\lambda)\int_\rho^{1}U_1(s,\lambda)f'(s)ds +u_1'(\rho,\lambda)\int_0^\rho U_0(s,\lambda)f'(s)ds 
\\
&+u'_{\mathrm{f}_0}(\rho,\lambda)\int_\rho^{1}U_{\mathrm{f}_1}(s,\lambda)f'(s)ds -u'_{\mathrm{f}_1}(\rho,\lambda)\int_0^\rho U_{\mathrm{f}_0}(s,\lambda)f'(s)ds.
\end{align*}
\begin{lem}
We can decompose $V'_3(f)(\rho,\lambda)$ as
\[
V'_3(f)(\rho,\lambda)=\sum_{j=10}^{18} H_j(f)(\rho,\lambda)
\]
with
\begin{align*}
H_{10}(f)(\rho,\lambda)=&\left[\O(\rho^{-1}) +\rho(1-\rho^2)^{-1} \O\left(\langle\omega\rangle\right)\right]\widetilde{G}_{10}(f)(\rho,\lambda)
\\
H_{11}(f)(\rho,\lambda)=&\left[\O(\rho^{-1})+(1+\rho)^{-1}\O(\langle\omega\rangle)\right]\widetilde{G}_{11}(f)(\rho,\lambda)+\widehat{G}_{11}(\rho,\lambda)
\\
H_{12}(f)(\rho,\lambda)=&\left[\O(\rho^{-1})+(1-\rho)^{-1}\O(\langle\omega\rangle)\right]\widetilde{G}_{12}(f)(\rho,\lambda)+\widehat{G}_{12}(\rho,\lambda)
\\
H_{13}(f)(\rho,\lambda)=&\left[\O(\rho^{-1}) +\rho(1-\rho^2)^{-1}\O( \langle\omega\rangle)\right]\widetilde{G}_{13}(f)(\rho,\lambda)
\\
H_{14}(f)(\rho,\lambda)=&\left[\O(\rho^{-1})+(1+\rho)^{-1}\O(\langle\omega\rangle)\right]
\widetilde{G}_{14}(f)(\rho,\lambda)+\widehat{G}_{14}(\rho,\lambda)
\\
H_{15}(f)(\rho,\lambda)=&\left[\O(\rho^{-1})+(1-\rho)^{-1}\O(\langle\omega\rangle)\right]
\widetilde{G}_{15}(f)(\rho,\lambda)+\widehat{G}_{15}(\rho,\lambda)
\\
H_{16}(f)(\rho,\lambda)=&\left[\O(\rho^{-1}) +\rho(1-\rho^2)^{-1} \O(\langle\omega\rangle)\right]\widetilde{G}_{16}(f)(\rho,\lambda)
\\
H_{17}(f)(\rho,\lambda)=&
\left[\O(\rho^{-1}) +(1+\rho)^{-1} \O\left(\langle\omega\rangle\right)\right]\widetilde{G}_{17}(f)(\rho,\lambda)+\widehat{G}_{17}(\rho,\lambda)
\\
H_{18}(f)(\rho,\lambda)=&\left[\O(\rho^{-1}) +(1+\rho)^{-1} \O(\left\langle\omega\rangle\right)\right]\widetilde{G}_{18}(f)(\rho,\lambda)+\widehat{G}_{18}(\rho,\lambda),
\end{align*} 
and where all the kernels $\widetilde{G}_j(f)(\rho,\lambda)$ are of the same form as $\rho^{-\frac{5}{2}}(1-\rho^2)^{\frac{1}{4}-\frac{\lambda}{2}}G_j(f)(\rho,\lambda)$ and the $\widehat{G}_j$ are defined as in Lemma \ref{lem:decmopv_2'}.
\end{lem}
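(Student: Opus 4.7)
The plan is to mimic exactly the reasoning used to derive the decomposition of $V_2'(f)(\rho,\lambda)$ in the preceding lemma: start from the already established decomposition of $V_3(f)(\rho,\lambda) = \sum_{j=10}^{18} \rho^{-5/2}(1-\rho^2)^{1/4-\lambda/2}G_j(f)(\rho,\lambda)$ and simply apply $\partial_\rho$, tracking where the derivative lands. Since $V_3'(f)(\rho,\lambda)$ differs from $\partial_\rho V_3(f)(\rho,\lambda)$ only in that the inner $s$-integrals are kept intact (the boundary contributions from differentiating $\int_\rho^1$ and $\int_0^\rho$ cancel by the matching factors $U_1, U_0, U_{\mathrm{f}_1}, U_{\mathrm{f}_0}$), this bookkeeping is the content of the lemma.

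First, I would group the nine terms of $V_3'$ exactly as was done for $V_3$, using the explicit forms of $u_0, u_1, u_{\mathrm{f}_0}, u_{\mathrm{f}_1}$ from Lemmas \ref{Besselsol}--\ref{connectioncoeffree} together with the identities $a_1b_1-a_2b_2 = b_1(a_1-a_2)+a_2(b_1-b_2)$ applied to the pairs $(u_j,u_{\mathrm{f}_j})$. This produces nine blocks indexed $j=10,\dots,18$ in direct correspondence to the $G_j(f)(\rho,\lambda)$. The only new ingredient is that wherever $u_j$ appeared, we now have $u_j'$; this differentiation is the single source of the new structure.

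Next, the derivative is analyzed piecewise. The prefactor of every $u_j$ is $\rho^{-5/2}(1-\rho^2)^{1/4-\lambda/2}$, and its $\rho$-derivative equals
\[
\rho^{-5/2}(1-\rho^2)^{1/4-\lambda/2}\Bigl(-\tfrac{5}{2}\rho^{-1} + \bigl(\tfrac{1}{2}-\lambda\bigr)\tfrac{\rho}{1-\rho^2}\Bigr),
\]
which accounts for the factor $\O(\rho^{-1})+\rho(1-\rho^2)^{-1}\O(\langle\omega\rangle)$ in the ``interior'' kernels $H_{10},H_{13},H_{16}$. For the kernels supported in $1-\chi_\lambda$, using Lemma \ref{free ODE near 1} one writes the prefactor as $\rho^{-5/2}(1\pm\rho)^{3/2-\lambda}$ times a slowly varying factor, whose derivative produces the claimed $\O(\rho^{-1})+(1\pm\rho)^{-1}\O(\langle\omega\rangle)$; this gives $H_{11},H_{12},H_{14},H_{15},H_{17},H_{18}$. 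When $\partial_\rho$ acts on the leading part of $h_j, b_j$ the result is again of the same symbolic form and gets absorbed into $\widetilde G_j$.

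The only remaining contribution is when $\partial_\rho$ hits one of the perturbative factors $\widehat\beta_j(\rho,\cdot,\lambda)$, $\widehat\gamma_j(\rho,\cdot,\lambda)$, or the $\O(\rho^0(1-\rho)\langle\omega\rangle^{-1})$ error inside $G_{10}$--$G_{12}$. By the symbol calculus these derivatives are of the same symbol class with one less power of $(1-\rho)$ (or $\rho$) and cost a factor of $\langle\omega\rangle$; the resulting kernels are collected into the terms denoted $\widehat G_j$, exactly as was done in the decomposition of $V_2'$. The hard part—if any—is purely notational: one has to verify that every error term that arises from differentiating the Volterra perturbations still fits inside the symbol templates for $\widehat\beta_j,\widehat\gamma_j$ used in Lemma \ref{V2symbol}, so that the $\widehat G_j$ inherit the same Strichartz bounds that will be needed in the subsequent analogue of Lemma \ref{kernel3}. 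Once this bookkeeping is complete, summing the nine contributions yields the claimed decomposition.
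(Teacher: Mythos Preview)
Your proposal is correct and follows precisely the approach the paper takes; in fact the paper gives no explicit proof of this lemma, treating it as immediate from the decomposition of $V_3$ together with the differentiation pattern already worked out for $V_1'$ (Lemma~\ref{decompV'}) and $V_2'$ (Lemma~\ref{lem:decmopv_2'}). Your detailed bookkeeping of where $\partial_\rho$ lands on the prefactor $\rho^{-5/2}(1-\rho^2)^{1/4-\lambda/2}$ versus $(1\pm\rho)^{3/2-\lambda}$ versus the perturbative symbols is exactly the content being asserted.
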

One last time we define operators $S_j$ and $\dot{S}_j$ for $j=10,\dots,18$ and $f\in C^2(\overline{\B^6_1})$ as
\begin{align*}
S_j(\tau)f(\rho):=\frac{1}{2 \pi i}\lim_{\varepsilon\to 0^+}\lim_{N \to \infty} \int_{\varepsilon-i N}^{\varepsilon+ i N}e^{\lambda\tau}
H_j(f)(\rho,\lambda) d \lambda
\end{align*}
and
\begin{align*}
\dot{S}_j(\tau)f(\rho):=\frac{1}{2\pi i}\lim_{\varepsilon\to 0^+}\lim_{N \to \infty} \int_{\varepsilon-i N}^{\varepsilon+ i N}\lambda e^{\lambda\tau}
H_j(f)(\rho,\lambda) d \lambda.
\end{align*}
\begin{lem}
The estimates
	\begin{align*}
	\|S_j(.)f\|_{L^2(\R_+)L^4(\B^6_1)}\lesssim& \|f\|_{H^1(\B^6_1)}
	\end{align*}
	and 
	\begin{align*}
	\|\dot{S}_j(.)f\|_{L^2(\R_+)L^4(\B^6_1)}\lesssim& \|f\|_{H^2(\B^6_1)}
	\end{align*}
hold for $j=10,\dots,18$ and all $f \in C^2(\overline{\B^6_1})$.
\end{lem}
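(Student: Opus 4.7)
The plan is to reduce the bound on each $S_j$ and $\dot S_j$ to the already-completed analysis of the $T_j$ and $\dot T_j$ operators in Lemma \ref{kernel3}, together with the preceding lemma on $S_j$ for $j=3,\dots,9$. The guiding observation is that every $H_j(f)$ splits canonically into a ``main'' piece of the form $[\O(\rho^{-1})+(1\pm\rho)^{-1}\O(\langle\omega\rangle)]\widetilde G_j(f)(\rho,\lambda)$, where $\widetilde G_j$ has the same shape as the symbol $\rho^{-\frac52}(1-\rho^2)^{\frac14-\frac{\lambda}{2}}G_j(f)(\rho,\lambda)$ that already appeared in Lemma \ref{kernel3}, plus a perturbative remainder $\widehat G_j$ produced when $\partial_\rho$ lands on the $\widehat\beta_k$ and $\widehat\gamma_k$ factors. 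The remainders $\widehat G_j$ carry strictly better behaviour both in $\rho$ and in $\langle\omega\rangle$, so they are harmless and handled at once by the oscillatory integral Lemmas \ref{osci1} and \ref{osci3}.

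For the ``regular'' indices $j\in\{10,11,12,16,17\}$ I would run the pointwise bounds of Lemma \ref{kernel3} unchanged and then replace Lemma \ref{teclem1} by Lemma \ref{teclem5}, which gives exactly the right weighted $L^4$ estimate for the Strichartz pair $(2,4)$. The prefactor $\O(\rho^{-1})$ shifts the $\rho$-weight by one, which Lemma \ref{teclem5} still absorbs into $\|f\|_{H^1}$ or $\|f\|_{H^2}$. The extra $\langle\omega\rangle$ carried by the $(1\pm\rho)^{-1}\O(\langle\omega\rangle)$ piece is traded for a factor $(1\mp t)$ by a single integration by parts in the innermost $t$-integral, using the identity $\partial_t(1-t^2)^{-\frac{1}{4}+\frac{\lambda}{2}}=\frac{(1/2-\lambda)t}{(1-t^2)^{5/4-\lambda/2}}$; this produces an additional $\langle\omega\rangle^{-1}$ that exactly compensates the bad prefactor and reduces the estimate to the same $\widetilde G_j$ bound already controlled in Lemma \ref{kernel3}.

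For the ``singular'' indices $j\in\{13,14,15,18\}$, the corresponding $\widetilde G_j$ already required one integration by parts in Lemma \ref{kernel3} to make the $\lambda$-integral absolutely convergent. The prefactor $(1-\rho)^{-1}\O(\langle\omega\rangle)$ forces a \emph{second} integration by parts in the $t$-integral, which yields a boundary contribution at $t=s$ plus a bulk term, both controlled exactly as the terms $T_{13}^1,T_{13}^2$ and $T_{14}^1,T_{14}^2$ in the proof of Lemma \ref{kernel3}; the ensuing estimates go through Minkowski's inequality and Lemmas \ref{teclem2} and \ref{teclem3}, with the final $L^4_\rho$ norm of the outer weight supplied by Lemma \ref{teclem5}. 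For the $\dot S_j$ version, the extra $\lambda$ is absorbed by one further integration by parts, in the same pattern used for $\dot T_{13}$, $\dot T_{14}$ and $\dot T_{18}$, which trades the $\omega$ for an additional power of $(1-t)$ and costs one more $\rho$-derivative of the data, explaining the upgrade from $H^1$ to $H^2$ on the right-hand side.

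The main obstacle is purely the bookkeeping of these successive integrations by parts: one has to verify that after each step the powers of $(1-t)$, $\langle\omega\rangle$, and the cutoff $(1-\chi_\lambda)$ conspire to keep the inner $t$-integral in $L^{4/3}(0,1)$ while the oscillatory integrand in $\omega$ is controlled by Lemmas \ref{osci2}, \ref{osci3}, or \ref{osci4}. Once this cancellation is tracked term by term, the Strichartz bound follows from a combination of Minkowski's inequality, the Cauchy--Schwarz inequality applied to the $\int_0^\rho f'(s)\,ds$ or $\int_\rho^1 f'(s)\,ds$ integrals, and Lemma \ref{teclem5}, concluding the proof.
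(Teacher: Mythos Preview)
Your overall strategy is correct and matches the paper's: reduce to the machinery of Lemma \ref{kernel3}, absorb the extra $\langle\omega\rangle$ coming from the derivative by further integrations by parts, and replace Lemma \ref{teclem1} by Lemma \ref{teclem5} to close in $L^2L^4$. The paper's proof is equally terse and invokes exactly these ingredients.

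There is, however, a small discrepancy in your case split. The paper groups $j\in\{10,13,16,17\}$ as the ones handled directly ``in almost the same fashion as in Lemma \ref{kernel3}'' and treats the remaining indices $j\in\{11,12,14,15,18\}$ by the $S_6$/$\dot S_6$ strategy (i.e., the double integration-by-parts argument). The reason $j=10,13,16$ are easy is that the outer cutoff is $\chi_\lambda(\rho)$, so on its support $\rho\lesssim\langle\omega\rangle^{-1}$ and the apparently dangerous prefactor $\rho(1-\rho^2)^{-1}\O(\langle\omega\rangle)$ is in fact $\O(\langle\omega\rangle^0)$; no additional integration by parts is needed there. Conversely, for $j=11,12$ (which you classify as ``regular'') the outer cutoff is $1-\chi_\lambda(\rho)$, so the prefactor genuinely costs a full $\langle\omega\rangle$, and for $\dot S_{11},\dot S_{12}$ this puts you two powers of $\langle\omega\rangle$ short---hence the paper routes these through the $S_6$/$\dot S_6$ scheme rather than treating them as a one-step perturbation of Lemma \ref{kernel3}. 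Your approach still works once you perform the extra integration by parts you mention for the $\dot S_j$, but be aware that the split you propose slightly misidentifies which terms are truly benign.
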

\begin{proof}
For $j=10,13,16,17$ we can derive both Strichartz estimates in almost the same fashion as we did in Lemma \ref{kernel3}.
For the remaining indices we can use an adapted version of the strategy which we used to bound $S_6$ and $\dot{S}_6$.
\end{proof}
Thanks to these last few Lemmas, we easily obtain the final result of this section.
\begin{prop}\label{strichatzestimate2}
The estimate
	\begin{equation*}
	\left\| [\Sf(\tau)(\I-\Pf)\ff]_1\right\|_{L^2_\tau(\R_+)\dot{W}^{1,4}(\B_1^6)}\lesssim\|(\I-\Pf)\ff\|_{\mathcal{H}}
	\end{equation*}
	holds true for all $\ff\in \mathcal{H}$. Furthermore, we also have
	\begin{equation*}
	\left\| \int_0^\tau [\Sf(\tau-\sigma)(\I-\Pf)\hfh(\sigma,.)]_1 d \sigma\right\|_{L^2_\tau(I)\dot{W}^{1,4}(\B_1^6)} \lesssim\|(\I-\Pf)\hfh\|_{L^1(I)\mathcal{H}},
	\end{equation*}
	for any $0\in I\subset [0,\infty)$  and  $\hfh \in L^1(\R_+,\mathcal{H}).$
	Finally, for the second component of the semigroup $\Sf$ we have the estimates
\begin{equation*}
	\left\| [\Sf(\tau)(\I-\Pf)\ff]_2\right\|_{L^2_\tau(\R_+)L^4(\B_1^6)}\lesssim\|(\I-\Pf)\ff\|_{\mathcal{H}}
	\end{equation*}
and
	\begin{equation*}
	\left\| \int_0^\tau [\Sf(\tau-\sigma)(\I-\Pf)\hfh(\sigma,.)]_2 d \sigma\right\|_{L^2_\tau(I)L^4(\B_1^6)} \lesssim\|(\I-\Pf)\hfh\|_{L^1(I)\mathcal{H}}
	\end{equation*}
	for $\ff\in \mathcal{H}$, $0\in I\subset [0,\infty)$, and $\hfh \in L^1(\R_+,\mathcal{H}).$
\end{prop}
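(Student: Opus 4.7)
The four estimates all follow by the same architecture as Proposition \ref{strichatzestimate1}: Laplace inversion, splitting into ``free'' and ``correction'' parts, and assembly of the Strichartz bounds for the operator families $S_j,\dot S_j$ and $T_j,\dot T_j$ established in the preceding lemmas, together with the free estimates of Lemma \ref{freestr2}.

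For the homogeneous $\dot W^{1,4}$ bound on the first component I would start from
\[
[\Sf(\tau)\widetilde\ff]_1=[\Sf_0(\tau)\widetilde\ff]_1+\frac{1}{2\pi i}\lim_{N\to\infty}\int_{\varepsilon-iN}^{\varepsilon+iN}e^{\lambda\tau}\bigl[\mathcal R(F_\lambda)-\mathcal R_{\mathrm f}(F_\lambda)\bigr](\rho,\lambda)\,d\lambda,
\]
valid for $\widetilde\ff=(\I-\Pf)\ff$ with $\ff\in C^3\times C^2(\overline{\B^6_1})$. Differentiating in $\rho$ and commuting $\partial_\rho$ with the limits by Lemma \ref{difinterchange} (after a preparatory integration by parts in $\omega$ to create absolute convergence), the free piece is controlled in $L^2\dot W^{1,4}$ by Lemma \ref{freestr2}. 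The correction decomposes into $V_1'+V_2'+V_3'$, whose multipliers $H_j$ for $j=1,\dots,18$ give rise to the operators $S_j,\dot S_j$ satisfying the bounds
\[
\|S_j(\cdot)f\|_{L^2(\R_+)L^4(\B^6_1)}\lesssim\|f\|_{H^1(\B^6_1)},\qquad \|\dot S_j(\cdot)f\|_{L^2(\R_+)L^4(\B^6_1)}\lesssim\|f\|_{H^2(\B^6_1)}
\]
proved above. Combined with the structure $F_\lambda=f_2+(\lambda+2)f_1+\rho f_1'$ (whose $(\lambda+2)f_1$ part produces the $\dot S_j$ contributions), these yield the homogeneous estimate on the dense subset of smooth data, and the inhomogeneous version then follows from Minkowski's inequality exactly as in the proof of Lemma \ref{Strichart}.

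For the second component I would exploit the first row of the resolvent equation $(\lambda-\Lf)\uf=\widetilde\ff$, namely $u_2=(\lambda+1)u_1+\rho u_1'-\widetilde f_1$, which holds identically for both $\Lf$ and $\Lf_0$ with the same right-hand side. Subtracting the two resolvents, the $-\widetilde f_1$ terms cancel and one obtains
\[
\bigl[\Rf_{\Lf}(\lambda)\widetilde\ff\bigr]_2-\bigl[\Rf_{\Lf_0}(\lambda)\widetilde\ff\bigr]_2=(\lambda+1)\bigl[\mathcal R-\mathcal R_{\mathrm f}\bigr](F_\lambda)+\rho\partial_\rho\bigl[\mathcal R-\mathcal R_{\mathrm f}\bigr](F_\lambda).
\]
After Laplace inversion, the $\rho\partial_\rho$ piece is bounded in $L^2L^4$ by the homogeneous $\dot W^{1,4}$ estimate just established, while the $(\lambda+1)$ piece is of the same form as the $\dot T_j$ contributions of Lemmas \ref{kernel1}--\ref{kernel3} and is bounded in $L^2L^{12}\hookrightarrow L^2L^4$ (Hölder on the bounded domain $\B^6_1$); the free contribution $[\Sf_0(\tau)\widetilde\ff]_2$ is handled by Lemma \ref{freestr2}, and the inhomogeneous version once more follows from Minkowski. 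The main obstacle is the bookkeeping for the extra factor of $\lambda$ appearing in the $(\lambda+1)$-piece: when combined with the internal $\lambda f_1$ contribution of $F_\lambda$ one formally obtains a $\lambda^2 f_1$ term which would naively require $f_1\in H^3$, but one integration by parts in $\omega$ inside the relevant oscillatory integrals (mirroring the treatment of $\dot S_6$ and $\dot S_9$ in this section) trades the extra $\lambda$ for improved decay at the cost of only an additional $\rho^{-1}$ weight, which remains admissible under Lemma \ref{teclem5}.
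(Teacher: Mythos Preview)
Your approach is correct and matches the paper's. For the first component you follow exactly the pattern of Proposition \ref{strichatzestimate1}, now using the $S_j,\dot S_j$ families in place of $T_j,\dot T_j$; this is precisely what the paper does (``The estimates on the first component can again be achieved as before''). For the second component your use of the first row of the resolvent equation, $u_2=(\lambda+1)u_1+\rho u_1'-\widetilde f_1$, is simply the Laplace dual of the paper's time-domain identity
\[
[\Sf(\tau)(\I-\Pf)\ff]_2=(\partial_\tau+\rho\partial_\rho+1)[\Sf(\tau)(\I-\Pf)\ff]_1,
\]
so the two routes are the same argument in different coordinates. The paper then just asserts that the resulting operators are ``comparable to $S_j,T_j$ and $\dot S_j,\dot T_j$'', whereas you are more explicit in flagging the $\lambda^2 f_1$ contribution and proposing to absorb the extra $\lambda$ by an integration by parts. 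One small terminological correction: the relevant integrations by parts (as in the treatment of $\dot S_6,\dot S_9$ and the various $\dot T_j$) are in the \emph{spatial} integration variables $s,t$, exploiting the factors $(1\pm s)^{\pm\lambda}$ to trade a power of $(1-s)^{-1}$ for a power of $\langle\omega\rangle^{-1}$, rather than literally in $\omega$; but the mechanism you describe is the right one.
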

\begin{proof}
The estimates on the first component can again be achieved as before. For the bounds on the second component we observe that by \eqref{coordinate} we have that
\begin{align*}
[\Sf(\tau)(\I-\Pf)\ff]_2(\rho)= (\partial_\tau +\rho\partial_\rho+1)[\Sf(\tau)(\I-\Pf)\ff]_1(\rho).
\end{align*}
As $\partial_\tau $ produces a factor of $\lambda$, it follows that the operators obtained this way are comparable to $S_j, T_j$ and $\dot{S}_j, \dot{T}_j$.
\end{proof}
\section{Nonlinear theory and fixed point arguments}
We can now finally turn our attention to the full equation
\begin{align}\label{integraleq2}
\Phi(\tau)=\Sf(\tau)\uf+\int_0^\tau \Sf(\tau-\sigma)\Nf(\Phi(\sigma)) d \sigma.
\end{align}
Recall that $\Nf$ was of the form
\begin{align*}
\Nf (\uf)(\rho) :=\begin{pmatrix}
0\\
N(\psi_{*_1}+u_1)(\rho)-N(\psi_{*_1})(\rho)-\frac{48}{(\rho^2+2)^2}u_1(\rho)
\end{pmatrix},
\end{align*}
where 
\begin{align*}
N(u)(\rho)=-\frac{3\sin(2\rho u(\rho) )-6\rho u(\rho)}{2\rho^3}
\end{align*}
and
\begin{equation*}
\Psi_*(\rho)=\begin{pmatrix}
&\frac{2 \arctan\left(\frac{\rho}{\sqrt{2}}\right)}{\rho}\\
&\frac{2\sqrt{2}}{\rho^2+2}
\end{pmatrix}.
\end{equation*}
The first result of this section will be a useful bound on $\Nf$, which, combined with the previously established Strichartz estimates, will allow us to also control the nonlinearity.
 \begin{lem}\label{nonlineare}
 The nonlinearity $\Nf$ satisfies the two estimates
\begin{align*}
\|\Nf(\uf)\|_{\mathcal{H}}&\lesssim \|u_1\|_{L^{12}(\B_1^6)}^2+\|u_1\|_{L^9(\B_1^6)}^3+\|u_1\|_{L^8(\B_1^6)}^4+\|u_1'\|_{L^4(\B_1^6)}^2
\end{align*}
and
\begin{align*}
\|\Nf(\uf)-\Nf(\vf)\|_{\mathcal{H}}\lesssim
\|u_1-v_1\|_{L^{12}(\B_1^6)}&\big(\|u_1\|_{L^{12}(\B^6_1)}+\|v_1\|_{L^{12}(\B^6_1)}\\
&+\|u_1\|_{L^{8}(\B^6_1)}^2+\|v_1\|_{L^{8}(\B^6_1)}^2\\
&+\|u_1'\|_{L^4(\B^6_1)}(1+\|u_1\|_{L^6(\B_1^6)}+\|v_1\|_{L^6(\B_1^6)})
\\
&+\|u_1\|_{L^{\frac{36}{5}}(\B^6_1)}^3+\|v_1\|_{L^{\frac{36}{5}}(\B^6_1)}^3
\big)
\\
+\,\|u_1'-v_1'\|_{L^4(\B_1^6)}&\big(\|v_1\|_{L^{12}(\B_1^6)}+\|v_1\|_{L^8(\B_1^6)}^2\big)
\end{align*}
for all $\uf, \vf \in C^\infty\times C^\infty(\overline{\B^6_1})$.
\end{lem}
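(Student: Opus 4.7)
The plan starts with an algebraic rewriting of $[\Nf(\uf)]_2$ that makes the cancellations in $N(u)(\rho)=F(\rho u)/\rho^3$, $F(y):=3y-\tfrac{3}{2}\sin(2y)$, manifest. Applying the addition formula to $\sin(2\rho(\psi_{*_1}+u_1))$, subtracting $N(\psi_{*_1})$ as well as the linearization $\frac{48}{(\rho^2+2)^2}u_1=\frac{3(1-\cos(2\rho\psi_{*_1}))}{\rho^2}u_1$, and using $1-\cos(2\rho u_1)=2\sin^2(\rho u_1)$, one obtains
\begin{equation*}
[\Nf(\uf)]_2 = \frac{3\sin(2\rho\psi_{*_1})}{\rho^3}\sin^2(\rho u_1) + \frac{3\cos(2\rho\psi_{*_1})}{\rho^3}\bigl(\rho u_1-\tfrac{1}{2}\sin(2\rho u_1)\bigr).
\end{equation*}
Since $\rho\psi_{*_1}=2\arctan(\rho/\sqrt 2)$, the tangent half-angle formula gives $\sin(2\rho\psi_{*_1})=\frac{4\sqrt 2\,\rho(2-\rho^2)}{(2+\rho^2)^2}$ and $\cos(2\rho\psi_{*_1})=\frac{4-12\rho^2+\rho^4}{(2+\rho^2)^2}$, both smooth on $[0,1]$. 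Introducing the \emph{even} entire functions $\alpha(x):=\sin^2(x)/x^2$ and $\Psi(x):=(x-\tfrac{1}{2}\sin(2x))/x^3$ (with $\alpha(0)=1$, $\Psi(0)=\tfrac{2}{3}$) and the smooth coefficients $A(\rho):=\sin(2\rho\psi_{*_1})/\rho$, $B(\rho):=\cos(2\rho\psi_{*_1})$, the apparent $\rho^{-3}$-singularity disappears and the nonlinearity takes the manifestly regular form
\begin{equation*}
[\Nf(\uf)]_2(\rho) = 3A(\rho)\,\alpha(\rho u_1)\,u_1^2 + 3B(\rho)\,\Psi(\rho u_1)\,u_1^3.
\end{equation*}
The oddness of $\alpha',\Psi'$ furthermore yields $|y\alpha'(y)|,|y\Psi'(y)|\lesssim 1$ uniformly on $\R$, a fact that will be crucial.

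For the first estimate, the pointwise bound $|[\Nf(\uf)]_2|\lesssim u_1^2+|u_1|^3$ combined with $L^q(\B^6_1)\hookrightarrow L^p(\B^6_1)$ for $p\leq q$ controls the $L^2$-part of $\|[\Nf(\uf)]_2\|_{H^1}$: $\|u_1^2\|_{L^2}=\|u_1\|_{L^4}^2\lesssim\|u_1\|_{L^{12}}^2$ and $\|u_1^3\|_{L^2}=\|u_1\|_{L^6}^3\lesssim\|u_1\|_{L^9}^3$. Applying $\partial_\rho$ to the representation, the product/chain rule generates contributions of the types $u_1^2,\,u_1^3,\,u_1^4,\,u_1u_1',\,u_1^2u_1'$, together with the borderline terms $\rho u_1^3u_1'\alpha'(\rho u_1)$ and $\rho u_1^3u_1'\Psi'(\rho u_1)$. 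These last two are reduced to multiples of $u_1^2u_1'$ by rewriting $\rho\alpha'(\rho u_1)u_1^3u_1'=(\rho u_1\alpha'(\rho u_1))\,u_1^2u_1'$ and exploiting the oddness-induced bound $|\rho u_1\alpha'(\rho u_1)|,|\rho u_1\Psi'(\rho u_1)|\lesssim 1$. The remaining $L^2$-norms are estimated by Hölder followed by Young's inequality:
\begin{equation*}
\|u_1u_1'\|_{L^2}\leq\|u_1\|_{L^4}\|u_1'\|_{L^4}\lesssim\|u_1\|_{L^{12}}^2+\|u_1'\|_{L^4}^2,
\end{equation*}
\begin{equation*}
\|u_1^2u_1'\|_{L^2}\leq\|u_1\|_{L^8}^2\|u_1'\|_{L^4}\lesssim\|u_1\|_{L^8}^4+\|u_1'\|_{L^4}^2,
\end{equation*}
together with $\|u_1^4\|_{L^2}=\|u_1\|_{L^8}^4$, which sum to the first stated bound.

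For the Lipschitz estimate, I write the difference in mean-value form, use $F'(y)-F'(z)=6\sin(y+z)\sin(y-z)$, and apply the addition formula once more to split off the small factor $\sin(2\rho\psi_{*_1})=O(\rho)$; the same cancellation mechanism as in Paragraph 1 produces the pointwise bound $|[\Nf(\uf)]_2-[\Nf(\vf)]_2|\lesssim|u_1-v_1|(|u_1|+|v_1|+u_1^2+v_1^2)$. Hölder in the form $\|(u_1-v_1)w^k\|_{L^2}\leq\|u_1-v_1\|_{L^{12}}\|w\|_{L^{12k/5}}^k$ for $k=1,2,3$, together with the embeddings $L^{12}\hookrightarrow L^{12/5}$, $L^8\hookrightarrow L^{24/5}$, and exact equality for $k=3$, then produces the factors $\|w\|_{L^{12}},\,\|w\|_{L^8}^2,\,\|w\|_{L^{36/5}}^3$ on the right-hand side. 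For $\partial_\rho([\Nf(\uf)]_2-[\Nf(\vf)]_2)$ I write $\partial_\rho[\Nf(\uf)]_2=G_\rho(\rho,u_1)+G_u(\rho,u_1)u_1'$ with $G$ the representation from Paragraph 1 and split
\begin{equation*}
\partial_\rho[\Nf(\uf)]_2-\partial_\rho[\Nf(\vf)]_2 = \bigl(G_\rho(\cdot,u_1)-G_\rho(\cdot,v_1)\bigr)+\bigl(G_u(\cdot,u_1)-G_u(\cdot,v_1)\bigr)u_1' + G_u(\cdot,v_1)\bigl(u_1'-v_1'\bigr).
\end{equation*}
The first two pieces produce differences of type $(u_1-v_1)\cdot w^k$ or $(u_1-v_1)u_1'\cdot w^k$ (accounting for the $\|u_1'\|_{L^4}$-factor with both $u_1$- and $v_1$-norms), while the last term contributes $(u_1'-v_1')$ with only $v_1$-norms on the right.

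The main obstacle is the bookkeeping: guaranteeing at every stage that each apparent $\rho^{-3}$ (respectively $\rho^{-2}$) singularity is absorbed by the matching small quantity -- either a $\sin(\rho u_1)$, a $\rho u_1-\tfrac{1}{2}\sin(2\rho u_1)$, or the bound $|y\alpha'(y)|,|y\Psi'(y)|\lesssim 1$ coming from oddness -- and that the combinatorics of the product rule land precisely on the Lebesgue exponents $(12,9,8,36/5,4)$ demanded by the statement. Without the oddness-induced absorption the derivative would produce a term of the form $u_1^3u_1'$ whose natural Hölder estimate $\|u_1\|_{L^{12}}^3\|u_1'\|_{L^4}$ cannot be converted into the target sum by Young's inequality, so the evenness of $\alpha$ and $\Psi$ is the most delicate structural ingredient of the proof.
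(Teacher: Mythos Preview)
Your proof is correct and the Hölder/Young arguments in the second half coincide with the paper's, but the way you obtain the pointwise bounds on $n(\rho,x):=[\Nf(\uf)]_2$ and its partial derivatives is genuinely different from the paper. The paper does not perform your algebraic factorization via the addition formula; instead it writes $n(\rho,x)$ directly as a Taylor remainder,
\[
n(\rho,x)=-\frac{12\sqrt{2}(\rho^2-2)}{(\rho^2+2)^2}\,x^2+6\int_0^x \cos\!\Big(4\arctan\tfrac{\rho}{\sqrt{2}}+2\rho t\Big)(x-t)^2\,dt,
\]
and reads off $|n|\lesssim |x|^2+|x|^3$, $|\partial_x n|\lesssim |x|+|x|^2$, $|\partial_\rho n|\lesssim |x|^2+|x|^4$, $|\partial_x^2 n|\lesssim 1+|x|$, $|\partial_\rho\partial_x n|\lesssim |x|+|x|^3$ by differentiating under the integral. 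Your route, introducing the even entire functions $\alpha(x)=\sin^2 x/x^2$ and $\Psi(x)=(x-\tfrac12\sin 2x)/x^3$ and exploiting $|y\alpha'(y)|,|y\Psi'(y)|\lesssim 1$, recovers exactly the same pointwise bounds. The advantage of your presentation is that the cancellation of the $\rho^{-3}$ singularity is made structurally transparent (it is absorbed once and for all into $\alpha$ and $\Psi$), and your identity $F'(y)-F'(z)=6\sin(y+z)\sin(y-z)$ gives a clean starting point for the Lipschitz bound; the advantage of the paper's Taylor-remainder approach is that no auxiliary functions need to be introduced and the required derivative bounds are obtained mechanically without invoking the evenness argument you single out as delicate. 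Either way, once the pointwise bounds are in hand, the remaining Hölder estimates are identical.
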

\begin{proof}
We define the auxiliary function $n:(0,1)\times\R \to \R,$ as
\begin{align*}
n(\rho,x)=&-\frac{3\sin\left(4\arctan\left(\frac{\rho}{\sqrt{2}}\right)+2\rho x\right)-6\rho x}{2\rho^3}+\frac{3\sin\left(4\arctan\left(\frac{\rho}{\sqrt{2}}\right)\right)}{2\rho^3}
-\frac{48}{(\rho^2+2)^2}x.
\end{align*}
By Taylor's Theorem we have that \begin{align}\label{Taylor}
n(\rho,x)=&-\frac{12\sqrt{2}(\rho^2-2)}{(\rho^2+2)^2}x^2+6\int_0^x \cos\left(4\arctan\left(\frac{\rho}{\sqrt{2}}\right)+2\rho t\right)(x-t)^2 dt
\end{align}
and thus
\begin{align*}
|n(\rho,x)|\lesssim |x|^2+|x|^3.
\end{align*}
From the above representation of $n$, it also follows that
\begin{align}\label{esti1}
|\partial_x n(\rho,x)|
\lesssim& |x|+|x|^2
\end{align}
and therefore
\begin{align}
|n(\rho,x)-n(\rho,y)|\lesssim \left(|x|+|y|+|x|^2+|y|^2\right)|x-y|
\end{align}
for all $\rho \in (0,1)$ and all $x,y \in \R$.
From these estimates and Hölder's inequality we deduce that
$$
\|[\Nf(\uf)]_2\|_{L^2(\B^6_1)}\lesssim \| u_1\|_{L^6(\B^6_1)}^3+\|u_1\|_{L^4(\B^6_1)}^2
$$
and
\begin{align*}
\|[\Nf(\uf)-\Nf(\vf)]_2\|_{L^2(\B^6_1)}&\lesssim \|u_1-v_1\|_{L^4(\B^6_1)}\\
&\quad\big(\|u_1\|_{L^4(\B^6_1)}+\|v_1\|_{L^4(\B^6_1)}+\|u_1\|_{L^8(\B^6_1)}^2+\|v_1\|_{L^8(\B^6_1)}^2\big)\\
&\lesssim \|u_1-v_1\|_{L^{12}(\B^6_1)}
\big(\|u_1\|_{L^{12}(\B^6_1)}+\|v_1\|_{L^{12}(\B^6_1)}\\
&\quad+\|u_1\|_{L^{8}(\B^6_1)}^2+\|v_1\|_{L^{8}(\B^6_1)}^2\big).
\end{align*}
Now we turn to the $\dot{H}^1$ seminorm.
A direct calculation using \eqref{Taylor} provides the estimate
\begin{align}\label{esti2}
|\partial_\rho n(\rho,x)|& \lesssim |x|^2+|x|^4
\end{align}
 This, together with \eqref{esti1} yields
\begin{align*}
\| [\N(\uf)]_2\|_{\dot{H}^1(\B^6_1)}&\lesssim \|u_1\|_{L^4(\B^6_1)}^2+\|u_1\|_{L^8(\B^6_1)}^4+\|u_1u_1'\|_{L^2(\B^6_1)}+\|u_1^2u_1'\|_{L^2(\B^6_1)}
\\
&\lesssim \|u_1\|_{L^4(\B^6_1)}^2+\|u_1\|_{L^8(\B^6_1)}^4+\|u_1'\|_{L^4(\B^6_1)}^2.
\end{align*}
Hence only the Lipschitz bound for the $\dot{H}^1$ seminorm remains to be shown.
To this end, we first investigate the expression $\partial_x \partial_\rho n(\rho,x)$. By once more employing the representation \eqref{Taylor}, we derive
\begin{align}\label{estimate n3}
|\partial_x \partial_\rho n(\rho,x)|&\lesssim  |x|+|x|^3 \nonumber\\
|\partial_x^2 n(\rho,x)|&\lesssim 1+|x|.
\end{align}
Moreover,
\begin{align*}
\|[\Nf(\uf)-\Nf(\vf)]_2\|_{\dot{H}^1(\B^6_1)}&\leq \|\partial_1n(|.|,u_1)-\partial_1 n(|.|,v_1)\|_{L^2(\B_1^6)}
\\
&\quad+
\|u_1'\partial_2 n(|.|, u_1)-v_1'\partial_2 n(|.|,v_1)\|_{L^2(\B_1^6)}\\
&=: N_1+N_2.
\end{align*}
Thanks to \eqref{estimate n3} and Hölder's inequality we have that
\begin{align*}
N_1&\lesssim \||u_1-v_1|\left( |u_1|+|u_1|^3 +|v_1|+|v_1|^3\right)\|_{L^2(\B_1^6)}
\\
&\lesssim \|u_1-v_1\|_{L^{12}(\B^6_1)}
\big(\|u_1\|_{L^{12}(\B^6_1)}+\|v_1\|_{L^{12}(\B^6_1)}+\|u_1\|_{L^{\frac{36}{5}}(\B^6_1)}^3+\|v_1\|_{L^{\frac{36}{5}}(\B^6_1)}^3
\big)
\end{align*}
and finally for $N_2$,
\begin{align*}
N_2&\leq \|u_1'\left[\partial_2 n(|.|, u_1)-\partial_2 n(|.|,v_1)\right]\|_{L^2(\B_1^6)}+\|(u_1'-v_1')\partial_2 n(|.|,v_1)\|_{L^2(\B_1^6)}
\\
&\lesssim \|u_1-v_2\|_{L^{12}(\B_1^6)}\|u_1'\|_{L^4(\B_1^6)}\left(1+\|u_1\|_{L^6(\B_1^6)}+\|v_1\|_{L^6(\B_1^6)} \right)
\\
&\quad+\|u_1'-v_1'\|_{L^4(\B_1^6)}\left(\|v_1\|_{L^{12}(\B_1^6)}+\|v_1\|_{L^8(\B_1^6)}^2\right).
\end{align*}
\end{proof}

In accordance with our previously proved Strichartz estimates, we now define the Strichartz space $\mathcal{X}$ as the completion of $C^\infty_c \times C^\infty_c([0,\infty)\times \overline{\B^6_1})$, where $C_c^\infty$ denotes smooth functions with compact support, with respect to the norm
\begin{align*}
\|(\phi_1,\phi_2)\|_{\X}&:=\|\phi_1\|_{L^{2}(\R_+)L^{12}(\B_1^6)}+\|\phi_1\|_{L^{3}(\R_+)L^{9}(\B_1^6)}+\|\phi_1\|_{L^{4}(\R_+)L^{8}(\B_1^6)}
+
\|\phi_1\|_{L^{6}(\R_+)L^{\frac{36}{5}}(\B_1^6)}
\\
&\quad+\|\phi_1\|_{L^{\infty}(\R_+)L^{6}(\B_1^6)}+\|\phi_1\|_{L^{2}(\R_+)\dot{W}^{1,4}(\B_1^6)}+\|\phi_2\|_{L^{2}(\R_+)L^{4}(\B_1^6)}.
\end{align*}
We will also make use of the abbreviation
\[\X_\delta:=\{\Phi \in \X: \|\Phi\|_\X \leq \delta\}.
\]
Next, for any $\uf \in \mathcal{H}$ and $\Phi\in C^\infty_c \times C^\infty_c([0,\infty)\times \overline{\B^6_1}) $ we define
\begin{align*}
\K_{\uf}(\Phi)(\tau):= \Sf(\tau)\uf+\int_0^\tau \Sf(\tau-\sigma)\Nf(\Phi(\sigma))d \sigma-e^{\tau}\Cf(\Phi,\uf),
\end{align*}
where $$
\Cf(\Phi,\uf):=\Pf\left(\uf+\int_0^\infty e^{-\sigma}\Nf(\Phi(\sigma))d \sigma\right).
$$
The useful thing about this integral operator is that it will satisfy the requirements of the Banach fixed point theorem under certain conditions.
\begin{lem}\label{integralbound}
Let $\uf \in \mathcal{H}$ and  $\Phi\in C^\infty_c \times C^\infty_c([0,\infty)\times \overline{\B^6_1})$. Then $\K_\uf(\Phi)\in \mathcal{X}$. Moreover, we have the estimate
\begin{align*}
\|\K_\uf(\Phi)\|_{\X}\lesssim \|\uf\|_{\mathcal{H}}+\|\Phi\|_{\X}^2+\|\Phi\|_{\X}^4.
\end{align*}
\end{lem}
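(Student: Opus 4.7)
The strategy is a Lyapunov--Perron decomposition exploiting the spectral structure of $\Lf$ established in the previous sections. Since $\Pf$ is the rank-one spectral projection onto $\langle\gf\rangle$ associated with the unstable eigenvalue $\lambda=1$, the identity $\Sf(\tau)\Pf=e^\tau\Pf$ holds. Splitting $\Nf(\Phi(\sigma))=(\I-\Pf)\Nf(\Phi(\sigma))+\Pf\Nf(\Phi(\sigma))$ and evaluating the unstable contributions explicitly, one sees that $\Cf$ has been constructed precisely so that the $\Pf$-pieces of $\Sf(\tau)\uf$ and of the Duhamel integral cancel against the two $e^\tau$-terms in $e^\tau\Cf(\Phi,\uf)$, leaving the identity
\[
\K_{\uf}(\Phi)(\tau) = \Sf(\tau)(\I-\Pf)\uf+\int_0^\tau \Sf(\tau-\sigma)(\I-\Pf)\Nf(\Phi(\sigma))d\sigma-e^\tau\int_\tau^\infty e^{-\sigma}\Pf\Nf(\Phi(\sigma))d\sigma.
\]

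The first two terms fall directly under the scope of the Strichartz estimates of Propositions \ref{strichatzestimate1} and \ref{strichatzestimate2}, and are bounded in $\X$ by $\|\uf\|_{\mathcal H}$ and $\|\Nf(\Phi)\|_{L^1(\R_+,\mathcal H)}$, respectively. For the third term I would exploit the fact that $\Pf$ has rank one: writing $\Pf\vf=\phi(\vf)\gf$ for a bounded linear functional $\phi$ on $\mathcal H$, this term takes the form $c(\tau)\gf$ with $|c(\tau)|\lesssim e^\tau\int_\tau^\infty e^{-\sigma}\|\Nf(\Phi(\sigma))\|_{\mathcal H}d\sigma$. Recognising this as a convolution $c=k*h$ where $k(x)=e^{x}\ind_{\{x\leq 0\}}\in L^p(\R)$ for every $p\in[1,\infty]$ and $h(\sigma)=\|\Nf(\Phi(\sigma))\|_{\mathcal H}$, Young's convolution inequality yields $\|c\|_{L^p_\tau(\R_+)}\lesssim \|\Nf(\Phi)\|_{L^1(\R_+,\mathcal H)}$ for every $p\in[1,\infty]$. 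Since $\gf$ and its spatial derivatives are smooth and bounded on $\overline{\B^6_1}$, all spatial norms of $\gf$ appearing in $\X$ are finite, and the third term is bounded in $\X$ by $\|\Nf(\Phi)\|_{L^1(\R_+,\mathcal H)}$ as well.

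To conclude, I would invoke Lemma \ref{nonlineare} pointwise in $\tau$ and integrate: the Strichartz pairs $(2,12)$, $(3,9)$, $(4,8)$ together with the derivative norm $L^2_\tau\dot W^{1,4}$ are built into the definition of $\X$, so
\[
\|\Nf(\Phi)\|_{L^1(\R_+,\mathcal H)}\lesssim \|\Phi\|_\X^2+\|\Phi\|_\X^3+\|\Phi\|_\X^4,
\]
and the cubic term is absorbed via $\|\Phi\|_\X^3\leq \tfrac12(\|\Phi\|_\X^2+\|\Phi\|_\X^4)$. Membership $\K_{\uf}(\Phi)\in\X$ follows from density, since each constituent has been bounded in $\X$-norm and arises as an $\X$-limit of smooth, compactly supported functions. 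The only genuinely delicate point is the very first step: one must verify that the Lyapunov--Perron correction $\Cf$ was designed so that the unstable contribution of the Duhamel integral is recast as a backwards tail $\int_\tau^\infty$ against an exponentially decaying kernel. Once this rearrangement is in place, the rest of the argument is an immediate consequence of the Strichartz theory of Propositions \ref{strichatzestimate1} and \ref{strichatzestimate2}, the nonlinear bound of Lemma \ref{nonlineare}, and Young's convolution inequality.
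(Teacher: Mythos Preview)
Your argument is correct and essentially identical to the paper's. The paper organises the proof by splitting $\K_{\uf}(\Phi)$ into $(\I-\Pf)\K_{\uf}(\Phi)$ and $\Pf\K_{\uf}(\Phi)$, arriving at exactly your identity $\Pf\K_{\uf}(\Phi)(\tau)=-\int_\tau^\infty e^{\tau-\sigma}\Pf\Nf(\Phi(\sigma))\,d\sigma$ and then applying the same Strichartz propositions, the rank-one structure of $\Pf$, Young's inequality, and Lemma \ref{nonlineare}; your explicit Lyapunov--Perron rearrangement is simply a more compact way of writing the same decomposition.
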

\begin{proof}
Let $\Phi \in  C^\infty_c \times C^\infty_c([0,\infty)\times \overline{\B^6_1})$.
Propositions \ref{strichatzestimate1} and \ref{strichatzestimate2} imply that 
\begin{align*}
\| (\I-\Pf)\K_{\uf}(\Phi)(\tau)\|_{\X}\lesssim \| \uf\|_{\mathcal{H}}+\int_0^\tau\|N(\Phi(\sigma))\|_\mathcal{H} d\sigma
\end{align*}
and by Lemma \ref{nonlineare}, we obtain
\begin{align*}
\int_0^\tau\|N(\Phi(\sigma))\|_\mathcal{H} d\sigma &\lesssim
\int_0^\tau \| \phi_1(\sigma,.)\|_{L^{12}(\B_1^6)}^2+\| \phi_1(\sigma,.)\|_{L^9(\B_1^6)}^3+\| \phi_1(\sigma,.)\|_{L^8(\B_1^6)}^4
\\
&\quad+\| \phi_1(\sigma,.)\|_{\dot{W}^{1,4}(\B_1^6)}^2 d \sigma\\
&\lesssim\|\phi_1\|_{L^2(\R_+)L^{12}(\B_1^6)}^2+\|\phi_1\|_{L^3(\R_+)L^9(\B_1^6)}^3+\| \phi_1\|_{L^4(\R_+)L^8(\B_1^6)}^4
\\
&\quad+\| \phi_1\|_{L^2(\R_+)\dot{W}^{1,4}(\B_1^6)}^2\\
&\lesssim \|\Phi\|_{\X}^2+\|\Phi\|_{\X}^4.
\end{align*}

Next, note that 
$$\Pf \K_{\uf}(\Phi)=-\int_\tau^\infty e^{\tau-\sigma}\Pf\Nf(\Phi(\sigma))d \sigma$$ 
and as $\rg\Pf= \Span\{\gf\}$, there exists a unique $\widetilde{\gf}\in\mathcal{H}$
with 
$$\Pf \ff=\left(\ff,\widetilde{\gf}\right)_{\mathcal{H}}\gf$$
for all $\ff\in \mathcal{H}$.
Thus, $\|[\Pf \ff]_j\|_{\dot W^{k,q}(\B_1^6)}\lesssim
|(\ff,\widetilde{\gf})_{\mathcal H}|\|g_j\|_{\dot W^{k,q}(\B_1^6)}\lesssim \|\ff\|_{\mathcal H}$.
For $q\in [6,12]$ we then obtain
\begin{align*}
&\quad\|[\Pf \K_{\uf}(\Phi)(\tau)]_1\|_{L^q(\B^6_1)}+\|[\Pf \K_{\uf}(\Phi)(\tau)]_1\|_{\dot{W}^{1,4}(\B^6_1)}+\|[\Pf \K_{\uf}(\Phi)(\tau)]_2\|_{L^4(\B^6_1)}\\
&\lesssim \int_\tau^\infty e^{\tau-\sigma}\|\Nf(\Phi(\sigma))\|_{\mathcal{H}} d \sigma\\
&=\int_\R 1_{(-\infty,0]}(\tau-\sigma)e^{\tau-\sigma} 1_{[0,\infty)}(\sigma)\|\Nf(\Phi(\sigma))\|_{\mathcal{H}} d \sigma
\end{align*}
and so an application of Young's inequality yields
\begin{align*}
\|\Pf \K_\uf (\Phi)\|_{L^p(\R_+)L^q(\B_1^6)}
  &\lesssim \|1_{(-\infty,0]}(.)e^{(.)}\|_{L^p(\R)}
    \int_0^\infty\|\Nf(\Phi(\sigma))\|_{\mathcal{H}} d \sigma 
\\
&\lesssim \|\Phi\|_{\X}^2+\|\Phi\|_{\X}^4
\end{align*}
and analogously for the other terms.
\end{proof}
We also derive a corresponding local Lipschitz estimate.
\begin{lem}\label{integrallip}
Let $\uf \in \mathcal{H}$. Then the estimate
\begin{align*}
\|\K_{\uf}(\Psi)-\K_{\uf}(\Phi)\|_\X\lesssim \left(\|\Psi\|_\X+\|\Psi\|_\X^3+\|\Phi\|_\X+\|\Phi\|_\X^3\right) \|\Psi-\Phi\|_\X
\end{align*}
holds true for all $\Psi, \Phi \in  C^\infty_c \times C^\infty_c([0,\infty)\times \overline{\B^6_1})$.
\end{lem}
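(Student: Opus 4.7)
The plan is to mirror the strategy of Lemma \ref{integralbound} almost verbatim, only replacing the bound on $\Nf$ from Lemma \ref{nonlineare} by its corresponding Lipschitz bound. Observe first that by construction the terms $\Sf(\tau)\uf$ cancel in the difference, giving
\[
\K_\uf(\Psi)(\tau)-\K_\uf(\Phi)(\tau)=\int_0^\tau \Sf(\tau-\sigma)\bigl[\Nf(\Psi(\sigma))-\Nf(\Phi(\sigma))\bigr]d\sigma-e^\tau[\Cf(\Psi,\uf)-\Cf(\Phi,\uf)],
\]
and the $\uf$-term drops out of the correction as well, so that $\Cf(\Psi,\uf)-\Cf(\Phi,\uf)=\Pf\int_0^\infty e^{-\sigma}[\Nf(\Psi(\sigma))-\Nf(\Phi(\sigma))]d\sigma$. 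I would then split this difference into its $(\I-\Pf)$ and $\Pf$ parts exactly as in Lemma \ref{integralbound}.

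For the $(\I-\Pf)$-part I apply the inhomogeneous Strichartz estimates from Propositions \ref{strichatzestimate1} and \ref{strichatzestimate2}, which bound every summand of the $\X$-norm by $\|\Nf(\Psi(\sigma))-\Nf(\Phi(\sigma))\|_{L^1_\sigma(\R_+)\mathcal{H}}$. For the $\Pf$-part I use that $\rg\Pf=\langle\gf\rangle$ is one-dimensional, so that $\|[\Pf\ff]_j\|_{\dot W^{k,q}(\B^6_1)}\lesssim \|\ff\|_{\mathcal H}$, and then Young's inequality applied to the convolution with the $L^1_{\tau}(\R)$-function $1_{(-\infty,0]}e^{(\cdot)}$, precisely as in the preceding lemma. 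In both cases the problem therefore reduces to controlling
\[
\int_0^\infty \|\Nf(\Psi(\sigma))-\Nf(\Phi(\sigma))\|_{\mathcal H}\,d\sigma
\]
by the right-hand side of the claimed estimate.

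For this I insert the pointwise (in $\sigma$) Lipschitz bound from Lemma \ref{nonlineare} and estimate each of the seven resulting terms in time with a suitable Hölder pairing that is tailored to the definition of $\|\cdot\|_\X$. Concretely, the terms pair as $L^2_\tau\cdot L^2_\tau$, $L^2_\tau\cdot L^4_\tau\cdot L^4_\tau$, $L^2_\tau\cdot L^2_\tau\cdot L^\infty_\tau$ and $L^2_\tau\cdot L^6_\tau\cdot L^6_\tau\cdot L^6_\tau$, matching the Strichartz pairs $(2,12)$, $(4,8)$, $(\infty,6)$, $(6,\tfrac{36}{5})$ (and for the derivative term $(2,4)$ for $u_1'$) that appear in $\|\cdot\|_\X$. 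A typical entry such as $\|u_1-v_1\|_{L^{12}}\|u_1'\|_{L^4}(1+\|u_1\|_{L^6}+\|v_1\|_{L^6})$ is controlled by
\[
\|\Psi-\Phi\|_{L^2_\tau L^{12}}\,\|\psi_1'\|_{L^2_\tau L^4}\bigl(1+\|\Psi\|_{L^\infty_\tau L^6}+\|\Phi\|_{L^\infty_\tau L^6}\bigr)\lesssim (1+\|\Psi\|_\X+\|\Phi\|_\X)\|\Psi\|_\X\|\Psi-\Phi\|_\X,
\]
and analogously for the remaining terms, where the cubic term $\|u_1\|_{L^{36/5}}^3$ produces the $\|\Psi\|_\X^3+\|\Phi\|_\X^3$ contribution. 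Summing yields exactly the claimed estimate.

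There is no substantive obstacle here: once the Strichartz pairs in the definition of $\X$ are matched against the nonlinearity bound of Lemma \ref{nonlineare}, the argument is a direct Hölder bookkeeping. The only point requiring a touch of care is the factor $1+\|u_1\|_{L^6}+\|v_1\|_{L^6}$ coming from the Lipschitz bound on the derivative term, which I would absorb using the $L^\infty_\tau L^6$-component of the $\X$-norm; otherwise the proof is a straightforward adaptation of Lemma \ref{integralbound}.
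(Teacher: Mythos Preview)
Your proposal is correct and follows essentially the same route as the paper's own proof: split into the $(\I-\Pf)$ and $\Pf$ parts, apply the inhomogeneous Strichartz estimates (Propositions \ref{strichatzestimate1} and \ref{strichatzestimate2}) respectively Young's inequality with the kernel $1_{(-\infty,0]}e^{(\cdot)}$, and reduce everything to $\int_0^\infty\|\Nf(\Psi(\sigma))-\Nf(\Phi(\sigma))\|_{\mathcal H}\,d\sigma$, which is then controlled by inserting the Lipschitz bound from Lemma \ref{nonlineare} and pairing each term in time via H\"older exactly as you describe. The only cosmetic point is that your intermediate bound $(1+\|\Psi\|_\X+\|\Phi\|_\X)\|\Psi\|_\X\|\Psi-\Phi\|_\X$ on the derivative term must be absorbed into the stated form via $x^2\le x+x^3$, which the paper likewise leaves implicit.
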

\begin{proof}
Again, we start by investigating $\left(\I-\Pf \right)\K_{\uf}=:\widetilde{\K}_{\uf}.$ Lemma \ref{nonlineare} implies
\begin{align*}
\|\widetilde{\K}_{\uf}(\Psi)-\widetilde{\K}_{\uf}(\Phi)\|_\X \lesssim& \int_0^\tau \|\Nf_{\uf}(\Psi(\sigma,.))-\Nf(\Phi(\sigma,.))\|_\mathcal{H} d \sigma\\
&\lesssim
\int_0^\tau\|\psi_1(\sigma,.)-\phi_1(\sigma,.)\|_{L^{12}(\B_1^6)}\Big(\|\psi_1(\sigma,.)\|_{L^{12}(\B^6_1)}+\|\phi_1(\sigma,.)\|_{L^{12}(\B^6_1)}
\\
&\quad+\|\psi_1(\sigma,.)\|_{L^{8}(\B^6_1)}^2+\|\phi_1(\sigma,.)\|_{L^{8}(\B^6_1)}^2
\\
&\quad+\|\psi_1(\sigma,.)\|_{\dot{W}^{1,4}(\B^6_1)}(1+\|\psi_1(\sigma,.)\|_{L^6(\B_1^6)}+\|\phi_1(\sigma,.)\|_{L^6(\B_1^6)})
\\
&\quad+\|\psi_1(\sigma,.)\|_{L^{\frac{36}{5}}(\B^6_1)}^3+\|\phi_1(\sigma,.)\|_{L^{\frac{36}{5}}(\B^6_1)}^3
\Big)
\\
&\quad+\|\psi_1(\sigma,.)-\phi_1(\sigma,.)\|_{\dot{W}^{1,4}(\B_1^6)}
\\
&\quad\times\big(\|\phi_1(\sigma,.)\|_{L^{12}(\B_1^6)}+\|\phi_1(\sigma,.)\|_{L^8(\B_1^6)}^2\big) d \sigma
\\
&\lesssim
\|\psi_1-\phi_1\|_{L^2(\R_+)L^{12}(\B_1^6)}\Big(\|\psi_1\|_{L^2(\R_+)L^{12}(\B_1^6)}
+\|\phi_1\|_{L^2(\R_+)L^{12}(\B_1^6)}
\\
&\quad+\|\psi_1\|_{L^4(\R_+)L^{8}(\B_1^6)}^2+
\|\phi_1\|_{L^4(\R_+)L^{8}(\B_1^6)}^2+\|\psi_1\|_{L^2(\R_+)\dot{W}^{1,4}(\B_1^6)}
\\
&\quad\times\big[1+\|\psi_1\|_{L^\infty(\R_+)L^{6}(\B_1^6)}+\|\phi_1\|_{L^\infty(\R_+)L^{6}(\B_1^6)}\big]
\\
&\quad+\|\psi_1\|_{L^6(\R_+)L^{\frac{36}{5}}(\B_1^6)}^3+\|\phi_1\|_{L^6(\R_+)L^{\frac{36}{5}}(\B_1^6)}^3\Big)
\\
&\quad+\|\psi_1-\phi_1\|_{L^2(\R_+)\dot{W}^{1,4}(\B_1^6)}\Big(\|\phi_1\|_{L^2(\R_+)L^{12}(\B_1^6)
}\\
&\quad+\|\phi_1\|_{L^4(\R_+)L^{8}(\B_1^6)}^2\Big)
\\
&\lesssim \left(\|\Psi\|_\X+\|\Psi\|_\X^3+\|\Phi\|_\X+\|\Phi\|_\X^3\right) \|\Psi-\Phi\|_\X.
\end{align*}
Analogously to the proof of Lemma \ref{integralbound}, we have that 
\begin{align*}
&\quad\|[\Pf \K_{\uf}(\Phi)(\tau)-\Pf \K_{\uf}(\Psi)(\tau)]_1\|_{L^q(\B^6_1)}+\|[\Pf \K_{\uf}(\Phi)(\tau)-\Pf \K_{\uf}(\Psi)(\tau)]_1\|_{\dot{W}^{1,4}(\B^6_1)}
\\
&\quad+\|[\Pf \K_{\uf}(\Phi)(\tau)-\Pf \K_{\uf}(\Psi)(\tau)]_2\|_{L^2(\B^6_1)}
\\
&\lesssim \int_\tau^\infty e^{\tau-\sigma}\|\Nf(\Phi(\sigma))\|_{\mathcal{H}} d \sigma
\\
&=\int_\R 1_{(-\infty,0]}(\tau-\sigma)e^{\tau-\sigma} 1_{[0,\infty)}(\sigma)\|\Nf(\Phi(\sigma,.)-\Psi(\sigma,.))\|_{\mathcal{H}} d \sigma
\end{align*}
for all $q\in [6,12]$.
Therefore, the same calculations as before yield \begin{align*}
\|\Pf \K_{\uf}(\Phi)-\Pf \K_{\uf}(\Psi)]\|_\X
\lesssim& \int_0^\infty\|\N(\Phi(\sigma,.))-\Nf(\Psi(\sigma,.))\|_{\mathcal{H}} d \sigma
\\
\lesssim&
\left(\|\Psi\|_\X+\|\Psi\|_\X^3+\|\Phi\|_\X+\|\Phi\|_\X^3\right) \|\Psi-\Phi\|_\X.
\end{align*}
\end{proof}
Lemmas \ref{integralbound} and \ref{integrallip} imply that $\K_\uf$
extends to an operator on $\X$.

\begin{lem}\label{solutionexistence}
  There exist $\delta, c>0$ such that for each $\uf\in \mathcal H$
  with $\|\uf\|_{\mathcal H}\leq \frac{\delta}{c}$ there exists a
  unique $\Phi_{\uf}\in \X_\delta$ that satisfies
$$
\Phi_{\uf}=\K_{\uf}(\Phi_{\uf}).
$$
\end{lem}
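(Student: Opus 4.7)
The plan is to apply the Banach fixed point theorem to $\mathbf K_{\uf}$ on the complete metric space $\X_\delta$, using Lemmas \ref{integralbound} and \ref{integrallip} as the two key inputs. Since the constants in those lemmas are fixed (say $C_1$ for the bound and $C_2$ for the Lipschitz estimate), the game is simply to pick $\delta$ small enough that the nonlinear terms $\delta^2 + \delta^4$ and $\delta + \delta^3$ are dominated by a suitable fraction of $\delta$ and $1$ respectively, and then to select the threshold $\|\uf\|_{\mathcal{H}} \leq \delta/c$ so that the linear part $\mathbf S(\tau)\uf$ contributes no more than half of the budget $\delta$.

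Concretely, I would first rewrite Lemmas \ref{integralbound} and \ref{integrallip} with explicit constants: there exist $C_1, C_2 > 0$ such that
\[
\|\mathbf K_{\uf}(\Phi)\|_{\X} \leq C_1\bigl(\|\uf\|_{\mathcal H} + \|\Phi\|_{\X}^2 + \|\Phi\|_{\X}^4\bigr)
\]
and
\[
\|\mathbf K_{\uf}(\Psi) - \mathbf K_{\uf}(\Phi)\|_{\X} \leq C_2\bigl(\|\Psi\|_{\X} + \|\Psi\|_{\X}^3 + \|\Phi\|_{\X} + \|\Phi\|_{\X}^3\bigr)\|\Psi - \Phi\|_{\X}
\]
for all $\Psi, \Phi \in \X$. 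I then choose $\delta > 0$ small enough so that both $C_1(\delta + \delta^3) \leq \tfrac{1}{2}$ and $2C_2(\delta + \delta^3) \leq \tfrac{1}{2}$ hold; such a $\delta$ exists because the bracketed expressions tend to $0$ as $\delta \to 0^+$. Finally, I set $c := 2C_1$, so that $\|\uf\|_{\mathcal H} \leq \delta/c$ implies $C_1\|\uf\|_{\mathcal H} \leq \delta/2$.

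With these choices, for any $\Phi \in \X_\delta$,
\[
\|\mathbf K_{\uf}(\Phi)\|_{\X} \leq C_1\|\uf\|_{\mathcal H} + C_1(\delta + \delta^3)\delta \leq \tfrac{\delta}{2} + \tfrac{\delta}{2} = \delta,
\]
so $\mathbf K_{\uf}$ maps $\X_\delta$ into itself. The Lipschitz bound gives
\[
\|\mathbf K_{\uf}(\Psi) - \mathbf K_{\uf}(\Phi)\|_{\X} \leq 2C_2(\delta + \delta^3)\|\Psi - \Phi\|_{\X} \leq \tfrac{1}{2}\|\Psi - \Phi\|_{\X}
\]
for all $\Psi, \Phi \in \X_\delta$, so $\mathbf K_{\uf}$ is a contraction on $\X_\delta$. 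Since $\X$ is a Banach space (being the completion of the stated function space under $\|\cdot\|_{\X}$) and $\X_\delta$ is a closed subset, the Banach fixed point theorem yields a unique $\Phi_{\uf} \in \X_\delta$ with $\Phi_{\uf} = \mathbf K_{\uf}(\Phi_{\uf})$.

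There is genuinely no obstacle here; all the hard analytic work has already been done in establishing the Strichartz estimates of Propositions \ref{strichatzestimate1} and \ref{strichatzestimate2} and the nonlinear bounds of Lemma \ref{nonlineare}, which together feed directly into the two structural estimates on $\mathbf K_{\uf}$. The only care needed is to track that the powers of $\delta$ appearing in both lemmas are strictly positive, so that smallness of $\delta$ indeed produces the required self-mapping and contraction properties simultaneously.
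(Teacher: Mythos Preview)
Your proof is correct and follows exactly the same approach as the paper, which simply cites Lemmas \ref{integralbound} and \ref{integrallip} together with the contraction mapping principle. You have merely spelled out the routine choice of $\delta$ and $c$ that the paper leaves implicit.
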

\begin{proof}
The claim follows from Lemmas \ref{integralbound}, \ref{integrallip},
and the contraction mapping principle.
\end{proof}
\subsection{Variation of Blowup time}
The next step is to show that we can choose a time $T$ close to $1$ such that the correction term vanishes.
Therefore, we recall that the prescribed initial data 
$$
\Phi(0)=(\phi_1(0,.),\phi_2(0,.)),
$$
are given by
\begin{align*}
	\phi_1(0,\rho)&=\psi_1(0,\rho)-\frac{2\arctan\left(\frac{\rho}{\sqrt{2}}\right)}{\rho}= Tf(T\rho)-\frac{2\arctan\left(\frac{\rho}{\sqrt{2}}\right)}{\rho},
	\\
	\phi_2(0,\rho)&=\psi_2(0,\rho)-\frac{2\sqrt{2}}{2+\rho^2}= T^2 g(T\rho)-\frac{2\sqrt{2}}{2+\rho^2}.
\end{align*}
Furthermore, in similarity coordinates the initial data of the blowup
solution $u^1_*$ are given by
\begin{align*}
\psi^1_{1_*}(0,\rho)=\frac{2\arctan\left(\frac{T \rho}{\sqrt{2}}\right)}{\rho}, \qquad \psi^1_{2_*}(0,\rho)=\frac{T^2 2\sqrt{2}}{2+T^2\rho^2}.
\end{align*}
Motivated by the appearance of the blowup time $T$ in the initial data, we define the operator $\Uf:[1-\delta,1+\delta]\times H^2 \times H^1(\B^6_{1+\delta}) \to \mathcal{H}$ as
\begin{align*}
\Uf(T,\vf)(\rho)=(Tv_1(T\rho),T^2 v_2(T\rho))+(\psi^1_{1_*}(0,\rho),\psi^1_{2_*}(0,\rho))
-\left(\frac{2\arctan(\frac{\rho}{\sqrt2})}{\rho},\frac{2\sqrt{2}}{2+\rho^2}
\right).
\end{align*}
Note that as long as $\delta$ stays small enough, this is a continuous map and
\begin{align*}
\Phi(0)=\Uf\left(T,\left(f-\frac{2\arctan\left(\frac{(.)}{\sqrt{2}}\right)}{(.)},g-\frac{2\sqrt{2}}{2+(.)^2}\right)\right).
\end{align*}
It is also clear that $\Uf(1,\textbf{0})=\textbf{0}$.
Further, for $\delta$ sufficiently small 
the estimate 
\begin{align*}
\|\Psi^1_*-\Psi_*\|_{H^2\times H^1(\B^6_{1+\delta})}\lesssim |1-T|
\end{align*}
holds for all $T\in [1-\delta,1+\delta]$ and so
\begin{align*}
\|\Uf(T,\vf)\|_{\mathcal{H}} \lesssim \|\vf\|_{H^2\times H^1(\B^6_{1+\delta}) }
+|1-T|
\end{align*}
for all $ T \in [1-\delta,1+\delta]$.
\begin{lem}\label{exlem2}
There exist constants $M \geq 1$ and $\delta >0$ such that if $\vf \in H^2\times H^1(\B^6_{1+\delta})$ satisfies $\|\vf\|_{H^2\times H^1(\B^6_{1+\delta})}\leq \frac{\delta}{M}$, then there exists a unique $T^* \in [1-\delta,1+\delta]$ and a $\Phi \in \X_\delta$ with 
$ \Phi=\K_{\Uf(T^*,\vf)}(\Phi)$ and $\Cf(\Phi,\Uf(T^*,\vf))= \normalfont{\textbf{0}}$.
\end{lem}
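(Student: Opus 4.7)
The plan is to find $T^*$ by a continuity argument in $T$, exploiting the fact that the unstable direction $\gf$ is precisely generated by infinitesimal variations of the blowup time. First I would verify that for $\delta$ small and $M$ large the hypotheses of Lemma~\ref{solutionexistence} hold uniformly in $T\in[1-\delta,1+\delta]$: from the bound $\|\Uf(T,\vf)\|_{\mathcal{H}}\lesssim |1-T|+\|\vf\|_{H^2\times H^1(\B^6_{1+\delta})}\lesssim \delta(1+1/M)$, one arranges that $\|\Uf(T,\vf)\|_{\mathcal{H}}$ stays below the threshold of Lemma~\ref{solutionexistence}, yielding a family of fixed points $\Phi_T=\Phi_{T,\vf}\in\X_\delta$ of $\K_{\Uf(T,\vf)}$. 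A parameter-dependence argument based on Lemma~\ref{integrallip} then gives Lipschitz continuity of $T\mapsto\Phi_T$ in $\X$, and hence continuity of the scalar
\[
F(T):=(\Cf(\Phi_T,\Uf(T,\vf)),\widetilde{\gf})_{\mathcal{H}},
\]
where $\widetilde{\gf}\in\mathcal{H}$ is defined by $\Pf\ff=(\ff,\widetilde{\gf})_{\mathcal{H}}\gf$ (cf.\ the proof of Lemma~\ref{integralbound}). Since $\rg\,\Pf=\Span\{\gf\}$, the vector equation $\Cf(\Phi_T,\Uf(T,\vf))=\mathbf{0}$ is equivalent to the scalar equation $F(T)=0$.

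The crucial observation is that $\partial_T\Uf(T,\mathbf{0})|_{T=1}$ is a nonzero multiple of $\gf$: a direct computation from the explicit formulas for $\psi^1_{1_*}(0,\cdot)$ and $\psi^1_{2_*}(0,\cdot)$ gives
\[
\partial_T\Uf(T,\mathbf{0})\big|_{T=1}(\rho)=\left(\frac{2\sqrt{2}}{2+\rho^2},\,\frac{8\sqrt{2}}{(2+\rho^2)^2}\right)=2\sqrt{2}\,\gf(\rho).
\]
Using $(\gf,\widetilde{\gf})_{\mathcal{H}}=1$ (which follows from $\Pf\gf=\gf$) together with the explicit form
\[
F(T)=(\Uf(T,\vf),\widetilde{\gf})_{\mathcal{H}}+\int_0^\infty e^{-\sigma}(\Nf(\Phi_T(\sigma)),\widetilde{\gf})_{\mathcal{H}}\,d\sigma,
\]
one obtains a decomposition $F(T)=2\sqrt{2}\,(T-1)+E(T,\vf)$ with $|E(T,\vf)|\lesssim |T-1|^2+\|\vf\|_{H^2\times H^1(\B^6_{1+\delta})}+\delta^2$, where the quadratic and quartic Duhamel contributions are controlled via Lemma~\ref{nonlineare} together with $\|\Phi_T\|_\X\lesssim\delta$. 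Consequently, for $M$ sufficiently large and $\delta$ sufficiently small, $F(1-\delta)<0<F(1+\delta)$, and the intermediate value theorem yields some $T^*\in(1-\delta,1+\delta)$ with $F(T^*)=0$.

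Uniqueness will follow by applying the same expansion to $F(T_2)-F(T_1)=2\sqrt{2}\,(T_2-T_1)+E_1(T_1,T_2,\vf)$: provided the error $E_1$ is $o(|T_2-T_1|)$ uniformly in $\vf$ with $\|\vf\|\leq\delta/M$, the function $F$ is strictly monotone on $[1-\delta,1+\delta]$ and so has a unique zero. The main obstacle in executing this argument is the quantitative Lipschitz bound $\|\Phi_{T_1}-\Phi_{T_2}\|_\X\lesssim|T_1-T_2|$, which I would obtain by subtracting the fixed-point equations for $\Phi_{T_1}$ and $\Phi_{T_2}$ and invoking Lemma~\ref{integrallip} together with $\|\Uf(T_1,\vf)-\Uf(T_2,\vf)\|_{\mathcal{H}}\lesssim|T_1-T_2|$. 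The delicate point is that the Lipschitz constant of $\K$ from Lemma~\ref{integrallip} degenerates as $\|\Phi\|_\X\to\delta$, so one must choose the ball radius strictly below the contraction threshold in Lemma~\ref{solutionexistence} to leave room for the closing estimate.
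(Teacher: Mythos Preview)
Your existence argument is correct and coincides with the paper's: in one dimension the Brouwer fixed point theorem the paper invokes (deferring to Lemma~6.5 of \cite{Don17}) is precisely the intermediate value theorem you use, and the key computation $\partial_T\Uf(T,\mathbf{0})|_{T=1}=2\sqrt{2}\,\gf$ is exactly what the paper singles out. Continuity (rather than Lipschitz continuity) of $T\mapsto\Uf(T,\vf)$ in $\mathcal H$, and hence of $T\mapsto\Phi_T$ in $\X$, already suffices for this step.

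The gap is in your uniqueness argument. The estimate $\|\Uf(T_1,\vf)-\Uf(T_2,\vf)\|_{\mathcal H}\lesssim|T_1-T_2|$ does \emph{not} hold for generic $\vf\in H^2\times H^1(\B^6_{1+\delta})$: the dilation $T\mapsto Tv_1(T\,\cdot)$ is continuous into $H^2(\B^6_1)$ but Lipschitz only if $v_1\in H^3$, since $\partial_T[Tv_1(T\rho)]=v_1(T\rho)+T\rho v_1'(T\rho)$ costs a derivative. Without this Lipschitz bound the contribution $(A_{T_2}\vf-A_{T_1}\vf,\widetilde{\gf})_{\mathcal H}$ is merely $o(1)$ as $T_2\to T_1$, not $O(|T_2-T_1|)$ with a small constant, and strict monotonicity of $F$ cannot be concluded from your expansion. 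The ``delicate point'' you flag---the Lipschitz constant of $\K_\uf$ from Lemma~\ref{integrallip}---is not the obstruction: that constant is uniformly small once $\|\Phi\|_\X\le\delta$, whereas the real loss sits in the $T$-dependence of the rescaled data $\Uf(T,\vf)$.
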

\begin{proof}
Since 
$$
\partial_T \left(\frac{2\arctan(\frac{T \rho}{\sqrt{2}})}{\rho},\frac{T^2 2\sqrt{2}}{2+T^2\rho^2}\right)\bigg|_{T=1}=2 \sqrt{2}\gf(\rho),
$$
the claim follows by an application of Brower's fixed point theorem, see
the proof of Lemma 6.5 in \cite{Don17} for the details.
\end{proof}
This allows us to give rigorous meaning to the notion of solutions in our topology.
\begin{defi}\label{def:solutionstrichartz}
  Let
  \[ \Gamma^T:=\{(t,r)\in [0,T)\times [0,\infty): r\leq T-t\}. \]
  We say that $u: \Gamma^T\to \R$ is a Strichartz solution of
\[ \left(\partial_t^2-\partial_r^2-\frac{5}{r}\partial_r\right)
  u(t,r) +\frac{3 \sin(2ru(t,r))- 6 r u (t,r)}{2
    r^3}=0 \]
if $\Phi:=\Psi-\Psi_*$, with
\[ \Psi(\tau,\rho):=
  \begin{pmatrix}
    \psi(\tau,\rho) \\ (1+\partial_\tau+\rho\partial_\rho)\psi(\tau,\rho)
  \end{pmatrix},\qquad \psi(\tau,\rho):=Te^{-\tau}u(T-Te^{-\tau}, Te^{-\tau}\rho),
\]
  belongs to $\mathcal{X}$ and satisfies
\begin{align*}
\Phi=\K_{\Phi(0)}(\Phi)
\end{align*}
and $\Cf(\Phi,\Phi(0))=\textup{\textbf{0}}$.
\end{defi}
Next, we show the uniqueness of such solutions.
\begin{lem}
Let $\uf\in \mathcal{H}$. Then the map $\K_{\uf}:\X\to \X$
has at most one fixed point $\Phi \in \X$ that satisfies $\Cf(\Phi,\uf)=\textup{\textbf{0}}$.
\end{lem}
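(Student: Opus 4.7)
Suppose $\Phi,\Psi\in\X$ are both fixed points of $\K_{\uf}$ with $\Cf(\Phi,\uf)=\Cf(\Psi,\uf)=\mathbf 0$. The plan is to first use the vanishing correction hypothesis to write the difference $\Phi-\Psi$ as a pure Duhamel term, then to close a contraction argument on a short interval $[0,\tau_0]$, and finally to bootstrap to the whole half-line.

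First I would exploit the hypothesis $\Cf=\mathbf 0$. Since the correction term $e^{\tau}\Cf(\Phi,\uf)$ in the definition of $\K_{\uf}$ vanishes for both $\Phi$ and $\Psi$, subtracting the two fixed point equations gives
\[
\Phi(\tau)-\Psi(\tau)=\int_0^\tau \Sf(\tau-\sigma)\bigl[\Nf(\Phi(\sigma))-\Nf(\Psi(\sigma))\bigr]\,d\sigma.
\]
This is the key algebraic step; it is the $\Cf=\mathbf 0$ assumption that eliminates the unstable $e^{\tau}$ growth from the representation and makes a comparison argument feasible. Next I would decompose $\Sf=\Sf(\I-\Pf)+\Sf\Pf$. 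The stable part is handled by the inhomogeneous Strichartz estimates of Propositions \ref{strichatzestimate1} and \ref{strichatzestimate2} applied on the interval $I=[0,\tau_0]$, which yields a bound by $\|(\I-\Pf)[\Nf(\Phi)-\Nf(\Psi)]\|_{L^1(I)\mathcal H}$. Since $\Sf(\tau)\Pf\ff=e^{\tau}\Pf\ff=e^{\tau}(\ff,\widetilde{\gf})_{\mathcal H}\gf$, the unstable contribution is bounded pointwise for $\tau\in I$ by $e^{\tau_0}\int_0^{\tau_0}\|\Nf(\Phi(\sigma))-\Nf(\Psi(\sigma))\|_{\mathcal H}\,d\sigma$, which contributes to each Strichartz norm a further multiplicative factor $\tau_0^{1/p}$ by H\"older in $\tau$.

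Then I would apply the local Lipschitz estimate from Lemma \ref{nonlineare}, following exactly the bookkeeping in the proof of Lemma \ref{integrallip}, to conclude
\[
\|\Phi-\Psi\|_{\X([0,\tau_0])}\le C(\tau_0)\,\|\Phi-\Psi\|_{\X([0,\tau_0])},
\]
where $C(\tau_0)$ depends on the Strichartz norms of $\Phi$ and $\Psi$ on $[0,\tau_0]$ and on $\tau_0$ itself. Since $\Phi,\Psi\in\X$, all Strichartz norms except the $L^\infty_\tau L^6_x$-component tend to zero on $[0,\tau_0]$ as $\tau_0\to 0$ by absolute continuity of the Lebesgue integral, and the $L^\infty L^6$-contribution occurring in Lemma \ref{nonlineare} is always paired, via H\"older in $\tau$, with at least one of the vanishing norms so that an overall factor $\tau_0^{\alpha}$ with $\alpha>0$ appears. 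Hence $C(\tau_0)\to 0$ as $\tau_0\to 0$, and picking $\tau_0$ small enough forces $\Phi=\Psi$ on $[0,\tau_0]$.

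The final step is the bootstrap. Set $\tau^*:=\sup\{\tau\ge 0:\Phi=\Psi\text{ on }[0,\tau]\}\in [\tau_0,\infty]$. Assuming for contradiction that $\tau^*<\infty$, one has $\Nf(\Phi)=\Nf(\Psi)$ on $[0,\tau^*]$, so for $\tau\ge\tau^*$
\[
\Phi(\tau)-\Psi(\tau)=\int_{\tau^*}^\tau \Sf(\tau-\sigma)\bigl[\Nf(\Phi(\sigma))-\Nf(\Psi(\sigma))\bigr]\,d\sigma,
\]
and the same argument applied on the shifted interval $[\tau^*,\tau^*+\tau_0']$ (with $\tau_0'$ small by the same reasoning, since $\Phi,\Psi\in\X$) yields $\Phi=\Psi$ on $[\tau^*,\tau^*+\tau_0']$, contradicting the maximality of $\tau^*$. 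The main obstacle is precisely the presence of the unstable eigenvalue $\lambda=1$: the unprojected semigroup $\Sf$ grows exponentially, so a global contraction in $\X$ is not available; one must both use the $\Cf=\mathbf 0$ hypothesis to remove the leading growing mode and localize in $\tau$ to exploit the smallness of the Strichartz norms on short intervals, and it is the interplay between these two ingredients that requires care.
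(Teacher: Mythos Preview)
Your proof is correct and follows essentially the same strategy as the paper: use $\Cf=\mathbf 0$ to reduce the difference to a pure Duhamel integral, exploit smallness of the Strichartz norms on short $\tau$-intervals together with the Lipschitz bound on $\Nf$ to obtain a contraction, and then propagate along the time axis. The only organizational difference is that the paper fixes an arbitrary $\tau_0>0$, partitions $[0,\tau_0]$ into $N_\delta$ equal subintervals on which $\|\Phi\|_{\X(I_n)}+\|\Psi\|_{\X(I_n)}\le\delta$, runs the contraction on each $I_n$ inductively (absorbing the factor $e^{\tau_0}$ from the unstable mode by choosing $\delta$ small depending on $\tau_0$), and then lets $\tau_0\to\infty$; you instead use a maximal-time bootstrap. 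Both arguments require the same observation that the shifted Duhamel integral $\int_{\tau^*}^\tau \Sf(\tau-\sigma)[\cdots]\,d\sigma$ inherits the Strichartz bounds via the change of variables $\sigma'=\sigma-\tau^*$ and the semigroup property, so neither approach has an advantage on that point.
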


\begin{proof}
  Let $\uf \in \mathcal{H}$ and assume that we are given two fixed
  points $\Psi,\Phi\in \X$ with
  $\Cf(\Psi,\uf)=\Cf(\Phi,\uf)=\textup{\textbf{0}}$. Let
  $\tau_0\geq 0$ be arbitrary and note that for any $\delta>0$ there
  exists an $N_\delta\in\mathbb N$ such that
\begin{equation}\label{smallnes assumption}
\|\Psi\|_{\X(I_n)}+\|\Phi\|_{\X(I_n)}\leq \delta,
\end{equation}
for all $n\in \{0,1,2,\dots,N_\delta-1\}$,
 where 
\begin{align*}
\|\Psi\|_{\X(I_n)}:=
  &\|\psi_1\|_{L^2(I_n)L^{12}(\B^6_1)}+\|\psi_1\|_{L^4(I_n)L^8(\B^6_1)}+\|\psi_1\|_{{L^6(I_n)L^{\frac{36}{5}}(\B^6_1)}}+\|\psi_1\|_{{L^2(I_n)\dot{W}^{1,4}(\B^6_1)}}
  \\
  &+\|\psi_2\|_{L^2(I_n)L^4(B_1^6)}
\end{align*}
and $I_n:=[n\frac{\tau_0}{N_\delta}, (n+1)\frac{\tau_0}{N_\delta}]$.
Such an $N_\delta$ can always be found because the map $\tau \mapsto
\|f\|_{L^p(0,\tau)}: [0,\infty)\to \R$ is continuous for every $f\in
L^p(0,\infty)$ and $p\in [1,\infty)$. 
From the calculations done in the proof of Lemma \ref{integrallip} we then see that
\begin{align*}
&\quad\|\psi_1(\tau)-\phi_1(\tau)\|_{L^{12}(\B^6_1)}+\|\psi_1(\tau)-\phi_1(\tau)\|_{\dot{W}^{1,4}(\B^6_1)}+\|\psi_2(\tau)-\phi_2(\tau)\|_{L^4(\B^6_1)}
\\
&\lesssim \int_0^\tau e^{\tau-\sigma} \|\Nf(\Phi(\sigma))-\Nf(\Psi(\sigma))\|_{\mathcal{H}}d \sigma
\end{align*}
and so
\begin{align*}
&\quad\|\psi_1-\phi_1\|_{L^2(I_0)L^{12}(\B^6_1)}+\|\psi_1-\phi_1\|_{L^2(I_0)\dot{W}^{1,4}(\B^6_1)}+\|\psi_2-\phi_2\|_{L^2(I_0)L^4(\B^6_1)}
\\
&\lesssim \left\|\int_0^\tau e^{\tau-\sigma} \|\Nf(\Phi(\sigma))-\Nf(\Psi(\sigma))\|_{\mathcal{H}}d \sigma\right\|_{L^2_\tau(I_0)}
\\
&\lesssim  e^{\tau_0} \int_0^{\tau_0/N_\delta}\|\Nf(\Phi(\sigma))-\Nf(\Psi(\sigma))\|_{\mathcal{H}}d \sigma
\\
&\lesssim  e^{\tau_0} \Big[
\|\psi_1-\phi_1\|_{L^2(I_0)L^{12}(\B_1^6)}\big(\|\psi_1\|_{L^2(I_0)L^{12}(\B^6_1)}+\|\phi_1\|_{L^2(I_0)L^{12}(\B^6_1)}
\\
&\quad+\|\psi_1\|_{L^4(I_0)L^{8}(\B^6_1)}^2+\|\phi_1\|_{L^4(I_0)L^{8}(\B^6_1)}^2+\|\psi_1\|_{L^6(I_0)L^{\frac{36}{5}}(\B^6_1)}^3+\|\phi_1\|_{L^6(I_0)L^{\frac{36}{5}}(\B^6_1)}^3
\\
&\quad+\|\psi_1\|_{L^2(I_0)\dot W^{1,4}(\B^6_1)}(1+\|\psi_1\|_{L^\infty(\R_+)L^6(\B_1^6)}+\|\phi_1\|_{L^\infty(\R_+)L^6(\B_1^6)})
\big)
\\
&\quad+\|\psi_1-\phi_1\|_{L^2(I_0)\dot W^{1,4}(\B_1^6)}\big(\|\phi_1\|_{L^2(I_0)L^{12}(\B_1^6)}+\|\phi_1\|_{L^4(I_0)L^8(\B_1^6)}^2\big)\Big].
\end{align*}
By choosing delta small enough, we obtain
\begin{align*}
&\quad\|\psi_1-\phi_1\|_{L^2(I_0)L^{12}(\B^6_1)}+\|\psi_1-\phi_1\|_{L^2(I_0)\dot{W}^{1,4}(\B^6_1)}+\|\psi_2-\phi_2\|_{L^2(I_0)L^{4}(\B^6_1)}
\\
&\leq \frac{1}{2}\left(\|\psi_1-\phi_1\|_{L^2(I_0)L^{12}(\B^6_1)}+\|\psi_1-\phi_1\|_{L^2(I_0)\dot{W}^{1,4}(\B^6_1)}+\|\phi_2-\psi_2\|_{L^2(I_0)L^{4}(\B^6_1)}\right).
\end{align*}
Hence, it follows that $\Psi$ and $\Phi$ agree as elements of $\X(I_0)$.
Proceeding inductively, we conclude that $\Psi= \Phi$ in $\X([0,\tau_0])$
and as $\tau_0$ was chosen arbitrarily the claim follows.
\end{proof}
\subsection{Persistence of regularity and proof of Theorem \ref{stability}}
As we have shown the existence and uniqueness of Strichartz solutions in the sense of Definition \ref{def:solutionstrichartz} for data close enough to the blowup solution, we will now show that such a solution is in fact smooth if we prescribe smooth initial data.
The first step in achieving this is the following result for the semigroup $\Sf$.
\begin{lem}\label{reg0}
Let $2\leq k \in \mathbb{N}$. Then there exists a constant $a_k>0$ such that the restriction of $\Sf$ to $H^k\times H^{k-1}(\B^6_1)$ satisfies
\begin{align*}
\| \Sf(\tau) \uf\|_{H^k\times H^{k-1}(\B^6_1)}\lesssim_k e^{a_k\tau}\|\uf\|_{H^k\times H^{k-1}(\B^6_1)}
\end{align*}
for all $\tau \geq 0$ and all $\uf \in H^k\times H^{k-1}(\B^6_1)$.
\end{lem}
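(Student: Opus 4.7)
The plan is to mimic, at each regularity level $k\geq 2$, the Lumer--Phillips argument that was carried out in Lemmas \ref{lem:norms} and \ref{semigroupgen} for the base case $k=2$. Set $\mathcal{H}_k:=\{\mathbf u\in H^k\times H^{k-1}(\B_1^6):\mathbf u\text{ radial}\}$ and take $D_k:=\{\mathbf u\in C^{k+1}\times C^k(\overline{\B_1^6}):\mathbf u\text{ radial}\}$ as a core. First I would introduce a weighted inner product on $D_k$ of the form
\[
(\mathbf u,\mathbf v)_{\widetilde{\mathcal H}_k}:=\sum_{j=0}^{k}c_j\int_0^1 u_1^{(j)}(\rho)\overline{v_1^{(j)}(\rho)}\,\rho^{2j+1}\,d\rho+\sum_{j=0}^{k-1}d_j\int_0^1 u_2^{(j)}(\rho)\overline{v_2^{(j)}(\rho)}\,\rho^{2j+1}\,d\rho+B_k(\mathbf u,\mathbf v),
\]
with positive constants $c_j,d_j$ and a finite collection of boundary terms $B_k$ at $\rho=1$ involving traces of $u_1^{(j)}$ and $u_2^{(j)}$. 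The top-order weights $\rho^{2k+1}$ and $\rho^{2k-1}$ are precisely those that make the (radial) $\dot H^k(\B_1^6)$ and $\dot H^{k-1}(\B_1^6)$ seminorms comparable to the weighted one, and the boundary terms play the same role as $|u_1(1)|^2+|u_2(1)|^2$ played in the proof of Lemma~\ref{lem:norms}. Hence, by the same iterative Hardy estimate as in Lemma~\ref{helplemoverrho} (applied to each pair of consecutive derivative orders), I obtain $\|\cdot\|_{\widetilde{\mathcal H}_k}\simeq\|\cdot\|_{\mathcal H_k}$ on $D_k$.

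Next, and this is the heart of the argument, I would establish a quasi-dissipativity bound
\[
\Re(\widetilde{\mathbf L}\mathbf u,\mathbf u)_{\widetilde{\mathcal H}_k}\leq a_k\|\mathbf u\|_{\widetilde{\mathcal H}_k}^2\qquad(\mathbf u\in D_k),
\]
which is the exact higher-order analogue of the computation preceding Lemma~\ref{density}. Concretely, I commute $\partial_\rho$ through each component of $\widetilde{\mathbf L}$; the top-order part reproduces the same structure as in the base case, with the dangerous first-order transport terms $-\rho\partial_\rho u_j$ producing (after integration by parts against $\rho^{2j+1}$) positive constants times $|u_j^{(j)}(\rho)|^2\rho^{2j+1}$ up to boundary contributions that cancel against $B_k$ exactly as in the $k=2$ case. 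The commutator $[\partial_\rho^j,\widetilde{\mathbf L}]$ consists of strictly lower-order differential expressions (including $[\partial_\rho^j,\rho\partial_\rho]$ and $[\partial_\rho^j,\frac{5}{\rho}\partial_\rho]$), so the extra pieces can be absorbed as $C_k\|\mathbf u\|_{\widetilde{\mathcal H}_k}^2$ via Cauchy--Schwarz together with the already-established Hardy-type bounds. The constants $c_j,d_j$ are chosen inductively, starting from the top order, so that all offending cross terms are absorbed. Finally, the bounded compact perturbation $\mathbf L'$ contributes only a multiplication operator, whose $\widetilde{\mathcal H}_k$-bound follows directly from the pointwise smoothness of $\frac{48}{(\rho^2+2)^2}$ together with the equivalence of norms.

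With quasi-dissipativity in hand, the Lumer--Phillips theorem (in its quasi-contractive form; see e.g.~\cite{EngNag99}) implies that the closure $\mathbf L_k$ of $\widetilde{\mathbf L}|_{D_k}$ generates a strongly continuous semigroup $\mathbf S_k$ on $\mathcal H_k$ with $\|\mathbf S_k(\tau)\|_{\mathcal H_k}\lesssim e^{a_k\tau}$, provided the range condition $\mathrm{rg}(\mu-\widetilde{\mathbf L})$ is dense in $\mathcal H_k$ for some $\mu>a_k$. This range condition is checked exactly as in Lemma~\ref{density}: for data in $C^\infty\times C^\infty(\overline{\B^6_1})$ the constructed solution $u_1$ of the spectral ODE lies in $C^{k+1}([0,1])$ thanks to the Frobenius/Taylor expansions carried out there, and hence $\mathbf u\in D_k$. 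The final step is the identification $\mathbf S_k(\tau)=\mathbf S(\tau)|_{\mathcal H_k}$: both semigroups agree on the common core $D_k\subset\mathcal H$, so by uniqueness of the semigroup generated by a given operator they coincide on $\mathcal H_k$.

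The main obstacle will be the bookkeeping in Step 3, i.e.~showing that the higher-order commutators produced by the weights $\rho^{2j+1}$ and the boundary integrations can indeed be controlled by a multiple of $\|\mathbf u\|_{\widetilde{\mathcal H}_k}^2$ with a strictly \emph{positive-definite} top-order part. The boundary terms at $\rho=1$ grow in number with $k$ and must be neutralised by elementary inequalities of the form \eqref{elementaryineq} applied to the right collections of traces; choosing the boundary weights in $B_k$ correctly is the only delicate point. All other steps are direct extensions of arguments already present in the paper.
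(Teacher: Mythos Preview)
Your Lumer--Phillips strategy is sound in principle, but it is considerably more laborious than the paper's argument and the ``main obstacle'' you identify is entirely avoidable. The paper first reduces to $\mathbf S_0$ by noting that $\mathbf L'$ is a bounded multiplication operator on every $H^k\times H^{k-1}(\B^6_1)$, so the bounded perturbation theorem gives the claimed exponential bound for $\mathbf S$ once one has $\|\mathbf S_0(\tau)\mathbf u\|_{H^k\times H^{k-1}}\lesssim_k\|\mathbf u\|_{H^k\times H^{k-1}}$. For the latter, the paper does \emph{not} redo the Lumer--Phillips machinery at each level. Instead it exploits the explicit representation $[\mathbf S_0(\tau)\mathbf u]_1(\rho)=Te^{-\tau}u(T-Te^{-\tau},Te^{-\tau}\rho)$, where $u$ solves the \emph{free} radial wave equation on $\R^6$ with data $\mathbf A_T^{-1}\mathbf u$ (as in the proof of Lemma~\ref{Strichart}). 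Standard higher energy conservation for the free wave equation gives $\|u(t,\cdot)\|_{H^k(\B^6_{T-t})}+\|\partial_t u(t,\cdot)\|_{H^{k-1}(\B^6_{T-t})}\lesssim\|u[0]\|_{H^k\times H^{k-1}(\B^6_T)}$, and a change of variables back to $(\tau,\rho)$ finishes the proof in a few lines. Your approach would also work, but the paper's route sidesteps all the weighted-norm bookkeeping, the inductive choice of constants $c_j,d_j$, and the growing collection of boundary terms; it simply leverages a fact about the free evolution that is already available.
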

\begin{proof}
As $\Lf'$ is a bounded operator on $H^k\times H^{k-1}(\B^6_1)$ for any $2\leq k \in \mathbb N $, the claim follows if we show that $\Sf_0$ satisfies
\begin{align*}
\| \Sf_0(\tau) \uf\|_{H^k\times H^{k-1}(\B^6_1)}\lesssim_k\|\uf\|_{H^k\times H^{k-1}(\B^6_1)}
\end{align*}
for any $\uf \in H^k \times H^{k-1}(\B^6_1)$.
To show this, we argue similarly as in the proof of Lemma \ref{Strichart}. Thus, assume $T\in \left[\frac{1}{2},\frac{3}{2}\right]$ and let $k\geq 2$ be a fixed natural number. Moreover, by a density argument it suffices to show the claim for $\uf \in C^k\times C^{k-1}(\overline{\B^6_1}).$ Next, let $\Af_T$ be the scaling operator $\Af_T:H^k\times H^{k-1}(\B^6_T)\to H^k\times H^{k-1}(\B^6_1)$,
$$
\Af_T\ff= (Tf_1(T.),T^2f_2(T.))
$$
and note that both $\Af_T$ and its inverse are uniformly bounded with respect to $T$.
 Now, let $u$ be the solution of the Cauchy problem
\begin{align*}
\begin{cases}
\left(\partial_t^2- \partial_r^2-\frac{5}{r}\partial_r \right)u(t,r)=0\\
u[0]=\Af^{-1}_T\uf
\end{cases}
\end{align*}
 on $\Gamma^T$. Then, by higher energy conservation we know that
 \begin{align*}
 \|u(t,.)\|_{H^k(\B^6_{T-t})}+ \|\partial_t u(t,.)\|_{H^{k-1}(\B^6_{T-t})}\lesssim \|u[0]\|_{H^k\times H^{k-1}(\B^6_T)}.
 \end{align*}
Furthermore, $[\Sf_0(\tau) \uf]_1(\rho)= Te^{-\tau}u(T-Te^{-\tau},Te^{-\tau}\rho)$ which we use to obtain
 \begin{align*}
 \|[\Sf_0(\tau) \uf]_1\|_{H^k(\B^6_1)} &\lesssim \|u(T-Te^{-\tau},Te^{-\tau}.)\|_{H^k(\B^6_1)}
 \lesssim \|u(T-Te^{-\tau},.)\|_{H^k(\B^6_{T-t})}
 \\
 &\lesssim
 \|u[0]\|_{H^k\times H^{k-1}(\B^6_T)}
 \lesssim  \|\uf\|_{H^k\times H^{k-1}(\B^6_1)}.
 \end{align*}

In the same manner one obtains the estimate 
\begin{align*}
 \|[\Sf_0(\tau)\uf]_2\|_{H^{k-1}(\B^6_1)}  \lesssim  \|\uf\|_{H^k\times H^{k-1}(\B^6_1)}
\end{align*}
 and since $k\geq 2$ was arbitrary the claim follows.
\end{proof}

\begin{lem}\label{nonlinearH3}
Let $3 \leq k\in \mathbb N.$ Then the nonlinearity $\Nf$ maps $H^k\times H^{k-1}(\B^6_1)$ to itself.
\end{lem}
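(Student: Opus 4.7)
The strategy is to reduce the claim to a composition estimate for the smooth scalar function $n\colon [0,1]\times\R\to\R$ introduced in the proof of Lemma \ref{nonlineare}, noting that $[\Nf(\uf)]_1=0$ identically while $[\Nf(\uf)]_2(\rho)=n(\rho,u_1(\rho))$. The Taylor representation
\[
n(\rho,x)=-\frac{12\sqrt{2}(\rho^2-2)}{(\rho^2+2)^2}\,x^2+6\int_0^x\cos\!\bigl(4\arctan\tfrac{\rho}{\sqrt{2}}+2\rho t\bigr)(x-t)^2\,dt
\]
makes manifest that $n\in C^\infty([0,1]\times\R)$ and that every partial derivative $\partial_\rho^a\partial_x^b n$ is uniformly bounded on each set $[0,1]\times[-M,M]$. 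Consequently, the task reduces to showing that $\rho\mapsto n(\rho,u_1(\rho))$ lies in $H^{k-1}(\B^6_1)$ whenever the radial $u_1$ belongs to $H^k(\B^6_1)$.

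I would carry this out via Faà di Bruno's formula: for each $0\leq j\leq k-1$, the derivative $\partial_\rho^j[n(\rho,u_1(\rho))]$ expands into a finite sum of terms of the form
\[
(\partial_\rho^{a_0}\partial_x^{m}n)(\rho,u_1(\rho))\prod_{i=1}^{m}u_1^{(b_i)}(\rho),\qquad a_0+b_1+\cdots+b_m=j,\ b_i\geq 1,
\]
together with the pure $\rho$-derivative $(\partial_\rho^j n)(\rho,u_1(\rho))$. Each smooth prefactor is bounded pointwise by a polynomial in $|u_1(\rho)|$, which sits in every $L^q(\B^6_1)$ by Sobolev embedding, so the essential task is to control the product $\prod_i u_1^{(b_i)}$ in an appropriate $L^p(\B^6_1)$. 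This I would accomplish by combining Hölder's inequality with the six-dimensional Sobolev embeddings $H^s(\B^6_1)\hookrightarrow L^{6/(3-s)}(\B^6_1)$ for $s<3$, $H^3(\B^6_1)\hookrightarrow L^q(\B^6_1)$ for every finite $q$, and $H^s(\B^6_1)\hookrightarrow L^\infty(\B^6_1)$ for $s>3$, since $\sum b_i\leq k-1$ leaves enough room to distribute the derivatives.

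For $k\geq 5$ the argument is immediate, because $H^{k-1}(\B^6_1)$ is then a Banach algebra (as $k-1>d/2=3$) and $H^k\hookrightarrow L^\infty$ controls the prefactor. The main technical point is therefore the range $k\in\{3,4\}$, where the algebra property fails and one has to treat the finitely many product terms individually. Concretely, at $k=3$ the worst contributions to $\partial_\rho^2[n(\rho,u_1)]$ are $u_1''u_1$ and $(u_1')^2$ multiplied by smooth bounded coefficients; these close via
\[
\|u_1''u_1\|_{L^2(\B^6_1)}\lesssim\|u_1''\|_{L^3(\B^6_1)}\|u_1\|_{L^6(\B^6_1)},\qquad \|(u_1')^2\|_{L^2(\B^6_1)}=\|u_1'\|_{L^4(\B^6_1)}^2,
\]
using $H^1\hookrightarrow L^3$, $H^2\hookrightarrow L^6$ and interpolation to bound $u_1'$ in $L^4$. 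The case $k=4$ follows analogously with one additional derivative, deploying $H^3\hookrightarrow L^q$ for all finite $q$ and $H^4\hookrightarrow L^\infty$; all estimates ultimately close against $\|u_1\|_{H^k(\B^6_1)}$, yielding $\Nf(\uf)\in H^k\times H^{k-1}(\B^6_1)$.
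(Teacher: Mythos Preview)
Your overall strategy matches the paper's---reduce to a composition estimate for the smooth scalar $n$, dispatch large $k$ by the algebra property, and treat the borderline cases by hand. The gap is in how you measure regularity: you compute one-dimensional radial derivatives $\partial_\rho^j[n(\rho,u_1(\rho))]$ and propose to bound the resulting products in $L^2(\B^6_1)$, but for $j\geq 2$ this does \emph{not} control the $H^{j}(\B^6_1)$ norm of a radial function. In six dimensions,
\[
\|g\|_{\dot H^2(\B^6_1)}^2=\int_0^1\Bigl|g''(\rho)+\tfrac{5}{\rho}g'(\rho)\Bigr|^2\rho^5\,d\rho
=\int_0^1|g''(\rho)|^2\rho^5\,d\rho+5\int_0^1|g'(\rho)|^2\rho^3\,d\rho,
\]
so the weighted term $\int |g'|^2\rho^3\,d\rho$ has to be handled in addition to the bare $\partial_\rho^2$ contribution; this is precisely where the singular factors $\rho^{-1}$ from the radial Laplacian live, and your Fa\`a di Bruno expansion never produces or estimates them. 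Relatedly, your bound $\|u_1''\|_{L^3}\lesssim\|u_1''\|_{H^1}$ is not justified as stated: the radial object that sits in $H^1(\B^6_1)$ when $u_1\in H^3(\B^6_1)$ is $\Delta u_1=u_1''+\tfrac{5}{\rho}u_1'$, not the bare $u_1''$.

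The paper's proof confronts exactly this point. For $k=3$ it computes $\|n(|.|,u_1)\|_{\dot H^2}$ via the radial Laplacian, deliberately groups the terms so that $u_1''+\tfrac{5}{\rho}u_1'=\Delta u_1$ reappears as a single factor (then $\|\Delta u_1\|_{L^3}=\|u_1\|_{\dot W^{2,3}}\lesssim\|u_1\|_{H^3}$ by Sobolev), and disposes of the leftover $\rho^{-1}$ contributions with the Hardy-type inequality of Lemma~\ref{helplemoverrho}, e.g.\ $\|\rho^{-1}u_1^4\|_{L^2}\lesssim\|u_1^4\|_{H^1}$. Your argument can be repaired along the same lines, but the repair is not cosmetic: recognising $\Delta u_1$ rather than $u_1''$ and invoking Hardy for the $\rho^{-1}$ terms is the substance of the $k=3$ case.
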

\begin{proof}
For $4\leq k\in \mathbb N$ the claim follows immediately from the Banach algebra property of $H^k(\B^6_1)$.
For $k=3$
 it suffices to show that 
\begin{align*}
\widehat{\Nf}(\uf)(\rho):=\begin{pmatrix}
0
\\
N(u_1)(\rho)
\end{pmatrix}
=\begin{pmatrix}
0
\\
-\frac{3\sin(2\rho u_1(\rho))-6\rho u_1(\rho)}{2\rho^3}
\end{pmatrix}
\end{align*} 
is in $H^3\times H^2(\B^6_1)$ whenever $\uf\in H^3\times H^2(\B^6_1)$. This follows from the equation
\begin{align*}
\Nf(\uf)=\widehat{\Nf}(\Psi_*+\uf)+\widehat{\Nf}(\Psi_*)+\Lf'\uf,
\end{align*}
and the fact that $\Psi_* \in C^\infty \times C^\infty(\overline{\B^6_1})$.
Making use of Taylor's theorem, we see that
\begin{align}
n(\rho,x):=-\frac{3\sin(2\rho x)-6\rho x}{2\rho^3}=6\int_0^{x}
  \cos(2t\rho)( x-t)^2 dt=6x^3\int_0^1 \cos(2t\rho x)(1-t)^2 dt
\end{align}
and thus, we need to show that $\rho\mapsto n(\rho,u(\rho))$
is an element of $H^2(\B^6_1)$ for any $u\in H^3(\B^6_1)$.
For the $H^1$ norm we see that
\begin{align*}
\|n(|.|,u)\|_{H^1(\B^6_1)}&\lesssim \|u^3\|_{L^2(\B^6_1)}+\|u^4\|_{L^2(\B^6_1)}+\| u^2 u'\|_{L^2(\B^6_1)} \\
&\lesssim 1+\|u\|_{H^3(\B^6_1)}^4+\| u^2\|_{L^4(\B^6_1)}\| u'\|_{L^4(\B^6_1)}
\\
&\lesssim 1+\|u\|_{H^3(\B^6_1)}^4,
\end{align*}
while
\begin{align*}
\| n(|.|,u)\|_{\dot{H}^2(\B^6_1)}&=\bigg\|\int_0^{u(\rho)} 2 u''(\rho)\cos(2t\rho)( u(\rho)-t)+2 u'(\rho)^2\cos(2t\rho)
\\
&\quad-8u'(\rho)\sin(2t\rho)2t( u(\rho)-t)-4\cos(2t\rho)t^2( u(\rho)-t)^2 dt
\\
&\quad+\frac{5}{\rho}\left(\int_0^{u(\rho)} 2 u'(\rho)\cos(2t\rho)( u(\rho)-t) -2\sin(2t\rho)t( u(\rho)-t)^2 dt\right)\bigg\|_{L^2_\rho(\B^6_1)}.
\end{align*}
Note, that
\begin{align*}
&\quad\left\|\int_0^{u(\rho)} 2 u''(\rho)\cos(2t\rho)( u(\rho)-t)dt+\frac{5}{\rho}\int_0^{u(\rho)} 2 u'(\rho)\cos(2t\rho)( u(\rho)-t)dt\right\|_{L^2_\rho(\B^6_1)}
\\
&=\left\|\left(u''(\rho)+\frac{5}{\rho}u'(\rho)\right)\int_0^{u(\rho)} 2 \cos(2t\rho)( u(\rho)-t)dt\right\|_{L^2_\rho(\B^6_1)}
\\
&\leq \|u\|_{\dot{W}^{2,3}(\B^6_1)}\left\|\int_0^{u(\rho)} 2 \cos(2t\rho)( u(\rho)-t)dt\right\|_{L^6_\rho(\B^6_1)}\lesssim \|u\|_{H^3(\B_1^6)}^3
\end{align*}
due to Hölder's inequality and Sobolev embedding.
Further, by employing Lemma \ref{helplemoverrho} we derive the estimate
\begin{align*}
\left\|\frac{5}{\rho}\int_0^{u(\rho)}\sin(2t\rho)2t( u(\rho)-t)^2 dt
\right\|_{L^2_\rho(\B^6_1)}&\lesssim \||.|^{-1}u^4\|_{L^2(\B^6_1)}\lesssim \|u^4\|_{H^1(\B^6_1)}
\\
&\lesssim \|u\|_{W^{1,3}(\B^6_1)}\|u^3\|_{L^6(\B^6_1)}\lesssim \|u\|_{H^3(\B^6_1)}^4
\end{align*}
and as the other terms can be bounded likewise we conclude this proof.
\end{proof}

\begin{lem}
Let $3\leq k\in \mathbb N$. Then $\Nf$ satisfies the Lipschitz bound
\begin{align*}
\|\Nf(\uf)-\Nf(\vf)\|_{H^k\times H^{k-1}(\B^6_1)}\lesssim \left(1+\|\uf\|_{H^k\times H^{k-1}(\B^6_1)}^{k+1}+\|\vf\|_{H^k\times H^{k-1}(\B^6_1)}^{k+1}\right)\|\uf-\vf\|_{H^k\times H^{k-1}(\B^6_1)}
\end{align*}
for all $\uf,\vf \in H^k\times H^{k-1}(\B^6_1)$.
\end{lem}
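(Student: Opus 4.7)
The plan is to mimic the proof of Lemma \ref{nonlinearH3} and lift it to a Lipschitz-type bound via the fundamental theorem of calculus. With the notation $\widetilde n(\rho,x):=-\frac{3\sin(2\rho x)-6\rho x}{2\rho^3}$ (which, as observed earlier, extends smoothly from $(0,1)\times\R$ to all of $\R\times\R$), one has
\begin{align*}
[\Nf(\uf)-\Nf(\vf)]_2=\widetilde n(\cdot,\psi_{*_1}+u_1)-\widetilde n(\cdot,\psi_{*_1}+v_1)-\tfrac{48}{((\cdot)^2+2)^2}(u_1-v_1).
\end{align*}
The last term is linear with smooth bounded coefficient, hence trivially bounded by $\|u_1-v_1\|_{H^{k-1}(\B^6_1)}$, so I concentrate on the difference of the two $\widetilde n$-terms, which I rewrite via the fundamental theorem of calculus as
\begin{align*}
(u_1-v_1)\int_0^1(\partial_2\widetilde n)\bigl(\cdot,\psi_{*_1}+v_1+s(u_1-v_1)\bigr)\,ds=:(u_1-v_1)\,R(\cdot;u_1,v_1).
\end{align*}

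The task therefore reduces to bounding the $H^{k-1}(\B^6_1)$-norm of the product $(u_1-v_1)R(\cdot;u_1,v_1)$. For $k\geq 5$ the space $H^{k-1}(\B^6_1)$ is a genuine Banach algebra (as $k-1>d/2=3$), and the standard Moser-type composition estimate applied to the smooth function $\partial_2\widetilde n$ -- together with the elementary bound $\|\psi_{*_1}+v_1+s(u_1-v_1)\|_{H^{k-1}}\lesssim 1+\|u_1\|_{H^k}+\|v_1\|_{H^k}$ -- produces a polynomial bound for $\|R(\cdot;u_1,v_1)\|_{H^{k-1}(\B^6_1)}$ in the $H^k$-norms of $u_1,v_1$, and multiplying by $u_1-v_1$ using the algebra property then yields the claimed inequality.

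For the remaining borderline cases $k\in\{3,4\}$, in which $H^{k-1}(\B^6_1)$ fails or only barely qualifies as an algebra in dimension six, I would repeat the direct computations of Lemma \ref{nonlinearH3}: distribute the $k-1$ derivatives over the product by the Leibniz and Faa di Bruno rules, and estimate each resulting term of the form
\begin{align*}
\partial^{i_0}(u_1-v_1)\prod_{j=1}^{m}\partial^{i_j}w_j\cdot\Phi(\rho,u_1,v_1),\qquad w_j\in\{\psi_{*_1},u_1,v_1\},\quad m\leq k,
\end{align*}
(with $\Phi$ a uniformly bounded smooth function coming from the composition) in $L^2(\B^6_1)$ by Hölder's inequality combined with the Sobolev embeddings $H^k(\B^6_1)\hookrightarrow L^q(\B^6_1)$ (valid for every $q<\infty$ when $k=3$, and for $q=\infty$ when $k\geq 4$). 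Negative powers of $\rho$ that are produced by differentiating $\widetilde n$ with respect to its first variable are absorbed by Lemma \ref{helplemoverrho}, exactly as in the second-derivative estimate of Lemma \ref{nonlinearH3}. The worst term carries one factor of $u_1-v_1$ and up to $k$ additional factors drawn from $\psi_{*_1},u_1,v_1$, which is the source of the polynomial degree $k+1$ on the right-hand side.

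The main obstacle is the endpoint case $k=3$: in dimension six $H^2(\B^6_1)$ does not embed in $L^\infty$, so one cannot control the composition $\partial_2\widetilde n(\cdot,g)$ by an abstract Moser argument and must instead exploit the explicit pointwise Taylor structure $\widetilde n(\rho,x)=6x^3\int_0^1\cos(2t\rho x)(1-t)^2\,dt$ (and its analogue for $\partial_2\widetilde n$). Once this is unpacked as in Lemma \ref{nonlinearH3}, the remainder of the argument is routine bookkeeping.
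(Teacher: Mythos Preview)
Your proposal is correct and follows the same strategy as the paper: handle large $k$ via the Banach-algebra/Moser machinery and the remaining borderline cases by the explicit computations of Lemma \ref{nonlinearH3}. The only difference is cosmetic: you split at $k\geq 5$ (using that $H^{k-1}(\B^6_1)$ is an algebra) and treat $k\in\{3,4\}$ by hand, whereas the paper splits at $k\geq 4$ by exploiting that the \emph{input} $u_1,v_1$ lies in $H^k(\B^6_1)$, which is already an algebra for $k\geq 4$; this lets one bound the composition in $H^k$ and then simply drop to $H^{k-1}$, so only $k=3$ requires the direct argument.
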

\begin{proof}
For $k\geq 4$, the claim again follows immediately from the Banach
algebra property of $H^k(\B^6_1)$
and for $k=3$ it can be proved like Lemma \ref{nonlinearH3}.
\end{proof}
With this we can now define another notion of a solution of Eq.~\eqref{startingeq2}.
\begin{defi}
We say that $u: \Gamma^T\to \R$ is a local $H^k$ solution of
\[ \left(\partial_t^2-\partial_r^2-\frac{5}{r}\partial_r\right)
  u(t,r) +\frac{3 \sin(2ru(t,r))- 6 r u (t,r)}{2
    r^3}=0 \]
if $\Phi:=\Psi-\Psi_*$, with
\[ \Psi(\tau,\rho):=
  \begin{pmatrix}
    \psi(\tau,\rho) \\ (1+\partial_\tau+\rho\partial_\rho)\psi(\tau,\rho)
  \end{pmatrix},\qquad \psi(\tau,\rho):=Te^{-\tau}u(T-Te^{-\tau}, Te^{-\tau}\rho),
\]
  belongs to $C([0,\tau_0), H^k\times H^{k-1}(\B^6_1))$ and satisfies
\begin{align*}
\Phi(\tau)=\Sf(\tau)\Phi(0)+\int_0^\tau \Sf(\tau-\sigma)\Nf(\Phi(\sigma)) d \sigma
\end{align*}
for all $0 \leq \tau < \tau_0$.
Finally, if $\tau_0$ can be chosen $\infty$, we refer to $u$ as a global $H^k$ solution.
\end{defi}

Using Lemmas \ref{reg0} and \ref{nonlinearH3} we can now mimic the proof of Lemma \ref{lem:locexk} to obtain a local existence result in $H^k\times H^{k-1}$ for $k\geq 3$.
\begin{lem}\label{lem:localexistHk}
    Let $M>0$ and $3\leq k\in \mathbb{N}$ fixed. Then there exists
    a $\tau_0>0$ such that the following holds. Let $(f,g)\in H^{k}\times
    H^{k-1}(\B^6_T)$ with $\|(f,g)\|_{H^{k}\times H^{k-1}(\B^6_T)}\leq M$. Then there exists a unique local $H^k$ solution $u$ of ~\eqref{startingeq2} in the
lightcone
$\Gamma^T$ with $(u(0,.),\partial_0 u(0,.))=(f,g)$.
Moreover, if the associated $\Phi$ is inextendible beyond time $\tau_0$, we must have
    \[ \sup_{\tau\in [0,\tau_0)}\|\Phi(\tau)\|_{H^k\times H^{k-1} (\B^6_1)}=\infty. \]
  \end{lem}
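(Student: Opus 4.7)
The plan is to run a standard contraction mapping argument for the Duhamel form of the similarity-coordinate equation in the space
$$Y(\tau_0) := C([0,\tau_0], H^k\times H^{k-1}(\B^6_1)),$$
equipped with the norm $\|\Phi\|_{Y(\tau_0)} := \sup_{\tau\in[0,\tau_0]}\|\Phi(\tau)\|_{H^k\times H^{k-1}(\B^6_1)}$, mirroring the argument of Lemma \ref{lem:locexk} but with the one-dimensional d'Alembertian replaced by the semigroup $\Sf$ acting on $H^k\times H^{k-1}(\B^6_1)$.

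First, I would transcribe the data: Given $(f,g)\in H^k\times H^{k-1}(\B^6_T)$ with $\|(f,g)\|_{H^k\times H^{k-1}(\B^6_T)}\leq M$, a short scaling computation as in the proof of Lemma \ref{Strichart} shows that
$$\Phi(0) = \bigl(Tf(T\,\cdot),\, T^2g(T\,\cdot)\bigr) - \Psi_*$$
belongs to $H^k\times H^{k-1}(\B^6_1)$ with $\|\Phi(0)\|_{H^k\times H^{k-1}(\B^6_1)} \leq C_1(M+1)$ uniformly for $T\in[\tfrac12,\tfrac32]$. On $Y(\tau_0)$ I would then define
$$\mathcal{K}(\Phi)(\tau) := \Sf(\tau)\Phi(0) + \int_0^\tau \Sf(\tau-\sigma)\Nf(\Phi(\sigma))\,d\sigma.$$
Combining Lemma \ref{reg0} with the observation $\Nf(\mathbf{0})=\mathbf{0}$ (immediate from the definition) and the Lipschitz bound on $\Nf$ in $H^k\times H^{k-1}(\B^6_1)$, for $\Phi\in Y(\tau_0)$ with $\|\Phi\|_{Y(\tau_0)}\leq R$ I obtain
$$\|\mathcal{K}(\Phi)\|_{Y(\tau_0)} \leq C_2 e^{a_k\tau_0}\|\Phi(0)\|_{H^k\times H^{k-1}(\B^6_1)} + C_3\,\tau_0 e^{a_k\tau_0}\bigl(1+R^{k+1}\bigr)R,$$
together with the analogous difference estimate
$$\|\mathcal{K}(\Phi)-\mathcal{K}(\Psi)\|_{Y(\tau_0)} \leq C_4\,\tau_0 e^{a_k\tau_0}\bigl(1+R^{k+1}\bigr)\|\Phi-\Psi\|_{Y(\tau_0)}.$$
Choosing $R := 2 C_1 C_2(M+1)$ and then $\tau_0 > 0$ small (depending only on $M$ and $k$) makes $\mathcal{K}$ a self-map and a contraction on the closed ball $\{\Phi\in Y(\tau_0):\|\Phi\|_{Y(\tau_0)}\leq R\}$, and the Banach fixed point theorem produces the unique $H^k$ solution in that ball. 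Uniqueness in all of $Y(\tau_0)$ then follows by applying Lemma \ref{reg0} and the Lipschitz bound to the difference of any two candidate solutions and invoking Gronwall's inequality.

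For the blowup criterion I argue by contradiction: if a maximal $H^k$ solution $\Phi$ exists on $[0,\tau_0)$ with $M_0 := \sup_{\tau\in[0,\tau_0)}\|\Phi(\tau)\|_{H^k\times H^{k-1}(\B^6_1)} < \infty$, then the local existence statement applied at any base time $\tau_1 < \tau_0$ produces an $H^k$ solution on $[\tau_1,\tau_1+\tau_*(M_0)]$ with $\tau_*(M_0)>0$ independent of $\tau_1$. Taking $\tau_1$ so close to $\tau_0$ that $\tau_1+\tau_*(M_0) > \tau_0$ and gluing via uniqueness extends $\Phi$ beyond $\tau_0$, contradicting maximality.

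No step here is genuinely an obstacle: the real work is already done in the supporting lemmas, namely the exponential semigroup bound of Lemma \ref{reg0} and the boundedness plus Lipschitz continuity of $\Nf$ on $H^k\times H^{k-1}(\B^6_1)$. The one mildly subtle point is exploiting $\Nf(\mathbf{0})=\mathbf{0}$, which removes the constant term in the nonlinear estimate and allows the Duhamel iteration to close inside a fixed ball whose radius is dictated only by the size of the initial data.
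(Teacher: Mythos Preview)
Your proposal is correct and follows essentially the same approach as the paper, which does not give a detailed proof but simply says to mimic the argument of Lemma~\ref{lem:locexk} using the semigroup bound of Lemma~\ref{reg0} and the mapping/Lipschitz properties of $\Nf$ on $H^k\times H^{k-1}(\B^6_1)$. Your write-up correctly identifies the needed ingredients (including $\Nf(\mathbf 0)=\mathbf 0$ and the local Lipschitz lemma for $\Nf$) and carries out the contraction and blowup-criterion steps in the expected way.
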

  Using this local existence result we can now show that a local $H^3$
  solution in the lightcone $\Gamma^T$ will be a global one, provided
  that it is also a Strichartz solution.
\begin{lem}\label{reg1}
Suppose we are given initial data $(f,g)\in H^3 \times H^2(\B^6_T)$, such that there exists an associated Strichartz solution $u$ with $(u(0,.),\partial_0 u(0,.))=(f,g)$. Then $u$ is also a global $H^3$ solution.
\end{lem}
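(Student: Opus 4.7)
The plan is to upgrade the Strichartz solution $\Phi$ to a global $H^3$ solution by combining local $H^3$ well-posedness with a continuation argument driven by the finiteness of $\|\Phi\|_{\X}$. First, apply Lemma \ref{lem:localexistHk} to the data $\Phi(0)\in H^3\times H^2(\B^6_1)$ to obtain a maximal $\tau_0\in(0,\infty]$ and a unique local $H^3$ solution $\widetilde\Phi\in C([0,\tau_0),H^3\times H^2(\B^6_1))$ with the blowup criterion $\sup_{\tau\in[0,\tau_0)}\|\widetilde\Phi(\tau)\|_{H^3\times H^2}=\infty$ whenever $\tau_0<\infty$. The goal is to rule out $\tau_0<\infty$.

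Second, identify $\widetilde\Phi$ with $\Phi$ on $[0,\tau_0)$. Sobolev embeddings on $\B^6_1$ ensure $\widetilde\Phi\in\X([0,\tau'])$ for every $\tau'<\tau_0$. Both $\Phi$ and $\widetilde\Phi$ satisfy the pure Duhamel equation with the same initial data, so their difference $\Psi:=\widetilde\Phi-\Phi$ obeys
\[
\Psi(\tau)=\int_0^\tau\Sf(\tau-\sigma)[\Nf(\widetilde\Phi(\sigma))-\Nf(\Phi(\sigma))]\,d\sigma.
\]
Splitting $\Psi=(\I-\Pf)\Psi+\Pf\Psi$ and combining the Strichartz estimates of Propositions \ref{strichatzestimate1} and \ref{strichatzestimate2} for the stable component with the finite-dimensional bound on $\Pf\Psi$ (together with the Lipschitz bound from Lemma \ref{nonlineare}), a standard Gronwall argument on subintervals of small $\X$-norm forces $\Psi=\textup{\textbf{0}}$ on $[0,\tau_0)$.

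Third, assume for contradiction that $\tau_0<\infty$. Based at $\tau_n\in[0,\tau_0)$, the Duhamel representation combined with Lemma \ref{reg0} yields
\[
\|\widetilde\Phi(\tau)\|_{H^3\times H^2}\leq C_{\tau_0}\|\widetilde\Phi(\tau_n)\|_{H^3\times H^2}+C_{\tau_0}\int_{\tau_n}^\tau\|\Nf(\widetilde\Phi(\sigma))\|_{H^3\times H^2}\,d\sigma
\]
for all $\tau\in[\tau_n,\tau_0)$. The main technical obstacle is a \emph{tame} nonlinear bound
\[
\|\Nf(\widetilde\Phi(\sigma))\|_{H^3\times H^2}\leq G(\sigma)\,\|\widetilde\Phi(\sigma)\|_{H^3\times H^2},
\]
where $G(\sigma)$ is built from spatial norms of $\widetilde\phi_1(\sigma,\cdot)$ whose time-integrals over any interval $I$ are controlled by $\|\Phi\|_{\X(I)}$. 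Such an estimate is obtained by differentiating the Taylor representation of $n(\rho,\psi_{*_1}+\widetilde\phi_1)-n(\rho,\psi_{*_1})-(\partial_x n)(\rho,\psi_{*_1})\widetilde\phi_1$ used in Lemma \ref{nonlinearH3} via the product and chain rules, and distributing the resulting factors by Hölder's inequality together with the Sobolev embeddings on $\B^6_1$, so that only a single factor stays in $H^3\times H^2$ while the remaining ones fit into the $\X$-compatible Lebesgue spaces.

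Granted the tame estimate, the finiteness of $\|\Phi\|_\X$ permits a partition of $[0,\tau_0]$ into finitely many subintervals $I_n=[\tau_n,\tau_{n+1}]$ with $\|\Phi\|_{\X(I_n)}\leq\delta_1$, for $\delta_1>0$ chosen so small that $C_{\tau_0}\int_{I_n}G(\sigma)\,d\sigma\leq\tfrac12$. Gronwall's inequality on each $I_n$ then yields $\sup_{I_n}\|\widetilde\Phi\|_{H^3\times H^2}\leq 2C_{\tau_0}\|\widetilde\Phi(\tau_n)\|_{H^3\times H^2}$, and iterating through the finitely many pieces bounds $\sup_{[0,\tau_0)}\|\widetilde\Phi\|_{H^3\times H^2}$ by a finite constant. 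This contradicts the blowup criterion, forcing $\tau_0=\infty$ and producing the desired global $H^3$ solution.
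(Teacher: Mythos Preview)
Your proposal is correct and follows essentially the same strategy as the paper: invoke the local $H^3$ existence with blowup alternative (Lemma~\ref{lem:localexistHk}), identify the local $H^3$ solution with the Strichartz solution via Sobolev embedding and the established uniqueness, then derive a nonlinear estimate that allows a Gronwall argument controlled by the finite Strichartz norm.

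The only notable differences are cosmetic. First, the paper simply cites the earlier uniqueness result rather than re-deriving it via the $(\I-\Pf)/\Pf$ splitting you describe. Second, the paper works with the slightly cruder affine bound
\[
\|\Nf(\uf)\|_{H^3\times H^2}\lesssim (1+\|u_1\|_{H^3})\big(1+\|u_1\|_{L^9}^3+\|u_1\|_{L^{12}}^2+\|u_1'\|_{L^4}^2\big),
\]
and then runs a single Gronwall over all of $[0,\tau_0)$, obtaining a closed bound of the form $e^{a_3\tau_0}(\ldots)\exp(e^{a_3\tau_0}(\tau_0+\|\Phi\|_{\X}^3))$. Your homogeneous tame estimate and subsequent partition into intervals of small $\X$-norm are perfectly valid, but unnecessary: since $\tau_0<\infty$ by assumption, the inhomogeneous ``$1+$'' terms integrate to something finite anyway, so the direct Gronwall already closes without any subdivision.
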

\begin{proof}
From Lemma \ref{lem:localexistHk} we already know that there exists a unique local $H^3$ solution $u$. Assume now that the corresponding $\Phi$ is inextendible beyond $\tau_0$. Moreover, for any bounded interval $I\subset [0,\infty)$ that contains 0 we let
$\mathcal{X}(I)$ be the completion of $C^\infty \times C^\infty(I\times \overline{\B^6_1})$ with respect to the norm
\begin{align*}
\|(\phi_1,\phi_2)\|_{\X(I)}&:=\|\phi_1\|_{L^{2}(I)L^{12}(\B_1^6)}+\|\phi_1\|_{L^{3}(I)L^{9}(\B_1^6)}+\|\phi_1\|_{L^{4}(I)L^{8}(\B_1^6)}
+
\|\phi_1\|_{L^{6}(I)L^{\frac{36}{5}}(\B_1^6)}
\\
&\quad+\|\phi_1\|_{L^{\infty}(I)L^{6}(\B_1^6)}+\|\phi_1\|_{L^{2}(I)\dot{W}^{1,4}(\B_1^6)}+\|\phi_2\|_{L^{2}(I)L^{4}(\B_1^6)}.
\end{align*}
Then, by Sobolev embedding $\Phi$ belongs to the space $\X([0,\tau])$ for every $0<\tau<\tau_0$ and therefore by our uniqueness theory has to coincide with the Strichartz solution on the time interval $[0,\tau_0)$.
Next, from Lemma \ref{reg0} we know that the estimate
\begin{align*}
\|\Sf(\tau)\uf\|_{H^3\times H^{2}(\B^6_1)}\lesssim e^{a_3\tau}\|\uf\|_{H^3\times H^{2}(\B^6_1)}
\end{align*}
holds for any $\uf\in H^3\times H^{2}(\B^6_1)$ and $\tau\geq0$. With this at hand, the main task now is to establish control over the nonlinearity. To do so we again investigate the expression
\begin{align*}
n(|.|,u)=\int_0^{u} \cos(2t|.|)( u-t)^2 dt
\end{align*}
for $u \in H^3(\B^6_1)$.
Similarly as before, we obtain
\begin{align}\label{n:H2norm}
\left\|n(|.|,u)\right\|_{H^1(\B^6_1)}&\leq \|u\|_{L^6(\B^6_1)}^3+\|u\|_{L^8(\B^6_1)}^4+\|u\|_{\dot{W}^{1,6}(\B^6_1)}\|u\|_{L^6(\B^6_1)}^2\nonumber
\\
&\lesssim \|u\|_{H^3(\B^6_1)}\left( \|u\|_{L^{12}(\B^6_1)}^2+\|u\|_{L^9(\B^6_1)}^3\right).
\end{align}
Recall, that
\begin{align*}
\| n(|.|,u)\|_{\dot{H}^2(\B^6_1)}&= \bigg\|\int_0^{u(\rho)} 2 u''(\rho)\cos(2t\rho)( u(\rho)-t)+2 u'(\rho)^2\cos(2t\rho)
\\
&\quad-8u'(\rho)\sin(2t\rho)2t( u(\rho)-t)-\cos(2t\rho)2t^2( u(\rho)-t)^2 dt
\\
&\quad+\frac{5}{\rho}\left(\int_0^{u(\rho)} 2 u'(\rho)\cos(2t\rho)( u(\rho)-t) -\sin(2t\rho)2t( u(\rho)-t)^2 dt\right)\bigg\|_{L^2_\rho(\B^6_1)}.
\end{align*}
Hölder's inequality implies that
\begin{align*}
&\quad\left\|\int_0^{u(\rho)} 2 u''(\rho)\cos(2t\rho)( u(\rho)-t)dt+\frac{5}{\rho}\int_0^{u(\rho)} 2 u'(\rho)\cos(2t\rho)( u(\rho)-t)dt\right\|_{L^2_\rho(\B^6_1)}
\\
&\leq \| u\|_{\dot{W}^{2,3}(\B^6_1)}\left\|\int_0^{u(\rho)} 2 \cos(2t\rho)( u(\rho)-t)dt\right\|_{L^6_\rho(\B^6_1)}\lesssim \|u\|_{H^3(\B^6_1)}\|u\|_{L^{12}(\B^6_1)}^2.
\end{align*}
Next, from the calculations in Lemma \ref{nonlinearH3} we also infer that 
\begin{align*}
\left\|\frac{5}{\rho}\int_0^{u(\rho)}\sin(2t\rho)2t( u(\rho)-t)^2 dt
\right\|&\lesssim \|u^4\|_{H^1(\B^6_1)}\lesssim \|u\|_{{W}^{1,6}(\B^6_1)}\|u^3\|_{L^3(\B^6_1)}
\\
&\lesssim  \|u\|_{H^3(\B^6_1)}\|u\|_{L^9(\B^6_1)}^3 
\end{align*}
while doing an integration by parts yields
\begin{align*}
&\quad\left\|\int_0^{u(\rho)}\cos(2t\rho)2t^2( u(\rho)-t)^2 dt
\right\|_{L^2_\rho(\B^6_1)}
\\
&\leq \left\|\int_0^{u(\rho)}\rho^{-1}\sin(2t\rho)\left[t( u(\rho)-t)^2+ t^2( u(\rho)-t) \right]dt
\right\|_{L^2_\rho(\B^6_1)}
\\
&\lesssim \||.|^{-1} u^4\|_{L^2(\B^6_1)}\lesssim \|u\|_{H^3(\B^6_1)}\|u\|_{L^9(\B^6_1)}^3 .
\end{align*}
As the remaining summands can be bounded likewise we derive that
\begin{align*}
\|n(|.|,u)\|_{H^2(\B^6_1)}\lesssim \|u\|_{H^3(\B^6_1)}\left(\|u\|_{L^9(\B^6_1)}^3 +\|u\|_{L^{12}(\B^6_1)}^2+\|u\|_{\dot{W}^{1,4}(\B^6_1)}^2\right)
\end{align*}
and therefore
\begin{align*}
\|\widehat{\Nf}(\uf)\|_{H^3\times H^2(\B^6_1)}\lesssim \|u_1\|_{H^3(\B^6_1)}\left(\|u_1\|_{L^9(\B^6_1)}^3 +\|u_1\|_{L^{12}(\B^6_1)}^2+\|u_1\|_{\dot{W}^{1,4}(\B^6_1)}^2\right)
\end{align*}
for any $\uf \in H^3 \times H^2(\B^6_1)$ and with $\widehat{\Nf}$ as defined in the proof of Lemma \ref{nonlinearH3}.
Moreover, we know that
\begin{align*}
\Nf(\uf)=\widehat{\Nf}(\Psi_*+\uf)+\widehat{\Nf}(\Psi_*)+\Lf'\uf
\end{align*}
and so we see that
\begin{align*}
\|\Nf(\uf)\|_{H^3\times H^2(\B^6_1)}\lesssim \left(1+\|u_1\|_{H^3(\B^6_1)}\right)\left(1+\|u_1\|_{L^9(\B^6_1)}^3 +\|u_1\|_{L^{12}(\B^6_1)}^2+\|u_1\|_{\dot{W}^{1,4}(\B^6_1)}^2\right)
\end{align*}
for any $\uf \in H^3 \times H^2(\B^6_1)$.
With this at hand we readily estimate
\begin{align*}
\|\Phi(\tau)\|_{H^3\times H^{2}(\B^6_1)}&\lesssim e^{a_3\tau}\|\Phi(0) \|_{H^3\times H^{2}(\B^6_1)} +e^{a_3\tau}\int_0^\tau \|\Nf(\Phi(\sigma))\|_{H^3\times H^{2}(\B^6_1)} d \sigma
\\
&\lesssim
e^{a_3\tau}\|\Phi(0) \|_{H^3\times H^{2}(\B^6_1)}+e^{a_3\tau}\int_0^\tau \left(1+\|\Phi(\sigma)\|_{H^3\times H^2(\B^6_1)}\right)
\\
&\quad\times\left(1+\|\phi_1(\sigma)\|_{L^9(\B^6_1)}^3+\|\phi_1(\sigma)\|_{L^{12}(\B^6_1)}^2+\|\phi_1(\sigma)\|_{\dot{W}^{1,4}(\B^6_1)}^2\right)d \sigma
\end{align*}
for any $0\leq\tau < \tau_0$. Hence, Gronwall's inequality implies
\begin{align*}
\|\Phi(\tau)\|_{H^3\times H^{2}(\B^6_1)}&\lesssim  e^{a_3\tau_0}(\tau_0+1+\|\Phi\|_{\mathcal{X}([0,\tau_0))}^3 +\|\Phi(0)\|_{H^3\times H^{2}(\B^6_1)} )
\\
&\quad\times\exp \bigg(e^{a_3\tau_0}\int_0^\tau \|\phi_1(\sigma)\|_{L^9(\B^6_1)}^3
+\|\phi_1(\sigma)\|_{L^{12}(\B^6_1)}^2+\|\phi_1(\sigma)\|_{\dot{W}^{1,4}(\B^6_1)}^2 d \sigma \bigg)
\\
&\lesssim
e^{a_3\tau_0}(\tau_0+1+\|\Phi\|_{\mathcal{X}([0,\tau_0))}^3+\|\Phi(0) \|_{H^3\times H^{2}(\B^6_1)}) 
\\
&\quad\times\exp\left(e^{a_3\tau_0}(1+ \|\Phi\|_{\mathcal{X}([0,\tau_0))}^3)\right),
\end{align*}
for all $0\leq \tau<\tau_0$.
So, we see that $\|\Phi(\tau)\|_{H^3\times H^2(\B^6_1)}$ can be uniformly bounded on $[0,\tau_0)$ which contradicts the blowup alternative.
\end{proof}
The same strategy can be applied to further improve the regularity to $H^4\times H^{3}(\B^6_1)$.
\begin{lem}\label{reg2}
Suppose we are given initial data $(f,g)\in H^4 \times H^3(\B^6_T)$, such that there exists an associated  Strichartz solution $u$ with $(u(0,.),\partial_0 u(0,.))=(f,g)$. Then $u$ is also a global $H^4$ solution.
\end{lem}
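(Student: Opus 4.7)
The plan is to adapt the argument from the proof of Lemma \ref{reg1} one regularity level higher, while exploiting the fact that in dimension $6$ the Sobolev space $H^4(\B^6_1)$ is a Banach algebra (since $4>6/2$), which makes the nonlinearity considerably easier to handle than at $H^3$. First I would invoke Lemma \ref{lem:localexistHk} with $k=4$ to obtain a maximal local $H^4$ solution $u$ with associated $\Phi \in C([0,\tau_0),H^4\times H^3(\B^6_1))$, and I would argue that if $\tau_0<\infty$ then necessarily $\sup_{\tau \in [0,\tau_0)}\|\Phi(\tau)\|_{H^4\times H^3(\B^6_1)}=\infty$. Since $H^4\times H^3(\B^6_1)\hookrightarrow H^3\times H^2(\B^6_1)$, the Strichartz solution is simultaneously an $H^3$ solution by Lemma \ref{reg1}, and hence $\sup_{\tau\in[0,\tau_0)}\|\Phi(\tau)\|_{H^3\times H^2(\B^6_1)}$ is already known to be finite for every $\tau_0<\infty$.

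The core of the argument is then a nonlinear estimate of the form
\[
\|\Nf(\uf)\|_{H^4\times H^3(\B^6_1)}\lesssim \bigl(1+\|\uf\|_{H^4\times H^3(\B^6_1)}\bigr)P\bigl(\|\uf\|_{H^3\times H^2(\B^6_1)}\bigr)
\]
for some polynomial $P$. As in the proof of Lemma \ref{nonlinearH3}, I would write
$\Nf(\uf)=\widehat{\Nf}(\Psi_*+\uf)-\widehat{\Nf}(\Psi_*)-\Lf'\uf$ and reduce everything to controlling $\rho\mapsto n(\rho,u(\rho))$ with $n(\rho,x)=6x^3\int_0^1\cos(2t\rho x)(1-t)^2\,dt$. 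The $\dot H^3$ seminorm of $n(|\cdot|,u)$ produces terms involving $u^{(3)}$, products like $u''u'$, $(u')^3$, $u^2 u''$, $u^4 u'$, together with the usual $\frac{1}{\rho}$ and $\frac{1}{\rho^2}$ factors coming from the radial Laplacian and from differentiating $\frac{\sin(2\rho u)}{\rho^3}$. Each of these is estimated by Hölder's inequality, the algebra property of $H^4(\B^6_1)$, Sobolev embedding, and Lemma \ref{helplemoverrho} to absorb the inverse powers of $\rho$, with exactly one factor carrying the top-order $H^4$ norm and the remaining factors being controlled by the $H^3$ norm. The singular-at-$\rho=0$ terms such as $\frac{1}{\rho^2}u^4$ or $\frac{1}{\rho}\partial_\rho(\cdots)$ are handled by integrating by parts in the inner $t$-integral to trade a $\rho$ for an extra power of $u$, just as in the proof of Lemma \ref{reg1}; the algebra property makes these routine.

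With this bound and Lemma \ref{reg0} at $k=4$, I would then write the integral equation
\[
\Phi(\tau)=\Sf(\tau)\Phi(0)+\int_0^\tau \Sf(\tau-\sigma)\Nf(\Phi(\sigma))\,d\sigma,
\]
take $H^4\times H^3$ norms, and obtain
\[
\|\Phi(\tau)\|_{H^4\times H^3}\lesssim e^{a_4\tau}\|\Phi(0)\|_{H^4\times H^3}+e^{a_4\tau}\int_0^\tau \bigl(1+\|\Phi(\sigma)\|_{H^4\times H^3}\bigr)P\bigl(\|\Phi(\sigma)\|_{H^3\times H^2}\bigr)\,d\sigma .
\]
Since $\sigma\mapsto P(\|\Phi(\sigma)\|_{H^3\times H^2})$ is bounded on any finite interval $[0,\tau_0)$ by Lemma \ref{reg1}, Gronwall's inequality yields a uniform bound on $\|\Phi(\tau)\|_{H^4\times H^3}$ on $[0,\tau_0)$, contradicting the blowup alternative from Lemma \ref{lem:localexistHk} unless $\tau_0=\infty$.

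The main obstacle, though a mild one compared to the $H^3$ case of Lemma \ref{reg1}, is keeping the $\rho^{-1}$ and $\rho^{-2}$ singularities generated by repeated differentiation of $\frac{\sin(2\rho u)-2\rho u}{2\rho^3}$ and by the $\frac{5}{\rho}\partial_\rho$ term under control in $L^2(\B^6_1)$; this is resolved by the same integration-by-parts-in-$t$ trick as in Lemma \ref{nonlinearH3} together with Lemma \ref{helplemoverrho}, and the algebra property of $H^4(\B^6_1)$ ensures that no delicate Strichartz-norm factors are needed here, so all top-order terms reduce to an expression linear in $\|\Phi\|_{H^4\times H^3}$ with coefficients depending only on $\|\Phi\|_{H^3\times H^2}$, which is exactly what Gronwall requires.
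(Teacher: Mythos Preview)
Your proposal is correct. The paper gives no explicit proof of Lemma~\ref{reg2}; it only remarks, just before stating the lemma, that ``the same strategy can be applied to further improve the regularity to $H^4\times H^3(\B^6_1)$,'' referring to Lemma~\ref{reg1}. Your argument is a faithful implementation of that instruction.

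There is one mild difference in emphasis worth noting. A literal repetition of the Lemma~\ref{reg1} proof would close the Gronwall loop using the Strichartz quantities $\|\phi_1\|_{L^{12}}$, $\|\phi_1\|_{L^9}$, $\|\phi_1\|_{\dot W^{1,4}}$ directly, after proving an estimate of the form $\|\Nf(\uf)\|_{H^4\times H^3}\lesssim (1+\|u_1\|_{H^4})(1+\text{Strichartz norms})$. You instead feed in the $H^3\times H^2$ bound already secured by Lemma~\ref{reg1} and close Gronwall against $P(\|\Phi(\sigma)\|_{H^3\times H^2})$. This is a legitimate and somewhat cleaner variant: it avoids re-deriving the delicate product estimates with Strichartz factors at the $H^3$ level, and it anticipates the purely Sobolev-based induction step of Lemma~\ref{lem:Hkexistence}. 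Both routes work; yours trades a little extra work in the nonlinear estimate (since $H^3(\B^6_1)$ is not an algebra, so you must carefully place exactly one factor in $H^4$ and the rest in $H^3\hookrightarrow L^p$ for all $p<\infty$) for not having to track Strichartz norms again. Your handling of the $\rho^{-1}$ and $\rho^{-2}$ singularities via the integration-by-parts-in-$t$ trick and Lemma~\ref{helplemoverrho} is exactly what is needed.
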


\begin{lem}\label{lem:Hkexistence}
Suppose we are given initial data $(f,g)\in C^\infty \times C^\infty(\overline{\B^6_T})$, such that there exists an associated Strichartz solution $u$ with $(u(0,.),\partial_0 u(0,.))=(f,g)$. Then $u$ is a global $H^k$ solution for every $3 \leq k \in \mathbb{N}$.
\end{lem}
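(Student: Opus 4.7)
The plan is to proceed by induction on $k$. The base cases $k=3$ and $k=4$ are precisely Lemmas \ref{reg1} and \ref{reg2}, so it suffices to establish the inductive step: if $u$ is a global $H^{k-1}$ solution for some $k \geq 5$, then $u$ is also a global $H^k$ solution. Since $(f,g) \in C^\infty \times C^\infty(\overline{\B^6_T})$, the rescaled initial datum $\Phi(0)$ belongs to $H^k \times H^{k-1}(\B^6_1)$ for every such $k$.

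First I would invoke Lemma \ref{lem:localexistHk} to obtain a local $H^k$ solution on a maximal interval $[0,\tau_0)$. By Sobolev embedding and the uniqueness already established in the $H^3$ setting (Lemma \ref{reg1}), this local $H^k$ solution must coincide with the given global Strichartz / $H^{k-1}$ solution on $[0,\tau_0)$. Arguing by contradiction, suppose $\tau_0 < \infty$; the blowup alternative then forces
\[
\sup_{\tau \in [0,\tau_0)} \|\Phi(\tau)\|_{H^k \times H^{k-1}(\B^6_1)} = \infty.
\]
On the other hand, the inductive hypothesis gives $\Phi \in C([0,\infty), H^{k-1} \times H^{k-2}(\B^6_1))$, so
\[
M_0 := \sup_{\tau \in [0,\tau_0]} \|\Phi(\tau)\|_{H^{k-1} \times H^{k-2}(\B^6_1)} < \infty.
\]

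The heart of the argument is a tame estimate for the nonlinearity. Writing $\Nf(\uf) = \widehat{\Nf}(\Psi_* + \uf) - \widehat{\Nf}(\Psi_*) - \Lf'\uf$ as in the proof of Lemma \ref{nonlinearH3} and using that $\Psi_* \in C^\infty(\overline{\B^6_1})$, it suffices to bound $\widehat{\Nf}(\Psi_* + \uf)$. Since $k \geq 5$, we have $k - 1 \geq 4$ and $H^{k-1}(\B^6_1)$ is a Banach algebra. Expanding derivatives of the smooth function $(\rho,x) \mapsto -\tfrac{3\sin(2\rho x)-6\rho x}{2\rho^3}$ via the Taylor representation already exploited in Lemmas \ref{nonlinearH3} and \ref{reg1}, distributing up to $k$ derivatives by the chain and Leibniz rules, and treating the apparent singularity at $\rho = 0$ with Lemma \ref{helplemoverrho} as before, one obtains a bound of the tame form
\[
\|\Nf(\uf)\|_{H^k \times H^{k-1}(\B^6_1)} \lesssim \bigl(1 + \|\uf\|_{H^{k-1} \times H^{k-2}(\B^6_1)}^{k+1}\bigr)\bigl(1 + \|\uf\|_{H^k \times H^{k-1}(\B^6_1)}\bigr),
\]
in which the top-order norm appears linearly while only the (already controlled) lower-order norm is allowed to saturate.

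Combining Duhamel's formula with the semigroup estimate from Lemma \ref{reg0} then yields
\[
\|\Phi(\tau)\|_{H^k \times H^{k-1}} \lesssim e^{a_k \tau_0}\|\Phi(0)\|_{H^k \times H^{k-1}} + e^{a_k \tau_0}\int_0^\tau (1 + M_0^{k+1})\bigl(1 + \|\Phi(\sigma)\|_{H^k \times H^{k-1}}\bigr)\, d\sigma
\]
for all $\tau \in [0,\tau_0)$, and Gronwall's inequality produces a uniform bound on $\|\Phi(\tau)\|_{H^k \times H^{k-1}(\B^6_1)}$ throughout $[0,\tau_0)$. This contradicts the blowup alternative, so $\tau_0 = \infty$ and the induction closes. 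The main (and really only) obstacle is the tame nonlinearity bound above: all the higher derivatives of $\widehat{\Nf}(\Psi_* + \uf)$ must be shown to be at most linear in the top-order norm, which follows from the Banach algebra property of $H^{k-1}$ once $k \geq 5$ but requires the delicate Taylor expansion and weighted Hardy-type bookkeeping already developed in the proof of Lemma \ref{reg1}.
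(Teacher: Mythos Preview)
Your overall inductive scheme matches the paper's exactly: local existence via Lemma~\ref{lem:localexistHk}, identification with the given solution, blowup alternative, and contradiction. The difference lies in the nonlinearity estimate. You aim for a tame bound
\[
\|\Nf(\uf)\|_{H^k\times H^{k-1}}\lesssim \bigl(1+\|\uf\|_{H^{k-1}\times H^{k-2}}^{k+1}\bigr)\bigl(1+\|\uf\|_{H^k\times H^{k-1}}\bigr),
\]
linear in the top norm, and then close by Gronwall. The paper observes something stronger and simpler: because $[\Nf(\uf)]_1=0$ and $[\Nf(\uf)]_2$ depends only on $u_1$, one has $\|\Nf(\uf)\|_{H^k\times H^{k-1}}=\|[\Nf(\uf)]_2\|_{H^{k-1}}$, and for $k\geq 5$ the Banach algebra property of $H^{k-1}(\B^6_1)$ gives
\[
\|\Nf(\uf)\|_{H^k\times H^{k-1}(\B^6_1)}\lesssim 1+\|\uf\|_{H^{k-1}\times H^{k-2}(\B^6_1)}^{k+2},
\]
with \emph{no} dependence on the top norm at all. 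Consequently the Duhamel estimate bounds $\|\Phi(\tau)\|_{H^k\times H^{k-1}}$ directly by quantities already controlled by the inductive hypothesis, and no Gronwall step is needed. Your approach is correct and is the generic tame-estimate argument; the paper's route exploits the specific ``one-derivative smoothing'' structure of $\Nf$ to shortcut it.
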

\begin{proof}
From Lemma \ref{lem:localexistHk} we already know that smooth initial
data leads to a local $H^k $ solution for any $3\leq k \in \mathbb{N}$ which additionally is a global $H^4$  solution. Fix now a natural number $k\geq 5$. As before, we consider the associated $\Phi \in C([0,\tau_0),H^k\times H^{k-1}(\B^6_1))$ and assume that $\Phi \in  C([0,\infty),H^{k-1}\times H^{k-2}(\B^6_1))$. Suppose now that $\Phi$ is inextendible beyond $\tau_0$, for some $\tau_0>0$ as an element of  $C([0,\tau_0),H^{k}\times H^{k-1}(\B^6_1))$.
As $k\geq 5$ we can use the Banach algebra property of $H^{k-1}(\B^6_1)$ to compute that
\begin{align*}
\|\Nf(\uf)\|_{H^k\times H^{k-1}(\B^6_1)}\lesssim 1+\|\uf\|_{H^{k-1}\times H^{k-2}(\B^6_1)}^{k+2}
\end{align*}
for all $\uf\in H^k\times H^{k-1}(\B^6_1)$.
We then use Lemma \ref{reg0} to calculate 
\begin{align*}
\|\Phi(\tau)\|_{H^k\times H^{k-1}(\B^6_1)}&\lesssim e^{a_k\tau}\left(\|\Phi(0) \|_{H^k\times H^{k-1}(\B^6_1)}+\int_0^\tau \|\Nf(\Phi(\sigma))\|_{H^k\times H^{k-1}(\B^6_1)} d \sigma\right)
\\
&\lesssim e^{a_k\tau} \bigg(\|\Phi(0) \|_{H^k\times H^{k-1}(\B^6_1)}
+ \int_0^\tau 1+\|\Phi(\sigma)\|_{H^{k-1}\times H^{k-2}(\B^6_1)}^{k+2} d \sigma\bigg).
\end{align*}
Hence, the $H^k\times H^{k-1}$ norm of $\Phi(\tau)$ stays uniformly bounded on $[0,\tau_0)$, which contradicts the blowup alternative. Proceeding inductively, we obtain $\Phi\in C([0,\infty),H^k\times H^{k-1}(\B^6_1))$
for every $3\leq k \in \mathbb N$.
\end{proof}
\begin{prop}
Suppose we are given initial data $(f,g)\in C^\infty \times C^\infty(\overline{\B^6_T})$, such that there exists an associated Strichartz solution $u$ with $(u(0,.),\partial_0 u(0,.))=(f,g)$. Then $u$ is a smooth solution in the lightcone $\Gamma^T$.
\end{prop}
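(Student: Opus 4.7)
The plan is to bootstrap from the $H^k$-regularity already established in Lemma \ref{lem:Hkexistence} to joint $C^\infty$-regularity in the similarity variables $(\tau,\rho)$, and then translate the result back to the lightcone $\Gamma^T$ via the similarity coordinate change. Since $\Phi\in C([0,\infty), H^k\times H^{k-1}(\B^6_1))$ for every $k\geq 3$, the radial Sobolev embedding $H^m(\B^6_1)\hookrightarrow C^{m-4}(\overline{\B^6_1})$ (valid in six space dimensions for $m\geq 4$) gives $\Phi(\tau,\cdot)\in C^\infty\times C^\infty(\overline{\B^6_1})$ as a radial function for every $\tau\geq 0$, with continuous dependence on $\tau$ in each $C^m$-topology.

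To pass from spatial to temporal regularity, I would use the equation itself. Because $\Phi(\tau)\in H^k\times H^{k-1}\subset D(\Lf)$ for every $k\geq 3$, the Duhamel identity for the semigroup $\Sf$ implies the classical form
\begin{equation*}
\partial_\tau \Phi(\tau)=\Lf\Phi(\tau)+\Nf(\Phi(\tau))
\end{equation*}
in $\mathcal{H}$. The operator $\Lf=\Lf_0+\Lf'$ maps $H^k\times H^{k-1}$ boundedly into $H^{k-1}\times H^{k-2}$, and by the estimates underlying Lemmas \ref{nonlinearH3}--\ref{reg1} the nonlinearity $\Nf$ preserves each space $H^k\times H^{k-1}$ with $k\geq 3$. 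Hence $\partial_\tau\Phi\in C([0,\infty), H^{k-1}\times H^{k-2}(\B^6_1))$ for every $k$, so that $\partial_\tau\Phi$ is again continuous in $\tau$ with values smooth in the spatial variable. I would then iterate this: differentiating the displayed identity $j-1$ times in $\tau$ and using the chain rule on $\Nf(\Phi)$, together with the Banach algebra property of $H^k(\B^6_1)$ for $k\geq 4$, one obtains inductively that $\partial_\tau^j\Phi\in C([0,\infty), H^k\times H^{k-1}(\B^6_1))$ for all $j,k$. Combining all mixed derivatives $\partial_\tau^j\partial_\rho^l\Phi$ and invoking Sobolev embedding once more yields $\Phi\in C^\infty([0,\infty)\times\overline{\B^6_1})$ as a radial function, and hence $\Psi=\Phi+\Psi_*$ is smooth as well.

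Finally, recalling that
\[
u(t,x)=\frac{1}{T-t}\,\psi_1\!\left(-\log\tfrac{T-t}{T},\tfrac{x}{T-t}\right)
\]
and that the similarity change of variables $(t,x)\mapsto\bigl(\tau(t),\tfrac{x}{T-t}\bigr)$ is a smooth diffeomorphism from the open cone $\Gamma^T$ onto $[0,\infty)\times\overline{\B^6_1}$ preserving radiality, the smoothness of $\psi_1$ transports directly to smoothness of $u$ on all of $\Gamma^T$.

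The main technical obstacle is the induction on the number of time derivatives: each pass brings down derivatives of $\Nf$, producing progressively more complicated expressions involving $\cos(2\rho\Phi)$, $\sin(2\rho\Phi)$ and their nested derivatives, together with the factor $1/\rho^3$ in $N$ that must be absorbed by the Taylor representation. Controlling these relies on pushing the cancellation already exploited in Lemmas \ref{nonlinearH3} and \ref{reg1} (via the identity $n(\rho,x)=6\int_0^x\cos(4\arctan(\rho/\sqrt 2)+2\rho t)(x-t)^2\,dt$) to all orders in both spatial and temporal derivatives. This is tedious but routine once the unrestricted spatial regularity of Lemma \ref{lem:Hkexistence} is in hand, since no sharp estimate in any Sobolev index is required.
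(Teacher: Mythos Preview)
Your approach is correct and essentially the same as the paper's: spatial smoothness from Lemma \ref{lem:Hkexistence} via Sobolev embedding, then an induction on time derivatives using the equation, then the similarity change of variables. The only notable difference is that the paper writes $\partial_\tau\Phi$ in Duhamel form,
\[
\partial_\tau\Phi(\tau)=\Sf(\tau)\Lf\uf+\Nf(\Phi(\tau))+\int_0^\tau\Sf(\tau-\sigma)\Lf\Nf(\Phi(\sigma))\,d\sigma,
\]
whose right-hand side contains no $\partial_\tau$, so the induction proceeds without ever differentiating $\Nf$ and the ``tedious but routine'' chain-rule bookkeeping you flag in your last paragraph is avoided entirely. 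The paper also makes explicit the final step you pass over, namely that continuity of all mixed partials $\partial_\tau^k\partial_\rho^\ell\Phi$ implies joint $C^\infty$-regularity via a Schwarz-type theorem (Rudin, Theorem 9.41).
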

\begin{proof}
Lemma \ref{lem:Hkexistence} implies that the associated $\Phi$ satisfies $\Phi\in C([0,\infty),C^\infty\times C^\infty(\overline{\B^6_1}))$.
Furthermore, as $\uf$ and $\Phi(\tau)(.)$ are smooth for any $\tau\geq 0$ we know that
\begin{align*}
\partial_\tau \Phi(\tau)= \Sf(\tau) \Lf \uf +\Nf(\Phi(\tau))+\int_0^\tau \Sf(\tau-\sigma)\Lf\N(\Phi(\sigma))d\sigma
\end{align*}
and hence we see that 
\begin{align*}
\partial_\tau \Phi\in C([0,\infty),C^\infty\times C^\infty(\overline{\B^6_1})).
\end{align*}
However, as the right hand side contains no terms involving $\partial_\tau$ we can conclude inductively that $\Phi$ satisfies 
$\Phi\in C^\infty([0,\infty),C^\infty\times C^\infty(\overline{\B^6_1}))$.
Hence, we infer that $\partial_\rho^\ell\partial_\tau^k
\Phi(\tau)(\rho)$ can naturally be identified with an element of
$C([0,\infty)\times \overline{\B^6_1},\R^2)$ for any $k,\ell \in
\mathbb{N}_0$. Consequently, an improved  version of Schwarz's Theorem
(see \cite{Rud76}, p.~235, Theorem 9.41) ensures that $\Phi$ and subsequently $u$ is indeed smooth.
\end{proof}
Now, we are finally able to prove Theorem \ref{stability}.
\begin{proof}[Proof of Theorem \ref{stability}]
Let $\delta >0$ be small enough and choose $M\geq0$ sufficiently large.
Further, let $\vf=(f,g)-u^1_*[0] \in C^\infty \times C^\infty(\overline{\B^6_{1+\delta}})$  be such that
\begin{align*}
\|(f,g)-u^1_*[0]\|_{H^2\times H^1(\B^6_{1+\delta})}\leq \frac{\delta}{M}.
\end{align*}
Then, by our previous results, there exists a unique smooth solution with that initial data. Denote this solution by $u$, its associated function in similarity coordinates by $\Phi \in \X_\delta$, and let $T$ be the blowup time. We calculate
\begin{align*}
\delta^2&\geq\| \phi_1\|_{L^2(\R_+)L^{12}(\B_1^6)}^2= \int_0^\infty \left\|\psi(\tau,.)-2|.|^{-1}\arctan\left(2^{-\frac{1}{2}}|.|\right)\right\|^2_{L^{12}(\B_1^6)}d \tau\\
&=\int_0^T \left\|\psi(-\log(T-t)+\log T,.)-2|.|^{-1}\arctan\left(2^{-\frac{1}{2}}|.|\right)\right\|^2_{L^{12}(\B_1^6)}\frac{dt}{T-t}\\
&=\int_0^T \bigg\|(T-t)^{-1}\psi(-\log(T-t)+\log T,\frac{.}{T-t})
\\
&\quad-2|.|^{-1}\arctan\left(2^{-\frac{1}{2}}\frac{|.|}{T-t}\right)\bigg\|^2_{L^{12}(\B_{T-t}^6)} dt
\\
&=\int_0^T \left\|u(t,.)-u^T_*(t,r)\right\|^2_{L^{12}(\B_{T-t}^6)}dt
\end{align*}
and similarly
\begin{align*}
\delta^2&\geq\| \phi_1\|_{L^2(\R_+)\dot{W}^{1,4}(\B_1^6)}^2= \int_0^\infty \left\|\psi(\tau,.)-2|.|^{-1}\arctan\left(2^{-\frac{1}{2}}|.|\right)\right\|^2_{\dot{W}^{1,4}(\B_1^6)}d \tau\\
&=\int_0^T \left\|\psi(-\log(T-t)+\log T,.)-2|.|^{-1}\arctan\left(2^{-\frac{1}{2}}|.|\right)\right\|^2_{\dot{W}^{1,4}(\B_1^6)}\frac{dt}{T-t}\\
&=\int_0^T \bigg\|(T-t)^{-1}\psi(-\log(T-t)+\log T,\frac{.}{T-t})
\\
&\quad-2|.|^{-1}\arctan\left(2^{-\frac{1}{2}}\frac{|.|}{T-t}\right)\bigg\|^2_{\dot{W}^{1,4}(\B_{T-t}^6)}dt
\\
&=\int_0^T \left\|u(t,.)-u^T_*(t,r)\right\|^2_{\dot{W}^{1,4}(\B_{T-t}^6)}dt.
\end{align*}
\end{proof}
\section{The main theorem}
Establishing Theorem \ref{maintheorem} will now be a comparatively simple task.
We first state another useful version of Hardy's inequality.
\begin{lem}\label{Hardy}
 Let  $d=4,6$ and $R_0>0$. Then we have the bounds
  \begin{align*}
    \int_0^R |f(r)|^2 r^{d-3} dr&\lesssim \|f\|_{H^1(\B_R^d)}^2 \\
    \int_0^R |f'(r)|^2 r^{d-3} dr &\lesssim \|f\|_{H^2(\B_R^d)}^2
  \end{align*}
  for all $R\geq R_0$ and all $f\in C^2(\overline{\B_R^d})$.
\end{lem}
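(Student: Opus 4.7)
The plan is to follow, essentially verbatim, the integration-by-parts strategy used in the proof of Lemma \ref{helplemoverrho}, adapted so that the exponents work uniformly for $d\in\{4,6\}$. Nothing substantially new is needed; only the algebraic bookkeeping changes.

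For the \emph{second} inequality I would rewrite
\begin{align*}
\int_0^R |f'(r)|^2 r^{d-3}\,dr=\int_0^R |r^{d-1}f'(r)|^2\, r^{-d-1}\,dr
\end{align*}
and integrate by parts using $r^{-d-1}=-\tfrac{1}{d}\partial_r r^{-d}$. For $f\in C^2(\overline{\B_R^d})$ and $d\geq 4$, the boundary contribution at $r=0$ vanishes since $r^{d-2}|f'(r)|^2\to 0$, while the contribution at $r=R$ equals $-\tfrac{R^{d-2}}{d}|f'(R)|^2\leq 0$ and may be dropped. Since $\partial_r(r^{d-1}f'(r))=r^{d-1}\bigl(f''(r)+\tfrac{d-1}{r}f'(r)\bigr)$, we are left with
\begin{align*}
\int_0^R |f'(r)|^2 r^{d-3}\,dr\lesssim \int_0^R |f'(r)|\left|f''(r)+\tfrac{d-1}{r}f'(r)\right|r^{d-2}\,dr,
\end{align*}
and Cauchy--Schwarz followed by absorption yields the desired bound by $\|f\|_{\dot H^2(\B_R^d)}^2\leq \|f\|_{H^2(\B_R^d)}^2$.

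For the \emph{first} inequality I would perform the analogous integration by parts using $r^{d-3}=\tfrac{1}{d-2}\partial_r r^{d-2}$, producing
\begin{align*}
\int_0^R |f(r)|^2 r^{d-3}\,dr=\frac{R^{d-2}}{d-2}|f(R)|^2-\frac{2}{d-2}\operatorname{Re}\int_0^R f'(r)\overline{f(r)}\, r^{d-2}\,dr.
\end{align*}
The interior term is bounded via Cauchy--Schwarz by $\|f\|_{\dot H^1(\B_R^d)}\bigl(\int_0^R |f|^2 r^{d-3}\,dr\bigr)^{1/2}$, which after Young's inequality is absorbed into the left-hand side. To control the boundary term $|f(R)|^2$ I use the standard one-dimensional trace argument for radial functions: from $f(R)=f(r)+\int_r^R f'(s)\,ds$ one gets $|f(R)|^2\leq 2|f(r)|^2+2R\int_0^R |f'(s)|^2\,ds$ for any $r\in[R_0/2,R_0]$, and averaging over this interval yields $|f(R)|^2\lesssim \int_{R_0/2}^{R_0}|f(r)|^2\,dr+\int_0^R |f'(s)|^2\,ds$. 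Since $r^{d-1}\simeq_{R_0} 1$ on $[R_0/2,R_0]$, both terms are controlled by $\|f\|_{H^1(\B_R^d)}^2$, completing the argument.

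The only point to watch is uniformity of the implicit constants, but no real obstacle arises: all powers of $R$ appearing (essentially $R^{d-2}$) are harmless on any bounded range of $R$, and the trace-type bound only uses integrability on the fixed sub-interval $[R_0/2,R_0]\subset(0,R]$ independently of $R$. Thus the proof is a mild and routine generalization of Lemma \ref{helplemoverrho}.
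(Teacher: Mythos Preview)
Your approach is exactly the one the paper intends (the paper's proof is literally ``ad verbum as Lemma \ref{helplemoverrho}''), and the second inequality is handled correctly.

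There is one small gap in the first inequality. The lemma is stated for \emph{all} $R\geq R_0$, with no upper bound on $R$, yet your trace argument for the boundary term $\tfrac{R^{d-2}}{d-2}|f(R)|^2$ averages over the \emph{fixed} interval $[R_0/2,R_0]$ and produces constants that grow like $R^{d-1}$; you acknowledge this yourself when you say the powers of $R$ are ``harmless on any bounded range of $R$''. The fix is immediate: average instead over $[R/2,R]$. From $f(R)=f(r)+\int_r^R f'(s)\,ds$ for $r\in[R/2,R]$ one gets
\[
|f(R)|^2\lesssim \frac{1}{R}\int_{R/2}^R |f(r)|^2\,dr + R\int_{R/2}^R |f'(s)|^2\,ds,
\]
and since $r^{d-1}\simeq R^{d-1}$ on $[R/2,R]$ this yields $|f(R)|^2\lesssim R^{-d}\|f\|_{L^2(\B_R^d)}^2+R^{2-d}\|f\|_{\dot H^1(\B_R^d)}^2$. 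Multiplying by $R^{d-2}$ gives a bound that is uniform for $R\geq R_0$. With this adjustment your argument is complete and matches the paper's.
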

\begin{proof}
This can be shown ad verbum as Lemma \ref{helplemoverrho}.
\end{proof}
Using this lemma, we can establish the following result.
\begin{lem}\label{equivnorms}
The equivalence of norms
\[
\||.|f\|_{H^2(\R^4)}\simeq\|f\|_{H^2(\R^6)},
\]
holds for any $f\in C^2_c(\R^4)$.
Further, also 
\[
\||.|f\|_{H^2(\B_R^4)}\simeq\|f\|_{H^2(\B^6_R)},
\]
holds for $f\in C^2(\overline{\B_R^4})$ and $R\in \left[\frac{1}{2},\frac{3}{2}\right]$.
\end{lem}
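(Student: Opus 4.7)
The approach is to reduce both $H^2$-norms to weighted one-dimensional integrals of the radial profile of $f$ via the representations recalled at the start of Section 2, then substitute $g(r):=rf(r)$ and compare the resulting integrands term by term. The derivative identities $g'=f+rf'$ and $g''+\tfrac{3}{r}g'=r(f''+\tfrac{5}{r}f')+\tfrac{3}{r}f$ yield
\[
\||.|f\|_{H^2(\B^4_R)}^2\simeq\int_0^R\!\bigl(f^2r^5+(f+rf')^2r^3+\bigl(r(f''+\tfrac{5}{r}f')+\tfrac{3}{r}f\bigr)^2r^3\bigr)dr,
\]
and the $L^2$-contribution matches $\|f\|_{L^2(\B^6_R)}^2$ exactly.

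Expanding the remaining two quadratic integrands and integrating by parts systematically to eliminate the cross terms $\int ff'r^k\,dr$ and $\int f(f''+\tfrac{5}{r}f')r^3\,dr$, I would arrive at the identity
\[
\||.|f\|_{H^2(\B^4_R)}^2 = \|f\|_{H^2(\B^6_R)}^2 - 3\!\!\int_0^R\!f^2 r^3 dr - 6\!\!\int_0^R\!(f')^2 r^3 dr - 3\!\!\int_0^R\!f^2 r\,dr + \mathcal{B}(R),
\]
where $\mathcal{B}(R)$ collects boundary contributions at $r=R$ of the form $R^{k}f(R)^2$ and $R^3 f(R)f'(R)$. For $f\in C^2_c(\R^4)$ all boundary terms vanish, and since the three remaining corrections are non-positive the forward inequality $\||.|f\|_{H^2(\R^4)}^2\leq \|f\|_{H^2(\R^6)}^2$ is immediate. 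In the ball version, $\mathcal{B}(R)$ is controlled uniformly for $R\in[\tfrac12,\tfrac32]$ by one-dimensional Sobolev embedding applied to the radial profile on the annulus $[\tfrac14,\tfrac32]$, where the weight $r^5$ is bounded away from both zero and infinity.

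The reverse inequality reduces via the same identity to bounding each of the three correction integrals by $\||.|f\|_{H^2(\B^4_R)}^2$. Writing $f=g/r$ rewrites them as $\int g^2 r\,dr$, $\int (g')^2 r\,dr + \int g^2/r\,dr$ (from the pointwise bound $(g'/r-g/r^2)^2 r^3\leq 2(g')^2 r+2g^2/r$), and $\int g^2/r\,dr$. Lemma \ref{Hardy} for $d=4$ applied once to $g$ and once to $g'$ controls the $r$-weighted integrals, so the whole argument rests on the single critical estimate $\int_0^R g^2/r\,dr\lesssim \|g\|_{H^2(\B^4_R)}^2$.

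This borderline Hardy bound is the main obstacle, since it lies just beyond the reach of the standard 4D iterated Hardy inequality. I would close it by splitting at $r=\tfrac14$: on $[\tfrac14,R]$ the weight $r^3$ is bounded below, so $\int_{1/4}^R g^2/r\,dr\lesssim \int g^2 r^3\,dr\lesssim \|g\|_{L^2(\B^4_R)}^2$; on $[0,\tfrac14]$ the structure $g=rf$ with $f\in C^2$ supplies the Taylor expansion $g(r)=f(0)r+\tfrac12 f''(0)r^3+O(r^5)$, so $g^2/r$ is pointwise $O(r)$ near the origin, while the same expansion forces $(g''+\tfrac{3}{r}g')^2 r^3\sim 9f(0)^2 r$ there, yielding $f(0)^2\lesssim \|g\|_{\dot H^2(\B^4_R)}^2$ and hence $\int_0^{1/4}g^2/r\,dr\lesssim\|g\|_{\dot H^2(\B^4_R)}^2$. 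Combining the two ranges gives the required critical bound, and the full-space statement follows from the ball version by choosing $R$ beyond the support of $f$, at which point $\mathcal{B}(R)$ vanishes identically.
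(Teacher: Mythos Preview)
Your overall framework---the radial identity relating the two norms, the forward inequality via non-positivity of the correction terms, and the reduction of the reverse inequality to controlling $\int f^2 r^3$, $\int(f')^2 r^3$, $\int f^2 r$ by $\||.|f\|_{H^2}^2$---is sound and matches the paper's logic. You are also right that everything hinges on the borderline estimate $\int_0^R g^2/r\,dr\lesssim\|g\|_{H^2(\B^4_R)}^2$ for $g=rf$, equivalently $\int_0^R f^2 r\,dr\lesssim\||.|f\|_{H^2}^2$.

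Your Taylor-expansion argument for this step has a genuine gap, however. First, $f\in C^2$ only yields $g(r)=f(0)r+\tfrac12 f''(0)r^3+o(r^3)$, not $O(r^5)$. More seriously, even if one grants $f(0)^2\lesssim\|g\|_{\dot H^2}^2$, this does \emph{not} control $\int_0^{1/4}rf(r)^2\,dr$: the integrand involves $f(r)$, not $f(0)$, and $f(r)-f(0)$ can be arbitrarily large relative to $f(0)$ (take $f(r)=1+Mr^2$ with $M\gg1$). Likewise, the asymptotic $(g''+\tfrac{3}{r}g')^2 r^3\sim 9f(0)^2 r$ carries an $f$-dependent error term, so it cannot produce a universal lower bound $f(0)^2\lesssim\|g\|_{\dot H^2}^2$ either. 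In short, the pointwise expansion only gives qualitative finiteness, not the required uniform estimate.

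The paper closes this step by passing to Fourier space. For compactly supported radial $f$ on $\R^4$ one has $\|f\|_{\dot H^1(\R^4)}^2\simeq\||\xi|\hat f\|_{L^2}^2$, and a Hardy inequality in the Fourier radial variable gives $\||\xi|\hat f\|_{L^2}^2\lesssim\||\xi|^2(\hat f)'\|_{L^2}^2$. Since radial differentiation of $\hat f$ corresponds to multiplication by $|x|$ on the physical side, the right-hand side is $\simeq\||.|f\|_{\dot H^2(\R^4)}^2$. Combined with the elementary integration-by-parts bound $\int_0^\infty f^2 r\,dr\leq\int_0^\infty(f')^2 r^3\,dr=\|f\|_{\dot H^1(\R^4)}^2$, this delivers precisely the missing critical estimate; the ball version then follows via extension.
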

\begin{proof}
We start by noting that
\[
\||.|f\|_{L^2(\R^4)}=\|f\|_{L^2(\R^6)}.
\]
Next, we calculate
\begin{align*}
\||.|f\|_{\dot{H}^1(\R^4)}^2=\int_0^\infty |\partial_r(r f(r))|^2 r^3
  dr\leq\int_0^\infty |f'(r)|^2 r^5 + |f(r)|^2r^3 dr\lesssim
\|f\|_{H^1(\R^6)}^2
\end{align*}
due to Lemma \ref{Hardy}.
Analogously, we have that 
\begin{align*}
\||.|f\|_{\dot{H^2}(\R^4)}^2&=\int_0^\infty \left|f''(r) r+ 5f'(r)+\frac{3f(r)}{r}\right|^2 r^3dr
\\
&\lesssim \int_0^\infty |f''(r)|^2 r^5+ 5|f'(r)|^2 r^3+|f(r)|^2 r dr
\lesssim \|f\|_{H^2(\R^6)}^2
\end{align*}
 by Lemmas \ref{helplemoverrho} and \ref{Hardy}. Thus, we have shown 
\[
\||.|f\|_{H^2(\R^4)}\lesssim\|f\|_{H^2(\R^6)}.
\]
To establish the other inequality, we once more employ Lemma \ref{Hardy} to estimate
\begin{align*}
\|f\|_{\dot{H}^1(\R^6)}^2=\int_0^\infty |f'(r)|^2 r^5 dr \lesssim \int_0^\infty |\partial_r(f(r)r)|^2 r^3 + |f(r)|^2 r^3 dr\lesssim \||.|f\|_{H^1(\R^4)}^2.
\end{align*}
Finally, to bound the $\dot{H}^2(\R^6)$ norm of $f$ in terms of $\||.|f\|_{H^2(\R^4)}$, we
first note that an integration by parts shows that 
\begin{align*}
\int_0^\infty |f(r)|^2 r dr\leq \int_0^\infty |f'(r)|^2 r^3 dr=\|f\|_{\dot{H}^1(\R^4)}^2.
\end{align*}
Now, as the Fourier transform of a radial function is again radial, we have that
\begin{align*}
\|f\|_{\dot{H}^1(\R^4)}^2&\simeq\||.|\mathcal{F}(f)\|_{L^2(\R^4)}^2\lesssim \||.|^2\mathcal{F}(f)'\|_{L^2(\R^4)}^2
\\
&\lesssim\||.|^2\mathcal{F}(|.|f)\|_{L^2(\R^4)}^2\simeq \||.|f\|_{\dot{H}^2(\R^4)}^2
\end{align*}
and thus 
\begin{align*}
\int_0^\infty |f(r)|^2 r dr+\int_0^\infty |f'(r)|^2 r^3 dr\lesssim \||.|f\|_{H^2(\R^4)}^2.
\end{align*}
Using this, we obtain that 
\begin{align*}
\|f\|_{\dot{H}^2(\R^6)}^2&=\int_0^\infty \left|f''(r)+\frac{5}{r}f'(r)\right|^2 r^5 dr \lesssim  \||.|f\|_{H^2(\R^4)}^2+\int_0^\infty |f(r)|^2 r +|f'(r)|^2 r^3 dr
\\
&\lesssim \||.|f\|_{H^2(\R^4)}^2
\end{align*}
which establishes the other inequality.
The claim on balls can be derived by the same means combined with an application of Lemma \ref{lem:extension}.
\end{proof}
Next, we will show that by combining Theorems \ref{away from the center} and \ref{stability} we obtain a smooth solution of Eq. \eqref{startingeq2} on $[0,T)\times \R^6$.

\begin{lem}\label{globalsol1}
Let $(f,g)\in C^\infty \times C^\infty(\R^6)$ be smooth initial data that satisfy the assumptions of Theorem \ref{stability} and let $T$ be the associated blowup time. Then Eq.~\eqref{startingeq2} has a unique smooth solution $u:[0,T)\times \R^6 \to \R$, which satisfies $u[0]=(f,g)$.
\end{lem}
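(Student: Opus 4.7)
The plan is to glue the solution furnished by Theorem \ref{stability} inside the backward lightcone of the blowup point $(T,0)$ with the solution furnished by Corollary \ref{cor:awayorigin} outside the forward lightcone of the origin, and then to exploit finite speed of propagation to cover the wedge-shaped region that these two pieces miss.

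First, I apply Theorem \ref{stability} to obtain a smooth radial solution $u_1$ on $\Lambda_T:=\{(t,x)\in [0,T)\times\R^6:|x|\leq T-t\}$, and Corollary \ref{cor:awayorigin} to produce a smooth radial solution $u_2$ on $\{(t,x)\in [0,\infty)\times\R^6:|x|>t\}$. Every point of the overlap $\{(t,x):0\leq t<T/2,\ t<|x|<T-t\}$ is contained in some cone $\Gamma_{t_0}^{t_1}(r_0)$ that stays a positive distance from the origin, so the uniqueness part of Theorem \ref{away from the center} (equivalently Lemma \ref{continuous dependence}) forces $u_1=u_2$ on that overlap. Gluing yields a smooth radial function $u$ on $\Lambda_T\cup\{(t,x):t\geq 0,\ |x|>t\}$.

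The main obstacle is that this union fails to cover $[0,T)\times\R^6$: points with $T-t\leq|x|\leq t$ (which can occur as soon as $t\geq T/2$) are left out. To handle them I pick an intermediate time $t_*\in (0,T/2)$; since $t_*<T-t_*$, the sets $\{|x|\leq T-t_*\}$ and $\{|x|>t_*\}$ together exhaust $\R^6$, so the already constructed $u$ supplies smooth radial Cauchy data $(u(t_*,\cdot),\partial_t u(t_*,\cdot))$ on all of $\R^6$.

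For any $T'\in(t_*,T)$ and any $\varepsilon>0$ I cover the slab $\{(t,r):t_*\leq t\leq T',\ r\geq\varepsilon\}$ by cones $\Gamma_{t_*}^{T'}(r_0)$ that stay away from the time axis, and apply Theorem \ref{away from the center} on each such cone with the Cauchy data inherited from $t_*$. Lemma \ref{continuous dependence} provides uniqueness on each cone and on overlaps, so the individual solutions glue, exactly as in the proof of Corollary \ref{cor:awayorigin}, into a single smooth radial function on $\{(t,x):t_*\leq t\leq T',\ |x|\geq\varepsilon\}$. Letting $\varepsilon\downarrow 0$ and combining with $u_1$ on $\Lambda_T$, which already contains an open neighborhood of the axis $\{|x|=0\}$ for every $t\in[t_*,T)$, yields a smooth radial solution on $[0,T')\times\R^6$; since $T'<T$ was arbitrary, this produces $u$ on $[0,T)\times\R^6$. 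Uniqueness is inherited from uniqueness in each of the constituent pieces, namely Theorem \ref{stability} and Lemma \ref{continuous dependence}.
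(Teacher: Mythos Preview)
Your argument has a real gap in the covering step. You claim that the slab $\{(t,r):t_*\leq t\leq T',\ r\geq\varepsilon\}$ can be covered by cones $\Gamma_{t_*}^{T'}(r_0)$ that stay away from the axis, but this is false. For such a cone to avoid the origin one needs $r_0>T'-t_*$, and at time $t$ the leftmost radius it reaches is $r_0-(T'-t)>(T'-t_*)-(T'-t)=t-t_*$. Hence the union of all admissible cones with base time $t_*$ is contained in $\{(t,r):t_*\leq t<T',\ r>t-t_*\}$, which is exactly the exterior of the forward lightcone of $(t_*,0)$. Any point $(t,r)$ with $T-t<r<t-t_*$ (and such points exist as soon as $t>(T+t_*)/2$) lies neither in $\Lambda_T$ nor in this union, so your construction leaves a wedge uncovered. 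The appeal to the proof of Corollary~\ref{cor:awayorigin} does not help: there one only needs to cover $\{r>t\}$, which \emph{is} the exterior of the forward lightcone from the spacetime origin, so cones with base at $t=0$ suffice.

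The paper's proof avoids this by iterating. It builds a finite chain of cones $\Gamma_k$ with successively earlier base times, arranged so that the base of each $\Gamma_k$ is contained in $\Gamma_{k+1}$ together with the already known region $\Gamma^T\cup\{r>t\}$; one then solves in $\Gamma_{k_\varepsilon},\Gamma_{k_\varepsilon-1},\dots,\Gamma_0$ in that order, each step invoking Theorem~\ref{away from the center} with data supplied by the previous one. Your approach can be salvaged in the same spirit: after one step from $t_*$ you have the solution on $\{r\leq T-t\}\cup\{r>t-t_*\}$, which is all of $\R^6$ up to time $(T+t_*)/2$; take new data at that time and repeat. Each iteration halves the remaining distance to $T$, so any $T'<T$ is reached after finitely many steps.
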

\begin{proof}
From Theorems \ref{away from the center} and \ref{stability} we obtain that there exist two solutions corresponding to that initial data, one on
$$\{(t,r)\in [0,\infty)^2: r>t\}$$ and a second one on $\Gamma^T:= \{(t,r)\in [0,T)\times [0,\infty): r\leq T-t \}$ for some $T$ close to $1$ and with $r=|x|$.
As both solutions are smooth, they have to coincide on their overlap. Thus, we obtain a solution on $$\Gamma_T \cup   \{(t,r)\in [0,\infty)^2: r>t\}.$$
Recall now that we defined the domains
\[
\Gamma_{t_0}^{t_1}(r_0):=\{(t,r):t\in[t_0,t_1), r\in[r_0-(t_1-t),r_0+(t_1-t)]\}
\]
and for any such ``lightcone'' we call the set $ \{t_0\}\times  [r_0-(t_1-t_0),r_0+(t_1-t_0)]$ its base.
The idea now is to cover the set $[0,T-\varepsilon]\times \R^6$ for any $\varepsilon>0$ small, with lightcones, such that we can extend our solution from one lightcone to the next. To this end consider $\Gamma_k:=\Gamma_{T-k\frac{\varepsilon}{2}-\varepsilon}^{3T-k\frac{\varepsilon}{2}-\varepsilon}(2T+\frac{\varepsilon}{2})$,  for $k\in \mathbb{N}_0$ and $\varepsilon>0$ small. Then, there exists a minimal $k_\varepsilon \in \mathbb{N}$ such that the base of $\Gamma_{k_\varepsilon}$ is contained in 
$$ \Gamma^T \cup \{(t,r)\in [0,\infty)^2: r>t\}.$$
Moreover, for $0\leq k < k_\varepsilon$ we have that 
the base of $\Gamma_k$ is a subset of
 $$ \Gamma_{k+1} \cup \Gamma^T \cup \{(t,r)\in [0,\infty)^2: r>t\}.$$
Therefore, the local existence theory in lightcones implies that we obtain a smooth solution on $$\Gamma^T\cup  \{(t,r)\in [0,\infty)^2: r>t\} \cup \left(\bigcup_{k=0}^{k_\varepsilon} \Gamma_k\right).$$
 However, by construction we have that $[0,T-\varepsilon]\times\R^6$ is contained in this set, and as $\varepsilon>0$ can be chosen arbitrarily small the claim follows.
\end{proof}
Combining this Lemma with our local wellposed theory in lightcones now yields the desired existence on 
$\Omega_{T}^6:=\left ([0, \infty)\times \mathbb
    R^6\right )\setminus
  \left \{(t,x)\in [T,\infty)\times \mathbb R^6: |x|\leq t-T\right
  \}.$

\begin{prop}\label{globalsol}
Let $(f,g)\in C^\infty\times C^\infty(\R^6)$ be smooth initial data
that satisfy the assumptions of Theorem \ref{stability} and let $T$ be
the associated blowup time. Then Eq.~\eqref{startingeq2} has a unique
smooth solution on $\Omega_T^6$ that satisfies $u[0]=(f,g)$.
\end{prop}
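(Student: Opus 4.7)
The plan is to build on Lemma \ref{globalsol1}, which already yields a unique smooth solution $u_{\mathrm{in}}$ of Eq.~\eqref{startingeq2} on the slab $[0,T)\times\R^6$. All that remains is to extend $u_{\mathrm{in}}$ to the ``exterior'' piece $E:=\{(t,x)\in[T,\infty)\times\R^6:|x|>t-T\}$, since $\Omega_T^6=\big([0,T)\times\R^6\big)\cup E$. The geometric point is that every point of $E$ lies in a truncated backward lightcone whose base sits inside $[0,T)\times\R^6$ and which stays uniformly away from the origin, so Theorem \ref{away from the center} (together with the uniqueness statement after Lemma \ref{continuous dependence}) applies directly.

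Concretely, fix $(t^*,r^*)\in E$, so $r^*>t^*-T\geq 0$. Pick $\eta>0$ small and $t_0\in\R$ with
\[
\max\{0,\,t^*-r^*+\eta\}<t_0<T,
\]
which is possible because $t^*-r^*<T$. Set $t_1:=t^*+\eta$ and $r_0:=r^*$ and consider the cone $\Gamma_{t_0}^{t_1}(r_0)$. By construction $(t^*,r^*)\in\Gamma_{t_0}^{t_1}(r_0)$ and
\[
r_0-(t_1-t_0)=r^*-t^*-\eta+t_0>0,
\]
so the cone does not touch the time axis. Its base $\{t_0\}\times[r^*-(t_1-t_0),\,r^*+(t_1-t_0)]$ lies in $[0,T)\times\R^6$, so $u_{\mathrm{in}}(t_0,.)$ and $\partial_0 u_{\mathrm{in}}(t_0,.)$ provide smooth radial initial data on this base. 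Theorem \ref{away from the center} then supplies a unique smooth solution $u_{(t^*,r^*)}$ of Eq.~\eqref{startingeq2} on $\Gamma_{t_0}^{t_1}(r_0)$ agreeing with these data.

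The patching step is where one has to be slightly careful but not genuinely hard: on the overlap of any two such cones, or on the overlap of such a cone with the slab $[0,T)\times\R^6$, the two candidate smooth solutions coincide. Indeed, their common domain can be exhausted by cones $\Gamma_{s_0}^{s_1}(\rho_0)$ staying away from the origin, and on each of these the uniqueness consequence of Lemma \ref{continuous dependence} (extended from $H^1$ to smooth solutions by Sobolev embedding) forces agreement. Therefore the family $\{u_{\mathrm{in}}\}\cup\{u_{(t^*,r^*)}\}_{(t^*,r^*)\in E}$ unambiguously defines a function $u:\Omega_T^6\to\R$, which is smooth since smoothness is a local property and $u$ is smooth on each cone.

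The only mild obstacle is ensuring that the cone construction exhausts all of $E$ and that the smooth patching indeed yields a globally smooth function; this is immediate once one notices that for any compact $K\subset\Omega_T^6$ only finitely many such cones are needed and the pairwise agreement on overlaps is already guaranteed. Uniqueness of the resulting smooth solution on $\Omega_T^6$ follows from the same local uniqueness argument: any other smooth solution must agree with $u_{\mathrm{in}}$ on $[0,T)\times\R^6$ by the uniqueness clause of Lemma \ref{globalsol1}, and must then agree with each $u_{(t^*,r^*)}$ by the cone uniqueness cited above. This completes the proposed proof of Proposition \ref{globalsol}.
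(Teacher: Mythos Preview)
Your proposal is correct and follows exactly the route the paper indicates: the paper gives no detailed proof here, merely stating that the result follows by ``combining this Lemma [\ref{globalsol1}] with our local wellposedness theory in lightcones,'' and your argument is a faithful and correct fleshing-out of that sentence. Your cone construction around each $(t^*,r^*)\in E$ with base in $[0,T)\times\R^6$ and $r_0-(t_1-t_0)>0$ is precisely what is needed to invoke Theorem~\ref{away from the center}, and the patching via Lemma~\ref{continuous dependence} is the same mechanism used in the proofs of Corollary~\ref{cor:awayorigin} and Lemma~\ref{globalsol1}.
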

Having constructed solutions on all of $\Omega_T^6$, we now turn our focus back to the original wave maps equation \eqref{wavemaps}. For a smooth solution $u$ of Eq. \eqref{startingeq2} we obtain a solution of Eq. \eqref{wavemaps} by setting 
\[U(t,x)=\begin{pmatrix}
\sin(|x|u(t,x))\frac{x}{|x|}\\
\cos(|x|u(t,x))
\end{pmatrix}
\]
and a Taylor expansion shows that $U$ is indeed smooth.
Recall, that the prescribed initial data were of the form
$\left(U(0,x),\partial_0U(0,x)\right)=(F,G)$ with 
 \begin{align} \label{initialdataform}
  F(x)=
    \begin{pmatrix}
      \sin(|x|f(x))\frac{x}{|x|} \\
      \cos(|x|f(x))
    \end{pmatrix},\qquad
    G(x)=
    \begin{pmatrix}
      \cos(|x|f(x))g(x)x \\
      -\sin(|x|f(x))|x|g(x)
    \end{pmatrix}
\end{align}
  for smooth, radial functions $f,g: \mathbb R^4\to\mathbb R$.
  For notational convenience, we also set $U_*^1[0]=(F_*,G_*)$ and $u_*^1[0]=(f_*,g_*)$,
  and finally, for $\delta > 0$ we define the set 
  \begin{align*}
  \mathcal{F}_{\delta}:=\left\{ f\in
    C^\infty\left(\overline{\B_{1+\delta}^4}\right): |x|f(x)\in
    \left[-\tfrac{3}{2},\tfrac{3}{2}\right] \mbox{ for all } x\in \Bz \right\}
  \end{align*}
\begin{lem} \label{symmetryreduction1}
Let $ \delta_0 >0$  be sufficiently small and for $f\in \mathcal{F}_{\delta_0}$ define
\[F_f(x):=\begin{pmatrix}
\sin(|x|f(x))\frac{x}{|x|}\\
\cos(|x|f(x))
\end{pmatrix}.
\]
Then the estimate
\begin{align*} 
\|f-f_*\|_{H^2(\B^6_{1+\delta})}\lesssim \|F_f-F_*\|_{H^2(\B^4_{1+\delta})}
\end{align*}
holds for all $f\in \mathcal{F}_{\delta_0}$ and all $\delta \in (0,\delta_0)$.
\end{lem}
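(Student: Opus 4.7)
The plan is to invert the nonlinear pointwise relation $F_f = (\sin(|x|f)\,x/|x|, \cos(|x|f))$ by exploiting the uniform bound $|x|f, |x|f_* \in [-3/2,3/2]$ which keeps us in the regime where the map $\phi \mapsto (\sin\phi, \cos\phi)$ is a diffeomorphism with quantitative derivative. First I would reduce the claim: by Lemma \ref{equivnorms} applied to the radial function $f-f_*$,
\[
  \|f-f_*\|_{H^2(\B^6_{1+\delta})}
  \simeq
  \||x|(f-f_*)\|_{H^2(\B^4_{1+\delta})}.
\]
Setting $\phi(x) := |x|f(x)$, $\phi_*(x) := |x|f_*(x)$, and $v(x) := F_f(x)-F_*(x)$, so that $\phi - \phi_* = |x|(f-f_*)$, the task becomes
\[
  \|\phi - \phi_*\|_{H^2(\B^4_{1+\delta})}
  \lesssim
  \|v\|_{H^2(\B^4_{1+\delta})}.
\]

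Next I would establish the key pointwise inversion. Since $\phi(x), \phi_*(x) \in [-3/2,3/2] \subset (-\pi/2,\pi/2)$ by the $\mathcal{F}_{\delta_0}$ condition (taking $\delta_0$ small enough that $\phi_*$ stays in this range), the fundamental theorem of calculus applied to $s \mapsto \sin((1-s)\phi_* + s\phi)$ gives
\[
  \sin\phi(x) - \sin\phi_*(x) \;=\; K_1(x)\,(\phi(x)-\phi_*(x)),
  \qquad
  K_1(x) := \int_0^1 \cos\bigl((1-s)\phi_*(x)+s\phi(x)\bigr)\,ds,
\]
with the crucial uniform lower bound $K_1(x) \geq \cos(3/2) > 0$. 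Combining with the corotational identity $F_f^j - F_*^j = (\sin\phi - \sin\phi_*)\,x_j/|x|$ for $j=1,\dots,4$ and multiplying through by $|x|$, this yields the pointwise formula
\[
  (f(x)-f_*(x))\,x_j \;=\; \frac{F_f^j(x)-F_*^j(x)}{K_1(x)},\qquad j=1,2,3,4,
\]
which holds throughout $\B^4_{1+\delta}$ (the right-hand side extends smoothly across the origin since $K_1(0) > 0$).

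Finally, to convert this pointwise inversion into an $H^2$ estimate I would use that $H^2(\B^4_{1+\delta})$ is a Banach algebra via the embedding $H^2(\B^4) \hookrightarrow L^\infty(\B^4)$. A Moser-type multiplier estimate gives
\[
  \|(f-f_*)x_j\|_{H^2(\B^4_{1+\delta})}
  \;\lesssim\;
  \|1/K_1\|_{H^2(\B^4_{1+\delta})} \cdot \|v\|_{H^2(\B^4_{1+\delta})}.
\]
Summing in $j$ and using the pointwise identity $\sum_{j=1}^{4}((f-f_*)x_j)^2 = (|x|(f-f_*))^2 = (\phi-\phi_*)^2$ (together with the analogous identities for first and second derivatives, which follow from the product rule and the radial symmetry of $f-f_*$) will conclude the proof. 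The chief obstacle will be the uniform control of $\|1/K_1\|_{H^2(\B^4_{1+\delta})}$: while $\cos(3/2)^{-1}$ gives a uniform pointwise bound, the derivatives of $1/K_1$ involve derivatives of $\phi = |x|f$, which a priori depend on $\|f\|_{H^2(\B^6)}$. This will be handled by exploiting the structural context of the lemma: the smoothness of $f_*$ together with a bootstrap argument based on the immediate pointwise estimate $|\phi-\phi_*| \leq \tfrac{\pi}{2}|v|$ (coming from $|v|^2 = 2-2\cos(\phi-\phi_*)$) allows one to absorb the $f$-dependence of $\|1/K_1\|_{H^2}$ into a constant depending only on $\delta_0$ and on $\|f_*\|_{H^2}$, completing the proof.
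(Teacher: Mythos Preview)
Your overall plan---invert the map $f\mapsto F_f$ pointwise and then reduce to $\||x|(f-f_*)\|_{H^2(\B^4)}$ via Lemma~\ref{equivnorms}---is close in spirit to the paper's argument, which instead writes $|x|f=\arctan(\sin(|x|f)/\cos(|x|f))$ and reduces to bounding $\|\sin(|.|f)-\sin(|.|f_*)\|_{H^2(\B^4)}$ by the $H^2(\B^4)$-norms of the components $\tfrac{x_i}{|x|}\sin(|x|f)-\tfrac{x_i}{|x|}\sin(|x|f_*)$ of $F_f-F_*$. However, your execution has a concrete error and a genuine gap.

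The error is the claim that $H^2(\B^4_{1+\delta})\hookrightarrow L^\infty(\B^4_{1+\delta})$ and is therefore a Banach algebra: in four dimensions $s=2=d/2$ is the critical exponent, so this embedding fails and $H^2(\B^4)$ is \emph{not} an algebra. Your multiplier estimate $\|(f-f_*)x_j\|_{H^2}\lesssim\|1/K_1\|_{H^2}\|v_j\|_{H^2}$ is thus unjustified as written. One can work in $H^2\cap L^\infty$ instead (using $K_1\geq\cos(3/2)$), but then $\|1/K_1\|_{H^2}$ still involves $\|\phi\|_{H^2(\B^4)}\simeq\|f\|_{H^2(\B^6)}$, and the ``bootstrap'' you sketch via the pointwise bound $|\phi-\phi_*|\leq\tfrac{\pi}{2}|v|$ only yields $L^\infty$ control, not the $H^2$ control you need. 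The gap is your final passage from $\sum_j\|(f-f_*)x_j\|_{H^2(\B^4)}$ to $\||x|(f-f_*)\|_{H^2(\B^4)}$: this is \emph{not} a pointwise identity at the $\dot H^2$ level. Second derivatives of $|x|g$ (for $g$ radial) contain terms like $\tfrac{1}{|x|}g'$ that are not recovered by summing $|\partial_k\partial_l(x_jg)|^2$; moreover the functions $x_jg$ are not radial, so Lemma~\ref{equivnorms} does not apply to them. The paper's proof spends most of its effort on exactly the analogous step---comparing $\sin(|.|f)-\sin(|.|f_*)$ with its angular components $\tfrac{x_i}{|x|}(\sin(|.|f)-\sin(|.|f_*))$---via an explicit componentwise expansion of the second derivatives together with Hardy's inequality (Lemma~\ref{Hardy}); this is the substantive part of the argument that your proposal does not supply.
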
 
\begin{proof}
From $f\in \mathcal{F}_{\delta_0}$, we can infer that $\cos(|x|f(|x|))\geq c$ for all $x\in \B^4_{1+\delta}$
and some constant $c>0$. Furthermore, by choosing $\delta_0$ sufficiently small we also ensure that $f_*\in\mathcal{F}_{\delta_0}$.  Moreover, we have the explicit representation
\begin{align*}
|x|f(x)=\arctan\left(\sum_{i=1}^4 \frac{x_i}{|x|} \frac{F_{f_i}(x)}{F_{f_5}(x)}\right)=\arctan\left(\frac{\sin(|x|f(x))}{\cos(|x|f(x))}\right)
\end{align*}
for any $f\in \mathcal{F}_{\delta_0}$.
Since $ \frac{\sin(|.|f)}{\cos(|.|f)}$ is bounded by assumption and $\arctan \in C^\infty(\R)$, we can immediately conclude that
\begin{align*}
\||.|(f-f_*)\|_{H^2(\Bz)}^2& \lesssim \left\|\frac{\sin(|.|f)}{\cos(|.|f)}-\frac{\sin(|.|f_*)}{\cos(|.|f_*)}\right\|_{H^2(\Bz)}^2
\\
&\lesssim \|\sin(|.|f)-\sin(|.|f_*)\|_{H^2(\Bz)}^2+ \|\cos(|.|f)-\cos(|.|f_*)\|_{H^2(\Bz)}^2.
\end{align*}
By employing Lemma \ref{equivnorms} we can thus wind up this proof by showing that
\begin{align*}
\|\sin(|.|f)-\sin(|.|f_*)\|_{H^2(\Bz)}^2\lesssim \sum_{i=1}^4\left\|\frac{x_i}{|x|}\left[\sin(|x|f(x))-\sin(|x|f_*(x))\right]\right\|_{H^2_x(\Bz)}^2.
\end{align*}
From the fact that
\begin{align*}
\frac{|x|}{|x|}\leq  \sum_{i=1}^4\frac{|x_i|}{|x|}
\end{align*}
and the triangle inequality we can immediately infer that
\begin{align*}
\|\sin(|.|f)-\sin(|.|f_*)\|_{L^2(\Bz)}^2\lesssim \sum_{i=1}^4\left\|\frac{x_i}{|x|}\left[\sin(|x|f(x))-\sin(|x|f_*(x))\right]\right\|_{L^2_x(\Bz)}^2.
\end{align*}
Let now $h$ be the radial representative of $|.|f(.)$ and calculate
\begin{align*}
\partial_j \frac{x_i}{|x|}\sin(h(|x|))=
h'(|x|) \cos(h(|x|))\frac{x_i x_j}{|x|^2}+\sin(h(|x|))\frac{\delta_{i,j}-x_ix_j}{|x|^3}
\end{align*}
for $j=1,2,3,4$ and where $\delta_{i,j}$ denotes the Kronecker delta.
Note that
\begin{align*}
\sum_{i,j=1}^{4}\frac{x_ix_j}{|x|}\left(\frac{\delta_{i,j}}{|x|}-\frac{x_ix_j}{|x|^3}\right)=0.
\end{align*}
and therefore
\begin{align*}
&\quad\sum_{i=1}^4\left\|\frac{x_i}{|x|}\left[\sin(h(|x|))-\sin(h_*(|x|))\right]\right\|_{\dot{H}^1_x(\Bz)}^2
\\
&=
\sum_{i,j=1}^4\left\|\frac{x_ix_j}{|x|}\left[h'(|x|) \cos(h(|x|))-h_*'(|x|)\cos(h_*(|x|))\right]\right\|_{L^2_x(\Bz)}^2
\\
&\quad+\sum_{i,j=1}^4\left\|\frac{\delta_{i,j}-x_ix_j}{|x|^3}\left[\sin(h(|x|))-\sin(h_*(|x|)))\right]\right\|_{L^2(\Bz)}^2
\\
&\geq \|\sin(|.|f)-\sin(|.|f_*)\|_{\dot{H}^1(\Bz)}^2.
\end{align*}
Finally, we calculate
\begin{align*}
\partial_k\partial_j \sin h(|x|)&=h'' (|x|)\cos(h(|x|))\frac{x_k x_j}{|x|^2}-h' (|x|)^2\sin(h(|x|))\frac{x_k x_j}{|x|^2}
\\
&\quad +h' (|x|)\cos(h(|x|))\frac{\delta_{j,k}-x_k x_j}{|x|^3}
\end{align*}
and
\begin{align}\label{Fprimeprime}
\partial_k\partial_j\frac{x_i}{|x|}\sin(h(|x|))&=
h'' (|x|)\cos(h(|x|))\frac{x_k x_j x_i}{|x|^3}-h' (|x|)^2\sin(h(|x|))\frac{x_k x_j x_i}{|x|^3}\nonumber
\\
&\quad+h' (|x|)\cos(h(|x|))\left(\frac{\delta_{i,k}x_j+\delta_{j,k}x_i+\delta_{i,j}x_k}{|x|^2}-3\frac{x_ix_jx_k}{|x|^4}\right)
\\
&\quad+\sin(h(|x|))\left(-\frac{\delta_{j,k} x_i+\delta_{i,k} x_j+x_k\delta_{i,j}}{|x|^3}+3\frac{x_ix_jx_k}{|x|^5}\right) \nonumber
\end{align}
for $j,k=1,2,3,4$.
As before 
\begin{align*}
\sum_{i,j=1}^{4}\frac{x_ix_j}{|x|}\left(\frac{\delta_{i,j}}{|x|}-\frac{x_ix_j}{|x|^3}\right)=0
\end{align*}
 and so
 \begin{align}\label{estimate:H21}
&\quad\| \sin(h)-\sin( h_*)\|_{\dot{H}^2(\Bz)}^2\nonumber
\\
&=\sum_{j,k=1}^{4} \Big\| \frac{x_k x_j}{|x|^2}\big[(h''(|x|)\cos(h(|x|))-h'(|x|)^2\sin(h(|x|))\nonumber
\\
&\quad-h_*''(|x|)\cos(h_*(|x|))+h'(|x|)^2\sin(h(|x|))\big]\Big\|_{L^2_x(\Bz)}^2 \nonumber
\\
&\quad+\left\|\frac{\delta_{j,k}-x_k x_j}{|x|^3}\left[h'(|x|) \cos(h(|x|))-h_*'(|x|)\cos(h(|x|))\right]\right\|_{L^2_x(\Bz)}^2.
 \end{align}
 Moreover, since 
 \begin{align*}
 &\quad\sum_{j,k=1}^{4}\left\|\frac{\delta_{j,k}-x_k x_j}{|x|^3}\left(h'(|x|)\cos(h(|x|))-h_*'(|x|)\cos(h(|x|))\right)\right\|_{L^2_x(\Bz)}^2
 \\
 &\lesssim \left\|\frac{1}{|x|}\left[h'(|x|)\cos(h(|x|))-h_*'(|x|)\cos(h(|x|))\right]\right\|_{L^2_x(\Bz)}^2,
 \end{align*}
 an application of Hardy's inequality yields
 \begin{align}\label{estimate:H22}
 &\quad\left\|\frac{1}{|x|}\left[h'(|x|)\cos(h(|x|))-h_*'(|x|)\cos(h(|x|))\right]\right\|_{L^2_x(\Bz)}^2\nonumber
 \\
 &\lesssim
 \left\|h'(|x|)\cos(h(|x|))-h_*'(|x|)\cos(h(|x|))\right\|_{H^1_x(\Bz)}^2.
 \end{align}
A direct calculation shows that 
\begin{align*}
\sum_{i,j,k=1}^4 \frac{x_ix_jx_k}{|x|^3}\left(\frac{\delta_{i,k}x_j+\delta_{j,k}x_i+\delta_{i,j}x_k}{|x|^2}-3\frac{x_ix_jx_l}{|x|^4}
\right)=0,
\end{align*}
from which we can infer that 
\begin{align}\label{estimate:H33}
&\quad\left\| \frac{x_i}{|x|}\left[\sin( h(|x|))-\sin( h_*(|x|))\right]\right\|_{\dot{H}^2_x(\Bz)}^2\nonumber
\\
&\geq 
\sum_{j,k,i=1}^4\bigg\|\frac{x_k x_j x_i}{|x|^3} \big[h''(|x|)\cos(h(|x|))-h_*''(|x|) \cos(h_*(|x|))\nonumber
\\
&\quad-h'(|x|)^2\sin(h(|x|))+h'(|x|)^2_*\sin(h_*(|x|))\big]\bigg\|_{L^2_x(\Bz)}^2.
\end{align}
Subsequently, combining \eqref{estimate:H21}, \eqref{estimate:H22} and \eqref{estimate:H33}
yields
\begin{align*}
\| \sin(h)-\sin( h_*)\|_{\dot{H}^2(\Bz)}^2 &\lesssim \left\| \frac{x_i}{|x|}\left[\sin( h(|x|))-\sin( h_*(|x|))\right]\right\|_{\dot{H}^2_x(\Bz)}^2
\\
&\quad+ \left\|h'(|x|)\cos(h(|x|))-h_*'(|x|)\cos(h(|x|))\right\|_{H^1_x(\Bz)}^2
\\
&\lesssim \left\| \frac{x_i}{|x|}\left[\sin( h(|x|))-\sin( h_*(|x|))\right]\right\|_{H^2_x(\Bz)}^2.
\end{align*}
\end{proof}
Similarly we also have
\begin{lem} \label{symmetryreduction2}
Let $\delta_0>0$ be sufficiently small and for $f \in \mathcal{F}_{\delta_0}$ and $g \in C^\infty( \overline{\B_{1+\delta_0}^4})$ define 
\[G_{f,g}(x)=
    \begin{pmatrix}
      \cos(|x|f(x))g(x)x \\
      -\sin(|x|f(x))|x|g(x)
    \end{pmatrix}.
  \]
Then the estimate
\begin{align*}
\|g-g_*\|_{H^1(\B_{1+\delta}^6)}\lesssim\left(\|G_{f,g}-G_*\|_{H^1(\Bz)}+\|f-f_*\|_{H^1(\B_{1+\delta}^6)}\right)
\end{align*}
holds for all $ g \in C^\infty( \overline{\B_{1+\delta_0}^4})$, $f \in \mathcal{F}_{\delta_0}$, and $\delta \in [0, \delta_0)$.
\end{lem}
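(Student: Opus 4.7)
The plan is to extract $|x|g(x)$ algebraically from $G_{f,g}$ and then exploit the scaling $\||x|h\|_{H^1(\B^4_{1+\delta})}\simeq\|h\|_{H^1(\B^6_{1+\delta})}$ for radial $h$, which is (implicitly) established as an intermediate step in the proof of Lemma \ref{equivnorms}. The first four components of $G_{f,g}$ give the pointwise identity
\[
[G_{f,g}(x)]_j = \cos(|x|f(x))\,g(x)\,x_j,\qquad j=1,2,3,4,
\]
so after multiplying by $x_j$, summing, and dividing by $|x|\cos(|x|f(x))$ (which is bounded below by a positive constant on $\B^4_{1+\delta}$ since $f\in\mathcal{F}_{\delta_0}$ forces $|x|f(x)\in[-\tfrac32,\tfrac32]$) one obtains the clean formula
\[
|x|g(x) = \frac{1}{\cos(|x|f(x))}\sum_{j=1}^{4}\frac{x_j}{|x|}\,[G_{f,g}(x)]_j.
\]

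Writing the analogous identity for $g_*$ and subtracting, I would decompose
\[
|x|\bigl(g-g_*\bigr)(x) = \frac{1}{\cos(|x|f(x))}\sum_{j=1}^{4}\frac{x_j}{|x|}\bigl[G_{f,g}-G_*\bigr]_j(x) \;+\; \Bigl(\tfrac{1}{\cos(|x|f)}-\tfrac{1}{\cos(|x|f_*)}\Bigr)|x|\cos(|x|f_*)g_*(x).
\]
The last parenthesis is, via a fundamental-theorem-of-calculus identity of the kind used for $\sec$ in Lemma \ref{symmetryreduction1}, equal to $|x|(f-f_*)(x)$ times a smooth bounded factor, and $|x|\cos(|x|f_*)g_*$ is a smooth bounded function on $\B^4_{1+\delta}$. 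Schematically the right-hand side is thus $(\text{smooth})\cdot\sum_j(x_j/|x|)H_j \;+\; (\text{smooth})\cdot|x|(f-f_*)$, where $H:=G_{f,g}-G_*$ and each $H_j$ (for $j\leq 4$) still carries a factor of $x_j$.

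I would then bound both contributions in $H^1(\B^4_{1+\delta})$ and apply the $H^1$-version of Lemma \ref{equivnorms} to convert the left-hand side into $\|g-g_*\|_{H^1(\B^6_{1+\delta})}$. The second contribution is controlled by the $H^1$ algebra property and the same $H^1$ scaling identity, which together give a bound by $\|f-f_*\|_{H^1(\B^6_{1+\delta})}$. The first contribution is controlled by $\|G_{f,g}-G_*\|_{H^1(\B^4_{1+\delta})}$ after exactly the kind of bookkeeping that appears in Lemma \ref{symmetryreduction1}: the built-in $x_j$-factors inside $H_j$ cancel the $|x|^{-1}$ singularity of $x_j/|x|$ at the level of $L^2$, and upon differentiation the dangerous pieces $\delta_{ij}/|x|$ and $x_ix_j/|x|^3$ either collapse through the identity $\sum_{i,j}\tfrac{x_ix_j}{|x|}\bigl(\tfrac{\delta_{ij}}{|x|}-\tfrac{x_ix_j}{|x|^3}\bigr)=0$ used there or recombine into $L^\infty$-multipliers acting on smooth ingredients. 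The main obstacle is precisely this $H^1$-bookkeeping around the symbols $x_j/|x|$, $\delta_{ij}/|x|$, and $x_ix_j/|x|^3$, none of which is in $H^1$ individually; but since the mechanism is the one already executed at the $H^2$ level in Lemma \ref{symmetryreduction1}, and we only need $H^1$ here, the calculation is strictly easier than its predecessor.
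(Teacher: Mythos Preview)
Your extraction formula $|x|g=\sec(|x|f)\sum_{j}\tfrac{x_j}{|x|}[G_{f,g}]_j$ differs from the paper's, which uses all five components:
\[
|x|g(x)=\sum_{i=1}^4\frac{x_i}{|x|}\cos(|x|f(x))\,G_i(x)-\sin(|x|f(x))\,G_5(x).
\]
This difference is not cosmetic; it is the heart of the $\dot H^1$ bound. When you differentiate your first term $\sec(|x|f)\sum_j\tfrac{x_j}{|x|}H_j$, the product rule produces
\[
\sec'(|x|f)\,\partial_k(|x|f)\,\sum_j\tfrac{x_j}{|x|}H_j.
\]
Here $\sec'(|x|f)$ is bounded, but $\partial_k(|x|f)$ is \emph{not} uniformly bounded over $f\in\mathcal F_{\delta_0}$, and the remaining factor is only in $H^1(\B^4_{1+\delta})$, which neither embeds into $L^\infty$ nor is an algebra (so your appeal to an ``$H^1$ algebra property'' is also unjustified, although the second contribution does happen to work by a direct Leibniz argument with the fixed $W^{1,\infty}$ function $|x|\cos(|x|f_*)g_*$). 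Splitting $\partial_k(|x|f)=\partial_k(|x|f_*)+\partial_k(|x|(f-f_*))$ handles the first piece, but the cross term $\partial_k(|x|(f-f_*))\cdot\sum_j\tfrac{x_j}{|x|}H_j$ is a product of two merely-$L^2$ functions and cannot be placed in $L^2(\B^4_{1+\delta})$ using only $\|G_{f,g}-G_*\|_{H^1}+\|f-f_*\|_{H^1(\B^6)}$.

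The paper's representation sidesteps this: differentiating $\cos(|x|f)$ and $\sin(|x|f)$ produces the dangerous contributions $-\sin(|x|f)\partial_k(|x|f)\,G_i$ and $-\cos(|x|f)\partial_k(|x|f)\,G_5$, and after substituting $G_i=\cos(|x|f)g\,x_i$ and $G_5=-\sin(|x|f)|x|g$ these combine to $-\sin\cos\,|x|g\,\partial_k(|x|f)+\sin\cos\,|x|g\,\partial_k(|x|f)=0$. What survives is the singular-coefficient term $\sum_i(\tfrac{\delta_{ik}}{|x|}-\tfrac{x_ix_k}{|x|^3})[\cos(|x|f)G_i-\cos(|x|f_*)G_{*i}]$, controlled by Hardy's inequality on $\B^4$, together with bounded multiples of $\partial_k G_i-\partial_k G_{*i}$ and $\partial_k G_5-\partial_k G_{*5}$. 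The fifth component is thus not a convenience but the mechanism that makes the uncontrolled $\partial_k(|x|f)\cdot g$ terms disappear.
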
 \begin{proof}
Let $G_{f,g}=G$. Then, we have the explicit representation 
\begin{align*}
|x|g(x)=\sum_{i=1}^4\frac{x_i}{|x|}\cos(|x|f(x))G_i(x)-\sin(|x|f(x))G_5(x)
\end{align*}
and therefore
\begin{align*}
\||.|(g-g_*)\|_{L^2(\Bz)}^2&\leq \sum_{j=1}^4\|\cos(|x|f)G_{j}(x)-\cos(|x|f_*)G_{*_j}(x)\|_{L^2(\Bz)}^2
\\
&\quad+\|\sin(|x|f)G_5(x)-\sin(|x|f_*)G_{*_5}(x)\|_{L^2(\Bz)}^2
\\
&\lesssim \|f-f_*\|_{L^2(\Bz)}^2+\|G-G_*\|_{L^2(\Bz)}^2.
\end{align*}
Furthermore, using Hardy's inequality one can directly verify the estimate
\begin{align*}
\||.|(g-g_*)\|_{\dot{H}^1(\Bz)}\lesssim\left(\|G-G_*\|_{H^1(\Bz)}+\|f-f_*\|_{H^1(\B_{1+\delta}^6)}\right).
\end{align*}
\end{proof}
Combining these last two Lemmas now yields 
\begin{prop}\label{symmetryreduction}
Let $\delta_0>0$ be sufficiently small and
for $f \in \mathcal{F}_{\delta_0}$ and $g \in C^\infty( \overline{\B_{1+\delta_0}^4})$ define $F_f$ and $G_{f,g}$ as in Lemma \ref{symmetryreduction1} and \ref{symmetryreduction2}, respectively. Then the estimate
\begin{align*}
\|(f,g)-u^1_*[0]\|_{H^2\times H^1(\B^6_{1+\delta})} \lesssim \|(F_f,G_{f,g})-U^1_*[0]\|_{H^2\times H^1(\B^4_{1+\delta})}
\end{align*}
holds for all $f \in \mathcal{F}_{\delta_0}$, $g \in C^\infty( \overline{\B_{1+\delta_0}^4})$ and $\delta \in [0,\delta_0)$.
\end{prop}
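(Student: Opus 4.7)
The proof is essentially a bookkeeping exercise: the two preceding lemmas already carry all the analytic content, and the proposition only requires combining them cleanly. My plan is therefore to split the product-space norm into its two components and estimate each separately.

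First, I would write
\[
\|(f,g)-u^1_*[0]\|_{H^2\times H^1(\B^6_{1+\delta})}
\simeq \|f-f_*\|_{H^2(\B^6_{1+\delta})}
+\|g-g_*\|_{H^1(\B^6_{1+\delta})},
\]
and bound the first summand by Lemma \ref{symmetryreduction1}, which gives
\[
\|f-f_*\|_{H^2(\B^6_{1+\delta})}\lesssim \|F_f-F_*\|_{H^2(\B^4_{1+\delta})}
\lesssim \|(F_f,G_{f,g})-U^1_*[0]\|_{H^2\times H^1(\B^4_{1+\delta})}.
\]
This is the place where the radiality and the form of the corotational ansatz are used, but that work is already internalized in Lemma \ref{symmetryreduction1}.

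For the second summand I would invoke Lemma \ref{symmetryreduction2}:
\[
\|g-g_*\|_{H^1(\B^6_{1+\delta})}
\lesssim \|G_{f,g}-G_*\|_{H^1(\B^4_{1+\delta})}
+\|f-f_*\|_{H^1(\B^6_{1+\delta})}.
\]
The first term on the right is already controlled by the target right-hand side, and the second is controlled by $\|f-f_*\|_{H^2(\B^6_{1+\delta})}$ via the trivial inclusion $H^2\hookrightarrow H^1$, so I can recycle the estimate from the previous paragraph.

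Adding the two bounds yields the claim. There is no real obstacle here: the only thing to watch is that the hidden constants in Lemmas \ref{symmetryreduction1} and \ref{symmetryreduction2} are uniform in $\delta\in[0,\delta_0)$, which they are by construction (the smallness parameter $\delta_0$ has already been fixed so that $\cos(|x|f(x))\geq c>0$ uniformly and so that $f_*\in\mathcal F_{\delta_0}$). Hence a single choice of $\delta_0$ works simultaneously for all three statements, and the proposition follows by a two-line argument.
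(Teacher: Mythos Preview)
Your proposal is correct and matches the paper's approach exactly: the paper simply states that ``Combining these last two Lemmas now yields'' the proposition, and your write-up spells out precisely that combination. There is nothing to add.
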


Loosely speaking, we have just shown that initial data of Eq.~\eqref{wavemaps} reduces to initial data of Eq. \ref{startingeq2} in a sensible manner and all that is left to do is to go back in the other direction, i.e. to prove the following result.
\begin{lem}\label{goback}
For $u \in C^\infty(\R\times \R^6)$ define
\begin{align*}
U_u(t,.)= 
    \begin{pmatrix}
      \sin(|.|(u(t,.))\frac{.}{|.|} \\
      \cos(|.|u(t,.))
    \end{pmatrix}.
\end{align*}
 Then the estimates
\begin{align*}
\||.|^{-\frac{5}{6}}\left(U_u(t,.)-U_*^T(t,.)\right)\|_{L^{12}(\B^4_{T-t})}
&\lesssim \|u(t,.)-u_*^T(t,.)\|_{L^{12}(\B^6_{T-t})}
\\
\||.|^{-\frac{1}{2}}\partial_j\left(U_u(t,.)-U_*^T(t,.)\right)\|_{L^{4}(\B^4_{T-t})}
&\lesssim \|u(t,.)-u_*^{T}(t,.)\|_{\dot W^{1,4}(\B^6_{T-t})}
\end{align*}
hold for $j=1,2,3,4$, $T \in \left[\frac{1}{2},\frac{3}{2}\right], t\in[0,T)$, and $u\in C^\infty(\R\times \R^6)$.
\end{lem}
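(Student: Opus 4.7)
The plan is to reduce both inequalities to (i) a pointwise bound on the integrand and (ii) the radial measure change $r^3\,dr = r^{-2}\cdot r^5\,dr$ which converts weighted $\mathbb{R}^4$ integrals into $\mathbb{R}^6$ integrals.

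For the first estimate, the trigonometric identity
\[
|U_u(t,x)-U_*^T(t,x)|^2 = 4\sin^2\!\left(\tfrac{|x|(u(t,x)-u_*^T(t,x))}{2}\right) \leq |x|^2\,|v(t,x)|^2,\qquad v := u-u_*^T,
\]
is immediate from writing both $U_u$ and $U_*^T$ through the common map $\Phi(y,h):=(\sin h\,\tfrac{y}{|y|},\cos h)$. Substituting and changing measures,
\[
\||\cdot|^{-5/6}(U_u-U_*^T)\|_{L^{12}(\B^4_{T-t})}^{12} \lesssim \int_0^{T-t} r^5|v(t,r)|^{12}\,dr \simeq \|v(t,\cdot)\|_{L^{12}(\B^6_{T-t})}^{12},
\]
which settles the first bound.

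The derivative estimate is the heart of the lemma. I would write $U_u(y)=\Phi(y,h(y))$ with $h(y):=|y|u(y)$, and use the chain rule to decompose
\[
\partial_j(U_u-U_*^T) = \bigl[\partial_y\Phi(y,h)-\partial_y\Phi(y,h_*)\bigr] + \partial_h\Phi(y,h)(\partial_jh-\partial_jh_*) + \bigl[\partial_h\Phi(y,h)-\partial_h\Phi(y,h_*)\bigr]\partial_jh_*.
\]
The key structural point is that $\partial_h\Phi(y,\cdot)=(\cos h\,\tfrac{y}{|y|},-\sin h)$ is a \emph{unit} vector in $\mathbb{R}^5$ and is $1$-Lipschitz in $h$, since $|\partial_h\Phi|\equiv 1$ and $|\partial_h^2\Phi|\equiv 1$. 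Combining this with the elementary inputs $|h-h_*|=|y||v|$, $|\partial_jh-\partial_jh_*|\leq|v|+|y||\partial_jv|$, $|\partial_y\Phi(y,h)-\partial_y\Phi(y,h_*)|\lesssim|h-h_*|/|y|$, $|\partial_jh_*|\lesssim(T-t)^{-1}$, and $|y|\leq T-t$ inside $\B^4_{T-t}$, I expect to arrive at the clean pointwise bound
\[
|\partial_j(U_u-U_*^T)| \lesssim |v| + |y|\,|\partial_jv|.
\]
The hard part is that a naive product rule applied to $\partial_j(\sin(h)\,y_i/|y|)$ produces a factor $|\partial_jh|$ which is \emph{not} controlled by the $\dot W^{1,4}(\B^6_{T-t})$ norm of $v$; it is precisely the $1$-Lipschitz unit-vector structure of $\partial_h\Phi$ that eliminates this factor and leaves only the admissible ingredients $|v|$ and $|y||\partial_jv|$.

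Raising the pointwise bound to the fourth power and integrating with weight $|x|^{-2}$ on $\B^4_{T-t}$ yields
\[
\int_{\B^4_{T-t}}|x|^{-2}|\partial_j(U_u-U_*^T)|^4\,dx \lesssim \int_0^{T-t} r\,|v|^4\,dr + \int_0^{T-t} r^5|\partial_rv|^4\,dr,
\]
whose second summand equals $\|\partial_jv\|_{L^4(\B^6_{T-t})}^4 = \|v\|_{\dot W^{1,4}(\B^6_{T-t})}^4$ by the measure conversion. For the first summand I would use $\int_0^{T-t} r|v|^4\,dr \simeq \int_{\B^6_{T-t}} |v|^4/|y|^4\,dy$ and then apply an $L^4$-Hardy inequality on $\mathbb{R}^6$, valid since the dimension $6$ exceeds the exponent $4$, after extending $v$ to $\mathbb{R}^6$ by means of Lemma~\ref{lem:extension} combined with a smooth cutoff; the constraint $T-t\leq 3/2$ makes the extension constants uniform. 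This Hardy-with-extension step is the secondary technical point, but once the pointwise bound above is secured it reduces to a routine application of well-known weighted inequalities.
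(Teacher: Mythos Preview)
Your argument is correct and reaches the same endpoint as the paper, but the two proofs differ in execution. For the pointwise control of $\partial_j(U_u-U_*^T)$ you exploit the geometric structure of $\Phi$ (the unit length and $1$-Lipschitz property of $\partial_h\Phi$) together with the $L^\infty$ bound $|\partial_j h_*|\lesssim (T-t)^{-1}$ to arrive directly at $|v|+|y|\,|\partial_j v|$; the paper instead expands the components of $\partial_j[U_u-U_*^T]$ explicitly and handles the cross term $(\cos\hat u-\cos\hat u_*^T)\,\partial_r\hat u_*^T$ via H\"older against the scale-invariant integral bound $\int_0^{T-t}|\partial_r\hat u_*^T|^6 r^5\,dr\lesssim 1$. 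For the remaining term $\int_0^{T-t} r|v|^4\,dr$ you propose extension plus the global $L^4$-Hardy inequality on $\R^6$, whereas the paper proves the tailored Lemma~\ref{finalhelplem}, which establishes $\int_0^R r|v|^4\,dr\lesssim \int_0^R r^5|v'|^4\,dr+\|v\|_{L^{12}(\B^6_R)}^4$ directly by integration by parts and is manifestly uniform in $R$. Two small caveats on your route: Lemma~\ref{lem:extension} as stated is an $H^2\times H^1$ extension on balls of radius in $[\tfrac12,\tfrac32]$, so you need instead a standard $W^{1,4}$ extension defined by rescaling to the unit ball (the upper bound $T-t\leq\tfrac32$ alone does not give uniformity; the point is rather that the resulting lower-order term $R^{-1}\|v\|_{L^4(\B^6_R)}$ is controlled by $\|v\|_{L^{12}(\B^6_R)}$ via the scale-invariant H\"older embedding); and both your approach and the paper's in fact produce an additional $\|v\|_{L^{12}(\B^6_{T-t})}$ on the right-hand side of the second estimate, which the stated lemma omits but which is harmless for the application since Theorem~\ref{stability} supplies both the $L^{12}$ and the $\dot W^{1,4}$ bounds.
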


Before we can show this, we need one final auxiliary lemma.
\begin{lem}\label{finalhelplem}
The estimate
\begin{align*}
\||.|^{-\frac{1}{2}}(|.|u)'\|_{L^{4}(\B^4_R)}\lesssim \left(\|u'\|_{L^{4}(\B^6_R)}+\|u\|_{L^{12}(\B^6_R)}\right)
\end{align*}
holds true for all $u\in C^\infty(\overline{\B^6_R})$ and all $R>0$.
\end{lem}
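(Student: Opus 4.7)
The plan is to reduce to the unit ball by scaling, expand the product rule $(|\cdot|u)'=u+ru'$, and then handle the remaining unweighted $L^4(r\,dr)$ contribution by a weighted integration by parts closed through Young's inequality.

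A direct computation shows that both sides scale as $R^2$ under $\tilde u(x):=u(Rx)$ (rewrite the left-hand side as $\int_0^R r|(ru)'|^4\,dr$ and change variables), so I would first reduce to $R=1$. On the unit ball, the elementary inequality $|u+ru'|^4\lesssim |u|^4+r^4|u'|^4$ yields
\begin{align*}
\||\cdot|^{-\frac12}(|\cdot|u)'\|_{L^4(\B^4_1)}^4
\simeq \int_0^1 |u+ru'|^4 r\,dr
\lesssim A + \int_0^1 |u'|^4 r^5\,dr,
\end{align*}
where $A:=\int_0^1 |u|^4 r\,dr$ and the last integral equals $\|u'\|_{L^4(\B^6_1)}^4$ up to constants. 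Everything therefore reduces to controlling $A$.

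For $A$, I would integrate by parts against $\tfrac{r^2}{2}$, which kills the endpoint at the origin and produces
\begin{align*}
A = \tfrac{1}{2}|u(1)|^4 - 2\Re\!\int_0^1 r^2 |u|^2 \bar u\,u'\,dr
\lesssim |u(1)|^4 + \int_0^1 r^2 |u|^3 |u'|\,dr.
\end{align*}
Hölder with exponents $4/3$ and $4$, splitting the weight as $r^2 = r^{3/4}\cdot r^{5/4}$, gives exactly
\begin{align*}
\int_0^1 r^2|u|^3|u'|\,dr \leq A^{3/4}\,\|u'\|_{L^4(\B^6_1)},
\end{align*}
and Young's inequality with the same pair $(4/3,4)$ absorbs $A^{3/4}$ back into $A$, leaving $A\lesssim |u(1)|^4 + \|u'\|_{L^4(\B^6_1)}^4$.

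It remains to control the boundary term $|u(1)|^4$ by the two norms on the right-hand side. For any $\rho\in[1/2,1]$ one has $|u(1)|^4\leq |u(\rho)|^4+4\int_\rho^1|u|^3|u'|\,dr$; I would average in $\rho$ over $[1/2,1]$, use $r\simeq 1$ there to pass freely between unweighted and $r^5$-weighted integrals, and then apply the elementary Young estimate $|u|^3|u'|\leq |u|^4+|u'|^4$ together with the trivial $\|u\|_{L^4(\B^6_1)}\lesssim\|u\|_{L^{12}(\B^6_1)}$ on the bounded domain. This yields $|u(1)|^4\lesssim \|u\|_{L^{12}(\B^6_1)}^4+\|u'\|_{L^4(\B^6_1)}^4$, completing the argument. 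No step is conceptually subtle; the only obstacle is the bookkeeping of $r$-weights, which has to be arranged so that the Hölder pairing produces precisely $A^{3/4}$ on one factor and $\|u'\|_{L^4(\B^6_1)}$ on the other, so that the ensuing Young absorption closes.
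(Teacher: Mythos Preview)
Your proof is correct and follows essentially the same skeleton as the paper: expand $(ru)'=u+ru'$, reduce to bounding $A=\int |u|^4 r\,dr$, integrate by parts against $r^2$, apply H\"older with exponents $(4/3,4)$ to produce $A^{3/4}\|u'\|_{L^4(\B^6)}$, and close with Young's inequality. The differences are cosmetic but worth noting. First, you scale to $R=1$ at the outset, whereas the paper works at general $R$ throughout and tracks the $R$-dependence explicitly. Second, for the boundary value the paper writes $u(R)=R^{-5}\int_0^R \partial_r(u(r)r^5)\,dr$ and applies H\"older directly against the $L^4(\B^6_R)$ and $L^{12}(\B^6_R)$ norms, obtaining $R^2|u(R)|^4\lesssim \|u'\|_{L^4(\B^6_R)}^4+\|u\|_{L^{12}(\B^6_R)}^4$ with the correct $R$-scaling built in; your averaging-over-$[1/2,1]$ argument is more elementary but relies on the trivial embedding $L^{12}\hookrightarrow L^4$ on the \emph{unit} ball, which is why you need the scaling reduction first. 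Both routes are equally short; the paper's boundary treatment is slightly more direct since it avoids the preliminary scaling step.
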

\begin{proof}
Let $R>0$ fixed and note that 
\begin{align*}
\||.|^{-\frac{1}{2}}(|.|u)'\|_{L^{4}(\B^4_R)}^4\leq\int_0^R |u'(r)|^4 r^5  +|u(r)|^4 r dr
\end{align*}
and thus we are left with showing
\begin{align*}
\int_0^R |u(r)|^4 r dr \lesssim \int_0^R |u'(r)|^4 r^5 dr+\left(\int_0^R |u(r)|^{12}r^{5} dr\right)^{\frac{1}{3}}.
\end{align*}
An integration by parts combined with Hölder's inequality yields
\begin{align*}
\int_0^R |u(r)|^4 r dr &\lesssim  R^2|u(R)|^4 + \int_0^R |u(r)|^3 |u'(r)| r^2 dr 
\\
&\leq R^2|u(R)|^4 + \left(\int_0^R |u(r)|^4 r dr\right)^{\frac{3}{4}}\left(\int_0^R |u'(r)|^4 r^5 dr\right)^{\frac{1}{4}}.
\end{align*}
From Young's inequality, we can now infer that
\begin{align*}
\int_0^R |u(r)|^4 r dr &\lesssim  R^2|u(R)|^4 + \varepsilon^{\frac{4}{3}} \int_0^R |u(r)|^4 r dr+\frac{1}{\varepsilon^4}\int_0^R |u'(r)|^4 r^5 dr,
\end{align*}
for any $\varepsilon>0$. Ergo, choosing $\varepsilon$ small enough implies that 
\begin{align*}
\int_0^R |u(r)|^4 r dr &\lesssim  R^2|u(R)|^4 +\int_0^R |u'(r)|^4 r^5 dr.
\end{align*}
Finally, we compute
\begin{align*}
|u(R)|&= R^{-5}\left|\int_0^R \partial_rp(u(r)r^5) dr \right|\lesssim R^{-5}\left|\int_0^R u'(r)r^5 dr\right|+R^{-5}\left|\int_0^R u(r)r^4 dr\right|
\\
&\lesssim R^{-5+\frac{3}{4}}\left(\int_0^R |u'(r)|^4 r^{20} dr\right)^{\frac{1}{4}}+R^{-5+\frac{11}{12}}\left(\int_0^R |u(r)|^{12}r^{48} dr\right)^{\frac{1}{12}}
\\
&\leq R^{-\frac{1}{2}}\left(\left(\int_0^R |u'(r)|^4 r^{5} dr\right)^{\frac{1}{4}}+\left(\int_0^R |u(r)|^{12}r^{5} dr\right)^{\frac{1}{12}}\right).
\end{align*}
Hence
\begin{align*}
R^2|u(R)|^4\lesssim \int_0^R |u'(r)|^4 r^{5} dr+\left(\int_0^R |u(r)|^{12}r^{5} dr\right)^{\frac{1}{3}}
\end{align*}
and since all the implicit constants can be chosen independently of $R$ the claim follows.
\end{proof}
\begin{proof}[Proof of Lemma \ref{goback}]
The first estimate follows immediately from the Lipschitz continuity of the trigonometric functions. 
For the second one, it suffices to show that
\begin{align}\label{finalestimate}
\||.|^{-\frac{1}{2}}\partial_j\left(U_u(t,.)-U_*^T(t,.)\right)\|_{L^{4}(\B^4_{T-t})}&\lesssim \|u(t,.)-u^{*}(t,.)\|_{L^{12}(\B^4_{T-t})} \nonumber
\\
&\quad+\||.|^{-\frac{1}{2}}\hu(t,.)-\hu^{T}_*(t,.)\|_{\dot W^{1,4}(\B^4_{T-t})}
\end{align}
where $\widehat{u}=|.|u$ and $\widehat{u}^T_*=|.|u_*^T$,  thanks to Lemma \ref{finalhelplem}. For $i=1,2,3,4$ we calculate
\begin{align*}
\partial_j\left[U_u(t,x)-U_*^T(t,x)\right]_i&=\cos(\widehat{u}(t,|x|))\partial_1\hu(t,|x|)\frac{x_i x_j}{|x|^2}+\sin(\widehat{u}(t,|x|))\frac{\delta_i^j|x|^2-x_ix_j}{|x|^3}
\\
&\quad-\cos(\hu_*^T(t,|x|))\partial_1\hu^{T}_*(t,|x|)\frac{x_i x_j}{|x|^2}-\sin(\hu_*^T(t,|x|))\frac{\delta_i^j|x|^2-x_ix_j}{|x|^3}
\end{align*}
and so
\begin{align*}
&\quad\left\||.|^{-\frac{1}{2}}\partial_j\left[U_u(t,.)-U_*^T(t,.)\right]_i\right\|_{L^4(\B^4_{T-t})}^4
\\
&\lesssim  \int_0^{T-t} \left|\cos(\widehat{u}(t,r))\partial_r\hu(t,r)-\cos(\hu_*^T(t,r))\partial_r\hu^{T}_*(t,r)\right|^4r dr
\\
&\quad+\left|\sin(\widehat{u}(t,r))-\sin(\hu_*^T(t,r))\right|^4 r^{-3} dr.
\end{align*}
From Hölder's inequality, we now immediately infer that
\begin{align*}
\int_0^{T-t}\left|\sin(\widehat{u}(t,r))-\sin(\hu_*^T(t,r))\right|^4 r^{-3} dr
&\lesssim\left(\int_0^{T-t}\left|\sin(\widehat{u}(t,r))-\sin(\hu_*^T(t,r))\right|^{12} r^{-7} dr\right)^{\frac{1}{3}}
\\
&\leq \|u(t,.)-u_*^T(t,.)\|_{L^{12}(\B^6_{T-t})}^4.
\end{align*}
To proceed, we remark that
\begin{align*}
\int_0^{T-t} |\partial_r \hu^{T}_*(t,r)|^6 r^5 dr=(T-t)^6\int_0^1
  \left |(T-t)^{-1}\partial_\rho \widehat u^T(t,(T-t)\rho)\right |^6\rho^5d\rho\lesssim 1
\end{align*}
for all $t\in [0,T)$.
With this in mind, we estimate 
\begin{align*}
&\quad\int_0^{T-t} \left|\cos(\widehat{u}(t,r))\partial_r\hu(t,r)-\cos(\hu_*^T(t,r))\partial_r\hu_*^{T}(t,r)\right|^4r dr 
\\ &\leq\int_0^{T-t} |\cos(\hu_*(t,r))|^4 \left|\partial_r \hu(t,r)-\partial_r\hu_*^{T}(t,r)\right|^4r dr
\\
&\quad+\int_0^{T-t} |\partial_r \hu^{T}_*(t,r)|^4 \left|\cos(\hu(t,r))-\cos(\hu_*^T(t,r))\right|^4r dr
\\
&\leq\||.|^{-\frac{1}{2}}\hu(t,.)-\hu^{T}_*(t,.)\|_{\dot W^{1,4}(\B^4_{T-t})}^4
+\int_0^{T-t} |\partial_r \hu^{T}_*(t,r)|^4 \left|\cos(\hu(t,r))-\cos(\hu_*^T(t,r))\right|^4r dr.
\end{align*}
One last application of Hölder's inequality then yields
\begin{align*}
&\quad\int_0^{T-t} |\partial_r \hu^{T}_*(t,r)|^4 \left|\cos(\hu(t,r))-\cos(\hu_*^T(t,r))\right|^4r dr 
\\
&\lesssim 
\left(\int_0^{T-t} |\partial_r \hu^{T}_*(t,r)|^6 r^5\right)^{\frac{2}{3}} \left(\int_0^{T-t}|\cos(\hu(t,r))-\cos(\hu ^ *(t,r))|^{12} r^{-7} dr\right)^{\frac{1}{3}} 
\\
&\lesssim \||.|^{-\frac{5}{6}}[\hu(t,.)-\hu_*^T(t,.)]\|_{L^{12}(\B^4_{T-t})}^{4}=\|u(t,.)-u_*^T(t,.)\|_{L^{12}(\B^6_{T-t})}^{4}.
\end{align*}
Since $ \||.|^{-\frac{5}{6}}\partial_j\left[U_u(t,.)-U_*^T(t,.)\right]_5\|_{L^{4}(\B^4_{T-t})}$ can be bounded in the same manner we can conclude this proof.
\end{proof}
\begin{proof}[Proof of Theorem \ref{maintheorem}]
Let $(F,G)$ be as in the assumptions of Theorem \ref{maintheorem}. Proposition \ref{symmetryreduction} tells us that 
\begin{align*}
\|(f,g)-u^1_*[0]\|_{H^2 \times H^1(\B^6_{1+\delta})}^2\lesssim \|(F,G) -U^1_*[0]\|_{H^2 \times H^1(\B^4_{1+\delta})}^2
\end{align*}
and hence, if $(F,G)$ are chosen close enough to $U^1_*[0]$, then $(f,g)$ will satisfy the assumptions of Theorem \ref{stability}.
Therefore, we can conclude by invoking Proposition \ref{globalsol} and Lemma \ref{goback}.
\end{proof}
\bibliography{references}

\begin{thebibliography}{10}

\bibitem{BieDonSch21}
Pawe{\l} Biernat, Roland Donninger, and Birgit Sch\"{o}rkhuber.
\newblock Hyperboloidal similarity coordinates and a globally stable blowup
  profile for supercritical wave maps.
\newblock {\em Int. Math. Res. Not. IMRN}, (21):16530--16591, 2021.

\bibitem{Biz00}
Piotr Bizo\'{n}.
\newblock Equivariant self-similar wave maps from {M}inkowski spacetime into
  3-sphere.
\newblock {\em Comm. Math. Phys.}, 215(1):45--56, 2000.

\bibitem{BizBie15}
Piotr Bizo\'{n} and Pawe{\l} Biernat.
\newblock Generic self-similar blowup for equivariant wave maps and
  {Y}ang-{M}ills fields in higher dimensions.
\newblock {\em Comm. Math. Phys.}, 338(3):1443--1450, 2015.

\bibitem{BizChmTab00}
Piotr Bizo\'{n}, Tadeusz Chmaj, and Zbis\l{}aw Tabor.
\newblock Dispersion and collapse of wave maps.
\newblock {\em Nonlinearity}, 13(4):1411--1423, 2000.

\bibitem{BizChmTab01}
Piotr Bizo\'{n}, Tadeusz Chmaj, and Zbis\l{}aw Tabor.
\newblock Formation of singularities for equivariant {$(2+1)$}-dimensional wave
  maps into the 2-sphere.
\newblock {\em Nonlinearity}, 14(5):1041--1053, 2001.

\bibitem{Bri20}
Bjoern Bringmann.
\newblock Stable blowup for the focusing energy critical nonlinear wave
  equation under random perturbations.
\newblock {\em Comm. Partial Differential Equations}, 45(12):1755--1777, 2020.

\bibitem{CanHer18}
Timothy Candy and Sebastian Herr.
\newblock On the division problem for the wave maps equation.
\newblock {\em Ann. PDE}, 4(2):Paper No. 17, 61, 2018.

\bibitem{CazShaTah98}
Thierry Cazenave, Jalal Shatah, and A.~Shadi Tahvildar-Zadeh.
\newblock Harmonic maps of the hyperbolic space and development of
  singularities in wave maps and {Y}ang-{M}ills fields.
\newblock {\em Ann. Inst. H. Poincar\'{e} Phys. Th\'{e}or.}, 68(3):315--349,
  1998.

\bibitem{ChaDonGlo17}
Athanasios Chatzikaleas, Roland Donninger, and Irfan Glogi\'{c}.
\newblock On blowup of co-rotational wave maps in odd space dimensions.
\newblock {\em J. Differential Equations}, 263(8):5090--5119, 2017.

\bibitem{ChiKri17}
Elisabetta Chiodaroli and Joachim Krieger.
\newblock A class of large global solutions for the wave-map equation.
\newblock {\em Trans. Amer. Math. Soc.}, 369(4):2747--2773, 2017.

\bibitem{ChiKriLuh18}
Elisabetta Chiodaroli, Joachim Krieger, and Jonas L\"{u}hrmann.
\newblock Concentration compactness for critical radial wave maps.
\newblock {\em Ann. PDE}, 4(1):Paper No. 8, 148, 2018.

\bibitem{CosDonXia16}
O.~Costin, R.~Donninger, and X.~Xia.
\newblock A proof for the mode stability of a self-similar wave map.
\newblock {\em Nonlinearity}, 29(8):2451--2473, 2016.

\bibitem{CosDonGlo17}
Ovidiu Costin, Roland Donninger, and Irfan Glogi\'{c}.
\newblock Mode stability of self-similar wave maps in higher dimensions.
\newblock {\em Comm. Math. Phys.}, 351(3):959--972, 2017.

\bibitem{Cot15}
R.~C\^{o}te.
\newblock On the soliton resolution for equivariant wave maps to the sphere.
\newblock {\em Comm. Pure Appl. Math.}, 68(11):1946--2004, 2015.

\bibitem{CotKenLawSch15a}
R.~C\^{o}te, C.~E. Kenig, A.~Lawrie, and W.~Schlag.
\newblock Characterization of large energy solutions of the equivariant wave
  map problem: {I}.
\newblock {\em Amer. J. Math.}, 137(1):139--207, 2015.

\bibitem{CotKenLawSch15b}
R.~C\^{o}te, C.~E. Kenig, A.~Lawrie, and W.~Schlag.
\newblock Characterization of large energy solutions of the equivariant wave
  map problem: {II}.
\newblock {\em Amer. J. Math.}, 137(1):209--250, 2015.

\bibitem{CotKenMer08}
Rapha\"{e}l C\^{o}te, Carlos~E. Kenig, and Frank Merle.
\newblock Scattering below critical energy for the radial 4{D} {Y}ang-{M}ills
  equation and for the 2{D} corotational wave map system.
\newblock {\em Comm. Math. Phys.}, 284(1):203--225, 2008.

\bibitem{DodLaw15}
Benjamin Dodson and Andrew Lawrie.
\newblock Scattering for radial, semi-linear, super-critical wave equations
  with bounded critical norm.
\newblock {\em Arch. Ration. Mech. Anal.}, 218(3):1459--1529, 2015.

\bibitem{Don11}
Roland Donninger.
\newblock On stable self-similar blowup for equivariant wave maps.
\newblock {\em Comm. Pure Appl. Math.}, 64(8):1095--1147, 2011.

\bibitem{Don17}
Roland Donninger.
\newblock Strichartz estimates in similarity coordinates and stable blowup for
  the critical wave equation.
\newblock {\em Duke Math. J.}, 166(9):1627--1683, 2017.

\bibitem{DonGlo19}
Roland Donninger and Irfan Glogi\'{c}.
\newblock On the existence and stability of blowup for wave maps into a
  negatively curved target.
\newblock {\em Anal. PDE}, 12(2):389--416, 2019.

\bibitem{DonRao20}
Roland Donninger and Ziping Rao.
\newblock Blowup stability at optimal regularity for the critical wave
  equation.
\newblock {\em Adv. Math.}, 370:107219, 81, 2020.

\bibitem{DonSchSof11}
Roland Donninger, Wilhelm Schlag, and Avy Soffer.
\newblock A proof of {P}rice's law on {S}chwarzschild black hole manifolds for
  all angular momenta.
\newblock {\em Adv. Math.}, 226(1):484--540, 2011.

\bibitem{DonSchAic12}
Roland Donninger, Birgit Sch\"{o}rkhuber, and Peter~C. Aichelburg.
\newblock On stable self-similar blow up for equivariant wave maps: the
  linearized problem.
\newblock {\em Ann. Henri Poincar\'{e}}, 13(1):103--144, 2012.

\bibitem{DuyJiaKenMer18}
Thomas Duyckaerts, Hao Jia, Carlos Kenig, and Frank Merle.
\newblock Universality of blow up profile for small blow up solutions to the
  energy critical wave map equation.
\newblock {\em Int. Math. Res. Not. IMRN}, (22):6961--7025, 2018.

\bibitem{EngNag99}
Klaus-Jochen Engel and Rainer Nagel.
\newblock {\em One-parameter semigroups for linear evolution equations}, volume
  194.
\newblock Springer Science \& Business Media, 1999.

\bibitem{CanKri15}
Can Gao and Joachim Krieger.
\newblock Optimal polynomial blow up range for critical wave maps.
\newblock {\em Commun. Pure Appl. Anal.}, 14(5):1705--1741, 2015.

\bibitem{GhoIbrNgu18}
T.~Ghoul, S.~Ibrahim, and V.~T. Nguyen.
\newblock Construction of type {II} blowup solutions for the 1-corotational
  energy supercritical wave maps.
\newblock {\em J. Differential Equations}, 265(7):2968--3047, 2018.

\bibitem{Gri17}
Roland Grinis.
\newblock Quantization of time-like energy for wave maps into spheres.
\newblock {\em Comm. Math. Phys.}, 352(2):641--702, 2017.

\bibitem{JenLaw18}
Jacek Jendrej and Andrew Lawrie.
\newblock Two-bubble dynamics for threshold solutions to the wave maps
  equation.
\newblock {\em Invent. Math.}, 213(3):1249--1325, 2018.

\bibitem{JiaKen17}
Hao Jia and Carlos Kenig.
\newblock Asymptotic decomposition for semilinear wave and equivariant wave map
  equations.
\newblock {\em Amer. J. Math.}, 139(6):1521--1603, 2017.

\bibitem{KlaMac95}
S.~Klainerman and M.~Machedon.
\newblock Smoothing estimates for null forms and applications.
\newblock {\em Duke Math. J.}, 81(1):99--133 (1996), 1995.
\newblock A celebration of John F. Nash, Jr.

\bibitem{KlaRod01}
Sergiu Klainerman and Igor Rodnianski.
\newblock On the global regularity of wave maps in the critical {S}obolev norm.
\newblock {\em Internat. Math. Res. Notices}, (13):655--677, 2001.

\bibitem{KlaSel97}
Sergiu Klainerman and Sigmund Selberg.
\newblock Remark on the optimal regularity for equations of wave maps type.
\newblock {\em Comm. Partial Differential Equations}, 22(5-6):901--918, 1997.

\bibitem{KriSchTat08}
J.~Krieger, W.~Schlag, and D.~Tataru.
\newblock Renormalization and blow up for charge one equivariant critical wave
  maps.
\newblock {\em Invent. Math.}, 171(3):543--615, 2008.

\bibitem{Kri03}
Joachim Krieger.
\newblock Global regularity of wave maps from {${\bf R}^{3+1}$} to surfaces.
\newblock {\em Comm. Math. Phys.}, 238(1-2):333--366, 2003.

\bibitem{Kri04}
Joachim Krieger.
\newblock Global regularity of wave maps from {$\bold R^{2+1}$} to {$H^2$}.
  {S}mall energy.
\newblock {\em Comm. Math. Phys.}, 250(3):507--580, 2004.

\bibitem{KriMia20}
Joachim Krieger and Shuang Miao.
\newblock On the stability of blowup solutions for the critical corotational
  wave-map problem.
\newblock {\em Duke Math. J.}, 169(3):435--532, 2020.

\bibitem{KriSch12}
Joachim Krieger and Wilhelm Schlag.
\newblock {\em Concentration compactness for critical wave maps}.
\newblock EMS Monographs in Mathematics. European Mathematical Society (EMS),
  Z\"{u}rich, 2012.

\bibitem{LawOh16}
Andrew Lawrie and Sung-Jin Oh.
\newblock A refined threshold theorem for {$(1+2)$}-dimensional wave maps into
  surfaces.
\newblock {\em Comm. Math. Phys.}, 342(3):989--999, 2016.

\bibitem{MasPla12}
Nader Masmoudi and Fabrice Planchon.
\newblock Unconditional well-posedness for wave maps.
\newblock {\em J. Hyperbolic Differ. Equ.}, 9(2):223--237, 2012.

\bibitem{NahSteUhl03}
Andrea Nahmod, Atanas Stefanov, and Karen Uhlenbeck.
\newblock On the well-posedness of the wave map problem in high dimensions.
\newblock {\em Comm. Anal. Geom.}, 11(1):49--83, 2003.

\bibitem{Olv97}
Frank Olver.
\newblock {\em Asymptotics and special functions}.
\newblock CRC Press, 1997.

\bibitem{OlvLonBoiClar10}
Frank W.~J. Olver, Daniel~W. Lozier, Ronald~F. Boisvert, and Charles~W. Clark.
\newblock {\em NIST handbook of mathematical functions hardback and CD-ROM}.
\newblock Cambridge university press, 2010.

\bibitem{RapRod12}
Pierre Rapha\"{e}l and Igor Rodnianski.
\newblock Stable blow up dynamics for the critical co-rotational wave maps and
  equivariant {Y}ang-{M}ills problems.
\newblock {\em Publ. Math. Inst. Hautes \'{E}tudes Sci.}, 115:1--122, 2012.

\bibitem{RodSte10}
Igor Rodnianski and Jacob Sterbenz.
\newblock On the formation of singularities in the critical {${\rm O}(3)$}
  {$\sigma$}-model.
\newblock {\em Ann. of Math. (2)}, 172(1):187--242, 2010.

\bibitem{Rud76}
Walter Rudin.
\newblock {\em Principles of mathematical analysis}.
\newblock McGraw-Hill Book Co., New York, third edition, 1976.
\newblock International Series in Pure and Applied Mathematics.

\bibitem{Sha88}
Jalal Shatah.
\newblock Weak solutions and development of singularities of the {${\rm
  SU}(2)$} {$\sigma$}-model.
\newblock {\em Comm. Pure Appl. Math.}, 41(4):459--469, 1988.

\bibitem{ShaStr02}
Jalal Shatah and Michael Struwe.
\newblock The {C}auchy problem for wave maps.
\newblock {\em Int. Math. Res. Not.}, (11):555--571, 2002.

\bibitem{ShaTah94}
Jalal Shatah and A.~Shadi Tahvildar-Zadeh.
\newblock On the {C}auchy problem for equivariant wave maps.
\newblock {\em Comm. Pure Appl. Math.}, 47(5):719--754, 1994.

\bibitem{Sog95}
Christopher~Donald Sogge.
\newblock {\em Lectures on non-linear wave equations}, volume~2.
\newblock International Press Boston, MA, 1995.

\bibitem{SteTat10a}
Jacob Sterbenz and Daniel Tataru.
\newblock Energy dispersed large data wave maps in {$2+1$} dimensions.
\newblock {\em Comm. Math. Phys.}, 298(1):139--230, 2010.

\bibitem{SteTat10b}
Jacob Sterbenz and Daniel Tataru.
\newblock Regularity of wave-maps in dimension {$2+1$}.
\newblock {\em Comm. Math. Phys.}, 298(1):231--264, 2010.

\bibitem{Str03}
Michael Struwe.
\newblock Equivariant wave maps in two space dimensions.
\newblock volume~56, pages 815--823. 2003.
\newblock Dedicated to the memory of J\"{u}rgen K. Moser.

\bibitem{Tao00}
Terence Tao.
\newblock Ill-posedness for one-dimensional wave maps at the critical
  regularity.
\newblock {\em Amer. J. Math.}, 122(3):451--463, 2000.

\bibitem{Tao01a}
Terence Tao.
\newblock Global regularity of wave maps. {I}. {S}mall critical {S}obolev norm
  in high dimension.
\newblock {\em Internat. Math. Res. Notices}, (6):299--328, 2001.

\bibitem{Tao01b}
Terence Tao.
\newblock Global regularity of wave maps. {II}. {S}mall energy in two
  dimensions.
\newblock {\em Comm. Math. Phys.}, 224(2):443--544, 2001.

\bibitem{Tao08}
Terence Tao.
\newblock Global regularity of wave maps {III}--{VII}.
\newblock {\em Preprint arXiv.org}, 2008, 2009.

\bibitem{Tat98}
Daniel Tataru.
\newblock Local and global results for wave maps. {I}.
\newblock {\em Comm. Partial Differential Equations}, 23(9-10):1781--1793,
  1998.

\bibitem{Tat01}
Daniel Tataru.
\newblock On global existence and scattering for the wave maps equation.
\newblock {\em Amer. J. Math.}, 123(1):37--77, 2001.

\bibitem{Tat05}
Daniel Tataru.
\newblock Rough solutions for the wave maps equation.
\newblock {\em Amer. J. Math.}, 127(2):293--377, 2005.

\bibitem{TurSpe90}
Neil Turok and David Spergel.
\newblock Global texture and the microwave background.
\newblock {\em Phys. Rev. Lett.}, 64:2736--2739, Jun 1990.

\end{thebibliography}
\bibliographystyle{plain}

\end{document}